\newcommand{\abs}[1]{\left| #1 \right|}
\newcommand{\C}{\mathbb{C}}
\newcommand{\Z}{\mathbb{Z}}
\newcommand{\N}{\mathbb{N}}
\newcommand\norm[1]{\left\lVert#1\right\rVert}
\newcommand{\p}{\partial}
\newcommand{\R}{\mathbb{R}}
\newcommand{\set}[2]{\left\{{#1}:{#2}\right\}}
\newcommand{\supp}[1]{\textrm{supp}\left({#1}\right)}
\newtheorem{theorem}{Theorem}[section]
\newtheorem{proposition}[theorem]{Proposition}
\newtheorem{corollary}[theorem]{Corollary}
\newtheorem{definition}[theorem]{Definition}
\newtheorem{remark}[theorem]{Remark}
\numberwithin{equation}{section}
\definecolor{bleu}{rgb}{0.,0.,0.5}
\title{Global subelliptic estimates for geometric Kramers-Fokker-Planck operators on closed manifolds}
\author{Francis~Nier\thanks{LAGA, Universit{\'e} de Sorbonne-Paris Nord (Paris XIII), 99 avenue
J.B.~Cl{\'e}ment, F-93430~Villetaneuse. nier@math.univ-paris13.fr}\\
Xingfeng~Sang\thanks{LAGA, Universit{\'e} de Sorbonne-Paris Nord (Paris XIII), 99 avenue
J.B.~Cl{\'e}ment, F-93430~Villetaneuse. sang@math.univ-paris13.fr}\\
Francis~White\thanks{Department of Mathematics, University of California Irvine, 400 Physical Sciences Quad, Irvine CA-92697.\\fgwhite@uci.edu}}
\begin{document}
\maketitle
\begin{abstract}
  In this article we reconsider the proof of subelliptic estimates for Geometric Kramers-Fokker-Planck operators, a class which includes Bismut's hypoelliptic Laplacian, when the base manifold is closed (no boundary).
  The method is significantly different from the ones proposed by Bismut-Lebeau in \cite{BiLe} and Lebeau in \cite{Leb1} and \cite{Leb2}. As a new result we are able to prove maximal subelliptic estimates with some control of the constants in the two asymptotic regimes of high ($b\to 0$) and low ($b\to +\infty$) friction. After a dyadic partition in the momentum variable, the analysis is essentially local in the position variable, contrary to the microlocal reduction techniques of the previous works. In particular this method will be easier to adapt on manifolds with boundaries. A byproduct of our analysis is the introduction of a very convenient double exponent Sobolev scale associated with globally defined differential operators. Applications of this convenient parameter dependent functional analysis to accurate spectral problems, in particular for Bismut's hypoelliptic Laplacian with all its specific geometry, is deferred to subsequent works.
\end{abstract}
\textbf{MSC2020:} 35G15, 35H10, 35H20, 35R01, 47D06, 58J32, 58J50, 58J65, 60J65\\
\textbf{Keywords:} Geometric Kramers-Fokker-Planck operators, analysis on manifolds, cotangent total space, subelliptic
estimates, global pseudodifferential calculus, commutation of unbounded operators.

\tableofcontents{}

\section{Introduction}

\subsection{Background and Motivations}
In \cite{Bis041}\cite{Bis042}\cite{Bis05} J.M.~Bismut introduced the hypoelliptic Laplacian which can be viewed alternatively as a deformation of Hodge theory on the cotangent bundle or as a generalization to the case of arbitrary degree differential forms of the Fokker-Planck equation (often specified as Kramers-Fokker-Planck equation in this case) associated with the Langevin process. Very rapidly J.M.~Bismut and G.~Lebeau made in \cite{BiLe} a careful analysis of families of such hypoelliptic Laplacians indexed by a parameteer $b>0$\,, proving in particular the spectral convergence  as $b\to 0^{+}$ of the hypoelliptic Laplacian to the Hodge Laplacian on the base manifold. It was in the mood of those times to develop the accurate spectral analysis of parameter dependent non self-adjoint hypoelliptic operators and we refer the reader to \cite{Dav}\cite{DSZ}\cite{HerNi}\cite{HHS} for works in this direction which still have many developments. In \cite{BiLe} J.M.~Bismut and G.~Lebeau combined such involved microlocal and semiclassical spectral analysis with the heavy geometrical construction of the hypoelliptic Laplacian and this was continued by S.~Shen in \cite{She}, for the double parameter asymptotics where the first limit $b\to 0^{+}$ allows to recover a semiclassical Witten Laplacian on the base manifold let say with a parameter $h\to 0^{+}$ which leads to Morse theory as in \cite{Wit}\cite{HeSj4}\cite{Zha}. In \cite{Leb1}\cite{Leb2}, G.~Lebeau introduced a general class of non self-adjoint hypoelliptic operators for which accurate subelliptic estimates, a corner stone of the spectral asymptotic analysis, can be proven.\\
About the Witten Laplacian on the base manifold, it was realized in \cite{LNV1}\cite{LNV2} that the introduction of artificial boundary value problems, associated with a suitable cutting and gluing of the manifold, was a very convenient and robust way for the generalization of the accurate spectral asymptotic analysis with a potential which can be more general than a Morse function. Such accurate spectral analysis was motivated by various questions related with molecular dynamics. This raised the question of understanding similar problems for the Langevin process, where the arbitrary degree form formulation, involves boundary value problems for the hypoelliptic Laplacian. The functional analysis was started in \cite{Nie}\cite{NiSh} for fixed hypoelliptic Laplacians, which means with no parameter. The asymptotic spectral analysis with respect to one parameter $b>0$ or two parameters $(b,h)\in (0,+\infty)^{2}$ remains to be done.\\
In this direction, the microlocal reduction approach proposed by in \cite{BiLe}\cite{Leb1}\cite{Leb2} appears to be not well adapted for a similar asymptotic analysis of boundary value problems. The present article proposes an alternative approach which relies on some local approximation of the hypoelliptic Laplacian on the cotangent $T^{*}Q$ of a general compact Riemannian manifold $Q$, by the euclidean version. This euclidean approximation relies on the use of normal coordinates around a point $q_{0}\in Q$ while the Taylor expansion of the metric leads to controlled error terms in balls $B(q_{0},r_{b,|p|})$ parametrized by the parameter $b>0$ and the size of momentum variable $p\in T^{*}_{q_{0}}Q$\,.\\
While doing so we are able to provide a control of constants in the global subelliptic estimates not only as $b\to 0^{+}$ but also as $b\to +\infty$ which can be of interest for further developments. The spectral analysis as $b\to 0^{+}$ proving the spectral convergence to the Hodge or Witten Laplacian on the base manifold, will be carried out in another text. Compared to the works of \cite{BiLe}\cite{She}, the approach proposed here  makes possible  an easier extension to boundary value problems and a more direct connexion with standard tools of spectral analysis like Grushin problems (see\cite{SjZw}) and other related works with similar spectral problems studied in \cite{ReTa}\cite{Dro}\cite{Smi}. In particular, global Sobolev scales $\tilde{\mathcal{W}}^{s_{1},s_{2}}(T^{*}Q)$\,, $s_{1},s_{2}\in \R$\,, are associated with a pair of commuting self-adjoint operators which make use of the scalar horizontal Laplacian associated with a Sasaki type metric. This synthetic formulation, of which the properties nevertheless rely on some global pseudodifferential techniques, was inspired by \cite{ReTa} and \cite{BeBo}. Again, it will allow a rather direct adaptation of techniques developed for simpler spectral problems, namely scalar operators on a compact total space within the framework of standard Sobolev spaces, to the more involved framework of the hypoelliptic Laplacian (and possibly other global analysis problems on the total space of  the cotangent bundle $X=T^{*}Q$).
\subsection{General Framework}

In this text we shall consider geometric Kramers-Fokker-Planck operators on $X = T^* Q$, where $(Q, g)$ is a smooth compact $d$-dimensional Riemannian manifold without boundary. Points in $X$ will be denoted by $x$, and $\pi_{X}: X \rightarrow Q$ will denote the natural projection $T^*_q Q \ni x \mapsto q \in Q$\,. Local coordinates on $Q$ will be denoted by $(q^1, \ldots, q^d)$\,. We shall use Einstein's convention of summing over repeated up and down indices. If $q \in Q$ and $(U, q^1, \ldots q^d)$ is a local coordinate system for $Q$, then an element of the fiber $p \in T^*_q Q$ will be written $p = p_j \, dq^j$, and $(q^1, \ldots, q^d, p_1, \ldots, p_d)$ will denote canonical coordinates in $U \times \R^d \sim T^* U \subset T^* Q$\,. If local canonical coordinates $(q^1, \ldots, q^d, p_1, \ldots, p_d)$ have been fixed in a neighborhood $T^* U$ of a point $x \in X$\,, then we shall write $x = (q,p)$\,.

The metric $g$ on $Q$, i.e. on the tangent bundle $\pi_{TQ}: TQ \rightarrow Q$, will be denoted by $g(q) = {}^{t}g(q) = \left(g_{jk}(q) \right)_{1 \le j, k \le d}$ or $g = g_{jk}(q) \, dq^j \, dq^k$\,. The corresponding dual metric on the cotangent bundle $\pi_{T^* Q}: T^* Q \rightarrow Q$ will be denoted by $g^{-1}(q) = (g^{jk}(q))_{1 \le j, k \le d}$\,. Let $\nabla^{LC}$ be the Levi-Civita connection on the tangent bundle $\pi_{TQ}: TQ \rightarrow Q$ associated with the metric $g$\,. By abuse of notation, we shall also denote by $\nabla^{LC}$ the connection on the tensor bundle
\begin{align}
	\mathcal{T}^{(k,\ell)} TQ = \underbrace{TQ \otimes \cdots \otimes TQ}_{k \ \textrm{times}} \otimes \underbrace{T^* Q \otimes \cdots \otimes T^* Q}_{\ell \ \textrm{times}}
\end{align}
induced by the Levi-Civita connection on $TQ$ for every $k, \ell \in \N$\,. If $q = (q^1, \ldots, q^d)$ are local coordinates for $Q$, then the Christoffel symbols associated the Levi-Civita connection $\nabla^{LC}$ are given by
\begin{align}
	\Gamma_{jk}^\ell = \frac{1}{2} g^{\ell a} \left(\p_{q^j} g_{a k} + \p_{q^k} g_{a j} - \p_{q^a} g_{jk} \right), \ \ 1 \le j, k, \ell \le d\,.
\end{align}
The connection $\nabla^{LC}$ gives rise to a global decomposition
\begin{align} \label{horizontal_vertical_decomposition}
	TX = T^H X \oplus T^V X\,,
\end{align}
where $T^V X = \ker{(d\pi_X)}$ is the vertical subbundle of $TX$ associated with the projection $\pi_X$ and $T^H X$ is the horizontal subbundle of $TX$ defined at each point $x \in X$ by
\begin{align}
\begin{split}
	T^H_x X = \{\gamma'(0): & \ \textrm{there exists $\epsilon>0$ and a smooth path $\gamma: (-\epsilon, \epsilon) \rightarrow X$ such that} \\
	&\ \ \textrm{$\gamma(0) = x$ and $\nabla^{LC}_{d\pi_X(\gamma'(t))} \gamma(t) = 0$ for all $-\epsilon<t<\epsilon$}\}\,.
\end{split}
\end{align}
In terms of local coordinates for $X$, the subbundles $T^HX$ and $T^V X$ may be described as follows. If $(q^1, \ldots, q^d, p_1, \ldots, p_d)$ are local canonical coordinates for $X$, let
\begin{align} \label{definition_e_j}
	e_j = \frac{\p}{\p q^j} + \Gamma^\ell_{jk}(q) p_\ell \frac{\p}{\p p_k}, \ \ \ 1 \le j \le d\,,
\end{align}
and let
\begin{align} \label{definiton_hat_e_j}
	\widehat{e}^j = \frac{\p}{\p p_j}, \ \ 1 \le j \le d\,.
\end{align}
Together these vector fields form a local frame $(e_1, \ldots, e_d, \widehat{e}^1, \ldots, \widehat{e}^d)$ for $TX$ and locally we have
\begin{align}
	T^H X = \textrm{span} \left(e_1, \ldots, e_d \right)
\end{align}
and
\begin{align}
	T^V X = \textrm{span} \left(\widehat{e}^1, \ldots, \widehat{e}^d\right)\,.
\end{align}
Since for every $x = (q,p) \in X$ the differential $\left. d \pi_X \right|_x$ restricts to a linear isomorphism $T^H_x X \rightarrow T_q Q$ while $T^V_x X \cong T^*_q Q$\,, the decomposition (\ref{horizontal_vertical_decomposition}) yields the identifications
\begin{align} \label{decomposition_of_tangent_bundle}
  TX \cong \pi_{X}^{*}(TQ \oplus T^* Q)\,.
\end{align}
The total tangent space $TX=T^{H}X\oplus T^{V}X$ is equiped with the metric $\pi_{X}^{*}(g\displaystyle\mathop{\oplus}^{\perp}g^{-1})$, simply written $g\oplus g^{-1}$, by using the above identification.
Clearly horizontal (resp. vertical) vector fields on $X$ are given as sections in $\mathcal{C}^{\infty}(X;T^{H}X)$ (resp. $\mathcal{C}^{\infty}(X;T^{V}X)$). Specific subspaces of horizontal (resp. vertical) sections are provided by the fact that \eqref{decomposition_of_tangent_bundle} induces a natural imbedding
$$
i_{g}:\mathcal{C}^{\infty}(Q;TQ\oplus T^{*}Q)\to \mathcal{C}^{\infty}(X;\pi^{*}_{X}(TQ\oplus T^{*}Q))=\mathcal{C}^{\infty}(X;TX)
$$ and we introduce
\begin{eqnarray*}
  && \mathcal{C}^{\infty}_{Q}(X;T^{H}X)=i_{g}(\mathcal{C}^{\infty}(Q;TQ))\subset \mathcal{C}^{\infty}(X;T^{H}X)\,,\\
 \text{resp.}&& \mathcal{C}^{\infty}_{Q}(X;T^{V}X)=i_{g}(\mathcal{C}^{\infty}(Q;T^{*}Q))\subset \mathcal{C}^{\infty}(X;T^{V}X)\,.
\end{eqnarray*}
These spaces $\mathcal{C}^{\infty}_{Q}(X;T^{H}X)$ and $\mathcal{C}^{\infty}_{Q}(X;T^{V}Q)$ are $\mathcal{C}^{\infty}(Q;\R)$ modules.  Additionally on $\mathcal{C}^{\infty}(Q;TQ\oplus T^{*}Q)$ a $\mathcal{C}^{k}$-norm can be fixed once and for all by using a finite partition of unity subordinate to an open chart covering $Q=\mathop{\cup}_{j=1}^{J}\Omega_{j}$ while changing the atlas and the partition of unity gives an equivalent norm.  We therefore can speak of $\|T\|_{\mathcal{C}^{k}}$ for $T\in \mathcal{C}^{\infty}_{Q}(X;T^{H}X)$ and $T\in \mathcal{C}^{\infty}_{Q}(X;T^{V}X)$ without specifying its expression.\\

It will be convenient to use the following families of vector fields.
\begin{definition}
  \label{de.famcalT}
  For any $N\in \N$ and any $k\in \N$\,, the set $\mathcal{T}_{N,k}^{H}$ (resp. $\mathcal{T}_{N,k}^{V}$) is defined by
  \begin{eqnarray*}
    && \mathcal{T}_{N,k}^{H}=\left\{(T_{1}^{H},\ldots, T_{N}^{H})\in \mathcal{C}^{\infty}_{Q}(X;T^{H}X)^{N}\,,\, \forall j\in \left\{1,\ldots, N\right\}\,, \|T_{j}^{H}\|_{\mathcal{C}^{k}}\leq 1\right\}\,,\\
 \text{resp.}&&\mathcal{T}_{N,k}^{V}=\left\{(T_{1}^{V},\ldots, T_{N}^{V})\in \mathcal{C}^{\infty}_{Q}(X;T^{V}X)^{N}\,,\, \forall j\in \left\{1,\ldots, N\right\}\,, \|T_{j}^{V}\|_{\mathcal{C}^{k}}\leq 1\right\}\,.
  \end{eqnarray*}
\end{definition}
\noindent By duality, we also have the identifications
\begin{align} \label{identification_of_T*X}
	T^* X \cong T^*Q \oplus T Q.
\end{align}
If $(q^1, \ldots, q^d, p_1, \ldots, p_d)$ are local canonical coordinates for $X$, we let
\begin{align}
	e^j = dq^j, \ \ 1 \le j \le d,
\end{align}
and
\begin{align}
	\widehat{e}_j = dp_j - \Gamma^\ell_{jk}(q) p_\ell d q^k, \ \ 1 \le j \le d.
\end{align}
It is clear that $(e^1, \ldots, e^d, \widehat{e}_1, \ldots, \widehat{e}_d)$ is a local coframe for $T^* X$, and locally it is true that
\begin{align}
	(T^H X)^* = \textrm{span} \left(e^1, \ldots, e^d\right)
\end{align}
and
\begin{align}
	(T^V X)^* = \textrm{span} \left(\widehat{e}_1, \ldots, \widehat{e}_d \right).
\end{align}

We also note that $X$ is naturally a symplectic manifold with respect to the usual symplectic form $\sigma$ given in local canonical coordinates $(q,p)$ by
\begin{align} \label{standard_symplectic_form}
	\sigma = \sum_{j=1}^d dp_j \wedge dq^j\,.
\end{align}
Since $\sigma^{\wedge d} \neq 0$, the manifold $X$ is orientable, and we orient $X$ so that every local canonical coordinate system $(q,p)$ is positively oriented. The volume form on $X$ for the metric $g \oplus g^{-1}$ is denoted by $d\textrm{vol}_X$ and given locally by
\begin{align}
	d\textrm{vol}_X = dq^1 \wedge \cdots \wedge dq^d \wedge dp_1 \wedge \cdots \wedge dp_d\,.
\end{align}
The volume form $d\textrm{vol}_X$ is related to $\sigma$ by
\begin{align}
	d \textrm{vol}_X = \frac{1}{d!}(-1)^\frac{d(d+1)}{2} \sigma^{\wedge d}\,.
\end{align}
In particular, if $H \in C^\infty(X; \R)$ and $\mathcal{Y}$ is the Hamilton vector field of $H$ with respect to the symplectic form $\sigma$\,, i.e. $\mathcal{Y}$ is the unique smooth vector field on $X$ such that $\iota_{\mathcal{Y}} \sigma = -dH$, then the flow $\Phi^t = \exp{(t\mathcal{Y})}$ on $X$ generated by $\mathcal{Y}$ preserves $d\textrm{vol}_X$\,. In this text, we will be primarily concerned with the situation in which $H$ is the kinetic energy
\begin{align} \label{kinetic_energy}
	H(q,p) = \frac{1}{2} \abs{p}^2_{q} = \frac{1}{2} g^{jk}(q)p_j p_k\,.
\end{align}
In this case, the Hamilton vector field $\mathcal{Y}$ of $H$ is given locally by
\begin{align}
	\mathcal{Y} = g^{jk}(q) p_j e_k\,,
\end{align}
where $e_k$ is as in (\ref{definition_e_j}), and the projections of the integral curves of $\mathcal{Y}$ to $Q$ by $\pi_X$ are precisely the smooth geodesic curves of the metric $g$\,.
We will use also the metric-dependent Japanese bracket
	\begin{align}
		\langle p \rangle_q = (1 + \abs{p}^2_{q})^{1/2}=(1+g^{ij}(q)p_i p_j)^{1/2}\,,
	\end{align}
        while the notation
	\begin{align}
          \langle p \rangle = (1 + \abs{p}^2)^{1/2}=(1+\delta^{ij}p_i p_j)^{1/2}(1+\sum_{i=1}^{d}p_{i}^{2})^{1/2}\,,
        \end{align}
will be used for the euclidean version.\\
Let $E \xrightarrow{\pi_E} Q$ be a smooth complex vector bundle over $Q$ of complex dimension $N$ that is equipped with an affine connection $\nabla^E$ and a Hermitian metric $g^E$\,. Let $\mathcal{E} := \pi_X^* E \xrightarrow{\pi_{\mathcal{E}}} X$ denote the pullback bundle of $E \xrightarrow{\pi_E} Q$ by the map $\pi_X: X \rightarrow Q$\,. Locally, smooth sections $u$ of $\mathcal{E} \xrightarrow{\pi_{\mathcal{E}}} X$ have the form
\begin{align} \label{local_form_of_section}
	u(x) = \sum_{\ell = 1}^{N} u_\ell(x) f^\ell(q), \ \ x = (q,p) \in X\,, \ \ 
\end{align}
where $\left(f^1, \ldots, f^{N}\right)$ is a smooth local frame for $E \xrightarrow{\pi_E} Q$ and $u_1, \ldots, u_N$ are smooth locally defined complex-valued functions on $X$\,. We equip $\mathcal{E} \xrightarrow{\pi_\mathcal{E}} X$ with the pullback connection $\nabla^{\mathcal{E}}$\,, which is defined using the decomposition (\ref{decomposition_of_tangent_bundle}) of $TX$ by the relations
\begin{align}
\begin{split}
	\left(\nabla^{\mathcal{E}}_{e_j} u \right)(x) &= \sum_{\ell=1}^{N} \left[(e_j u_\ell)(x) f^\ell(q) + u_\ell(x) \nabla^E_{\frac{\p}{\p q^j}} f^\ell(q)\right]\,, \\
	\left(\nabla^{\mathcal{E}}_{\widehat{e}^j} u \right)(x) &= \sum_{\ell=1}^{N} (\widehat{e}^j u_\ell)(x) f^\ell(q)=\sum_{\ell=1}^{N}(\partial_{p_{j}}u_{\ell})(x)f^{\ell}(q) \,, \quad x = (q,p) \in X, \ \ 1 \le j \le d\,,
\end{split}
\end{align}
whenever $u \in C^\infty(X; \mathcal{E})$ is of the form (\ref{local_form_of_section}). Because the connection $\nabla^{\mathcal{E}}_{T_{V}}$ is trivial for $T_{V}\in TX^{V}$\,, the covariant derivative with respect to a vertical vector field  will be identified with the associated  scalar first order differential operator\,. Accordingly the vertical Laplacian and the vertical harmonic oscillator, written locally as,
\begin{eqnarray}
  \label{vertical_laplacian}
  \Delta_p &=& \sum_{1 \le j, k \le d} \frac{1}{2} g_{jk}(q) \nabla^{\mathcal{E}}_{\widehat{e}^j} \nabla^{\mathcal{E}}_{\widehat{e}^k}=\sum_{1\leq j,k\le d} \frac{1}{2} g_{jk}(q) \partial_{p_{j}}\partial_{p_{k}}
  \\
  \label{scalar_vertical_harmonic_oscillator}
	\mathcal{O} &=& -\frac{1}{2} \Delta_p + \frac{1}{2} \abs{p}_{q}^2.
\end{eqnarray}
are globally defined operators, which happen to be scalar differential operators in the sense that in any local frame $(f^{1},\ldots,f^{N})$ of $E\stackrel{\pi_{E}}{\to}Q$\,,
$$
\Delta_{p}(\sum_{\ell=1}^{N}u_{\ell}(x)f^{\ell}(q))=\sum_{\ell=1}^{N}(\Delta_{p}u_{\ell})(x)f^{\ell}(q)\quad\text{and}\quad
\mathcal{O}(\sum_{\ell=1}^{N}u_{\ell}(x)f^{\ell}(q))=\sum_{\ell=1}^{N}(\mathcal{O}u_{\ell})(x)f^{\ell}(q)\,.
$$
We also equip the bundle $\mathcal{E}$ with the pulled back Hermitian metric $g^{\mathcal{E}}$ defined by
\begin{align}
	g^{\mathcal{E}}(u, u') = \sum_{\ell_1, \ell_2} \overline{u_{\ell_1}(x)} u'_{\ell_2}(x) g^E(f^{\ell_1}(q), f^{\ell_2}(q)), \ \ x = (q,p) \in X\,,
\end{align}
where $u = \sum u_\ell(x) f^\ell(q)$ and $u' = \sum u'_{\ell}(x) f^{\ell}(q)$\,. Using the Hermitian metric $g^{\mathcal{E}}$ on $\mathcal{E}$ and the volume form $d \textrm{vol}_X$ on $X$\,, we may introduce the Hilbert space $L^2(X; \mathcal{E})$ of square integrable sections of $\mathcal{E}$ as follows. The space $L^{2}(X;\mathcal{E})$ is the set of measurable sections $u$ such that
\begin{align}
	\langle u\,,\,  u\rangle_{L^{2}(X;\mathcal{E})} = \int_X g^{\mathcal{E}}_{x}\left(u(x), u(x)\right) \, d\textrm{vol}_X(x)<+\infty\,,
\end{align}
and it is a Hilbert space for the scalar product
\begin{align}
	\langle u_{1}\,,\,  u_{2}\rangle_{L^{2}(X;\mathcal{E})} = \int_X g^{\mathcal{E}}_{x}\left(u_{1}(x), u_{2}(x)\right) \, d\textrm{vol}_X(x)\,,
\end{align}
in which  $\mathcal{C}^{\infty}_{0}(X;\mathcal{E})$ is dense.\\
By recalling $d\mathrm{vol}_{X}=dqdp$\,, the operator $\mathcal{O}$ is clearly self-adjoint with its maximal domain $D(\mathcal{O})=\left\{u\in L^{2}(X,\mathcal{E})\,, \mathcal{O}u\in L^{2}(X;\mathcal{E})\right\}$\,, in which $\mathcal{C}^{\infty}_{0}(X;\mathcal{E})$ is dense with the graph norm. It is also bounded from below by $\frac{d}{2}$ and $\sqrt{\mathcal{O}}$ is well defined.\\
Let us now introduce some Sobolev type spaces, taking into account the different homogeneities of $e_{i}$\,, $p_{i}$ and $\partial_{p_{i}}$\,.
\begin{definition}
\label{de:tW}
	For $k \in \N$ and $u$ a sufficiently regular section of $\mathcal{E}$, we define
	\begin{align}
          \norm{u}_{\tilde{\mathcal{W}}^{k}}=
          \sup_{\substack{N_{1}+\frac{N_{2}+N_{3}}{2}\leq k\\
          (T_{1}^{H},\ldots, T_{N_{1}}^{H})\in \mathcal{T}_{N_{1},k}^{H}\\
          (T_{1}^{H},\ldots, T_{N_{2}}^{V})\in \mathcal{T}_{N_{2},k}^{V} \\
          }}
          \norm{\langle p \rangle_{q}^{N_3} \nabla^{\mathcal{E}}_{T_{1}^{H}}\ldots \nabla^{\mathcal{E}}_{T_{N_{1}}^{H}}
          \nabla^{\mathcal{E}}_{T_{1}^{V}}\ldots \nabla^{\mathcal{E}}_{T_{N_{2}}^{V}}
          u}_{L^2(X; \mathcal{E})}\,,
	\end{align}
	and we take
	\begin{align}
		\tilde{\cal W}^k(X; \mathcal{E}) = \overline{C^\infty_0(X; \mathcal{E})}^{\norm{\cdot}_{\tilde{\cal W}^k}}.
	\end{align}
	The space $\tilde{\cal W}^s(X; \mathcal{E})$ is then defined for all $s \ge 0$ by interpolation and for $s<0$ by setting $\tilde{\cal W}^s(X;\mathcal{E}) = (\tilde{\cal W}^{-s}(X;\mathcal{E}))^*$\,.\\
        Finally the space $\tilde{\mathcal{W}}^{1,s}(X;\mathcal{E})$ is the space
$$
\tilde{\mathcal{W}}^{1,s}(X;\mathcal{E})=\left\{u\in \mathcal{W}^{s}(X;\mathcal{E})\,, \quad \sqrt{\mathcal{O}}u\in \mathcal{W}^{s}(X;\mathcal{E})\right\}
$$
endowed with the norm $\|\sqrt{\mathcal{O}}u\|_{\tilde{\mathcal{W}}^{s}}$\,.
\end{definition}
\begin{remark}
  \label{re:compLeb}
The supremum norm over the families of vector fields ensure the geometrical global meaning of the functional spaces $\tilde{\mathcal{W}}^k(X;\mathcal{E})$ and therefore of $\tilde{\mathcal{W}}^{s}(X;\mathcal{E})$ and $\tilde{\mathcal{W}}^{1,s}(X;\mathcal{E})$\,. It is not the most convenient definition and in particular their Hilbert nature is not obvious here. A more convenient presentation in terms of local coordinates and then the use of a specific pseudo-differential calculus presented in Appendix \ref{sec:pseudodiff}  is detailed in Section~\ref{sec:spacesWs}.\\
Although those spaces are modelled on Lebeau's spaces in \cite{Leb1}\cite{Leb2} they slightly differ, e.g. the case $s=1$ allows $N_2=2$ with two vertical derivatives bounded in $L^2$\,.
\end{remark}

We shall define geometric Kramers-Fokker-Planck operators as second order differential operators acting on sections of the pullback bundle $\mathcal{E}$ that depend on a parameter $b \in (0,\infty)$. Our definition will be slightly more general than that of Lebeau \cite{Leb1}\cite{Leb2} but in the same spirit.
\begin{definition}[Geometric Kramers-Fokker-Planck Operator]
\label{de:GKFPO}
	A Geometric Kramers-Fokker-Planck (abbreviated as GKFP) operator  is a $b$-dependent operator $P_{\pm,b}+M(b)$ acting on $C^\infty_{0}(X; \mathcal{E})$ or $\mathcal{S}(X;\mathcal{E})$ with
	\begin{eqnarray*}
          &&P_{\pm,b} = \frac{1}{b^2} \mathcal{O} \pm \frac{1}{b} \nabla^{\mathcal{E}}_\mathcal{Y}\,,\\
          \text{and}&& \forall s\in \mathbb{R}\,,\quad M(b)\in \mathcal{L}(\tilde{\mathcal{W}}^{1,s}(X;\mathcal{E});\tilde{\mathcal{W}}^{s}(X;\mathcal{E}))
	\end{eqnarray*}
	where $\mathcal{Y}$ is Hamilton vector field of the kinetic energy (\ref{kinetic_energy}) with respect to the symplectic form $\sigma$ and $\mathcal{O}$ is the vertical harmonic oscillator.
      \end{definition}
Actually the term $M(b)$ will appear as a perturbative term for which the norm estimates of $\|M(b)\|_{\mathcal{L}(\tilde{\mathcal{W}}^{1,s};\tilde{\mathcal{W}}^{s})}$ with respect to the parameter $b>0$ can be discussed afterwards. Actually  all the analysis focuses on the case $M(b)=0$\,.
The H{\"o}rmander Theorem about sum of squares and type II operators (see \cite{Hor67}) provides the local hypoelliptic nature of the geometric Kramers-Fokker-Planck operator $P_{\pm, b}$ for every $b \in (0,\infty)$\,. By following the method of Lebeau in \cite{Leb1}\cite{Leb2} our aim is to provide accurate  subelliptic estimates with the best regularity exponents, that will also account for the behavior of $P_{\pm,b}$ as either $b \rightarrow 0^+$ (the large friction limit) or $b \rightarrow \infty$ (the low friction limit).

\subsection{Statement of the Main Result}

Remember the following notion.
\begin{definition}
\label{de:essmaxaccr} In a Hilbert space $\mathfrak{H}$ and densely  defined operator $A:D\mapsto \mathfrak{H}$ is called essentially maximal accretive, if it is accretive, therefore closable, and if it admits a unique maximal accretive extension equal to its closure $\overline{A}:D(\overline{A})\mapsto \mathfrak{H}$ with $D(\overline{A})=\overline{D}^{\|~\|_{A}}$\,, $\|u\|^{2}_{A}=\|u\|_{\mathfrak{H}}^{2}+\|Au\|_{\mathfrak{H}}^{2}$\,.
\end{definition}

The main result of this paper is the following subelliptic estimate for geometric Kramers-Fokker-Planck operators.\\

\begin{theorem}
  \label{th:mainOne}
  Let $P_{\pm,b}=\frac{1}{b^{2}}\mathcal{O}\pm \frac{1}{b}\nabla^{\mathcal{E}}_{\mathcal{Y}}$\,.
  There exists a constant $C_{g}\geq 1$
  determined by the geometric data $(g,E,g^{E},\nabla^{E})$ such that the operator $\frac{\kappa_{b}}{b^{2}}+P_{\pm,b}$ is essentially maximal accretive on $\mathcal{C}^{\infty}_{0}(X;\mathcal{E})$ (or on $\mathcal{S}(X;\mathcal{E})$), when $\kappa_{b}\geq C_{g}(1+b^{5})$\,.
  If $\overline{P}_{\pm,b}$ denotes its closure, the inequalities
  \begin{equation}
    \label{eq:IPPineqTh}
\mathrm{Re}~\langle u\,,\, (\frac{\kappa_{b}}{b^{2}}+\overline{P}_{\pm,b})u\rangle_{L^{2}}\geq \frac{1}{4b^{2}}\left[\|u\|_{\tilde{\mathcal{W}}^{1,0}}^{2}+\kappa_{b}\|u\|_{L^{2}}^{2}\right]\,.
\end{equation}
and
   \begin{align} \nonumber
    \left\| \left(\overline{P}_{\pm,b} - \frac{i\lambda}{b}\right)u\right\|_{L^{2}}+\frac{1}{b^2}\norm{u}_{L^2} 
    \geq  \frac{1}{C_{g}(1+b)^7}
    \Bigg(\left\| \frac{\mathcal{O}}{b^{2}}u \right\|_{L^{2}} + &
    \left\|
      \frac{1}{b}\left( \pm\nabla^{\mathcal{E}}_{\mathcal{Y}} - i\lambda \right)u
    \right\|_{L^{2}}
    \\
    \label{eq:principalInequalityOne}
    &+ \frac{1}{b^{4/3}}\left[||u||_{\tilde{\cal W}^{\frac{2}{3}}} + \norm{\left(\frac{\abs{\lambda}}{\langle p \rangle_q} \right)^{2/3} u}_{L^2}\right]
    \Bigg)
\end{align}
hold for  every $u\in D(\overline{P}_{\pm,b})$ and every $(\lambda,b)\in \R\times (0,+\infty)$\,.
\end{theorem}
The proof of the Theorem~\ref{th:mainOne} can be found in Section~\ref{sec:finalproof}. Other results involving the realizations of $P_{\pm,b}$ in the Sobolev spaces $\mathcal{W}^{s}(X;\mathcal{E})$ or other perturbative results will be deduced as corollaries in Section~\ref{sec:conseqOpt}.

\subsection{Outline of the article}
In Section~\ref{sec:redscal} an elementary integration by part provides the first a priori lower bound for $\mathrm{Re}~\langle u, P_{\pm,b}u\rangle$\,. This implies that the analysis of $P_{\pm,b}$ can be localized in the $q$-variable via partition of unity. Comparison of different connections can be done locally which reduces the problem to purely scalar operators and then the essential maximal accretivity on $\mathcal{C}^{\infty}_{0}(X;\mathcal{E})$ or $\mathcal{S}(X;\mathcal{E})$ is proved.\\
The Sobolev spaces 
$\tilde{\mathcal{W}}^{s_1,s_2}(X;\mathcal{E})$
are then studied in Section~{\ref{sec:spacesWs}}. After a localization via partition of unity, the Definition~\ref{de:tW} is characterized in term of the suitable pseudodifferential calculus, local in the $q$-variable but global in the $p$-variable. The construction of this pseudodifferential calculus relying on standard techniques, nevertheless to be adapted, is detailed in Appendix~\ref{sec:pseudodiff}. Section~\ref{sec:spacesWs} ends with a very convenient global characterization of these Sobolev spaces $\tilde{\mathcal{W}}^{s_1,s_2}(X;\mathcal{E})$ in terms of the functional calculus of two geometrically defined commuting self-adjoint operators, namely $\mathcal{O}$ and $W^2=C-\Delta_H +C\mathcal{O}^2$ where $\Delta_H$ is a scalar horizontal Laplacian.\\
Section~\ref{sec:localization} is devoted to the localization process. A dyadic partition of unity in the $p$-variable is used and then once the parameter $2^j$  of the dyadic partition if fixed, a grid partition in the $q$-variable with the spacing $2^{-j}$ is introduced. Near point of the grid, a Taylor expansion of the metric in normal coordinates expresses the scalar GKFP operator as the euclidean one with a $(2^j,b)$-dependent error term.\\
In Section~\ref{sec:EuclCase} the maximal subelliptic estimate, where the exponent $2/3$ is obtained via the model problem of the one dimensional complex Airy operator, is recalled. Actually the uniform estimates with respect to the parameters $(b,2^j,\lambda)\in (0,+\infty)^2\times R$ are carefully checked.\\
Section~\ref{sec:finalproof} gathers the local comparison of the scalar GKFP operator with the euclidean model of Section~\ref{sec:localization} with the uniform estimates of the euclidean model. Error terms due to the two partition of unities (dyadic in $p$ and $2^j$-dependent grid in $q$) happen to be controlled by the lower bounds of the parameter dependent euclidean model. While doing this, intermediate parameters of the grid partition must be tuned carefully according to the two regimes $2^j>>1$ or $2^j\leq C$\,.\\
Section~\ref{sec:conseqOpt} completes Theorem~\ref{th:mainOne} with various consequences or precisions. In particular the $b$-dependence of the perturbation $M(b)$ in Definition~\ref{de:GKFPO}, which allows the generalization of Theorem~\ref{th:mainOne} is specified. A corollary is the $\tilde{W}^{0,s}(X;\mathcal{E})$ version of Theorem~\ref{th:mainOne}, where a simple conjugation reduces the perturbed operator in $L^2(X,dqdp;\mathcal{E})$.\\
The Appendices gathers known material. A rather long paragraph is about the global pseudodifferential calculus on the total space $X=T^*Q$. As already said, it follows the general approach but things have to be specified in particular for proving, via the Helffer-Sjöstrand formula, that functions of self-adjoint globally elliptic operators in this class are pseudodifferential operators with a good asymptotic expansion.

\section{Reduction to a scalar operator}
\label{sec:redscal}
Here we write first a priori estimates for Geometric Kramers-Fokker-Planck (GKFP) operators coming from a simple integration by parts. The essential maximal accretivity of $\frac{\kappa_{b}}{b^{2}}+P_{\pm,b}$ is checked and all the perturbative terms coming from a partition of unity in the $q$-variable will be shown to be of lower order with a uniform control of the constants w.r.t $b$\,. Similarly a local change of connection happens to be of lower order and this reduces the problem to local scalar GKFP operators.
\subsection{Integration by parts and maximal accretivity}
\begin{proposition}\label{pr:IppIneqWithRealPart}
  \label{pr:IPPineq}
  Let $P_{\pm, b}=\frac{1}{b^{2}}\mathcal{O}\pm \frac{1}{b}\nabla^{\mathcal{E}}_{\mathcal{Y}}$\,. There exists $C_{0}\geq 1$\,, determined by the geometric data
  $(g,E,\nabla^{E},g^{E})$\,, such that for all $b>0$\,, $\lambda\in \R$  and for $\kappa_{b}\geq C_{0}(1+b^{2})$ the inequality
  \begin{eqnarray}
    \label{eq:IPPineq}
&&\mathrm{Re}~ \langle u\,,\, (\frac{\kappa_{b}}{b^{2}}+P_{\pm, b}-i\lambda) u\rangle_{L^{2}(X;\mathcal{E})}\geq \frac{1}{4b^{2}}
\left[
  \|u\|_{\tilde{\mathcal{W}}^{1,0}(X;\mathcal{E})}^{2}+\kappa_{b}\|u\|_{L^{2}(X;\mathcal{E})}^{2}\right]\\
\text{and}&&
\label{eq:secondIPPineq}
    \left\| (\frac{\kappa_{b}}{b^{2}}+P_{\pm, b}-i\lambda) u \right\|_{L^{2}(X;\mathcal{E})}^{2}\geq \frac{\kappa_{b}}{16b^{4}}
\left[
  \norm{u}_{\mathcal{W}^{1,0}(X;\mathcal{E})}^{2}
  +\kappa_{b}\|u\|_{L^{2}(X;\mathcal{E})}^{2}\right]
  \end{eqnarray}
  holds for all $ u\in \mathcal{C}^{\infty}_{0}(X;\mathcal{E})$\,.
\end{proposition}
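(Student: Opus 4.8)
The strategy is to derive \eqref{eq:IPPineq} from a single integration by parts and then to obtain \eqref{eq:secondIPPineq} from it by a soft Cauchy--Schwarz argument; everything is done on $u\in\mathcal C^\infty_0(X;\mathcal E)$ (or $\mathcal S(X;\mathcal E)$), where integrations by parts carry no boundary term. Test $\tfrac{\kappa_b}{b^2}+P_{\pm,b}-i\lambda$ against $u$ and take real parts. The term $-i\lambda$ drops since $\Real\langle u\,,\,-i\lambda u\rangle_{L^2}=0$. The vertical oscillator $\mathcal O$ is self-adjoint and $\ge 0$ on $L^2(X;\mathcal E)$, with $\langle u\,,\,\mathcal O u\rangle_{L^2}=\|\sqrt{\mathcal O}u\|_{L^2}^2=\|u\|_{\tilde{\mathcal W}^{1,0}}^2$, and since $\mathcal O=-\tfrac12\Delta_p+\tfrac12|p|_q^2$ with $-\Delta_p\ge 0$ one records the elementary bound $\langle u\,,\,|p|_q^2\,u\rangle_{L^2}\le 2\langle u\,,\,\mathcal O u\rangle_{L^2}$. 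Thus $\Real\langle u\,,\,(\tfrac{\kappa_b}{b^2}+P_{\pm,b})u\rangle_{L^2}=\tfrac1{b^2}\|u\|_{\tilde{\mathcal W}^{1,0}}^2+\tfrac{\kappa_b}{b^2}\|u\|_{L^2}^2\pm\tfrac1b\Real\langle u\,,\,\nabla^{\mathcal E}_{\mathcal Y}u\rangle_{L^2}$, and everything comes down to the transport term.

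To handle it, I would apply the Leibniz rule for the pulled-back metric, $\mathcal Y\big[g^{\mathcal E}(u,u)\big]=(\nabla^{\mathcal E}_{\mathcal Y}g^{\mathcal E})(u,u)+2\,\Real\,g^{\mathcal E}(u,\nabla^{\mathcal E}_{\mathcal Y}u)$, and integrate against $d\mathrm{vol}_X$. Since $\mathcal Y$ is the Hamilton vector field of the kinetic energy, its flow preserves $d\mathrm{vol}_X=dq\,dp$, so $\int_X\mathcal Y\big[g^{\mathcal E}(u,u)\big]\,d\mathrm{vol}_X=0$ by Stokes, leaving $2\,\Real\langle u\,,\,\nabla^{\mathcal E}_{\mathcal Y}u\rangle_{L^2}=-\int_X(\nabla^{\mathcal E}_{\mathcal Y}g^{\mathcal E})_x(u(x),u(x))\,d\mathrm{vol}_X(x)$. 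Because $\mathcal Y_{(q,p)}=g^{jk}(q)p_j\,e_k$ is horizontal and along horizontal directions $\nabla^{\mathcal E}=\pi_X^*\nabla^E$, the tensor $(\nabla^{\mathcal E}_{\mathcal Y}g^{\mathcal E})_{(q,p)}$ is the pullback of $(\nabla^E_{d\pi_X\mathcal Y}g^E)_q$ with $|d\pi_X\mathcal Y(q,p)|_{g,q}=|p|_q$; it is therefore a fibrewise-multiplication (zeroth order) term bounded by $\beta_E|p|_q$, where $\beta_E:=\sup_Q\|\nabla^E g^E\|$ depends only on $(g,E,\nabla^E,g^E)$. Hence $\big|\Real\langle u\,,\,\nabla^{\mathcal E}_{\mathcal Y}u\rangle_{L^2}\big|\le \tfrac{\beta_E}{2}\langle u\,,\,|p|_q\,u\rangle_{L^2}$. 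This is the one delicate point of the proof: one genuinely uses both the volume-preserving character of the Hamilton flow (which annihilates the ``honest transport'' part) and the horizontal identification of $\nabla^{\mathcal E}$ with $\pi_X^*\nabla^E$, so that the surviving symmetric part of $\nabla^{\mathcal E}_{\mathcal Y}$ is merely multiplication by an $O(|p|_q)$ matrix — without the Hermitian/horizontal structure one might fear an unbounded symmetric contribution.

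Now one absorbs. By the weighted Young inequality $\tfrac{\beta_E}{2b}|p|_q\le \tfrac{3}{8b^2}|p|_q^2+\tfrac{\beta_E^2}{6}$, so $\tfrac{\beta_E}{2b}\langle u\,,\,|p|_q u\rangle_{L^2}\le\tfrac{3}{4b^2}\langle u\,,\,\mathcal O u\rangle_{L^2}+\tfrac{\beta_E^2}{6}\|u\|_{L^2}^2$, and combining with the expression above gives $\Real\langle u\,,\,(\tfrac{\kappa_b}{b^2}+P_{\pm,b}-i\lambda)u\rangle_{L^2}\ge \tfrac{1}{4b^2}\|u\|_{\tilde{\mathcal W}^{1,0}}^2+\big(\tfrac{\kappa_b}{b^2}-\tfrac{\beta_E^2}{6}\big)\|u\|_{L^2}^2$. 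Taking $C_0:=\max(1,\tfrac{2\beta_E^2}{9})$, the hypothesis $\kappa_b\ge C_0(1+b^2)$ forces $\tfrac{\kappa_b}{b^2}-\tfrac{\beta_E^2}{6}\ge\tfrac{\kappa_b}{4b^2}$, which is exactly \eqref{eq:IPPineq} (the ``$+1$'' in $C_0(1+b^2)$ is not needed for this estimate alone but is kept for the subsequent uses in the section).

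Finally, \eqref{eq:secondIPPineq} follows from \eqref{eq:IPPineq} by Cauchy--Schwarz: set $A=\tfrac{\kappa_b}{b^2}+P_{\pm,b}-i\lambda$ and $R=\Real\langle u\,,\,Au\rangle_{L^2}$. From \eqref{eq:IPPineq}, $\|u\|_{\tilde{\mathcal W}^{1,0}}^2+\kappa_b\|u\|_{L^2}^2\le 4b^2 R$; in particular $\kappa_b\|u\|_{L^2}^2\le 4b^2R\le 4b^2\|u\|_{L^2}\|Au\|_{L^2}$, whence $\|u\|_{L^2}\le\tfrac{4b^2}{\kappa_b}\|Au\|_{L^2}$ and then $R\le\|u\|_{L^2}\|Au\|_{L^2}\le\tfrac{4b^2}{\kappa_b}\|Au\|_{L^2}^2$; plugging this back yields $\|u\|_{\tilde{\mathcal W}^{1,0}}^2+\kappa_b\|u\|_{L^2}^2\le\tfrac{16b^4}{\kappa_b}\|Au\|_{L^2}^2$, i.e. \eqref{eq:secondIPPineq}. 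The rest is bookkeeping of constants.
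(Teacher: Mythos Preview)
Your proof is correct and follows essentially the same strategy as the paper: identify the symmetric part of $\nabla^{\mathcal E}_{\mathcal Y}$ as a zeroth-order multiplication by an $O(|p|_q)$ endomorphism (coming from the defect of unitarity of $\nabla^E$ with respect to $g^E$), absorb it by Young's inequality into $\tfrac{1}{b^2}\mathcal O$, and then deduce \eqref{eq:secondIPPineq} from \eqref{eq:IPPineq} by the same Cauchy--Schwarz bootstrap. The only cosmetic difference is that the paper first localizes via a quadratic partition of unity $\sum_j\varrho_j^2\equiv 1$ on $Q$ and computes the formal adjoint $(\nabla^{\mathcal E}_{\mathcal Y})^*=-\nabla^{\mathcal E}_{\mathcal Y}-a^i(q)p_i$ in coordinates (equation~\eqref{eq:formadjY}), whereas you work globally with the Leibniz rule for $\nabla^{\mathcal E}g^{\mathcal E}$ and the volume-preserving property of $\mathcal Y$; the tensor $\nabla^E g^E$ you invoke is exactly the $\omega(\nabla^E,g^E)$ appearing in the paper's adjoint formula.
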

Before proving this result let us specify the formal adjoint of $\nabla_{\mathcal{Y}}^{\mathcal{E}}$\,. Start with the vector bundle $\pi_{E}:E\to Q$ and the data $(\nabla^{E},g^{E})$ and consider the dual connection with respect to $g^{E}$ given by
$$
Xg^{E}(s,s')=g^{E}(s, \nabla^{E}_{X}s')+g^{E}(\nabla_{X}^{E,*}s, s')\,.
$$
The unitary connection
$$
\nabla^{E,u}=\frac{\nabla^{E}+\nabla^{E,*}}{2}
$$
differs from $\nabla^{E}$ by
$$
\nabla^{E,u}-\nabla^{E}=\frac{1}{2}\omega(\nabla^{E},g^{E})\quad\in \mathcal{C}^{\infty}(Q;T^{*}Q\otimes\mathrm{End}(E))
$$
With the pull back we obtain with $\mathcal{Y}=g^{ij}(q)p_{i}e_{j}$ written in a local canonical coordinates system
$$
\nabla^{\mathcal{E},u}_{\mathcal{Y}}-\nabla^{\mathcal{E}}_{\mathcal{Y}}=\frac{1}{2}
g^{ij}(q)p_{i}\omega(\nabla^{E},g^{E})(\frac{\partial}{\partial q^{j}})=a^{i}(q)p_{i}\quad,\quad a^{i}(q)\in \mathrm{End}(E_{q})
$$
We also recall the formula
$$
\forall v,w\in \mathcal{C}^{\infty}_{0}(X;\mathcal{E})\,,\forall T\in \mathcal{C}^{\infty}(X;TX)\,,\,
\int_{X}g^{\mathcal{E}}(v,\nabla^{\mathcal{E},u}_{T}w)~d\textrm{vol}_X=-\int_{X}g^{\mathcal{E}}(\nabla_{T}^{\mathcal{E},u}v,w)
+\mathrm{div}(T)g^{\mathcal{E}}(v,w)~d\textrm{vol}_{X}
$$
while here $d\textrm{vol}_{X}=dqdp$ and $\mathrm{div}\,\mathcal{Y}=0$\,.\\
We find that the formal adjoint of $\nabla_{\mathcal{Y}}^{\mathcal{E}}$ is nothing but
\begin{equation}
  \label{eq:formadjY}
(\nabla^{\mathcal{E}}_{\mathcal{Y}})^{*}=-\nabla^{\mathcal{E}}_{\mathcal{Y}}-g^{ij}(q)p_{i}\omega(\nabla^{E},g^{E})(\frac{\partial}{\partial q^{j}})
=-\nabla^{\mathcal{E}}_{\mathcal{Y}}-a^{i}(q)p_{i}\,.
\end{equation}
\begin{proof}[Proof of Proposition~\ref{pr:IPPineq}]
  Let $\mathop{\cup}_{j=1}^{J}\Omega_{j}= Q$ be a finite open chart covering of $Q$ and let $\sum_{j=1}^{J}\varrho_{j}(q)^{2}\equiv 1$ be a subordinate quadratic partition of unity, $\varrho_{j}\in \mathcal{C}^{\infty}_{0}(\Omega_{j};[0,1])$\,.  Because $\nabla^{\mathcal{E}}_{\mathcal{Y}}$\,, $P_{\pm,b}$\,, $\mathcal{O}$\,, are at most first order differential operators in $q$ we get
  \begin{eqnarray*}
&& \langle u\,,\, (\frac{\kappa_{b}}{b^{2}}+P_{\pm, b}-i\lambda)u\rangle_{L^{2}(X;\mathcal{E})}
=\sum_{j=1}^{J}\langle u_{j}\,,\, (\frac{\kappa_{b}}{b^{2}}+P_{\pm, b}-i\lambda)u_{j}\rangle_{L^{2}(X;\mathcal{E})}
    \\
&&     \underbrace{\|\sqrt{\mathcal{O}}u\|_{L^{2}(X;\mathcal{E})}^{2}}_{=\|u\|^{2}_{\tilde{\mathcal{W}}^{1,0}(X;\mathcal{E})}}   
   + \kappa_{b}\|u\|_{L^{2}(X;\mathcal{E})}^{2}
   =\langle u\,,\, \mathcal{O} u\rangle_{\mathcal{L}^{2}(X;\mathcal{E})}+\kappa_{b}\|u\|_{L^{2}(X,\mathcal{E})}^2
    =\sum_{j=1}^{J}\underbrace{\|\sqrt{\mathcal{O}}u_{j}\|_{L^{2}(X;\mathcal{E})}^{2}}_{=\|u_{j}\|^{2}_{\tilde{\mathcal{W}}^{1,0}(X;\mathcal{E})}}+ \kappa_{b} \|u_{j}\|_{L^{2}(X;\mathcal{E})}^{2}
  \end{eqnarray*}
  for all $u\in \mathcal{C}^{\infty}_{0}(X;\mathcal{E})$\,, by setting $u_{j}=\varrho_{j}u\in \mathcal{C}^{\infty}_{0}(T^{*}\Omega_{j};\mathcal{E})$\,.\\
  With canonical local coordinates $(q,p)$ in $T^{*}\Omega_{j}$\,, \eqref{eq:formadjY} implies
  \begin{align*}
    \mathrm{Re}~\langle u_{j}\,,\, (\frac{\kappa_{b}}{b^{2}}+P_{\pm,b}-i\lambda)u_{j}\rangle_{L^{2}(X;\mathcal{E})}
    &=
      \langle u_{j}\,,\, \frac{2\kappa_{b}-\Delta_{p}+|p|_{q}^{2}}{2b^{2}}u_{j}\rangle_{L^{2}(X;\mathcal{E})}
      \mp
      \frac{1}{b}\langle u_{j}\,,\, a^{i}(q)p_{i}u_{j}\rangle_{L^{2}(X;\mathcal{E})}
    \\
    &\geq \frac{1}{2b^{2}}\left[\|u_{j}\|_{\tilde{\mathcal{W}}^{1,0}(X;\mathcal{E})}^{2}+2\kappa_{b}\|u\|^{2}_{L^{2}(X;\mathcal{E})}\right]-\frac{C_{0}'}{b}\norm{u_{j}}_{L^{2}(X;\mathcal{E})}
      \|u_{j}\|_{\tilde{\mathcal{W}}^{1,0}(X;\mathcal{E})}
\\    &\hspace{-1cm}\geq
      \frac{1}{2b^{2}}\left[\|u_{j}\|_{\tilde{\mathcal{W}}^{1,0}(X;\mathcal{E})}^{2}+2\kappa_{b}\|u\|^{2}_{L^{2}(X;\mathcal{E})}\right]
      -\frac{1}{4b^{2}}\|u_{j}\|_{\tilde{\mathcal{W}}^{1,0}(X;\mathcal{E})}^{2} -2{C_{0}'}^{2}\|u_{j}\|_{L^{2}(X;\mathcal{E})}^{2}\,,
  \end{align*}
  for some $C_{0}'>0$ determined by the geometric data $(g,E,g^{E},\nabla^{E})$\,.
  With $\frac{\kappa_{b}}{2b^{2}}\geq C_{0}\frac{1+b^{2}}{2b^{2}}\geq \frac{C_{0}}{2}$\,, the first inequality \eqref{eq:IPPineq} is proved for $C_{0}\geq 4{C_{0}'}^{2}$\,.\\
  Using Cauchy-Schwarz inequality in the left hand side of \eqref{eq:IPPineq} yields
  $$ \| (\frac{\kappa_{b}}{b^{2}}+P_{\pm, b} - i\lambda) u \|_{L^{2}(X;\cal{E})} \|u\|_{L^{2}(X;\cal{E})} \geq \frac{1}{4b^{2}} 
  \left[
    \|u\|_{\tilde{\mathcal{W}}^{1,0}(X;\mathcal{E})}^{2} + \kappa_{b} \|u\|_{L^{2}(X;\cal{E})}^2
  \right] .$$
  We deduce at once $ \|(\frac{\kappa_{b}}{b^{2}}+P_{\pm, b} - i \lambda) u \|_{L^{2}(X;\cal{E})} \geq \frac{\kappa_{b}}{4b^{2}}\|u\|_{L^{2}(X,\cal{E})} $\,. The latter inequality multiplied by $\|(\frac{\kappa_{b}}{b^{2}}+P_{\pm, b} - i \lambda) u \|_{L^{2}(X;\cal{E})}$  yields \eqref{eq:secondIPPineq}.
\end{proof}

\begin{corollary} \label{corollary_ess_max_acc}
   Let $P_{\pm, b}=\frac{1}{b^{2}}\mathcal{O}\pm \frac{1}{b}\nabla^{\mathcal{E}}_{\mathcal{Y}}$ and let $C_{0}\geq 1$ be determined by the geometric data
   $(g,E,\nabla^{E},g^{E})$ according to Proposition~\ref{pr:IPPineq}\,. For $\kappa_{b}\geq C_{0}(1+b^{2})$ the operator $\frac{\kappa_{b}}{b^{2}}+P_{\pm,b}$
   is essentially maximal accretive on $\mathcal{C}^{\infty}_{0}(X;\mathcal{E})$ and therefore on $\mathcal{S}(X;\mathcal{E})$\,.
 \end{corollary}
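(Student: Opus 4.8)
The plan is to combine the a priori bounds of Proposition~\ref{pr:IPPineq} with hypoellipticity. Write $A:=\frac{\kappa_{b}}{b^{2}}+P_{\pm,b}$, viewed on $\mathcal{C}^{\infty}_{0}(X;\mathcal{E})$. Accretivity of $A$ is exactly \eqref{eq:IPPineq} with $\lambda=0$, so $A$ is closable and $\overline{A}$ is accretive; taking $\lambda=0$ in \eqref{eq:secondIPPineq} gives $\|\overline{A}u\|_{L^{2}}\geq\frac{\kappa_{b}}{4b^{2}}\|u\|_{L^{2}}$, hence $\overline{A}$ is injective with closed range and $\mathrm{Ran}(\overline{A})=\ker(A^{*})^{\perp}$. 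It therefore suffices to prove $\ker(A^{*})=\{0\}$: then $\overline{A}$ is bijective with bounded (and accretive) inverse, $I+\mu\overline{A}^{-1}$ is invertible for every $\mu>0$ (its numerical range lies in $\{\Real z\geq 1\}$ because $\overline{A}^{-1}$ is accretive), so $\mu+\overline{A}=\overline{A}\bigl(I+\mu\overline{A}^{-1}\bigr)$ is onto and $\overline{A}$ is maximal accretive by Lumer--Phillips. The passage from $\mathcal{C}^{\infty}_{0}$ to $\mathcal{S}$ will be immediate at the end.

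To show $\ker(A^{*})=\{0\}$ I would argue as follows. By \eqref{eq:formadjY} the formal adjoint of $A$ on $\mathcal{C}^{\infty}_{0}(X;\mathcal{E})$ is $A^{t}=\frac{\kappa_{b}}{b^{2}}+P_{\mp,b}\mp\frac{1}{b}a^{i}(q)p_{i}$, a GKFP operator plus a bounded zeroth order term, so \eqref{eq:IPPineq}--\eqref{eq:secondIPPineq} hold verbatim for $A^{t}$ (it has the same real part), and by H\"ormander's sum-of-squares/type-II theorem $A^{t}$ is hypoelliptic. If $v\in\ker(A^{*})$, then $\langle Au,v\rangle_{L^{2}}=0$ for all $u\in\mathcal{C}^{\infty}_{0}(X;\mathcal{E})$, i.e. $A^{t}v=0$ in $\mathcal{D}'$, whence $v\in\mathcal{C}^{\infty}(X;\mathcal{E})$. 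Now truncate with the \emph{geodesic-flow-invariant} cutoffs $\zeta_{R}(x):=\zeta(|p|^{2}_{q}/R^{2})$, with $\zeta\in\mathcal{C}^{\infty}_{0}([0,2);[0,1])$, $\zeta\equiv1$ on $[0,1]$: since $|p|^{2}_{q}=2H$ is a first integral of $\mathcal{Y}$ one has $\mathcal{Y}\zeta_{R}\equiv0$, so the drift commutes and $[A^{t},\zeta_{R}]=\frac{1}{b^{2}}[\mathcal{O},\zeta_{R}]$ is a \emph{purely vertical} first order operator supported in the shell $\{|p|_{q}\sim R\}$ with coefficients $O(R^{-1})$. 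Since $\zeta_{R}v\in\mathcal{C}^{\infty}_{0}(X;\mathcal{E})$ with $A^{t}(\zeta_{R}v)=\frac{1}{b^{2}}[\mathcal{O},\zeta_{R}]v$, feeding $\zeta_{R}v$ into \eqref{eq:secondIPPineq} for $A^{t}$ and using the oscillator confinement $\mathcal{O}\geq\tfrac{1}{2}|p|^{2}_{q}$, together with the fact that the right-hand side of \eqref{eq:secondIPPineq} controls exactly one vertical derivative of $\zeta_{R}v$ in $L^{2}$ (which is what appears in $[\mathcal{O},\zeta_{R}]v$), one reabsorbs the shell contribution and, letting $R\to\infty$, concludes $v=0$. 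Finally $\mathcal{C}^{\infty}_{0}(X;\mathcal{E})\subset\mathcal{S}(X;\mathcal{E})\subset D(\overline{A})$, and the same $\zeta_{R}$ show that $\mathcal{C}^{\infty}_{0}(X;\mathcal{E})$ is dense in $\mathcal{S}(X;\mathcal{E})$ for the graph norm of $A$ — here $[A,\zeta_{R}]v$ is trivially negligible since $v$ and $\nabla^{\mathcal{E}}_{\widehat{e}^{j}}v$ are Schwartz — so $A$ is also essentially maximal accretive on $\mathcal{S}(X;\mathcal{E})$.

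The hard part is the vanishing of $\ker(A^{*})$, equivalently that the minimal realization of $A^{t}$ on $\mathcal{C}^{\infty}_{0}$ is already maximal, and the subtlety is precisely that, $A^{t}$ being only subelliptic, a weak solution $v\in L^{2}$ of $A^{t}v=0$ carries no a priori decay in $p$, so the shell quantity $R^{-1}\|\nabla^{\mathcal{E}}_{\widehat{e}^{j}}v\|_{L^{2}(|p|_{q}\sim R)}$ in $[\mathcal{O},\zeta_{R}]v$ is not obviously small; the geodesic-invariant choice of $\zeta_{R}$ is exactly what removes the otherwise uncontrollable drift contribution, and the remaining vertical error must be absorbed by a careful dyadic iteration of \eqref{eq:secondIPPineq} exploiting the confining term $\mathcal{O}$. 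Should this bare cutoff argument prove delicate, an equivalent route is elliptic regularization: $A_{\varepsilon}:=A+\varepsilon(-\Delta_{H}+\mathcal{O}^{2})$ is globally elliptic, hence essentially maximal accretive on $\mathcal{C}^{\infty}_{0}(X;\mathcal{E})$ by standard self-adjoint-plus-relatively-bounded perturbation theory; one solves $A_{\varepsilon}u_{\varepsilon}=f$, obtains $\|u_{\varepsilon}\|_{L^{2}}\leq\frac{4b^{2}}{\kappa_{b}}\|f\|_{L^{2}}$ and $\varepsilon(-\Delta_{H}+\mathcal{O}^{2})u_{\varepsilon}\to0$ in $\mathcal{D}'$ from Proposition~\ref{pr:IPPineq}, and passes to the weak limit, the remaining issue being to place that limit in $D(\overline{A})$ rather than in the maximal domain — which again reduces to density of $\mathcal{C}^{\infty}_{0}$ and can be handled with the global pseudodifferential calculus of Appendix~\ref{sec:pseudodiff}.
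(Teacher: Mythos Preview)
Your approach is essentially the paper's: accretivity from \eqref{eq:IPPineq}, reduction to $\ker(A^{*})=\{0\}$, H\"ormander hypoellipticity to obtain smoothness of a putative $v\in L^{2}$ with $A^{t}v=0$, and then a cutoff argument with the \emph{geodesic-invariant} truncation $\chi(\varepsilon|p|_{q}^{2})$ (your $\zeta_{R}$), exploiting $\mathcal{Y}\chi(\varepsilon|p|_{q}^{2})\equiv 0$ so that only the vertical commutator $[\mathcal{O},\chi]$ survives. The passage to $\mathcal{S}(X;\mathcal{E})$ is handled the same way.

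Where you diverge from the paper is the endgame, and there you make life harder than necessary. You feed $\zeta_{R}v$ into the \emph{squared-norm} inequality \eqref{eq:secondIPPineq}, which forces you to estimate $\|[\mathcal{O},\zeta_{R}]v\|_{L^{2}}$ and hence $\|\nabla_{p}v\|_{L^{2}(|p|_{q}\sim R)}$~--- a quantity you rightly flag as not obviously controlled, leading you to propose a dyadic iteration or an elliptic-regularization detour. The paper instead uses the \emph{form} inequality \eqref{eq:IPPineq}: pairing $A^{t}u_{\varepsilon}=\tfrac{1}{b^{2}}[\mathcal{O},\chi](1-\tilde{\chi})u$ with $u_{\varepsilon}$ (where $\tilde{\chi}\equiv 1$ near $0$ and $\chi\equiv 1$ on $\mathrm{supp}\,\tilde{\chi}$, so the commutator is supported where $1-\tilde{\chi}=1$) and integrating the single $\partial_{p}$ in the commutator back onto $u_{\varepsilon}$. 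This yields
\[
\tfrac{1}{4b^{2}}\|u_{\varepsilon}\|_{\tilde{\mathcal{W}}^{1,0}}^{2}\leq C_{g,\chi}\,\|u_{\varepsilon}\|_{\tilde{\mathcal{W}}^{1,0}}\,\|(1-\tilde{\chi}(\varepsilon|p|_{q}^{2}))u\|_{L^{2}},
\]
so the vertical derivative lands on $u_{\varepsilon}$ (controlled by the left-hand side) and the only shell quantity needed is $\|(1-\tilde{\chi}(\varepsilon|p|_{q}^{2}))u\|_{L^{2}}\to 0$ by dominated convergence, since $u\in L^{2}$. No iteration, no regularization, no a priori decay of $\nabla_{p}v$ is required; your worry about the shell term dissolves once you switch from \eqref{eq:secondIPPineq} to \eqref{eq:IPPineq} and integrate by parts.
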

 \begin{proof}
   Proposition~\ref{pr:IPPineq} says that the operator $(\frac{\kappa_{b}}{b^{2}}+P_{\pm, b})$ and its formal adjoint $\frac{\kappa_{b}}{b^{2}}+P_{\pm,b}^{*}$ are  accretive on $\mathcal{C}^{\infty}_{0}(X;\mathcal{E})$ with the lower bound \eqref{eq:IPPineq}\,.\\
   It suffices to prove that the range $(\frac{\kappa_{b}}{b^{2}}+P_{\pm b})\mathcal{C}^{\infty}_{0}(X;\mathcal{E})$ is dense in $L^{2}(X;\mathcal{E})$\,.
   It is equivalent to 
   $$
   \left.
     \begin{array}[c]{l}
     u\in L^{2}(X;\mathcal{E})\\
     (\frac{\kappa_{b}}{b^{2}}+P_{\pm, b}^{*})u=0\in \mathcal{D}'(X;\mathcal{E})
   \end{array}
   \right\}\Rightarrow u=0\,.
   $$
   With $P_{\pm,b}^{*}=P_{\mp,b}\mp a^{i}(q)b_{i}$ in local coordinates according to \eqref{eq:formadjY},
   H{\"o}rmander's hypoellipticity result for type~II operators (see~\cite{Hor67}) implies $u\in \mathcal{C}^{\infty}(X;\mathcal{E})$\,.
   For $\chi\in \mathcal{C}^{\infty}_{0}(\mathbb{R};[0,1])$ such that $\chi\equiv 1$ in a neighborhood of $0$ and for $\varepsilon>0$ set $u_{\varepsilon}=\chi(\varepsilon|p|_{q}^{2})u$\,.\\
   The above equation implies
   $$
   (\frac{\kappa_{b}}{b^{2}}+P_{\pm,b}^{*})u_{\varepsilon}=-\left[P_{\pm, b}^{*}\,,\,\chi(\varepsilon |p|_{q}^{2})\right]u
   =
   -\left[-\frac{\Delta_{p}}{2b^{2}}\,,\,\chi(\varepsilon|p|_{q}^{2})\right]u   
     $$
     because $\mathcal{Y}f(|p|_{q}^{2})=0$\,. The form of the last commutator allows to write
$$
(\frac{\kappa_{b}}{b^{2}}+P_{\pm,b}^{*}u_{\varepsilon})= -\left[-\frac{\Delta_{p}}{2b^{2}}\,,\,\chi(\varepsilon|p|_{q}^{2})\right](1-\tilde{\chi}(\varepsilon |p|_{q}^{2}))u  
$$
where $\tilde{\chi}\in \mathcal{C}^{\infty}_{0}(\mathbb{R};R)$ has a support included a neighborhood of $0$ where  $\chi\equiv 1$ and its derivatives vanish, while $\tilde{\chi}\equiv 1$ in a smaller neighborhood of $0$\,.
By taking the scalar product with $u_{\varepsilon}$\,, the inequality \eqref{eq:IPPineq} for $P_{\pm,b}^{*}$ implies
$$
\frac{1}{4b^{2}}\left[\|u_{\varepsilon}\|_{\tilde{\mathcal{W}}^{1,0}(X,\mathcal{E})}^{2}+\kappa_{b}\|u_{\varepsilon}\|^{2}_{L^{2}(X;\mathcal{E})}\right]\leq C_{g,\chi}\|u_{\varepsilon}\|_{\tilde{\mathcal{W}}^{1,0}(X;\mathcal{E})}\|(1-\tilde{\chi}(\varepsilon|p|_{q}^{2}))u\|_{L^{2}(X;\mathcal{E})}\,
$$
and
$$
\sqrt{\frac{d}{2}}\|u_{\varepsilon}\|_{L^{2}(X;\mathcal{E})}\leq \|u_{\varepsilon}\|_{\tilde{\mathcal{W}}^{1,0}(X;\mathcal{E})}\leq 4C_{g,\chi}b^{2}\|(1-\tilde{\chi}(\varepsilon)|p|_{q}^{2})u\|_{L^{2}(X;\mathcal{E})}\,.
$$
Lebesgue's theorem for the limit $\varepsilon\to 0$ gives
$$
\sqrt{\frac{d}{2}}\|u\|_{L^{2}(X;\mathcal{E})}\leq 4C_{g,\chi}b^{2}\lim_{\varepsilon\to 0}
\|(1-\tilde{\chi}(\varepsilon|p|_{q}^{2}))u\|_{L^{2}(X;\mathcal{E})}=0\,.
$$
 \end{proof}

\subsection{Localization}
\label{sec:partunitPb}
\begin{proposition}
  \label{pr:equivalence}
  Let $P_{\pm,b}=\frac{1}{b^{2}}\mathcal{O}\pm\frac{1}{b}\nabla^{\mathcal{E}}_{\mathcal{Y}}$ and  fix $Q=\mathop{\cup}_{j=1}^{J}\Omega_{j}$ a finite open chart covering of $Q$\,. Let $\sum_{j=1}^{J}\varrho_{j}(q)^{2}\equiv 1$ be a subordinate quadratic partition of unity, $\varrho_{j}\in \mathcal{C}^{\infty}_{0}(\Omega_{j};[0,1])$\,. There exists $C_{0}\geq 1$\,, determined by the geometric data
  $(g,E,\nabla^{E},g^{E})$\,, and now the partition of unity $(\varrho_{j})_{1\leq j\leq J}$\,, such that for all $b>0$\,, $\lambda\in \R$  and for $\kappa_{b}=C_{0}(1+b^{2})$ the following equivalence of norms
  \begin{equation}
    \label{eq:equivalence}
    \left(
      \frac{\norm{(\frac{\kappa_{b}}{b^{2}}+P_{\pm,b}-i\lambda)u}^{2}_{L^{2}(X;\mathcal{E})}}
      {\sum_{j=1}^{J}\norm{(\frac{\kappa_{b}}{b^{2}}+P_{\pm,b}-i\lambda)(\varrho_{j}u)}^{2}_{L^{2}(X;\mathcal{E})}}
      \right)^{\pm 1}\leq 4
  \end{equation}
  holds for all $ u\in \mathcal{C}^{\infty}_{0}(X;\mathcal{E})$\,.
\end{proposition}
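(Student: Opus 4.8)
The plan is to set $A_{b,\lambda}:=\frac{\kappa_{b}}{b^{2}}+P_{\pm,b}-i\lambda$ and to compare $A_{b,\lambda}u$ with the localized pieces $A_{b,\lambda}(\varrho_{j}u)$ (each lying in $\mathcal{C}^{\infty}_{0}(T^{*}\Omega_{j};\mathcal{E})$, which is what makes the later chart-by-chart analysis possible) by a commutator computation. The starting observation is that among the constituents of $A_{b,\lambda}$ only $\pm\frac{1}{b}\nabla^{\mathcal{E}}_{\mathcal{Y}}$ fails to commute with multiplication by $\varrho_{j}(q)$: the scalars $\frac{\kappa_{b}}{b^{2}}$ and $-i\lambda$ commute trivially, and $\mathcal{O}=-\frac{1}{2}\Delta_{p}+\frac{1}{2}|p|_{q}^{2}$ is, in any local canonical chart, a combination of $p$-derivatives with $q$-dependent coefficients and a multiplication operator, hence also commutes with multiplication by $\varrho_{j}(q)$. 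Since $\varrho_{j}$ does not depend on $p$, the Leibniz rule for $\nabla^{\mathcal{E}}$ gives the exact identity $A_{b,\lambda}(\varrho_{j}u)=\varrho_{j}\,A_{b,\lambda}u\pm\frac{1}{b}(\mathcal{Y}\varrho_{j})\,u$, where $\mathcal{Y}\varrho_{j}=g^{ik}(q)p_{i}\,\partial_{q^{k}}\varrho_{j}$ acts by scalar multiplication and obeys a pointwise bound $|(\mathcal{Y}\varrho_{j})(q,p)|\le C_{j}\,|p|_{q}$ with $C_{j}$ controlled by the first derivatives of $\varrho_{j}$ and by the metric.

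For the lower bound on the ratio (the ``$-1$'' case of \eqref{eq:equivalence}) I would exploit the cancellation
\[
\sum_{j=1}^{J}\varrho_{j}\,\mathcal{Y}\varrho_{j}=\tfrac{1}{2}\,\mathcal{Y}\Big(\sum_{j=1}^{J}\varrho_{j}^{2}\Big)=0 .
\]
Multiplying the Leibniz identity by $\varrho_{j}$ and summing over $j$ then gives $A_{b,\lambda}u=\sum_{j}\varrho_{j}\,A_{b,\lambda}(\varrho_{j}u)$ with no remainder term, and the pointwise Cauchy--Schwarz inequality together with $\sum_{j}\varrho_{j}^{2}\equiv1$ yields $\|A_{b,\lambda}u\|_{L^{2}(X;\mathcal{E})}^{2}\le\sum_{j}\|A_{b,\lambda}(\varrho_{j}u)\|_{L^{2}(X;\mathcal{E})}^{2}$, which is in fact sharper than the claimed factor $4$ and requires nothing about $\kappa_{b}$.

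The other direction carries the actual work. From $(a+b)^{2}\le2a^{2}+2b^{2}$, $\sum_{j}\varrho_{j}^{2}\equiv1$, the bound on $\mathcal{Y}\varrho_{j}$, and $-\Delta_{p}\ge0$ — so that $\mathcal{O}\ge\frac{1}{2}|p|_{q}^{2}$ and hence $\||p|_{q}u\|_{L^{2}}^{2}\le2\|\sqrt{\mathcal{O}}u\|_{L^{2}}^{2}=2\|u\|_{\tilde{\mathcal{W}}^{1,0}}^{2}$ — one reaches $\sum_{j}\|A_{b,\lambda}(\varrho_{j}u)\|_{L^{2}}^{2}\le2\|A_{b,\lambda}u\|_{L^{2}}^{2}+\frac{C''}{b^{2}}\|u\|_{\tilde{\mathcal{W}}^{1,0}}^{2}$, with $C''$ depending only on the metric and on the partition of unity. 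The main obstacle is that the commutator contribution $\frac{1}{b^{2}}\|u\|_{\tilde{\mathcal{W}}^{1,0}}^{2}=\frac{1}{b^{2}}\|\sqrt{\mathcal{O}}u\|_{L^{2}}^{2}$ is of the same order in the momentum weight as the quantity it must be compared with, so it is not automatically absorbed on the left; its control relies crucially on the extra factor $\sqrt{\kappa_{b}/b^{2}}$ produced by the second a priori inequality \eqref{eq:secondIPPineq}. Indeed, since $\kappa_{b}=C_{0}(1+b^{2})$ with $C_{0}$ at least the constant of Proposition~\ref{pr:IPPineq}, that inequality gives $\|u\|_{\tilde{\mathcal{W}}^{1,0}}^{2}\le\frac{16b^{4}}{\kappa_{b}}\|A_{b,\lambda}u\|_{L^{2}}^{2}$, so the commutator term is at most $\frac{16C''b^{2}}{\kappa_{b}}\|A_{b,\lambda}u\|_{L^{2}}^{2}\le\frac{16C''}{C_{0}}\|A_{b,\lambda}u\|_{L^{2}}^{2}$. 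Choosing $C_{0}$ large in terms of $C''$ — which is exactly why the final constant is now allowed to depend on the partition of unity $(\varrho_{j})_{1\le j\le J}$ — makes this at most $2\|A_{b,\lambda}u\|_{L^{2}}^{2}$, giving $\sum_{j}\|A_{b,\lambda}(\varrho_{j}u)\|_{L^{2}}^{2}\le4\|A_{b,\lambda}u\|_{L^{2}}^{2}$, the ``$+1$'' case of \eqref{eq:equivalence}. Taking $C_{0}$ to be the maximum of this threshold and of the constant furnished by Proposition~\ref{pr:IPPineq} completes the argument.
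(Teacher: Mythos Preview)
Your proof is correct and rests on the same core computation as the paper's: only $\pm\frac{1}{b}\nabla^{\mathcal{E}}_{\mathcal{Y}}$ fails to commute with $\varrho_{j}(q)$, the commutator is scalar multiplication by $\pm\frac{1}{b}\mathcal{Y}\varrho_{j}$ and is bounded pointwise by $\frac{C}{b}|p|_{q}$, and this error is absorbed using the quadratic a~priori estimate \eqref{eq:secondIPPineq}, whose gain of $\kappa_{b}/b^{2}$ is what makes $C_{0}$ large enough.

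The packaging differs slightly. The paper invokes a general localization lemma (Corollary~\ref{Cor:equivalenceOfQuantities}) for second-order operators and quadratic partitions of unity, checking its hypothesis \eqref{eq:equivH} by bounding the first commutators against $\sum_{j}\|A_{b,\lambda}(\varrho_{j}u)\|^{2}$ (the double commutators vanish). You instead argue directly, and in the ``$-1$'' direction you exploit the exact cancellation $\sum_{j}\varrho_{j}\,\mathcal{Y}\varrho_{j}=\tfrac{1}{2}\mathcal{Y}(\sum_{j}\varrho_{j}^{2})=0$ to get the identity $A_{b,\lambda}u=\sum_{j}\varrho_{j}A_{b,\lambda}(\varrho_{j}u)$ and hence the sharper bound $\|A_{b,\lambda}u\|^{2}\le\sum_{j}\|A_{b,\lambda}(\varrho_{j}u)\|^{2}$ with no hypothesis on $\kappa_{b}$. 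For the ``$+1$'' direction you absorb the commutator into $\|A_{b,\lambda}u\|^{2}$ rather than into the localized sum; both choices work. The paper's lemma-based route is reused verbatim later (e.g.\ Proposition~\ref{pr:equivNormAfterDyadicPartition}), which is its advantage; your route is more self-contained and yields a tighter constant on one side.
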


\begin{proof}
  It's a straightforward application of Corollary \ref{Cor:equivalenceOfQuantities}. We have to check the assumption \eqref{eq:equivH}  which says
  \begin{align*}
     \forall u\in \mathcal{C}^{\infty}_{0}(X;\mathcal{E})\,,\quad \frac{r}{2} \sum_{j\in J} \| (\frac{\kappa_{b}}{b^{2}} + P_{\pm,b} - i \lambda) \varrho_{j}u \|^{2}_{L^{2}(X;\cal{E})} &\geq  2 \sum_{j_{1},j\in J} \|[(\frac{\kappa_{b}}{b^{2}} + P_{\pm,b} - i \lambda), \varrho_{j_{1}}] \varrho_{j} u \|^{2}_{L^{2}(X;\cal{E})} \\
   &+ 4 \sum_{j_{1},j_{2},j\in J} \|[[(\frac{\kappa_{b}}{b^{2}} + P_{\pm,b} - i \lambda),\varrho_{j_{2}}],\varrho_{j_{1}} ]\varrho_{j}u \|^{2}_{L^{2}(X;\cal{E})}\,,
  \end{align*}
  for some $r\in[0,1)$\,. Because  the operator $P_{\pm,b}$ is a first-order differential operator in the $q$ variable  the first commutator equals 
  $$[\frac{\kappa_{b}}{b^{2}} + P_{\pm,b} - i \lambda, \varrho_{j_{1}}] = \pm \frac{1}{b}\mathcal{Y}\varrho_{j_{1}} = \pm\frac{1}{b}g^{\ell k}(q)p_{k} \frac{\partial \varrho_{j_{1}}}{\partial q^{\ell}}, $$
  for $j_{1}\in J$ where the right-hand side is written local canonical coordinate $(q,p)$. Moreover the double commutators indexed by $j_{1},j_{2}\in J$ all vanish.\\
  We deduce the existence of $C_{0}'>0$ such that
  $$  \forall u\in \mathcal{C}^{\infty}_{0}(X;\mathcal{E})\,,\quad\|[(\frac{\kappa_{b}}{b^{2}} + P_{\pm,b} - i \lambda), \varrho_{j_{1}}] \varrho_{j} u \|^{2}_{L^{2}(X;\cal{E})} \leq C_{0}'\frac{1}{b^{2}} \| |p|_{q} \varrho_{j} u \|_{L^{2}(X;\cal{E})}^{2} .$$
  and the summation over $j_{1},j\in J$\,, combined with the inequality \eqref{eq:secondIPPineq}, yields
  $$ \forall u\in \mathcal{C}^{\infty}_{0}(X;\mathcal{E})\,,\quad \sum_{j_{1},j\in J}  \|[(\frac{\kappa_{b}}{b^{2}} + P_{\pm,b} - i \lambda), \varrho_{j_{1}}] \varrho_{j} u \|^{2}_{L^{2}(X;\cal{E})} \leq C_{0}' ~ |J| ~ \frac{16b^{2}}{\kappa_{b}} \sum_{j} \| (\frac{\kappa_{b}}{b^{2}}+P_{\pm, b} - i \lambda) \varrho_{j}u \|^{2}_{L^{2}(X,\cal{E})}\,.$$
  With $C_{0}' ~ |J| ~ \frac{16b^{2}}{\kappa_{b}}\leq C_{0}' ~ |J| ~ \frac{16}{C_{0}}$\,, choosing $C_{0}\geq 1$ large enough guarantees the assumption \eqref{eq:equivH} with $r=\frac{1}{2}$\,. 
\end{proof}

\subsection{Changing locally the connections}
\label{sec:locchangeconnec}

With Proposition~\ref{pr:equivalence} the analysis of $P_{\pm,b}$ can be localized in a chart open domain $\Omega_{j}$\,. Additionally it can be assumed that there is a well defined local  frame $(f^{1}(q),\ldots,f^{N}(q))$ of the restricted bundle $E\big|_{\Omega_{j}}$\,. In this frame  a trivial connection $\nabla^{E,j}$ on $E\big|_{\Omega_{j}}$ and therefore a corresponding flat connection on $\mathcal{E}\big|_{T^{*}\Omega_{j}}$ is defined by pull-back.
The operator $P_{\pm, b}^{j}$ defined locally can be identified with a scalar operator according to
\begin{align}
\label{eq:defPbj}
  P_{\pm,b}^{j}\left[\sum_{\ell=1}^{N}u_{\ell}f^{\ell}\right]&=
\left(\frac{1}{b^{2}}\mathcal{O}\pm \frac{1}{b}\nabla^{\mathcal{E},j}_{Y}\right)\left[\sum_{\ell=1}^{N}u_{\ell}f^{\ell}\right]
  \\
  &=\sum_{\ell=1}^{N}
\left[\frac{-g_{ik}(q)\frac{\partial^{2}}{\partial p_{i}\partial p_{k}}+ g^{ik}(q)p_{i}p_{k}}{2b^{2}}(u_{\ell})
  \pm \frac{1}{b}g^{ik}(q)p_{i}e_{k}(u_{\ell})\right]f^{\ell} \,.
\end{align}
\begin{proposition}
  \label{pr:locscalPb}
  Under the assumptions of Proposition~\ref{pr:equivalence} with  and with the additional condition that $E$ is trivialized by  a local frame $(f^{1}(q),\ldots, f^{N}(q))$ over $\Omega_{j}$ for every $j\in \left\{1,\ldots,J\right\}$\,, let $P_{\pm, b}^{j}$ be defined by \eqref{eq:defPbj}. There exists $C_{0}\geq 1$\,,  determined by the geometric data
  $(g,E,\nabla^{E},g^{E})$ and the partition of unity $(\varrho_{j})_{1\leq j\leq J}$\,, such that for all $b>0$\,, $\lambda\in \R$  and for $\kappa_{b}=C_{0}(1+b^{2})$ the following equivalence of norms
  \begin{equation}
    \label{eq:locscalPb}
    \left(
      \frac{\norm{(\frac{\kappa_{b}}{b^{2}}+P_{\pm, b}-i\lambda)u}^{2}_{L^{2}(X;\mathcal{E})}}
      {\sum_{j=1}^{J}\norm{(\frac{\kappa_{b}}{b^{2}}+P^{j}_{\pm,b}-i\lambda)(\varrho_{j}u)}^{2}_{L^{2}(X;\mathcal{E})}}
      \right)^{\pm 1}\leq 12
  \end{equation}
  holds for all $ u\in \mathcal{C}^{\infty}_{0}(X;\mathcal{E})$\,.
\end{proposition}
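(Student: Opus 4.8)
The plan is to use the localization already in hand from Proposition~\ref{pr:equivalence} and then to show that, on each $T^{*}\Omega_{j}$, replacing $P_{\pm,b}$ by its scalar flat-connection version $P^{j}_{\pm,b}$ perturbs the localized quantities $\norm{(\tfrac{\kappa_{b}}{b^{2}}+P_{\pm,b}-i\lambda)(\varrho_{j}u)}_{L^{2}(X;\mathcal{E})}$ by a factor as close to $1$ as we wish, provided $C_{0}$ is taken large enough. Set $v_{j}=\varrho_{j}u\in\mathcal{C}^{\infty}_{0}(T^{*}\Omega_{j};\mathcal{E})$. Comparing \eqref{eq:defPbj} with the definition of the pulled-back connection $\nabla^{\mathcal{E}}$, the difference $R_{j}:=P_{\pm,b}-P^{j}_{\pm,b}=\pm\tfrac{1}{b}\bigl(\nabla^{\mathcal{E}}_{\mathcal{Y}}-\nabla^{\mathcal{E},j}_{\mathcal{Y}}\bigr)$ acts, in the given local frame $(f^{1},\dots,f^{N})$, as multiplication by $\pm\tfrac{1}{b}g^{ik}(q)p_{i}\theta_{k}(q)$, where $\theta_{k}\in\mathcal{C}^{\infty}(\Omega_{j};\mathrm{End}(E))$ is the connection one-form of $\nabla^{E}$ in that frame; in particular $R_{j}$ is of order zero in the differentiation sense and $\norm{R_{j}v_{j}}_{L^{2}(X;\mathcal{E})}\le \tfrac{C_{1}}{b}\,\norm{\,|p|_{q}v_{j}}_{L^{2}(X;\mathcal{E})}$ with $C_{1}$ depending only on $(g,E,\nabla^{E},g^{E})$, the chart, the frame, and $\supp{\varrho_{j}}$.

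Next I would estimate $\norm{\,|p|_{q}v_{j}}_{L^{2}(X;\mathcal{E})}$. Since $\mathcal{O}\ge\tfrac{1}{2}|p|_{q}^{2}$ one has $\norm{\,|p|_{q}v_{j}}_{L^{2}}^{2}\le 2\langle v_{j},\mathcal{O}v_{j}\rangle_{L^{2}}=2\norm{v_{j}}_{\tilde{\mathcal{W}}^{1,0}}^{2}$. The integration by parts of Proposition~\ref{pr:IPPineq} applies to $P^{j}_{\pm,b}$ — the only change being that the unitarisation term of the flat connection $\nabla^{E,j}$ produces, exactly as there, at most a harmless first-order contribution absorbed by enlarging $C_{0}$ — and yields an a priori lower bound of the form \eqref{eq:IPPineq} for $P^{j}_{\pm,b}$; by Cauchy--Schwarz this gives $\norm{v_{j}}_{\tilde{\mathcal{W}}^{1,0}}\le C_{2}\,\tfrac{b^{2}}{\sqrt{\kappa_{b}}}\,\norm{(\tfrac{\kappa_{b}}{b^{2}}+P^{j}_{\pm,b}-i\lambda)v_{j}}_{L^{2}}$ with $C_{2}$ geometric. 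Combining the last two displays with $\kappa_{b}=C_{0}(1+b^{2})\ge C_{0}b^{2}$ gives $\norm{R_{j}v_{j}}_{L^{2}}\le \theta\,\norm{(\tfrac{\kappa_{b}}{b^{2}}+P^{j}_{\pm,b}-i\lambda)v_{j}}_{L^{2}}$ with $\theta=\sqrt{2}\,C_{1}C_{2}/\sqrt{C_{0}}$.

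I would then choose $C_{0}$ large — and at least as large as the constants produced by Propositions~\ref{pr:IPPineq} and \ref{pr:equivalence} — so that $\theta\le 1-1/\sqrt{3}$. Writing $P_{\pm,b}=P^{j}_{\pm,b}+R_{j}$, the triangle inequality gives, for each $j$, both $\norm{(\tfrac{\kappa_{b}}{b^{2}}+P_{\pm,b}-i\lambda)v_{j}}_{L^{2}}\le(1+\theta)\norm{(\tfrac{\kappa_{b}}{b^{2}}+P^{j}_{\pm,b}-i\lambda)v_{j}}_{L^{2}}$ and $\norm{(\tfrac{\kappa_{b}}{b^{2}}+P^{j}_{\pm,b}-i\lambda)v_{j}}_{L^{2}}\le(1-\theta)^{-1}\norm{(\tfrac{\kappa_{b}}{b^{2}}+P_{\pm,b}-i\lambda)v_{j}}_{L^{2}}$, so that $\sum_{j}\norm{(\tfrac{\kappa_{b}}{b^{2}}+P_{\pm,b}-i\lambda)v_{j}}_{L^{2}}^{2}$ and $\sum_{j}\norm{(\tfrac{\kappa_{b}}{b^{2}}+P^{j}_{\pm,b}-i\lambda)v_{j}}_{L^{2}}^{2}$ differ by a factor at most $\max\{(1+\theta)^{2},(1-\theta)^{-2}\}\le 3$. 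Multiplying by the factor $4$ of \eqref{eq:equivalence} produces \eqref{eq:locscalPb}.

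The one genuinely delicate point — and the place where the size of $C_{0}$ enters — is that $R_{j}$ is not a bounded perturbation on $L^{2}$: it grows linearly in $|p|$, and it cannot be dominated pointwise by $\mathcal{O}/b^{2}$ either. What rescues it is precisely the one-sided coercivity \eqref{eq:IPPineq}, which supplies the $\tilde{\mathcal{W}}^{1,0}$-norm of $v_{j}$ with a constant of size $b^{2}/\sqrt{\kappa_{b}}$; together with the extra $1/b$ in front of $R_{j}$ this yields the gain $b/\sqrt{\kappa_{b}}\le 1/\sqrt{C_{0}}$, which is where all the smallness comes from. Consequently $C_{0}$ must be fixed once and for all so as to serve simultaneously Propositions~\ref{pr:IPPineq}, \ref{pr:equivalence} and the present statement; this is legitimate since each of those constants depends only on the geometric data $(g,E,\nabla^{E},g^{E})$, the fixed partition of unity, and now the trivializing frames.
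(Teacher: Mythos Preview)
Your proof is correct and follows essentially the same approach as the paper: identify $R_{j}=P_{\pm,b}-P^{j}_{\pm,b}$ as $\pm\tfrac{1}{b}$ times a zeroth-order endomorphism growing like $|p|_{q}$, control $\||p|_{q}v_{j}\|$ via the integration-by-parts inequality so that $\|R_{j}v_{j}\|$ becomes small relative to the operator norm once $C_{0}$ is large, and then multiply by the factor $4$ from Proposition~\ref{pr:equivalence} to get $12$. The only cosmetic difference is that the paper bounds $\|R_{j}v_{j}\|$ against $\|(\tfrac{\kappa_{b}}{b^{2}}+P_{\pm,b}-i\lambda)v_{j}\|$ using \eqref{eq:secondIPPineq} for $P_{\pm,b}$ and then invokes the parallelogram identity for the factor $3$, whereas you bound it against the $P^{j}_{\pm,b}$-version (which indeed requires the brief observation that Proposition~\ref{pr:IPPineq} holds verbatim for $P^{j}_{\pm,b}$) and use the triangle inequality with $\theta\le 1-1/\sqrt{3}$; both routes produce the same factor $3$.
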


\begin{proof}
 For a given $j\in \{1,\ldots, J\}$ and for $v\in \mathcal{C}^{\infty}_{0}(\Omega_{j};\mathcal{E})$ we have 
   $$ (P_{\pm,b} - P_{\pm,b}^j) (v) = \pm\frac{1}{b}(\nabla_{\mathcal{Y}}^{\mathcal{E}}-\nabla^{\mathcal{E},j}_{\mathcal{Y}})(v)
   =\pm\frac{1}{b} g^{ik}(q)p_{i}\pi_{X}^{*}(\nabla^{E}_{\frac{\partial}{\partial q^{k}}}-\nabla^{E,j}_{\frac{\partial}{\partial q^{k}}})(v)
   $$
   where
$$
(\nabla^{E}_{\frac{\partial}{\partial q^{k}}}-\nabla^{E,j}_{\frac{\partial}{\partial q^{k}}})[v_{\ell}(q,p)f^{\ell}(q)]=v_{\ell}(q,p)F^{\ell}_{\ell',k}(q)f^{\ell'}(q)
$$
with $F^{\ell}_{\ell',k}\in \mathcal{C}^{\infty}(\Omega_{j};\R)$\,.\\
Applied to $v=\varrho_{j}u$ this gives the 
    upper bound 
    $$ \|(P_{\pm,b} - P_{\pm, b}^j) )(\varrho_j u) \|^2 \leq \frac{C_0'}{b^2} \| |p|_q (\varrho_j u) \|_{L^{2}(X;\cal{E})}^2,$$
    where $C_0'$ depends only on the metric $g^E$ and the connection $\nabla^E$ given on the vector bundle $E$ and is uniform with respect to $j\in \left\{1,\ldots J\right\}$\,.
    The same argument as in the proof o Proposition~\ref{pr:equivalence} shows that 
    $$ \| (P_{\pm,b} - P_{\pm,b}^j) (\varrho_j u) \|^2_{L^2(X;\cal{E})} \leq \frac{1}{4} \| (\frac{\kappa_b}{b^2}+ P_{\pm,b} -i \lambda) (\varrho_j u) \|^2_{L^2(X;\cal{E})} ,$$
    when the constant $C_{0}$ in $\kappa_{b}=C_{0}(1+b^{2})$ is chosen large enough.
    The parallelogram identity implies
    \begin{align*}
&\| (\frac{\kappa_b}{b^2}+ P_{\pm,b}^{j} -i \lambda) (\varrho_j u) \|^2_{L^2(X;\cal{E})}\leq 3
\| (\frac{\kappa_b}{b^2}+ P_{\pm,b} -i \lambda) (\varrho_j u) \|^2_{L^2(X;\cal{E})}
      \\
      \text{and}~&
\| (\frac{\kappa_b}{b^2}+ P_{\pm,b} -i \lambda) (\varrho_j u) \|^2_{L^2(X;\cal{E})}\leq 3
\| (\frac{\kappa_b}{b^2}+ P_{\pm,b}^{j} -i \lambda) (\varrho_j u) \|^2_{L^2(X;\cal{E})}\,.
    \end{align*}
    By summation over $j\in \left\{1,\ldots,J\right\}$ we obtain
    $$
\left(\frac{\sum_{j=1}^{J}\| (\frac{\kappa_b}{b^2}+ P_{\pm,b}^{j} -i \lambda) (\varrho_j u) \|^2_{L^2(X;\cal{E})}}{\sum_{j=1}^{J}\| (\frac{\kappa_b}{b^2}+ P_{\pm,b} -i \lambda) (\varrho_j u) \|^2_{L^2(X;\cal{E})}}\right)^{\pm 1}\leq 3\,.
$$
The inequality \eqref{eq:locscalPb} is obtained by taking the product with the result \eqref{eq:equivalence} of Proposition~\ref{pr:equivalence}.
  \end{proof}
  \section{Sobolev spaces}
\label{sec:spacesWs}
Like for the operator $P_{\pm,b}$ we firstly reduce the characterization of $u\in\tilde{\mathcal{W}}^{k}(X;\mathcal{E})$\,, $k\in\mathbb{N}$\,, to a local problem with the possibility of replacing the connection $\nabla^{\mathcal{E}}$ by a trivial connection in a given local frame. Then $\mathcal{W}^{s}(X;\mathcal{C})$ and its norm will be expressed in terms of the functional calculus of pseudo-differential elliptic self-adjoint operator $W^{2}$ in the class $\mathrm{OpS}^{2}_{\Psi}(Q;\mathbb{C})$ presented in Appendix~\ref{sec:pseudodiff}. Finally general spaces $\tilde{\mathcal{W}}^{s_{1},s_{2}}(X;\mathcal{E})$\,, $s_{1},s_{2}\in \mathbb{R}$ are introduced by using the functional calculus of two commuting self-adjoint operators.

\subsection{First properties of $\tilde{\mathcal{W}}^k$, $k\in\N$}
\label{sec:firstWk}
We collect rather immediate consequences of the Definition~\ref{de:tW}.
The norm $\|u\|_{\tilde{\mathcal{W}}^{k}}$ is 
\begin{equation}
  \label{eq:normeNkW}
\|u\|_{\tilde{\mathcal{W}}^{k}}=\max_{N_{1}+\frac{N_{2}+N_{3}}{2}\leq k} P_{k,N_{1},N_{2},N_{3}}(u)
\end{equation}
where $P_{k,N_{1},N_{2},N_{3}}(u)$ is the smallest constant $C\geq 0$ such that
\begin{align*}
  & \forall (T_{1}^{H},\ldots T_{N_{1}}^{H})\in \mathcal{C}^{\infty}_{Q}(X;T^{H}X)^{N_{1}}\,, \forall (T_{1}^{V},\ldots, T_{N_{2}}^{V})\in \mathcal{C}^{\infty}_{Q}(X;T^{V}X)^{N_{2}}\,,\\
  &\hspace{2cm}\|\langle p\rangle_{q}^{N_{3}}\nabla_{T_{1}^{H}}^{\mathcal{E}}\ldots \nabla_{T_{N_{1}}^{H}}^{\mathcal{E}}\nabla_{T_{1}^{V}}^{\mathcal{E}}\ldots \nabla_{T_{N_{2}}^{V}}^{\mathcal{E}}u\|_{L^{2}}\leq C\prod_{n_{1}=1}^{N_{1}}\|T_{n_{1}}^{H}\|_{\mathcal{C}^{k}}\prod_{n_{2}=1}^{N_{2}}\|T_{n_{2}}^{V}\|_{\mathcal{C}^{k}}\,.
\end{align*}
Because
$$
\langle p\rangle_{q}^{N_{3}}\nabla_{T_{1}^{H}}^{\mathcal{E}}\ldots \nabla_{T_{N_{1}}^{H}}^{\mathcal{E}}\nabla_{T_{1}^{V}}^{\mathcal{E}}\ldots \nabla_{T_{N_{2}}^{V}}^{\mathcal{E}}u
=
\nabla_{T_{1}^{H}}^{\mathcal{E}}\ldots \nabla_{T_{N_{1}}^{H}}^{\mathcal{E}}\langle p\rangle_{q}^{N_{3}}\nabla_{T_{1}^{V}}^{\mathcal{E}}\ldots \nabla_{T_{N_{2}}^{V}}^{\mathcal{E}}u\,,
$$
$\|u\|_{\tilde{\mathcal{W}}^{k}}$ also equals
\begin{equation}
  \label{eq:redN1}
\|u\|_{\tilde{\mathcal{W}}^{k}}=\max_{\substack{N_{2}+N_{3}=j\leq 2k \\ N_{1}\leq k-2j}}
\quad \sup_{(T_{1}^{V},\ldots T_{N_{2}}^{V})\in \mathcal{C}^{\infty}_{Q}(X;T^{V}X)^{N_{2}}}\frac{P_{k,N_{1},0,0}(\langle p\rangle_{q}^{N_{3}}\nabla_{T_{1}^{V}}^{\mathcal{E}}\ldots \nabla_{T_{N_{2}}^{V}}^{\mathcal{E}}u)}{\prod_{n_{2}=1}^{N_{2}}\|T_{n_{2}}^{V}\|_{\mathcal{C}^{k}}}\,.
\end{equation}

\begin{proposition}
  \label{pr:contTheta}
  Let $k\in \N$\,, let $\theta\in \mathcal{C}^{\infty}(Q;\mathrm{End}(E))$ and fix any $\mathcal{C}^{k}$-norm on $Q$\,. The multiplication by $\pi_{X}^{*}\theta\in \mathcal{C}^{\infty}(X;\mathrm{End}(\mathcal{E}))$\,, $\pi_{X}^{*}\theta(x)=\pi_{X}^{*}(\theta(\pi_{X}(x)))$\,, is a bounded operator in $\tilde{\mathcal{W}}^{k}(X;\mathcal{E})$ with
  \begin{equation}
    \label{eq:thetaW}
\forall u\in \tilde{\mathcal{W}}^{k}(X;\mathcal{E})\,,\quad
\|(\pi_{X}^{*}\theta) u\|_{\tilde{\mathcal{W}}^{k}}\leq C_{k}\|\theta\|_{\mathcal{C}^{k}}\|u\|_{\tilde{\mathcal{W}}^{k}}\,.
\end{equation}
\end{proposition}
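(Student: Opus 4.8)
The plan is to exploit the characterization \eqref{eq:redN1} of the norm $\|\cdot\|_{\tilde{\mathcal{W}}^k}$ together with the fact that $\pi_X^*\theta$ is a section pulled back from $Q$, hence commutes with all vertical derivatives $\nabla^{\mathcal{E}}_{\widehat{e}^j} = \partial_{p_j}$ and with the weight $\langle p\rangle_q^{N_3}$ (which is also a function of $q$ and $p$ only, not differentiated by $\partial_{p_j}$ in a way that produces $\theta$-terms). First I would reduce, via \eqref{eq:redN1}, to estimating quantities of the form $P_{k,N_1,0,0}(\langle p\rangle_q^{N_3}\nabla^{\mathcal{E}}_{T_1^V}\cdots\nabla^{\mathcal{E}}_{T_{N_2}^V}((\pi_X^*\theta)u))$. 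Since each $T_m^V \in \mathcal{C}^\infty_Q(X;T^VX)$ is itself pulled back from a section of $T^*Q$ over $Q$, the Leibniz rule for the vertical (trivial) connection shows $\nabla^{\mathcal{E}}_{T_1^V}\cdots\nabla^{\mathcal{E}}_{T_{N_2}^V}((\pi_X^*\theta)u) = (\pi_X^*\theta)\,\nabla^{\mathcal{E}}_{T_1^V}\cdots\nabla^{\mathcal{E}}_{T_{N_2}^V}u$ exactly — no commutator terms appear because $\widehat e^j\theta=0$. So the problem collapses to the purely horizontal case: it suffices to bound $P_{k,N_1,0,0}((\pi_X^*\theta)v)$ by $C_k\|\theta\|_{\mathcal{C}^k}P_{k,N_1,0,0}(v)$ (uniformly, with $v$ then specialized to $\langle p\rangle_q^{N_3}\nabla^{\mathcal{E}}_{T_1^V}\cdots\nabla^{\mathcal{E}}_{T_{N_2}^V}u$, though one should be slightly careful that $v$ ranges over sections, not necessarily in $\mathcal{C}^\infty_0$; density and the definition of $P_{k,N_1,0,0}$ as smallest constant handle this).

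Next I would handle the horizontal derivatives by the Leibniz rule for $\nabla^{\mathcal{E}}$ along horizontal vector fields. Here $\nabla^{\mathcal{E}}_{T_j^H}((\pi_X^*\theta)w) = (\pi_X^*\theta)\nabla^{\mathcal{E}}_{T_j^H}w + (\nabla^{\mathcal{E}}_{T_j^H}(\pi_X^*\theta))w$, and the key point is that $\nabla^{\mathcal{E}}_{T_j^H}(\pi_X^*\theta) = \pi_X^*(\nabla^{E\otimes E^*}_{\cdot}\theta)$ evaluated appropriately — that is, differentiating a pulled-back endomorphism field along a horizontal lift of a vector field on $Q$ gives the pullback of the covariant derivative of $\theta$ on $Q$, which is again a smooth endomorphism field pulled back from $Q$, with $\mathcal{C}^{k-1}$-norm controlled by $\|\theta\|_{\mathcal{C}^k}$ and the $\mathcal{C}^k$-norm of $T_j^H$ (bounded by $1$ on $\mathcal{T}_{N_1,k}^H$). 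Iterating the Leibniz rule over the $N_1 \le k$ horizontal derivatives produces a sum of $2^{N_1}$ terms, each of the form $(\pi_X^*(\text{some covariant derivative of }\theta))$ times a product of at most $N_1$ horizontal derivatives of $v$. Every factor $\pi_X^*(\nabla^{\ell}\theta)$ that lands to the left is a bounded multiplication operator on $L^2(X;\mathcal{E})$ with norm $\lesssim \|\theta\|_{\mathcal{C}^k}$ (it is fiberwise an endomorphism whose operator norm is bounded by a $\mathcal{C}^0$ quantity), so each term is bounded by $C\|\theta\|_{\mathcal{C}^k}$ times an $L^2$-norm of a product of $\le N_1$ horizontal derivatives of $v$, i.e. by $C\|\theta\|_{\mathcal{C}^k}\,P_{k,N_1,0,0}(v)$ after absorbing the $\mathcal{C}^k$-norms of the vector fields (which are $\le 1$ on the relevant sets). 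Summing the finitely many terms and then taking the maximum over $N_1+\frac{N_2+N_3}{2}\le k$ and the supremum over the vector-field families yields \eqref{eq:thetaW}. Finally, since $\mathcal{C}^\infty_0(X;\mathcal{E})$ is dense in $\tilde{\mathcal{W}}^k$ and $\pi_X^*\theta$ maps $\mathcal{C}^\infty_0$ into itself, the bounded operator extends to all of $\tilde{\mathcal{W}}^k(X;\mathcal{E})$.

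The main obstacle — really the only non-bookkeeping point — is the precise identity $\nabla^{\mathcal{E}}_{T^H}(\pi_X^*\theta) = \pi_X^*(\nabla^{E\otimes E^*}_{\rho(T^H)}\theta)$, where $\rho = d\pi_X$ identifies $T^HX$ with $\pi_X^*TQ$: one must check that the pullback connection $\nabla^{\mathcal{E}}$ on $\mathrm{End}(\mathcal{E}) = \pi_X^*\mathrm{End}(E)$, restricted to horizontal directions, is genuinely the pullback of $\nabla^{\mathrm{End}(E)}$, and that $T_j^H \in \mathcal{C}^\infty_Q(X;T^HX)$ — being $i_g$ of a section of $TQ$ — projects to a section of $TQ$ whose $\mathcal{C}^k$-norm on $Q$ matches $\|T_j^H\|_{\mathcal{C}^k}$ as defined before Definition \ref{de.famcalT}. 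This is immediate from the construction of $\nabla^{\mathcal{E}}$ via the relations defining it on the frame $(e_j,\widehat e^j)$, but it is the step where the "geometric global meaning" of Remark \ref{re:compLeb} is actually used; everything else is the Leibniz rule plus counting.
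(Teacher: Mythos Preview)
Your proposal is correct and follows essentially the same route as the paper: first reduce via \eqref{eq:redN1} to the purely horizontal case using that $\pi_X^*\theta$ commutes with vertical derivatives and with $\langle p\rangle_q^{N_3}$, then handle the horizontal derivatives by Leibniz, exploiting the identity $\nabla^{\mathcal{E}}_{T^H}(\pi_X^*\theta)=\pi_X^*(\nabla^{\mathrm{End}(E)}_{\pi_{X,*}T^H}\theta)$. The only difference is organizational: the paper runs an induction on $N_1$ (one Leibniz step at a time, showing $P_{k,N_1,0,0}((\pi_X^*\theta)v)\le C_k\|\theta\|_{\mathcal{C}^k}P_{k,N_1,0,0}(v)$ by peeling off the innermost horizontal derivative), whereas you expand all $N_1$ derivatives at once into $2^{N_1}$ terms and bound each directly.
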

\begin{proof}
  Because $\langle p\rangle_{q}^{N_{3}}\nabla_{T_{1}^{V}}^{\mathcal{E}}\ldots \nabla^{\mathcal{E}}_{T_{N_{2}}^{V}}(\pi_{X}^{*}\theta)= (\pi_{X}^{*}\theta)\langle p\rangle_{q}^{N_{3}}\nabla_{T_{1}^{V}}^{\mathcal{E}}\ldots \nabla^{\mathcal{E}}_{T_{N_{2}}^{V}}$ and owing to \eqref{eq:redN1} the problem is reduced to
$$
P_{k,N_{1},0,0}((\pi_{X}^{*}\theta)v)\leq C_{k}\|\theta\|_{\mathcal{C}^{k}}P_{k,N_{1},0,0}(v)
$$
for all $N_{1}\in \left\{0,\ldots,k\right\}$\,.\\
It is obviously true for $N_{1}=0$\,. If it is true for $N_{1}\in \left\{0,\ldots, k-1\right\}$ then for $T_{N_{1}+1}^{H}\in \mathcal{C}^{\infty}_{Q}(X;TX^{H})$ with $T_{N_{1}+1}=\pi_{X,*}T_{N_{1}+1}^{H}\in \mathcal{C}^{\infty}(Q;TQ)$\,, we write
$$
\nabla_{T_{N_{1}+1}^{H}}^{\mathcal{E}}(\pi_{X}^{*}\theta)v=(\pi_{X}^{*}(\theta)\nabla_{T_{N_{1}+1}^{H}}^{\mathcal{E}}v)+
[\pi_{X}^{*}(\nabla_{T_{N_{1}+1}}^{\mathrm{End}(E)}\theta)]v\,.
$$
We get
\begin{align*}
  P_{k-1,N_{1},0,0}(\nabla_{T_{N+1}^{H}}^{\mathcal{E}}(\pi_{X}^{*}\theta)v)
  &\leq
C_{k-1}\left[\|\theta\|_{\mathcal{C}^{k-1}}P_{k-1,N_{1},0,0}(\nabla_{T_{N_{1}+1}^{H}}^{\mathcal{E}}v)+
  \|\nabla_{T_{N_{1}+1}}^{\mathrm{End}(E)}\theta\|_{\mathcal{C}^{k-1}}
                                                      P_{k-1,N_{1},0,0}(v)\right]\\
  &\leq
    2C_{k-1}\|\theta\|_{\mathcal{C}^{k}}P_{k,N_{1}+1,0,0}(v)\,.
\end{align*}
The obvious inequality
$$
P_{k,N_{1}+1,0,0}(w)\leq \sup_{T_{N_{1}+1}^{H}\in \mathcal{C}^{\infty}_{Q}(X;T^{H}V)}
\frac{P_{k-1,N_{1},0,0}(\nabla_{T_{N_{1}+1}^{H}}^{\mathcal{E}}w)}{\|T_{N_{1}+1}^{H}\|_{\mathcal{C}^{k}}}
$$
ends the proof by induction.
\end{proof}
The previous statement contains two particular cases which allow the local scalar characterization of $u\in \tilde{\mathcal{W}}^{k}(X;\mathcal{E})$ with a simpler norm.
\begin{proposition}
  \label{pr:Wkred}
  Fix $k\in \mathbb{N}$ and consider two different connections $\nabla^{E,1}$ and $\nabla^{E,2}$ on the vector bundle $E\stackrel{\pi_{E}}{\to}Q$ with the associated connections $\nabla^{\mathcal{E},1}$ and $\nabla^{\mathcal{E},2}$ on $\mathcal{E}\stackrel{\pi_{\mathcal{E}}}{\to}X$\,. Let
  $\tilde{\mathcal{W}}_{\nabla^{j}}^{k}(X;\mathcal{E})$ and $\|~\|_{\tilde{\mathcal{W}}^{k}_{\nabla^{j}}}$ be the corresponding $\tilde{\mathcal{W}}^{k}(X;\mathcal{E})$ spaces and norms according to Definition~\ref{de:tW}.
  \begin{description}
  \item[1)] The space $\mathcal{W}^{k}(X;\mathcal{E})$ is a $\mathcal{C}^{\infty}(Q;\mathbb{R})$ module and for any finite atlas $Q=\mathop{\bigcup}_{j=1}^{J}\Omega_{j}$ and for any subordinate partition of unity $\sum_{j=1}^{J}\varrho_{j}(q)\equiv 1$\,, a section $u$ belongs to
$\mathcal{W}^{k}(X;\mathcal{E})$ if and only if, for every $1\leq j\leq J$\,,  $\varrho_{j}u\in \mathcal{W}^{k}_{\Omega_{j}-\mathrm{comp}}(T^{*}\Omega_{j}; \mathcal{E}\big|_{T^{*}\Omega_{j}})$  and the norm $\|u\|_{\tilde{\mathcal{W}}^{k}}$ is equivalent to $\max_{1\leq j\leq J}\|\varrho_{j}u\|_{\tilde{\mathcal{W}}^{k}}$\,.
\item[2)] For two different connections $\nabla^{E,1}$ and $\nabla^{E,2}$\,, the two spaces $\tilde{\mathcal{W}}_{\nabla^{1}}^{k}(X;\mathcal{E})$ and $\tilde{\mathcal{W}}^{k}_{\nabla^{2}}(X;\mathcal{E})$ are equal and the norms $\|~\|_{\tilde{\mathcal{W}}^{k}_{\nabla^{1}}}$ and $\|~\|_{\tilde{\mathcal{W}}^{k}_{\nabla^{2}}}$ are equivalent.
  \end{description}
\end{proposition}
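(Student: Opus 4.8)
The plan is to prove both assertions by reducing them, via the localization and structural results already established, to elementary facts about differential operators in a fixed local frame. For assertion \textbf{2)}, the key observation is that two connections $\nabla^{E,1}$ and $\nabla^{E,2}$ on $E$ differ by an element $A = \nabla^{E,1} - \nabla^{E,2} \in \mathcal{C}^\infty(Q; T^*Q \otimes \mathrm{End}(E))$, so that the pulled-back connections differ by $\nabla^{\mathcal{E},1}_{T^H} - \nabla^{\mathcal{E},2}_{T^H} = \pi_X^*(A(\pi_{X,*}T^H))$, a zeroth-order operator of the form multiplication by a section of $\mathrm{End}(\mathcal{E})$ that is constant along the fibers, while the vertical covariant derivatives $\nabla^{\mathcal{E},j}_{T^V}$ coincide (both are the scalar operators $\partial_{p_j}$ in local canonical coordinates, independent of the connection on $E$). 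Using the reduced form \eqref{eq:redN1} of the norm, one expands a product $\nabla^{\mathcal{E},1}_{T_1^H}\cdots\nabla^{\mathcal{E},1}_{T_{N_1}^H}\langle p\rangle_q^{N_3}\nabla^{\mathcal{E}}_{T_1^V}\cdots\nabla^{\mathcal{E}}_{T_{N_2}^V}u$ by repeatedly substituting $\nabla^{\mathcal{E},1}_{T^H} = \nabla^{\mathcal{E},2}_{T^H} + \pi_X^*(A(\cdot))$ and commuting the resulting $\pi_X^*(\mathrm{End}(E))$-factors to the left through the remaining horizontal derivatives; each such commutation produces, by the Leibniz rule, a new $\pi_X^*(\mathrm{End}(E))$-valued factor whose $\mathcal{C}^{k}$-norm is controlled (this is exactly the mechanism in the proof of Proposition~\ref{pr:contTheta}). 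Each term in the expansion is then bounded by $\|u\|_{\tilde{\mathcal{W}}^k_{\nabla^2}}$ times a constant depending only on $k$, the $\mathcal{C}^k$-norm of $A$, and the chosen fixed $\mathcal{C}^k$-norm on $Q$; symmetry in $1 \leftrightarrow 2$ gives the reverse bound, hence the equivalence of norms and the equality of the spaces.

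For assertion \textbf{1)}, that $\tilde{\mathcal{W}}^k(X;\mathcal{E})$ is a $\mathcal{C}^\infty(Q;\R)$-module is the special case $E = \mathcal{E}$ trivial, $\theta$ scalar, of Proposition~\ref{pr:contTheta} (more precisely its obvious $\mathrm{End}$-free analogue), so multiplication by $\pi_X^*\varphi$ for $\varphi \in \mathcal{C}^\infty(Q;\R)$ is bounded on $\tilde{\mathcal{W}}^k$. For the localization statement, one direction is immediate: if $u \in \tilde{\mathcal{W}}^k(X;\mathcal{E})$ then each $\varrho_j u \in \tilde{\mathcal{W}}^k(X;\mathcal{E})$ by the module property, and since $\varrho_j u$ is supported in $T^*\Omega_j$ it defines an element of the local space $\tilde{\mathcal{W}}^k_{\Omega_j-\mathrm{comp}}(T^*\Omega_j; \mathcal{E}|_{T^*\Omega_j})$ with $\|\varrho_j u\|_{\tilde{\mathcal{W}}^k} \le C_k \|\varrho_j\|_{\mathcal{C}^k} \|u\|_{\tilde{\mathcal{W}}^k}$, giving $\max_j \|\varrho_j u\|_{\tilde{\mathcal{W}}^k} \le C \|u\|_{\tilde{\mathcal{W}}^k}$. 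Conversely, using $\sum_j \varrho_j \equiv 1$ one writes $u = \sum_j \varrho_j u$ (after squaring the partition, or simply noting the finite sum), and to recover $\|u\|_{\tilde{\mathcal{W}}^k}$ one must bound an arbitrary test quantity $\|\langle p\rangle_q^{N_3}\nabla^{\mathcal{E}}_{T^H_1}\cdots\nabla^{\mathcal{E}}_{T^V_{N_2}}u\|_{L^2}$; distributing over $j$ and moving each $\nabla^{\mathcal{E}}_{T^H_i}$ past the factors $\varrho_j$ by the Leibniz rule produces a finite sum of terms of the same shape applied to $\varrho_j u$ but with the vector fields $T^H_i$ replaced by $T^H_i$ or by the horizontal vector fields obtained from $\pi_{X,*}T^H_i \cdot \varrho_j$, all of $\mathcal{C}^k$-norm controlled by $\|T^H_i\|_{\mathcal{C}^k}$ and the fixed data. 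Each such term is $\le \|\varrho_j u\|_{\tilde{\mathcal{W}}^k}$, so summing over the finitely many $j$ yields $\|u\|_{\tilde{\mathcal{W}}^k} \le C \max_j \|\varrho_j u\|_{\tilde{\mathcal{W}}^k}$, which is the asserted equivalence; combining with \textbf{2)} allows the connection in each $T^*\Omega_j$ to be replaced by a trivial one in a local frame.

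The only genuinely delicate point — and the main obstacle to a fully rigorous write-up — is the bookkeeping of commutators when horizontal covariant derivatives are pushed past the zeroth-order factors (the $\pi_X^*\mathrm{End}(E)$-terms from the change of connection in \textbf{2)}, and the $\pi_X^*(\partial\varrho_j)$-terms from the partition of unity in \textbf{1)}). One must check that $[\nabla^{\mathcal{E}}_{T^H}, \pi_X^*\theta] = \pi_X^*(\nabla^{\mathrm{End}(E)}_{T}\theta)$ is again a bounded multiplier with $\mathcal{C}^{k-1}$-norm controlled by $\|\theta\|_{\mathcal{C}^k}$ and $\|T^H\|_{\mathcal{C}^k}$, so that an induction on $N_1$ (as in Proposition~\ref{pr:contTheta}) closes; this relies on the fact that the vector fields in $\mathcal{T}^H_{N,k}$ come from $\mathcal{C}^\infty_Q(X;T^HX) = i_g(\mathcal{C}^\infty(Q;TQ))$, i.e. they are base vector fields pulled back horizontally, so their interaction with $\pi_X^*(\mathrm{End}(E))$-valued and $\langle p\rangle_q$ factors is governed entirely by the Leibniz rule on $Q$. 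Everything else is routine: the supremum over $\mathcal{T}^H, \mathcal{T}^V$ in Definition~\ref{de:tW} makes all estimates automatically uniform, and the final equivalences follow by taking maxima over the finitely many charts.
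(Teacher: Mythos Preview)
Your proposal is correct and follows essentially the same approach as the paper: for \textbf{2)} the paper also observes that vertical derivatives coincide, reduces via \eqref{eq:redN1} to $N_2=N_3=0$, and runs an explicit induction on $N_1$ using Proposition~\ref{pr:contTheta} exactly as you outline; for \textbf{1)} it invokes Proposition~\ref{pr:contTheta} with scalar $\theta$ for the module property. One minor simplification: for the converse bound in \textbf{1)} the paper just uses the plain triangle inequality $\|u\|_{\tilde{\mathcal{W}}^k}\leq \sum_j\|\varrho_j u\|_{\tilde{\mathcal{W}}^k}\leq J\max_j\|\varrho_j u\|_{\tilde{\mathcal{W}}^k}$, so no Leibniz commutation past the $\varrho_j$ is needed there (the derivatives act on $\varrho_j u$ as a whole).
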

\begin{proof}
\noindent\textbf{1)} Simply apply Proposition~\ref{pr:contTheta} with $\theta\in \mathcal{C}^{\infty}(Q;\mathbb{R})$\,. The definition of $\tilde{\mathcal{W}}^{k}_{\Omega_{j}-\mathrm{comp}}(T^{*}\Omega_{j};\mathcal{E}\big|_{T^{*}\Omega_{j}})$ and the other statements are explained in Appendix~\ref{sec:Qloccomp}. Simply use the triangular inequality for $\|u\|_{\tilde{\mathcal{W}}^{k}}\leq J\max_{1\leq j\leq J}\|\varrho_{j}u\|_{\tilde{\mathcal{W}^{k}}}$\,.\\

\noindent\textbf{2)} Remember $\nabla_{T}^{E,2}-\nabla_{T}^{E,1}=R(T)\in \mathrm{End}(E)$ with $\|R(T)\|_{\mathcal{C}^{k}}\leq C_{k}\|T\|_{C^{k+1}}$ for any $T\in \mathcal{C}^{\infty}(Q;TQ)$\,. Let $P_{k,N_{1},N_{2},N_{3}}^{j}$ be the norms involved in \eqref{eq:normeNkW}\eqref{eq:redN1} for the two associated connections $\nabla^{\mathcal{E},\ell}$ for $\ell=1,2$\,.
We can make an induction proof with respect to $N_{1}\in \left\{0,\ldots,k\right\}$ like in Proposition~\ref{pr:contTheta} after reducing the problem to $N_{2}=N_{3}=0$ by noticing
$$
\forall T^{V}\in \mathcal{C}^{\infty}_{Q}(X;TX^{V})\,,\quad
\nabla_{T^{V}}^{\mathcal{E},2}=\nabla_{T^{V}}^{\mathcal{E},1}
$$
and by using
$$
\nabla_{T^{H}}^{2}-\nabla_{T^{H}}^{1}=\pi_{X}^{*}(R(T))
$$
for any $T^{H}\in \mathcal{C}^{\infty}_{Q}(X;TX^{H})$ with $\pi_{X,*}T^{H}=T\in \mathcal{C}^{\infty}(Q;TQ)$\,.\\
Actually the induction proof relies on
\begin{align*}
  P^{2}_{k-1,N_{1},0,0}(\nabla_{T_{N_{1}+1}^{H}}^{\mathcal{E},2}v)&\leq P^{2}_{k-1,N_{1},0,0}(\nabla_{T_{N_{1}+1}^{H}}^{\mathcal{E},1}v)+
                                                                    P^{2}_{k-1,N_{1},0,0}([\pi^{*}(R(T_{N_{1}+1}^{H})]v)\\
  &\underbrace{\leq}_{\text{induction}}C_{k}\left[P^{1}_{k-1,N_{1},0,0}(\nabla_{T_{N_{1}+1}^{H}}^{\mathcal{E},1}v)+
    P^{1}_{k-1,N_{1},0,0}([\pi^{*}(R(T_{N_{1}+1}^{H})]v)\right]\\
&\underbrace{\leq}_{\text{Prop.~\ref{pr:contTheta}}}
   C_{k}'\|T_{N_{1}+1}^{H}\|_{\mathcal{C}^{k}}
   P^{1}_{k,N_{1}+1,0,0}(v)\,,
\end{align*}
and leads to $\|u\|_{\tilde{\mathcal{W}}^{k}_{\nabla^{2}}}\leq C_{k}''\|u\|_{\tilde{\mathcal{W}^{k}}_{\nabla^{1}}}$\,. The result follows by symmetry.
\end{proof}
The atlas $(\Omega_{j})_{1\leq j\leq J}$ can be chosen such that  $E\big|_{\Omega_{j}}$ is trivial with a local frame $(f_{j}^{1},\ldots,f_{j}^{N})$\,, $f_{j}^{n}\in \mathcal{C}^{\infty}(\Omega_{j};E\big|_{\Omega_{j}})$\,. Above $\Omega_{j}$ the connection $\nabla^{E}$ can be replaced by the trivial connection $\nabla^{j,E}$ given by
$$
\forall T\in \mathcal{C}^{\infty}(Q;TQ)\,,\quad \nabla^{j}_{T}(\sum_{n=1}^{N}v_{n}(q)f_{j}^{n})=\sum_{n=1}^{N}(Tv_{n})f_{j}^{n}\,.
$$
For a section $u=\sum_{n=1}^{N}u_{n,j}(x)f_{j}^{n}$ of $\mathcal{E}\big|_{T^{*}\Omega_{j}}$ we get
$$
\nabla^{j,\mathcal{E}}_{T_{1}^{H}}\ldots \nabla^{j,\mathcal{E}}_{T_{N_{1}}^{H}}
\nabla^{j,\mathcal{E}}_{T_{1}^{V}}\ldots \nabla^{j,\mathcal{E}}_{T_{N_{2}}^{V}}u=
\sum_{n=1}^{N}(T_{1}^{H}\ldots T_{N_{1}}^{H}T_{1}^{V}\ldots T_{N_{2}}^{V}v_{n,j})f^{n}_{j}\,.
$$
Proposition~\ref{pr:Wkred} now implies that the $\tilde{\mathcal{W}^{k}}$-norm of 
$$
u=\sum_{j=1}^{J}\varrho_{j}(q)u=\sum_{j=1}^{J}\sum_{n=1}^{N}u_{n,j}f^{n}_{j}
$$
with $u_{n,j}\in \tilde{\mathcal{W}}^{k}_{\Omega_{j}-\mathrm{comp}}(T^{*}\Omega_{j};\mathbb{C})\subset \tilde{\mathcal{W}}^{k}(X;\mathbb{C})$\,, is equivalent to
$$
\max_{\substack{1\leq j\leq J\\ 1\leq n\leq N}}\|u_{n,j}\|_{\tilde{\mathcal{W}}^{k}(X;\mathbb{C})}\,.
$$
The  $\tilde{\mathcal{W}}^{k}$-spaces for $k\in \mathbb{N}$ and their norms is are thus fully understood in a local scalar setting.

\subsection{Pseudo-differential definition of $\tilde{\mathcal{W}^{s}}$\,, $s\in \mathbb{R}$}
\label{sec:pseudWs}

The end of Subsection~\ref{sec:firstWk} reduced the description of $\tilde{\mathcal{W}}^{k}(X;\mathcal{E})$ to the local description of $\tilde{\mathcal{W}}^{k}(X;\mathbb{C})$\,. We can thus focus on scalar sections, and we now give a pseudo-differential and a global characterization.\\
We need the pseudo-differential calculus in $\mathop{\bigcup}_{m\in \mathbb{R}}\mathrm{OpS}^{m}_{\Psi}(Q;\mathbb{C})$ introduced in Appendix~\ref{sec:pseudodiff}.
We recall that $a\in S^{m}_{\Psi}(Q;\mathbb{C})$ if in doubly canonical coordinates $(q,p,\xi,\eta)$ in $T^{*}(T^{*}\Omega)$ associated with the local coordinates  $q=(q^{1},\ldots,q^{d})$ on a chart open set $\Omega\subset Q$\,, the uniform estimate
$$
|\partial_{q}^{\alpha}\partial_{p}^{\beta}\partial_{\xi}^{\gamma}\partial_{\eta}^{\delta}a(q,p,\xi,\eta)|\leq C_{\alpha,\beta,\gamma,\delta}(1+|\xi|^{2}+|p|^{4}+|\eta|^{4})^{\frac{m-|\gamma|-\frac{|\beta|+|\delta|}{2}}{2}}
$$
and that, the quantization is the standard one,given by the local kernel on $T^{*}\Omega\times T^{*}\Omega$:
$$
[a(q,p,D_{q},D_{p})](q,p,q',p')=\int_{\mathbb{R}^{2d}}e^{i[(q-q').\xi+(p-p').\eta]}a(q,p,\xi,\eta)~\frac{d\xi d\eta}{(2\pi)^{2d}}\,.
$$
Actually the general quantization of $a\in S^{m}_{\Psi}(Q;\mathbb{C})$ is defined by introducing a partition of unity $\sum_{j=1}^{J}\hat{\varrho}_{j}(q)\equiv 1$ and cut-off functions $\hat\chi\in \mathcal{C}^{\infty}_{0}(\Omega_{j};[0,1])$\,, $\hat\chi\equiv 1$ on $\mathrm{supp}\,\hat{\varrho}_{j}$\,, and  by setting $a_{\hat{\varrho},\hat{\chi}}(x,D_{x})=\sum_{j=1}^{J}(\hat{\varrho}_{j}(q)a)(x,D_{x})\circ \hat{\chi}_{j}(q)$\,, and the set of pseudo-differential operators is
$$
\mathrm{OpS}^{m}_{\Psi}=\left\{a_{\hat\varrho,\hat\chi}(x,D_{x})+R\,, a\in S^{m}_{\Psi}(Q;\mathbb{C})\,, R\in \mathcal{R}(Q;\mathbb{C})\right\}\,,
$$
with $\mathcal{R}(Q;\mathbb{C})=\mathcal{L}(\mathcal{S}'(T^{*}Q;\mathbb{C});\mathcal{S}(T^{*}Q;\mathbb{C}))\,.
$
It is proved in Appendix~\ref{sec:pseudodiff} that this pseudo-differential calculus has the same properties as the usual pseudo-differential calculus, with a different homogeneity which takes into account the global estimates as $p\to \infty$\,.\\
In canonical coordinates $(q,p)$ associated with the local coordinates
$(q^{1},\ldots,q^{d})$ on $\Omega$ we know the two frames $(e_{1},\ldots,e_{d})$ (resp. $(\hat{e}^{1},\ldots,\hat{e}^{d})$) of $T(T^{*}\Omega)^{H}$ (resp. $T(T^{*}\Omega)^{V}$) given by
\begin{eqnarray*}
  &&e_{i}=\frac{\partial}{\partial q^{i}}+\Gamma_{i\ell}^{k}(q)p_{k}\frac{\partial}{\partial p_{\ell}}\\
  \text{resp.}&&
                 \hat{e}^{i}=\frac{\partial}{\partial p_{i}}\,.
\end{eqnarray*}
As differential operators, the locally defined operators
$e_{i}$\,, $\partial_{q^{i}}$\,, $p_{k}\partial_{p_{\ell}}$\,,  $\mathcal{O}=\frac{-g_{ij}(q)\partial_{p_{i}}\partial_{p_{j}}+g^{ij}(q)p_{i}p_{j}}{2}$ belong to $\mathrm{OpS}^{1}_{\Psi, \Omega-\mathrm{loc}}(\Omega;\mathbb{C})$\,, while $p_{k}\times$\,, $\langle p\rangle_{q}\times$ and $\partial_{p_{k}}$ belong to $\mathrm{OpS}^{1/2}_{\Psi,\Omega-\mathrm{loc}}(\Omega;\mathbb{C})$\,.\\
Thus any $T^{H}\in \mathcal{C}^{\infty}_{Q}(X;TX^{H})$ is a differential operator that belongs  to $\mathrm{OpS}_{\Psi}^{1}(Q;\mathbb{C})$\,, and any $T^{V}\in \mathcal{C}^{\infty}_{Q}(X;TX^{V})$ belongs to $\mathrm{OpS}^{1/2}_{\Psi}(Q;\mathbb{C})$\,.\\
Let us introduce another operator which involves the scalar horizontal Laplacian $\Delta_{H}$\,.
We follow \cite{BeBo}: On $X$ with the decomposition $TX=TX^{H}\oplus TX^{V}\sim TQ\oplus T^{*}Q$ given by the Levi-Civita connection associated with $g$\, we put the riemannian metric $g\oplus^{\perp}g^{-1}$ and consider the associated total Laplacian $\Delta_{x}$\,. The projection $\pi_{X}:X=T^{*}Q\to Q$ is now a riemannian submersion with totally geodesic fibers and the horizontal Laplacian $\Delta_{H}=\Delta_{x}-\Delta_{p}$ equals in local canonical coordinates
$$
\Delta_{H}=g^{ij}(q)(e_{i}e_{j}-\Gamma_{ij}^{k}(q)e_{k})\,.
$$
Because the volume of $g\oplus^{\perp} g^{-1}$ is equal to the symplectic volume $dqdp$ and $\Delta_{x}$ and $\Delta_{p}$ are symmetric on $\mathcal{S}(T^{*}Q;\mathbb{C})$\,, the operator $\Delta_{H}$ is symmetric on $\mathcal{S}(T^{*}Q;\mathbb{C})$ for the $L^{2}(T^{*}Q,dqdp;\mathbb{C})$ scalar product. By introducing the adjoint differential operator $e_{i}^{*}=-\partial_{q}^{i}-\Gamma_{ii'}^{k}p_{k}\partial_{p_{i'}}-\Gamma_{ii'}^{'}$ and owing to the symmetry or by explicit computations with
$$
\partial_{q^{i}}g^{ij}=\partial_{q^{i}}g^{-1}(dq^{i},dq^{j})
=g^{-1}(-\Gamma_{i,k}^{i}dq^{k},dq^{j})+g^{-1}(dq^{i}, -\Gamma_{i,k}^{j}dq^{k})
=-\Gamma_{ik}^{i}g^{kj}-\Gamma_{ik}^{j}g^{ik}\,,
$$
the horizontal Laplacian is also given by
$$
-\Delta_{H}=e_{i}^{*}g^{ij}(q)e_{j}\,,
$$
without a divergence term because integrations are made with respect to the symplectic volume $dqdp$\,.
\begin{definition}
\label{de:W2} The operator $W^{2}$ is the closure in $L^{2}(X,dqdp;\mathbb{C})$ of the differential operator $C_{g}-\Delta_{H}+C_{g}\mathcal{O}^{2}:\mathcal{S}(X;\mathbb{C})\to \mathcal{S}(X;\mathbb{C})\subset L^{2}(X,dqdp;\mathbb{C})$ for $C_{g}\geq 1$ large enough.
\end{definition}
Notice that because the flow $\exp(te_{i})$ sends isometrically $T^{*}_{q}Q$ to $T^{*}_{\exp(t\partial_{q^{i}})q}Q$\,, the commutations
$$
[e_{i},-\Delta_{p}]=[e_{i},|p|_{q}^{2}]=[e_{i},\mathcal{O}]=[\Delta_{H},\mathcal{O}]=[W^{2},\mathcal{O}]=0
$$
hold true on $\mathcal{S}_{\Omega-\mathrm{loc}}(\Omega;\mathbb{C})$\,.\\
As a consequence of Appendix~\ref{sec:pseudodiff} we have a simple characterization of $\tilde{\mathcal{W}}^{s}(X;\mathbb{C})$\,, in the case when $E=Q\otimes \mathbb{C}$\,. The general case can then be deduced either by the localization at the end of the previous paragraph or by the approach proposed afterwards. Both are equivalent.
\begin{proposition}
  \label{pr:characWs}
  For any $s\in \mathbb{R}$\,, the space $\tilde{\mathcal{W}}^{s}(X;\mathbb{C})$ is characterized by
$$
\tilde{\mathcal{W}}^{s}(X;\mathbb{C})=\left\{u\in \mathcal{S}'(X;\mathbb{C})\,, \forall A\in \mathrm{OpS}^{s}_{\Psi}(Q;\mathbb{C})\,,\; Au\in L^{2}(X,dqdp;\mathbb{C})\right\}
$$
For $C_{g}\geq 1$ large enough, $C_{g}-\Delta_{H}+C_{g}\mathcal{O}^{2}:\mathcal{S}(X;\mathbb{C})\to L^{2}(X,dqdp;\mathbb{C})$ is a non~negative essentially self-adjoint operator, with self-adjoint extension $W^{2}$ and $D(W^{2})=\tilde{\mathcal{W}}^{2}(X;\mathbb{C})$\,.\\
For any $s\in \mathbb{R}$\,, $W^{s}=(W^{2})^{s/2}$ is an elliptic operator in $\mathrm{OpS}^{s}_{\Psi}(Q;\mathbb{C})$ and the norm on $\tilde{\mathcal{W}}^{s}(X;\mathbb{C})$ can be chosen as $\|W^{s}u\|_{L^{2}(X,dqdp;\mathbb{C})}$\,.
\end{proposition}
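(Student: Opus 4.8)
\emph{Plan.} The reduction of the bundle case to scalar sections having been carried out at the end of Subsection~\ref{sec:firstWk}, I may work throughout with $\tilde{\mathcal W}^{s}(X;\mathbb C)$. Introduce the $\Psi$-Sobolev space
$\tilde{\mathcal W}^{s}_{\Psi}:=\{u\in\mathcal S'(X;\mathbb C):Au\in L^{2}(X,dq\,dp;\mathbb C)\ \forall A\in\mathrm{OpS}^{s}_{\Psi}(Q;\mathbb C)\}$, which is exactly the right-hand side of the asserted characterization. The plan is: (1) show $L:=C_{g}-\Delta_{H}+C_{g}\mathcal O^{2}$ is a positive, symmetric, globally elliptic element of $\mathrm{OpS}^{2}_{\Psi}$, essentially self-adjoint with closure $W^{2}$ and $D(W^{2})=\tilde{\mathcal W}^{2}_{\Psi}$; (2) invoke the Helffer--Sjöstrand/complex-powers result of Appendix~\ref{sec:pseudodiff} to get $W^{s}=(W^{2})^{s/2}\in\mathrm{OpS}^{s}_{\Psi}$ elliptic, hence $\|W^{s}\cdot\|_{L^{2}}$ is an admissible norm for $\tilde{\mathcal W}^{s}_{\Psi}$ and the scale $(\tilde{\mathcal W}^{s}_{\Psi})_{s}$ interpolates and is $L^{2}$-self-dual; (3) prove $\tilde{\mathcal W}^{k}(X;\mathbb C)=\tilde{\mathcal W}^{k}_{\Psi}$ for $k\in\mathbb N$ and pass to all $s\in\mathbb R$ by interpolation and duality.

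\emph{The operator $W^{2}$.} Since $e_{i}\in\mathrm{OpS}^{1}_{\Psi}$ and $\partial_{p_{i}},\,p_{i}\times,\,\langle p\rangle_{q}\times\in\mathrm{OpS}^{1/2}_{\Psi}$, both $\Delta_{H}=g^{ij}(e_{i}e_{j}-\Gamma^{k}_{ij}e_{k})$ and $\mathcal O^{2}$ lie in $\mathrm{OpS}^{2}_{\Psi}$, whence $L\in\mathrm{OpS}^{2}_{\Psi}$, with $\Psi$-principal symbol (modulo lower order) $g^{ij}\xi_{i}\xi_{j}+C_{g}\bigl(\tfrac12 g_{ij}\eta_{i}\eta_{j}+\tfrac12 g^{kl}p_{k}p_{l}\bigr)^{2}+C_{g}$, which by compactness of $Q$ is $\gtrsim 1+|\xi|^{2}+|p|^{4}+|\eta|^{4}$ once $C_{g}$ is large; so $L$ is globally elliptic in $\mathrm{OpS}^{2}_{\Psi}$. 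It is symmetric on $\mathcal S(X;\mathbb C)$ (symmetry of $\Delta_{H}$ for the $dq\,dp$ pairing being the computation preceding Definition~\ref{de:W2}, $\mathcal O$ being self-adjoint), and $\langle Lu,u\rangle=C_{g}\|u\|^{2}+\sum_{ij}\langle g^{ij}e_{j}u,e_{i}u\rangle+C_{g}\|\mathcal O u\|^{2}\ge C_{g}\|u\|^{2}$ using $-\Delta_{H}=e_{i}^{*}g^{ij}e_{j}\ge0$ and $\mathcal O\ge d/2$. For essential self-adjointness: if $v\in L^{2}$ is orthogonal to $L\mathcal S$, then $Lv=0$ in $\mathcal S'$, and applying an elliptic parametrix $E\in\mathrm{OpS}^{-2}_{\Psi}$ ($EL=I+R$, $R\in\mathcal R(Q;\mathbb C)$) gives $v=-Rv\in\mathcal S$, whence $0=\langle Lv,v\rangle\ge C_{g}\|v\|^{2}$ and $v=0$; call the closure $W^{2}$. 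The same parametrix gives $D(W^{2})=\{u\in L^{2}:Lu\in L^{2}\}=\tilde{\mathcal W}^{2}_{\Psi}$. Finally, since $L$ is positive, self-adjoint and elliptic in $\mathrm{OpS}^{2}_{\Psi}$, the Helffer--Sjöstrand representation of $\varphi(L)$ from Appendix~\ref{sec:pseudodiff} yields $W^{s}=L^{s/2}\in\mathrm{OpS}^{s}_{\Psi}$, elliptic; the standard consequences (equivalence of $\|W^{s}\cdot\|_{L^{2}}$ with the $\Psi$-Sobolev norm, density of $\mathcal C^{\infty}_{0}$, closedness of the scale under interpolation, $(\tilde{\mathcal W}^{s}_{\Psi})^{*}=\tilde{\mathcal W}^{-s}_{\Psi}$) follow from the calculus.

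\emph{Matching with $\tilde{\mathcal W}^{k}$.} For $k\in\mathbb N$ it suffices to show $\|u\|_{\tilde{\mathcal W}^{k}}\asymp\|W^{k}u\|_{L^{2}}$ on $\mathcal C^{\infty}_{0}$. The bound $\lesssim$ is formal: each building block $P=\langle p\rangle_{q}^{N_{3}}\nabla^{\mathcal E}_{T^{H}_{1}}\cdots\nabla^{\mathcal E}_{T^{V}_{N_{2}}}$ with $N_{1}+\tfrac{N_{2}+N_{3}}{2}\le k$ lies in $\mathrm{OpS}^{k}_{\Psi}$, so $PW^{-k}\in\mathrm{OpS}^{0}_{\Psi}$ is $L^{2}$-bounded, with bound depending on finitely many symbol seminorms and hence uniform over the building-block family, giving $\sup_{P}\|Pu\|\lesssim\|W^{k}u\|$. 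For $\gtrsim$: if $k=2m$, then in local coordinates $L^{m}$ is a finite sum of building blocks of order $\le2m$ — the $p$-dependent coefficients being absorbed into powers of $\langle p\rangle_{q}$ using $e_{i}\langle p\rangle_{q}=0$ and $|p_{k}|\lesssim\langle p\rangle_{q}$ — so $\|W^{2m}u\|=\|L^{m}u\|\lesssim\|u\|_{\tilde{\mathcal W}^{2m}}$, and since $L^{m}\in\mathrm{OpS}^{2m}_{\Psi}$ is elliptic the parametrix gives the reverse inequality. If $k=2m+1$, write $W^{2m+1}u=W(L^{m}u)$; the case $k=1$ gives $\|W(L^{m}u)\|\lesssim\|L^{m}u\|_{\tilde{\mathcal W}^{1}}\lesssim\|u\|_{\tilde{\mathcal W}^{2m+1}}$ (the last step since $QL^{m}$ is a sum of building blocks of order $\le2m+1$ for $Q$ of order $\le1$), which reduces odd orders to $k=1$; and $k=1$ is elementary, $\|Wu\|^{2}=C_{g}\|u\|^{2}+\sum_{ij}\langle g^{ij}e_{j}u,e_{i}u\rangle+C_{g}\|\mathcal O u\|^{2}$, each term $\lesssim\|u\|_{\tilde{\mathcal W}^{1}}^{2}$ because $\Delta_{p}u$ and $\langle p\rangle_{q}^{2}u$ are controlled by $\|u\|_{\tilde{\mathcal W}^{1}}$ (Definition~\ref{de:tW} allows $N_{2}=2$ and $N_{3}=2$ at $k=1$). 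Thus $\tilde{\mathcal W}^{k}(X;\mathbb C)=\tilde{\mathcal W}^{k}_{\Psi}$ for $k\in\mathbb N$; for non-integer $s>0$, Definition~\ref{de:tW} takes $\tilde{\mathcal W}^{s}$ as an interpolation space between integer members, which by the integer case coincides with the interpolation of the corresponding $\tilde{\mathcal W}^{\bullet}_{\Psi}$, i.e.\ with $\tilde{\mathcal W}^{s}_{\Psi}$; for $s<0$, $\tilde{\mathcal W}^{s}=(\tilde{\mathcal W}^{-s})^{*}=(\tilde{\mathcal W}^{-s}_{\Psi})^{*}=\tilde{\mathcal W}^{s}_{\Psi}$. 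This is the claimed characterization, with norm $\|W^{s}\cdot\|_{L^{2}}$.

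\emph{Main obstacle.} The genuine work lies in the two ingredients imported from Appendix~\ref{sec:pseudodiff}: first, that $L$ is \emph{elliptic} in the anisotropic homogeneity of $\mathrm{OpS}^{\bullet}_{\Psi}$, where $\xi$ carries weight $1$ but $p$ and $\eta$ carry weight $1/2$ — the subtle point being that the $|\xi|^{2}$-part of the elliptic weight can only be supplied by $-\Delta_{H}$ and the $|p|^{4}+|\eta|^{4}$-part only by $C_{g}\mathcal O^{2}$, so that neither term alone suffices and $C_{g}$ must be large; and second, the Helffer--Sjöstrand/complex-powers statement $L^{z}\in\mathrm{OpS}^{2\,\mathrm{Re}\,z}_{\Psi}$, which requires redoing Seeley-type resolvent estimates inside this calculus. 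Granting these, the steps above are essentially bookkeeping, the only mildly delicate routine point being the systematic absorption of the $p$-dependent coefficients of $L^{m}$ and of commutators such as $[e_{i},e_{j}]$ and $[\partial_{p_{i}},\langle p\rangle_{q}^{N_{3}}]$ into the weight $\langle p\rangle_{q}$ when comparing $L^{m}$ with the building blocks of Definition~\ref{de:tW}.
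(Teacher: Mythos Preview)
Your outline matches the paper's (ellipticity of $W^{2}$ in $\mathrm{OpS}^{2}_{\Psi}$, then Proposition~\ref{pr:HeSj} for $W^{s}\in\mathrm{OpS}^{s}_{\Psi}$, then match with the integer norm and interpolate/dualize), but there is one concrete error and one genuine methodological difference.

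\emph{The principal symbol of $-\Delta_{H}$.} You write the $\Psi$-principal symbol of $L$ as $g^{ij}\xi_{i}\xi_{j}+C_{g}(\tfrac12 g_{ij}\eta_{i}\eta_{j}+\tfrac12 g^{kl}p_{k}p_{l})^{2}+C_{g}$ ``modulo lower order''. This drops the connection terms in $e_{i}=\partial_{q^{i}}+\Gamma^{\ell}_{ik}p_{\ell}\partial_{p_{k}}$; the correct principal part is $|\xi+\Gamma^{k}_{\cdot\cdot}(q)p_{k}\eta|_{q}^{2}+\tfrac{C_{g}}{4}(|p|_{q}^{2}+|\eta|_{q}^{2})^{2}+C_{g}$, and the cross term $p\eta\cdot\xi$ is \emph{not} lower order in this calculus (it has $\Psi$-order $2$, the same as $|\xi|^{2}$). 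The paper's part~\textbf{a)} makes this explicit: expand the square, bound $2|\Gamma p\eta|_{q}^{2}\le C|p|^{2}|\eta|^{2}$, and absorb it into $\tfrac{C_{g}}{4}(|p|^{4}+|\eta|^{4})$ by taking $C_{g}$ large. Your conclusion (ellipticity for $C_{g}$ large) survives, but the mechanism is this absorption of the connection cross term, not the vaguer ``neither $-\Delta_{H}$ nor $\mathcal O^{2}$ alone suffices''.

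\emph{The matching step.} For $\|u\|_{\tilde{\mathcal W}^{k}}\asymp\|W^{k}u\|_{L^{2}}$ the paper takes a different, more computational route than yours: it localizes via Proposition~\ref{pr:Wkred}, replaces the abstract $T^{H},T^{V}$ by the concrete local frame $(e_{i},\hat e^{j})$, then converts $e_{i_{1}}\cdots e_{i_{N_{1}}}\partial_{p}^{\beta}$ to $\partial_{q}^{\alpha}\partial_{p}^{\beta}$ by an $\varepsilon^{N_{1}}$-weighting trick (the difference has strictly fewer $q$-derivatives at the same total $\Psi$-order), and finally invokes the flat-space result Proposition~\ref{pr:eqnormes}. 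Your route---$PW^{-k}\in\mathrm{OpS}^{0}_{\Psi}$ for $\lesssim$, and writing $L^{m}$ as a finite sum of building blocks for $\gtrsim$---is correct in principle, but the second half hides nontrivial bookkeeping: $[e_{i},e_{j}]$ is a vertical field with $p$-linear coefficient, $[e_{i},p_{k}]=\Gamma^{\ell}_{ik}p_{\ell}\neq0$, and the rigid ordering $\langle p\rangle_{q}^{N_{3}}\,T^{H}\cdots T^{V}$ of Definition~\ref{de:tW} must be restored at each step. The paper's passage to the flat model sidesteps this; your approach is tidier to state but leaves more commutator algebra to check.
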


\begin{proof}[Proof]
  \noindent\textbf{a)} If we start from the above definition of $\tilde{\mathcal{W}}^{s}(X;\mathbb{C})$ the problem is reduced to the ellipticity and the identification of the principal symbol of $W^{2}=C_{g}-\Delta_{H}+C_{g}\mathcal{O}^{2}$\,. Because it is a differential operator $W^2=\sum_{j=1}^{J}\hat{\varrho}_{j}(q)W^{2}\hat{\chi}_{j}(q)$\,, and we obtain $W^{2}=a_{\hat{\varrho},\hat{\chi}}(x,D_{x})$  with $a-a_{2}\in S^{1}_{\Psi}(Q;\mathbb{C})$ and
  $$
a_{2}(x,\Xi)=C_{g}+|\xi+\Gamma_{..}^{k}(q)p_{k}\eta|_{q}^{2}+\frac{C_{g}}{4}(|p|_{q}^{2}+|\eta|_{q}^{2})^{2}\geq C_{g}+|\xi|_{q}^{2}-2|\Gamma_{..}^{k}(q)p_{k}\eta|_{q}^2+\frac{C_{g}}{4}(|p|_{q}^{2}+|\eta|_{q}^{2})^{2}
$$
We used the notation $|\tau|_{q}^{2}=g^{ij}(q)\tau_{i}\tau_{j}$ for $\tau=\xi+\Gamma_{..}^{k}(q)p_{k}\eta$\,, $\tau=p$ and $|\eta|_{q}^{2}=g_{ij}(q)\eta^{i}\eta^{j}$\,.
The ellipticity comes from
$$
a_{2}(x,\Xi)\geq C_{g}+\varepsilon_{g}|\xi|^{2}-\frac{1}{\varepsilon_{g}}|p|^{2}|\eta|^{2}+\frac{C_{g}\varepsilon_{g}}{4}(|p|^{4}+|\eta|^{4})
$$
for some $\varepsilon_{g}>0$ given by $g$ and the fixed open covering $\mathop{\bigcup_{j=1}^{J}}\Omega_{j}$\,.
The ellipticity $a_{2}\geq C_{g}+\varepsilon_{g}(|\xi|^{2}+|p|^{4}+|\eta|^{4})$ holds true if  $C_{g}\varepsilon_{g}^{2}-1\geq 8\varepsilon_{g}^{2}$\,.\\
The operator $W^{2}$ is symmetric with
\begin{equation}
  \label{eq:positW2}
\langle u\,,\, W^{2}u\rangle
=C_{g}\|u\|_{L^{2}}^{2}+\sum_{j=1}^{J}\int_{T^{*}\Omega_{j}}g^{ii'}(q)\left[\overline{(e_{i}\theta_{j}(q)u)}(e_{i'}\theta_{j}(q)u)-(\partial_{q^{i}}\theta_{j})(\partial_{q^{i}}\theta_{j})|u|^{2}\right]~dqdp +C_{g}\|\mathcal{O}u\|_{L^{2}}^{2}
\end{equation}
for all $ u\in \mathcal{S}(X;\mathbb{C})$ when $\sum_{j=1}^{J}\theta_{j}^{2}(q)\equiv 1$ with $\theta_{j}\in \mathcal{C}^{\infty}_{0}(\Omega_{j};[0,1])$\,.
It is bounded from below by $1$ for $C_{g}>0$ large enough.\\ It suffices to apply Proposition~\ref{pr:HeSj} of Appendix~\ref{sec:pseudodiff}.\\

\noindent\textbf{b)} For the identification of the two definitions of $\tilde{\mathcal{W}}^{s}(X;\mathbb{C})$ and the equivalence of the norms, it suffices to consider the case $s=k\in \mathbb{N}$\,, because all the other cases will follow by interpolation and duality.
\\
We start from the Definition~\ref{de:tW} and Proposition~\ref{pr:Wkred}-\textbf{1)} which says that the $\tilde{\mathcal{W}}^{k}(X;\mathbb{C})$ norm of $u$ is equivalent to
$$
\max_{1\leq j\leq J}\|\varrho_{j}u\|_{\tilde{\mathcal{W}}^{k}}
$$
with $\sum_{j=1}^{J}\varrho_{j}(q)\equiv 1$\,, $\varrho_{j}\in \mathcal{C}^{\infty}_{0}(\Omega_{j};[0,1])$\,. Additionally the definition of $\tilde{\mathcal{W}}^{k}_{\Omega_{j}-\mathrm{comp}}(\Omega_{j};\mathbb{C})$ ensures that it is independent of the choice of a coordinate system $(q^{1},\ldots,q^{d})$ with equivalent norm for two different choices.
So let us work on $T^{*}\Omega=\Omega\times\mathbb{R}^{d}\subset \mathbb{R}_{q}^{d}\times \mathbb{R}_{p}^{d}$ and let us consider functions  $u\in L^{2}_{\Omega-\mathrm{comp}}(T^{*}\Omega;\mathbb{C})$ with a $\Omega$-support included in a fixed compact set $K\subset\subset \Omega$ (a neighborhood of $\mathrm{supp}\,\varrho$  with $\varrho=\varrho_{j}$ when $\Omega=\Omega_{j}$). Any vector field $T^{H}\in \mathcal{C}^{\infty}_{\Omega-\mathrm{comp}}(T^{*}\Omega^{d};TX^{H})$ (resp. $T^{V}\in \mathcal{C}^{\infty}_{\Omega-\mathrm{comp}}(T^{*}\mathbb{\Omega}^{d};TX^{V})$) can be written
$$
T^{H}=\sum_{i=1}^{d}t^{i}(q)e_{i}\quad\text{resp.}\quad T^{V}=\sum_{i=1}^{d}t_{i}(q)\hat{e}^{i} 
$$
with $\max_{1\leq i\leq d}\|t^{i}\|_{\mathcal{C}^{k}} \asymp \|T^{H}\|_{\mathcal{C}^{k}}$ (resp. $\max_{1\leq i\leq d}\|t_{i}\|_{\mathcal{C}^{k}}\asymp \|T^{V}\|_{\mathcal{C}^{k}}$)\,.\\

We deduce
$$
  \frac{\|\langle p\rangle_{q}^{N_{3}}T_{1}^{H}\ldots T_{N_{1}}^{H}T_{1}^{V}\ldots T_{N_{2}}^{V}u\|^{2}_{L^{2}}}{\prod_{n_{1}=1}^{N_{1}}\|T_{n_{1}}^{H}\|_{\mathcal{C}^{k}}\prod_{n_{2}=1}^{N_{2}}\|T_{n_{2}}^{V}\|_{\mathcal{C}^{k}}}\leq C_{K,k}\max_{\substack{1\leq i_{1},\ldots,i_{N_{1}}\leq d\\ 1\leq j_{1},\ldots,j_{N_{2}}\leq d}}\|\langle p\rangle_{q}^{N_{3}}e_{i_{1}}\ldots e_{i_{N_{1}}}\hat{e}^{j_{1}}\ldots \hat{e}^{j_{N_{2}}}u\|_{L^{2}}
  $$
where
$$
\|\langle p\rangle_{q}^{N_{3}}e_{i_{1}}\ldots e_{i_{N_{1}}} \hat{e}^{j_{1}}\ldots \hat{e}^{j_{N_{2}}}u\|_{L^{2}}
=\|\langle p\rangle_{q}^{N_{3}}(\chi e_{i_{1}})\ldots (\chi e_{i_{N_{1}}})(\chi \hat{e}^{j_{1}})\ldots (\chi \hat{e}^{j_{N_{2}}})u\|_{L^{2}}
$$
for some  $\chi=\chi(q)\in \mathcal{C}^{\infty}_{0}(\Omega;[0,1])$ such that $\chi\equiv 1$ in a neighborhood of $K\supset \Omega-\mathrm{supp}\,u$\,. Because $\chi(q)e_{i}\in \mathcal{C}^{\infty}_{\Omega-\mathrm{comp}}(T^{*}\Omega; TX^{H})$ and $\chi(q)\hat{e}^{j}\in \mathcal{C}^{\infty}_{\Omega-\mathrm{comp}}(T^{*}\Omega; TX^{V})$ the right-hand side is bounded by $C_{K,\chi,k}\|u\|_{\tilde{\mathcal{W}}^{k}}$\,. \\
By taking the supremum with respect to $N_{1}+\frac{N_{2}+N_{3}}{2}\leq k$\,, $T_{1}^{H},\ldots, T_{N_{1}}^{H}\in \mathcal{C}^{\infty}_{\Omega-\mathrm{comp}}(T^{*}\Omega;TX^{H})$ and $T_{1}^{V},\ldots, T_{N_{2}}^{V}\in \mathcal{C}^{\infty}_{\Omega-\mathrm{comp}}(T^{*}\Omega;TX^{V})$\,.
The  norm $\|u\|_{\tilde{\mathcal{W}}^{k}}$ for $u\in L^{2}(T^{*}\Omega;\mathbb{C})$ with $\Omega$-support included in $K$\,, is thus equivalent to
\begin{equation}
  \label{eq:normmax}
  \max_{\substack{1\leq i_{1},\ldots,i_{N_{1}}\leq d\\ 1\leq j_{1},\ldots,j_{N_{2}}\leq d\\ N_{1}+\frac{N_{2}+N_{3}}{2}\leq k}}\|\langle p\rangle_{q}^{N_{3}}e_{i_{1}}\ldots e_{i_{N_{1}}}\hat{e}^{j_{1}}\ldots \hat{e}^{j_{N_{2}}}u\|_{L^{2}}\asymp
  \max_{\substack{1\leq i_{1},\ldots,i_{N_{1}}\leq d\\ N_{1}+\frac{|\beta|+N_{3}}{2}\leq k}}
  \|\langle p\rangle^{N_{3}}e_{i_{1}}\ldots e_{i_{N_{1}}}\partial_{p}^{\beta}u\|_{L^{2}}
\end{equation}
where we have replaced $\langle p\rangle_{q}=(1+g^{ij}(q)p_{i}p_{j})^{1/2}$ by the equivalent quantity $\langle p\rangle=(1+\sum_{i}p_{i}^{2})^{1/2}$\,.\\
From
$$
[e_{i},f^{k}(q)p_{k}]=(\partial_{q^{i}}f^{k})(q)p_{k}+(f^{k}\Gamma_{ik}^{\ell})(q)p_{\ell}\,,
$$
we get by induction
$$
e_{i_{1}}\ldots e_{i_{N_{1}}}\partial_{p}^{\beta}-\partial_{q^{i_{1}}}\ldots\partial_{q^{i_{N_{1}}}}\partial_{p}^{\beta}=\sum_{\substack{|\alpha|\leq N_{1}-1\\ |\alpha|+\frac{|\gamma|+|\beta'|}{2}=N_{1}+\frac{|\beta|}{2}}}f_{\alpha,\beta',\gamma}(q)p^{\gamma}\partial_{q}^{\alpha}\partial_{p}^{\beta'}
$$ and we deduce
\begin{multline*}
\varepsilon^{N_{1}}\|\langle p\rangle^{N_{3}}e_{i_{1}}\ldots e_{i_{N_{1}}}\partial^{\beta}_{p}u\|_{L^{2}}-\varepsilon^{N_{1}}\|\langle p\rangle_{q}^{N_{3}}\partial_{q^{i_{1}}}\ldots \partial_{q^{i_{N_{1}}}}\partial^{\beta}_{p}u\|_{L^{2}}
\\
\leq C_{K,k}\varepsilon\max_{\substack{|\alpha|\leq N_{1}-1\\
  |\alpha|+\frac{N_{3}'+|\beta'|}{2}\leq N_{1}+\frac{|\beta|}{2}}}\varepsilon^{N_{1}-1}\|\langle p\rangle^{N_{3}'}\partial_{q}^{\alpha}\partial_{p}^{\beta'}u\|_{L^{2}}\,.
\end{multline*}
Choosing $\varepsilon=\varepsilon_{K,k}$ for $\varepsilon_{K,k}>0$ small enough implies that the norm $\|u\|_{\tilde{\mathcal{W}}^{k}}$ is equivalent to
$$
\max_{|\alpha|+\frac{N_{3}+|\beta|}{2}\leq k}\|\langle p\rangle^{N_{3}}\partial_{q}^{\alpha}\partial_{p}^{\beta}\chi(q)u\|_{L^{2}}
\quad\text{or}
\quad \sqrt{\sum_{|\alpha|+\frac{N_{3}+|\beta|}{2}\leq k}\|\langle p\rangle^{N_{3}}\partial_{q}^{\alpha}\partial_{p}^{\beta}\chi(q)u\|^{2}_{L^{2}}}\,.
$$
But according Appendix~\ref{sec:pseudodiff} and in particular Proposition~\ref{pr:eqnormes}\,, it is equivalent to the norm $\|W^{k}\chi(q)u\|_{L^{2}}$\,.
\end{proof}

Let us extend now this result to $\tilde{\mathcal{W}}^{s}(Q;\mathcal{E})$\,.
For a self-adjoint non negative scalar operator $A\in \mathrm{OpS}^{m}_{\Psi}(Q;\mathbb{C})$\,, like $W^{2}$ with $m=2$ or $W^{s}$\,, $s=m\in \mathbb{R}$\,, it is not possible to define directly its action on sections of $\mathcal{E}$\,. However a localization technique makes it possible, up to lower order corrections.\\
We fix, as we did in Appendix~\ref{sec:pseudodiff}, the atlas covering $Q=\mathop{\bigcup}_{j=1}^{J}\Omega_{j}$ by assuming that for every $j\in \left\{1,\ldots, J\right\}$ the two properties are satisfied:
\begin{itemize}
\item the open set $\tilde{\Omega}_{j}=\mathop{\bigcup_{\Omega_{j'}\cap \Omega_{j}\neq \emptyset}}\Omega_{j'}$ is a chart open set;
\item the restricted vector bundle $E\big|_{\tilde{\Omega}_{j}}$ admits an orthonormal frame $(f_{j}^{1},\ldots,f_{j}^{N})$ for the metric $g_{E}$\,.
\end{itemize}
If $\sum_{j=1}^{J}\theta_{j}^{2}(q)\equiv 1$ is a quadratic partition of unity with $\theta_{j}\in \mathcal{C}^{\infty}_{0}(\Omega_{j};[0,1])$ we set
$$
A_{\theta}=\sum_{j=1}^{J}\theta_{j}(q)\circ A_{\mathrm{sc}\,j}\circ \theta_{j}(q)\,,
$$
where $A_{\mathrm{sc},j}$ is the scalar pseudo-differential operator in the othonormal local frame $(f_{j}^{1},\ldots,f_{j}^{N})$ above $\tilde{\Omega}_{j}$:
$$
A_{\mathrm{sc,j}}(u_{k}f_{j}^{k}(q))(q,p)
=[Au_{k}](q,p)f_{j}^{k}(q)\,.
$$
When $A=a_{\varrho,\chi}(x,D_{x})+R=\sum_{j_{1}=1}^{J}(\varrho_{j_{1}}(q)a)(x,D_{x})\circ\chi_{j_{1}}(q)+R\in \mathrm{OpS}^{m}_{\Psi}(Q;\mathbb{C})$ we obtain
$$
A_{\theta}=\sum_{j_{1}=1}^{J}\varrho_{j_{1}}(q)\sum_{\Omega_{j}\cap \Omega_{j_{1}}\neq\emptyset }\theta_{j}(q)\circ [a(x,D_{x})]_{\mathrm{sc},j}\circ \theta_{j}(q)\circ \chi_{j_{1}}(q)+R_{\theta}\,.
$$
If $U_{j_{1},j_{2}}(q)$ is the unitary matrix of $(f_{j_{1}}^{1},\ldots, f_{j_{1}}^{N})$ in the frame $(f_{j_{2}}^{1},\ldots, f_{j_{2}}^{N})$\,, the operator
$$
\sum_{\Omega_{j}\cap \Omega_{j_{1}}\neq\emptyset}
\theta_{j}(q)\circ [a(x,D_{x})]_{\mathrm{sc},j}\circ \theta_{j}(q)
$$
with $\mathop{\bigcup}_{\Omega_{j}\cap \Omega_{j_{1}}\neq \emptyset}\Omega_{j}\subset \tilde{\Omega}_{j_{1}}\subset \mathbb{R}^{d}$\,, is nothing but the operator $\tilde{a}_{j_{1}}(x,D_{x})$ with $a_{j_{1}}\in S(\Psi^{m},g_{\Psi};\mathbb{C}^{N})$ given by
$$
\sum_{\Omega_{j}\cap \Omega_{j_{1}}\neq \emptyset} U_{j_{1},j}(q)\sharp ([(\theta_{j}(q)a)\sharp \theta_{j}(q)]\otimes \mathrm{Id}_{\mathbb{C}^{N}})\sharp U_{j,j_{1}}(q)\,,
$$
where we recall $(a\sharp b)(x,D_{x})=a(x,D_{x})\circ b(x,D_{x})$\,.\\
Owing to the exact chain rules $U_{j_{3},j_{1}}(q)=U_{j_{3},j_{2}}\circ U_{j_{2},j_{1}}(q)$ and $U_{j_{3},j_{1}}(q)=U_{j_{3},j_{2}}(q)\sharp U_{j_{2},j_{1}}(q)$ and the exact commutation $\theta_{j_{3}}(q)\sharp U_{j_{2},j_{1}}(q)=U_{j_{2},j_{1}}(q)\sharp \theta_{j_{3}}(q)$\,, we can write
$$
A_{\theta}=\sum_{j_{1}=1}^{J}(\varrho_{j_{1}}(q)a_{\theta})(x,D_{x})\circ \chi_{j_{1}}(q)+R_{\theta}=(a_{\theta})_{\varrho,\chi}(x,D_{x})+R_{\theta}
$$
with $a_{\theta}\in S^{m}_{\Psi}(Q;\mathrm{End}\,,\mathcal{E})$ and $R_{\theta}\in \mathcal{R}(Q;\mathcal{\mathcal{E}})$\,.\\
Additionally,  if $A=a_{m,\varrho,\chi}(x,D_{x})+A_{m-1}$\,, $A_{m-1}\in \mathrm{OpS}^{m-1}_{\Psi}(Q;\mathbb{C})$\,, then $A_{\theta}=(a_{m}\otimes \mathrm{Id}_{\mathrm{\mathcal{E}}})_{\varrho,\chi}(x,D_{x})+A_{\theta,m-1}$ with $A_{\theta,m-1}\in \mathrm{OpS}^{m-1}_{\Psi}(Q;\mathrm{End}\,\mathcal{E})$\,. In particular the principal symbol does not depend on the chosen orthonormal frames $(f_{j}^{1},\ldots,f_{j}^{N})$\, $j=1,\ldots,J$\,.\\
If $A$ is self-adjoint and elliptic, the same holds for $A_{\theta}$ with
\begin{eqnarray*}
  D(A_{\theta})=\tilde{\mathcal{W}}^{m}(X;\mathcal{E})&=&\left\{u\in \mathcal{S}'(X;\mathcal{E})\,,\forall j\in \left\{1,\ldots,J\right\}\,,\quad u\big|_{\Omega_{j}}=u_{k}(x)f_{j}^{k}(q)\,, u_{k}\in \tilde{\mathcal{W}}^{m}_{\Omega_{j}-\mathrm{loc}}(T^{*}\Omega_{j};\mathbb{C})\right\}\\
  &=& \left\{u\in \mathcal{S}'(X;\mathcal{E})\,,\, \forall B\in \mathrm{OpS}^{m}_{\Psi}(Q;\mathrm{End}\,\mathcal{E})\,,\, Bu\in L^{2}(X,dqdp;\mathcal{E})\right\}\,.
\end{eqnarray*}
We can conclude with the following summary.
\begin{proposition}
  \label{pr:identWsE}
   Once the quadratic partition of unity $\sum_{j=1}^{J}\theta_{j}^{2}(q)\equiv 1$ and the local orthonormal frames $(f_{j}^{1},\ldots,f_{j}^{N})$\,, $1\leq j\leq J$, are fixed and for $C_{g}\geq C_{g,\theta}$ chosen large enough, the operator 
   $$W^{2}_{\theta}=\sum_{j=1}^{J}\theta_{j}(q)(W^{2})_{\mathrm{sc},j}\theta_{j}(q)\quad\text{with}\quad D(W^{2}_{\theta})=\left\{s\in L^{2}(X,dqdp;\mathcal{E})\,,\, W^{2}_{\theta}s\in L^{2}(X,dqdp;\mathcal{E})\right\}
   $$ is self-adjoint and bounded from below by $1$\,.\\
  For $s\in \mathbb{R}$\,, the space $\tilde{\mathcal{W}}^{s}(X;\mathcal{E})$ introduced in Definition~\ref{de:tW} for $s=k\in \mathbb{N}$ and then extended by interpolation and duality, equals
  \begin{eqnarray*}
    \tilde{\mathcal{W}}^{s}(X;\mathcal{E}) &=& \left\{u\in \mathcal{S}'(X;\mathcal{E})\,,\, \forall B\in \mathrm{OpS}^{s}_{\Psi}(Q;\mathrm{End}\,\mathcal{E})\,,\, Bu\in L^{2}(X,dqdp;\mathcal{E})\right\}\\
             &=&
                 \left\{u\in \mathcal{S}'(X;\mathcal{E})\,,\quad (W^{2}_{\theta})^{s/2}u\in L^{2}(X,dqdp;\mathcal{E})\right\}\\
                                           &=&
 \left\{u\in \mathcal{S}'(X;\mathcal{E})\,,\,\forall j\in \left\{1,\ldots,J\right\}\,, u\big|_{\Omega_{j}}=u_{k}(x)f_{j}^{k}(q)\,,\, W^{s}u_{k}\in L^{2}_{\Omega_{j}-\mathrm{loc}}(T^{*}\Omega_{j},dqdp;\mathcal{E})\right\}
    \\                               
    &=& \left\{u\in \mathcal{S}'(X;\mathcal{E})\,,\quad (C_{s}+(W^{|s|})_{\theta})^{\mathrm{sign}\,s}u\in L^{2}(X,dqdp;\mathcal{E})\right\}\,,
  \end{eqnarray*}
where the constant $C_{s}>0$ is chosen large enough.
\end{proposition}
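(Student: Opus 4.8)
The plan is to reduce everything to the scalar case settled in Proposition~\ref{pr:characWs}, exploiting the \emph{orthonormal} frames $(f_j^1,\ldots,f_j^N)$ over the enlarged charts $\tilde\Omega_j$ together with the fact, established in the discussion preceding the statement, that the localization $A_\theta=\sum_j\theta_j(q)\circ A_{\mathrm{sc},j}\circ\theta_j(q)$ of a scalar operator $A=a_{m,\varrho,\chi}(x,D_x)+A_{m-1}\in\mathrm{OpS}^m_\Psi(Q;\mathbb{C})$ satisfies $A_\theta=(a_m\otimes\mathrm{Id}_\mathcal{E})_{\varrho,\chi}(x,D_x)+A_{\theta,m-1}$ with $A_{\theta,m-1}\in\mathrm{OpS}^{m-1}_\Psi(Q;\mathrm{End}\,\mathcal{E})$, whose principal symbol is frame independent. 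First I would settle the self-adjointness and positivity of $W^2_\theta$. Applying the above to $A=W^2=C_g-\Delta_H+C_g\mathcal{O}^2$ shows $W^2_\theta$ is elliptic in $\mathrm{OpS}^2_\Psi(Q;\mathrm{End}\,\mathcal{E})$ with principal symbol $a_2\otimes\mathrm{Id}_\mathcal{E}$, $a_2$ being the scalar symbol of part a) of the proof of Proposition~\ref{pr:characWs}. For positivity, for $u\in\mathcal{C}^\infty_0(X;\mathcal{E})$ one writes $\langle u,W^2_\theta u\rangle=\sum_{j=1}^J\langle\theta_j u,(W^2)_{\mathrm{sc},j}\theta_j u\rangle$; expanding $\theta_j u=\sum_k u_{k,j}f_j^k$ in the orthonormal frame over $\tilde\Omega_j$, the scalarity of $(W^2)_{\mathrm{sc},j}$ and the orthonormality of $(f_j^k)$ give $\langle\theta_j u,(W^2)_{\mathrm{sc},j}\theta_j u\rangle=\sum_k\langle u_{k,j},W^2 u_{k,j}\rangle_{L^2(X,dqdp;\mathbb{C})}$, to which the scalar lower bound \eqref{eq:positW2} applies; summing over $j,k$ and using $\sum_j\theta_j^2\equiv1$ yields $\langle u,W^2_\theta u\rangle\geq\|u\|_{L^2}^2$ for $C_g\geq C_{g,\theta}$. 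Essential self-adjointness of the symmetric semibounded differential operator $W^2_\theta$ on $\mathcal{S}(X;\mathcal{E})$, with the stated maximal domain, then follows from the Helffer--Sjöstrand based Proposition~\ref{pr:HeSj} used in the $\mathrm{End}\,\mathcal{E}$-valued calculus.

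Next I would treat the functional calculus and the pseudo-differential characterization. Since $W^2_\theta$ is self-adjoint, elliptic, and $\geq1$, Proposition~\ref{pr:HeSj} shows $(W^2_\theta)^{s/2}\in\mathrm{OpS}^s_\Psi(Q;\mathrm{End}\,\mathcal{E})$, elliptic with principal symbol $a_2^{s/2}\otimes\mathrm{Id}_\mathcal{E}$, and in particular admits a parametrix in $\mathrm{OpS}^{-s}_\Psi$. The equality $\{u:(W^2_\theta)^{s/2}u\in L^2\}=\{u:\forall B\in\mathrm{OpS}^s_\Psi(Q;\mathrm{End}\,\mathcal{E}),\ Bu\in L^2\}$ then follows: $\supseteq$ is immediate, and $\subseteq$ comes from writing $B=(B\circ(W^2_\theta)^{-s/2})\circ(W^2_\theta)^{s/2}$ modulo a regularizing operator, the first factor being of order $0$ hence bounded on $L^2(X,dqdp;\mathcal{E})$. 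The same argument applies to $(W^{|s|})_\theta$, which by the preliminary discussion is elliptic of order $|s|$ with principal symbol $a_2^{|s|/2}\otimes\mathrm{Id}_\mathcal{E}$ — the same leading symbol as $(W^2_\theta)^{|s|/2}$, so that $(W^{|s|})_\theta-(W^2_\theta)^{|s|/2}\in\mathrm{OpS}^{|s|-1}_\Psi$ — and since $C_s+(W^{|s|})_\theta\geq1$ is invertible with inverse in $\mathrm{OpS}^{-|s|}_\Psi$, this yields the last characterization.

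Finally I would identify these spaces with Definition~\ref{de:tW} and with the frame-wise local characterization. For $s=k\in\mathbb{N}$, Proposition~\ref{pr:Wkred}-1) reduces membership in $\tilde{\mathcal{W}}^k(X;\mathcal{E})$ to $\varrho_j u\in\tilde{\mathcal{W}}^k_{\Omega_j-\mathrm{comp}}$ for all $j$, with equivalent norm $\max_j\|\varrho_j u\|_{\tilde{\mathcal{W}}^k}$; expanding $\varrho_j u=\sum_k(\varrho_j u)_k f_j^k$ in the orthonormal frame and invoking Proposition~\ref{pr:Wkred}-2) to replace $\nabla^{\mathcal{E}}$ by the trivial connection of that frame, the scalar Proposition~\ref{pr:characWs} shows this is equivalent to $W^k(\varrho_j u)_k\in L^2_{\Omega_j-\mathrm{loc}}$, i.e. to the frame-wise condition in the statement; multiplication by the smooth bounded unitary transition matrices $U_{j,j'}(q)$ preserves the local spaces by Proposition~\ref{pr:contTheta}, so the condition is frame independent. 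The general real $s$ follows by interpolation and duality as for the scalar statement, all four right-hand sides being of the form ``$Eu\in L^2$ for an elliptic self-adjoint $E$ of order $s$'' and hence stable under both operations. I expect the main obstacle to be the functional-calculus step: checking that complex powers and resolvents of the matrix-valued elliptic operator $W^2_\theta$ genuinely lie in $\mathrm{OpS}^\bullet_\Psi(Q;\mathrm{End}\,\mathcal{E})$ with the expected principal symbols, which is where the nonstandard homogeneity of the calculus (weight $1+|\xi|^2+|p|^4+|\eta|^4$) and the compatibility of the frame-dependent assembly $A_\theta$ with the symbolic calculus up to $\mathrm{OpS}^{m-1}_\Psi$ enter; both are supplied by the appendix, so concretely this amounts to invoking Proposition~\ref{pr:HeSj} and the preliminary computation correctly, while the positivity bookkeeping and the interpolation/duality are routine once orthonormality of the frames is used to make $L^2$-norms and symmetry glue.
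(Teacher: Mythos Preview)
Your proposal is correct and follows essentially the same route as the paper: reduction to the scalar case via the discussion preceding the statement, use of Proposition~\ref{pr:HeSj} for the functional calculus of $W^2_\theta$ and for $(W^{|s|})_\theta$, and identification with Definition~\ref{de:tW} via Propositions~\ref{pr:Wkred} and~\ref{pr:characWs} for integer exponents followed by interpolation/duality. Your positivity argument is in fact slightly cleaner than the paper's sketch: by expanding $\theta_j u$ directly in the orthonormal frame $(f_j^k)$ and applying the scalar lower bound componentwise, you avoid the cross terms from $\partial_{q^i}U_{j_1,j_2}(q)$ that the paper mentions having to absorb into $C_{g,\theta}$; both arguments are valid, but yours explains the bound $W^2_\theta\geq 1$ with only the scalar constant $C_g$ entering.
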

\begin{proof}
  We already know that $W^{2}=C_{g}-\Delta_{H}+C_{g}\mathcal{O}^{2}$ is elliptic, self-adjoint and bounded from below  by $1$ for $C_{g}\geq 1$ large enough with domain $D(W^{2})=\tilde{\mathcal{W}}^{2}(X;\mathbb{C})$\,.\\
  With the previous discussion this proves that $W^{2}_{\theta}$ is elliptic and self-adjoint.
  The same computation as \eqref{eq:positW2} shows that $W^{2}_{\theta}\geq 1$\,: Actually the derivatives of the unitary matrix associated with the change of frames, $\partial_{q^{i}}U_{j_{1},j_{2}}(q)$\,, bring lower order terms which are absorbed if $C_{g}=C_{g,\theta}$ is chosen large enough.\\
  For $W^{2}_{\theta}$ with a scalar principal symbol $a_{2}\otimes\mathrm{Id}_{\mathcal{E}}\geq \frac{1}{\kappa}\Psi^{2}\otimes \mathrm{Id}_{\mathcal{E}}$, Proposition~\ref{pr:HeSj} applies and $(W^{2}_{\theta})^{s/2}=f_{s}(W^{2}_{\theta})$ with $f_{s}\in S(\langle t\rangle^{s/2},\frac{dt}{\langle t\rangle^{2}})$ is elliptic with the principal symbol $f_{s}(a_{2})\otimes\mathrm{Id}_{\mathcal{E}}=a_{2}^{s/2}\otimes \mathrm{Id}_{\mathcal{E}}$\,.\\
The local characterization with $u\big|_{\Omega_{j}}=u_{k}(x)f_{j}^{k}(q)$ has been explained and with the reduction of the previous paragraph and Proposition~\ref{pr:characWs} it shows that $D((W^{2}_{\theta})^{k/2})$ coincides with $\tilde{\mathcal{W}}^{k}(X;\mathcal{E})$ when $k\in \mathbb{N}$\,. This ends the identifications of the general spaces $\tilde{\mathcal{W}}^{s}(X;\mathcal{E})$ for $s\in \mathbb{R}$\,.\\
Because $W^{|s|}=(W^{2})^{|s|/2}=f_{|s|}(W^{2})$ is elliptic with the principal symbol $a_{2}^{|s|/2}$ for $s\neq 0$\,, $(W^{|s|})_{\theta}$ is elliptic with the principal symbol $a_{2}^{|s|/2}\otimes \mathrm{Id}_{\mathcal{E}}$\,. It is self-adjoint with the same domain, $\tilde{\mathcal{W}}^{|s|}(X;\mathcal{E})$\,, as $(W^{2}_{\theta})^{|s|/2}$\,. It is bounded from below by Garding inequality. Adding a constant $C_{s}$ ensures that $(C_{s}+(W^{|s|})_{\theta})$ is bounded from below by $1$ and invertible.
\end{proof}

\subsection{Spaces $\tilde{\mathcal{W}}^{s_{1},s_{2}}(X;\mathcal{E})$}
A priori $\mathcal{Y}=g^{ij}(q)p_{i}e_{j}$ belongs to $\mathrm{OpS}_{\Psi}^{3/2}(Q;\mathbb{C})$ but it has locally some specific structure made of $e_{i}\in \mathrm{OpS}_{\Psi}^{1}(\Omega;\mathbb{C})$ and followed by a multiplication by $p_{i}$\,.
We start with a simple commutation result.
\begin{proposition}
\label{pr:commut}
  The self-adjoint operator $(W^{2}_{\theta},D(W^{2}_{\theta})=\tilde{\mathcal{W}}^{2}(X;\mathcal{E}))$ modelled on $W^{2}=C_{g}-\Delta_{H}+C_{g}\mathcal{O}^{2}$ and introduced in Proposition~\ref{pr:identWsE} and the vertical harmonic oscillator $\mathcal{O}$ with the maximal domain
  $D(\mathcal{O})=\left\{u\in L^{2}(X,dqdp;\mathcal{E})\,,\quad \mathcal{O}u\in L^{2}(X,dqdp;\mathbb{C})\right\}$ make a pair of strongly commuting self-adjoint operators:
  For any Borel functions $f,g:\mathbb{R}\to \mathbb{C}$\,, $f(W^{2}_{\theta})g(\mathcal{O})=g(\mathcal{O})f(W^{2}_{\theta})$ on the intersection of their domain.
\end{proposition}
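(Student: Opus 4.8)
The plan is to reduce strong commutation to the commutation of resolvents, the latter being propagated from an elementary algebraic commutation on the common core $\mathcal{S}(X;\mathcal{E})$; the point to keep in mind is that bare commutation on a core is not enough (Nelson's counterexample), so the transfer to the resolvent has to be done carefully, and this is where the global pseudodifferential regularity of the resolvent enters.

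First I would check that $W^{2}_{\theta}$ and $\mathcal{O}$ commute on $\mathcal{S}(X;\mathcal{E})$. For $u\in\mathcal{S}(X;\mathcal{E})$ and $j\in\{1,\dots,J\}$ the section $\theta_{j}u$ is supported in $T^{*}\Omega_{j}\subset T^{*}\tilde{\Omega}_{j}$, hence can be written $\theta_{j}u=\sum_{k}u_{k}f_{j}^{k}$ in the orthonormal frame $(f_{j}^{1},\dots,f_{j}^{N})$. Both $\mathcal{O}$ and $(W^{2})_{\mathrm{sc},j}$ act componentwise in this frame, by the scalar operators $\mathcal{O}$ and $W^{2}=C_{g}-\Delta_{H}+C_{g}\mathcal{O}^{2}$, which satisfy $[W^{2},\mathcal{O}]=0$ as recalled before Definition~\ref{de:W2} (the flow of $e_{i}$ acting isometrically on the fibres). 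Since moreover multiplication by $\theta_{j}(q)$ commutes with $\mathcal{O}$ — the latter only differentiates in $p$ and has coefficients depending on $q$ alone — one obtains $[\theta_{j}(q)(W^{2})_{\mathrm{sc},j}\theta_{j}(q),\mathcal{O}]=0$ for each $j$, whence $[W^{2}_{\theta},\mathcal{O}]=0$ on $\mathcal{S}(X;\mathcal{E})$.

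Next I would propagate this to the resolvent $R_{z}=(W^{2}_{\theta}-z)^{-1}$, $z\notin\mathbb{R}$. By the Helffer--Sjöstrand representation and the calculus of Appendix~\ref{sec:pseudodiff} (Proposition~\ref{pr:HeSj}, used as in Proposition~\ref{pr:identWsE}), $R_{z}\in\mathrm{OpS}^{-2}_{\Psi}(Q;\mathrm{End}\,\mathcal{E})$, so $R_{z}$ maps $\tilde{\mathcal{W}}^{s}(X;\mathcal{E})$ into $\tilde{\mathcal{W}}^{s+2}(X;\mathcal{E})$ for every $s$ and in particular preserves $\mathcal{S}(X;\mathcal{E})=\bigcap_{s\in\mathbb{R}}\tilde{\mathcal{W}}^{s}(X;\mathcal{E})$. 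For $u\in\mathcal{S}(X;\mathcal{E})$ set $v=R_{z}u\in\mathcal{S}(X;\mathcal{E})$; applying $\mathcal{O}$ to $(W^{2}_{\theta}-z)v=u$ and using the commutation just proved (all the terms involved lie in $\mathcal{S}(X;\mathcal{E})$) gives $(W^{2}_{\theta}-z)\mathcal{O}v=\mathcal{O}u$, i.e. $\mathcal{O}R_{z}u=R_{z}\mathcal{O}u$. Since $\mathcal{C}^{\infty}_{0}(X;\mathcal{E})\subset\mathcal{S}(X;\mathcal{E})$ is a core for $(\mathcal{O},D(\mathcal{O}))$ and $\mathcal{O}$ is closed, a routine density argument upgrades this identity to $R_{z}\,\mathcal{O}\subseteq\mathcal{O}\,R_{z}$ on $D(\mathcal{O})$.

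Finally I would invoke the standard fact that a bounded operator $R$ with $R\,\mathcal{O}\subseteq\mathcal{O}\,R$ also satisfies $R(\mathcal{O}-w)^{-1}=(\mathcal{O}-w)^{-1}R$ for every $w\notin\mathbb{R}$ (apply $(\mathcal{O}-w)^{-1}$ on both sides of the inclusion). Thus the resolvents of $W^{2}_{\theta}$ and of $\mathcal{O}$ commute, which is exactly the criterion for the two self-adjoint operators to strongly commute; the joint spectral theorem then yields $f(W^{2}_{\theta})g(\mathcal{O})=g(\mathcal{O})f(W^{2}_{\theta})$ on the intersection of the relevant domains, for all Borel functions $f,g:\mathbb{R}\to\mathbb{C}$. (An alternative would be to apply a common-core commutation theorem, using that $W^{2}_{\theta}+\mathcal{O}$ is elliptic, hence essentially self-adjoint on $\mathcal{S}(X;\mathcal{E})$ by the Appendix; but the resolvent route above is more self-contained.) The only real obstacle is the middle step: algebraic commutation on a core does not by itself give strong commutation, so one genuinely needs the $\mathcal{S}(X;\mathcal{E})$-preservation of $R_{z}$ — i.e. the global pseudodifferential regularity of the resolvent — to transfer the identity; everything afterwards is soft functional analysis.
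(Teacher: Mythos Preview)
Your proof is correct, and it takes a genuinely different route from the paper. Both arguments start the same way, checking that $[W^{2}_{\theta},\mathcal{O}]=0$ on $\mathcal{S}(X;\mathcal{E})$ via the local commutation $[e_{i},\mathcal{O}]=0$ and the fact that $\theta_{j}(q)$ commutes with $\mathcal{O}$. The divergence is in how the algebraic commutation on the core is upgraded to strong commutation.

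The paper works on the $\mathcal{O}$ side: after a fibrewise gauge transformation $(q,p)\mapsto(q,g(q)^{-1/2}p)$ it identifies $\mathcal{O}$ with the constant-fibre euclidean harmonic oscillator $O$, computes explicitly that $e^{itO}$ acts linearly on $(p,D_{p})$ and trivially on $\partial_{q}$, and uses the change-of-variable result (Proposition~\ref{pr:quantchang}) to show that $e^{it\mathcal{O}}$ preserves $\tilde{\mathcal{W}}^{2}(X;\mathcal{E})=D(W^{2}_{\theta})$. With this invariance of the domain under the unitary group and the commutation on the core, it invokes the $C_{0}$-group machinery of \cite{ABG} to conclude $\mathrm{ad}_{\mathcal{O}}W^{2}_{\theta}=0$ in the strong sense.

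You work on the $W^{2}_{\theta}$ side instead: elliptic regularity (or Proposition~\ref{pr:HeSj}) gives that $R_{z}=(W^{2}_{\theta}-z)^{-1}$ preserves $\mathcal{S}(X;\mathcal{E})$, which lets you transfer the commutation identity through the resolvent and then close it on $D(\mathcal{O})$ by density. Your route avoids the external reference \cite{ABG} and the explicit analysis of $e^{it\mathcal{O}}$, relying only on the pseudodifferential parametrix/bootstrap already built in the Appendix; the paper's route is perhaps more hands-on about the group action but outsources the final functional-analytic step. Either way the essential obstacle (Nelson-type failure of naive core commutation) is overcome by an invariance property—of $D(W^{2}_{\theta})$ under $e^{it\mathcal{O}}$ in the paper, of $\mathcal{S}$ under $R_{z}$ in your argument.
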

\begin{proof}
  The space $L^{2}(X,dqdp;\mathcal{E})$ is isomorphic to the direct integral $\int_{Q}^{\oplus}L^{2}(\mathbb{R}^{d},dp)\,d\mathrm{vol}_{g}(q)$ after the pointwise gauge transformation $(q,p)\mapsto (q,g(q)^{-1/2}.p)$\,. In this direct integral decomposition the operator $\mathcal{O}$ is nothing but $\int_{Q}^{\oplus}O~d\mathrm{vol}_{g}(q)$ where $O=\sum_{j=1}^{d}\frac{-\partial_{p_{j}}^{2}+p_{j}^{2}}{2}$ is the euclidean harmonic oscillator.\\
  The associated unitary group $e^{itO}$ satisfies $e^{itO}\partial_{q^{i}}e^{-itO}=\partial_{q^{i}}$\,, $e^{itO}p_{i}e^{-itO}=\cos(t)p_{i}-\sin(t)D_{p_{i}}$ and $e^{itO}D_{p_{i}}e^{-itO}=\sin(t)p_{i}+\cos(t)D_{p_{i}}$\,. We deduce that for any $t\in \mathbb{R}$\,, $e^{itO}$ is continuous from $\tilde{\mathcal{W}}^{2}(X;\mathbb{C})$ into itself, and therefore as a scalar operator from $\tilde{\mathcal{W}}^{2}(X;\mathcal{E})$ into itself.\\
  Because the unitary transform $U_{\Phi}:L^{2}(X;\mathcal{E})\to \int_{Q}^{\oplus}L^{2}(\mathbb{R}^{d})~d\mathrm{vol}_{g}(q)$ given by $(U_{\Phi}u)(q,p')=u(q,g(q)^{1/2}.p)$ is a special case of Proposition~\ref{pr:quantchang} (it suffices to consider locally the effect on the scalar components). It is an isomorphism of $\mathcal{W}^{2}(X;\mathcal{E})=D(W^{2}_{\theta})$ and $e^{it\mathcal{O}}$ is continuous from $D(W^{2}_{\theta})$ into itself for any $t\in \mathbb{R}$\,.\\
  Because the scalar operator $W^{2}$ and $\mathcal{O}$ commute on $\mathcal{S}(X;\mathbb{C})$ we deduce that $[W^{2}_{\theta},\mathcal{O}]=0$ on $\mathcal{S}(X;\mathcal{E})$ which is a core for $W^{2}_{\theta}$\,.\\
  We have all the ingredients of \cite{ABG} in order to conclude that
  $$
\mathrm{ad}_{\mathcal{O}}W^{2}_{\theta}=i\frac{d}{dt}e^{-it\mathcal{O}}W_{\theta}^{2}e^{it\mathcal{O}}\big|_{D(W^{2}_{\theta})}=0\,,
$$
and $W^{2}_{\theta}$ and $\mathcal{O}$ strongly commute.
\end{proof}
This leads to the introduction of the following, double indexed, spaces.
\begin{definition}
  \label{de:Ws1s2}
  For any $s_{1},s_{2}\in \mathbb{R}$\,, the space $\tilde{\mathcal{W}}^{s_{1},s_{2}}(X;\mathcal{E})$ is the space associated with the functional calculus of the two commuting self-adjoint operators $\mathcal{O}$ and $W^{2}_{\theta}$ and endowed with the Hilbert norm
$$
\|u\|_{\tilde{\mathcal{W}}^{s_{1},s_{2}}}=\|\mathcal{O}^{s_{1}/2}(W^{2}_{\theta})^{s_{2}/2}u\|_{L^{2}}\,.
$$
\end{definition}
In particular the space $\tilde{\mathcal{W}}^{1,s}(T^{*}Q;\mathcal{E})$ of Definition~\ref{de:tW} with the norm
$$
\|u\|_{\tilde{\mathcal{W}}^{1,s}}=\|\mathcal{O}^{1/2}u\|_{\tilde{\mathcal{W}}^{s}}=\|(W^{2}_{\theta})^{s/2}\mathcal{O}^{1/2}u\|_{L^{2}}
$$
is the particular case $s_{1}=1$\,, $s_{2}=s$\,. Clearly the spaces $\tilde{\mathcal{W}}^{s_{1},s_{2}}(X;\mathcal{E})$ contain a finer description of the regularity properties. With
$$
\|u\|_{\tilde{\mathcal{W}}^{s_{1},s_{2}}}^{2}=\langle (W^{2}_{\theta})^{s_{2}/2}\mathcal{O}^{(s_{1}-1)/2}u\,,\, \mathcal{O}(W^{2}_{\theta})^{s_{2}/2}\mathcal{O}^{(s_{1}-1)/2}u\rangle_{L^{2}}\leq \|(W^{2}_{\theta})^{(s_{2}+1/2)/2}\mathcal{O}^{(s_{1}-1)/2}u\|_{L^{2}}^{2}=\|u\|_{\tilde{\mathcal{W}}^{s_{2}+1/2,s_{1}-1}}^{2}\,.
$$
for $s_{1}\geq 1$\,, we deduce the continuous embeddings $\tilde{\mathcal{W}}^{0,s_{2}+s_{1}/2}(X;\mathcal{E})\subset \mathcal{W}^{s_{1},s_{2}}(X;\mathcal{E})$ for $s_{1}\geq 0$ and by duality
$\tilde{\mathcal{W}}^{s_{1},s_{2}}(X;\mathcal{E})\subset \tilde{\mathcal{W}}^{s_{1},s_{2}+s_{1}/2}(X;\mathcal{E})$ for $s_{1}<0$\,. We will essentially work with $s_{1}\in \left\{0,1\right\}$\,.\\
As a first order differential operator with respect to $q$ the operator $\nabla_{\mathcal{Y}}^{\mathcal{E}}$ can be written
$$
\nabla_{\mathcal{Y}}^{\mathcal{E}}=\sum_{j=1}^{J}\theta_{j}(q)\nabla_{\mathcal{Y}}^{\mathcal{E}}\theta_{j}(q)=\sum_{j=1}^{J}\theta_{j}(q)[g^{ii'}(q)p_{i'}\nabla_{e_{i'}}\big|_{T^{*}\tilde{\Omega}_{j}}]\theta_{j}(q)\,,
$$
where $g^{ii'}(q)p_{i}\nabla_{e_{i'}}^{\mathcal{E}}\big|_{\mathcal{T}^{*}\tilde{\Omega}_{j}}$ is expressed with the local coordinates in $\tilde{\Omega}_{j}$\,.\\
With the cut-off function $\tilde{\chi}_{j}\in \mathcal{C}^{\infty}_{0}(\tilde{\Omega}_{j};[0,1])$ such that $\tilde{\chi}_{j}\equiv 1$ in a neighborhood of $\mathrm{supp}\,\theta_{j'}(q)$ when $\Omega_{j}\cap \Omega_{j'}\neq \emptyset$\,, we can introduce the local scalar operator
\begin{eqnarray}
  \label{eq:hatpji}
  &&\hat{p}_{j,i}=\tilde{\chi}_{j}(q)p_{i}\otimes \mathrm{Id}_{\mathcal{E}}\quad,\quad \hat{D}_{j,i}=\tilde{\chi}_{j}(q)D_{p_{i}}\,,
  \\
  \label{eq:hatEji}  
  \text{while}
  &&\hat{E}_{j,i}=\theta_{j}(q)g^{ii'}(q)\nabla_{e_{i'}}^{\mathcal{E}}\theta_{j}(q)\in \mathrm{OpS}^{1}_{\Psi}(Q;\mathrm{End}\,\mathcal{E})\\
  \label{eq:hatEji12}
  \text{with}&&
                \hat{E}_{j,i}-(\theta_{j}(q)g^{ii'}(q)e_{i'}\theta_{j}(q)\otimes \mathrm{Id}_{\mathcal{E}})\in \mathrm{OpS}^{0}_{\Psi}(Q;\mathrm{End}\,\mathcal{E})\,.
\end{eqnarray}
We have in particular
$$
\nabla^{\mathcal{E}}_{\mathcal{Y}}=\sum_{j=1}^{J}\sum_{i=1}^{d}\hat{E}_{j,i}\circ \hat{p}_{j,i}\,.
$$
\begin{proposition}
  \label{pr:nablaY}
  Let $\hat{p}_{j,i}$\,, $\hat{D}_{j,i}$ and $\hat{E}_{j,i}$\,, $j\in \left\{1,\ldots,J\right\}$\,, $i\in \left\{1,\ldots,d\right\}$\,, be the operators defined by \eqref{eq:hatpji} and \eqref{eq:hatEji}\,.
  For any $s\in \mathbb{R}$ we have the estimates:
  \begin{itemize}
  \item $\|\hat{p}_{j,i}\|_{\mathcal{L}(\tilde{\mathcal{W}}^{1,s};\tilde{\mathcal{W}}^{0,s})}+\|\hat{D}_{j,i}\|_{\mathcal{L}(\tilde{\mathcal{W}}^{1,s};\tilde{\mathcal{W}}^{0,s})}\leq C_{g,s}$\,;
  \item $\|(W^{2}_{\theta})^{s/2}\hat{p}_{j,i}(W^{2}_{\theta})^{-s/2}-\hat{p}_{j,i}\|_{\mathcal{L}(\tilde{\mathcal{W}}^{1,0};\tilde{\mathcal{W}}^{0,1})}+\|(W^{2}_{\theta})^{s/2}\hat{D}_{j,i}(W^{2}_{\theta})^{-s/2}-\hat{D}_{j,i}\|_{\mathcal{L}(\tilde{\mathcal{W}}^{1,0};\tilde{\mathcal{W}}^{0,1})}\leq C_{g,s}$\,;
  \item $\|(W^{2}_{\theta})^{s/2}\nabla_{\mathcal{Y}}^{\mathcal{E}}(W^{2}_{\theta})^{-s/2}-\nabla_{\mathcal{Y}}^{\mathcal{E}}\|_{\mathcal{L}(\tilde{\mathcal{W}}^{1,0};L^{2})}\leq C_{g,s}$\,.
  \end{itemize}
\end{proposition}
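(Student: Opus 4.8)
The plan is to run the three bullets in order, reducing everything to three ingredients. First, the elementary $L^{2}$-bounds $\|\hat p_{j,i}v\|_{L^{2}}+\|\hat D_{j,i}v\|_{L^{2}}\le C_{g}\|\mathcal{O}^{1/2}v\|_{L^{2}}$, immediate from $-\Delta_{p}\ge 0$, $\mathcal{O}\ge\frac12|p|_{q}^{2}$, $\mathcal{O}\ge-\frac12\Delta_{p}$, $\mathcal{O}\ge d/2$ (in quadratic-form sense) and the uniform comparability of $g(q)$ with the Euclidean metric on the compact $Q$. Second, the pseudo-differential calculus of Appendix~\ref{sec:pseudodiff}: that $(W^{2}_{\theta})^{t/2}\in\mathrm{OpS}^{t}_{\Psi}(Q;\mathrm{End}\,\mathcal{E})$ is elliptic with principal symbol $a_{2}^{t/2}\otimes\mathrm{Id}_{\mathcal{E}}$ (established in the proof of Proposition~\ref{pr:identWsE}), together with $L^{2}$-continuity of $\mathrm{OpS}^{0}_{\Psi}$, the mapping property $\mathrm{OpS}^{m}_{\Psi}\colon\tilde{\mathcal{W}}^{\sigma}\to\tilde{\mathcal{W}}^{\sigma-m}$ (recall $\tilde{\mathcal{W}}^{\sigma}=\tilde{\mathcal{W}}^{0,\sigma}$ by Proposition~\ref{pr:identWsE}), and the composition and commutator rules. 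Third, the strong commutativity of $W^{2}_{\theta}$ with $\mathcal{O}$ from Proposition~\ref{pr:commut}, which lets us conjugate between $\|\cdot\|_{\tilde{\mathcal{W}}^{1,s}}=\|\mathcal{O}^{1/2}(W^{2}_{\theta})^{s/2}\cdot\|_{L^{2}}$ and $\|\cdot\|_{\tilde{\mathcal{W}}^{1,0}}=\|\mathcal{O}^{1/2}\cdot\|_{L^{2}}$.

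For the first bullet, put $v=(W^{2}_{\theta})^{s/2}u$; by Proposition~\ref{pr:commut}, $\|\hat p_{j,i}u\|_{\tilde{\mathcal{W}}^{0,s}}=\|(W^{2}_{\theta})^{s/2}\hat p_{j,i}(W^{2}_{\theta})^{-s/2}v\|_{L^{2}}$ and $\|u\|_{\tilde{\mathcal{W}}^{1,s}}=\|v\|_{\tilde{\mathcal{W}}^{1,0}}$, so it suffices to bound $(W^{2}_{\theta})^{s/2}\hat p_{j,i}(W^{2}_{\theta})^{-s/2}=\hat p_{j,i}+R_{s,j,i}$ from $\tilde{\mathcal{W}}^{1,0}$ to $L^{2}$, where $R_{s,j,i}=[(W^{2}_{\theta})^{s/2},\hat p_{j,i}](W^{2}_{\theta})^{-s/2}$. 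The term $\hat p_{j,i}$ is handled by the first ingredient, while $\hat p_{j,i}\in\mathrm{OpS}^{1/2}_{\Psi}$ gives $[(W^{2}_{\theta})^{s/2},\hat p_{j,i}]\in\mathrm{OpS}^{s-1/2}_{\Psi}$, hence $R_{s,j,i}\in\mathrm{OpS}^{-1/2}_{\Psi}$, which is $L^{2}$-bounded and a fortiori bounded from $\tilde{\mathcal{W}}^{1,0}\hookrightarrow L^{2}$ to $L^{2}$; the same argument applies to $\hat D_{j,i}$.

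The second bullet is the crux, and a plain order count does not close it: one needs $\|(W^{2}_{\theta})^{1/2}R_{s,j,i}u\|_{L^{2}}\le C\|\mathcal{O}^{1/2}u\|_{L^{2}}$, but $(W^{2}_{\theta})^{1/2}R_{s,j,i}\in\mathrm{OpS}^{1/2}_{\Psi}$ is only bounded from the honest Sobolev space $\tilde{\mathcal{W}}^{1/2}$, which is strictly smaller than $\tilde{\mathcal{W}}^{1,0}=D(\mathcal{O}^{1/2})$ (the latter controls only the fibre directions). The structure to exploit is that $\hat p_{j,i}=\tilde\chi_{j}(q)p_{i}$ and $\hat D_{j,i}=\tilde\chi_{j}(q)D_{p_{i}}$ have symbols independent of the variable $\xi$ dual to $q$ and of degree $\le 1$ in $(p,\eta)$, whereas the symbol $a_{2}$ of $W^{2}_{\theta}$ depends on $(p,\eta)$ only through $|\xi+\Gamma^{k}_{..}(q)p_{k}\eta|_{q}^{2}$ and $(|p|_{q}^{2}+|\eta|_{q}^{2})^{2}$. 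Inspecting the symbol expansion of $[(W^{2}_{\theta})^{s/2},\hat p_{j,i}]$ one sees that its leading ($|\alpha|=1$) terms each carry a factor which is either a leftover $p_{k}$ (from differentiating $\tilde\chi_{j}(q)p_{i}$ in $q$) or a factor $\partial_{\eta_{k}}a_{2}^{s/2}$, itself containing a $p$ or an $\eta$ because $\partial_{\eta}a_{2}$ does; and since $\hat p_{j,i}$ must be differentiated in $q$ (or once in $p$, which then exhausts it) to contribute, each such term also loses at least one order in $\xi$ relative to $a_{2}^{s/2}$. Hence, modulo a remainder in $\mathrm{OpS}^{s-3/2}_{\Psi}$, one can write $[(W^{2}_{\theta})^{s/2},\hat p_{j,i}]=\sum_{\ell}B_{\ell}V_{\ell}$ with $B_{\ell}\in\mathrm{OpS}^{s-1}_{\Psi}$ and $V_{\ell}$ of the form $\hat\chi(q)p_{k}$, $\hat\chi(q)D_{p_{k}}$ or $\hat\chi(q)$, $\hat\chi$ supported in $\tilde\Omega_{j}$; the same holds for $\hat D_{j,i}$, using also that $\partial_{p}a_{2}$ contains a $p$ or an $\eta$. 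Moving each $V_{\ell}$ to the left of $(W^{2}_{\theta})^{-s/2}$ (commutator in $\mathrm{OpS}^{-s-1/2}_{\Psi}$) gives $R_{s,j,i}=\sum_{\ell}B_{\ell}'V_{\ell}+\tilde R$ with $B_{\ell}'\in\mathrm{OpS}^{-1}_{\Psi}$, $\tilde R\in\mathrm{OpS}^{-3/2}_{\Psi}$. Then $(W^{2}_{\theta})^{1/2}B_{\ell}'\in\mathrm{OpS}^{-1/2}_{\Psi}$ is $L^{2}$-bounded, each $V_{\ell}$ maps $\tilde{\mathcal{W}}^{1,0}\to L^{2}$ boundedly by the first ingredient, and $(W^{2}_{\theta})^{1/2}\tilde R\in\mathrm{OpS}^{-1}_{\Psi}$ is $L^{2}$-bounded, so $R_{s,j,i}\colon\tilde{\mathcal{W}}^{1,0}\to\tilde{\mathcal{W}}^{0,1}$ is bounded; likewise for $\hat D_{j,i}$.

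Finally, for the third bullet, write $\nabla^{\mathcal{E}}_{\mathcal{Y}}=\sum_{j,i}\hat E_{j,i}\hat p_{j,i}$ and $(W^{2}_{\theta})^{s/2}\hat E_{j,i}(W^{2}_{\theta})^{-s/2}=\hat E_{j,i}+S_{j,i}$ with $S_{j,i}=[(W^{2}_{\theta})^{s/2},\hat E_{j,i}](W^{2}_{\theta})^{-s/2}\in\mathrm{OpS}^{0}_{\Psi}$ (as $\hat E_{j,i}\in\mathrm{OpS}^{1}_{\Psi}$ by \eqref{eq:hatEji}), and $\hat p_{j,i}'=\hat p_{j,i}+R_{s,j,i}$ as above. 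Then $(W^{2}_{\theta})^{s/2}\nabla^{\mathcal{E}}_{\mathcal{Y}}(W^{2}_{\theta})^{-s/2}-\nabla^{\mathcal{E}}_{\mathcal{Y}}=\sum_{j,i}(\hat E_{j,i}R_{s,j,i}+S_{j,i}\hat p_{j,i}+S_{j,i}R_{s,j,i})$, where $\hat E_{j,i}R_{s,j,i}$ factors as $\tilde{\mathcal{W}}^{1,0}\xrightarrow{R_{s,j,i}}\tilde{\mathcal{W}}^{0,1}\xrightarrow{\hat E_{j,i}}L^{2}$ by the second bullet, $S_{j,i}\hat p_{j,i}$ as $\tilde{\mathcal{W}}^{1,0}\xrightarrow{\hat p_{j,i}}L^{2}\xrightarrow{S_{j,i}}L^{2}$ by the first ingredient, and $S_{j,i}R_{s,j,i}$ as $\tilde{\mathcal{W}}^{1,0}\to\tilde{\mathcal{W}}^{0,1}\hookrightarrow L^{2}\xrightarrow{S_{j,i}}L^{2}$, all uniformly over the finitely many $(j,i)$; summing gives the third estimate. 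The one genuine difficulty is the second bullet: a bare $\mathrm{OpS}_{\Psi}$ order count is off by half an order against the weak space $\tilde{\mathcal{W}}^{1,0}$, and one must extract the missing vertical factor (a $D_{p}$ or a $p$) from the precise commutator structure of $W^{2}_{\theta}$ with the fibre operators $\hat p_{j,i},\hat D_{j,i}$.
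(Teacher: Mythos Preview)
Your approach is correct and essentially the same as the paper's. Both proofs recognize that a naive order count fails for the second bullet and that one must extract a vertical factor ($p_{k}$ or $D_{p_{k}}$) from the Poisson bracket structure of the commutator; the paper does this by writing the principal symbol of $(W^{2}_{\theta})^{s/2}A(W^{2}_{\theta})^{-s/2}-A$ explicitly as a sum of $\sharp$-products where one factor (e.g.\ $w^{-1}\partial_{p_{k}}w^{2}$, $w^{-1}\partial_{\eta_{k}}w^{2}$, $\partial_{q}a$) maps $\tilde{\mathcal{W}}^{1,0}\to L^{2}$ and the other (e.g.\ $w^{-1}\partial_{\eta_{k}}a$, $w^{-1}\partial_{p_{k}}a$, $w^{s}\partial_{\xi}w^{-s}$) maps $L^{2}\to\tilde{\mathcal{W}}^{0,1}$, while you organize the same factorization on the commutator $[(W^{2}_{\theta})^{s/2},\hat p_{j,i}]$ first and then push the vertical factors through $(W^{2}_{\theta})^{-s/2}$. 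The third bullet is handled identically in spirit, with the paper writing two terms and you three. Two harmless slips: $(W^{2}_{\theta})^{1/2}B_{\ell}'\in\mathrm{OpS}^{0}_{\Psi}$ (not $\mathrm{OpS}^{-1/2}_{\Psi}$) and $(W^{2}_{\theta})^{1/2}\tilde R\in\mathrm{OpS}^{-1/2}_{\Psi}$ (not $\mathrm{OpS}^{-1}_{\Psi}$), but both are still $L^{2}$-bounded so your conclusions stand.
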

\begin{proof}
  All the operators and commutators are well defined continuous operators onn the space of smooth rapidly decaying (w.r.t $p$) sections, $\mathcal{S}(X;\mathcal{E})$\,. The estimates are then extended by density.\\
  For $A=\hat{p}_{j,i}$ or $\hat{D}_{j,i}$ we know $A\in \mathrm{OpS}^{1/2}_{\Psi}(Q;\mathcal{E})$ while $A$ and $(W^{2}_{\theta})^{\pm s}\in \mathrm{OpS}^{\pm s}_{\Psi}(Q;\mathrm{End}\,\mathcal{E})$ have scalar principal symbols.
  We deduce
  $$
(W^{2}_{\theta})^{s}A(W^{2}_{\theta})^{-s}-A\in \mathrm{OpS}^{-1/2}_{\Psi}(Q;\mathrm{End}\,\mathcal{E})\subset \mathcal{L}(\tilde{\mathcal{W}}^{1,0}(X;\mathcal{E});L^{2}(X;\mathcal{E}))\,,
$$
and $\|A\|_{\mathcal{L}(\tilde{\mathcal{W}}^{1,s};\tilde{\mathcal{W}}^{0,s})}=\|(W^{2}_{\theta})^{s/2}A(W^{2}_{\theta})^{-s/2}\|_{\mathcal{L}(\tilde{\mathcal{W}}^{1,0};L^{2})}\leq C_{g,s}$\,.\\
For the second estimate we need a more accurate decomposition of $(W^{2}_{\theta})^{s/2}A(W^{2}_{\theta})^{-s/2}-A$\,. Let us write $A=a(q,p,D_{p})$ with the local coordinate writing, $a(q,p,\eta)=\tilde{\chi}_{j}(q)p_{i}$ when $A=\hat{p}_{j,i}$ and $a(q,p,\eta)=\tilde{\chi}_{j}(q)\eta_{i}$ when $A=\hat{D}_{j,i}$\,, and let $w(q,p,\xi,\eta)=(C_{g}+|\xi-\Gamma_{..}^{k}p_{k}\eta|_{g}^{2}+C_{g}/4(|p|_{g}^{2}+|\eta|_{g}^{2})^{2})^{1/2}$ be the principal scalar symbol of $W^{2}_{\theta}$\,. If we forget the tensor product with $\mathrm{Id}_{\mathcal{E}}$\,, we have
$$
(W^{2}_{\theta})^{\pm s}-w^{\pm s}(q,p,D_{q},D_{p})=R^{\pm s-1}\in \mathrm{OpS}^{\pm s -1}_{\Psi}(Q;\mathrm{End}\,\mathcal{E})
$$
and
\begin{eqnarray*}
&&
   (W^{2}_{\theta})^{s}A(W^{2}_{\theta})^{-s}-A=\underbrace{w^{s}(q,p,D_{q},D_{p})\circ A\circ w^{-s}(q,p,D_{q},D_{p})-A}_{=A_{1,s}}+ A_{2,s} +R_{s}
  \\
  \text{with}&& A_{2,s}= (W^{2}_{\theta})^{s/2}R^{-s-1}A+R^{s-1}w^{-1}(q,p,D_{q},D_{p})A\in \mathcal{L}(\tilde{\mathcal{W}}^{1,0};\tilde{\mathcal{W}}^{0,1})\,,\\
  \text{and}&&
R_{s}=(W^{2}_{\theta})^{s/2}[A,R^{-s-1}]+R^{s-1}[A,w^{-s}(q,p,D_{q},D_{p})]\in 
\mathrm{OpS}^{-3/2}_{\Psi}(Q;\mathrm{End}\,\mathcal{E})\subset \mathcal{L}(\tilde{\mathcal{W}}^{1,0};\tilde{\mathcal{W}}^{0,1})\,.
\end{eqnarray*}
By pseudo-differential calculus the symbol of $iA_{1,s}$ equals
\begin{multline*}
  w^{s}\partial_{\eta}a.\partial_{p}(w^{-s})-w^{s}\partial_{p}a.\partial_{\eta}(w^{-s})-w^{s}\partial_{q}a.\partial_{\xi}w^{-s}+r_{s}\\
  =\frac{s}{2}\sum_{k=1}^{d}-(w^{-1}\partial_{\eta_{k}}a)\sharp (w^{-1}\partial_{p}w^{2})+(w^{-1}\partial_{p_{k}}a)\sharp (w^{-1}\partial_{\eta_{k}}w^{2})+ w^{s}\partial_{\xi}w^{-s}\sharp \partial_{q}a +r_{s}'
\end{multline*}
with $r_{s},r_{s}'\in S^{-3/2}_{\Psi}(Q;\mathbb{C})$\,.\\
An explicit computation shows that the operators $(w^{-1}\partial_{p_{k}}w^{2})(q,p,D_{q},D_{p})$\,,\, $(w^{-1}\partial_{p_{k}}w^{2})(q,p,D_{q},D_{p})$ and $\partial_{q}a(q,p,D_{p})$ belong to $\mathcal{L}(\tilde{\mathcal{W}}^{1,0}(X;\mathcal{E});L^{2}(X;\mathcal{E}))$\,.\\
The operators $(w^{-1}\partial_{\eta_{k}}a)(q,p,D_{q},D_{p})$\,, $(w^{-1}\partial_{p_{k}}a)(q,p,D_{q},D_{p})$ belong to $\mathcal{L}(L^{2}(X;\mathcal{E});\tilde{\mathcal{W}}^{0,1}(X;\mathcal{E}))$\,.\\
Finally the remainder $r_{s}'(q,p,D_{q},D_{p})\in \mathrm{OpS}^{-3/2}_{\Psi}(Q;\mathbb{C})\subset \mathcal{L}(\tilde{\mathcal{W}}^{1,0}(X;\mathcal{E});\tilde{\mathcal{W}}^{0,1}(X;\mathcal{E}))$\,.\\
This ends the proof of
$$
\|(W^{2}_{\theta})^{s}A(W^{2}_{\theta})^{-s}-A\|_{\mathcal{L}(\tilde{\mathcal{W}}^{1,0};\tilde{\mathcal{W}}^{0,1})}\leq C_{g,s} \quad\text{for}~A=\hat{p}_{j,k}~\text{or}~\hat{D}_{j,i}\,.
$$
We split $(W^{2}_{\theta})\nabla_{\mathcal{Y}}^{\mathcal{E}}(W^{2}_{\theta})^{-s}-\nabla_{\mathcal{Y}}^{\mathcal{E}}$ into
\begin{multline*}
\sum_{j=1}^{J}(W^{2}_{\theta})^{s}\hat{E}_{j,i}\hat{p}_{j,i}(W^{2}_{\theta})^{-s}-\hat{E}_{j,i}\hat{p}_{j,i}
=\sum_{j=2}^{J}\left[(W^{2}_{\theta})^{s}\hat{E}_{j,i}(W^{2}_{\theta})^{-s}-\hat{E}_{j,i}\right]\circ (W^{2}_{\theta})^{s}\hat{p}_{j,i}(W^{2}_{\theta})^{-s}
\\
+\hat{E}_{j,i}\circ \left[(W^{2}_{\theta})^{s}\hat{p}_{j,i}(W^{2}_{\theta})^{-s}-\hat{p}_{j,i}\right]
\end{multline*}
The factor $\left[(W^{2}_{\theta})^{s}\hat{E}_{j,i}(W^{2}_{\theta})^{-s}-\hat{E}_{j,i}\right]$ belongs to $\mathrm{OpS}^{0}_{\Psi}(Q;\mathrm{End}\,\mathcal{E})\subset \mathcal{L}(L^{2}(X;\mathcal{E});L^{2}(X;\mathcal{E}))$ while the operator $(W^{2}_{\theta})^{s}\hat{p}_{j,i}(W^{2}_{\theta})^{-s}$ belongs to $\mathcal{L}(\tilde{\mathcal{W}}^{1,0}(X;\mathcal{E});L^{2}(X;\mathcal{E}))$\,.\\
The operator $(W^{2}_{\theta})^{s}\hat{p}_{j,i}(W^{2}_{\theta})^{-s}-\hat{p}_{j,i}$ belongs to $\mathcal{L}(\tilde{\mathcal{W}}^{1,0}(X;\mathcal{E});\tilde{\mathcal{W}}^{0,1}(X;\mathcal{E}))$ while the factor $\hat{E}_{j,i}$ belongs to $\mathrm{OpS}^{1}_{\Psi}(Q;\mathrm{End}\,\mathcal{E})\subset \mathcal{L}(\tilde{\mathcal{W}}^{0,1}(X;\mathcal{E});L^{2}(X;\mathcal{E}))$\,.
\end{proof}
\section{A priori estimates on the scalar GKFP operator}
\label{sec:localization}
In this section we work directly with the localized scalar version of GKFP operators. The results of this section will then applied to the operators  $P_{\pm,b}^j$'s of Subsection~\ref{sec:locchangeconnec}.
From now on, we focus the analysis to the case $\pm=+$, because the other case $\pm=-$ is the same, and we write simply $P_{b}^j$  and all the forthcoming related operators without the $\pm$ index.\\
The chart coordinates open set $\Omega$  in $Q$ is fixed and any coordinate system allows the identification $T^*\Omega=\Omega\times\R^d\subset \R^{2d}_{q,p}$\,. The symplectic volume on $\Omega\times\R^d$\,, is the usual Lebesgue measure $dqdp$ and the corresponding $L^2(\Omega\times\R^d,dqdp;\C)$-norm will be denoted simply by $\norm{~}_{L^2}$\,.
We consider a scalar GKFP operator 
\begin{eqnarray*}
&&    \mathcal{P}_b  = \frac{1}{b^2} \mathcal{O} + \frac{1}{b} \mathcal{Y}, \ \ b \in (0,\infty),
\\
\text{with}
&&
\mathcal{Y}=g^{ij}(q)p_j e_i\quad,\quad \mathcal{O}=\frac{-g_{ij}(q)\partial_{p_i}\partial_{p_j}+g^{ij}(q)p_ip_j}{2}\,,
\end{eqnarray*}
with  the domain
\begin{align}
	D(\mathcal{P}_b) = C^\infty_0(\Omega\times\R^d; \C).
\end{align}
By assuming $\Omega\subset\subset\Omega_1$ where $\Omega_1$ is a bigger chart coordinates open subset of $Q$\,, we can assume $g\big|_{\Omega_{1/2}}=\tilde{g}\big|_{\Omega_{1/2}}$ where $\tilde{g}$ is a riemannian metric on $\R^d$ which is euclidean outside a compact set, and $\Omega_{1/2}$ is an open neighborhood of $\Omega$ such that $\Omega\subset\subset \Omega_{1/2}\subset\subset \Omega_1$\,.\\
Alternatively the local scalar GKFP operators $\mathcal{P}_{b}$ can be introduced directly on $\Omega\times\R^d\subset \R^{2d}$ with a metric $g$ which is a compactly supported  perturbation of the euclidean metric.

\subsection{Dyadic partition of unity}\label{subSec:DyadicPartitionOfUnity}
By following \cite{Leb1}\cite{Leb2} or \cite{BCD}, let $\theta, \tilde{\theta} \in C^\infty_0(\R)$ be such that $\supp{\theta} \subset \left[\frac{1}{4}, 4 \right]$, $\supp{\tilde{\theta}} \subset \left[0, 4 \right]$, and
\begin{equation}
  \label{eq:dyadic}
  \forall t\in [0,\infty), \phantom{ccc}  \tilde{\theta}^{2}(4t^{2}) + \sum_{\ell=0}^{\infty} \theta^{2} \left(2^{-2 \ell} t^2 \right) = 1.
\end{equation}
For $x \in T^*\Omega$ and $\ell \in \N \cup \{-1\}$, set
\begin{align}
	\theta_\ell(x) = \begin{cases}
 					\theta \left(2^{-2 \ell} \abs{p}^2_q\right), \ \ &\ell > -1, \\
 					\tilde{\theta}(4 \abs{p}^2_q), \ \ &\ell = -1.
 					\end{cases}
\end{align}
The collection $\{\theta_\ell\}_{\ell = -1}^\infty$ constitutes a quadratic dyadic partition of unity for $T^*\Omega$ in the sense that
\begin{align}
	\forall x \in T^*\Omega,\ \ \sum_{\ell = -1}^\infty \theta^2_\ell(x) = 1\,,
\end{align}
with
\begin{align}
	\supp{\theta_\ell} \subset \{2^{\ell-1} \le \abs{p}_q \le 2^{\ell+1} \} \ \ \ \mathrm{whenever} \ \ \ \ell>-1\,,
\end{align}
and
\begin{align}
	\supp{\theta_{-1}} \subset \{0 \le \abs{p}_q \le 1\}, \ \mathrm{and} \ \ \theta_{-1}(x) = 1 \ \mathrm{for} \ 0 \le \abs{p}_q \le \frac{1}{2}.
\end{align}
Notice that because  $\theta_{\ell}$ is a function of  $\abs{p}^2_q$\,, $\theta_{\ell}$   satisfies
\begin{align} \label{Y_kills_theta_ell}
	\forall \ell \in \N \cup \{-1\}, \ \ \mathcal{Y} \theta_\ell \equiv 0.
\end{align}

 When $(q,p)$ are canonical coordinates on $T^* \Omega=\Omega\times\R^d$\,, we also observe
\begin{align} \label{p_derivative_estimates_thetas}
   \forall \alpha\in \mathbb{N}^d, \ \exists C_{\alpha}>0\,, \ \forall \ell \in \N \cup \{-1\}\,, &\quad \sup_{x\in T^* \Omega} |\p_{p}^{\alpha}\theta_{\ell}(x)| \leq C_{\alpha} 2^{-|\alpha|\ell}.
\end{align}

\begin{proposition}\label{pr:equivNormAfterDyadicPartition}
There exists a constant $C_{g,\theta, \tilde{\theta}}\geq 1$ depending only on the metric $g$ and the functions $\theta$ and $\tilde{\theta}$ so that
  \begin{equation}\label{eq:EquivDyadicPartition}
    \frac{1}{4} \sum_{\ell} \norm{\left(\frac{\kappa_{b}}{b^{2}}+\mathcal{P}_{b}-\frac{i\lambda}{b} \right) \theta_{\ell} u}_{L^2}^{2} \leq \norm{\left(\frac{\kappa_{b}}{b^{2}}+\mathcal{P}_{b}-\frac{i\lambda}{b}\right) u}^{2}_{L^2} \leq \frac{5}{2}\sum_{\ell} \norm{\left(\frac{\kappa_{b}}{b^{2}}+\mathcal{P}_{b}-\frac{i\lambda}{b}\right) \theta_{\ell} u}^{2}_{L^2}
 \end{equation}
 holds for all $u\in \mathcal{C}^\infty_0(\Omega\times\R^d;\mathbb{C})$ and all $(\lambda,b)\in \R\times \R_+$ when $\kappa_b = C_{g,\theta, \tilde{\theta}}(1+b^2)$.
\end{proposition}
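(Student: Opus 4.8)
The plan is to derive \eqref{eq:EquivDyadicPartition} from the abstract almost-orthogonality statement Corollary~\ref{Cor:equivalenceOfQuantities}, exactly as in the proofs of Propositions~\ref{pr:equivalence} and~\ref{pr:locscalPb}; the one genuinely new point is that the partition $\{\theta_\ell\}_{\ell\geq -1}$ is countable, so one must keep all constants uniform in $\ell$ and make sure the series converge. Writing $A_{b,\lambda}=\frac{\kappa_b}{b^2}+\mathcal P_b-\frac{i\lambda}{b}$, it suffices to verify the commutator hypothesis \eqref{eq:equivH}, namely that for some $r\in[0,1)$,
\[
\tfrac r2\sum_\ell\bigl\|A_{b,\lambda}\theta_\ell u\bigr\|_{L^2}^2\ \geq\ 2\sum_{\ell_1,\ell}\bigl\|[A_{b,\lambda},\theta_{\ell_1}]\theta_\ell u\bigr\|_{L^2}^2+4\sum_{\ell_1,\ell_2,\ell}\bigl\|[[A_{b,\lambda},\theta_{\ell_2}],\theta_{\ell_1}]\theta_\ell u\bigr\|_{L^2}^2
\]
for all $u\in C_0^\infty(\Omega\times\R^d;\C)$, and then to feed $r=\tfrac12$ into the abstract corollary, which returns the numerical constants $\tfrac14$ and $\tfrac52$.

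First I would compute the commutators. Because $\frac{\kappa_b}{b^2}$ and $\frac\lambda b$ are constants and $\mathcal Y\theta_\ell\equiv0$ by \eqref{Y_kills_theta_ell}, one has $[A_{b,\lambda},\theta_{\ell_1}]=\frac1{b^2}[\mathcal O,\theta_{\ell_1}]$, and since $\theta_{\ell_1}$ depends only on $|p|^2_q$ and $g_{ij}(q)$ is $p$-independent the potential term $g^{ij}p_ip_j$ drops out, leaving
\[
[\mathcal O,\theta_{\ell_1}]=-\tfrac12 g_{ij}(q)\bigl((\partial_{p_i}\partial_{p_j}\theta_{\ell_1})+(\partial_{p_i}\theta_{\ell_1})\partial_{p_j}+(\partial_{p_j}\theta_{\ell_1})\partial_{p_i}\bigr),
\]
a first-order $p$-differential operator with coefficients supported in $\supp{\theta_{\ell_1}}$, while $[[\mathcal O,\theta_{\ell_2}],\theta_{\ell_1}]=-\tfrac12 g_{ij}\bigl((\partial_{p_i}\theta_{\ell_2})(\partial_{p_j}\theta_{\ell_1})+(\partial_{p_j}\theta_{\ell_2})(\partial_{p_i}\theta_{\ell_1})\bigr)$ is a multiplication operator supported in $\supp{\theta_{\ell_1}}\cap\supp{\theta_{\ell_2}}$. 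Using \eqref{p_derivative_estimates_thetas} for the $p$-derivatives of the $\theta_\ell$ and the boundedness of $g_{ij}$ together with its derivatives on the relevant compact region, this yields, with $C=C_{g,\theta,\tilde\theta}$,
\[
\bigl\|\tfrac1{b^2}[\mathcal O,\theta_{\ell_1}]v\bigr\|_{L^2}\leq\tfrac C{b^2}\bigl(2^{-\ell_1}\|\nabla_p v\|_{L^2}+2^{-2\ell_1}\|v\|_{L^2}\bigr),\qquad \bigl\|\tfrac1{b^2}[[\mathcal O,\theta_{\ell_2}],\theta_{\ell_1}]v\bigr\|_{L^2}\leq\tfrac{C\,2^{-2\ell_1}}{b^2}\|v\|_{L^2}.
\]

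Next I would exploit the finite-overlap property $\supp{\theta_{\ell_1}}\cap\supp{\theta_\ell}\neq\emptyset\Rightarrow|\ell_1-\ell|\leq2$ (so for each fixed $\ell$ at most five $\ell_1$, resp.\ at most twenty-five pairs $(\ell_1,\ell_2)$, give a nonzero contribution), the fact that $2^{-\ell_1}\asymp2^{-\ell}\leq2$ on the overlap, and the elliptic bound $\|\nabla_p(\theta_\ell u)\|_{L^2}^2\leq C\langle\theta_\ell u,\mathcal O\,\theta_\ell u\rangle=C\|\theta_\ell u\|_{\tilde{\mathcal W}^{1,0}}^2$ coming from $\mathcal O=\tfrac12(-g_{ij}\partial_{p_i}\partial_{p_j}+g^{ij}p_ip_j)$ and the uniform positivity of $g_{ij}(q)$. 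Plugging $v=\theta_\ell u$ into the previous bounds and summing gives
\[
\sum_{\ell_1,\ell}\bigl\|[A_{b,\lambda},\theta_{\ell_1}]\theta_\ell u\bigr\|_{L^2}^2+\sum_{\ell_1,\ell_2,\ell}\bigl\|[[A_{b,\lambda},\theta_{\ell_2}],\theta_{\ell_1}]\theta_\ell u\bigr\|_{L^2}^2\leq\tfrac C{b^4}\sum_\ell\bigl(\|\theta_\ell u\|_{\tilde{\mathcal W}^{1,0}}^2+\|\theta_\ell u\|_{L^2}^2\bigr).
\]

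Finally I would invoke the scalar integration-by-parts identity for $\mathcal P_b$, which here is cleaner than \eqref{eq:secondIPPineq} because $\mathcal Y^{*}=-\mathcal Y$ exactly on $C_0^\infty(\Omega\times\R^d)$ (we have $\mathrm{div}\,\mathcal Y=0$ and no connection), so that $\mathrm{Re}\langle v,A_{b,\lambda}v\rangle=\tfrac1{b^2}\|v\|_{\tilde{\mathcal W}^{1,0}}^2+\tfrac{\kappa_b}{b^2}\|v\|_{L^2}^2$ and hence, by Cauchy--Schwarz, $\|A_{b,\lambda}v\|_{L^2}^2\geq\tfrac{\kappa_b}{b^4}\bigl(\|v\|_{\tilde{\mathcal W}^{1,0}}^2+\kappa_b\|v\|_{L^2}^2\bigr)$. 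Taking $v=\theta_\ell u$ and summing, $\sum_\ell\bigl(\|\theta_\ell u\|_{\tilde{\mathcal W}^{1,0}}^2+\|\theta_\ell u\|_{L^2}^2\bigr)\leq\tfrac{b^4}{\kappa_b}\bigl(1+\tfrac1{\kappa_b}\bigr)\sum_\ell\|A_{b,\lambda}\theta_\ell u\|_{L^2}^2$, so the right-hand side of the commutator inequality of the first paragraph is bounded by $\tfrac{C'}{\kappa_b}\sum_\ell\|A_{b,\lambda}\theta_\ell u\|_{L^2}^2$; since $\kappa_b=C_{g,\theta,\tilde\theta}(1+b^2)\geq C_{g,\theta,\tilde\theta}$, choosing $C_{g,\theta,\tilde\theta}$ large enough makes this $\leq\tfrac14\sum_\ell\|A_{b,\lambda}\theta_\ell u\|_{L^2}^2$, i.e.\ \eqref{eq:equivH} holds with $r=\tfrac12$, and Corollary~\ref{Cor:equivalenceOfQuantities} yields \eqref{eq:EquivDyadicPartition}. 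I expect the main obstacle to be entirely bookkeeping: arranging the numerical constants so that one lands exactly on $\tfrac14$ and $\tfrac52$, and keeping everything uniform in $(\ell,b,\lambda)$ — the analytic input beyond Proposition~\ref{pr:equivalence} is slight.
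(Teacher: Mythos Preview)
Your proposal is correct and follows essentially the same approach as the paper: both compute the commutators $[A_{b,\lambda},\theta_{\ell_1}]=\tfrac{1}{b^2}[\mathcal{O},\theta_{\ell_1}]$ and the double commutator using \eqref{Y_kills_theta_ell}, bound them via \eqref{p_derivative_estimates_thetas}, absorb the resulting $\tfrac{1}{b^4}(\|D_p\theta_\ell u\|^2+\|\theta_\ell u\|^2)$ terms using the integration-by-parts estimate of Proposition~\ref{pr:IppIneqWithRealPart} (which in the scalar case is the exact identity you wrote), and conclude by Corollary~\ref{Cor:equivalenceOfQuantities} with $r=\tfrac12$. Your explicit mention of the finite-overlap property $|\ell_1-\ell|\leq 2$ is what the paper uses implicitly when passing from the double sum over $(\ell,\ell_1)$ to a single sum over $\ell$.
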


\begin{proof}
Thanks to (\ref{Y_kills_theta_ell}), we have the commutator identities
\begin{align} \label{first_commutator_identity}
 &\left[\frac{\kappa_{b}}{b^{2}}+\mathcal{P}_{b}-\frac{i\lambda}{b},\theta_{\ell_{1}} \right]  = \frac{1}{b^{2}}[\mathcal{O},\theta_{\ell_{1}}]= -\frac{1}{b^{2}}g_{ij}(q)\frac{\partial \theta_{\ell_{1}}}{\partial p_{i}} \frac{\partial}{\partial p_{j}}-\frac{1}{2b^{2}}g_{ij}(q)\frac{\partial^{2} \theta_{\ell_{1}}}{\partial p_{i} \partial p_{j}},
 \end{align}
 and
 \begin{align} \label{second_commutator_identity}
\left[\left[\frac{\kappa_{b}}{b^{2}}+\mathcal{P}_{b}-\frac{i\lambda}{b},\theta_{\ell_{1}}\right],\theta_{\ell_{2}}\right]  =\frac{1}{b^{2}}[[\mathcal{O},\theta_{\ell_{1}}],\theta_{\ell_{2}}] = -\frac{1}{b^{2}}g_{ij}(q) \frac{\partial \theta_{\ell_{1}}}{\partial p_{i}} \frac{\partial \theta_{\ell_{2}}}{\partial p_{j}}.
\end{align}
  for any $\ell_1, \ell_2 \in \N \cup \{-1\}$. From  (\ref{p_derivative_estimates_thetas}), (\ref{first_commutator_identity}), (\ref{second_commutator_identity}) and the integration by parts inequality of Proposition~\ref{pr:IppIneqWithRealPart}, we deduce that there is a constant $C'_{g, \theta, \tilde{\theta}} \ge 1$, depending only on the metric $g$ and the functions $\theta$ and $\tilde{\theta}$ such that $\kappa_b = C_{g, \theta, \tilde{\theta}}(1+b^2)$, with $C_{g, \theta, \tilde{\theta}} = C_{0}+32C'_{g, \theta, \tilde{\theta}}$ and $C_0\geq 1$ fixed in Proposition~\ref{pr:IppIneqWithRealPart}, implies
  \begin{align}
  \begin{split}
    \sum_{\ell,\ell_{1}} \norm{\left[\frac{\kappa_{b}}{b^{2}}+\mathcal{P}_{b}-\frac{i\lambda}{b},\theta_{\ell_{1}} \right] \theta_{\ell} u}^{2}_{L^2}  & \leq C'_{g,\theta, \tilde{\theta}} 
  \left(
    \sum_{\ell} \left(\frac{1}{b^{4}}\|D_{p}\theta_{\ell}u\|^{2}_{L^2} + \frac{1}{b^{4}}\| \theta_{\ell}u\|^{2}_{L^2} \right) \right) \\
  & \leq C'_{g,\theta, \tilde{\theta}}  \left(\frac{1}{\kappa_{b}}+\frac{1}{\kappa_{b}^{2}}\right) \sum_{\ell} \norm{ \left(\frac{\kappa_{b}}{b^{2}}+\mathcal{P}_{b}-\frac{i\lambda}{b}\right) \theta_{\ell}u}^{2}_{L^2}
  \end{split}
  \end{align}
and
\begin{align}
    \sum_{\ell,\ell_{1},\ell_{2}} \norm{\left[ \left[\frac{\kappa_{b}}{b^{2}}+\mathcal{P}_{b}-\frac{i\lambda}{b},\theta_{\ell_{1}} \right],\theta_{\ell_{2}} \right] \theta_{l}}^{2}_{L^2}\leq \frac{C'_{g,\theta, \tilde{\theta}}}{b^{4}}  \|u\|^{2}_{L^2} & \leq \frac{C'_{g,\theta, \tilde{\theta}}}{\kappa_{b}^{2}}\sum_{\ell} \norm{ \left(\frac{\kappa_{b}}{b^{2}}+\mathcal{P}_{b}-\frac{i\lambda}{b}\right) \theta_{\ell}u}^{2}_{L^2}
\end{align}
for all $\lambda \in \R$, $b>0$, and $u \in C^\infty_0(T^* \Omega; \C)$. The equivalence (\ref{eq:EquivDyadicPartition}) then follows from Corollary \ref{Cor:equivalenceOfQuantities} with $r=\frac{1}{2}$.
\end{proof}

For every $\ell \geq -1$, we define the change of variable
$$
\Phi_{\ell}:\begin{array}[t]{ccl}
\Omega\times\R^d & \rightarrow & \Omega\times \R^d \\
(q,p) & \mapsto & (q,2^{\ell}p)
\end{array}.
$$ 
The change of variable in the integral give
\begin{equation}
  \|(\frac{\kappa_{b}}{b^{2}}+\mathcal{P}_{b}-\frac{i\lambda}{b})\theta_{\ell}u\|_{L^2}=  \| (\frac{\kappa_{b}}{b^{2}}+\mathcal{P}_{b,\ell}-\frac{i\lambda}{b})u_{\ell} \|_{L^2}
\end{equation}
with $ u_{\ell}(q,p) = 2^{\frac{\ell d}{2}}\theta(|p|^{2}_q)u(q,2^{\ell}p)$ (for $\ell=-1$ replace $\theta$ by $\tilde{\theta}$) and $\mathcal{P}_{b,\ell}=\Phi_{\ell}^{*}\mathcal{P}_{b}(\Phi_{\ell}^{-1})^{*}. $
  After the change of variable, operators are changed by
\begin{align}
\label{eq:defPbell}
  \mathcal{P}_{b,\ell} & = \frac{1}{b^{2}} \mathcal{O}_{\ell} + \frac{1}{b}\mathcal{Y}_{\ell}, \\
\label{eq:defOell}
      \mathcal{O}_{\ell} & =\Phi_{\ell}^{*}\mathcal{O}(\Phi_{\ell}^{-1})^{*} = \frac{1}{2}(2^{-2\ell}g_{ij}(q)D_{p_{i}}D_{p_{j}}+2^{2\ell}g^{ij}(q)p_{i}p_{j}) \\
\label{eq:defYell}
\mathrm{and} \quad \mathcal{Y}_{\ell}  & = \Phi_{\ell}^{*}\mathcal{Y}(\Phi_{\ell}^{-1})^{*}=  2^{\ell} g^{ij}(q) p_{j}(\frac{\partial}{\partial q^{i}} + \Gamma^{m}_{ik}(q)p_{m}\frac{\partial}{\partial p_{k}}).
\end{align}
The equivalence \eqref{eq:EquivDyadicPartition} can be rewritten as 
\begin{multline}
\label{eq:equivcalPbell}
\forall u\in \mathcal{C}^{\infty}_{0}(\Omega \times \mathbb{R}^d;\C)\,,\\
\frac{1}{4} \sum_\ell \| (\frac{\kappa_b}{b^2} + \mathcal{P}_{b,\ell} - \frac{i \lambda}{b}) u_{\ell}\|^2_{L^2} \leq \|(\frac{\kappa_b}{b^2} + \mathcal{P}_b - \frac{i \lambda}{b}) u \|^2_{L^2} \leq \frac{5}{2}\sum_\ell \| (\frac{\kappa_b}{b^2} + \mathcal{P}_{b,\ell} - \frac{i \lambda}{b}) u_{\ell}\|^2_{L^2},
\end{multline}
where now $u_\ell\in \mathcal{C}^{\infty}_{0}(S_{\ell,2};\C)$ with
\begin{equation}
\label{eq:defSell}
    S_{\ell,R}=\left\{
    \begin{array}[c]{ll}
    \{x=(q,p)\in\Omega\times\R^d\,,  \frac{1}{R}< |p|_q <R\}&\text{for}~\ell\geq 0\\ 
    \{x=(q,p)\in\Omega\times \R^d\,, |p|_q< 1\}&\text{for}~\ell=-1\,,
\end{array}
\right.
\end{equation}
for any fixed $R>1$\,.

\subsection{Partition with a $2^\ell$-dependent grid in the open set $\Omega\subset\R^d_q$}
\label{sec:gridpart}
Here the integer $\ell\geq -1$ is fixed and the localization will be done with some translation invariance in $\R^d$ by using a regular grid with a spacing of size $A2^{-\ell}$\,.\\
Let us start with the translation invariant partition of unity
\begin{equation} \label{eq:DefinitionOfPsi}
\sum_{m\in\Z^d}\psi^2(q-m)\equiv 1\quad\text{with}~ \psi\in \mathcal{C}^{\infty}_0(\R^d;[0,1])\,.    
\end{equation}

For any $A>0$ and $\ell\in\Z$\,, $\ell\geq -1$\,, it can be written
$$
\sum_{m\in\Z^d}\psi^2\left(\frac{q-q_{m,\ell,A}}{A2^{-\ell}}\right)\equiv 1\quad \text{with}~q_{m,\ell,A}=A2^{-\ell}m\,.
$$
Accordingly we set
$$
\psi_{m,\ell,A}(q)=\psi\left(\frac{q-q_{m,\ell,A}}{A2^{-\ell}}\right)
$$
and we get
\begin{align}
  & \mathrm{Supp}(\psi_{m,\ell,A})  = q_{m,\ell,A}+A2^{-\ell}\mathrm{Supp}(\psi)\,, \\
  \forall \alpha , \beta \in \mathbb{N}^{d}\,, \quad & |\alpha|>0 \Longrightarrow D_{p}^{\alpha}D_{q}^{\beta}\psi_{m,\ell,A} = 0\,, \\
  \label{eq:derpsimellA}
  \forall \beta \in \mathbb{N}^{d}\,, \exists C_{\beta,\psi}>0\,, \quad & |A^{|\beta|}2^{-\ell|\beta|} D_{q}^{\beta}\psi_{m,\ell,A}| \leq C_{\beta,\psi}\,.
\end{align}

\begin{proposition}\label{pr:controlOfErrorAfterGridPartitionOfUnity}
Let $\mathcal{P}_{b,\ell}$ and $S_{\ell,2}$ be defined respectively by \eqref{eq:defPbell} and \eqref{eq:defSell} for $\ell\in\Z$\,, $\ell\geq -1$\,.
There exists a constant $C_{g,\psi}>0$ depending only on the metric $g$ and the function $\psi$ such that
\begin{eqnarray*}
    &&
    \frac{1}{2}\sum_{m\in \Z^d}\| (\frac{\kappa_{b}}{b^{2}}+\mathcal{P}_{b,\ell} - \frac{i\lambda}{b}) \psi_{m,\ell,A}u \|^{2}_{L^2}-\frac{C_{g,\psi}}{A^2b^2}\norm{2^{2\ell}\psi_{m,\ell,A}u}_{L^2}^2
    \leq
    \| (\frac{\kappa_{b}}{b^{2}}+\mathcal{P}_{b,\ell} - \frac{i\lambda}{b}) u \|^{2}_{L^2}\\
    \text{and}&&
    \| (\frac{\kappa_{b}}{b^{2}}+\mathcal{P}_{b,\ell} - \frac{i\lambda}{b}) u \|^{2}_{L^2} \leq 
    2 \sum_{m\in\Z^d}\| (\frac{\kappa_{b}}{b^{2}}+\mathcal{P}_{b,\ell}-\frac{i\lambda}{b}) \psi_{m,\ell,A} u \|^{2}_{L^2}+\frac{C_{g,\psi}}{A^2b^2}\norm{2^{2\ell}\psi_{m,\ell,A}u}_{L^2}^2  
\end{eqnarray*}
holds for all $u\in \mathcal{C}^{\infty}_0(S_{\ell,2};\C)$ and for all $(\lambda,b)\in\R\times (0,+\infty)$\,.
\end{proposition}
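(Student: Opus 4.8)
The plan is to apply the same abstract commutator lemma (Corollary~\ref{Cor:equivalenceOfQuantities}) that was used in Propositions~\ref{pr:equivalence} and \ref{pr:equivNormAfterDyadicPartition}, this time with the family of cutoffs $\psi_{m,\ell,A}$, $m\in\Z^d$. The only thing to check is that the single and double commutators of $\frac{\kappa_b}{b^2}+\mathcal{P}_{b,\ell}-\frac{i\lambda}{b}$ with the $\psi_{m,\ell,A}$'s are controlled, and then to convert that control into the stated inequalities via a lower bound of integration-by-parts type for $\mathcal{P}_{b,\ell}$. First I would record the commutator identities. Since $\psi_{m,\ell,A}$ depends only on $q$ and $\mathcal{O}_\ell$ has no $q$-derivative, $[\mathcal{O}_\ell,\psi_{m,\ell,A}]=0$, so the only contribution comes from $\frac{1}{b}\mathcal{Y}_\ell$; from \eqref{eq:defYell},
\begin{align*}
  \left[\tfrac{\kappa_b}{b^2}+\mathcal{P}_{b,\ell}-\tfrac{i\lambda}{b},\psi_{m,\ell,A}\right]
  =\tfrac{1}{b}[\mathcal{Y}_\ell,\psi_{m,\ell,A}]
  =\tfrac{2^\ell}{b}\,g^{ij}(q)p_j\,\partial_{q^i}\psi_{m,\ell,A}(q),
\end{align*}
which is a multiplication operator, and consequently the double commutator vanishes identically. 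Using \eqref{eq:derpsimellA} with $|\beta|=1$ one has $|\partial_{q^i}\psi_{m,\ell,A}|\le C_\psi A^{-1}2^{\ell}$, so on $S_{\ell,2}$ (where $|p|_q<2$, hence $|p|\le C_g$) the single commutator obeys
\begin{align*}
  \left\|\left[\tfrac{\kappa_b}{b^2}+\mathcal{P}_{b,\ell}-\tfrac{i\lambda}{b},\psi_{m,\ell,A}\right]\psi_{m',\ell,A}u\right\|_{L^2}
  \le \frac{C_{g,\psi}}{A b}\,2^{2\ell}\,\|\psi_{m',\ell,A}u\|_{L^2}.
\end{align*}

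Next I would feed these into the abstract lemma. Summing the square of the last display over $m$ and over the (boundedly many, uniformly in $\ell$ and $A$) indices $m'$ with overlapping supports produces a bound of the form $\sum_{m,m'}\|[\cdot,\psi_m]\psi_{m'}u\|^2\le \frac{C_{g,\psi}}{A^2 b^2}\sum_m\|2^{2\ell}\psi_{m,\ell,A}u\|^2$, and the double-commutator sum is zero. Corollary~\ref{Cor:equivalenceOfQuantities} (with the error term now kept explicitly rather than absorbed, since here $\frac{C_{g,\psi}}{A^2b^2}2^{4\ell}$ need not be small) then yields exactly the two stated inequalities: the lower bound
\begin{align*}
  \tfrac12\sum_m\|(\tfrac{\kappa_b}{b^2}+\mathcal{P}_{b,\ell}-\tfrac{i\lambda}{b})\psi_{m,\ell,A}u\|_{L^2}^2
  -\tfrac{C_{g,\psi}}{A^2b^2}\sum_m\|2^{2\ell}\psi_{m,\ell,A}u\|_{L^2}^2
  \le \|(\tfrac{\kappa_b}{b^2}+\mathcal{P}_{b,\ell}-\tfrac{i\lambda}{b})u\|_{L^2}^2
\end{align*}
and the symmetric upper bound with constant $2$. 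One small point to get right is that the norm $\|2^{2\ell}\psi_{m,\ell,A}u\|_{L^2}$ appearing in the statement should be read with the sum $\sum_m$ in front, consistent with how the commutator sums assemble after using the bounded overlap of the supports of $\{\psi_{m,\ell,A}\}_m$.

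The main obstacle is bookkeeping rather than anything deep: one must make sure the factor $2^{2\ell}$ in the error term is correct. It arises because $\mathcal{Y}_\ell$ carries a prefactor $2^\ell$ and because on the support of $u\in\mathcal{C}^\infty_0(S_{\ell,2};\C)$ the variable $p$ is of size $O(1)$ but the grid spacing is $A2^{-\ell}$, so each $q$-derivative of $\psi_{m,\ell,A}$ costs $2^\ell/A$; the single commutator is $\frac{2^\ell}{b}g^{ij}p_j\partial_{q^i}\psi_{m,\ell,A}$, whose $L^2$ norm on $S_{\ell,2}$ is $O(\frac{2^\ell}{Ab})$, and squaring and comparing with $\frac{1}{A^2b^2}\|2^{2\ell}\psi_{m,\ell,A}u\|^2$ forces exactly the exponent $2\ell$ shown (the $p$ being $O(1)$ rather than $O(2^\ell)$ is what keeps it from being $2^{3\ell}$). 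Once this scaling is nailed down, invoking Corollary~\ref{Cor:equivalenceOfQuantities} with $r=\tfrac12$ closes the argument. Note that, unlike in Proposition~\ref{pr:equivNormAfterDyadicPartition}, the error term is \emph{not} absorbed here: it is deliberately left on the right-hand side, to be handled later in Section~\ref{sec:finalproof} once $A$ and the regime of $2^\ell$ are tuned.
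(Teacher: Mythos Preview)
Your proposal is correct and follows essentially the same route as the paper: compute the commutator $[\mathcal{P}_{b,\ell},\psi_{m,\ell,A}]=\frac{2^{\ell}}{b}g^{ij}p_j\partial_{q^i}\psi_{m,\ell,A}$, note the double commutator vanishes, bound the single commutator by $\frac{C_{g,\psi}2^{2\ell}}{Ab}\|\psi_{m',\ell,A}u\|$ using $|p|_q<2$ on $S_{\ell,2}$ and \eqref{eq:derpsimellA}, and feed this into the abstract localization lemma. One small terminological point: the paper invokes the raw inequalities \eqref{eq:upperBound} and \eqref{eq:lowerBound} directly rather than Corollary~\ref{Cor:equivalenceOfQuantities}, since the latter's hypothesis \eqref{eq:equivH} is precisely the absorption condition that you (correctly) note is \emph{not} assumed here---but you clearly understand this distinction, so this is just a matter of pointing to the right label.
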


\begin{proof}
The computation of commutators gives
\begin{align}
\label{eq:firstcommPbell}
  & [\frac{\kappa_{b}}{b^{2}} + \mathcal{P}_{b,\ell}-\frac{i\lambda}{b},\psi_{m_{1},\ell,A}] = \frac{1}{b}[\mathcal{Y}_{\ell},\psi_{m_{1},\ell,A}] = \frac{1}{b}g^{ij}(q)2 ^{\ell}p_{j}\frac{\partial \psi_{m_{1},\ell,A}}{\partial q^{i}}(q), \\
  & [[\frac{\kappa_{b}}{b^{2}} + \mathcal{P}_{b,\ell}-\frac{i\lambda}{b},\psi_{\ell,m_{1},A}],\psi_{m_{2},\ell,A}] = \frac{1}{b}[[\mathcal{Y}_{\ell},\psi_{m_{1},\ell,A}],\psi_{m_{2},\ell,A}]=0.
\end{align}
Because $\mathrm{supp}\, u\subset S_{\ell,2}\subset \{x=(q,p), |p|_q\leq 2\}$\,, the estimate \eqref{eq:derpsimellA} of the derivatives of $\psi_{m,\ell,A}$ implies that the right-hand side of 
\eqref{eq:firstcommPbell} satisfies 
$$
\sum_{m_1\in \mathbb{Z}^d}\norm{\frac{1}{b}g^{ij}(q)2 ^{\ell}p_{j}\frac{\partial \psi_{m_{1},\ell,A}}{\partial q^{i}}(q)\psi_{m_2,\ell,A}u}_{L^2}^2 \leq \frac{\tilde{C}_{g,\psi}2^{4\ell}}{(Ab)^2}\norm{\psi_{m_2,\ell,A}u}_{L^2}^2
$$ 
for all $m_1,m_2\in \Z^d$ with a constant $\tilde{C}_{g,\psi}>0$ depending only on $g$ and $\psi$\,.
Because the double commutator  vanishes, we apply  the formulas \eqref{eq:upperBound} and \eqref{eq:lowerBound}. This yields the result with the constant $C_{g,\psi}=4\tilde{C}_{g,\psi}^2$\,.
\end{proof}

Since
\begin{align}\label{set:supportAfterGridPartition}
\{ x=(q,p)\in S_{\ell,2} \,, q\in \mathrm{supp}(\psi_{m,\ell,A})    \}\subset (B(q_{m,\ell,A}, \hat{C}_{g,\psi}A2^{-\ell})\times \R^d)\cap S_{\ell,2}\
\end{align}
when $u \in C^\infty_0(S_{\ell,2}; \C)$, the problem is thus reduced to finding lower bounds for $\norm{(\frac{\kappa_{b}}{b^{2}}+\mathcal{P}_{b,\ell} - \frac{i\lambda}{b}) v}_{L^2}^2$ when $v \in C^\infty_0 \left(B \left(q_{m,\ell,A}, \hat{C}_{g,\psi}A2^{-\ell}\right)\times \R^d \cap S_{\ell,2}\right)$. When $A2^{-\ell}$ is small enough, \eqref{set:supportAfterGridPartition} is contained in a $q$-ball with radius below the injectivity radius of the metric $\tilde{g}$ on $\R^d$, and  normal coordinates around $q_{m,\ell,A}$ can be used.
Note also that the ball $B(q_{m,\ell,A}, \hat{C}_{g,\psi}A2^{-\ell})$ can be equivalently taken for the euclidean metric or the metric $\tilde{g}$ by possibly adapting the constant $\hat{C}_{g,\psi}$\,.

\subsection{Use of normal coordinates}\label{subsec:NormalCoordinate}
Due to (\ref{set:supportAfterGridPartition}), we are interested in $\left(\frac{\kappa_b}{b^2}+\mathcal{P}_{b, \ell} - \frac{i \lambda}{b}\right) v$ when $v \in C^\infty_0(B(q_{m,\ell,A}, \hat{C}_{g,\psi}A2^{-\ell})\times \R^d)\cap S_{\ell,2}$\,.
For $2^{-\ell}A\leq \varepsilon_{g,\psi}$ with 
$\varepsilon_{g,\psi}$ small enough determined by the pair
$(g,\psi)$\,, we may introduce normal coordinates 
$\tilde{q} = (\tilde{q}^1, \ldots, \tilde{q}^d)$ centered at $q_{m, \ell, A}$. The associated canonical coordinates on $B(0, \hat{C}_{g, \psi} A 2^{-\ell}) \times \R^d \subset T^* \R^d$ will be denoted by $(\tilde{q}, \tilde{p})$ with $\tilde{p} = (\tilde{p}_1, \ldots, \tilde{p}_d)$. Since $\mathcal{P}_{b, \ell}$ maintains the same form (\ref{eq:defPbell}) under the coordinate change $(q,p) \mapsto (\tilde{q}, \tilde{p})$, we may assume without loss of generality when considering
\begin{align} \label{quantity_to_bound_below}
    \norm{\left(\frac{\kappa_b}{b^2}+\mathcal{P}_{b,\ell}-\frac{i \lambda}{b}\right) v}_{L^2}
\end{align}
for $2^{-\ell}A \leq \varepsilon_{g, \psi}$ and $v \in C^\infty_0(B(q_{m,\ell,A}, \hat{C}_{g,\psi}A2^{-\ell})\times \R^d \cap S_{\ell,2}; \C)$ that $q_{m, \ell, A} = 0$ and that the metric $g$ satisfies
\begin{align}\label{eq:BoundForTheMetricInNormalCoordinate}
    \forall \alpha \in \N^d, \ \ \p^\alpha_q \left(g_{ij}(q)-\delta_{ij}\right) = \mathcal{O}(\abs{q}^{(2-\abs{\alpha})_{+}}).
\end{align}
We note that since the Christoffel symbols for the metric $g$ are given by
\begin{equation}
  \Gamma_{ik}^{\ell}(q) = \frac{1}{2}g^{\ell j}(q)\left(\frac{\partial g_{j i}}{\partial q^{k}}(q) + \frac{\partial g_{jk}}{\partial q^{i}}(q) - \frac{\partial g_{ik}}{\partial q^{j}}(q)\right) ,
\end{equation}
we have
\begin{equation}\label{eq:BoundForTheChristoffelSymbolInNormalCoordinate}
  \forall \alpha \in \N^d, \ \  \partial_q^{\alpha}\Gamma_{ik}^{\ell}(q) = {\mathcal{O}}(|q|^{(1-|\alpha|_+)}) .
\end{equation}
in these coordinates. By taking $\varepsilon_{g, \psi}$ smaller if necessary, we may restrict our attention to functions $v \in C^\infty_0(B(0, \hat{C}'_{g, \psi} A 2^{-\ell}) \times \R^d \cap S'_{\ell,4}; \C)$, where $B(0, \hat{C}'_{g, \psi} A 2^{-\ell})$ denotes the Euclidean ball in $\R^d$ of radius $\hat{C}'_{g, \psi} A 2^{-\ell}$ centered at the origin $0 \in \R^d$, $\hat{C}'_{g, \psi}>0$ is a constant depending only on $g$ and $\psi$, and $S'_{\ell,4} = \set{(q,p) \in \R^{2d}}{\frac{1}{4}<\abs{p}<4}$. Here $\abs{p} = (p_1^2+\cdots+p_d^2)^{1/2}$.

Let $g(q) = \left(g_{ij}(q)\right)_{1 \le i, j \le d}$ extended to a function defined on the whole space $\R^d$ with the same properties. We now introduce the following non-symplectic change of coordinates on $\R^{2d}$\,:
\begin{equation}
\label{eq:defvphiellm}
  \varphi_{\ell,m} : (q,p) \mapsto 
\left(
  2^{-\ell}q,g(2^{-\ell}q)p
\right),
\end{equation}
and the associated unitary map 
$$
\mathcal{U}_{\ell,m} :\begin{array}[t]{rcl}
     L^2(\mathbb{R}^{2d};\mathbb{C})  & \rightarrow & L^2(\mathbb{R}^{2d} ; \mathbb{C} )  \\
     v & \mapsto & 2^{-\frac{\ell d}{2}}\sqrt{\mathrm{det}(g(2^{-\ell}q))} (v\circ \varphi_{\ell,m}) \;,
\end{array}
$$
which sends $L^2(B(0, \hat{C}'_{g, \psi} A 2^{-\ell})\times\mathbb{R}^d;\mathbb{C})$ into $L^2(B(0, \hat{C}'_{g, \psi} A) \times \mathbb{R}^d ; \mathbb{C} )$\,.\\
By taking $\varepsilon_{g,\psi}$ smaller if necessary, we can assume that the unitary map $\mathcal{U}_{\ell,m}$ and the pull-back $\varphi_{\ell,m}^*$  send $C^\infty_0((B(0, \hat{C}'_{g, \psi} A 2^{-\ell}) \times \R^d) \cap S'_{\ell,4}; \C)$ into $\mathcal{C}^{\infty}_0((B(0,\hat{C}'_{g,\psi}A)\times \mathbb{R}^d) \cap S_{\ell,8}' ;\mathbb{C}  ).$\\
The change of variables in the $L^{2}$-norm gives
\begin{align}\label{eq:ChangeOfVariableInL2NormAfterNormalCoordinate}
  \| (\frac{\kappa_{b}}{b^{2}}+\mathcal{P}_{b,\ell}-\frac{i\lambda}{b})v \|_{L^2} & = \|\left(\mathcal{U}_{\ell,m} (\frac{\kappa_{b}}{b^{2}}+\mathcal{P}_{b,\ell}-\frac{i\lambda}{b}) \mathcal{U}_{\ell,m}^*\right)  \mathcal{U}_{\ell,m}v \|_{L^2} \,,
\end{align}
where 
$$ \mathcal{U}_{\ell,m}\mathcal{P}_{b,\ell} \mathcal{U}_{\ell,m}^* = \sqrt{\mathrm{det}(g(2^{-\ell}q))} \mathcal{P}_{b,\ell,m} \frac{1}{\sqrt{\mathrm{det}(g(2^{-\ell}q))}} \;,$$
with
$$ \mathcal{P}_{b,\ell,m} = \varphi_{\ell,m}^{*} \mathcal{P}_{b,\ell} (\varphi^{-1}_{\ell,m})^{*} \;.$$
A straightforward computation shows that
\begin{align}
\label{eq:defPbellm}
\mathcal{P}_{b,\ell,m} & = \frac{1}{b^{2}}\mathcal{O}_{\ell,m}+\frac{1}{b}\mathcal{Y}_{\ell,m} \,, \\
\mathcal{U}_{\ell,m} \mathcal{P}_{b,\ell} \mathcal{U}_{\ell,m}^* & = \mathcal{P}_{b,\ell,m} + \left[ \sqrt{\mathrm{det}(g(2^{-\ell}q)} , \frac{\kappa_b}{b^2} + \mathcal{P}_{b,\ell,m} - \frac{i\lambda}{b} \right] \frac{1}{\sqrt{\mathrm{det}(g(2^{-\ell}q)}} \label{eq:ConjugationByUnitary}
\end{align}
where
\begin{align}
\label{eq:defOellm}
  \mathcal{O}_{\ell,m} & := \varphi_{\ell,m}^*\mathcal{O}_{\ell} (\varphi_{\ell,m}^{-1})^{*} = \mathcal{U}_{\ell,m} \mathcal{O}_{\ell} \mathcal{U}_{\ell,m}^* = \frac{1}{2}
                            \sum_{i,j}\left(g^{ij}(2^{-\ell}q)\frac{D_{p_{i}}}{2^{\ell}}\frac{D_{p_{j}}}{2^{\ell}} + g_{ij}(2^{-\ell}q)2^{\ell}p_{i}2^{\ell}p_{j}
                         \right)
\end{align}
and
\begin{align}
\label{eq:defYellm}
    \mathcal{Y}_{\ell,m} &:= \varphi_{\ell,m}^*\mathcal{Y}_{\ell}(\varphi_{\ell,m}^{-1})^*= 2^{2\ell}\delta^{ij}p_{j}\frac{\partial}{\partial q^{i}} + f_{k}^{ij}(q,\ell)p_{i}p_{j}\frac{\partial}{\partial p_{k}} \,, \\
    f_{k}^{ij}(q,\ell) &:= 2^{\ell} \sum_{n'}g_{n',j}(2^{-\ell}q)
                       \left(
                       \frac{\partial g^{k n'}}{\partial q^{i}}(2^{-\ell }q) + \sum_{n}\Gamma^{n'}_{in}(2^{-\ell} q)g^{n k}(2^{-\ell}q)
                       \right) \,,  \label{formOff}\\
    \mathcal{U}_{\ell,m}\mathcal{Y}_{\ell} \mathcal{U}_{\ell,m}^* & = \mathcal{Y}_{\ell,m} + \left[  \sqrt{\mathrm{det}(g(2^{-\ell}q)} , \mathcal{Y}_{\ell,m} \right] \frac{1}{\sqrt{\mathrm{det}(g(2^{-\ell }q)}} \label{eq:linkBetweenYlmAndYl} \;.
\end{align}
From \eqref{eq:BoundForTheMetricInNormalCoordinate} and \eqref{eq:BoundForTheChristoffelSymbolInNormalCoordinate} we know that 
\begin{equation} \label{eq:BoundForfijk}
\sup_{q\in B(0,\hat{C}'_{g,\psi}A)} |f_{k}^{ij}(q,\ell)| \leq C_{g,\psi}^{(1)}A \;.    
\end{equation}
Since $\mathrm{det}(g)\circ \varphi_{\ell,m} = \mathrm{det}(g(2^{-\ell}q)) $, we have
\begin{align}
\nonumber  \left[\frac{\kappa_{b}}{b^{2}} + \mathcal{P}_{b,\ell,m}-\frac{i\lambda}{b},\sqrt{\mathrm{det}(g)\circ\varphi_{\ell,m}}\right] & = \frac{1}{b}\left[ \mathcal{Y}_{\ell,m} , \sqrt{\mathrm{det}(g(2^{-\ell}q))} \right] \\
\nonumber  & = \frac{1}{b}\varphi_{\ell,m}^{*}[\mathcal{Y}_{\ell},\sqrt{\mathrm{det}(g)}] (\varphi^{-1}_{\ell,m})^{*} \\
\nonumber        &= \frac{1}{b}[\mathcal{Y}_{\ell}, \sqrt{\mathrm{det}(g)} ]\circ \varphi_{\ell,m}\\
        &= \frac{2^{\ell}}{b}\delta^{ki}p_{k} \frac{\partial \sqrt{\mathrm{det}(g)}}{\partial q^{i}}\circ \varphi_{\ell,m} \;. \label{eq:ErrorAfterCommutingWithTheJacobian}
\end{align}
 is bounded by some constant $C_{g,\psi}^{(2)}\frac{A}{b}$ on $(B(0,\hat{C}'_{g,\psi}A)\times \mathbb{R}^d) \cap S_{\ell,8}'$\,. With the identities \eqref{eq:ChangeOfVariableInL2NormAfterNormalCoordinate} and \eqref{eq:ConjugationByUnitary} we deduce 
\begin{equation}\label{eq:BoundForTheEquivalenceAfterChangeOfVariableVarphi}
  | ~ \| (\frac{\kappa_{b}}{b^{2}}+\mathcal{P}_{b,\ell,m} - \frac{i\lambda}{b})\mathcal{U}_{\ell,m} v\|_{L^2} - \| (\frac{\kappa_{b}}{b^{2}} + \mathcal{P}_{b,\ell} - \frac{i\lambda}{b})v \|_{L^2} ~ | \leq C_{g,\psi}^{(3)}\frac{A}{b}\| \mathcal{U}_{\ell,m} v\|_{L^2}.
\end{equation}

\begin{proposition} \label{pr:equivNormAfterGaugeChangeOfVariables}
There exists a constant $C_{g,\psi}\geq 1$ , determined by the metric $g$ and the function $\psi$\,, such that the inequalities
  \begin{equation}
   \frac{1}{4} \| (\frac{\kappa_{b}}{b^{2}} + \mathcal{P}_{b,\ell,m} - \frac{i\lambda}{b})\mathcal{U}_{\ell,m}v \|_{L^2}^{2} \leq \| (\frac{\kappa_{b}}{b^{2}}+\mathcal{P}_{b,\ell} - \frac{i\lambda}{b})v \|_{L^2}^{2} \leq 4 \| (\frac{\kappa_{b}}{b^{2}} + \mathcal{P}_{b,\ell,m} - \frac{i\lambda}{b})\mathcal{U}_{\ell,m}v \|_{L^2}^{2} ,
  \end{equation}
  hold for all $v\in \mathcal{C}_0^\infty((B(0, \hat{C}'_{g,\psi}A2^{-\ell})\times\mathbb{R}^d) \cap S'_{\ell,4};\mathbb{C})$, all  $(\lambda,b)\in \mathbb{R}\times(0,\infty)$ when $2^{-\ell}A\leq \frac{1}{C_{g,\psi}}$ and $\frac{\kappa_b}{b^2}>C_{g,\psi}\frac{A}{b}$\,.\\
  Additionally, we recall
  $$
  \mathcal{U}_{\ell,m}v\in \mathcal{C}_0^\infty((B(0, \hat{C}'_{g,\psi}A)\times\mathbb{R}^d) \cap S'_{\ell,8};\mathbb{C})\,,
  $$
  for all $v\in \mathcal{C}_0^\infty((B(0, \hat{C}'_{g,\psi}A2^{-\ell})\times\mathbb{R}^d) \cap S'_{\ell,4};\mathbb{C})$\,.
\end{proposition}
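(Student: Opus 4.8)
The plan is to read off the result from the perturbative estimate \eqref{eq:BoundForTheEquivalenceAfterChangeOfVariableVarphi} by absorbing its right-hand side into an \emph{a priori} lower bound on $\|(\frac{\kappa_b}{b^2}+\mathcal{P}_{b,\ell}-\frac{i\lambda}{b})v\|_{L^2}$. For that lower bound I would first note that $\mathcal{O}_{\ell}$ in \eqref{eq:defOell} is non-negative and self-adjoint on $L^2(\mathbb{R}^{2d},dqdp;\mathbb{C})$, while $\mathcal{Y}_{\ell}$ in \eqref{eq:defYell} is, up to the factor $2^{\ell}$, the Hamilton vector field of a $p$-quadratic Hamiltonian, hence divergence-free and formally anti-self-adjoint for the symplectic volume $dqdp$. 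Consequently, for all $v\in\mathcal{C}^{\infty}_0(\mathbb{R}^{2d};\mathbb{C})$,
$$
\Real\langle v\,,\,(\frac{\kappa_b}{b^2}+\mathcal{P}_{b,\ell}-\frac{i\lambda}{b})v\rangle_{L^2}=\frac{1}{b^2}\langle v\,,\,(\kappa_b+\mathcal{O}_{\ell})v\rangle_{L^2}\geq\frac{\kappa_b}{b^2}\|v\|_{L^2}^2\,,
$$
which is the dyadically rescaled counterpart of the integration by parts of Proposition~\ref{pr:IppIneqWithRealPart} (no $a^i(q)p_i$ term occurs since we are in the scalar flat setting). Cauchy--Schwarz then gives $\|(\frac{\kappa_b}{b^2}+\mathcal{P}_{b,\ell}-\frac{i\lambda}{b})v\|_{L^2}\geq\frac{\kappa_b}{b^2}\|v\|_{L^2}$.

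Next I would fix $C_{g,\psi}\geq 1$ large enough so that, on the one hand, $C_{g,\psi}\geq 2C_{g,\psi}^{(3)}$, and, on the other hand, all the constraints under which the normal coordinates of Subsection~\ref{subsec:NormalCoordinate} and the estimate \eqref{eq:BoundForTheEquivalenceAfterChangeOfVariableVarphi} were obtained hold as soon as $2^{-\ell}A\leq\frac{1}{C_{g,\psi}}$ (there are finitely many such constraints, all depending only on $g$ and $\psi$). Using the unitarity $\|\mathcal{U}_{\ell,m}v\|_{L^2}=\|v\|_{L^2}$ together with the standing hypothesis $\frac{\kappa_b}{b^2}>C_{g,\psi}\frac{A}{b}$, the error term in \eqref{eq:BoundForTheEquivalenceAfterChangeOfVariableVarphi} is then controlled by
$$
C_{g,\psi}^{(3)}\frac{A}{b}\,\|\mathcal{U}_{\ell,m}v\|_{L^2}\ \leq\ \frac{1}{2}\,\frac{\kappa_b}{b^2}\|v\|_{L^2}\ \leq\ \frac{1}{2}\,\|(\frac{\kappa_b}{b^2}+\mathcal{P}_{b,\ell}-\frac{i\lambda}{b})v\|_{L^2}\,.
$$
Writing $x=\|(\frac{\kappa_b}{b^2}+\mathcal{P}_{b,\ell}-\frac{i\lambda}{b})v\|_{L^2}$ and $y=\|(\frac{\kappa_b}{b^2}+\mathcal{P}_{b,\ell,m}-\frac{i\lambda}{b})\mathcal{U}_{\ell,m}v\|_{L^2}$, estimate \eqref{eq:BoundForTheEquivalenceAfterChangeOfVariableVarphi} yields $|x-y|\leq\frac{x}{2}$, hence $\frac{x}{2}\leq y\leq\frac{3x}{2}$, and therefore $\frac{1}{4}y^2\leq x^2\leq 4y^2$, which is precisely the claimed two-sided inequality. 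The support property of $\mathcal{U}_{\ell,m}v$ recalled at the end was already established in Subsection~\ref{subsec:NormalCoordinate} (for $\varepsilon_{g,\psi}$, i.e. $1/C_{g,\psi}$, small enough), so nothing further is needed there.

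Since every ingredient is already in place, there is no genuine analytic obstacle here; the only point to watch is the bookkeeping of constants, namely checking that a single $C_{g,\psi}$ depending only on $(g,\psi)$ can simultaneously absorb $2C_{g,\psi}^{(3)}$ and serve as the reciprocal of the smallness threshold on $2^{-\ell}A$ required upstream — routine, as there are finitely many such requirements. I would also emphasize that the a priori bound of the first paragraph holds for arbitrary $v\in\mathcal{C}^{\infty}_0(\mathbb{R}^{2d};\mathbb{C})$, not merely for $v$ supported in the small ball, so it applies without extra restriction to the test functions occurring in the statement; this is what makes the absorption argument uniform in $(\lambda,b)$ and in $\ell,m,A$.
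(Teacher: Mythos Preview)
Your proof is correct and follows the same overall architecture as the paper's: combine the perturbative bound \eqref{eq:BoundForTheEquivalenceAfterChangeOfVariableVarphi} with an a priori lower bound of the form $\|(\frac{\kappa_b}{b^2}+\cdots)w\|_{L^2}\geq c\,\frac{\kappa_b}{b^2}\|w\|_{L^2}$, then absorb the error. The difference is on which side you run the integration by parts. The paper establishes the lower bound for the \emph{transformed} operator $\mathcal{P}_{b,\ell,m}$ acting on $u=\mathcal{U}_{\ell,m}v$; since $\mathcal{Y}_{\ell,m}$ in \eqref{eq:defYellm} contains the extra drift $f_k^{ij}(q,\ell)p_ip_j\partial_{p_k}$, its real part contributes an $O(A/b)\|u\|_{L^2}^2$ term (via \eqref{eq:BoundForfijk}) that must itself be absorbed by $\kappa_b$, yielding $\frac{\kappa_b}{8b^2}\|u\|_{L^2}\leq\|(\frac{\kappa_b}{b^2}+\mathcal{P}_{b,\ell,m}-\frac{i\lambda}{b})u\|_{L^2}$ under a condition $\kappa_b\geq 8C_{g,\psi}^{(4)}Ab$. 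You instead bound the \emph{original} operator $\mathcal{P}_{b,\ell}$ acting on $v$: here $\mathcal{Y}_\ell$ is (a constant multiple of) the geodesic Hamilton vector field, hence exactly divergence-free, so the lower bound $\frac{\kappa_b}{b^2}\|v\|_{L^2}$ comes for free with no side condition on $\kappa_b$. Your route is therefore marginally cleaner and avoids introducing the auxiliary constant $C_{g,\psi}^{(4)}$ at this stage; the paper's choice has the minor advantage of foreshadowing the integration-by-parts computation for $\mathcal{P}_{b,\ell,m}$ that is reused (in the $\hat{\mathcal{P}}_{b,h,f}$ form) in Proposition~\ref{proposition:RefinedIntegrationByPart}.
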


\begin{proof}
  The same integration by parts as in the proof of Proposition~\ref{pr:IppIneqWithRealPart}  now gives
\begin{align*}
    \| (\frac{\kappa_{b}}{b^{2}}+\mathcal{P}_{b,\ell,m} - i\lambda) u \|_{L^{2}} \|u\|_{L^{2}} &\geq \frac{1}{4b^{2}} 
  \left[
    C_g\|2^{-\ell}D_{p}u\|_{L^{2}}^{2} + C_g\| 2^\ell|p|u\|_{L^{2}}^{2} + \kappa_{b} \|u\|_{L^{2}}^2
  \right]\\
&  \hspace{3cm}
  +\frac{1}{b}\mathrm{Re}~{\langle u\,,\,  f_{k}^{ij}(q,\ell)p_{i}p_{j}\frac{\partial}{\partial p_{k}}  u\rangle}\\
 &\geq (\frac{\kappa_{b}}{4b^2}-C_{g,\psi}^{(4)} \frac{A}{b})\|u\|_{L^{2}}^2\\
\end{align*}
for all $u\in \mathcal{C}^{\infty}_0((B(0,\hat{C}'_{g,\psi} A)\times\R^d)\cap S'_{\ell,8};\mathbb{C})$\,, owing to \eqref{eq:BoundForfijk}.
For $\kappa_b\geq 8C_{g,\psi}^{(4)}Ab$ we deduce

  $$\forall u\in \mathcal{C}^{\infty}_0((B(0,\hat{C}'_{g,\psi} A)\times\R^d)\cap S'_{\ell,8};\mathbb{C}),\quad  \frac{\kappa_{b}}{8b^{2}}\| u\| \leq \| (\frac{\kappa_{b}}{b^{2}}+\mathcal{P}_{b,\ell,m}-\frac{i\lambda}{b})u\|\,.$$
  The inequality \eqref{eq:BoundForTheEquivalenceAfterChangeOfVariableVarphi} now implies
  \begin{equation}
   | ~ \| (\frac{\kappa_{b}}{b^{2}}+\mathcal{P}_{b,\ell,m} - \frac{i\lambda}{b})\mathcal{U}_{\ell,m}v \|_{L^2} - \| (\frac{\kappa_{b}}{b^{2}} + \mathcal{P}_{b,\ell} - \frac{i\lambda}{b})v \|_{L^2} ~ | \leq \frac{8C_{g,\psi}^{(3)}Ab}{\kappa_{b}} \| (\frac{\kappa_{b}}{b^{2}} + \mathcal{P}_{b,\ell,m} - \frac{i\lambda}{b})\mathcal{U}_{\ell,m}v \|_{L^2}   .
  \end{equation}
  when $\kappa_b\geq 8C_{g,\psi}^{(4)}Ab$\,.
  It suffices to take $C_{g,\psi}=\max(8C_{g,\psi}^{(4)}, 16C_{g,\psi}^{(3)})$\,.
\end{proof}

The next step is to replace the $\mathcal{O}_{\ell,m}$ with the euclidean version defined by
\begin{equation}
  \tilde{\mathcal{O}}_{\ell}=\frac{1}{2}(\delta_{ij}2^{-2\ell}D_{p_{i}}D_{p_{j}}+2^{2\ell}|p|^2) \,,
\end{equation}
and we set according to \eqref{eq:defYellm}
\begin{align}
\label{eq:deftildePbellm}
\tilde{\mathcal{P}}_{b,\ell,m} &= \frac{1}{b^{2}} \tilde{\mathcal{O}}_{\ell} + \frac{1}{b}\mathcal{Y}_{\ell,m} \;,
\\
\label{eq:rappYellm}
\text{with}\quad
\mathcal{Y}_{\ell,m}&=2^{2\ell}\delta^{ij}p_{j}\frac{\partial}{\partial q^{i}} + f_{k}^{ij}(q,\ell)p_{i}p_{j}\frac{\partial}{\partial p_{k}}\,.
\end{align}

\begin{proposition} \label{pr:ChangingOToTheEuclideanO}
There is a constant $C_{g,\psi}\geq 1$\, determined by the metric $g$ and the function $\psi$\,, such that the following inequalities 
$$
\| \left(
    \frac{\kappa_{b}}{b^{2}} + \tilde{\mathcal{P}}_{b,\ell,m} - \frac{i\lambda}{b}
\right) u \|_{L^2} 
- C_{g,\psi} A^{2}2^{-2\ell} \|\left(
  \frac{\kappa_{b}}{b^{2}} + \frac{1}{b^{2}}\tilde{\mathcal{O}}_{\ell}
\right) u \|_{L^2}  
\leq 
\| \left(
  \frac{\kappa_{b}}{b^{2}} + \mathcal{P}_{b,\ell,m}-\frac{i\lambda}{b}
\right) u\|_{L^2}
$$
and
$$
\|\left( 
\frac{\kappa_{b}}{b^{2}} + \mathcal{P}_{b,\ell,m}-\frac{i\lambda}{b}
\right) u\|_{L^2} 
\leq 
\|\left(
    \frac{\kappa_{b}}{b^{2}} + \tilde{\mathcal{P}}_{b,\ell,m} - \frac{i\lambda}{b}
\right) u \|_{L^2} 
+ C_{g,\psi} A^{2}2^{-2\ell} 
\|\left(
  \frac{\kappa_{b}}{b^{2}} + \frac{1}{b^{2}}\tilde{\mathcal{O}}_{\ell}
\right) u \|_{L^2} $$
hold
for all $u\in \mathcal{C}^\infty_0((B(0,\hat{C}'_{g,\psi}A)\times \mathbb{R}^d)\cap S_{\ell,8}',\mathbb{C})$, all $\ell\in\Z$\,, $\ell\geq -1$\,, and all $(\lambda,b)\in \mathbb{R}\times (0,+\infty)$.
\end{proposition}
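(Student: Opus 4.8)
The plan is to estimate the difference operator $\tilde{\mathcal{P}}_{b,\ell,m} - \mathcal{P}_{b,\ell,m} = \frac{1}{b^2}(\tilde{\mathcal{O}}_\ell - \mathcal{O}_{\ell,m})$, since the first-order term $\frac{1}{b}\mathcal{Y}_{\ell,m}$ is common to both operators and cancels exactly. From \eqref{eq:defOellm} and the definition of $\tilde{\mathcal{O}}_\ell$, this difference reads
\begin{align*}
\tilde{\mathcal{O}}_\ell - \mathcal{O}_{\ell,m} = \frac{1}{2}\sum_{i,j}\left[(\delta^{ij}-g^{ij}(2^{-\ell}q))\frac{D_{p_i}}{2^\ell}\frac{D_{p_j}}{2^\ell} + (\delta_{ij}-g_{ij}(2^{-\ell}q))2^\ell p_i\, 2^\ell p_j\right]\,.
\end{align*}
On the relevant domain $q \in B(0, \hat{C}'_{g,\psi}A)$ the argument $2^{-\ell}q$ lies in a Euclidean ball of radius $\hat{C}'_{g,\psi}A2^{-\ell}$ around the origin where, thanks to the normal-coordinate bound \eqref{eq:BoundForTheMetricInNormalCoordinate}, we have $|g_{ij}(2^{-\ell}q)-\delta_{ij}| = \mathcal{O}(|2^{-\ell}q|^2) = \mathcal{O}(A^2 2^{-2\ell})$, and the same for $g^{ij}-\delta^{ij}$ (inverting a matrix that is $\delta + \mathcal{O}(A^2 2^{-2\ell})$).

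Next I would bound the two pieces separately as operators. The multiplicative piece obeys $\|(\delta_{ij}-g_{ij}(2^{-\ell}q))2^\ell p_i\,2^\ell p_j\, u\|_{L^2} \leq C_{g} A^2 2^{-2\ell}\, \|2^{2\ell}|p|^2 u\|_{L^2}$ pointwise, and the differential piece obeys $\|(\delta^{ij}-g^{ij}(2^{-\ell}q))2^{-2\ell}D_{p_i}D_{p_j}u\|_{L^2} \leq C_{g}A^2 2^{-2\ell}\,\|2^{-2\ell}|D_p|^2 u\|_{L^2}$. Both $\|2^{2\ell}|p|^2 u\|_{L^2}$ and $\|2^{-2\ell}|D_p|^2 u\|_{L^2}$ are controlled by $\|\tilde{\mathcal{O}}_\ell u\|_{L^2}$ uniformly in $\ell$ — this is the standard fact that $|p|^2$ and $-\Delta_p$ are each bounded by the harmonic oscillator (here the rescaled oscillator $\tilde{\mathcal{O}}_\ell = \frac{1}{2}(2^{-2\ell}|D_p|^2 + 2^{2\ell}|p|^2)$), with a constant independent of the dilation parameter $2^\ell$. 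Hence
\begin{align*}
\left\|\tfrac{1}{b^2}(\tilde{\mathcal{O}}_\ell - \mathcal{O}_{\ell,m})u\right\|_{L^2} \leq C_{g,\psi} A^2 2^{-2\ell}\, \tfrac{1}{b^2}\|\tilde{\mathcal{O}}_\ell u\|_{L^2} \leq C_{g,\psi} A^2 2^{-2\ell}\,\left\|\left(\tfrac{\kappa_b}{b^2} + \tfrac{1}{b^2}\tilde{\mathcal{O}}_\ell\right)u\right\|_{L^2}\,,
\end{align*}
where the last inequality uses that $\frac{\kappa_b}{b^2}$ and $\frac{1}{b^2}\tilde{\mathcal{O}}_\ell$ are both nonnegative self-adjoint, so adding them only increases the $L^2$ norm. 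The two asserted inequalities then follow from the triangle inequality applied to $\mathcal{P}_{b,\ell,m} = \tilde{\mathcal{P}}_{b,\ell,m} - \frac{1}{b^2}(\tilde{\mathcal{O}}_\ell - \mathcal{O}_{\ell,m})$ (the $\pm\frac{i\lambda}{b}$ shift and the $\frac{\kappa_b}{b^2}$ shift being identical on both sides).

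The main obstacle — really the only nontrivial point — is establishing the uniform-in-$\ell$ bounds $\|2^{2\ell}|p|^2 u\|_{L^2} + \|2^{-2\ell}|D_p|^2 u\|_{L^2} \leq C \|\tilde{\mathcal{O}}_\ell u\|_{L^2}$. This is cleanest to see by the unitary rescaling $p \mapsto 2^\ell p$ in the $p$-variable, under which $\tilde{\mathcal{O}}_\ell$ becomes the standard Euclidean harmonic oscillator $\mathcal{O} = \frac{1}{2}(|D_p|^2 + |p|^2)$ and the two quantities become $\||p|^2 u\|$ and $\||D_p|^2 u\|$; the bound $\||p|^2 u\| + \||D_p|^2 u\| \leq C\|\mathcal{O}u\|$ is then the elementary elliptic estimate for the harmonic oscillator, obtained e.g. from $\|\mathcal{O}u\|^2 = \frac14\||D_p|^2 u\|^2 + \frac14\||p|^2 u\|^2 + \frac12\Real\langle |D_p|^2 u, |p|^2 u\rangle + \frac{d}{2}\|\,|D_p|u\|^2$ together with $\Real\langle |D_p|^2 u, |p|^2 u\rangle \geq -\sum_j\|[\partial_{p_j}, |p|^2]\cdot\|$-type commutator corrections that are themselves lower order — in any case this is a fixed, $\ell$-independent inequality. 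One should also note that the cross term and commutators $[D_{p_i}, p_j] $ contribute only lower-order, $\ell$-independent constants, so the whole estimate is genuinely uniform in $\ell \geq -1$, in $b$, and in $\lambda$, as claimed.
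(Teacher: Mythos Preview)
Your proof is correct and follows essentially the same approach as the paper: identify the difference as $\frac{1}{b^2}(\tilde{\mathcal{O}}_\ell-\mathcal{O}_{\ell,m})$, use the normal-coordinate bound \eqref{eq:BoundForTheMetricInNormalCoordinate} on the metric coefficients, reduce by the unitary rescaling $p\mapsto 2^\ell p$ to a fixed harmonic-oscillator ellipticity estimate, and conclude by the triangle inequality. The paper packages the harmonic-oscillator step as a citation to Proposition~\ref{pr:composcharm}, whereas you spell it out inline, but the content is the same.
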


\begin{proof}
  From the equality
  \begin{equation}
   \mathcal{P}_{b,\ell,m} - \tilde{\mathcal{P}}_{b,\ell,m} = \frac{1}{b^{2}}
   \left(
     \mathcal{O}_{\ell,m} - \tilde{\mathcal{O}}_{\ell}
\right) \;.
  \end{equation}
  The difference between $\mathcal{O}_{\ell,m}$ and $\tilde{\mathcal{O}}_{\ell,m}$ is given by
  \begin{equation}
    \mathcal{O}_{\ell,m}- \tilde{\mathcal{O}}_{\ell} = \frac{1}{2}
  \left[
    \left(
      g^{ij}(2^{-\ell}q)-\delta_{ij}
\right)2^{-2\ell}D_{p_{i}}D_{p_{j}} +2^{2\ell}
\left( g_{ij}(2^{-\ell}q)-\delta^{ij} \right)p_{i}p_{j}
\right] \;.
\end{equation}
A unitary change of scale  replaces $(2^\ell p, 2^{-\ell} D_p)$ by $(p,D_p)$ and the problem is reduce to the comparison of harmonic oscillator hamiltonians in Proposition~\ref{pr:composcharm}
relying on global ellipticity. With $|g^{ij}(2^{-\ell}q)-\delta^{ij}|+|g_{ij}(2^{-\ell}q)-\delta_{ij}|\leq C_{g,\psi,1}A^2 2^{-2\ell}$\,, we obtain
\begin{equation}\label{eq:upperBoundForReplacingOToTheEuclideanO}
  \| \frac{1}{b^{2}} 
\left(
  \mathcal{O}_{\ell,m} - \tilde{\mathcal{O}}_{\ell}
\right) u  \|_{L^2} \leq C_{g,\psi} A^{2}2^{-2\ell} \|
\left(
  \frac{\kappa_{b}}{b^{2}} + \frac{1}{b^{2}}\tilde{\mathcal{O}}_{\ell}
\right) u \|_{L^2} \;,
\end{equation}
for some constant $C_{g,\psi}>0$ determined by $(g,\psi)$\,.
The two inequalities of this proposition follow.
\end{proof}

We set $h=\frac{1}{2^{2\ell}b}$,
\begin{equation}
  \frac{\kappa_{b}}{b^{2}} +  \tilde{\mathcal{P}}_{b,\ell,m} - \frac{i\lambda}{b}=  2^{2\ell}
  \left(
    \frac{\kappa_{b}h}{b} + \hat{\mathcal{P}}_{b,h,f} - i h \lambda
\right)
\end{equation}
where
\begin{equation} \label{perturbation_of_Euclidean_KFP}
  \hat{\mathcal{P}}_{b,h,f} = \frac{1}{2}\delta_{ij}(hD_{p_{i}})(hD_{p_{j}}) + \frac{|p|^2}{2b^2} + \frac{1}{b}\delta^{ij}p_{j}\frac{\partial}{\partial q^{i}} + hf^{ij}_{k} p_{i}p_{j}\frac{\partial}{\partial p_{k}}.
\end{equation}

The problem is now reduced to a careful study of the operator $\hat{ \mathcal{P}}_{b,h,f}$ acting on $\mathcal{C}^\infty_0((B(0,\hat{C}'_{g,\psi}A)\times \mathbb{R}^d)\cap S_{\ell,8}',\mathbb{C})$\,. We will firstly consider the euclidean case in Section~\ref{sec:EuclCase}, where  $f=(f^{ij}_k)_{1\leq i,j,k \leq d}=0$\,, and the results for the general case will be obtained by some accurate perturbative argument in Subsection~\ref{sec:localtPbellm}.
Passing from the local to the global estimates will be developed in the end of Section~\ref{sec:finalproof} but preliminary results are collected in the next paragraph.

\subsection{From the local models to the global estimates for a fixed $\ell$}
\label{sec:locglobfixedell}
We work in the framework of Subsection~\ref{sec:gridpart} and Subsection~\ref{subsec:NormalCoordinate}. Remember the notations and assumptions:
\begin{itemize}
\item the partition of unity $\sum_{m\in\Z^d}\psi^2(q-m)\equiv 1$ and the notations $\psi_{m,\ell,A}(q)=\psi(\frac{q-q_{m,\ell,A}}{2^{-\ell}A})$\,, $q_{\ell,m,A}=A2^{-\ell}m$\,;
    \item $\mathcal{P}_{b,\ell}=\frac{1}{b^2}\mathcal{O}_{\ell}+\frac{1}{b}\mathcal{Y}_{\ell}$ introduced in \eqref{eq:defPbell}\eqref{eq:defOell}\eqref{eq:defYell}\,;
    \item the condition $2^{-\ell}A\leq \varepsilon_{g,\psi}=\frac{1}{C_{g,\psi}}$ for $\varepsilon_{g,\psi}$ small enough which allows in Subsection~\ref{subsec:NormalCoordinate} the use of normal coordinates centered at $q_{\ell,m,A}=A2^{-\ell}m$\,, $m\in\Z^d$\,, and the comparison between the metric $g$ with the euclidean metric;
    \item the unitary transform $\mathcal{U}_{\ell,m}$ associated with the change of variables $  \varphi_{\ell,m} : (q,p) \mapsto 
\left(2^{-\ell}q,g(2^{-\ell}q)p\right)$ written in normal coordinates;
\item the operator $\mathcal{P}_{b,\ell,m}=\frac{1}{b^2}\mathcal{O}_{\ell,m}+\frac{1}{b}\mathcal{Y}_{\ell,m}$ introduced in \eqref{eq:defPbellm}.
\end{itemize}
\begin{proposition}
\label{pr:locglobPbell}
    With the above notations and assumptions, in particular $2^{-\ell}A\leq \frac{1}{C_{g,\psi}}$\,, set for $u\in\mathcal{C}^{\infty}_{0}(S_{\ell,2},\C)$ and for any $m\in \Z^d$\,,
    $$u_{\ell,m}=\mathcal{U}_{\ell,m}(\psi_{m,\ell,A}u)~\in \mathcal{C}^\infty_0((B(0,\hat{C}'_{g,\psi}A)\times \mathbb{R}^d)\cap S_{\ell,8}',\mathbb{C})\,.
    $$
    The following inequalities
    \begin{eqnarray*}
    &&\sum_{m\in \Z^d}\frac{1}{8}\| (\frac{\kappa_{b}}{b^{2}}+\mathcal{P}_{b,\ell,m} - \frac{i\lambda}{b}) u_{\ell,m}\|^{2}_{L^2(\R^{2d})}-\frac{C_{g,\psi}}{A^2b^2}\norm{2^{2\ell}u_{\ell,m}}_{L^2(\R^{2d})}^2
    \leq
    \| (\frac{\kappa_{b}}{b^{2}}+\mathcal{P}_{b,\ell} - \frac{i\lambda}{b}) u \|^{2}_{L^2(\R^{2d})}\,,\\
    \text{and}&&\| (\frac{\kappa_{b}}{b^{2}}+\mathcal{P}_{b,\ell} - \frac{i\lambda}{b}) u \|^{2}_{L^2(\R^{2d})}
    \leq \sum_{m\in \Z^d}8\| (\frac{\kappa_{b}}{b^{2}}+\mathcal{P}_{b,\ell,m} - \frac{i\lambda}{b}) u_{\ell,m}\|^{2}_{L^2(\R^{2d})}
    +\frac{C_{g,\psi}}{A^2b^2}\norm{2^{2\ell}u_{\ell,m}}_{L^2(\R^{2d})}^2
    \end{eqnarray*}
    hold true as soon as $\kappa_b\geq C_{g,\psi}Ab$ and the constant $C_{g,\psi}\geq 1$ is chosen large enough.
\end{proposition}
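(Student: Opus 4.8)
The plan is to prove Proposition~\ref{pr:locglobPbell} by composing, term by term in $m\in\Z^d$, the $2^{\ell}$‑grid localization of Proposition~\ref{pr:controlOfErrorAfterGridPartitionOfUnity} with the gauge‑and‑normal‑coordinate comparison of Proposition~\ref{pr:equivNormAfterGaugeChangeOfVariables}. Fix $u\in\mathcal{C}^\infty_0(S_{\ell,2};\C)$ and set $v_m=\psi_{m,\ell,A}u$. First I would check the function‑space bookkeeping: since $2^{-\ell}A\leq\frac{1}{C_{g,\psi}}\leq\varepsilon_{g,\psi}$, the support inclusion \eqref{set:supportAfterGridPartition} together with the reductions of Subsection~\ref{subsec:NormalCoordinate} (passage to normal coordinates centered at $q_{m,\ell,A}$, possibly shrinking $\varepsilon_{g,\psi}$) shows that, once expressed in those coordinates, $v_m\in\mathcal{C}^\infty_0\big((B(0,\hat{C}'_{g,\psi}A2^{-\ell})\times\R^d)\cap S'_{\ell,4};\C\big)$, so Proposition~\ref{pr:equivNormAfterGaugeChangeOfVariables} is applicable to each $v_m$; it then also guarantees that $u_{\ell,m}=\mathcal{U}_{\ell,m}v_m$ lies in $\mathcal{C}^\infty_0\big((B(0,\hat{C}'_{g,\psi}A)\times\R^d)\cap S'_{\ell,8};\C\big)$, as claimed. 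The two hypotheses of that proposition hold under the assumptions of Proposition~\ref{pr:locglobPbell}: $2^{-\ell}A\leq\frac{1}{C_{g,\psi}}$ is assumed, and $\frac{\kappa_b}{b^2}>C_{g,\psi}\frac Ab$ follows from $\kappa_b\geq C_{g,\psi}Ab$ after enlarging $C_{g,\psi}$ if necessary.

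Next I would identify the error terms. The change to normal coordinates is the cotangent lift of a diffeomorphism, hence a symplectomorphism preserving the symplectic volume $dqdp$, and $\mathcal{U}_{\ell,m}$ is unitary on $L^2(\R^{2d},dqdp;\C)$ by its very construction (the factors $2^{-\ell d/2}$ and $\sqrt{\det(g(2^{-\ell}q))}$ are there precisely for that). Therefore $\norm{2^{2\ell}u_{\ell,m}}_{L^2}=\norm{2^{2\ell}\psi_{m,\ell,A}u}_{L^2}$ for every $m$, so the error term $\frac{C_{g,\psi}}{A^2b^2}\norm{2^{2\ell}\psi_{m,\ell,A}u}_{L^2}^2$ produced by Proposition~\ref{pr:controlOfErrorAfterGridPartitionOfUnity} is exactly the one appearing in the statement to be proved.

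Then I would chain the inequalities. For the lower bound, the left‑hand inequality of Proposition~\ref{pr:controlOfErrorAfterGridPartitionOfUnity} gives $\norm{(\frac{\kappa_b}{b^2}+\mathcal{P}_{b,\ell}-\frac{i\lambda}{b})u}_{L^2}^2\geq\frac12\sum_m\norm{(\frac{\kappa_b}{b^2}+\mathcal{P}_{b,\ell}-\frac{i\lambda}{b})v_m}_{L^2}^2-\frac{C_{g,\psi}}{A^2b^2}\sum_m\norm{2^{2\ell}\psi_{m,\ell,A}u}_{L^2}^2$, and bounding each summand below by $\frac14\norm{(\frac{\kappa_b}{b^2}+\mathcal{P}_{b,\ell,m}-\frac{i\lambda}{b})u_{\ell,m}}_{L^2}^2$ via the left inequality of Proposition~\ref{pr:equivNormAfterGaugeChangeOfVariables} yields the constant $\frac12\cdot\frac14=\frac18$. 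Symmetrically, the right‑hand inequality of Proposition~\ref{pr:controlOfErrorAfterGridPartitionOfUnity} together with $\norm{(\frac{\kappa_b}{b^2}+\mathcal{P}_{b,\ell}-\frac{i\lambda}{b})v_m}_{L^2}^2\leq 4\norm{(\frac{\kappa_b}{b^2}+\mathcal{P}_{b,\ell,m}-\frac{i\lambda}{b})u_{\ell,m}}_{L^2}^2$ yields the constant $2\cdot4=8$. Replacing all the metric‑ and $\psi$‑dependent constants that have occurred by a single $C_{g,\psi}\geq1$, chosen large enough so that both $2^{-\ell}A\leq\frac{1}{C_{g,\psi}}$ and $\kappa_b\geq C_{g,\psi}Ab$ force the hypotheses above, completes the proof.

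The proof is essentially a routine assembly of two already‑established two‑sided inequalities, so there is no real analytic difficulty; the one point that needs attention — and the only place one could slip — is the bookkeeping of supports through the three successive changes of variable (the grid cut‑off $\psi_{m,\ell,A}$, the passage to normal coordinates, and the gauge map $\mathcal{U}_{\ell,m}$), i.e. checking that $v_m$ indeed lands in the validity domain $(B(0,\hat{C}'_{g,\psi}A2^{-\ell})\times\R^d)\cap S'_{\ell,4}$ of Proposition~\ref{pr:equivNormAfterGaugeChangeOfVariables} and that $u_{\ell,m}$ lands in $(B(0,\hat{C}'_{g,\psi}A)\times\R^d)\cap S'_{\ell,8}$. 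This is already arranged by the compatible choices of $\hat{C}_{g,\psi}$, $\hat{C}'_{g,\psi}$ and $\varepsilon_{g,\psi}$ made in Subsections~\ref{sec:gridpart} and \ref{subsec:NormalCoordinate}, so no new estimate is required.
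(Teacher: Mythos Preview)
Your proposal is correct and follows exactly the route the paper takes: the paper's proof is the single sentence ``Simply combine Proposition~\ref{pr:controlOfErrorAfterGridPartitionOfUnity} and Proposition~\ref{pr:equivNormAfterGaugeChangeOfVariables}'', and you have carried out that combination in detail, including the correct constants $\tfrac12\cdot\tfrac14=\tfrac18$ and $2\cdot4=8$, the identification of the error terms via the unitarity of $\mathcal{U}_{\ell,m}$, and the support bookkeeping.
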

\begin{proof}
    Simply combine Proposition~\ref{pr:controlOfErrorAfterGridPartitionOfUnity} and Proposition~\ref{pr:equivNormAfterGaugeChangeOfVariables}.
\end{proof}
A similar result can be written for the hamiltonian vector field alone.
\begin{proposition}
\label{pr:locglobYell}
    With the same notations and assumptions as in Proposition~\ref{pr:locglobPbell}
    the following inequalities
    \begin{eqnarray*}
    &&\sum_{m\in \Z^d}\frac{1}{8}\| \frac{1}{b}(\mathcal{Y}_{\ell,m} - i\lambda) u_{\ell,m}\|^{2}_{L^2(\R^{2d})}-C_{g,\psi}(\frac{1}{A^2b^2}+\frac{1}{b^2 2^{2\ell}})\norm{2^{2\ell}u_{\ell,m}}_{L^2(\R^{2d})}^2
    \leq
    \| \frac{1}{b}(\mathcal{Y}_{\ell} - i\lambda) u \|^{2}_{L^2(\R^{2d})}\,,\\
    \text{and}&&\| \frac{1}{b}(\mathcal{Y}_{\ell} - i\lambda) u \|^{2}_{L^2(\R^{2d})}
    \leq \sum_{m\in \Z^d} 8\| \frac{1}{b}(\mathcal{Y}_{\ell,m} - i\lambda) u_{\ell,m}\|^{2}_{L^2(\R^{2d})}
    +C_{g,\psi}(\frac{1}{A^2b^2}+\frac{1}{b^2 2^{2\ell}})\norm{2^{2\ell}u_{\ell,m}}_{L^2(\R^{2d})}^2
    \end{eqnarray*}
    hold true as soon as the constant $C_{g,\psi}\geq 1$ is chosen large enough.
\end{proposition}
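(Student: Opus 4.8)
## Plan of Proof

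The plan is to mimic exactly the structure of Proposition~\ref{pr:locglobPbell}, replacing at each stage the full operator $\frac{\kappa_b}{b^2}+\mathcal{P}_{b,\ell}-\frac{i\lambda}{b}$ by the bare Hamiltonian vector field term $\frac{1}{b}(\mathcal{Y}_\ell-i\lambda)$, and tracking the error terms that arise. The three moving pieces are: (i) the $2^\ell$-dependent grid partition of unity $\sum_{m}\psi_{m,\ell,A}^2\equiv 1$; (ii) the gauge change of variables $\varphi_{\ell,m}$ together with the unitary $\mathcal{U}_{\ell,m}$; and (iii) keeping control of remainders. First I would record the commutator computation: since $\mathcal{O}_\ell$ commutes with multiplication by functions of $q$ is false, but here we only have $\mathcal{Y}_\ell$, and $[\mathcal{Y}_\ell,\psi_{m_1,\ell,A}]=2^\ell g^{ij}(q)p_j\,\partial_{q^i}\psi_{m_1,\ell,A}(q)$ exactly as in \eqref{eq:firstcommPbell}, while the double commutator $[[\mathcal{Y}_\ell,\psi_{m_1,\ell,A}],\psi_{m_2,\ell,A}]=0$ vanishes. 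On $S_{\ell,2}$ we have $|p|_q\leq 2$, so by the derivative bound \eqref{eq:derpsimellA} the first commutator applied to $\psi_{m_2,\ell,A}u$ has $L^2$-norm squared bounded by $\tilde C_{g,\psi}\frac{2^{4\ell}}{(Ab)^2}\|\psi_{m_2,\ell,A}u\|_{L^2}^2$. Feeding this into the abstract two-sided inequalities \eqref{eq:upperBound}--\eqref{eq:lowerBound} (with $r=\tfrac12$, as in Proposition~\ref{pr:controlOfErrorAfterGridPartitionOfUnity}) yields the grid-level analogue: a factor $\tfrac12$ (resp.\ $2$) in front of $\sum_m\|\frac{1}{b}(\mathcal{Y}_\ell-i\lambda)\psi_{m,\ell,A}u\|^2$ and an error $\frac{C_{g,\psi}}{A^2 b^2}\|2^{2\ell}\psi_{m,\ell,A}u\|_{L^2}^2$.

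The second step is to transplant each localized piece $\psi_{m,\ell,A}u$ by the unitary $\mathcal{U}_{\ell,m}$ into the model configuration, obtaining $u_{\ell,m}=\mathcal{U}_{\ell,m}(\psi_{m,\ell,A}u)$, supported in $(B(0,\hat C'_{g,\psi}A)\times\R^d)\cap S'_{\ell,8}$. By \eqref{eq:linkBetweenYlmAndYl} the conjugation introduces the commutator with $\sqrt{\det g(2^{-\ell}q)}$, which by \eqref{eq:ErrorAfterCommutingWithTheJacobian} equals $\frac{2^\ell}{b}\delta^{ki}p_k\partial_{q^i}\sqrt{\det g}\circ\varphi_{\ell,m}$; on the support this is bounded by $C_{g,\psi}\frac{A}{b}$ pointwise (note the extra $A$ comes from $\partial_{q^i}\sqrt{\det g}$ being $\mathcal{O}(|2^{-\ell}q|)=\mathcal{O}(A2^{-\ell})$ in normal coordinates, times the $2^\ell$). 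So
\begin{align*}
\big|\ \|\tfrac{1}{b}(\mathcal{Y}_{\ell,m}-i\lambda)\mathcal{U}_{\ell,m}v\|_{L^2}-\|\tfrac{1}{b}(\mathcal{Y}_\ell-i\lambda)v\|_{L^2}\ \big|\leq C_{g,\psi}\tfrac{A}{b}\|\mathcal{U}_{\ell,m}v\|_{L^2}.
\end{align*}
Unlike Proposition~\ref{pr:equivNormAfterGaugeChangeOfVariables}, here there is \emph{no} lower bound of the form $\frac{\kappa_b}{b^2}\|u\|\lesssim\|(\tfrac{\kappa_b}{b^2}+\tilde{\mathcal{P}}_{b,\ell,m})u\|$ available to absorb this error — $\mathcal{Y}_\ell$ alone is not coercive. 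This is where the $+\frac{1}{b^2 2^{2\ell}}$ term in the statement comes from: write $\frac{A}{b}\|\mathcal{U}_{\ell,m}v\|=\frac{1}{b\cdot 2^\ell}\cdot A2^\ell\|\mathcal{U}_{\ell,m}v\|$, and since $A2^{-\ell}\leq 1/C_{g,\psi}$ one has $A2^\ell\leq C_{g,\psi}^{-1}2^{2\ell}$, so $\frac{A}{b}\|\mathcal{U}_{\ell,m}v\|\leq C_{g,\psi}^{-1}\frac{2^{2\ell}}{b}\|\mathcal{U}_{\ell,m}v\|$; squaring gives a term controlled by $\frac{C_{g,\psi}}{b^2 2^{2\ell}}\|2^{2\ell}u_{\ell,m}\|_{L^2}^2$ (indeed $\frac{2^{4\ell}}{b^2}\|u_{\ell,m}\|^2=\frac{1}{b^2 2^{2\ell}}\|2^{3\ell}u_{\ell,m}\|^2$ — I would instead bound $\frac{A}{b}\|\mathcal{U}_{\ell,m}v\|\le\frac{A}{b}\cdot 2^{-2\ell}\|2^{2\ell}u_{\ell,m}\|$ directly, giving the clean $\frac{A^2}{b^2 2^{4\ell}}\|2^{2\ell}u_{\ell,m}\|^2\le\frac{C_{g,\psi}}{b^2 2^{2\ell}}\|2^{2\ell}u_{\ell,m}\|^2$ using $A2^{-\ell}\le 1/C_{g,\psi}\le 1$). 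Either route puts this remainder into the advertised form.

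The third step combines the two: apply the triangle/parallelogram inequality (as in the passage from \eqref{eq:BoundForTheEquivalenceAfterChangeOfVariableVarphi} to Proposition~\ref{pr:equivNormAfterGaugeChangeOfVariables}) to turn the one-sided error bounds into the two-sided inequalities with the constants $\tfrac18$ and $8$ — factor $\tfrac12$ from the grid times $\tfrac14$ from the parallelogram step, and dually $2\times 4$ — then sum over $m\in\Z^d$, using that for fixed $\ell$ the supports $\{q\in\mathrm{supp}\,\psi_{m,\ell,A}\}$ have bounded overlap so that $\sum_m\|\psi_{m,\ell,A}u\|^2\asymp\|u\|^2$ and likewise $\sum_m\|\mathcal{Y}_\ell\psi_{m,\ell,A}u\|^2$ is comparable up to the commutator errors already accounted for. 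The unitarity of $\mathcal{U}_{\ell,m}$ means $\|\mathcal{U}_{\ell,m}(\psi_{m,\ell,A}u)\|_{L^2}=\|\psi_{m,\ell,A}u\|_{L^2}$, so the error terms $\frac{1}{A^2b^2}\|2^{2\ell}u_{\ell,m}\|^2$ and $\frac{1}{b^2 2^{2\ell}}\|2^{2\ell}u_{\ell,m}\|^2$ are exactly of the form stated. The main obstacle, and the only genuinely new point compared to Proposition~\ref{pr:locglobPbell}, is precisely the absence of coercivity of $\mathcal{Y}_\ell$: in Proposition~\ref{pr:locglobPbell} the Jacobian-commutator error was absorbed using the harmonic-oscillator lower bound from the integration by parts of Proposition~\ref{pr:IppIneqWithRealPart}, but here that mechanism is unavailable, so one must simply \emph{carry} the error along as an explicit $L^2$ remainder — which is why the statement has two error terms rather than one, the extra $\frac{1}{b^2 2^{2\ell}}$ being the price of not absorbing the gauge-change defect. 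No smallness of $A2^{-\ell}$ beyond $\le 1/C_{g,\psi}$ and no lower bound on $\kappa_b$ are needed, consistent with the hypotheses of the proposition.
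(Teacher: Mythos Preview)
Your proposal is correct and follows essentially the same approach as the paper's own proof: redo the grid-partition step of Proposition~\ref{pr:controlOfErrorAfterGridPartitionOfUnity} with $\tfrac{1}{b}(\mathcal{Y}_\ell-i\lambda)$ in place of the full operator (same commutator, same $\tfrac{C_{g,\psi}}{A^2b^2}\|2^{2\ell}\cdot\|^2$ error), then redo the gauge-change step leading to \eqref{eq:BoundForTheEquivalenceAfterChangeOfVariableVarphi} which produces an $\tfrac{A^2}{b^2}\|\mathcal{U}_{\ell,m}v\|^2$ remainder, and finally convert this via $\tfrac{A^2}{b^2}\|v\|^2=\tfrac{A^2 2^{-2\ell}}{b^2 2^{2\ell}}\|2^{2\ell}v\|^2\leq\tfrac{1}{b^2 2^{2\ell}}\|2^{2\ell}v\|^2$ using $A2^{-\ell}\leq 1$. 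Your explicit observation that the second error term arises precisely because $\mathcal{Y}_\ell$ lacks the coercivity that allowed absorption in Proposition~\ref{pr:equivNormAfterGaugeChangeOfVariables} is exactly the point, and your ``second route'' for the error conversion is verbatim the paper's computation.
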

\begin{proof}
    In Proposition~\ref{pr:controlOfErrorAfterGridPartitionOfUnity} the operator $(\frac{\kappa_{b}}{b^{2}}+\mathcal{P}_{b,\ell} - \frac{i\lambda}{b})$ can be replaced by $\frac{1}{b}(\mathcal{Y}_{\ell}-i\lambda)$ and this produces the same error term 
    $\frac{C_{g,\psi}}{A^2b^2}\norm{2^{2\ell}u_{\ell,m}}_{L^2}^2$\,. In the same way, the calculation leading to \eqref{eq:BoundForTheEquivalenceAfterChangeOfVariableVarphi} can be done with $(\frac{\kappa_{b}}{b^{2}}+\mathcal{P}_{b,\ell} - \frac{i\lambda}{b})$ replaced by $\frac{1}{b}(\mathcal{Y}_{\ell}-i\lambda)$\,. This second step leads to 
    $$
    \|\frac{1}{b}(\mathcal{Y}_{\ell,m}-i\lambda)\mathcal{U}_{\ell,m}v\|^2_{L^2(\R^{2d})}\leq  2\|\frac{1}{b}(\mathcal{Y}_{\ell}-i\lambda)v\|^2_{L^2(\R^{2d})}
    +2[C_{g,\psi}^{(3)}]^2\frac{A^2}{b^2}\|\mathcal{U}_{\ell,m}v\|^2_{L^2(\R^{2d})}
    $$
    and
    $$
    \|\frac{1}{b}(\mathcal{Y}_{\ell}-i\lambda)v\|^2_{L^2(\R^{2d})}\leq 2 \|\frac{1}{b}(\mathcal{Y}_{\ell,m}-i\lambda)\mathcal{U}_{\ell,m}v\|^2_{L^2(\R^{2d})}
    +2[C_{g,\psi}^{(3)}]^2\frac{A^2}{b^2}\|\mathcal{U}_{\ell,m}v\|^2_{L^2(\R^{2d})}\,.
    $$
     Since $2^{-\ell}A\leq \frac{1}{C_{g,\psi}}\leq 1 $, we conclude that
    $$
    \frac{A^2}{b^2}\|\mathcal{U}_{\ell,m}v\|^2_{L^2(\R^{2d})}\leq \frac{2^{-2\ell}A^2}{b^2 2^{2\ell}}\|2^{2\ell}\mathcal{U}_{\ell,m}v\|^2_{L^2(\R^{2d})}\leq\frac{1}{b^2 2^{2\ell}}\|2^{2\ell}\mathcal{U}_{\ell,m}v\|^2_{L^2(\R^{2d})}\,.
    $$
    
  \end{proof}
The last result of this paragraph is about the comparison between local and global estimates of the $\tilde{\mathcal{W}}^{2/3}$-norm appearing in the lower bound of Theorem~\ref{th:mainOne}.
According to Proposition~\ref{pr:eqnormes}-\textbf{ii)} in Appendix~\ref{sec:pseudodiff}, the $\tilde{\mathcal{W}}^{2/3}$-norm of  $u\in\mathcal{C}^{\infty}_{0}(\Omega\times\R^d;\C)$ can be written
$$
\|u\|_{\tilde{\mathcal{W}}^{2/3}}^2=\norm{(\tilde{\mathcal{O}}_{1})^{2/3}u}_{L^2(\R^{2d})}^2 +\norm{|D_q|^{2/3}u}_{L^2(\R^{2d})}^2\,,
$$
expressed with operators constructed in the euclidean case.
The change of scale $(p,D_p)\to (2^\ell p, 2^{-\ell} D_p)$ leads to consider
\begin{equation}
\label{eq:defnormWell}
\norm{u}_{\tilde{\mathcal{W}}^{2/3},\ell}^2
=\norm{(\tilde{\mathcal{O}_{\ell}})^{2/3}u}_{L^2(\R^{2d})}^2 +\norm{|D_q|^{2/3}u}_{L^2(\R^{2d})}^2\,.
\end{equation}
\begin{proposition}

\label{pr:locglobW23}
  With the same notations and assumptions as in Proposition~\ref{pr:locglobPbell}
    and with $\norm{u}_{\mathcal{W}^{2/3},\ell}$ defined by \eqref{eq:defnormWell}, the following inequalities
        \begin{eqnarray*}
   \|u \|^{2}_{\tilde{\mathcal{W}}^{2/3},\ell}
   \leq C_{g,\psi}\sum_{m\in \Z^d}\| (\tilde{\mathcal{O}}_{\ell})^{2/3} u_{\ell,m}\|^{2}_{L^2(\R^{2d})}
    +\||2^{\ell}D_q|^{2/3}u_{\ell,m}\|^{2}_{L^2(\R^{2d})}
    +\frac{1}{A^{4/3}}\norm{2^{2\ell/3}u_{\ell,m}}_{L^2(\R^{2d})}^2
    \end{eqnarray*}
    hold true as soon as $\kappa_b\geq C_{g,\psi}Ab$ and the constant $C_{g,\psi}\geq 1$ is chosen large enough.
\end{proposition}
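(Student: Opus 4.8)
The plan is to split $\|u\|_{\tilde{\mathcal{W}}^{2/3},\ell}^{2}=\|(\tilde{\mathcal{O}}_{\ell})^{2/3}u\|_{L^{2}}^{2}+\||D_{q}|^{2/3}u\|_{L^{2}}^{2}$ and bound the two pieces separately by the right-hand side of the proposition. The vertical piece $\|(\tilde{\mathcal{O}}_{\ell})^{2/3}u\|^{2}$ will be treated by the mechanism already used for $\mathcal{P}_{b,\ell}$ in Propositions~\ref{pr:controlOfErrorAfterGridPartitionOfUnity} and \ref{pr:equivNormAfterGaugeChangeOfVariables}, because $(\tilde{\mathcal{O}}_{\ell})^{2/3}$ commutes with the $q$-cut-offs. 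The horizontal piece $\||D_{q}|^{2/3}u\|^{2}$ is the delicate one, precisely because $|D_{q}|^{2/3}$ does not drop orders cleanly under commutation with functions of $q$: its symbol $|\xi|^{2/3}$ is not smooth at $\xi=0$, and this defect is exactly what forces the error term $\tfrac{1}{A^{4/3}}\|2^{2\ell/3}u_{\ell,m}\|^{2}$, which carries the factor $(2^{\ell}/A)^{4/3}$ — the order-$\tfrac43$ operator $|D_{q}|^{4/3}$ tested at the grid scale $A2^{-\ell}$.

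For the vertical piece, $(\tilde{\mathcal{O}}_{\ell})^{2/3}$ depends only on $(p,D_{p})$, hence commutes with multiplication by $\psi_{\ell,m,A}(q)$, so $\sum_{m}\psi_{\ell,m,A}^{2}\equiv1$ gives $\|(\tilde{\mathcal{O}}_{\ell})^{2/3}u\|^{2}=\sum_{m}\|(\tilde{\mathcal{O}}_{\ell})^{2/3}\psi_{\ell,m,A}u\|^{2}$. Each term is then transported to $u_{\ell,m}=\mathcal{U}_{\ell,m}(\psi_{\ell,m,A}u)$ as in Subsection~\ref{subsec:NormalCoordinate}: $\mathcal{U}_{\ell,m}$ intertwines $\mathcal{O}_{\ell}$ with $\mathcal{O}_{\ell,m}$ (see \eqref{eq:defOellm}), and by \eqref{eq:BoundForTheMetricInNormalCoordinate} the operators $\mathcal{O}_{\ell}$ on $\mathrm{supp}(\psi_{\ell,m,A}u)$ and $\mathcal{O}_{\ell,m}$ on $\mathrm{supp}\,u_{\ell,m}$ agree with $\tilde{\mathcal{O}}_{\ell}$ up to a relative error $\mathcal{O}(A^{2}2^{-2\ell})$; since $2^{-\ell}A\le1/C_{g,\psi}$, operator monotonicity of $t\mapsto t^{2/3}$ together with the comparison of harmonic-oscillator Hamiltonians of Proposition~\ref{pr:composcharm} yields $\|(\tilde{\mathcal{O}}_{\ell})^{2/3}\psi_{\ell,m,A}u\|\le C_{g,\psi}\|(\tilde{\mathcal{O}}_{\ell})^{2/3}u_{\ell,m}\|$. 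Summing in $m$ bounds the vertical piece by $C_{g,\psi}\sum_{m}\|(\tilde{\mathcal{O}}_{\ell})^{2/3}u_{\ell,m}\|^{2}$.

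For the horizontal piece I would use the localization identity: since $\sum_{m}\psi_{\ell,m,A}^{2}\equiv1$, $\||D_{q}|^{2/3}u\|^{2}=\langle u,|D_{q}|^{4/3}u\rangle=\sum_{m}\||D_{q}|^{2/3}\psi_{\ell,m,A}u\|^{2}+\tfrac12\langle u,\mathcal{D}_{\ell,A}u\rangle$ with $\mathcal{D}_{\ell,A}=\sum_{m}\bigl[\psi_{\ell,m,A},[\psi_{\ell,m,A},|D_{q}|^{4/3}]\bigr]$. The crucial estimate is $\|\mathcal{D}_{\ell,A}\|_{L^{2}\to L^{2}}\le C_{g,\psi}(2^{\ell}/A)^{4/3}$. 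To get it I would use the subordination formula $|\xi|^{4/3}=c_{d}\int_{0}^{+\infty}(e^{-t|\xi|^{2}}-1)\,t^{-5/3}\,dt$, whence $\mathcal{D}_{\ell,A}=c_{d}\int_{0}^{+\infty}\bigl(\sum_{m}[\psi_{\ell,m,A},[\psi_{\ell,m,A},e^{t\Delta_{q}}]]\bigr)\,t^{-5/3}\,dt$; the Gaussian kernel of $e^{t\Delta_{q}}$, the derivative bounds $\|\partial_{q}^{\alpha}\psi_{\ell,m,A}\|_{\infty}\le C_{\alpha}(2^{\ell}/A)^{|\alpha|}$ of \eqref{eq:derpsimellA}, and the bounded overlap of the $\mathrm{supp}\,\psi_{\ell,m,A}$ give $\|\sum_{m}[\psi_{\ell,m,A},[\psi_{\ell,m,A},e^{t\Delta_{q}}]]\|_{L^{2}\to L^{2}}\le C_{g,\psi}\min(1,t(2^{\ell}/A)^{2})$, and $\int_{0}^{+\infty}\min(1,t(2^{\ell}/A)^{2})\,t^{-5/3}\,dt\simeq(2^{\ell}/A)^{4/3}$. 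Since $\|\psi_{\ell,m,A}u\|=\|u_{\ell,m}\|$ and $\sum_{m}\|\psi_{\ell,m,A}u\|^{2}=\|u\|^{2}$, the double-commutator term is $\le C_{g,\psi}\tfrac{2^{4\ell/3}}{A^{4/3}}\sum_{m}\|u_{\ell,m}\|^{2}=C_{g,\psi}\tfrac{1}{A^{4/3}}\sum_{m}\|2^{2\ell/3}u_{\ell,m}\|^{2}$. It then remains to transport $\||D_{q}|^{2/3}\psi_{\ell,m,A}u\|$ to $\||2^{\ell}D_{q}|^{2/3}u_{\ell,m}\|$ through $\varphi_{\ell,m}$: the pure dilation $q\mapsto2^{-\ell}q$ turns $|D_{q}|^{2/3}$ exactly into $|2^{\ell}D_{q}|^{2/3}$, while the $q$--$p$ coupling of $\varphi_{\ell,m}$ and the Jacobian $\sqrt{\det g(2^{-\ell}q)}$ generate, by \eqref{eq:BoundForTheMetricInNormalCoordinate}, corrections of the same kind already dominated in \eqref{formOff}--\eqref{eq:BoundForfijk} and \eqref{eq:ErrorAfterCommutingWithTheJacobian}, which for $2^{-\ell}A\le1/C_{g,\psi}$ are reabsorbed into the $(\tilde{\mathcal{O}}_{\ell})^{2/3}$- and $|2^{\ell}D_{q}|^{2/3}$-terms on the right.

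Adding the two pieces and enlarging $C_{g,\psi}$ gives the claim. The main obstacle is the double-commutator bound $\|\mathcal{D}_{\ell,A}\|_{L^{2}\to L^{2}}\le C_{g,\psi}(2^{\ell}/A)^{4/3}$: the naive ``one commutator lowers the order by one'' would produce $(2^{\ell}/A)^{2}$ times $|D_{q}|^{-2/3}$, which is unbounded near $\xi=0$, so the sharp exponent $\tfrac43$ — hence the precise shape of the error term — must come from a global-in-frequency argument such as the heat-semigroup subordination above. A secondary point requiring care is the transport of the fractional $q$-norm across the non-symplectic map $\varphi_{\ell,m}$, handled as in Subsections~\ref{sec:gridpart}--\ref{subsec:NormalCoordinate}.
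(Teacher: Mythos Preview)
Your localization of the horizontal piece via the IMS identity and heat-semigroup subordination is correct and yields the sharp exponent: the bound $\|\sum_m[\psi_{\ell,m,A},[\psi_{\ell,m,A},e^{t\Delta_q}]]\|\le C\min(1,t(2^{\ell}/A)^2)$ integrated against $t^{-5/3}dt$ does give $(2^{\ell}/A)^{4/3}$. The paper takes a shorter route: it establishes the unit-scale inequality $\|\langle D_q\rangle^{2/3}v\|^{2}\le C_{\psi}\sum_{m}\|\langle D_q\rangle^{2/3}\psi(\cdot-m)v\|^{2}$ by one line of Weyl--H\"ormander calculus in the class $S(\langle\xi\rangle^{4/3},dq^{2}+d\xi^{2}/\langle\xi\rangle^{2})$, then rescales by $h=A2^{-\ell}$; the term $\tfrac{1}{A^{4/3}}\|2^{2\ell/3}u_{\ell,m}\|^{2}$ appears simply from the constant in $\langle hD_q\rangle^{4/3}=|hD_q|^{4/3}+O(1)$ after dividing by $h^{4/3}$. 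Your argument is self-contained; the paper's relies on the pseudo-differential machinery of the appendix but avoids the integral representation entirely.

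The gap is in your transport step. You carry $(\tilde{\mathcal{O}}_{\ell})^{2/3}$ and $|D_q|^{2/3}$ through $\varphi_{\ell,m}$ separately, but the $p$-part of $\varphi_{\ell,m}$, namely $p\mapsto g(2^{-\ell}q)p$, couples $D_q$ to $p\,\partial_p$-terms. For the \emph{nonlocal} operator $|D_q|^{2/3}$ there is no finite commutator expansion, so the references you give---\eqref{formOff}, \eqref{eq:BoundForfijk}, \eqref{eq:ErrorAfterCommutingWithTheJacobian}---do not apply: those control the first-order differential operator $\mathcal{Y}_{\ell,m}$ and its commutator with a smooth Jacobian, not a fractional power of $D_q$. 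The paper resolves this by transporting the \emph{full} norm $\|\cdot\|_{\tilde{\mathcal{W}}^{2/3},\ell}$ at once: an $\ell$-dependent $p$-rescaling converts it to $\|\cdot\|_{\tilde{\mathcal{W}}^{2/3},1}$, and the residual $q$-dependent gauge $(q,p)\mapsto(q,g(q)^{-1}p)$ is then handled by the invariance of $\tilde{\mathcal{W}}^{2/3}$ under vector-bundle isomorphisms (Proposition~\ref{pr:diffeoWsloc}), which packages the $q$--$p$ mixing into a single pseudo-differential statement. If you want to keep the pieces separate, you would need to rerun your subordination argument for the commutator of $|2^{\ell}D_q|^{2/3}$ with the gauge, using that $\partial_q g(2^{-\ell}q)=O(A2^{-2\ell})$ in normal coordinates; this is feasible but is not what your sketch says.
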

\begin{proof}
    With $\norm{\langle D_q\rangle^{2/3}u}_{L^2(\R^{2d})}^2=\langle u\,,\, (1-\Delta_q)^{2/3} u\rangle$ and 
$(1-\Delta_q)^{2/3}=\left[(1+|\xi|^2)^{2/3}\right]^{Weyl}(q,D_q)$\,.\\
and  $\sum_{m\in \Z^{d}} \psi^2(q-m)\equiv 1$\,, Weyl-H{\"o}rmander pseudo-differential calculus with the standard metric $dq^2+\frac{d\xi^2}{\langle\xi\rangle^2}$ (see \cite{HormIII}-Chap~XVIII) , provides a constant $C_{\psi}\geq 1$ such that
      $$ \| \langle D_q \rangle^{\frac{2}{3}} v \|_{L^2}^2 \leq C_{\psi}\sum_{m\in \mathbb{Z}^d}\| \langle D_q \rangle^{\frac{2}{3}} \psi(.-m) v \|_{L^2}^2 \,.$$
      By setting  $h=A2^{-\ell}$\,, $\psi(\frac{q-hm}{h}) = \psi_{m,\ell,A }(q)$\,, 
      $v(q,p)=h^{d/2}u(hq,p)$ and $$v_{m,\ell,A}(q,p)=\psi_{m,\ell,A}(q)u(q,p)=h^{-d/2}\psi(\frac{q-hm}{h})v(h^{-1}q,p)\,,$$ the above inequality becomes 
    $$ \| |hD_q|^{\frac{2}{3}} u \|_{L^2}^2 \leq C_{\psi}\sum_{m\in \mathbb{Z}^d}\| \langle hD_q \rangle^{\frac{2}{3}} v_{m,\ell,A} \|_{L^2}^2
    \leq 2 C_{\psi}\left[\sum_{m\in \mathbb{Z}^d}\| |hD_q|^{\frac{2}{3}} v_{m,\ell,A} \|_{L^2}^2+\|v_{m,\ell, A}\|_{L^2(\R^{2d})}^2\right]\,.$$
 By multiplying both sides of the inequality by $ h^{-\frac{4}{3}}=\frac{2^{4\ell/2}}{A^{4/3}}$\,, we get 
    $$ \|  |D_q|^{\frac{2}{3}} u \|_{L^2}^2 \leq 2C_{\psi}\sum_{m\in \mathbb{Z}^d}\| |D_q|^{\frac{2}{3}} v_{m,\ell,A} \|_{L^2}^2 +\frac{1}{A^{4/3}}\|2^{2\ell/3}v_{m,\ell, A}\|_{L^2(\R^{2d})}^2\;.$$
    By adding $\|(\tilde{\mathcal{O}}_{\ell})u\|_{L^2(\R^{2d})}^{2}=\sum_{m\in\Z^d} \|(\tilde{\mathcal{O}}_{\ell})v_{m,\ell,A}\|_{L^2(\R^{2d})}^{2}$\,, we deduce
    $$
    \|u\|_{\tilde{\mathcal{W}}^{2/3},\ell}^2\leq C_{\psi}\left[\sum_{m\in\Z^d} \|v_{m,\ell,A}\|_{\tilde{\mathcal{W}}^{2/3},\ell}^2 +\frac{1}{A^{4/3}}\|2^{2\ell/3}v_{m,\ell, A}\|_{L^2(\R^{2d})}^2\right]\,,
    $$
    which is not exactly the seeked inequality expressed in terms of the $u_{m,\ell,A}$\,.
By setting $\tilde{v}_{m,\ell,A}=2^{\ell d/2}v_{m,\ell,A}(q, 2^{\ell}p)$ we have on one side
$$
 \|v_{m,\ell,A}\|_{\tilde{\mathcal{W}}^{2/3},\ell}^2= \|\tilde{v}_{m,\ell,A}\|_{\tilde{\mathcal{W}}^{2/3},1}^2
$$
while on the other side
\begin{eqnarray*}
\tilde{v}_{m,\ell, A}(q,p)&=&2^{\ell d/2}\psi_{m,\ell,A}(q) u(q,2^{\ell}p)=2^{\ell d/2}[\mathcal{U}_{\ell,m}^{-1}u_{m,\ell,A}](q, 2^{\ell}p)\\
&=&
\frac{2^{\ell d}}{\sqrt{\mathrm{det}(g(2^{\ell}q))}} u_{m,\ell,A}( 2^{\ell}q,g(2^{\ell}q)^{-1}2^{\ell}p)
=[\hat{\mathcal{U}}_{\ell,m}\hat{u}_{m,\ell,A}](q,p)
\\
\text{with}&&
\hat{u}_{m,\ell,A}(q, p) = 2^{\ell d}u_{m,\ell,A}(2^{\ell}q,2^{\ell}p)\quad,
\quad [\hat{\mathcal U}_{\ell,m}w](q,p)=\frac{1}{\sqrt{\mathrm{det}(g(q))}} w(q,g(q)^{-1}p)\,.
\end{eqnarray*}
Proposition~\label{pr:Wsplatchange} gives $\|\hat{\mathcal U}_{\ell,m}w\|_{\tilde{\mathcal{W}}^{2/3},1}\leq C_{g,\psi}^{(1)} \|w\|_{\tilde{\mathcal{W}}^{2/3},1}$ and
$$
\|\tilde{v}_{m,\ell,A}\|_{\tilde{\mathcal{W}}^{2/3},1}^2\leq [C_{g,\psi}^{(1)}]^2 \|\hat{u}_{m,\ell,A}\|_{\tilde{\mathcal{W}}^{2/3},1}^2
\leq  [C_{g,\psi}^{(1)}]^2 \|(\tilde{\mathcal{O}}_{\ell})^{2/3}u_{m,\ell,A}\|_{L^2(\R^{2d})}^2 + \||2^{\ell}D_q|^{2/3}u_{m,\ell,A}\|_{L^2(\R^{2d})}^2\,,
$$
while
$$
\|v_{m,\ell,A}\|_{L^2(\R^{2d})}=\|\tilde{v}_{m,\ell,A}\|_{L^2(\R^{2d})}=\|\hat{u}_{m,\ell,A}\|_{L^2(\R^{2d})}=\|u_{m,\ell,A}\|_{L^2(\R^{2d})}\,.
$$
This ends the proof after choosing the constant $C_{g,\psi}\leq 1$ large enough.
\end{proof}
\section{Euclidean Case}
\label{sec:EuclCase}
In this section we consider the scalar euclidean case indexed by two parameters $b,h>0$ with the Kramers-Fokker-Planck operator
\begin{equation}
  \hat{\mathcal{P}}_{b,h,0}=\underbrace{\frac{1}{2}(-h^2\Delta_{p} + \frac{|p|^{2}}{b^2})}_{=\hat{\mathcal{O}}_{b,h}} + \frac{1}{b}\underbrace{\sum_{j=1}^{d} p_{j}\frac{\partial}{\partial q^{j}}}_{=i p\cdot D_q}
\end{equation}
on $\R^{2d} = \R^d_q \times \R^d_p$ where $(q,p) = (q^1, \ldots, q^d, p_1, \ldots, p_d)$\,. 

\begin{proposition} \label{pr:SubellipticEstimateEuclidianCase}
    There exists a universal constant $C\geq 1$ such that the inequality 
    \begin{align}
    \nonumber
   C\| (\frac{h}{b}+ \hat{\mathcal{P}}_{b,h,0} -ih\lambda)u \|^{2}_{L^2(\R^{2d})} \geq &\| (\frac{h}{b}+\hat{\mathcal{O}}_{b,h})u\|^{2}_{L^2(\R^{2d})} + \| (\frac{1}{b}p\cdot D_q  -h\lambda)u\|^{2}_{L^2(\R^{2d})} \\
   &\quad + \|(|\frac{h}{b}D_{q}|^{\frac{2}{3}}+\frac{h}{b})u\|^{2}_{L^2(\R^{2d})} + 
 \left\|
   \left(
     h^{2}\frac{|\lambda|}{\sqrt{hb}+|p|}
   \right)^{\frac{2}{3}} u
 \right\|^{2}_{L^2(\R^{2d})}
       \label{eq:LowerBoundWithParameterbh}
\end{align}
holds for all $u \in C^\infty_0(\R^{2d};\C)$\,, all $\lambda \in \R$ and all $(b,h) \in (0,\infty)^2$\,.
\end{proposition}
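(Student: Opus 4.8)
The plan is to reduce the $d$-dimensional estimate \eqref{eq:LowerBoundWithParameterbh} to a one-dimensional model problem via a partial Fourier transform in the $q$-variable, and then to analyze that model through the complex Airy operator. First I would apply the Fourier transform $q \mapsto \xi$, after which $p\cdot D_q$ becomes multiplication by $p\cdot\xi$ and $\hat{\mathcal{P}}_{b,h,0}$ becomes, for each frozen $\xi$, the operator $\hat{\mathcal{O}}_{b,h} + \frac{i}{b} (p\cdot\xi)$ acting only in the $p$-variable. A rotation in $p$-space aligning one axis with $\xi$ reduces this to a tensor product: in the direction of $\xi$ one gets a one-dimensional operator of the form $\frac{1}{2}(-h^2\partial_s^2 + s^2/b^2) + \frac{i}{b}|\xi| s$, and in the orthogonal $(d-1)$ directions one gets the plain harmonic oscillator $\frac{1}{2}(-h^2\Delta + |p'|^2/b^2)$, which only helps. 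So everything comes down to the scalar one-dimensional operator $\mathcal{A}_{b,h,\tau} = \frac{1}{2}(-h^2\partial_s^2 + s^2/b^2) + \frac{i}{b}\tau s$ with parameter $\tau = |\xi| \ge 0$, together with the spectral shift $-ih\lambda$.

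Next I would perform the scaling that trivializes the two parameters $b,h$. Writing $s = (bh)^{1/2} t$ (or a similar choice), the operator $\mathcal{A}_{b,h,\tau} - ih\lambda$ becomes $\frac{h}{b}$ times a one-parameter family $\frac{1}{2}(-\partial_t^2 + t^2) + i\mu t - i\nu$ where $\mu$ is proportional to $\tau (b/h)^{1/2}$ and $\nu$ is proportional to $b\lambda$; completing the square $\frac{1}{2}(t^2 + 2i\mu t) = \frac{1}{2}(t+i\mu)^2 + \frac{\mu^2}{2}$ and translating the contour (justified since the relevant functions are entire and decay) turns this into a shifted, rotated version of the complex Airy / complex harmonic-oscillator operator. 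The key input is the known resolvent estimate for the complex harmonic oscillator $-\partial_t^2 + t^2 + i c t$, or equivalently for the Davies operator, which gives a lower bound of the form $\|(\mathcal{B}_c - z)u\|^2 \gtrsim \|\mathcal{B}_0 u\|^2 + \|u\|^2 + (\text{gain in } \langle c\rangle)$ — the crucial feature being the $|\cdot|^{2/3}$-type gain in the imaginary part when $c$ is large, which is exactly the mechanism producing the exponent $2/3$ in the last two terms of \eqref{eq:LowerBoundWithParameterbh}. I would cite the relevant one-dimensional result (this is the "model problem of the one dimensional complex Airy operator" mentioned in the outline, Section~\ref{sec:EuclCase}), checking that the constants are uniform in all parameters.

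Then I would reassemble: integrate the one-dimensional lower bound over $\xi \in \R^d$ (Plancherel), recombine the $\xi$-direction and orthogonal-direction pieces using that $|p|^2 = |p_{\parallel}|^2 + |p'|^2$ and $\hat{\mathcal{O}}_{b,h} = \hat{\mathcal{O}}_{b,h}^{\parallel} + \hat{\mathcal{O}}_{b,h}^{\perp}$, and translate back to the original variables. The term $\|(|\frac{h}{b}D_q|^{2/3} + \frac{h}{b})u\|^2$ comes from the elliptic regularity of $\mathcal{O}$ combined with the $2/3$-subelliptic gain along the $\xi$-direction; the term with $h^2|\lambda|/(\sqrt{hb}+|p|)$ reflects that the effective large-parameter in the model is $\sim |\lambda| b/(s\text{-scale})$, i.e. $\sim h^2|\lambda|/(\sqrt{hb}+|p|)$ after undoing the scalings — I would track these factors carefully, since getting the precise weights $\sqrt{hb}+|p|$ rather than something coarser is the whole point of the "uniform estimates with respect to the parameters" emphasized in the outline.

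The main obstacle I expect is precisely this bookkeeping of uniformity: the one-dimensional model has two or three effective parameters ($\mu \sim \tau(b/h)^{1/2}$ and $\nu \sim b\lambda$, plus the value of $|p_{\parallel}|$ relative to the oscillator length scale $\sqrt{bh}$), and one must show the one-dimensional lower bound holds with a single universal constant across all of their ranges — in particular both the regime where the drift $i\tau s/b$ dominates (giving the Airy/$2/3$ behavior) and the regime where it is negligible (giving plain harmonic-oscillator ellipticity), with a continuous interpolation between them. A secondary technical point is justifying the contour shift $t \mapsto t - i\mu$ rigorously on $C^\infty_0$ functions and controlling the resulting non-self-adjoint terms; this is standard for the complex Airy operator but needs care to keep constants universal. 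Once the sharp one-dimensional statement is in hand with uniform constants, the passage to $\R^{2d}$ is routine Fourier analysis.
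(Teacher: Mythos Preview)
Your overall architecture---partial Fourier transform in $q$, rotation in $p$ to align with $\xi$, reduction to a one-dimensional model, invocation of the complex Airy estimate, and reassembly---is exactly what the paper does. Two points of divergence are worth flagging. First, the paper performs the $(b,h)$-scaling \emph{at the very start}: the substitution $(q,p)\mapsto(\sqrt{b/h}\,q,\sqrt{bh}\,p)$ converts $\hat{\mathcal P}_{b,h,0}$ into $\tfrac{h}{b}\hat{\mathcal P}_{1,1,0}$, so the entire argument is run once at $b=h=1$ and the parametric weights (in particular the factor $\sqrt{hb}+|p|$) fall out mechanically at the end. This is cleaner than carrying $(b,h)$ through the one-dimensional analysis as you propose. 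Second, and more substantively, your suggested mechanism for the 1D estimate---completing the square and shifting the contour $t\mapsto t-i\mu$---is a dead end: that translation is not an $L^2$-isometry (indeed, if it were, the gain would be $\mu^2$ rather than $\mu^{2/3}$, which is false). The paper instead \emph{splits off} the confining potential by the explicit expansion
\[
\|(1+\mathcal O_1+i(p_1|\xi|-\lambda))u\|^2 = \bigl\|(1-\tfrac12\partial_{p_1}^2+i(p_1|\xi|-\lambda))u\bigr\|^2 + \text{(nonnegative $p_1^2$ terms)} - \tfrac12\|u\|^2,
\]
and applies the pure complex Airy estimate (Appendix~\ref{sec:euconedim}, with no potential) to the first piece; the orthogonal directions are then recombined via the exact operator identity $(1+\mathcal O)^2-(1+\mathcal O_1)^2\ge(\sum_{j\neq1}\mathcal O_j)^2$. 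So the black box you should cite is the Airy operator $-\tfrac12\partial_{p_1}^2+ip_1|\xi|$, not the complex harmonic oscillator, and no contour deformation is involved.
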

\begin{proof}
A unitary change of scale transforms  $(q,p,D_q,D_p)$ into $ (\sqrt{\frac{b}{h}}q, \sqrt{bh}p, \sqrt{\frac{h}{b}} D_q, \frac{1}{\sqrt{hb}}D_p)$  and $\hat{\mathcal{P}}_{b,h,0}$ into $\frac{h}{b}\hat{\mathcal{P}}_{1,1,0}$\,. The problem is thus reduced to the case $b=h=1$ and the final result will be obtained after multiplying both sides of the specific inequality by $h^2/b^2$\,.\\
Let us introduce some simplified notations:
\begin{itemize}
    \item The partial Fourier transform with respect to the variable  $q$ is normalized as
    \begin{align}
    \mathcal{F}_{q \mapsto \xi} u(\xi, p) = \int_{\R^d} e^{-i q \cdot \xi} u(q,p) \, dq, \ \ \ \ u \in C^\infty_0(\R^{2d})\,.
\end{align}
It is unitary from $L^2(\R^{2d}_{q,p},dqdp;\C)$ onto $L^2(\R^{2d}_{\xi,p},\frac{d\xi}{(2\pi)^{d}}dp; \C)$\,.
\item The operator $\hat{\mathcal{P}}_{1,1,0}$ is simply denoted by $\hat{P}$ and we set
\begin{align}
    \nonumber
    \hat{P}&=\hat{\mathcal{P}}_{1,1,0}=\underbrace{\frac{1}{2}(-\Delta_{p} + |p|^{2})}_{=\mathcal{O}} + i p\cdot D_q\\
    \label{conjugation_partial_FT}
    \widetilde{P} &:= \mathcal{F}_{q \mapsto \xi} \circ \hat{P} \circ \left(\mathcal{F}_{q \mapsto \xi}\right)^{-1} = \int_{\R^d}^{\oplus}\underbrace{\frac{1}{2}(-\Delta_p + \abs{p}^2)+i (p\cdot\xi)}_{=\widetilde{P}_{\xi}}~\frac{d\xi}{(2\pi)^d}\,,
\end{align}
in the direct integral decomposition $L^2(\R^{2d}_{\xi,p},\frac{d\xi}{(2\pi)^d}dp;\C)=\int_{\R^d}^{\oplus}L^2(\R^d,dp;\C)~\frac{d\xi}{(2\pi)^d}$\,.
\item The harmonic oscillator hamiltonian $\mathcal{O}=\frac{-\Delta_p+|p|^2}{2}$ is decomposed as $\mathcal{O}=\sum_{j=1}^{d}\mathcal{O}_j$ with 
\begin{align*}
    \mathcal{O}_j = -\frac{1}{2} \frac{\p^2}{\p p_j^2} + \frac{1}{2} p_j^2\,.
\end{align*}

\item For any fixed $\xi \in \R^d$, there exists an orthogonal matrix $\mathcal{R}_\xi \in O(d)$ such that
\begin{align} \label{definition_of_R_xi}
    \mathcal{R}^T_\xi(\xi) = \left(\abs{\xi}, 0, \ldots, 0 \right).
\end{align}
The corresponding unitary pullback on functions $(\mathcal{R}^T_\xi)^*: L^2(\R^d) \mapsto L^2(\R^d)$ is given by
\begin{align*}
    (\mathcal{R}^T_\xi)^* u(p) = u(\mathcal{R}^T_\xi p), \ \ \ \ u \in L^2(\R^d),
\end{align*}
and we let $(\mathcal{R}_\xi)^* := \left(\left(\mathcal{R}^T_\xi \right)^*\right)^{-1}$ be the inverse of $\left(\mathcal{R}^T_\xi\right)^*$.
For $\xi \in \R^d$, let
\begin{align} \label{conjugated_scalar_KFP_operator}
    \widetilde{P}_{\xi, \mathcal{R}} := (\mathcal{R}^T_\xi)^* \circ \widetilde{P}_\xi \circ (\mathcal{R}_\xi)^* = \frac{1}{2} \left(-\Delta_p + \abs{p}^2 \right) + ip_1 \abs{\xi}
\end{align}
be the conjugation of $\widetilde{P}_\xi$ by $(\mathcal{R}^T_\xi)^*$\,. From (\ref{conjugated_scalar_KFP_operator}), it is clear that
\begin{align}
    \widetilde{P}_{\xi, \mathcal{R}} = \mathcal{O}_1 + ip_1 \abs{\xi} + \sum_{j \neq 1} \mathcal{O}_j.
\end{align}
\end{itemize}
We now turn our attention to the topic of obtaining lower bounds for the quantity
\begin{align}
    \norm{(1+\hat{P}-i\lambda)u}_{L^2(\R^{2d})}
\end{align}
when $u \in C^\infty_0(\R^{2d})$ and $\lambda \in \R$\,. We begin by observing, via a straightforward calculation, that
\begin{align}
\nonumber
    \| (1+\mathcal{O}_1+i( p_1 |\xi| -\lambda) )u \|_{L^2(\R^d)}^{2}=& 
  \left\|
    \left(1-\frac{1}{2}\frac{\p^2}{\p p_1^2}+i(p_{1}|\xi|-\lambda)\right)u
  \right\|_{L^2(\R^d)}^{2} 
  \\
  \label{eq:ExpansionInNormOfThe1DHatP}
  &\quad + 
  \left\|
    \frac{p_{1}^{2}}{2} u
  \right\|_{L^2_p}^{2} + \| p_{1}u\|_{L^2(\R^d)}^{2} + 
  \frac{1}{2}\left\|
    p_{1} \frac{\partial}{\partial p_{1}}u
  \right\|_{L^2(\R^d)}^{2} - \frac{1}{2}\|u\|_{L^2(\R^d)}^2
\end{align}
for any $u \in C^\infty_0(\R^d)$, $\xi \in \R^d$, and $\lambda \in \R$\,. Meanwhile, an integration by parts argument gives
\begin{equation}\label{eq:InequalityIpp1D}
  \| (1-\frac{1}{2} \frac{\p^2}{\p p^2_1}+i(p_{1}|\xi|-\lambda)) u \|^{2}_{L^2(\R^d)} \geq \|u\|^{2}_{L^2(\R^d)} +\frac{1}{2} \| \frac{\partial}{\partial p_{1}}u\|^{2}_{L^2(\R^d)}
\end{equation}
for every $u \in C^\infty_0(\R^d)$, $\xi \in \R^d$, and $\lambda \in \R$\,. On the other hand, from Proposition \ref{eq:EstimateOn1DComplexAiryOperator}, we know that there is a universal constant $C_0>0$\,, such that
\begin{align} \nonumber
    C_0 \| 1 - \frac{1}{2} \Delta_{p_1} + i(p_1 |\xi|-\lambda) u \|^2_{L^2(\R^d)}\geq &\| \frac{1}{2} \Delta_{p_1} u \|^2_{L^2(\R^d)} 
    + \| (p_1 |\xi| -\lambda ) u \|^2_{L^2(\R^d)}\\
    \label{Airy_inequality}
    &\quad  + (|\xi|^{\frac{2}{3}}+1)^2 \|u\|^2_{L^2(\R^d)}  + \left\|\left(\frac{|\lambda|}{1+|p_1|}\right)^{\frac{2}{3}}u\right\|^2_{L^2(\R^d)} 
\end{align}
for every $u \in C^\infty_0(\R^d)$, $\xi \in \R^d$, and $\lambda \in \R$\,. Since
\begin{align*}
     \|(1+\mathcal{O}_1)u \|_{L^2(\R^d)}^2 =& \frac{1}{2} \|u\|^2_{L^2(\R^d)} + \| p_1 u\|^2_{L^2(\R^d)}\\ 
    &\quad + \|\frac{p_1^2}{2} u \|^2_{L^2(\R^d)} + \frac{1}{2}\|p_1 \frac{\partial}{\partial p_1} u \|^2_{L^2(\R^d)} + \|\frac{\partial}{ \partial p_1} u \|^2_{L^2(\R^d)} + \| \frac{1}{2} \frac{\p^2}{\p p_1^2} u\|^2_{L^2(\R^d)}
\end{align*}
for every $u \in C^\infty_0(\R^d)$, we deduce from (\ref{eq:ExpansionInNormOfThe1DHatP}), (\ref{eq:InequalityIpp1D}), and (\ref{Airy_inequality}) that there is a universal constant $C_1\geq 1$ such that
\begin{align}
\nonumber
    C_1\| (1+\mathcal{O}_1+i(p_{1}|\xi|-\lambda)) u \|^{2}_{L^2(\R^d)} \geq &\| (1+\mathcal{O}_{1})u\|^{2}_{L^2(\R^d)}+\|(p_{1}|\xi| - \lambda)u\|^{2}_{L^2(\R^d)}
    \\
    \label{lower_bound_for_O1}
    &\quad + ( |\xi|^{\frac{2}{3}}+1)^{2} \| u \|^{2}_{L^2(\R^d)}+ 
  \left\|
    \left(
      \frac{|\lambda|}{1+|p_{1}|}
    \right)^{\frac{2}{3}}u
  \right\|^{2}_{L^2(\R^d)}
\end{align}
for every $u \in C^\infty_0(\R^d)$, $\xi \in \R^d$, and $\lambda \in \R$\,.

Next, we observe that elementary algebraic manipulations give the following identity:
\begin{align} \label{harmonic_oscillator_identity}
    (1+\mathcal{O})^2-(1+\mathcal{O}_1)^2 = \left(\sum_{j \neq 1} \mathcal{O}_j \right)^2 + 2(1+\mathcal{O}_1) \sum_{j \neq 1} \mathcal{O}_j\geq \left(\sum_{j \neq 1} \mathcal{O}_j \right)^2\,.
\end{align}
A straightforward computation using (\ref{harmonic_oscillator_identity}) gives that
\begin{align} \label{positive_operator_identity}
\begin{split}
    (1+\widetilde{P}_{\xi, \mathcal{R}}-i\lambda)^*(1+\widetilde{P}_{\xi, \mathcal{R}}-i\lambda) &= (1+\mathcal{O}_1-i(p_1 \abs{\xi}-\lambda))(1+\mathcal{O}_1+i(p_1 \abs{\xi}-\lambda)) \\
    & \ \ \ \ +(1+\mathcal{O})^2-(1+\mathcal{O}_1)^2
\end{split}
\end{align}
holds for every $\xi \in \R^d$ and $\lambda \in \R$. As a consequence, we have
\begin{align} \label{lower_bound_p_tilde}
    \norm{(1+\widetilde{P}_{\xi, \mathcal{R}}-i\lambda)u}^2_{L^2(\R^d)} \geq \norm{(1+\mathcal{O}_1+i(p_1 \abs{\xi}-\lambda))u}^2_{L^2(\R^d)} + \norm{\sum_{j\neq 1}\mathcal{O}_j u}^{2}_{L^2(\R^d)}
\end{align}
for every $u \in C^\infty_0(\R^d)$, $\xi \in \R^d$, and $\lambda \in \R$. Combining (\ref{lower_bound_for_O1}) and (\ref{lower_bound_p_tilde}) then leads to
\begin{align} 
\nonumber
    C_1\| (1+\widetilde{P}_{\xi,\mathcal{R}}-i\lambda)u \|^{2}_{L^2(\R^d)} &\geq \| \sum_{j\neq 1}\mathcal{O}_j u\|^{2}_{L^2(\R^d)} +\|(1+\mathcal{O}_{1})u\|^2_{L^2(\R^d)} +\| (p_{1}|\xi|-\lambda)u\|^{2}_{L^2(\R^d)} 
    \\
    \nonumber
    &\hspace{1cm}+ (|\xi|^{\frac{2}{3}}+1)^{2}\|u\|^{2}_{L^2(\R^d)} + 
 \left\|
   \left(
     \frac{|\lambda|}{1+|p|}
   \right)^{\frac{2}{3}} u
 \right\|^{2}_{L^2(\R^{d})}
 \\
 \nonumber
 &\geq \frac{1}{2}\| (1+\mathcal{O}) u\|^{2}_{L^2(\R^d)} +\| (p_{1}|\xi|-\lambda)u\|^{2}_{L^2(\R^d)} 
    \\
    \label{estimate_for_P_xi_R}
    &\hspace{1cm}+ (|\xi|^{\frac{2}{3}}+1)^{2}\|u\|^{2}_{L^2(\R^d)} + 
 \left\|
   \left(
     \frac{|\lambda|}{1+|p|}
   \right)^{\frac{2}{3}} u
 \right\|^{2}_{L^2(\R^{d})}
\end{align}
for every $u \in C^\infty_0(\R^d)$, $\xi \in \R^d$, and $\lambda \in \R$\,. From (\ref{definition_of_R_xi}), (\ref{conjugated_scalar_KFP_operator}), (\ref{estimate_for_P_xi_R}), and the unitarity of $\left(\mathcal{R}^T_\xi \right)^*$, we see that there is a universal constant $C=2C_1\geq 1$ such that
\begin{align} 
    \nonumber
    C\| (1+\widetilde{P}_{\xi}-i\lambda)u \|^{2}_{L^2(\R^d)} \geq & \| (1+\mathcal{O})u\|^{2}_{L^2(\R^d)} + \| (p\cdot\xi -\lambda)u\|^{2}_{L^2(\R^d)} 
    \\\label{estimate_P_xi}
    &\quad + (|\xi|^{\frac{2}{3}}+1)^{2}\|u\|^{2}_{L^2(\R^d)}
    + 
 \left\|
   \left(
     \frac{|\lambda|}{1+|p|}
   \right)^{\frac{2}{3}} u
 \right\|^{2}_{L^2(\R^d)}
\end{align}
for every $u \in C^\infty_0(\R^d)$, $\xi \in \R^d$, and $\lambda \in \R$. Using (\ref{conjugation_partial_FT}), (\ref{estimate_P_xi}) and the unitarity of $\mathcal{F}_{q\to\xi}$\,, we obtain
  \begin{align*}
    C\| (1+\hat{P}-i\lambda)u \|^{2}_{L^2(\R^{2d})} \geq & \| (1+\mathcal{O})u\|^{2}_{L^2(\R^{2d})} + \| (p\cdot D_{q}-\lambda)u\|^{2}_{L^2(\R^{2d})}
    \\&\quad
 + \|(|D_{q}|^{\frac{2}{3}}+1)u\|^{2}_{L^2(\R^{2d})}    + 
 \left\|
   \left(
     \frac{|\lambda|}{1+|p|}
   \right)^{\frac{2}{3}} u
 \right\|^{2}_{L^2(\R^{2d})}
\end{align*}
for all $u \in C^\infty_0(\R^{2d})$ and $\lambda \in \R$\,. 
The change of scale introduced in the beginning of this proof  says for $\lambda$ replaced by $b\lambda$
 \begin{align*}
    C\| (1+\frac{b}{h}\hat{\mathcal{P}}_{b,h,0}-ib\lambda)u \|^{2}_{L^2(\R^{2d})} \geq & \| (1+\frac{b}{h}\hat{\mathcal{O}}_{b,h})u\|^{2}_{L^2(\R^{2d})} + \| (\frac{1}{h}p\cdot D_{q}-b\lambda)u\|^{2}_{L^2(\R^{2d})}
    \\&\quad
     + \|(|\sqrt{\frac{b}{h}}D_{q}|^{\frac{2}{3}}+1)u\|^{2}_{L^2(\R^{2d})} + 
 \left\|
   \left(
     \frac{|b\lambda|}{1+\frac{|p|}{\sqrt{hb}}}
   \right)^{\frac{2}{3}} u
 \right\|^{2}_{L^2(\R^{2d})}
\end{align*}
for all $u \in C^\infty_0(\R^{2d})$ and $\lambda \in \R$\,. Multiplying both sides by $(\frac{h}{b})^2$ ends the proof.
\end{proof}

\section{Final proof of Theorem~\ref{th:mainOne}}
\label{sec:finalproof}

We now  collect all the information given by the localization techniques of Section~\ref{sec:localization} and the accurate estimates for the euclidean model in Section~\ref{sec:EuclCase}.
The first result will be the derivation of the subelliptic estimate for the local model  $\tilde{\mathcal{P}}_{b,\ell,m}=\frac{1}{b^2}\tilde{\mathcal{O}}_{\ell}+\frac{1}{b}\mathcal{Y}_{\ell,m}$ introduced in \eqref{eq:deftildePbellm}  at the end of Subsection~\ref{subsec:NormalCoordinate} from the subelliptic estimates for the euclidean model.
The second result is about the other local operator  $\mathcal{P}_{b,\ell,m}=\frac{1}{b^2}\mathcal{O}_{\ell,m}+\frac{1}{b}\mathcal{Y}_{\ell,m}$ introduced in \eqref{eq:defPbellm}\eqref{eq:defOellm}\eqref{eq:defYellm}.
These preliminary results hold for all momenta $p\sim 2^{\ell}$ and arbitrary values of the intermediate parameter $A>0$ introduced in the grid partition. 
Then, in the third paragraph, we consider the case of large momenta or large $\ell$ and the summation with respect to the grid index $m\in\Z^d$ will hold for $A\geq 1$ large enough. Here the intermediate parameter $A$ will be fixed to $A=A_{\infty}(b)\geq 1$ large enough according to the value of $b>0$ and the geometric data.\\
Once $A_\infty(b)\geq 1$ is fixed, the fourth paragraph collects the information when the momentum $p\sim 2^{\ell}$ is bounded by $C_{A_\infty(b),b}$\,. For this part the term $p\times p\times \partial_p$\,, estimated by $O(C_{A_\infty(b),b})\partial_p$\,, is controlled  by a simple integration by parts argument provided $\kappa_b$ is large enough. The summation with respect to the grid index $m\in\Z^d$ will be done by choosing another value for the intermediate parameter $A=A_0(b)$ with $A_0(b)>0$ small enough according to the value of $b>0$ and the geometric data.\\
Finally all of the summations with respect to $\ell\geq -1$ are carried out.

\subsection{Comparison of the local model $\widetilde{\mathcal{P}}_{b,\ell,m}$ with the euclidean case}
\label{sec:localtPbellm}
We write general local subelliptic estimates for the local operator $\tilde{\mathcal{P}}_{b,\ell,m}=\frac{1}{b^2}\tilde{\mathcal{O}}_{\ell}+\frac{1}{b}\mathcal{Y}_{\ell,m}$ introduced in \eqref{eq:deftildePbellm}  at the end of Subsection~\ref{subsec:NormalCoordinate} and parametrized by the dyadic scale $2^{\ell}$, $b>0$, the grid index $m\in \Z^d$ and the constant grid scaling $A>0$\,. 

\begin{proposition}\label{pr:LowerBoundInLocalizedGeneralCase}
Let $C\geq 1$ be the universal constant given by Proposition~\ref{pr:SubellipticEstimateEuclidianCase} for the euclidean metric.\\
There exists a constant $C_{g,\psi}^{(3)}>0$\,, depending only on the metric $g$ and the function $\psi$\,, such that for all $(A,b)\in (0,+\infty)^2$\,, $\kappa_b \geq C_{g,\psi}^{(3)}Ab+1$ implies the inequalities
\begin{equation}\label{pr:LowerBoundInLocalizedGeneralCaseIPP}
    \left\| \left(\frac{\kappa_{b}}{b^{2}} +  \tilde{\mathcal{P}}_{b,\ell,m} - \frac{i\lambda}{b} \right) u  \right\|_{L^2(\R^{2d})}^{2} \geq \frac{1}{2^{16}b^4}\|2^{2\ell} u\|_{L^2(\R^{2d})}^2
\end{equation}
and
  \begin{align}\label{eq:subellipticEstimateInAlmostEuclideanCase}
    \begin{split}
4C\left( 1 + C_{g,\psi}^{(3)}b^2A^2\right)\left\| \left(\frac{\kappa_{b}}{b^{2}} +  \tilde{\mathcal{P}}_{b,\ell,m} - \frac{i\lambda}{b} \right) u  \right\|_{L^2(\R^{2d})}^{2} & \geq  \left\| \frac{1}{b^2}\left(\kappa_b+\tilde{\mathcal{O}}_{\ell}\right) u\right\|_{L^2(\R^{2d})}^2 
+ \left\| \frac{1}{b}\left(\mathcal{Y}_{\ell,m}-i \lambda \right)u \right\|_{L^2(\R^{2d})}^2
  \\ &\hspace{-3cm} + \left\| \left( \left|\frac{2^{\ell}}{b^2}D_{q} \right|^{\frac{2}{3}} + \frac{1}{b^2}\right) u  \right\|_{L^2(\R^{2d})}^{2} + \left\| \left( \frac{1}{b^2} \frac{|\lambda|}{1+2^{\ell}|p|} \right)^{\frac{2}{3}} u  \right\|_{L^2(\R^{2d})}^{2} \,,
    \end{split}
\end{align}
\begin{eqnarray*}
    \text{when~either}&& u \in \mathcal{C}^\infty_0((B(0,\hat{C}'_{g,\psi}A)\times \mathbb{R}^d)\cap S_{0,8}',\mathbb{C})\quad\text{and}\quad (\lambda,\ell)\in \mathbb{R}\times \mathbb{N}\,,\\
\text{or}&& u \in \mathcal{C}^\infty_0((B(0,\hat{C}'_{g,\psi}A)\times \mathbb{R}^d)\cap S_{-1,8}',\mathbb{C})\quad \text{and}\quad \lambda\in \mathbb{R}\quad\text{for}\quad  \ell=-1\,.
\end{eqnarray*}
\end{proposition}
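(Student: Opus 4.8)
The plan is to reduce both inequalities to the Euclidean estimate of Proposition~\ref{pr:SubellipticEstimateEuclidianCase} by viewing $\tilde{\mathcal{P}}_{b,\ell,m}$ as a perturbation of $\hat{\mathcal{P}}_{b,h,0}$ with $h = 2^{-2\ell}/b$, treating the extra drift term $hf^{ij}_k p_i p_j \partial_{p_k}$ in \eqref{perturbation_of_Euclidean_KFP} as a lower order error. Concretely, recall from the end of Subsection~\ref{subsec:NormalCoordinate} the identity
\begin{equation*}
\frac{\kappa_{b}}{b^{2}} +  \tilde{\mathcal{P}}_{b,\ell,m} - \frac{i\lambda}{b}=  2^{2\ell}
  \left(
    \frac{\kappa_{b}h}{b} + \hat{\mathcal{P}}_{b,h,f} - i h \lambda
  \right),\qquad h=\frac{1}{2^{2\ell}b},
\end{equation*}
and that $\hat{\mathcal{P}}_{b,h,f}=\hat{\mathcal{P}}_{b,h,0}+hf^{ij}_k p_ip_j\partial_{p_k}$ acts on $u$ supported in $(B(0,\hat{C}'_{g,\psi}A)\times\R^d)\cap S'_{\ell,8}$, where $|p|\sim 1$ and, by \eqref{eq:BoundForfijk}, $\sup|f^{ij}_k(q,\ell)|\le C^{(1)}_{g,\psi}A$. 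Thus the drift error term satisfies $\|hf^{ij}_k p_ip_j\partial_{p_k}u\|_{L^2}\le C^{(1)}_{g,\psi}A\,h\,\|\,|p|\,\partial_p u\|_{L^2}$, and on the support $|p|\le 8$ (say), $h|p|\partial_p u$ is controlled by $h\|D_p u\|_{L^2}\lesssim \|(\tfrac h b D_q)^{0}\cdots\|$; more carefully it is bounded by a multiple of $\|(\kappa_b h/b + \hat{\mathcal{O}}_{b,h})u\|_{L^2}$ once one notices $h\|D_p u\|_{L^2}^2 \lesssim \langle u,\hat{\mathcal{O}}_{b,h}u\rangle$ and $|p|^2\le 64$. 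This is exactly the type of bound already performed in Proposition~\ref{pr:ChangingOToTheEuclideanO}, only there it was the potential part that was perturbed.

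First I would establish \eqref{pr:LowerBoundInLocalizedGeneralCaseIPP}: this is the crude integration-by-parts lower bound, and it follows by repeating verbatim the computation in the proof of Proposition~\ref{pr:equivNormAfterGaugeChangeOfVariables} (the integration by parts of Proposition~\ref{pr:IppIneqWithRealPart} applied after the unitary rescaling $(2^\ell p,2^{-\ell}D_p)\mapsto(p,D_p)$). The only new term is $\Real\langle u, \tfrac1b f^{ij}_k p_ip_j\partial_{p_k}u\rangle$, which by \eqref{eq:BoundForfijk} and $|p|\le 8$ is bounded by $C^{(4)}_{g,\psi}\tfrac A b\|u\|^2_{L^2}$; choosing $\kappa_b\ge C^{(3)}_{g,\psi}Ab+1$ with $C^{(3)}_{g,\psi}$ large enough absorbs it and leaves $\tfrac{\kappa_b}{8b^2}\|u\|_{L^2}\le\|(\tfrac{\kappa_b}{b^2}+\tilde{\mathcal P}_{b,\ell,m}-\tfrac{i\lambda}{b})u\|_{L^2}$, which combined with $\kappa_b\ge 1$ and the rescaling $\|2^{2\ell}u\|_{L^2}\le 2^{2\ell}\cdot 8b^2/\kappa_b\cdot(\cdots)$ — more precisely just squaring $\tfrac{\kappa_b}{8b^2}\|u\|\le\|\cdot\|$ and using $\kappa_b\ge1$ so that $\tfrac{2^{2\ell}}{8b^2}\|u\|\le\|\cdot\|$ — gives \eqref{pr:LowerBoundInLocalizedGeneralCaseIPP} with the constant $2^{16}$ after being generous.

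For \eqref{eq:subellipticEstimateInAlmostEuclideanCase}, I would apply Proposition~\ref{pr:SubellipticEstimateEuclidianCase} to $u$ with the Euclidean operator $\hat{\mathcal{P}}_{b,h,0}$ and $\lambda$ replaced by $\lambda$ (the parameters there are exactly $b$ and $h=2^{-2\ell}/b$), giving a lower bound on $\|(\tfrac h b+\hat{\mathcal P}_{b,h,0}-ih\lambda)u\|^2$ by the four target quantities; then rewrite everything back in the original variables using the displayed identity above (multiplying through by $2^{4\ell}$ converts $\tfrac hb$ into $\tfrac{\kappa_b}{b^2}$-type terms, $\tfrac h b D_q$ into $\tfrac{2^\ell}{b^2}D_q$, $h^2|\lambda|/(\sqrt{hb}+|p|)$ into $\tfrac{1}{b^2}|\lambda|/(1+2^\ell|p|)$ after using $|p|\sim1$ so $\sqrt{hb}+|p|\asymp 1\vee\sqrt{hb}$ and $hb=2^{-2\ell}$, i.e.\ $\sqrt{hb}=2^{-\ell}$, whence $\sqrt{hb}+|p|\asymp$ the rescaled $1+2^\ell|p|$). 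This matches the right-hand side of \eqref{eq:subellipticEstimateInAlmostEuclideanCase}. It then remains to pass from $\hat{\mathcal{P}}_{b,h,0}$ to $\hat{\mathcal{P}}_{b,h,f}$: write $\|(\tfrac hb+\hat{\mathcal P}_{b,h,0}-ih\lambda)u\|\le\|(\tfrac hb+\hat{\mathcal P}_{b,h,f}-ih\lambda)u\|+\|hf^{ij}_kp_ip_j\partial_{p_k}u\|$, bound the error as above by $C'_{g,\psi}A\,\|(\tfrac{\kappa_b h}{b}+\hat{\mathcal O}_{b,h})u\|$ (absorbing a factor using $\kappa_b\ge1$), and observe that $\|(\tfrac{\kappa_b h}{b}+\hat{\mathcal O}_{b,h})u\|^2$ is one of the terms already on the right-hand side (so after squaring, $(A\,\|\hat{\mathcal O}\cdots\|)^2\le A^2\cdot(\text{RHS})$). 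Hence $\|(\tfrac hb+\hat{\mathcal P}_{b,h,0}-ih\lambda)u\|^2\le 2\|(\tfrac hb+\hat{\mathcal P}_{b,h,f}-ih\lambda)u\|^2 + 2(C'_{g,\psi})^2A^2\cdot(\text{RHS})$, and moving the $A^2$-term to the left and dividing gives the factor $4C(1+C^{(3)}_{g,\psi}b^2A^2)$ claimed (the $b^2$ reappears because after restoring original variables the Euclidean $A^2$ comes multiplied by $b^2$; one tracks this through the $2^{4\ell}$ scaling together with $h=2^{-2\ell}/b$).

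The main obstacle is bookkeeping: keeping the exact powers of $2^\ell$, $b$, and $h$ consistent through the rescaling $\hat{\mathcal{P}}_{b,h,0}=h/b\cdot\hat{\mathcal{P}}_{1,1,0}$-type identities and the conversion between the Euclidean-variable weights (like $h^2|\lambda|/(\sqrt{hb}+|p|)$) and the stated weights ($\tfrac1{b^2}|\lambda|/(1+2^\ell|p|)$), and making sure the absorption of the drift error into the $\hat{\mathcal{O}}_{b,h}$-term is quantitatively compatible with the constant $4C(1+C^{(3)}_{g,\psi}b^2A^2)$ — in particular that the error is genuinely $O(A)$ and not $O(A^2)$ at the level of the norm, so that the square is $O(A^2)$. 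The estimate $\|hf^{ij}_kp_ip_j\partial_{p_k}u\|\le C A\cdot h\|D_pu\|$ and $h\|D_pu\|^2\le 2\langle u,\hat{\mathcal O}_{b,h}u\rangle\le 2\|u\|\|\hat{\mathcal O}_{b,h}u\|$, combined with $|p|\le8$ on the support, delivers this; the rest is routine.
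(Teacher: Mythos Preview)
Your overall strategy—reduce to the Euclidean estimate via the identity $\tfrac{\kappa_b}{b^2}+\tilde{\mathcal P}_{b,\ell,m}-\tfrac{i\lambda}{b}=2^{2\ell}(\tfrac{\kappa_bh}{b}+\hat{\mathcal P}_{b,h,f}-ih\lambda)$ and treat the drift $hf^{ij}_kp_ip_j\partial_{p_k}$ as a perturbation—matches the paper. Your handling of \eqref{pr:LowerBoundInLocalizedGeneralCaseIPP} is essentially correct (the paper's Proposition~\ref{proposition:RefinedIntegrationByPart} does this, with a slightly more careful case split $h\lessgtr(CAb^2)^{-1}$ to see where the $2^{2\ell}$ really comes from: for $\ell\ge0$ it is the potential term $\tfrac{1}{2}\|\tfrac{|p|}{b}u\|^2\ge\tfrac{1}{128b^2}\|u\|^2$ on the support $|p|\ge\tfrac18$, not the constant $\kappa_b$).

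The gap is in your absorption argument for \eqref{eq:subellipticEstimateInAlmostEuclideanCase}. You propose to bound the error $\|hf^{ij}_kp_ip_j\partial_{p_k}u\|$ by $C'Ab\,\|(\tfrac{\kappa_bh}{b}+\hat{\mathcal O}_{b,h})u\|$ (the factor $b$ is indeed there once you trace it: from $|p|\ge\tfrac18$ one gets $\|u\|\le128b^2\|\hat{\mathcal O}_{b,h}u\|$, hence $\|hD_pu\|\le16b\|\hat{\mathcal O}_{b,h}u\|$), and then absorb this into the right-hand side. But that yields
\[
\text{RHS}\le 2C\|\text{full}\|^2+2CC'A^2b^2\cdot\|(\tfrac{\kappa_bh}{b}+\hat{\mathcal O}_{b,h})u\|^2
\le 2C\|\text{full}\|^2+4CC'A^2b^2\cdot\text{RHS},
\]
which only gives $(1-4CC'A^2b^2)\,\text{RHS}\le2C\|\text{full}\|^2$, useless unless $A^2b^2$ is small. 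Since the statement allows arbitrary $(A,b)\in(0,\infty)^2$, this route fails to produce the multiplicative factor $(1+C^{(3)}_{g,\psi}A^2b^2)$ on the left.

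The fix—and this is what the paper does—is to absorb the error into the \emph{left-hand side} using the very IPP estimate you already proved for the first inequality. From \eqref{eq:RefinedIppeq} (equivalently, the rescaled form of \eqref{pr:LowerBoundInLocalizedGeneralCaseIPP}) one has $\sum_j\|hD_{p_j}u\|^2\le 2^9b^2\|(\tfrac{h\kappa_b}{b}+\hat{\mathcal P}_{b,h,f}-ih\lambda)u\|^2$, so the drift error squared is bounded by $CA^2b^2$ times the \emph{full} LHS. Then
\[
\|(\tfrac{h\kappa_b}{b}+\hat{\mathcal P}_{b,h,0}-ih\lambda)u\|^2\le 2(1+C'A^2b^2)\|(\tfrac{h\kappa_b}{b}+\hat{\mathcal P}_{b,h,f}-ih\lambda)u\|^2,
\]
and composing with the Euclidean estimate (after the $\kappa_b\ge1$ adjustment via the cross-term $\langle\tfrac{h(\kappa_b-1)}{b}u,(\tfrac{h}{b}+\hat{\mathcal O}_{b,h})u\rangle\ge0$) gives exactly the stated factor $4C(1+C^{(3)}_{g,\psi}b^2A^2)$, valid for all $A,b$. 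So the missing ingredient in your plan is simply to reuse the IPP bound on $\|hD_pu\|$ in terms of the full operator rather than routing through $\hat{\mathcal O}_{b,h}$.
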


Remember the notations of Subsection \ref{subsec:NormalCoordinate}

\begin{itemize}
    \item $ h=\frac{1}{b2^{2\ell}} $ ,
\item $
  \frac{\kappa_{b}}{b^{2}} +  \tilde{\mathcal{P}}_{b,\ell,m} - \frac{i\lambda}{b}=  2^{2\ell}
  \left(
    \frac{\kappa_{b}h}{b} + \hat{\mathcal{P}}_{b,h,f} - i h \lambda
\right)=\frac{1}{bh}\left(
    \frac{\kappa_{b}h}{b} + \hat{\mathcal{P}}_{b,h,f} - i h \lambda
\right)\;,
$
\item $\tilde{\mathcal{O}}_{\ell}=\frac{b}{h}\hat{\mathcal{O}}_{b,h}$ with $\hat{\mathcal{O}}_{b,h}=\frac{-h^2\Delta_p+\frac{|p|^2}{b^2}}{2}$\,,
\item $\frac{1}{b}\mathcal{Y}_{\ell,m}=2^{2\ell}(\frac{1}{b}p\cdot \partial_q +hf^{ij}_{k}(q,\ell)p_i p_j\partial_{p_k})=\frac{1}{bh}(\frac{1}{b}p\cdot \partial_q +hf^{ij}_{k}(q,\ell)p_i p_j\partial_{p_k})$\,,
\item $S'_{1,8}=\{(q,p)\in\R^{2d}, \frac{1}{8}\leq |p|\leq 8\}$ and $S'_{0,8}=\{(q,p)\in\R^{2d}, |p|\leq 8\}$\,.
\end{itemize}

The result of Proposition~\ref{pr:LowerBoundInLocalizedGeneralCase} is actually deduced from the same results for the operator $\hat{\mathcal{P}}_{b,h,f}$\,. This will be done in two steps.

\begin{proposition}\label{proposition:RefinedIntegrationByPart}
There exists a constant $C_{g,\psi}^{(2)}>0$\,, depending on the metric $g$ and the function $\psi$\,, such for all $(A,b)\in (0,+\infty)^2$  and  for $\kappa_b \geq 1+C_{g,\psi}^{(2)}Ab$ the inequalities   
\begin{eqnarray}
\label{eq:IPPPbhf}
  &&\mathrm{Re}~ \, \left\langle \left( \frac{h\kappa_b}{b} + \hat{\mathcal{P}}_{b,h,f} - ih\lambda \right) u , u \right\rangle_{L^2} \geq  \frac{1}{2^7 b^2}  \|u\|_{L^2(\R^{2d})}^2 + \frac{1}{2} \sum_j \|hD_{p_j}u \|_{L^2(\R^{2d})}^2\\
    \label{eq:RefinedIppeq}
    &&  \hspace{-0.5cm}\left\| \left( \frac{h \kappa_{b}}{b} + \hat{\mathcal{P}}_{b,h,f} -i h\lambda \right) u \right\|_{L^2}^{2}\geq \frac{1}{2^{14} b^4} \left\| u \right\|_{L^2(\R^{2d})}^{2} +  \frac{1}{2^8 b^2} \sum_j \left\| hD_{p_{j}}u \right\|_{L^2(\R^{2d})}^{2}.
  \end{eqnarray}
  hold true
\begin{eqnarray*}
    \text{when~either}&& u \in \mathcal{C}^\infty_0((B(0,\hat{C}'_{g,\psi}A)\times \mathbb{R}^d)\cap S_{0,8}',\mathbb{C})\quad\text{and}\quad (\lambda,\ell)\in \mathbb{R}\times \mathbb{N}\,,\\
\text{or}&& u \in \mathcal{C}^\infty_0((B(0,\hat{C}'_{g,\psi}A)\times \mathbb{R}^d)\cap S_{-1,8}',\mathbb{C})\quad \text{and}\quad \lambda\in \mathbb{R}\quad\text{for}\quad  \ell=-1\,.
\end{eqnarray*}
\end{proposition}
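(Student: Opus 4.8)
The plan is to prove the coercivity inequality \eqref{eq:IPPPbhf} by directly computing the real part of the quadratic form of $\tfrac{h\kappa_b}{b}+\hat{\mathcal{P}}_{b,h,f}-ih\lambda$, in the spirit of Proposition~\ref{pr:IppIneqWithRealPart}, and then to deduce \eqref{eq:RefinedIppeq} from it by a Cauchy--Schwarz bootstrap. First, in the decomposition \eqref{perturbation_of_Euclidean_KFP} the operator $\hat{\mathcal{O}}_{b,h}=\tfrac12\big(-h^2\Delta_p+\tfrac{|p|^2}{b^2}\big)$ is formally self-adjoint and nonnegative on $\mathcal{C}^\infty_0$, with $\langle\hat{\mathcal{O}}_{b,h}u,u\rangle_{L^2}=\tfrac12\sum_j\|hD_{p_j}u\|_{L^2}^2+\tfrac1{2b^2}\||p|u\|_{L^2}^2$; the transport field $\tfrac1b\,p\cdot\partial_q$ has $q$-independent coefficients, hence is divergence free in $q$ and formally skew-adjoint, so it contributes nothing to the real part, and $-ih\lambda$ is anti-self-adjoint. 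Thus everything reduces to controlling the perturbative vector field $hf^{ij}_{k}(q,\ell)\,p_ip_j\,\partial_{p_k}$.

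For that term I would integrate by parts in $p$; since $f^{ij}_{k}(\cdot,\ell)$ does not depend on $p$,
$$2\Real\big\langle hf^{ij}_{k}\,p_ip_j\,\partial_{p_k}u,\,u\big\rangle_{L^2}=-h\int\sum_k\partial_{p_k}\!\big(f^{ij}_{k}(q,\ell)\,p_ip_j\big)\,|u|^2\,dqdp=-h\int f^{ij}_{k}(q,\ell)\big(\delta_{ik}p_j+\delta_{jk}p_i\big)|u|^2\,dqdp ,$$
which is multiplication by a function linear in $p$ whose coefficients, on $B(0,\hat C'_{g,\psi}A)$, are bounded by $C^{(1)}_{g,\psi}A$ by \eqref{eq:BoundForfijk}. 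The key observation is that $u$ is supported in $S'_{\ell,8}\subset\{|p|\le 8\}$, so I would \emph{not} estimate this term by a Young inequality (which would cost a factor $A^2$ and force $\kappa_b\gtrsim A^2b^2$) but simply by $\big|2\Real\langle\cdots\rangle\big|\le C^{(2)}_{g,\psi}\,A\,h\,\|u\|_{L^2}^2$, with $C^{(2)}_{g,\psi}$ depending only on $(g,\psi)$. Combining the three contributions,
$$\Real\big\langle\big(\tfrac{h\kappa_b}{b}+\hat{\mathcal{P}}_{b,h,f}-ih\lambda\big)u,\,u\big\rangle_{L^2}\ \ge\ h\Big(\tfrac{\kappa_b}{b}-\tfrac12 C^{(2)}_{g,\psi}A\Big)\|u\|_{L^2}^2+\tfrac12\sum_j\|hD_{p_j}u\|_{L^2}^2+\tfrac1{2b^2}\||p|u\|_{L^2}^2 ,$$
and the hypothesis $\kappa_b\ge 1+C^{(2)}_{g,\psi}Ab$ makes $\tfrac{\kappa_b}{b}-\tfrac12C^{(2)}_{g,\psi}A\ge\tfrac1b$, so the first coefficient is $\ge\tfrac hb$. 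To upgrade this to the claimed $\tfrac1{2^8b^2}$ lower bound I would split into the two localizations: for $\ell=-1$ one has $h=\tfrac4b$, so $\tfrac hb=\tfrac4{b^2}\ge\tfrac1{2^8b^2}$ already from the $\|u\|^2$-term (one may discard $\||p|u\|^2$); for $\ell\ge0$ the support lies in the annulus $S'_{\ell,8}=\{\tfrac18\le|p|\le8\}$, whence $\tfrac1{2b^2}\||p|u\|_{L^2}^2\ge\tfrac1{2^7b^2}\|u\|_{L^2}^2\ge\tfrac1{2^8b^2}\|u\|_{L^2}^2$, and here the $h$-dependent term (which degenerates as $\ell\to\infty$) is used only to absorb the perturbation. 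Either way \eqref{eq:IPPPbhf} follows, with the term $\tfrac12\sum_j\|hD_{p_j}u\|_{L^2}^2$ carried along.

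Finally I would deduce \eqref{eq:RefinedIppeq} from \eqref{eq:IPPPbhf} by the bootstrap used for Proposition~\ref{pr:IppIneqWithRealPart}. Writing $L=\tfrac{h\kappa_b}{b}+\hat{\mathcal{P}}_{b,h,f}-ih\lambda$, Cauchy--Schwarz applied to \eqref{eq:IPPPbhf} gives $\|Lu\|_{L^2}\,\|u\|_{L^2}\ \ge\ \tfrac1{2^8b^2}\|u\|_{L^2}^2+\tfrac12\sum_j\|hD_{p_j}u\|_{L^2}^2$; discarding the last term yields $\|u\|_{L^2}\le 2^8b^2\|Lu\|_{L^2}$, and substituting this (dividing once by $\|u\|_{L^2}$ and multiplying once by $\|Lu\|_{L^2}$) produces exactly $\|Lu\|_{L^2}^2\ \ge\ \tfrac1{2^{16}b^4}\|u\|_{L^2}^2+\tfrac1{2^9b^2}\sum_j\|hD_{p_j}u\|_{L^2}^2$. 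The computation is elementary; the only delicate points are bookkeeping ones, and they are exactly where the care in the hypotheses enters: (i) controlling $hf^{ij}_{k}p_ip_j\partial_{p_k}$ via the a priori bound $|p|\le8$ on $\mathrm{supp}\,u$ rather than by Young's inequality, so that it costs only $O(Ah)$ and is absorbed under $\kappa_b\gtrsim Ab$ rather than $\kappa_b\gtrsim A^2b^2$; and (ii) producing the $b^{-2}$-order gain uniformly in $\ell$, which forces the use of the annular localization $|p|\gtrsim1$ when $\ell\ge0$ and of the size $h=4b^{-1}$ when $\ell=-1$.
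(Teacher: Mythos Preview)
Your proof is correct and follows essentially the same route as the paper: compute the real part, bound the perturbation $hf^{ij}_k p_ip_j\partial_{p_k}$ by integration by parts using $|p|\le 8$ and \eqref{eq:BoundForfijk}, then split into $\ell=-1$ (where $h=4/b$) and $\ell\ge 0$ (where $|p|\ge 1/8$ on the annular support), and finally run the Cauchy--Schwarz bootstrap. Your handling of the $\ell\ge 0$ case is in fact slightly cleaner than the paper's: the paper further splits into two sub-cases according to whether $h\le (2^7C^{(2)}_{g,\psi}Ab^2)^{-1}$ or not, but as you observe, once $\kappa_b\ge C^{(2)}_{g,\psi}Ab$ the term $\tfrac{h\kappa_b}{b}$ already absorbs $\tfrac{h}{2}C^{(2)}_{g,\psi}A$ directly, so the sub-case split is unnecessary for the stated bound.
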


\begin{proof}
A straightforward computation gives
  \begin{align}
  \nonumber
    \mathrm{Re}~ \, \left\langle \left(  \frac{h\kappa_b}{b} + \hat{\mathcal{P}}_{b,h,f}-i h\lambda \right) u , u \right\rangle_{L^2} = &  \frac{h\kappa_b}{b} \|u\|_{L^2(\R^{2d})}^2 + \frac{1}{2} \sum_j \left\| hD_{p_j} u \right\|_{L^2(\R^{2d})}^2 + \frac{1}{2}\left\| \frac{|p|}{b} u \right\|_{L^2(\R^{2d})}^2 \\
    & \label{eq:straightfor} \quad + \mathrm{Re}~ \, \left\langle hf^{ij}_k p_ip_j \frac{\partial u} {\partial p_k},u \right\rangle_{L^2},
  \end{align}
  and, owing to $|p|\leq 8$ and $|f_{ij}^{k}(q)|\leq C_{g,\psi}^{(1)}A$ according to \eqref{eq:BoundForfijk},
  \begin{align} \nonumber
    \left|\mathrm{Re}~\, \left\langle hf^{ij}_k p_ip_j \frac{\partial u}{\partial p_k},u \right\rangle_{L^2}\right| &= \left|\frac{h}{2}
    \left\langle
      \left[
        f^{ij}_{k}p_{i}p_{j},\frac{\partial}{\partial p_{k}}
\right]u , u \right\rangle_{L^2}\right|= \left|-\frac{h}{2} 
      \left\langle
        \sum_k\left(
          f^{ik}_{k}p_{i}+f^{kj}_{k}p_{j}
        \right)u , u
      \right\rangle_{L^2}\right|
      \\
      \label{equality:fErrorTerm}&\leq 8h C_{g,\psi}^{(1)} A\norm{u}_{L^2(\R^{2d})}^2
  \end{align}
for any $u$ is supported in $(B(0,\hat{C}'_{g,\psi}A)\times \mathbb{R}^d)\cap S_{\ell,8}'$\, for $\ell=0$ or $\ell=-1$\,.\\
When $\ell=-1$ and $h=\frac{4}{b}$ we simply use
  \begin{align*}
    \mathrm{Re}~ \, \left\langle \left(  \frac{h\kappa_b}{b} + \hat{\mathcal{P}}_{b,h,f}-i h\lambda \right) u , u \right\rangle_{L^2} &
    \geq  4\frac{\kappa_b-8C_{g,\psi}^{(1)}Ab}{b^2} \|u\|_{L^2}^2 + \frac{1}{2} \sum_j \left\| hD_{p_j} u \right\|_{L^2(\R^{2d})}^2
  \\
  &\geq \frac{2}{b^2}\|u\|_{L^2}^2 + \frac{1}{2} \sum_j \left\| hD_{p_j} u \right\|_{L^2(\R^{2d})}^2,
  \end{align*}
if $\kappa_b\geq 1+C_{g,\psi}^{(2)}Ab$ and $C_{g,\psi}^{(2)}=16C_{g,\psi}^{(1)}$\,.\\
When $\ell\geq 0$ and $\mathrm{supp}\,u\subset (B(0,\hat{C}'_{g,\psi}A)\times \mathbb{R}^d)\cap S_{0,8}$\,, we deduce with the lower bound $|p|\geq 2^{-3}$  the inequality
  \begin{align*}
    \mathrm{Re}~ \, \left\langle \left(  \frac{h\kappa_b}{b} + \hat{\mathcal{P}}_{b,h,f}-i h\lambda \right) u , u \right\rangle_{L^2} \geq  (\frac{h\kappa_b}{b}- \frac{hC_{g,\psi}^{(2)}A}{2} + \frac{1}{2^7b^{2}}) \|u\|_{L^2}^2 + \frac{1}{2} \sum_j \left\| hD_{p_j} u \right\|_{L^2(\R^{2d})}^2,
  \end{align*}
  where again $C_{g,\psi}^{(2)}=16 C_{g,\psi}^{(1)}$ is determined by the metric $g$ and the function $\psi$\,. The assumption $\kappa_{b}\geq C_{g,\psi}^{(2)}Ab$ implies
$$
   \mathrm{Re}~ \, \left\langle \left(  \frac{h\kappa_b}{b} + \hat{\mathcal{P}}_{b,h,f}-i h\lambda \right) u , u \right\rangle_{L^2} \geq  (\frac{h\kappa_b}{2b}+\frac{1}{2^7b^{2}}) \|u\|_{L^2(\R^{2d})}^2 + \frac{1}{2} \sum_j \left\| hD_{p_j} u \right\|_{L^2(\R^{2d})}^2
   $$
   and this ends the proof of \eqref{eq:IPPPbhf}.\\
 
 The second inequality \eqref{eq:RefinedIppeq} is deduced from \eqref{eq:IPPPbhf} via the Cauchy-Schwarz inequality like in the proof of Proposition~\ref{pr:IppIneqWithRealPart}.
 
\end{proof}

We are now able to give a lower bound for the operator $\hat{\mathcal{P}}_{b,h,f}$

\begin{proposition}
Let $C\geq 1$ be the universal constant given by Proposition~\ref{pr:SubellipticEstimateEuclidianCase} for the euclidean metric.\\
There exists a constant $C_{g,\psi}^{(3)}>0$\,, depending only on the metric $g$ and the function $\psi$\,, such that for all $(A,b)\in (0,+\infty)^2$\,, 
and $\kappa_b \geq C_{g,\psi}^{(3)}Ab+1$ the inequality 
  \begin{align} 
\nonumber
C\left( 1 + C_{g,\psi}^{(3)}A^2b^2\right)\left\| \left(\frac{ h\kappa_b}{b} + \hat{\mathcal{P}}_{b,h,f} -i h\lambda \right) u  \right\|_{L^2(\R^{2d})}^{2} &\geq \frac{1}{4} \left\|\left(\frac{\kappa_b h}{b}+\hat{\mathcal{O}}_{b,h}\right) u\right\|_{L^2(\R^{2d})}^2\\
&
\hspace{-3cm}
\nonumber
+ \frac{1}{4}\left\| \left(\frac{1}{b} p\cdot\partial_q+hf^{ij}_{k}(q,\ell)p_ip_j\partial_{p_k}-i h \lambda \right)u \right\|_{L^2(\R^{2d})}^2
\\
& \hspace{-3cm}
\label{eq:LowerBoundAfterChangeOfVariablesAndAfterGridPartition}
+\frac{1}{2} \left\| \left( \left|\frac{h}{b}D_{q} \right|^{\frac{2}{3}}+ \frac{h}{b}\right) u  \right\|_{L^2(\R^{2d})}^{2} 
+
\frac{1}{2}\left\| \left( h^2 \frac{|\lambda|}{\sqrt{hb}+|p|} \right)^{\frac{2}{3}} u  \right\|_{L^2(\R^{2d})}^{2} 
\end{align}
holds
 \begin{eqnarray*}
    \text{when~either}&& u \in \mathcal{C}^\infty_0((B(0,\hat{C}'_{g,\psi}A)\times \mathbb{R}^d)\cap S_{0,8}',\mathbb{C})\quad\text{and}\quad (\lambda,\ell)\in \mathbb{R}\times \mathbb{N}\,,\\
\text{or}&& u \in \mathcal{C}^\infty_0((B(0,\hat{C}'_{g,\psi}A)\times \mathbb{R}^d)\cap S_{-1,8}',\mathbb{C})\quad \text{and}\quad \lambda\in \mathbb{R}\quad\text{for}\quad  \ell=-1\,.
\end{eqnarray*}
\end{proposition}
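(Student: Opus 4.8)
The plan is to deduce this estimate for $\hat{\mathcal{P}}_{b,h,f}$ from the Euclidean case $\hat{\mathcal{P}}_{b,h,0}$ of Proposition~\ref{pr:SubellipticEstimateEuclidianCase} by treating the term $hf^{ij}_k(q,\ell)p_ip_j\partial_{p_k}$ as a perturbation, exactly as in the passage from Proposition~\ref{pr:controlOfErrorAfterGridPartitionOfUnity} to Proposition~\ref{pr:equivNormAfterGaugeChangeOfVariables}. First I would write
$$
\hat{\mathcal{P}}_{b,h,f}=\hat{\mathcal{P}}_{b,h,0}+hf^{ij}_k(q,\ell)p_ip_j\frac{\partial}{\partial p_k},
$$
and on the support $S'_{0,8}$ (or $S'_{-1,8}$) use $|p|\leq 8$ together with the bound \eqref{eq:BoundForfijk}, $|f^{ij}_k(q,\ell)|\leq C_{g,\psi}^{(1)}A$, to get a pointwise control of the perturbation:
$$
\left\| h f^{ij}_k p_ip_j\frac{\partial}{\partial p_k}u\right\|_{L^2(\R^{2d})}^2\leq C\, h^2 A^2 \sum_k\|hD_{p_k}u\|^2_{L^2(\R^{2d})}\cdot\frac{1}{h^2}\cdot\text{(bounded factor)}\,,
$$
which after tidying is $\lesssim A^2 \sum_k\|hD_{p_k}u\|_{L^2(\R^{2d})}^2$ — actually one wants it bounded by $A^2b^2$ times the full left-hand side, using Proposition~\ref{proposition:RefinedIntegrationByPart}. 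The cleaner route: \eqref{eq:RefinedIppeq} gives $\sum_k\|hD_{p_k}u\|_{L^2}^2\leq 2^9 b^2\|(\frac{h\kappa_b}{b}+\hat{\mathcal{P}}_{b,h,f}-ih\lambda)u\|_{L^2}^2$, so the perturbation term is bounded by $C_{g,\psi}^{(3)}A^2b^2\|(\frac{h\kappa_b}{b}+\hat{\mathcal{P}}_{b,h,f}-ih\lambda)u\|^2_{L^2}$ for a suitable constant.

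Next I would apply Proposition~\ref{pr:SubellipticEstimateEuclidianCase} to $u$ with $\lambda$ unchanged (noting $\frac{h\kappa_b}{b}+\hat{\mathcal{P}}_{b,h,0}-ih\lambda = \frac{h}{b}+\hat{\mathcal{P}}_{b,h,0}-ih\lambda$ up to the harmless extra constant $\frac{h\kappa_b}{b}-\frac{h}{b}\geq 0$, which only helps since it adds a nonnegative self-adjoint multiple of the identity inside the accretive operator — this needs a one-line justification that $\|(\frac{h\kappa_b}{b}+\hat{\mathcal{P}}_{b,h,0}-ih\lambda)u\|^2\geq \|(\frac{h}{b}+\hat{\mathcal{P}}_{b,h,0}-ih\lambda)u\|^2$ because the added term $(\frac{h\kappa_b}{b}-\frac{h}{b})$ is a nonnegative constant commuting with everything and the real part of the cross term is nonnegative by \eqref{eq:IPPPbhf}-type positivity; alternatively absorb it as in Proposition~\ref{pr:IppIneqWithRealPart}). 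This produces the four desired terms on the right with $\hat{\mathcal{O}}_{b,h}$, $p\cdot D_q$, $|\frac{h}{b}D_q|^{2/3}+\frac{h}{b}$ and $(h^2|\lambda|/(\sqrt{hb}+|p|))^{2/3}$, with the Euclidean constant $C$. Then I would convert $\hat{\mathcal{P}}_{b,h,0}$ back to $\hat{\mathcal{P}}_{b,h,f}$ in this lower bound: the $\hat{\mathcal{O}}_{b,h}$ term is unchanged, and for the transport term I replace $\frac1b p\cdot\partial_q-ih\lambda$ by $\frac1b p\cdot\partial_q+hf^{ij}_kp_ip_j\partial_{p_k}-ih\lambda$ at the cost of $\|hf^{ij}_kp_ip_j\partial_{p_k}u\|^2$, again controlled by $C_{g,\psi}^{(3)}A^2b^2\|(\frac{h\kappa_b}{b}+\hat{\mathcal{P}}_{b,h,f}-ih\lambda)u\|^2$. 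Collecting the factors $\frac14,\frac14,\frac12,\frac12$ and the multiplier $C(1+C_{g,\psi}^{(3)}A^2b^2)$ then yields \eqref{eq:LowerBoundAfterChangeOfVariablesAndAfterGridPartition}.

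The main obstacle I expect is bookkeeping of the various constants so that the perturbative error is genuinely absorbed into a fixed fraction (say $\tfrac14$) of each term on the right-hand side while keeping the multiplier of the form $C(1+C_{g,\psi}^{(3)}A^2b^2)$ rather than something worse; this requires using \eqref{eq:RefinedIppeq} to trade the gradient-in-$p$ error against the full LHS norm, and being careful that the constant $C_{g,\psi}^{(3)}$ chosen here is at least as large as the one forced by the hypothesis $\kappa_b\geq C_{g,\psi}^{(3)}Ab+1$ used to apply Proposition~\ref{proposition:RefinedIntegrationByPart}. A secondary point is the distinction between $\ell\geq 0$ (support in $S'_{0,8}$, where $|p|\geq 2^{-3}$ is available) and $\ell=-1$ (support in $S'_{-1,8}$, with $h=4/b$ fixed); but since the perturbation estimate only uses the upper bound $|p|\leq 8$ and the lower bounds on $\Real\langle\cdot,\cdot\rangle$ were already handled uniformly in Proposition~\ref{proposition:RefinedIntegrationByPart}, both cases go through with the same argument. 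Finally, scaling back via $\frac{\kappa_b}{b^2}+\tilde{\mathcal{P}}_{b,\ell,m}-\frac{i\lambda}{b}=\frac{1}{bh}(\frac{h\kappa_b}{b}+\hat{\mathcal{P}}_{b,h,f}-ih\lambda)$ and $\tilde{\mathcal{O}}_\ell=2^{2\ell}\hat{\mathcal{O}}_{b,h}$, $h=1/(b2^{2\ell})$ gives Proposition~\ref{pr:LowerBoundInLocalizedGeneralCase}, which is routine.
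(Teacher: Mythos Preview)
Your proposal is correct and follows essentially the same route as the paper: bound the perturbation $hf^{ij}_kp_ip_j\partial_{p_k}$ by $C_{g,\psi}A^2\sum_k\|hD_{p_k}u\|^2$ using $|p|\leq 8$ and \eqref{eq:BoundForfijk}, apply the Euclidean estimate of Proposition~\ref{pr:SubellipticEstimateEuclidianCase} (after noting the harmless replacement of $h/b$ by $h\kappa_b/b$ via positivity of the cross term), swap the Euclidean transport term for the full $\mathcal{Y}_{\ell,m}$ term at the same cost, and finally absorb all the $\|hD_{p_k}u\|^2$ errors into the left-hand side via \eqref{eq:RefinedIppeq}. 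The paper organizes the steps slightly differently (it first derives an intermediate inequality \eqref{eq:ineginterm} with a negative error term and only then invokes \eqref{eq:RefinedIppeq}), but the ingredients and the logic are identical to yours.
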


\begin{proof}
  We start with the inequality
\begin{align}
\label{eq:ineq1}
  \left\| \left(\frac{h\kappa_b}{b} + \hat{\mathcal{P}}_{b,h,f} - i h\lambda \right) u  \right\|_{L^2(\R^{2d})}^{2}  \geq \frac{1}{2} \left\| \left(\frac{h\kappa_b}{b}+\hat{\mathcal{P}}_{b,h,0} -i h\lambda \right) u\right\|_{L^2(\R^{2d})}^2- \left\| h f^{ij}_{k}p_{i}p_{j}\frac{\partial u}{\partial p_{k}} \right\|_{L^2(\R^{2d})}^{2}
\end{align}
and 
\begin{equation}
\label{eq:ineq2}
    \| hf^{ij}_k p_ip_j \frac{\partial u}{ \partial p_k} \|_{L^2(\R^{2d})}^2\leq [64 C_{g,\psi}^{(1)} A]^2 \sum_j\|hD_{p_j} u \|_{L^(\R^{2d})}^2\;,
\end{equation}
which comes from $|p|\leq 8$ and \eqref{eq:BoundForfijk}\,.
With $\kappa_b\geq 1$ and $C\geq 1$ given by Proposition~\ref{pr:SubellipticEstimateEuclidianCase}, the inequality \eqref{eq:LowerBoundWithParameterbh} combined with
\begin{align*}
    \norm{(\frac{h\kappa_b}{b}+Q-ih\lambda)u}_{L^2(\R^{2d})}^2&
    =\norm{(\frac{h}{b}+Q-ih\lambda)u}_{L^2(\R^{2d})}^2+\norm{\frac{h(\kappa_b-1)}{b}u}_{L^2(\R^{2d})}^2\\
    &\hspace{2cm}
    +2\underbrace{\langle \frac{h(\kappa_b-1)}{b}u, (\frac{h}{b}+\hat{\mathcal{O}}_{b,h})u\rangle}_{\geq 0}\\
    \text{for}& \quad Q=\hat{\mathcal{P}}_{b,h,0}~\text{or}~ Q=\hat{\mathcal{O}}_{b,h}=\frac{1}{2}(-h^2\Delta_p+\frac{|p|^2}{b^2})\,,
\end{align*}
 implies
\begin{align}
\label{eq:ineq3}
  \begin{split}
  C \left\| \left(\frac{ h\kappa_b}{b} + \hat{\mathcal{P}}_{b,h,0} - i h\lambda \right) u  \right\|_{L^2(\R^{2d})}^{2} \geq & \frac{1}{2} 
  \left\| \left(\frac{h\kappa_b}{b}+\hat{\mathcal{O}}_{b,h}\right) u \right\|_{L^2(\R^{2d})}^{2} 
   +  
\left\| \left(\frac{1}{b} p\cdot\partial_q-i h\lambda \right)u \right\|_{L^2(\R^{2d})}^{2}
    \\ & \quad + \left\| \left( \left|\frac{h}{b}D_{q} \right|^{\frac{2}{3}} + \frac{h}{b}\right) u  \right\|_{L^2(\R^{2d})}^{2}
  + \left\| \left( h^2 \frac{|\lambda|}{\sqrt{hb}+|p|} \right)^{\frac{2}{3}} u  \right\|_{L^2(\R^{2d})}^{2}\,.
  \end{split}
\end{align}
Combining \eqref{eq:ineq1}\eqref{eq:ineq2} and \eqref{eq:ineq3} implies
\begin{align}
\label{eq:ineginterm}
  \begin{split}
  C \left\| \left(\frac{ h \kappa_b}{b} + \hat{\mathcal{P}}_{b,h,f} - ih\lambda \right) u  \right\|_{L^2(\R^{2d})}^{2}  \geq & \frac{1}{4} \left\| \left(\frac{h\kappa_b}{b}+\hat{\mathcal{O}}_{b,h}\right) u \right\|_{L^2(\R^{2d})}^{2} + \frac{1}{2}\left\| \left(\frac{1}{b} p\cdot\partial_q-i h\lambda \right)u \right\|_{L^2(\R^{2d})}^{2} 
  \\ & \quad  + \frac{1}{2}\left\| \left( \left|\frac{h}{b}D_{q} \right|^{\frac{2}{3}} + \frac{h}{b}\right) u  \right\|_{L^2(\R^{2d})}^{2}
  + \frac{1}{2}\left\| \left( h^2 \frac{|\lambda|}{\sqrt{hb}+|p|} \right)^{\frac{2}{3}} u  \right\|_{L^2(\R^{2d})}^{2}
   \\ & \quad  - C [64C_{g,\psi}^{(1)}A]^{2}\sum_{j} \left\| hD_{p_{j}}u \right\|_{L^2(\R^{2d})}^{2} \;.
  \end{split}
\end{align}
Putting
\begin{align*}
    \left\| \left(\frac{1}{b} p\cdot\partial_q-i h\lambda \right)u \right\|_{L^2(\R^{2d})}^{2} &\geq 
    \frac{1}{2}\left\| \left(\frac{1}{b} p\cdot\partial_q +hf^{ij}_{k}p_i p_j \partial_{p_k}-i h\lambda \right)u \right\|_{L^2(\R^{2d})}^{2}
    \\
    &\hspace{1cm} -\left\| h f^{ij}_{k}p_i p_j \partial_{p_k}u \right\|_{L^2(\R^{2d})}^{2}\\
    &\geq \frac{1}{2}\left\| \left(\frac{1}{b} p\cdot\partial_q +hf^{ij}_{k}p_i p_j \partial_{p_k}-i h\lambda \right)u \right\|_{L^2(\R^{2d})}^{2}\\
    &\hspace{1cm}-[64C_{g,\psi}^{(1)}A]^{2}\sum_{j} \left\| hD_{p_{j}}u \right\|_{L^2(\R^{2d})}^{2} 
\end{align*}
into \eqref{eq:ineginterm} gives
\begin{align}
\label{eq:ineginterm2}
  \begin{split}
  C \left\| \left(\frac{ h \kappa_b}{b} + \hat{\mathcal{P}}_{b,h,f} - ih\lambda \right) u  \right\|_{L^2(\R^{2d})}^{2}  \geq & \frac{1}{4} \left\| \left(\frac{h\kappa_b}{b}+\hat{\mathcal{O}}_{b,h}\right) u \right\|_{L^2(\R^{2d})}^{2} + \frac{1}{4}\left\| \left(\frac{1}{b} p\cdot\partial_q  +hf^{ij}_{k}p_i p_j \partial_{p_k}-i h\lambda \right)u \right\|_{L^2(\R^{2d})}^{2} 
  \\ & \quad  + \frac{1}{2}\left\| \left( \left|\frac{h}{b}D_{q} \right|^{\frac{2}{3}} + \frac{h}{b}\right) u  \right\|_{L^2(\R^{2d})}^{2}
  + \frac{1}{2}\left\| \left( h^2 \frac{|\lambda|}{\sqrt{hb}+|p|} \right)^{\frac{2}{3}} u  \right\|_{L^2(\R^{2d})}^{2}
   \\ & \quad  - 2C[64C_{g,\psi}^{(1)}A]^{2}\sum_{j} \left\| hD_{p_{j}}u \right\|_{L^2(\R^{2d})}^{2} \;.
  \end{split}
\end{align}
When $\kappa_b\geq C_{g,\psi}^{(2)}Ab$\, the inequality \eqref{eq:RefinedIppeq} provides
$$
\left\| hD_{p_{j}}u \right\|_{L^2(\R^{2d})}^{2}\leq 2^9b^2\left\| \left(\frac{ h \kappa_b}{b} + \hat{\mathcal{P}}_{b,h,f} - ih\lambda \right) u  \right\|_{L^2(\R^{2d})}^{2} 
$$
and taking
$$
C_{g,\psi}^{(3)}=\max (C_{g,\psi}^{(2)},2^{22}[C_{g,\psi}^{(1)}]^2)\geq 1\quad,\quad \kappa_b\geq C_{g,\psi}^{(3)}Ab+1\,,
$$
yields the result.
\end{proof}

\subsection{Comparison of the local models $\mathcal{P}_{b,\ell,m}$ and $\widetilde{\mathcal{P}}_{b,\ell,m}$}
\label{subsec:compPbellmtPbellm}
We now deduce subelliptic estimates for the local operator $\mathcal{P}_{b,\ell,m}=\frac{1}{b^2}\mathcal{O}_{\ell,m}+\frac{1}{b}\mathcal{Y}_{\ell,m}$ introduced in \eqref{eq:defPbellm}\eqref{eq:defOellm}\eqref{eq:defYellm} from the one obtained in the previous paragraph for $\tilde{\mathcal{P}}_{b,\ell,m}$\,. It is a consequence of the upper bounds for the differences
$(\tilde{\mathcal{P}}_{b,\ell,m}-\mathcal{P}_{b,\ell,m})$
and $(\tilde{\mathcal{O}}_{\ell}-\mathcal{O}_{\ell,m})$ studied in Proposition~\ref{pr:ChangingOToTheEuclideanO}.
\begin{proposition}
\label{pr:subellPbellm}
There exists a constant $C_{g,\psi}^{(4)}\geq 1$ such that for any $A,b>0$\,, $\kappa_b\geq C_{g,\psi}^{(4)}(1+A)(1+b)$ and $2^{2\ell}\geq C_{g,\psi}^{(4)}(1+A)(1+b)A^{2}$ imply
\begin{equation}
\label{eq:subellPbellm1}
C_{g,\psi}^{(4)} \|(\frac{\kappa_b}{b^2}+\mathcal{P}_{b,\ell,m}-\frac{i\lambda}{b})u\|_{L^{2}(\R^{2d})}^2 \geq \frac{1}{b^4}\|2^{2\ell} u\|^2_{L^2(\R^{2d})} 
\end{equation}
and
\begin{align}
\nonumber
C_{g,\psi}^{(4)}(1+A)^2(1+b)^2
\left\| \left(\frac{\kappa_{b}}{b^{2}} +  \mathcal{P}_{b,\ell,m} - \frac{i\lambda}{b} \right) u  \right\|_{L^2(\R^{2d})}^{2} \geq & \left\| \frac{1}{b^2}\left(\kappa_b+\mathcal{O}_{\ell,m}\right) u\right\|_{L^2(\R^{2d})}^2 +
\left\| \frac{1}{b}\left(\mathcal{Y}_{\ell,m}-i \lambda \right)u \right\|_{L^2(\R^{2d})}^2
  \\
  \label{eq:subellPbellm2}
  &\hspace{-5cm} +\left\| \frac{1}{b^2}\left(\kappa_b+\tilde{\mathcal{O}}_{\ell}\right) u\right\|_{L^2(\R^{2d})}^2+ \left\| \left( \left|\frac{2^{\ell}}{b^2}D_{q} \right|^{\frac{2}{3}} + \frac{1}{b^2}\right) u  \right\|_{L^2(\R^{2d})}^{2} + \left\| \left( \frac{1}{b^2} \frac{|\lambda|}{1+2^{\ell}|p|} \right)^{\frac{2}{3}} u  \right\|_{L^2(\R^{2d})}^{2} \,,
\end{align}
when
\begin{eqnarray*}
    \text{when~either}&& u \in \mathcal{C}^\infty_0((B(0,\hat{C}'_{g,\psi}A)\times \mathbb{R}^d)\cap S_{0,8}',\mathbb{C})\quad\text{and}\quad (\lambda,\ell)\in \mathbb{R}\times \mathbb{N}\,,\\
\text{or}&& u \in \mathcal{C}^\infty_0((B(0,\hat{C}'_{g,\psi}A)\times \mathbb{R}^d)\cap S_{-1,8}',\mathbb{C})\quad \text{and}\quad \lambda\in \mathbb{R}\quad\text{for}\quad  \ell=-1\,.
\end{eqnarray*}
\end{proposition}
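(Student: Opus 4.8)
The plan is to deduce Proposition~\ref{pr:subellPbellm} from Proposition~\ref{pr:LowerBoundInLocalizedGeneralCase} together with the comparison estimates of Proposition~\ref{pr:ChangingOToTheEuclideanO}, in the same spirit as how Proposition~\ref{pr:LowerBoundInLocalizedGeneralCase} was obtained. First I would record the two facts that do the work. On one hand, Proposition~\ref{pr:ChangingOToTheEuclideanO} gives
$$
\left\|\left(\tfrac{\kappa_b}{b^2}+\mathcal{P}_{b,\ell,m}-\tfrac{i\lambda}{b}\right)u\right\|_{L^2}\geq \left\|\left(\tfrac{\kappa_b}{b^2}+\tilde{\mathcal{P}}_{b,\ell,m}-\tfrac{i\lambda}{b}\right)u\right\|_{L^2}-C_{g,\psi}A^22^{-2\ell}\left\|\left(\tfrac{\kappa_b}{b^2}+\tfrac{1}{b^2}\tilde{\mathcal{O}}_\ell\right)u\right\|_{L^2},
$$
and the reverse with the roles exchanged. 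On the other hand, \eqref{eq:upperBoundForReplacingOToTheEuclideanO} controls $\frac{1}{b^2}\|(\mathcal{O}_{\ell,m}-\tilde{\mathcal{O}}_\ell)u\|_{L^2}$ by $C_{g,\psi}A^22^{-2\ell}\|(\frac{\kappa_b}{b^2}+\frac{1}{b^2}\tilde{\mathcal{O}}_\ell)u\|_{L^2}$, which lets me pass between the terms $\|\frac{1}{b^2}(\kappa_b+\mathcal{O}_{\ell,m})u\|_{L^2}$ and $\|\frac{1}{b^2}(\kappa_b+\tilde{\mathcal{O}}_\ell)u\|_{L^2}$ in the output. The smallness of $A^22^{-2\ell}$ is exactly what the hypothesis $2^{2\ell}\geq C_{g,\psi}^{(4)}(1+A)(1+b)A^2$ delivers: it forces $C_{g,\psi}A^22^{-2\ell}\leq \tfrac{1}{C_{g,\psi}^{(4)}(1+A)(1+b)}$, hence $\leq \varepsilon$ for any prescribed $\varepsilon$ once $C_{g,\psi}^{(4)}$ is large.

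Next I would assemble the proof in the usual two-steps-with-absorption pattern. Apply \eqref{eq:subellipticEstimateInAlmostEuclideanCase} to $\tilde{\mathcal{P}}_{b,\ell,m}$ under the hypothesis $\kappa_b\geq C_{g,\psi}^{(3)}Ab+1$ (absorbed into $\kappa_b\geq C_{g,\psi}^{(4)}(1+A)(1+b)$), which gives the five desired terms on the right but written with $\tilde{\mathcal{O}}_\ell$ in place of $\mathcal{O}_{\ell,m}$ in the second slot and with $\|(\frac{\kappa_b}{b^2}+\tilde{\mathcal{P}}_{b,\ell,m}-\frac{i\lambda}{b})u\|_{L^2}^2$ on the left. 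I then replace the left-hand side by $2\|(\frac{\kappa_b}{b^2}+\mathcal{P}_{b,\ell,m}-\frac{i\lambda}{b})u\|_{L^2}^2+2C_{g,\psi}^2A^42^{-4\ell}\|(\frac{\kappa_b}{b^2}+\frac{1}{b^2}\tilde{\mathcal{O}}_\ell)u\|_{L^2}^2$ (using the comparison inequality and $(x-y)^2\leq$ triviality, i.e. $\|\tilde{\mathcal{P}}\cdots\|^2\leq 2\|\mathcal{P}\cdots\|^2+2(\text{error})^2$). The error term $\|(\frac{\kappa_b}{b^2}+\frac{1}{b^2}\tilde{\mathcal{O}}_\ell)u\|_{L^2}^2$ is, up to the factor $(4C(1+C_{g,\psi}^{(3)}b^2A^2))^{-1}$, dominated by the right-hand side of \eqref{eq:subellipticEstimateInAlmostEuclideanCase} itself (since $\kappa_b+\tilde{\mathcal{O}}_\ell\leq \kappa_b+\tilde{\mathcal{O}}_\ell$ appears as one of the output terms, noting $\|\frac{1}{b^2}(\kappa_b+\tilde{\mathcal{O}}_\ell)u\|^2$ is literally the first listed term after multiplying by $1/b^2$ — careful, the euclidean statement lists $\|\frac{1}{b^2}(\kappa_b+\tilde{\mathcal{O}}_\ell)u\|^2$ indeed). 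So the error term is absorbed into the left once $C_{g,\psi}^2A^42^{-4\ell}\cdot(1+C_{g,\psi}^{(3)}b^2A^2)$ is small, which again follows from $2^{2\ell}\geq C_{g,\psi}^{(4)}(1+A)(1+b)A^2$ with $C_{g,\psi}^{(4)}$ large (note $A^42^{-4\ell}(1+b^2A^2)\lesssim A^2 2^{-2\ell}\cdot (A^22^{-2\ell})(1+b^2A^2)$ and $A^22^{-2\ell}\leq (C_{g,\psi}^{(4)})^{-1}(1+b)^{-1}A^{-1}$). Finally, in the surviving right-hand side, replace $\|\frac{1}{b^2}(\kappa_b+\tilde{\mathcal{O}}_\ell)u\|_{L^2}^2$ by $\frac12\|\frac{1}{b^2}(\kappa_b+\mathcal{O}_{\ell,m})u\|_{L^2}^2 - \|\frac{1}{b^2}(\mathcal{O}_{\ell,m}-\tilde{\mathcal{O}}_\ell)u\|_{L^2}^2$ and absorb this last error by \eqref{eq:upperBoundForReplacingOToTheEuclideanO} plus smallness again; this both produces the $\mathcal{O}_{\ell,m}$-term and keeps the $\tilde{\mathcal{O}}_\ell$-term (which is the fifth term in \eqref{eq:subellPbellm2}) by splitting the coefficient in half beforehand. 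Collecting constants gives \eqref{eq:subellPbellm2} with a suitable $C_{g,\psi}^{(4)}$.

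For \eqref{eq:subellPbellm1}: start from \eqref{pr:LowerBoundInLocalizedGeneralCaseIPP}, namely $\|(\frac{\kappa_b}{b^2}+\tilde{\mathcal{P}}_{b,\ell,m}-\frac{i\lambda}{b})u\|^2\geq \frac{1}{2^{16}b^4}\|2^{2\ell}u\|^2$, and use the comparison inequality together with the already-established bound $\|(\frac{\kappa_b}{b^2}+\frac{1}{b^2}\tilde{\mathcal{O}}_\ell)u\|^2\lesssim \|(\frac{\kappa_b}{b^2}+\mathcal{P}_{b,\ell,m}-\frac{i\lambda}{b})u\|^2$ (which is itself a consequence of \eqref{eq:subellPbellm2}, or can be obtained more directly from \eqref{eq:RefinedIppeq}-type integration by parts on $\mathcal{P}_{b,\ell,m}$); absorbing the $A^22^{-2\ell}$-small error yields the lower bound on $\frac{1}{b^4}\|2^{2\ell}u\|^2$. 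I expect the main obstacle, or at least the point requiring genuine care, to be the bookkeeping of the three independent smallness requirements — $\kappa_b\geq C_{g,\psi}^{(3)}Ab+1$ from the euclidean step, the coefficient $4C(1+C_{g,\psi}^{(3)}b^2A^2)$ that multiplies the left side in \eqref{eq:subellipticEstimateInAlmostEuclideanCase}, and the error factors $A^22^{-2\ell}$ and $A^42^{-4\ell}(1+b^2A^2)$ — so that all of them are simultaneously controlled by the single pair of hypotheses $\kappa_b\geq C_{g,\psi}^{(4)}(1+A)(1+b)$ and $2^{2\ell}\geq C_{g,\psi}^{(4)}(1+A)(1+b)A^2$ with one universal choice of $C_{g,\psi}^{(4)}$. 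Everything else is a mechanical application of the already-proven comparison estimates and the parallelogram/absorption inequalities.
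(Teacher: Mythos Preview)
Your proposal is correct and follows essentially the same route as the paper: use Proposition~\ref{pr:ChangingOToTheEuclideanO} to compare $\mathcal{P}_{b,\ell,m}$ with $\tilde{\mathcal{P}}_{b,\ell,m}$, feed the error $C_{g,\psi}A^{2}2^{-2\ell}\|(\frac{\kappa_b}{b^2}+\frac{1}{b^2}\tilde{\mathcal{O}}_\ell)u\|$ back into the right-hand side of \eqref{eq:subellipticEstimateInAlmostEuclideanCase}, and absorb thanks to the hypothesis $2^{2\ell}\geq C_{g,\psi}^{(4)}(1+A)(1+b)A^{2}$; then use \eqref{eq:upperBoundForReplacingOToTheEuclideanO} again to swap $\tilde{\mathcal{O}}_\ell$ for $\mathcal{O}_{\ell,m}$ in the output. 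The paper carries out the absorption at the level of the norm (obtaining $\|\mathcal{P}\cdots u\|\geq\frac12\|\tilde{\mathcal{P}}\cdots u\|$ directly) rather than squaring first, but this is cosmetic.

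The one genuine difference is for \eqref{eq:subellPbellm1}: the paper does not transfer from \eqref{pr:LowerBoundInLocalizedGeneralCaseIPP} via the comparison, but instead redoes the integration-by-parts of Proposition~\ref{proposition:RefinedIntegrationByPart} \emph{directly} for $\mathcal{P}_{b,\ell,m}$, writing $(bh)\mathcal{P}_{b,\ell,m}=\hat{\mathcal{P}}_{b,h,f}+\frac{1}{2}[-h^{2}(g_{ij}(2^{-\ell}q)-\delta_{ij})\partial_{p_i}\partial_{p_j}+(g^{ij}(2^{-\ell}q)-\delta^{ij})p_ip_j/b^{2}]$ and bounding the extra quadratic-form contribution by $C''_{g,\psi}A^{2}2^{-2\ell}\,\Real\langle u,(\frac{h\kappa_b}{b}+\hat{\mathcal{P}}_{b,h,f}-ih\lambda)u\rangle$. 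This gives \eqref{eq:subellPbellm1} independently of \eqref{eq:subellPbellm2}, under the milder condition $2^{2\ell}\geq 2C''_{g,\psi}A^{2}$. Your route (transfer plus absorption, invoking either \eqref{eq:subellPbellm2} or a direct IBP on $\mathcal{P}_{b,\ell,m}$) is also valid; the paper's version is just slightly cleaner because it avoids any dependence on part~(b).
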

\begin{proof}
\noindent\textbf{a)} For the inequality \eqref{eq:subellPbellm1} we must reconsider the proof of Proposition~\ref{proposition:RefinedIntegrationByPart} after noticing 
$$
(bh)\mathcal{P}_{b,\ell,m}=\hat{\mathcal{P}}_{b,h,f} +\frac{-h^2(g_{ij}(2^{-\ell}q)-
\delta_{ij})\partial_{p_i}\partial_{p_j}+(g^{ij}(2^{-\ell}q)-\delta^{ij})p_ip_j/b^2}{2}\,.
$$
With $|g(2^{-\ell}q)-\mathrm{Id}|\leq C_{g,\psi}A^2 2^{-2\ell}$ we get 
\begin{multline*}
|\langle u\,,\,\frac{-h^2(g_{ij}(2^{-\ell}q)-\delta_{ij})\partial_{p_i}\partial_{p_j}+(g^{ij}(2^{-\ell}q)-\delta^{ij})p_ip_j/b^2}{2} u\rangle|\\
\leq C'_{g,\psi}A^2{2^{-2\ell}}
\left[\|hD_p u\|_{L^2(\R^{2d})}^2 +\frac{1}{b^2}\|u\|_{L^2(\R^{2d})}^2\right]
\end{multline*}
and
\begin{multline*}
\left|\langle u\,,\,(bh)(\frac{\kappa_b}{b^2}+\mathcal{P}_{b,\ell,m}-i\frac{\lambda}{b})u\rangle -\langle u\,,\,(\frac{h\kappa_b}{b}+\hat{\mathcal{P}}_{b,h,f}-ih\lambda) u\rangle\,\right|
\\
\leq 
C''_{g,\psi}A^2 2^{-2\ell}
\mathrm{Re}~{\langle u\,,\,(\frac{h\kappa_b}{b}+\hat{\mathcal{P}}_{b,h,f}-ih\lambda) u\rangle}\,.
\end{multline*}
The lower bound for $\mathrm{Re}~ \langle u\,,\,(bh)(\frac{\kappa_b}{b^2}+\mathcal{P}_{b,\ell,m}-i\frac{\lambda}{b})u\rangle $
and by Cauchy-Schwarz for $\|(bh)(\frac{\kappa_b}{b^2}+\mathcal{P}_{b,\ell,m}-i\frac{\lambda}{b})u\|^2_{L^2(\R^{2d})}$ are thus deduced  from Proposition~\ref{proposition:RefinedIntegrationByPart} when $2^{2\ell}\geq 2C''_{g,\psi}A^2$\,. The conditions and the inequality \eqref{eq:subellPbellm1} are thus satisfied by taking  $C_{g,\psi}^{(4)}\geq 2C''_{g,\psi}$ large enough.\\
\noindent\textbf{b)} Let us consider \eqref{eq:subellPbellm2}.
We recall the inequality \eqref{eq:upperBoundForReplacingOToTheEuclideanO}
\begin{equation*}
  \| \frac{1}{b^{2}} 
\left(
  \mathcal{O}_{\ell,m} - \tilde{\mathcal{O}}_{\ell}
\right) u  \|_{L^2(\R^{2d})} \leq C_{g,\psi} A^{2}2^{-2\ell} \|
\left(
  \frac{\kappa_{b}}{b^{2}} + \frac{1}{b^{2}}\tilde{\mathcal{O}}_{\ell}
\right) u \|_{L^2(\R^{2d})} \;,
\end{equation*}
which implies 
\begin{eqnarray*}
&&
\|
\left(
  \frac{\kappa_{b}}{b^{2}} + \frac{1}{b^{2}}\tilde{\mathcal{O}}_{\ell}
\right) u \|^2_{L^2(\R^{2d})} \geq \frac{1}{2}
\|
\left(
  \frac{\kappa_{b}}{b^{2}} + \frac{1}{b^{2}}\mathcal{O}_{\ell,m}
\right) u \|^2_{L^2(\R^{2d})}\\
\text{and}&&
\|
\left(
  \frac{\kappa_{b}}{b^{2}} + \frac{1}{b^{2}}\tilde{\mathcal{O}}_{\ell}
\right) u \|^2_{L^2(\R^{2d})} \geq \frac{1}{4}
\|
\left(
  \frac{\kappa_{b}}{b^{2}} + \frac{1}{b^{2}}\mathcal{O}_{\ell,m}
\right) u \|^2_{L^2(\R^{2d})}
+\frac{1}{4}\|
\left(
  \frac{\kappa_{b}}{b^{2}} + \frac{1}{b^{2}}\tilde{\mathcal{O}}_{\ell}
\right) u \|^2_{L^2(\R^{2d})}
\end{eqnarray*}
as soon as $2^{2\ell}\geq \frac{C_{g,\psi} A^2}{\sqrt{2}-1}$\,.\\
Proposition~\ref{pr:LowerBoundInLocalizedGeneralCase} holds under the sufficient condition $\kappa_b\geq C^{(3)}_{g,\psi}Ab+1$ which can be simplified into $\kappa_b\geq C^{(3)}_{g,\psi}(1+A)(1+b)$  while the left-hand side of \eqref{eq:subellipticEstimateInAlmostEuclideanCase} can be replaced by
$$
4CC^{(3)}_{g,\psi} (1+A)^2(1+b)^2\left\| \left(\frac{\kappa_{b}}{b^{2}} +  \tilde{\mathcal{P}}_{b,\ell,m} - \frac{i\lambda}{b} \right) u  \right\|_{L^2(\R^{2d})}^{2}\,.
$$
Therefore Proposition~\ref{pr:LowerBoundInLocalizedGeneralCase} and Proposition~\ref{pr:ChangingOToTheEuclideanO} say
\begin{eqnarray*}
\|(\frac{\kappa_b}{b^2}+\mathcal{P}_{b,\ell,m}-i\frac{\lambda}{b})u\|_{L^2(\R^{2d})}
&\geq& 
\left(1-2\sqrt{CC^{(3)}_{g,\psi}}(1+A)(1+b)C_{g,\psi}A^2 2^{-2\ell}\right)
\|(\frac{\kappa_b}{b^2}+\tilde{\mathcal{P}}_{b,\ell,m}-i\frac{\lambda}{b})u\|_{L^2(\R^{2d})}
\\
&\geq &
\frac{1}{2}
\|(\frac{\kappa_b}{b^2}+\tilde{\mathcal{P}}_{b,\ell,m}-i\frac{\lambda}{b})u\|_{L^2(\R^{2d})}
\end{eqnarray*}
as soon as
$$
2^{2\ell}\geq 4\sqrt{C C^{(3)}_{g,\psi}}C_{g,\psi}(1+A)(1+b)A^{2}\,.
$$
When all these conditions are satisfied this proves the seeked inequality with the upper bound
$$
64 CC^{(3)}_{g,\psi}(1+A)^2(1+b)^2\|(\frac{\kappa_b}{b^2}+\mathcal{P}_{b,\ell,m}-i\frac{\lambda}{b})u\|^2_{L^2(\R^{2d})}\,.
$$
The result is thus proved by choosing
$$
C^{(4)}_{g,\psi}\geq \max(4\sqrt{CC^{(3)}_{g,\psi}}C_{g,\psi}, 64 CC^{(3)}_{g,\psi})
$$
where the right-hand side is larger than $C^{(3)}_{g,\psi}$ and for which 
$C_{g,\psi}^{(4)}(1+A)(1+b)A^{2}\geq \frac{C_{g,\psi}A^{2}}{\sqrt{2}-1}$\,.
\end{proof}
\subsection{Estimate for $|p|\sim 2^{\ell}$ large} 
\label{subsec:estimateForLargeEll}
We now prove subelliptic estimates for $\mathcal{P}_{b,\ell}$ by summing the local subelliptic estimates for $\mathcal{P}_{b,\ell,m}$\,. All the error terms coming from the partition of unity $\sum_{m\in\Z^{d}}\psi^{2}(.-m)\equiv 1$ and  studied in Subsections~\ref{sec:gridpart}, \ref{subsec:NormalCoordinate} and \ref{sec:locglobfixedell}, happen to be relatively small enough when the parameter $A=A_\infty (b)$ is much larger than $1$ and $\ell$ is large enough so that $2^{2\ell}\gg [A_{\infty}(b)]^3\times (1+b)$\,.

\begin{proposition}
\label{pr:subellPbellLarge}
Let $\mathcal{P}_{b,\ell}$ and $S_{\ell,2}=S_{0,2}\subset \Omega\times \R^{d}$ be defined respectively by \eqref{eq:defPbell} and \eqref{eq:defSell} for $\ell\in\N$\,.\\
There exists a constant $C_{g,\psi}^{(5)}\geq 1$ such that $A_{\infty}(b)= C_{g,\psi}^{(5)}(1+b)$\,, $\kappa_{b}\geq C_{g,\psi}^{(5)}A_{\infty}(b)\times (1+b)\geq [C_{g,\psi}^{(5)}]^2(1+b)^2$ and $2^{2\ell}\geq C_{g,\psi}^{(5)}[A_{\infty}(b)]^3(1+b)=[C_{g,\psi}^{(5)}]^{4}(1+b)^4$ imply
\begin{align}
\nonumber
[C_{g,\psi}^{(5)}]^3(1+b)^4
\left\| \left(\frac{\kappa_{b}}{b^{2}} +  \mathcal{P}_{b,\ell} - \frac{i\lambda}{b} \right) u  \right\|_{L^2(\R^{2d})}^{2} \geq & \left\| \frac{1}{b^2}\left(\kappa_b+\mathcal{O}_{\ell}\right) u\right\|_{L^2(\R^{2d})}^2 +
\left\| \frac{1}{b}\left(\mathcal{Y}_{\ell}-i \lambda \right)u \right\|_{L^2(\R^{2d})}^2
  \\
  \label{eq:subellPbell}
  &\hspace{-5cm} +\frac{1}{b^{8/3}}\left[\left\|(\tilde{\mathcal{O}}_{\ell})^{2/3} u\right\|_{L^2(\R^{2d})}^2+ \left\|\left|D_{q} \right|^{\frac{2}{3}} u  \right\|_{L^2(\R^{{2d}})}^{2}\right] + \left\| \left( \frac{1}{b^2} \frac{|\lambda|}{1+2^{\ell}|p|} \right)^{\frac{2}{3}} u  \right\|_{L^2(\R^{2d})}^{2} \,,
\end{align}
when
$
u \in \mathcal{C}^\infty_0(S_{0,2};\C)$\,, $\lambda\in \R$ and $b>0$\,.
\end{proposition}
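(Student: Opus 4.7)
The plan is to apply the local subelliptic estimate \eqref{eq:subellPbellm2} to each rescaled localized piece $u_{\ell,m}=\mathcal{U}_{\ell,m}(\psi_{m,\ell,A}u)$, sum over $m\in\Z^d$, and then convert all the summed local norms into global norms of $u$ via Propositions~\ref{pr:locglobPbell}, \ref{pr:locglobYell} and \ref{pr:locglobW23}. The compatibility of the hypotheses with the absorption of the resulting partition-of-unity errors forces the choice $A=A_\infty(b)=C_{g,\psi}^{(5)}(1+b)$ with $C_{g,\psi}^{(5)}$ sufficiently large. Indeed the prerequisites of Proposition~\ref{pr:subellPbellm}, namely $\kappa_b\geq C^{(4)}_{g,\psi}(1+A)(1+b)$ and $2^{2\ell}\geq C^{(4)}_{g,\psi}(1+A)(1+b)A^2$, then read $\kappa_b\geq (C_{g,\psi}^{(5)})^2(1+b)^2$ and $2^{2\ell}\geq (C^{(5)}_{g,\psi})^4(1+b)^4$, which match the hypotheses of the present statement.

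Summing \eqref{eq:subellPbellm2} over $m$ yields on the left the quantity $C^{(4)}_{g,\psi}(1+A)^2(1+b)^2\sum_m\|(\frac{\kappa_b}{b^2}+\mathcal{P}_{b,\ell,m}-\frac{i\lambda}{b})u_{\ell,m}\|_{L^2}^2$. The upper bound in Proposition~\ref{pr:locglobPbell} controls this by $8\|(\frac{\kappa_b}{b^2}+\mathcal{P}_{b,\ell}-\frac{i\lambda}{b})u\|^2$ plus a remainder $\frac{C_{g,\psi}}{A^2b^2}\sum_m\|2^{2\ell}u_{\ell,m}\|^2$. This remainder is reabsorbed using the trivial lower bound \eqref{eq:subellPbellm1}, which gives $\|2^{2\ell}u_{\ell,m}\|^2\leq C^{(4)}_{g,\psi}b^4\|(\frac{\kappa_b}{b^2}+\mathcal{P}_{b,\ell,m}-\frac{i\lambda}{b})u_{\ell,m}\|^2$; the resulting error factor $\frac{Cb^2}{A^2}\leq \frac{1}{(C_{g,\psi}^{(5)})^2}$ is made smaller than $\frac{1}{2}$ by choosing $C^{(5)}_{g,\psi}$ large enough.

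The right-hand side of the summed \eqref{eq:subellPbellm2} is then converted term by term into the global norms appearing in \eqref{eq:subellPbell}. The $\mathcal{O}_{\ell,m}$ contribution transfers exactly: because $\mathcal{O}_\ell$ is a differential operator in $p$ only, it commutes with multiplication by $\psi_{m,\ell,A}(q)$, and the identity $\mathcal{O}_{\ell,m}u_{\ell,m}=\mathcal{U}_{\ell,m}(\psi_{m,\ell,A}\mathcal{O}_\ell u)$ combined with $\sum_m\psi_{m,\ell,A}^2\equiv 1$ and the unitarity of $\mathcal{U}_{\ell,m}$ gives $\sum_m\|\mathcal{O}_{\ell,m}u_{\ell,m}\|^2=\|\mathcal{O}_\ell u\|^2$. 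For the hamiltonian vector field term, Proposition~\ref{pr:locglobYell} provides $\sum_m\|\frac{1}{b}(\mathcal{Y}_{\ell,m}-i\lambda)u_{\ell,m}\|^2\geq \frac{1}{8}\|\frac{1}{b}(\mathcal{Y}_\ell-i\lambda)u\|^2$ minus an error $C_{g,\psi}(\frac{1}{A^2b^2}+\frac{1}{b^22^{2\ell}})\sum_m\|2^{2\ell}u_{\ell,m}\|^2$, again absorbed through \eqref{eq:subellPbellm1} thanks to $\frac{b^2}{A^2}$ and $\frac{b^2}{2^{2\ell}}$ both being small. For the subelliptic $\tilde{\mathcal{W}}^{2/3}$ contribution, Proposition~\ref{pr:locglobW23} bounds $\sum_m\|(\tilde{\mathcal{O}}_\ell)^{2/3}u_{\ell,m}\|^2+\||2^\ell D_q|^{2/3}u_{\ell,m}\|^2$ from below by $\frac{1}{C_{g,\psi}}(\|(\tilde{\mathcal{O}}_\ell)^{2/3}u\|^2+\||D_q|^{2/3}u\|^2)$ minus $\frac{1}{A^{4/3}}\sum_m\|2^{2\ell/3}u_{\ell,m}\|^2$, and the Airy-type lower bound $\|(\frac{1}{b^2}\frac{|\lambda|}{1+2^\ell|p|})^{2/3}u_{\ell,m}\|^2$ summed over $m$ reproduces the corresponding global term up to the same type of remainder.

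The main obstacle is this last $\tilde{\mathcal{W}}^{2/3}$-error: writing $\|2^{2\ell/3}u_{\ell,m}\|^2=2^{-8\ell/3}\|2^{2\ell}u_{\ell,m}\|^2$ and applying once more \eqref{eq:subellPbellm1}, its prefactor becomes proportional to $\frac{b^{4/3}}{A^{4/3}2^{8\ell/3}}$, which must be compared against the weight $\frac{1}{b^{8/3}}$ attached to the $\tilde{\mathcal{W}}^{2/3}$-term in \eqref{eq:subellPbell}. Smallness therefore requires $\frac{b^{4}}{A^{4/3}2^{8\ell/3}}\ll 1$, which is precisely what the joint choice $A\sim 1+b$ and $2^{2\ell}\gtrsim A^3(1+b)=(1+b)^4$ enforces. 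This balance is what fixes the minimal dyadic index appearing in the hypothesis and explains the downgrade from $\tilde{\mathcal{O}}_\ell$ in the local estimate \eqref{eq:subellPbellm2} to $(\tilde{\mathcal{O}}_\ell)^{2/3}$ in the global estimate \eqref{eq:subellPbell}: the loss of $1/3$ of the harmonic-oscillator regularity is exactly the reserve spent on absorbing the grid remainder. Collecting all absorbed contributions and relabeling the constant produces the claimed inequality with a universal $C_{g,\psi}^{(5)}$ chosen once and for all.
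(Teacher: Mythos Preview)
Your proposal is correct and follows essentially the same route as the paper: bound $\sum_m\|(\frac{\kappa_b}{b^2}+\mathcal{P}_{b,\ell,m}-\frac{i\lambda}{b})u_{\ell,m}\|^2$ by the global norm via Proposition~\ref{pr:locglobPbell}, feed in the local subelliptic estimate \eqref{eq:subellPbellm2}, convert the summed right-hand side back through Propositions~\ref{pr:locglobYell} and \ref{pr:locglobW23}, and absorb all partition errors using the reserve \eqref{eq:subellPbellm1} with $A=A_\infty(b)\sim 1+b$. One small correction to your commentary: the passage from $\tilde{\mathcal{O}}_\ell$ to $(\tilde{\mathcal{O}}_\ell)^{2/3}$ is not a regularity reserve spent on the grid remainder but simply the spectral inequality $\|Tv\|^2\geq\|T^{2/3}v\|^2$ for $T=\frac{1}{b^2}(\kappa_b+\tilde{\mathcal{O}}_\ell)\geq 1$, performed so that Proposition~\ref{pr:locglobW23} (stated for the $\tilde{\mathcal{W}}^{2/3}$-norm) becomes applicable; the constraint $2^{2\ell}\gtrsim(1+b)^4$ is actually forced by the prerequisite of Proposition~\ref{pr:subellPbellm} and the $\frac{b^2}{2^{2\ell}}$ error in Proposition~\ref{pr:locglobYell}, while for the $\tilde{\mathcal{W}}^{2/3}$ absorption the cruder bound $\|2^{2\ell/3}u_{\ell,m}\|^2\leq\|2^{2\ell}u_{\ell,m}\|^2$ together with $A\gtrsim b$ already suffices.
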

\begin{proof}
  Our conditions  and $A=A_\infty(b)=C_{{g,\psi}}^{(5)}(1+b)\geq 1$ and $2^{2\ell}\geq C_{g,\psi}^{(5)}A^{3}(1+b)\geq C_{{g,\psi}}^{(5)}A^{2}$ imply $2^{-\ell}A\leq \frac{1}{\sqrt{C_{g,\psi}^{(5)}}}\leq \frac{1}{C_{g,\psi}}$ when $C_{g,\psi}\geq 1$ is the constant of
  Proposition~\ref{pr:locglobPbell} and $C_{{g,\psi}}^{(5)}$ is chosen larger than $C_{g,\psi}^{2}$\,.
With $\kappa_{b}\geq  C_{g,\psi}^{(5)}A(1+b) \geq C_{g,\psi}Ab$\,, Proposition~\ref{pr:locglobPbell} says 
$$
\sum_{m\in \Z^d}\frac{1}{8}\| (\frac{\kappa_{b}}{b^{2}}+\mathcal{P}_{b,\ell,m} - \frac{i\lambda}{b}) u_{\ell,m}\|^{2}_{L^2(\R^{2d})}-\frac{C_{g,\psi}}{A^2b^2}\norm{2^{2\ell}u_{\ell,m}}_{L^2(\R^{2d})}^2
    \leq
    \| (\frac{\kappa_{b}}{b^{2}}+\mathcal{P}_{b,\ell} - \frac{i\lambda}{b}) u \|^{2}_{L^2(\R^{2d})}\,,
    $$
    with
    $$
    \forall m\in\Z^{d}\,,\quad u_{\ell,m}=\mathcal{U}_{\ell,m}(\psi_{m,\ell,A}u)~\in \mathcal{C}^\infty_0((B(0,\hat{C}'_{g,\psi}A)\times \mathbb{R}^d)\cap S_{0,8}',\mathbb{C})\,.
    $$
By choosing $C_{g,\psi}^{(5)}$ larger than the constant $2\times C_{{g,\psi}}^{(4)}$ of Proposition~\ref{pr:subellPbellm}, the condition $\kappa_{b}\geq C_{g,\psi}^{(5)}A(1+b)\geq C_{{g,\psi}}^{(4)}(1+A)(1+b)$ and the inequality 
\eqref{eq:subellPbellm1}
imply
$$
\forall m\in\Z^{d}\,,\quad 
C_{g,\psi}^{(4)} \|(\frac{\kappa_b}{b^2}+\mathcal{P}_{b,\ell,m}-\frac{i\lambda}{b})u_{\ell,m}\|_{L^{2}(\R^{2d})}^2 \geq \frac{1}{b^4}\|2^{2\ell} u_{\ell,m}\|^2_{L^2(\R^{2d})}\,,
$$
With $\frac{C_{g,\psi}^{(4)}C_{g,\psi}b^{2}}{A^{2}}\leq \frac{1}{16}$ when $A\geq C_{g,\psi}^{(5)}(1+b)$ and $C_{g,\psi}^{(5)}$ is chosen larger than $4\sqrt{C_{g,\psi}^{(4)}C_{g,\psi}}$\,, we obtain
$$
\frac{1}{16}\sum_{m\in \Z^d}\| (\frac{\kappa_{b}}{b^{2}}+\mathcal{P}_{b,\ell,m} - \frac{i\lambda}{b}) u_{\ell,m}\|^{2}_{L^2(\R^{2d})}
    \leq
    \| (\frac{\kappa_{b}}{b^{2}}+\mathcal{P}_{b,\ell} - \frac{i\lambda}{b}) u \|^{2}_{L^2(\R^{2d})}\,,
    $$
    and the two lower bounds of $\| (\frac{\kappa_{b}}{b^{2}}+\mathcal{P}_{b,\ell,m} - \frac{i\lambda}{b}) u_{\ell,m}\|^{2}_{L^2(\R^{2d})}$ of Proposition~\ref{pr:subellPbellm} can be used for every $m\in\Z^{d}$\,.
    The first one \eqref{eq:subellPbellm1} already used gives now
$$
\frac{1}{32}\sum_{m\in \Z^d}\| (\frac{\kappa_{b}}{b^{2}}+\mathcal{P}_{b,\ell,m} - \frac{i\lambda}{b}) u_{\ell,m}\|^{2}_{L^2(\R^{2d})}+\frac{1}{C_{g,\psi}^{(4)}b^{4}}\|2^{2\ell}u_{\ell,m}\|^{2}_{L^{2}(\R^{2d})}
    \leq
    \| (\frac{\kappa_{b}}{b^{2}}+\mathcal{P}_{b,\ell} - \frac{i\lambda}{b}) u \|^{2}_{L^2(\R^{2d})}\,.
$$
The second one  \eqref{eq:subellPbellm2} with here $2A\geq (1+A)$ and  combined with the second inequality of Proposition~\ref{pr:locglobYell} it implies
    \begin{eqnarray*}
      2^{10}C_{g,\psi}^{(4)}A^{3}(1+b)\| (\frac{\kappa_{b}}{b^{2}}+\mathcal{P}_{b,\ell} - \frac{i\lambda}{b}) u \|^{2}_{L^2(\R^{2d})}
      &&\geq\|\frac{1}{b^{2}}(\kappa_{b}+\mathcal{O}_{\ell})u\|_{L^{2}(\R^{2d})}^{2}
             +\|\frac{1}{b}(\mathcal{Y}_{\ell}-i\lambda)u\|_{L^{2}(\R^{2d})}^{2}
             \\
      &&\hspace{1cm}
         +\left\| \left( \frac{1}{b^2} \frac{|\lambda|}{1+2^{\ell}|p|} \right)^{\frac{2}{3}} u  \right\|_{L^2(\R^{2d})}^{2}\\
      &&\hspace{-2cm}+\sum_{m\in\Z^{d}}\left(\frac{1}{C^{(4)}_{g,\psi}b^{4}}-\frac{C_{g,\psi}}{A^{2}b^{2}}-\frac{C_{g,\psi}}{b^{2}2^{2\ell}}\right)
         \|2^{2\ell}u_{\ell,m}\|_{L^{2}(\R^{2d})}^{2}
      \\
      &&\hspace{-1cm}  +
\left\|\frac{1}{b^{2}}(\kappa_{b}+\tilde{\mathcal{O}}_{\ell}) u_{\ell,m}\right\|_{L^2(\R^{2d})}^2+ \frac{1}{b^{8/3}}\left\|\left|2^{\ell}D_{q} \right|^{\frac{2}{3}} u_{\ell,m}  \right\|_{L^2(\R^{2d})}^{2}\,.
    \end{eqnarray*}
    Moreover $\kappa_{b}\geq C^{(5)}_{g,\psi}A(1+b)\geq (1+b)^{2}$ implies
    $\frac{1}{b^{2}}(\kappa_{b}+\tilde{\mathcal{O}}_{\ell})\geq 1$ and  interpolation gives
$$
\left\|\frac{1}{b^{2}}(\kappa_{b}+\tilde{\mathcal{O}}_{\ell}) u_{\ell,m}\right\|_{L^2(\R^{2d})}^2\geq \frac{1}{b^{8/3}}\left\|(\kappa_b+\tilde{\mathcal{O}}_{\ell})^{2/3} u_{\ell,m}\right\|_{L^2(\R^{2d})}^2\geq \frac{1}{b^{8/3}}\left\|\tilde{\mathcal{O}}_{\ell}^{2/3}u_{\ell,m}\right\|_{L^2(\mathbb{R}^{2d})}\,.
$$

Proposition~\ref{pr:locglobW23} implies
\begin{eqnarray*}
\sum_{m\in\Z^{d}}\|\tilde{\mathcal{O}}_{\ell}^{2/3}u_{\ell,m}\|_{L^{2}(\R^{2d})}^{2}+
  \||2^{\ell}D_{q}|^{2/3}u_{\ell,m}\|_{L^{2}(\R^{2d})}^{2}
  &\geq &
          \frac{1}{C_{g,\psi}}\left[
\|\tilde{\mathcal{O}}_{\ell}^{2/3}u\|_{L^{2}(\R^{2d})}^{2}+
          \||D_{q}|^{2/3}u\|_{L^{2}(\R^{2d})}^{2}
  \right]\\
  &&-\sum_{m\in\Z^{d}}\frac{1}{A^{4/3}}
     \underbrace{\|2^{2\ell/3}u_{\ell,m}\|_{L^{2}(\R^{2d})}^{2}}_{\leq \|2^{2\ell}u_{\ell,m}\|_{L^{2}(\R^{2d})}^{2}}  
\end{eqnarray*}
The complete error term in the lower bound is now bounded from below
by
$$
\frac{1}{b^{4}}\sum_{m\in\Z^{d}}\left(\frac{1}{C^{(4)}_{g,\psi}}-\frac{C_{g,\psi}b^{2}}{A^{2}}-\frac{C_{g,\psi}b^{2}}{2^{2\ell}}-\frac{b^{4/3}}{A^{4/3}}\right)
\|2^{2\ell}u_{\ell,m}\|_{L^{2}(\R^{2d})}^{2}
$$
which is non negative when
\begin{eqnarray*}
  && A=C_{g,\psi}^{(5)}(1+b) \\
  \text{and} && 2^{2\ell}\geq C_{g,\psi}^{(5)} A^3(1+b) \geq[C_{g,\psi}^{(5)}]^{4}(1+b)^{4}\geq C_{g,\psi}^{(5)}(1+b)^2
\end{eqnarray*}

with $C^{(5)}_{g,\psi}$ large enough. The inequality~\eqref{eq:subellPbell} is then obtained by taking $C_{g,\psi}^{(5)}\geq 2^{8}C_{g,\psi}^{(4)}C_{g,\psi}$\,. 
\end{proof}
\subsection{Estimate for $|p|\sim 2^{\ell}\leq C_{b,g,\psi}$}
We have fixed the value of $A_\infty(b)=C_{g,\psi}^{(5)}(1+b)\gg 1$ in Proposition~\ref{pr:subellPbellLarge} in order to get a result for all $\ell\geq \ell_{b,g,\psi}+1$ with
\begin{equation}
  \label{eq:defellb}
  2^{2\ell_{b,g,\psi}+2}\geq [C_{g,\psi}^{(5)}]^{4}(1+b)^{4}\geq 2^{2\ell_{b,g,\psi}}\,.
\end{equation}
We now consider all the bounded values of $\ell\in \left\{-1,0,1,\ldots,\ell_{b,g,\psi}\right\}$\,. Here the error terms related with the partition of unity $\sum_{m\in \Z^{d}}\psi^{2}(.-m)\equiv 1$ will be relatively small by taking a new, small enough, value for the intermediate parameter $A>0$ denoted by $A_0(b)$ and the parameter $\kappa_{b}\gg 1$\,. 
\begin{proposition}\label{pr:subellPbellSmall}
  Let $\ell_{b,g,\psi}\in\N$ be such that \eqref{eq:defellb} is satisfied.
  For all $\ell\in \left\{-1,0,1,\ldots,\ell_{b,g,\psi}\right\}$ let
  $\mathcal{P}_{b,\ell}$ and $S_{\ell,2}\subset \Omega\times \R^{d}$ be defined respectively by \eqref{eq:defPbell} and \eqref{eq:defSell}.\\
  Take $A_0(b)=\frac{1}{2C_{{g,\psi}}'(1+b)}$ where $C_{{g,\psi}}'=\max(C_{g,\psi},C_{g,\psi}^{(4)})$\,, $C_{g,\psi}\geq 1$ is given by Proposition~\ref{pr:locglobPbell} and Proposition~\ref{pr:locglobW23} while $C_{g,\psi}^{(4)}\geq 1$ is given by Proposition~\ref{pr:subellPbellm}.\\
  There exists a constant $C_{g,\psi}^{(6)}\geq 1$ such that  $\kappa_{b}\geq C_{g,\psi}^{(6)}(1+b)^{5}$ implies
\begin{align}
\nonumber
C_{g,\psi}^{(6)}(1+b)^2
\left\| \left(\frac{\kappa_{b}}{b^{2}} +  \mathcal{P}_{b,\ell} - \frac{i\lambda}{b} \right) u  \right\|_{L^2(\R^{2d})}^{2} \geq & \left\| \frac{1}{b^2}\left(\kappa_b+\mathcal{O}_{\ell}\right) u\right\|_{L^2(\R^{2d})}^2 +
\left\| \frac{1}{b}\left(\mathcal{Y}_{\ell}-i \lambda \right)u \right\|_{L^2(\R^{2d})}^2
  \\
  \label{eq:subellPbellSmall}
  &\hspace{-5cm} +\frac{1}{b^{8/3}}\left[\left\|(\tilde{\mathcal{O}}_{\ell})^{2/3} u\right\|_{L^2(\R^{2d})}^2+ \left\|\left|D_{q} \right|^{\frac{2}{3}} u  \right\|_{L^2(\R^{{2d}})}^{2}\right] + \left\| \left( \frac{1}{b^2} \frac{|\lambda|}{1+2^{\ell}|p|} \right)^{\frac{2}{3}} u  \right\|_{L^2(\R^{2d})}^{2} \,,
\end{align}
when
$
u \in \mathcal{C}^\infty_0(S_{\ell,2};\C)$\,, $\ell\in \left\{-1,0,1,\ldots,\ell_{b,g,\psi}\right\}$\,, $\lambda\in \R$ and $b>0$\,.
\end{proposition}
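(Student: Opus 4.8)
The strategy is to run the same machinery as in Proposition~\ref{pr:subellPbellLarge}, i.e.\ a grid partition of unity in the $q$-variable followed by the normal-coordinate change and the comparison with the Euclidean model, but now with the \emph{small} spacing parameter $A=A_0(b)=\frac{1}{2C_{g,\psi}'(1+b)}\ll 1$ instead of $A_\infty(b)\gg 1$. The point of the choice of $A_0(b)$ is purely algebraic: it forces $C_{g,\psi}'A_0(b)b\le\frac12$, so that the various error constants $C_{g,\psi}A_0(b)b$, $C_{g,\psi}^{(4)}(1+A_0(b))(1+b)$, $C_{g,\psi}^{(4)}(1+A_0(b))(1+b)A_0(b)^2$ appearing in Propositions~\ref{pr:locglobPbell}, \ref{pr:locglobYell}, \ref{pr:locglobW23} and \ref{pr:subellPbellm} are absorbed provided $\kappa_b$ is taken of the order $(1+b)^{5}$ (this is where the exponent $5$ in the hypothesis $\kappa_b\ge C_{g,\psi}^{(6)}(1+b)^5$ enters). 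Concretely: $2^{-\ell}A_0(b)\le A_0(b)\le\frac{1}{C_{g,\psi}}$ is automatic since $A_0(b)\le\frac12$, so Proposition~\ref{pr:locglobPbell} and Proposition~\ref{pr:locglobYell} apply for every $\ell\in\{-1,0,\ldots,\ell_{b,g,\psi}\}$ as soon as $\kappa_b\ge C_{g,\psi}A_0(b)b$, which holds because $\kappa_b\ge C_{g,\psi}^{(6)}(1+b)^5\ge C_{g,\psi}$.

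First I would set $u_{\ell,m}=\mathcal{U}_{\ell,m}(\psi_{\ell,m,A_0(b)}u)$ and invoke Proposition~\ref{pr:locglobPbell} to obtain
$$
\sum_{m}\tfrac18\bigl\|(\tfrac{\kappa_b}{b^2}+\mathcal{P}_{b,\ell,m}-\tfrac{i\lambda}{b})u_{\ell,m}\bigr\|_{L^2}^2-\tfrac{C_{g,\psi}}{A_0(b)^2b^2}\|2^{2\ell}u_{\ell,m}\|_{L^2}^2\le\bigl\|(\tfrac{\kappa_b}{b^2}+\mathcal{P}_{b,\ell}-\tfrac{i\lambda}{b})u\bigr\|_{L^2}^2.
$$
Then I would use the local subelliptic estimate \eqref{eq:subellPbellm1} of Proposition~\ref{pr:subellPbellm} (valid here because $\kappa_b\ge C_{g,\psi}^{(4)}(1+A_0(b))(1+b)$ and, crucially, $2^{2\ell}\ge\frac14\ge C_{g,\psi}^{(4)}(1+A_0(b))(1+b)A_0(b)^2$ since $A_0(b)^2(1+b)\le\frac{1}{4C_{g,\psi}'^2(1+b)}$ is tiny --- this is the step where small $\ell$ is handled without any lower bound on $2^{2\ell}$) to write $\frac{1}{b^4}\|2^{2\ell}u_{\ell,m}\|^2\le C_{g,\psi}^{(4)}\|(\tfrac{\kappa_b}{b^2}+\mathcal{P}_{b,\ell,m}-\tfrac{i\lambda}{b})u_{\ell,m}\|^2$. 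Since $\frac{C_{g,\psi}^{(4)}C_{g,\psi}b^2}{A_0(b)^2}\le\frac{1}{16}$ once $C_{g,\psi}^{(6)}$ is large enough (again using $A_0(b)\asymp (1+b)^{-1}$, so $\frac{b^2}{A_0(b)^2}\asymp b^2(1+b)^2$, absorbed against $\kappa_b\ge C_{g,\psi}^{(6)}(1+b)^5$ only if we also multiply through by an extra $(1+b)$ factor built into $\kappa_b$), the error $\frac{C_{g,\psi}}{A_0(b)^2b^2}\|2^{2\ell}u_{\ell,m}\|^2$ is absorbed into the left-hand side, leaving $\frac{1}{16}\sum_m\|(\tfrac{\kappa_b}{b^2}+\mathcal{P}_{b,\ell,m}-\tfrac{i\lambda}{b})u_{\ell,m}\|^2\le\|(\tfrac{\kappa_b}{b^2}+\mathcal{P}_{b,\ell}-\tfrac{i\lambda}{b})u\|^2$, with a spare copy of $\frac{1}{b^4}\|2^{2\ell}u_{\ell,m}\|^2$ available for later absorptions.

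Next, I would feed the second local estimate \eqref{eq:subellPbellm2} into each term of the sum, then reassemble the global right-hand side using Proposition~\ref{pr:locglobYell} for the $\|\frac1b(\mathcal{Y}_\ell-i\lambda)u\|^2$ term and Proposition~\ref{pr:locglobW23} for the $\tilde{\mathcal{W}}^{2/3}$-terms $\|\tilde{\mathcal{O}}_\ell^{2/3}u\|^2+\||D_q|^{2/3}u\|^2$; the reassembly error terms are again of the shape $(\frac{C_{g,\psi}}{A_0(b)^2b^2}+\frac{C_{g,\psi}}{b^22^{2\ell}}+\frac{1}{A_0(b)^{4/3}})\|2^{2\ell}u_{\ell,m}\|^2$, and the factor $\frac{1}{A_0(b)^{4/3}}\asymp(1+b)^{4/3}$ from Proposition~\ref{pr:locglobW23} is the dominant one. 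Multiplying the spare lower bound $\frac{1}{b^4}\|2^{2\ell}u_{\ell,m}\|^2$ through by $\kappa_b/b^{\text{something}}$, or rather keeping it as $\frac{1}{b^{8/3}}\tilde{\mathcal{O}}_\ell^{2/3}$-type lower bounds via $\frac{1}{b^2}(\kappa_b+\tilde{\mathcal{O}}_\ell)\ge1$ (which needs $\kappa_b\ge b^2$, true), one checks that the net error coefficient $\frac{1}{C_{g,\psi}^{(4)}}-\frac{C_{g,\psi}b^2}{A_0(b)^2}-\frac{C_{g,\psi}b^2}{2^{2\ell}}-\frac{b^{4/3}}{A_0(b)^{4/3}}$ --- wait, this would be \emph{negative} with this choice of $A_0(b)$; the resolution (and the real role of $\kappa_b\ge C_{g,\psi}^{(6)}(1+b)^5$) is that here, unlike the large-$\ell$ case, the term $\|\frac{1}{b^2}(\kappa_b+\mathcal{O}_{\ell,m})u_{\ell,m}\|^2\ge\frac{\kappa_b^2}{b^4}\|u_{\ell,m}\|^2$ on the right of \eqref{eq:subellPbellm2} is itself large enough to dominate $\frac{1}{b^4}\|2^{2\ell}u_{\ell,m}\|^2$ because $2^{2\ell}\le 2^{2\ell_{b,g,\psi}+2}\le[C_{g,\psi}^{(5)}]^4(1+b)^4\le\kappa_b$ once $C_{g,\psi}^{(6)}$ is chosen so that $C_{g,\psi}^{(6)}(1+b)^5\ge[C_{g,\psi}^{(5)}]^4(1+b)^4$. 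Thus on the support of each $u_{\ell,m}$, $\|2^{2\ell}u_{\ell,m}\|^2\le\kappa_b\|u_{\ell,m}\|^2\le\frac{b^4}{\kappa_b}\|\frac{1}{b^2}(\kappa_b+\mathcal{O}_{\ell,m})u_{\ell,m}\|^2$, and all error terms of size $C\cdot(1+b)^{O(1)}\|2^{2\ell}u_{\ell,m}\|^2$ are absorbed against this once $\kappa_b$ has enough powers of $(1+b)$ --- the bookkeeping shows $(1+b)^5$ suffices. Finally I would collect the factors to obtain \eqref{eq:subellPbellSmall} with $C_{g,\psi}^{(6)}$ the maximum of all the thresholds encountered.

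\textbf{Main obstacle.} The only genuinely delicate point is the bookkeeping of powers of $(1+b)$: with the small spacing $A_0(b)\asymp(1+b)^{-1}$, the grid-partition error terms carry inverse powers of $A_0(b)$ and hence \emph{grow} in $b$, so the whole argument hinges on the observation that for bounded $\ell$ one has $2^{2\ell}\lesssim(1+b)^4\lesssim\kappa_b$, which lets the leading right-hand term $\kappa_b^2/b^4\,\|u_{\ell,m}\|^2$ swallow every error of polynomial size in $(1+b)$ times $\|2^{2\ell}u_{\ell,m}\|^2$. Getting the precise exponent $5$ (as opposed to some larger power) requires tracking that the worst error is $\frac{b^{4/3}}{A_0(b)^{4/3}}+\frac{b^2}{A_0(b)^2}\asymp(1+b)^{2}b^{2}$ from Propositions~\ref{pr:locglobW23} and \ref{pr:subellPbellm}, weighed against $\kappa_b^2/2^{2\ell}\gtrsim\kappa_b^2/(1+b)^4$, giving the condition $\kappa_b\gtrsim(1+b)^{2}b^{2}\cdot(1+b)^{2}/\kappa_b$, i.e.\ $\kappa_b^2\gtrsim(1+b)^{8}$... — the conservative and clean sufficient condition that covers all of them simultaneously is $\kappa_b\ge C_{g,\psi}^{(6)}(1+b)^5$, and I would simply verify each absorption holds under this single hypothesis rather than optimize.
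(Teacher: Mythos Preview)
Your overall strategy---grid partition with $A=A_0(b)$, apply the local estimates of Proposition~\ref{pr:subellPbellm}, reassemble via Propositions~\ref{pr:locglobPbell}, \ref{pr:locglobYell}, \ref{pr:locglobW23}---is exactly the paper's, and your observation that the boundedness of $\ell$ (namely $2^{2\ell}\le[C_{g,\psi}^{(5)}]^4(1+b)^4$) is what replaces the large-$\ell$ mechanism is the heart of the matter. However, your write-up contains two genuine slips that would need fixing.

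First, in your ``Plan'' you assert that $\frac{C_{g,\psi}^{(4)}C_{g,\psi}b^2}{A_0(b)^2}\le\frac{1}{16}$ for large $C_{g,\psi}^{(6)}$. This is simply false: with $A_0(b)\asymp(1+b)^{-1}$ one has $\frac{b^2}{A_0(b)^2}\asymp b^2(1+b)^2$, and no choice of $C_{g,\psi}^{(6)}$ (which only enters through $\kappa_b$, absent from this ratio) repairs it. This is precisely the step from the large-$\ell$ proof (Proposition~\ref{pr:subellPbellLarge}) that \emph{does not} transfer. You notice this yourself in the parenthetical, but the sentence as written is incorrect and the paragraph does not produce the claimed inequality $\frac{1}{16}\sum_m\|(\cdot)u_{\ell,m}\|^2\le\|(\cdot)u\|^2$.

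Second, in your ``Main obstacle'' resolution you write $\|2^{2\ell}u_{\ell,m}\|^2\le\kappa_b\|u_{\ell,m}\|^2$. The left side is $2^{4\ell}\|u_{\ell,m}\|^2$, so you would need $2^{4\ell}\le\kappa_b$, i.e.\ $(1+b)^8\lesssim\kappa_b$, which is stronger than the hypothesis $\kappa_b\gtrsim(1+b)^5$. What actually works---and what the paper does---is to compare $2^{4\ell}\lesssim(1+b)^8$ against $\kappa_b^2\gtrsim(1+b)^{10}$, together with the explicit prefactor $(1+b)^2$ on the left of \eqref{eq:subellPbellSmall}. Concretely, the paper does \emph{not} try to absorb the grid-partition error $\frac{C_{g,\psi}}{A_0(b)^2b^2}\|2^{2\ell}u_{\ell,m}\|^2$ via the local \eqref{eq:subellPbellm1}; instead it invokes the \emph{global} integration-by-parts estimate $\|(\frac{\kappa_b}{b^2}+\mathcal{P}_{b,\ell}-\frac{i\lambda}{b})u\|^2\ge\frac{\kappa_b^2}{16b^4}\|u\|^2$ (Proposition~\ref{pr:IppIneqWithRealPart} applied to $\mathcal{P}_{b,\ell}$) and then uses $\sum_m 2^{4\ell}\|u_{\ell,m}\|^2\le[C_{g,\psi}^{(5)}]^8(1+b)^8\|u\|^2$ to bound the error by a fixed multiple of the left-hand side. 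After multiplying through by $(1+b)^2$, the spare term $\frac{\kappa_b^2(1+b)^2}{b^4}\|u_{\ell,m}\|^2$ is what absorbs all remaining reassembly errors, and the condition for nonnegativity is $\kappa_b^2(1+b)^2\gtrsim(1+b)^{10}b^2$, i.e.\ $\kappa_b\gtrsim(1+b)^4b$, which is covered by $\kappa_b\ge C_{g,\psi}^{(6)}(1+b)^5$.
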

\begin{proof}
  The proof has the same structure as
  the one of Proposition~\ref{pr:subellPbellLarge}.
  Choose now $A=A_0(b)=\frac{1}{2C_{{g,\psi}}'(1+b)}$\,, where the constant $C_{g,\psi}'$ will be fixed later in the proof.\\
  With  $2^{-\ell}A\leq 2A\leq \frac{1}{C'_{g,\psi}(1+b)}\leq \frac{1}{C_{g,\psi}}$ which combined with $\kappa_{b}\geq C_{g,\psi}^{(6)}(1+b)^{6}\geq \frac{b}{2(1+b)}\geq C_{g,\psi}Ab$ in Proposition~\ref{pr:locglobPbell}, implies
  $$
\sum_{m\in \Z^d}\frac{1}{8}\| (\frac{\kappa_{b}}{b^{2}}+\mathcal{P}_{b,\ell,m} - \frac{i\lambda}{b}) u_{\ell,m}\|^{2}_{L^2(\R^{2d})}-\underbrace{\frac{C_{g,\psi}}{A^{2}b^2}}_{\leq \frac{4[C_{g,\psi}']^{3}(1+b)^{2}}{b^{2}}}\underbrace{\norm{2^{2\ell}u_{\ell,m}}_{L^2(\R^{2d})}^2}_{\times[C_{g,\psi}^{(5)}]^{8}(1+b)^{8}\|u\|_{L^{2}(\R^{2d})}^{2}}
    \leq
    \| (\frac{\kappa_{b}}{b^{2}}+\mathcal{P}_{b,\ell} - \frac{i\lambda}{b}) u \|^{2}_{L^2(\R^{2d})}\,,
    $$
    with
    $$
    u_{\ell,m}=\mathcal{U}_{\ell,m}(\psi_{m,\ell,A}u)~\in \mathcal{C}^\infty_0((B(0,\hat{C}'_{g,\psi}A)\times \mathbb{R}^d)\cap S_{0,8}',\mathbb{C})\,.
    $$
    for all $m\in \Z^{d}$ and all $\ell\in\left\{-1,0,1,\ldots,\ell_{b,g,\psi}\right\}$\,.\\
    The same integration by parts argument as for Proposition~\ref{pr:IppIneqWithRealPart} says that for $\kappa_{b}\geq C_{g,\psi}^{(6)}(1+b)^{6}\geq C_{g,\psi}^{(6)}(1+b)^{2}$ with $C_{g,\psi}^{(6)}\geq 1$ large enough
$$
\|(\frac{\kappa_{b}}{b^{2}}+\mathcal{P}_{b,\ell}-\frac{i\lambda}{b})u\|_{L^{2}(\R^{2d})}^{2}\geq \frac{\kappa_{b}^{2}}{16 b^{4}}\|u\|_{L^{2}(\R^{2d})}^{2}\geq \frac{[C_{g,\psi}^{(6)}]^{2}(1+b)^{10}}{16b^{2}}\|u\|_{L^{2}(\R^{2d})}^{2}\,.
$$
The bound $2^{2\ell}\leq [C_{g,\psi}^{(5)}]^{4}(1+b)^{4}$ implies
\begin{align*}
\frac{[C_{g,\psi}^{(6)}]^{2}(1+b)^{2}}{b^{2}}\sum_{m\in \Z}\|2^{2\ell}u_{\ell,m}\|_{L^{2}(\R^{2d})}^{2}&\leq
\frac{[C_{g,\psi}^{(6)}]^{2}[C_{g,\psi}^{(5)}]^{8}(1+b)^{10}}{b^{2}}\|u\|_{L^{2}(\R^{2d})}^{2}
  \\
                                                                                              &\leq
16 [C_{g,\psi}^{(5)}]^{8}\|(\frac{\kappa_{b}}{b^{2}}+\mathcal{P}_{b,\ell}-\frac{i\lambda}{b})u\|_{L^{2}(\R^{2d})}^{2}\,.
\end{align*}
The additional condition
$[C_{g,\psi}^{(6)}]^{2}\geq 8[C_{g,\psi}']^{3}$ thus implies
$$
16 [C_{g,\psi}^{(5)}]^{8}  \| (\frac{\kappa_{b}}{b^{2}}+\mathcal{P}_{b,\ell} - \frac{i\lambda}{b}) u \|^{2}_{L^2(\R^{2d})}
\geq \frac{4[C_{g,\psi}']^{3}[C_{g,\psi}^{(5)}]^{8}(1+b)^{10}}{b^{2}}\|u\|_{L^{2}(\R^{2d})}^{2}+ \underbrace{8[ C_{g,\psi}^{(5)}]^{8}}_{\geq 1+4}\| (\frac{\kappa_{b}}{b^{2}}+\mathcal{P}_{b,\ell} - \frac{i\lambda}{b}) u \|^{2}_{L^2(\R^{2d})}
$$
and
\begin{align*}
16 [C_{g,\psi}^{(5)}]^{8}  \| (\frac{\kappa_{b}}{b^{2}}+\mathcal{P}_{b,\ell} - \frac{i\lambda}{b}) u \|^{2}_{L^2(\R^{2d})}
  &\geq
    \sum_{m\in \Z^d}\frac{1}{8}\| (\frac{\kappa_{b}}{b^{2}}+\mathcal{P}_{b,\ell,m} - \frac{i\lambda}{b}) u_{\ell,m}\|^{2}_{L^2(\R^{2d})}\\
  &\hspace{2cm}
  +\frac{\kappa_{b}^{2}}{4b^4}\norm{u_{\ell,m}}_{L^2(\R^{2d})}^2
\end{align*}


Because $2^{\ell}\geq 1/2\geq C_{g,\psi}^{(4)}(1+b)A\geq C_{g,\psi}^{(4)}(1+A)(1+b)A^{2}$\,, the condition of Proposition~\ref{pr:subellPbellm} are satisfied.
Multiplying the above inequality by
$8\times C_{g,\psi}^{(4)}4(1+b)^{2}$ which is larger than $8 C_{g,\psi}^{(4)}(1+A)^{2}(1+b)^{2}\geq 1$\,, leads to
\begin{align*}
2^{9} C_{g,\psi}^{(4)}(1+b)^{2}[C_{g,\psi}^{(5)}]^{8}  \| (\frac{\kappa_{b}}{b^{2}}+\mathcal{P}_{b,\ell} - \frac{i\lambda}{b}) u \|^{2}_{L^2(\R^{2d})}
  &\geq
     \sum_{m\in \Z^d}C_{g,\psi}^{(4)}(1+A)^{2}(1+b)^{2}\| (\frac{\kappa_{b}}{b^{2}}+\mathcal{P}_{b,\ell,m} - \frac{i\lambda}{b}) u_{\ell,m}\|^{2}_{L^2(\R^{2d})}\\
  &\hspace{2cm}
    +\frac{2\kappa_{b}^{2}(1+b)^{2}}{b^4}\norm{u_{\ell,m}}_{L^2(\R^{2d})}^2
\end{align*}
Proposition~\ref{pr:subellPbellm} combined with Proposition~\ref{pr:locglobYell} leads to
\begin{eqnarray*}
  2^{9} C_{g,\psi}^{(4)}(1+b)^{2}[C_{g,\psi}^{(5)}]^{8}  \| (\frac{\kappa_{b}}{b^{2}}+\mathcal{P}_{b,\ell} - \frac{i\lambda}{b}) u \|^{2}_{L^2(\R^{2d})}
      &&\geq\|\frac{1}{b^{2}}(\kappa_{b}+\mathcal{O}_{\ell})u\|_{L^{2}(\R^{2d})}^{2}
             +\|\frac{1}{b}(\mathcal{Y}_{\ell}-i\lambda)u\|_{L^{2}(\R^{2d})}^{2}
             \\
      &&\hspace{1cm}
         +\left\| \left( \frac{1}{b^2} \frac{|\lambda|}{1+2^{\ell}|p|} \right)^{\frac{2}{3}} u  \right\|_{L^2(\R^{2d})}^{2}\\
      &&\hspace{-2cm}+\sum_{m\in\Z^{d}}\left(\frac{2\kappa_{b}^{2}(1+b)^{2}}{b^{4}}
         -\frac{C_{g,\psi}2^{4\ell}}{A^{2}b^{2}}-\frac{C_{g,\psi}2^{4\ell}}{b^{2}2^{2\ell}}\right)
         \|u_{\ell,m}\|_{L^{2}(\R^{2d})}^{2}
      \\
      &&\hspace{-1cm}  +
 \frac{1}{b^{8/3}}\left[
\left\|\tilde{\mathcal{O}}_{\ell}^{2/3} u_{\ell,m}\right\|_{L^2(\R^{2d})}^2+ \left\|\left|2^{\ell}D_{q} \right|^{\frac{2}{3}} u_{\ell,m}  \right\|_{L^2(\R^{2d})}^{2}\right]\,
    \end{eqnarray*}
    where we used that
   $\frac{\kappa_{b}+\tilde{\mathcal{O}}_{\ell}}{b^{2}}\geq 1$ when $\kappa_{b}\geq C_{g,\psi}^{(6)}(1+b)^{6}\geq C_{g,\psi}^{(6)}(1+b)^{2}$\,, as in the proof of Proposition~\ref{pr:subellPbellLarge}. 
Proposition~\ref{pr:locglobW23} implies
\begin{eqnarray*}
\sum_{m\in\Z^{d}}\|\tilde{\mathcal{O}}_{\ell}^{2/3}u_{\ell,m}\|_{L^{2}(\R^{2d})}^{2}+
  \||2^{\ell}D_{q}|^{2/3}u_{\ell,m}\|_{L^{2}(\R^{2d})}^{2}
  &\geq &
          \frac{1}{C_{g,\psi}}\left[
\|\tilde{\mathcal{O}}_{\ell}^{2/3}u\|_{L^{2}(\R^{2d})}^{2}+
          \||D_{q}|^{2/3}u\|_{L^{2}(\R^{2d})}^{2}
  \right]\\
  &&-\sum_{m\in\Z^{d}}\frac{1}{A^{4/3}}
     \|2^{2\ell/3}u_{\ell,m}\|_{L^{2}(\R^{2d})}^{2}
  \\
  &\geq&  \frac{1}{C_{g,\psi}}\left[
\|\tilde{\mathcal{O}}_{\ell}^{2/3}u\|_{L^{2}(\R^{2d})}^{2}+
          \||D_{q}|^{2/3}u\|_{L^{2}(\R^{2d})}^{2}
         \right]\\
  &&\hspace{-0.5cm}-\underbrace{2^{4/3}[C_{g,\psi}']^{4/3}(1+b)^{4/3}[C_{{b,\psi}}^{(5)}]^{8/3}(1+b)^{8/3}}_{\leq 4[C_{g,\psi}']^{3}[C_{g,\psi}^{(5)}]^{8}(1+b)^{4}}\|u_{\ell,m}\|_{L^{2}(\R^{2d})}^{2}
\end{eqnarray*}
The complete error term in the lower bound is now bounded from below
by
$$
\sum_{m\in\Z^{d}}\left(\frac{\kappa_{b}^{2}(1+b)^{2}}{b^{4}}-4[C_{g,\psi}']^{3}[C_{g,\psi}^{(5)}]^{8}\left(\frac{(1+b)^{10}+(1+b)^{8}}{b^{2}}+\frac{(1+b)^{4}}{b^{8/3}}\right)\right)
\|u_{\ell,m}\|_{L^{2}(\R^{2d})}^{2}
$$
which is non negative as soon as
$$
\frac{\kappa_{b}^2(1+b)^{2}}{b^{4}}\geq 12[C_{g,\psi}']^{3}[C_{g,\psi}^{(5)}]^{8}\frac{(1+b)^{10}}{b^{2}}\,.
$$
A sufficient condition is $\kappa_{b}\geq C_{b,\psi}^{(6)}(1+b)^{5}$ with $C_{g,\psi}^{(6)}\geq 1$ large enough.\\
For the final writing of the the inequality we also take
$C_{g,\psi}^{(6)}\geq 2^{9}C_{g,\psi}C_{g,\psi}^{(4)}[C_{g,\psi}^{(5)}]^{8}$\,.
\end{proof}
\subsection{Lower bound for $\|(P_b-i\lambda/b)u\|_{L^2}+1/b^2\|u\|_{L^2}$}
\label{sec:totallower}
After the dyadic partition of unity for $ \sum_{\ell=-1}^{\infty}\theta_{\ell}^2(q,p)\equiv 1$ on $\Omega\times \R^d$ of Subsection~\ref{subSec:DyadicPartitionOfUnity}  and by setting
$$
u_{\ell}(q,p)=2^{\ell d/2}\theta_{\ell}(q,2^\ell p) u(q,2^\ell p) \in \mathcal{C}^{\infty}_{0}(S_{2,\ell};\C)\quad \text{for}~u\in\mathcal{C}^{\infty}_0(\Omega\times\R^d;\C)\,,
$$
the results of Proposition~\ref{pr:subellPbellLarge} and Proposition~\ref{pr:subellPbellSmall} give after a rescaling
\begin{eqnarray*}
    C_{g,\psi}^{(7)}(1+b)^4 
    \|(\frac{\kappa_b}{b^2}+\mathcal{P}_b-i\frac{\lambda}{b})\theta_\ell u\|_{L^2(\R^{2d})}^2
&\geq &\|\frac{(\kappa_b+\mathcal{O})}{b^2}\theta_\ell u\|_{L^2(\R^{2d})}^2 
+ \|\frac{1}{b}(\mathcal{Y}-i\lambda)\theta_\ell u\|_{L^2(\R^{2d})}^2\\
&&
+\frac{1}{b^{8/3}}\|\theta_\ell u\|_{\tilde{\mathcal{W}}^{2/3}}^2
+\frac{1}{b^{8/3}}\|\frac{|\lambda|}{\langle p\rangle}\theta_{\ell}u\|_{L^2(\R^{2d})}^2
\end{eqnarray*}
for all $\ell\in\Z$\,, $\ell\geq -1$\,, as soon as $\kappa_b\geq C_{g,\psi}^{(7)}(1+b)^5$ and $C_{g,\psi}^{(7)}\geq 1$ is chosen large enough.
By summation with respect to $\ell\geq -1$\,, Proposition~\ref{pr:equivNormAfterDyadicPartition} and the same result with $\mathcal{P}_b-i\lambda/b$ replaced by $\frac{1}{b^2}\mathcal{O}$ and Proposition~\ref{pr:eqnormes} for $\|~\|_{\mathcal{W}^{2/3}}$ imply
\begin{eqnarray*}
    C_{g,\psi}^{(8)}(1+b)^4 
    \|(\frac{\kappa_b}{b^2}+\mathcal{P}_b-i\frac{\lambda}{b})u\|_{L^2(\R^{2d})}^2
  &\geq &\|\frac{(\kappa_b+\mathcal{O})}{b^2}u\|_{L^2(\R^{2d})}^2 
+ \|\frac{1}{b}(\mathcal{Y}-i\lambda)u\|_{L^2(\R^{2d})}^2\\
&&
+\frac{1}{b^{8/3}}\|u\|_{\tilde{\mathcal{W}}^{2/3}}^2
+\frac{1}{b^{8/3}}\|\frac{|\lambda|}{\langle p\rangle}u\|_{L^2(\R^{2d})}^2
\end{eqnarray*}
for all $u\in \mathcal{C}^{\infty}_{0}(\Omega\times\R^d;\C)$ as soon as $\kappa_b\geq C_{g,\psi}^{(8)}(1+b)^5$ with $C_{g,\psi}^{(8)}\geq 1$ large enough.\\
By taking $C_{g,\mathcal{M}}\geq C_{g,\psi}^{(8)}$ large enough so that the comparison results of Proposition~\ref{pr:locscalPb} and Proposition~\ref{pr:eqnormes}-\textbf{ii)} can be applied with 
\begin{eqnarray*}
    C_{g,\mathcal{M}}(1+b)^2 
    \|(\frac{\kappa_b}{b^2}+P_b-i\frac{\lambda}{b})u\|_{L^2(\R^{2d})}
&\geq &\|\frac{(\kappa_b+\mathcal{O})}{b^2}u\|_{L^2(\R^{2d})} 
+ \|\frac{1}{b}(\nabla_{\mathcal{Y}}^{\mathcal{E}}-i\lambda)u\|_{L^2(\R^{2d})}\\
&&
+\frac{1}{b^{4/3}}\|u\|_{\tilde{\mathcal{W}}^{2/3}}
+\frac{1}{b^{4/3}}\|\frac{|\lambda|}{\langle p\rangle}u\|_{L^2(\R^{2d})}
\end{eqnarray*}
for all $u\in \mathcal{C}^{\infty}_{0}(X;\mathcal{E})$\,, when $\kappa_b\geq C_{g,\mathcal{M}}(1+b)^5$\\
By writing
$$
  C_{g,\mathcal{M}}(1+b)^2 \kappa_b \left( \|(P_b-i\frac{\lambda}{b})u\|_{L^2(\R^{2d})}
+ \frac{1}{b^2}\|u\|_{L^2(\R^{2d})} 
\right)\geq    C_{g,\mathcal{M}}(1+b)^2 \|(\frac{\kappa_b}{b^2}+P_b-i\frac{\lambda}{b})u\|_{L^2(\R^{2d})}
$$
and by noticing that the factor of the left-hand side can be
$$
  C_{g,\mathcal{M}}(1+b)^2 \times   C_{g,\mathcal{M}}(1+b)^5=C_{g,\mathcal{M}}(1+b)^7 
$$
this ends the proof of the inequality \eqref{eq:principalInequalityOne} in  Theorem~\ref{th:mainOne}.\\
The essential maximal accretivity result for $\kappa_{b}\geq C_{0}(1+b^{2})$\,, $C_{0}\geq 1$ determined by $(g,E,g^{E},\nabla^{E})$\,,  was proved in Proposition~\ref{pr:IPPineq} and Corollary~\ref{corollary_ess_max_acc}.

\section{Consequences and optimality of Theorem~~\ref{th:mainOne}}
\label{sec:conseqOpt}
In this subsection we will discuss the optimality of the constants appearing in Theorem \ref{th:mainOne} as well as several consequences and extensions of the subelliptic estimate.

\subsection{About $b$-dependent constants}
\label{sec:bdep}
  Obviously the constant $\frac{1}{C_{g}(1+b)^{7}}$ or the condition $\kappa_{b}\geq C_{g}(1+b)^{5}$ of Theorem~\ref{th:mainOne} can be replaced by a uniform constant and a uniform lower bound for $\kappa_{b}$ when $b\in ]0,b_{0}]$ for some $b_{0}>0$\,. The question arises about the regime $b\to +\infty$\,. We do not claim that neither our lower bound nor the condition on $\kappa_{b}$ are optimal with respect to $b$ as $b\to +\infty$\,, but they cannot be written with uniform constants.\\
  Actually, we show here that scalar GKFP operators admit quasimodes at $\lambda = 0$ of size $\mathcal{O}(b^{-2})$ as $b \rightarrow \infty$\,.
\begin{proposition} \label{quasimode_proposition}
    Let $P_{\pm,b} = \frac{1}{b^2} \mathcal{O}\pm \frac{1}{b}\mathcal{Y}$ be the scalar GKFP operator on $X =  T^*Q$ where the Hermitian bundle is $E = Q \times \C$ with $\nabla^E$ the trivial connection, $g^E$ is the usual pointwise Hermitian inherited from $\C$, and $\mathcal{M}(b)=0$. Then there exists $u \in C^\infty_{0}(X; \mathcal{E}) = C^\infty_{0}(X; \C)$ with $\norm{u}_{L^2(X; \mathcal{E})} = 1$ satisfying
    \begin{align} \label{scalar_quasimode_bound}
        \norm{P_{\pm,b}u}_{L^2(X;\mathcal{E})} \le Cb^{-2}, \ \ b \in (0,\infty),
    \end{align}
    for some constant $C>0$ independent of $b$.
\end{proposition}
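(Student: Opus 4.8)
The plan is to build the quasimode by separating the roles of the two terms in $P_{\pm,b} = \frac{1}{b^2}\mathcal{O} \pm \frac{1}{b}\mathcal{Y}$. Since the estimate \eqref{scalar_quasimode_bound} we want is of size $b^{-2}$, the dominant term $\frac{1}{b}\mathcal{Y}$ must be essentially killed, while $\frac{1}{b^2}\mathcal{O}u$ may be left of size $O(b^{-2})$, i.e. we only need $\mathcal{O}u$ to be $O(1)$ in $L^2$. The natural candidate is a function concentrated near $p=0$, because there $\mathcal{Y} = g^{ij}(q)p_j e_i$ is small: on the set $\abs{p}_q \lesssim \delta$ one has $\abs{\mathcal{Y}u} \lesssim \delta\,\abs{e u}$. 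So first I would fix a point $q_0 \in Q$, work in a canonical coordinate chart $T^*U \cong U \times \R^d$ around $q_0$, and take an ansatz of product form $u(q,p) = b^{d/2}\,\chi(q)\,\phi(b^{1/2}p)$, where $\chi \in C^\infty_0(U;\R)$ is a fixed bump function with $\norm{\chi}_{L^2(U,d\mathrm{vol}_g)}$ normalizing factor absorbed, and $\phi \in \mathcal{S}(\R^d)$ (or $C^\infty_0$, after a further truncation) is a fixed Schwartz function with $\norm{\phi}_{L^2(\R^d)}=1$. The scaling $p \mapsto b^{1/2}p$ is chosen so that $\mathcal{O}u$ stays bounded: with the rescaled momentum $p' = b^{1/2}p$ the operator $\mathcal{O} = \frac12(-g_{ij}\partial_{p_i}\partial_{p_j} + g^{ij}p_ip_j)$ becomes $\frac12(-b\,g_{ij}\partial_{p'_i}\partial_{p'_j} + b^{-1}g^{ij}p'_ip'_j)$, which is \emph{not} bounded uniformly in $b$. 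So instead I would choose the concentration scale in $p$ to be a fixed (b-independent) window: take $u(q,p) = c_b\,\chi(q)\,\phi(p)$ with $\phi$ fixed, $c_b$ the $L^2$-normalization which is then $b$-independent up to the $q$-integral, so $c_b \equiv c$.

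With this genuinely $b$-independent ansatz $u = c\,\chi(q)\phi(p)$, one simply computes: $\norm{\mathcal{O}u}_{L^2} \le \norm{\mathcal{O}(\chi\phi)}_{L^2} =: C_1 < \infty$ since $\chi\phi \in C^\infty_0$ and $\mathcal{O}$ maps it into a fixed $L^2$ function (the coefficients $g_{ij}, g^{ij}$ are smooth and bounded on the compact $\mathrm{supp}\,\chi$). Hence $\norm{\frac{1}{b^2}\mathcal{O}u}_{L^2} \le C_1 b^{-2}$. For the transport term, $\mathcal{Y}u = c\,g^{ij}(q)p_j\bigl(\partial_{q^i}\chi(q)\bigr)\phi(p) + c\,\chi(q)\,g^{ij}(q)p_j\,\Gamma^{\ell}_{ik}(q)p_\ell\,(\partial_{p_k}\phi)(p)$, and since $\phi$ and $p\mapsto p_j\phi(p)$, $p\mapsto p_jp_\ell\partial_{p_k}\phi(p)$ all lie in $L^2(\R^d)$ and the $q$-coefficients are smooth on $\mathrm{supp}\,\chi$, we get $\norm{\mathcal{Y}u}_{L^2} \le C_2 < \infty$, so $\norm{\frac{1}{b}\mathcal{Y}u}_{L^2} \le C_2 b^{-1}$. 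This gives $\norm{P_{\pm,b}u}_{L^2} \le C_2 b^{-1} + C_1 b^{-2}$, which is only $O(b^{-1})$ — not good enough. The fix is to also shrink the $p$-support with $b$, \emph{but slowly}: replace $\phi(p)$ by $\phi_b(p) = b^{d/4}\phi(b^{1/2}p)$ (keeping $\norm{\phi_b}_{L^2}=1$). Then $p\,\phi_b$ has $L^2$ norm $O(b^{-1/2})$ and $p^2\partial_p\phi_b$ has $L^2$ norm $O(b^{-1})$, so $\norm{\mathcal{Y}u}_{L^2} = O(b^{-1/2})$, giving $\norm{\frac1b\mathcal{Y}u}_{L^2} = O(b^{-3/2})$; meanwhile the $\mathcal{O}$-term: $-g_{ij}\partial_{p_i}\partial_{p_j}\phi_b$ has norm $O(b)$, so $\frac1{b^2}\mathcal{O}u$ has norm $O(b^{-1})$ again. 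The two requirements conflict, so one must balance: choose the rescaling $\phi_b(p) = \beta(b)^{d/2}\phi(\beta(b)p)$ and optimize $\beta(b)$. One finds $\norm{\frac1{b^2}\mathcal{O}u} \lesssim \frac{\beta(b)^2}{b^2} + \frac{1}{b^2\beta(b)^2}$ and $\norm{\frac1b\mathcal{Y}u} \lesssim \frac{1}{b\beta(b)} + \frac{1}{b\beta(b)^2}$ (the last from the Christoffel/vertical piece). Taking $\beta(b) \to \infty$, e.g. $\beta(b) = b^{1/2}$, the $\mathcal{O}$-bound becomes $\frac1b + \frac1{b^3} = O(b^{-1})$ — still only $b^{-1}$.

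The resolution, and the one genuinely non-routine point, is that one should \emph{not} put $u$ near $p=0$ at all but rather should use the ground state of $\mathcal{O}$ in the vertical variable to make the $\mathcal{O}$-term exactly controlled, and then arrange $\mathcal{Y}u$ to be small by a cancellation/averaging in $q$. Concretely: let $\phi_0(p) = \pi^{-d/4}(\det g_q)^{1/4}e^{-\frac12 g^{ij}(q)p_ip_j}$ be, for each fixed $q$, the normalized ground state of the $q$-dependent oscillator $\mathcal{O}$, so that $\mathcal{O}\phi_0 = \frac{d}{2}\phi_0$ pointwise in $q$ — hence $\mathcal{O}(\chi(q)\phi_0(q,p)) = \frac{d}{2}\chi\phi_0 + (\text{terms from }q\text{-derivatives of }\phi_0\text{, which are }O(\langle p\rangle\phi_0)\in L^2)$, all of fixed $L^2$ size. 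Then $\norm{\frac1{b^2}\mathcal{O}u} = O(b^{-2})$ cleanly. For the transport term, $\mathcal{Y}$ acting on $\chi(q)\phi_0(q,p)$ produces $g^{ij}p_j e_i(\chi\phi_0)$; the pointwise-in-$p$ factor $p_j\phi_0(q,p)$ is odd-times-Gaussian, still in $L^2$ with fixed norm, so $\norm{\frac1b\mathcal{Y}u} = O(b^{-1})$ — and again this is the obstruction. The \textbf{main difficulty}, therefore, is genuinely getting to size $b^{-2}$ rather than $b^{-1}$: a pure product ansatz gives only $b^{-1}$ from the transport term. One must add a corrector $u_1$ solving $\mathcal{Y}u_1 = -\frac{b}{?}(\cdots)$ — but $\mathcal{Y}$ is not invertible. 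The correct observation is that $\mathcal{Y}$ is the geodesic vector field and $\langle u, \mathcal{Y}u\rangle$ has no real part issues; however, what actually works (as in the Witten/KFP folklore) is to choose $\chi$ supported near a \emph{closed geodesic} or, failing existence, to exploit that on $T^*Q$ the $L^2$-adjoint satisfies $\mathcal{Y}^* = -\mathcal{Y}$ (since $\mathrm{div}\,\mathcal{Y}=0$), so $\mathcal{Y}$ is skew-adjoint and $e^{t\mathcal{Y}/b}$ is unitary; then $P_{\pm,b}$ is a small ($O(b^{-2})$) perturbation of $\pm\frac1b\mathcal{Y}$ \emph{in the sense} that $\ker_{L^2}$-approximation must come from approximate eigenfunctions of the skew-adjoint $\mathcal{Y}$ at eigenvalue $0$, i.e. from $\mathcal{Y}$-invariant $L^2$ functions. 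Any function of the form $u = c\,F(H(q,p))$ with $H$ the kinetic energy satisfies $\mathcal{Y}u = 0$ exactly (since $\mathcal{Y}H = 0$); taking $F$ a fixed bump supported in $\frac14 < H < 1$ gives $u \in C^\infty_0(X;\C)$, $\norm{u}_{L^2}=1$ after normalization, $\mathcal{Y}u \equiv 0$, and $\mathcal{O}u = \frac1{b^2}\mathcal{O}(F\circ H)$ with $\mathcal{O}(F\circ H) \in C^\infty_0 \subset L^2$ of fixed size, so $\norm{P_{\pm,b}u}_{L^2} = \frac1{b^2}\norm{\mathcal{O}(F\circ H)}_{L^2} \le Cb^{-2}$. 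That is the whole proof; the one subtlety to check is that $\mathcal{O}(F\circ H)$ indeed lies in $L^2$ with $b$-independent norm, which follows since $H$ and all its $p$-derivatives are polynomially bounded and $F$ has compact support in $H$, so $F\circ H \in C^\infty_0(X)$ and $\mathcal{O}$ maps $C^\infty_0$ to $C^\infty_0$. I would present the proof in this last, clean form: define $u = \norm{F\circ H}_{L^2}^{-1}\,F(H(q,p))$, verify $\mathcal{Y}u=0$ using $\mathcal{Y}H=0$, and bound $\norm{P_{\pm,b}u}_{L^2} = b^{-2}\norm{\mathcal{O}u}_{L^2}$.
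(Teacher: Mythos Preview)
Your final construction is correct and is exactly the paper's proof: take $u = c\,\varphi(\abs{p}_q^2)$ with $\varphi \in C^\infty_0(\R)$, observe $\mathcal{Y}u \equiv 0$ because $\mathcal{Y}$ annihilates any function of the kinetic energy, and conclude $\norm{P_{\pm,b}u}_{L^2} = b^{-2}\norm{\mathcal{O}u}_{L^2} \le Cb^{-2}$. The several false starts (product ansatz, rescalings, vertical ground state) are unnecessary and could be dropped from a written proof; the single idea $\mathcal{Y}H = 0$ is all that is needed.
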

\begin{proof}
Let $u \in C^\infty(X; \C)$ be any function of the form
\begin{align}
    u(q,p) = \varphi(\abs{p}^2_q), \ \ \ (q,p) \in X,
\end{align}
where $\varphi\neq 0$ belongs to  $\mathcal{C}^{\infty}_{0}(\R;\C)$. By multiplying $u$ by a non-zero constant if necessary, we may ensure that $\norm{u}_{L^2(X; \C)} = 1$. Since $u$ is a function of the kinetic energy, we have
\begin{align}
    \mathcal{Y}u \equiv 0.
\end{align}
Hence
\begin{align} \label{harmonic_osc_applied_to_quasimode}
    \norm{P_{\pm,b} u}_{L^2(X;\C)} \le \frac{1}{b^2} \norm{\mathcal{O}u} + \frac{1}{b} \norm{\mathcal{Y}u} \le Cb^{-2}, \ \ b \in (0,\infty).
\end{align}
\end{proof}
An immediate consequence of Proposition \ref{quasimode_proposition} is that the best possible constant appearing on the right-hand side of (\ref{th:mainOne}) fails to be independent of $b$ in general. Indeed, let $P_{\pm,b} = \frac{1}{b^2} \mathcal{O}\pm \frac{1}{b} \mathcal{Y}$ be a scalar GKFP operator as in Proposition \ref{quasimode_proposition} and let $C(b)>0$ be the largest constant such that
\begin{align} \label{scalar_subelliptic_est}
    \left\| \left(P_{\pm,b} - \frac{i\lambda}{b}\right)u\right\|_{L^{2}}+\frac{1}{b^2}\norm{u}_{L^2} \geq C(b)
    \left(\left\| \frac{\mathcal{O}}{b^{2}}u \right\|_{L^{2}} + 
    \left\|
      \frac{1}{b}\left( \nabla^{\mathcal{E}}_{\mathcal{Y}} - i\lambda \right)u
    \right\|_{L^{2}} + \frac{1}{b^{4/3}}\left[||u||_{\tilde{\cal W}^{\frac{2}{3}}} + \norm{\left(\frac{\abs{\lambda}}{\langle p \rangle_q} \right)^{2/3} u}_{L^2}\right]
    \right)
\end{align}
holds for every $u\in C^\infty_{0}(X; \C)$. Taking $\lambda = 0$ in (\ref{scalar_subelliptic_est}) gives
\begin{align} \label{restricted_inequality}
    \norm{P_{\pm,b} u}_{L^2(X;\C)} + \frac{1}{b^2} \norm{u}_{L^2(X;\C)} \ge \frac{C(b)}{b^{4/3}} \norm{u}_{\tilde{W}^{\frac{2}{3}}(X;\C)}
\end{align}
If $u \in C^\infty_{0}(X; \C)$ is as in Proposition \ref{quasimode_proposition}, then (\ref{scalar_quasimode_bound}) and (\ref{restricted_inequality}) together give
\begin{align}
    C(b) \le C b^{-\frac{2}{3}}, \ \ b \in (0,\infty),
\end{align}
for some $C>0$ independent of $b$. Because $b^{-2/3} \rightarrow 0$ as $b \rightarrow \infty$, there cannot exist a constant $C_0>0$ such that $C(b) \ge C_0$ for $b \gg 1$\,. In particular, $C(b)$ cannot be constant with respect to $b$.

\subsection{Perturbative Estimate}
\label{sec:perturb}
In this subsection, we consider the stability of the subelliptic estimate~\eqref{eq:principalInequalityOne} under a general class of perturbations.

\begin{proposition}\label{perturbation_result}
Let $P_{\pm,b}=\frac{1}{b^{2}}\mathcal{O}\pm \frac{1}{b}\nabla^{\mathcal{E}}_{\mathcal{Y}}$\,, let $M(b)\in \mathcal{L}(\tilde{\mathcal{W}}^{1,0}(X;\mathcal{E});L^{2}(X,dqdp;\mathcal{E}))$ satisfy
\begin{eqnarray*}
  && M(b)=M_{1}(b)+M_{0}(b)\\
  && \|M_{1}(b)\|_{\mathcal{L}(\tilde{\mathcal{W}}^{1,0};L^{2})}\leq \frac{\nu_{1}(b)}{b}\quad\text{and}\quad
     \|M_{0}(b)\|_{\mathcal{L}(L^{2};L^{2})}\leq
     \nu_{0}\left(1+\frac{1}{b^{2}}\right)\,,
\end{eqnarray*}
and set
$$
P_{\pm,b,M}=P_{\pm,b} +M_{1}(b)+M_{0}(b)\,.
$$
We assume that $\nu_{1}(b)$ and $\nu_{0}$ satisfy
$$
\forall b\in (0,+\infty)\,,\quad  \nu_{1}^{2}(b)b^{2}\leq \frac{C_{g}+8\nu_{0}}{16}(1+b^{2})\,,
$$
where  $C_{g}\geq 1$ is the constant determined by $(g,E,g^{E},\nabla^{E})$ in Theorem~\ref{th:mainOne}.
For $\kappa_{b}\geq (C_{g}+16\nu_{0})(1+b^{5})$\,, the operator $\frac{\kappa_{b}}{b^{2}}+P_{\pm,b,M}$ is closable and its closure $\frac{\kappa_{b}}{b^{2}}+\overline{P}_{\pm,b,M}$ is maximal accretive with $D(\overline{P}_{\pm,b,M})=D(\overline{P}_{\pm,b})$ and
$$
\forall u\in D(\overline{P}_{\pm,b,M})\,,\quad
\mathrm{Re}~ \langle u\,,\, (\frac{\kappa_{b}}{b^{2}}+\overline{P}_{\pm,b,M})u\rangle
\geq \frac{1}{8b^{2}}\left[\|u\|_{\tilde{\mathcal{W}}^{1,0}}^{2}+\kappa_{b}\|u\|_{L^{2}}^{2}\right]\,.
$$
Moreover, the inequalities
\begin{align*}
  \left\|(\overline{P}_{\pm,b,M}- \frac{i\lambda}{b})u\right\|_{L^{2}}  
    +\frac{1+b^{2}}{b^2}\norm{u}_{L^2}
  \geq \frac{(1+b)^{-7}}{8(C_{g}+16\nu_{0})^{2}}
  \Bigg(&
  \| \frac{\mathcal{O}}{b^{2}}u\|_{L^{2}}+ 
    \|\frac{1}{b}(\pm\nabla^{\mathcal{E}}_{\mathcal{Y}} - i\lambda)u
                                            \|_{L^{2}}
  \\
      &+ \frac{1}{b^{4/3}}\left[||u||_{\tilde{\cal W}^{\frac{2}{3}}} + \norm{\left(\frac{\abs{\lambda}}{\langle p \rangle_q} \right)^{2/3} u}_{L^2}\right]
    \Bigg)
\end{align*}
and
\begin{align*}
  \left\|(\overline{P}_{\pm,b,M}- \frac{i\lambda}{b})u\right\|_{L^{2}}  
    +\frac{2\kappa_b}{b^2}\norm{u}_{L^2}
  \geq \frac{1}{4C_{g}(1+b)^7}
  \Bigg(&
  \| \frac{\mathcal{O}}{b^{2}}u\|_{L^{2}}+ 
    \|\frac{1}{b}( \pm\nabla^{\mathcal{E}}_{\mathcal{Y}} - i\lambda)u
                                            \|_{L^{2}}
  \\
      &+ \frac{1}{b^{4/3}}\left[||u||_{\tilde{\cal W}^{\frac{2}{3}}} + \norm{\left(\frac{\abs{\lambda}}{\langle p \rangle_q} \right)^{2/3} u}_{L^2}\right]
    \Bigg)
\end{align*}
hold 
for every $u\in D(\overline{P}_{\pm,b,M})$ and every $(\lambda,b)\in \R\times(0,+\infty)$\,.
\end{proposition}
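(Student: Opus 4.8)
The strategy is to treat $M(b)=M_1(b)+M_0(b)$ as a relatively bounded perturbation of $\frac{\kappa_b}{b^2}+P_{\pm,b}$ and to extract all the required inequalities from Proposition~\ref{pr:IPPineq} (or the slightly refined version of the statement of Theorem~\ref{th:main}), together with the subelliptic estimate \eqref{eq:principalInequality} itself. First I would record, from \eqref{eq:IPPineqTh} applied with a shift $\kappa_b$ (more precisely, rerunning the integration by parts of Proposition~\ref{pr:IPPineq} with the larger $\kappa_b\geq (C_g+16\nu_0)(1+b^2)$), that for all $u\in\mathcal{C}^\infty_0(X;\mathcal{E})$
$$
\Real\langle u\,,\,(\tfrac{\kappa_b}{b^2}+P_{\pm,b}-i\tfrac{\lambda}{b})u\rangle_{L^2}\geq \frac{1}{4b^2}\Big[\|u\|_{\tilde{\mathcal{W}}^{1,0}}^2+\kappa_b\|u\|_{L^2}^2\Big]+\frac{2\nu_0}{b^2}(1+b^2)\|u\|_{L^2}^2\,,
$$
the last term being the slack produced by the $16\nu_0$ over the minimal $C_g$. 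Then I bound the perturbation on the core: $|\langle u\,,\,M_1(b)u\rangle|\leq \frac{\nu_1(b)}{b}\|u\|_{\tilde{\mathcal{W}}^{1,0}}\|u\|_{L^2}\leq \frac{1}{8b^2}\|u\|_{\tilde{\mathcal{W}}^{1,0}}^2+\frac{\nu_1^2(b)b^2}{2b^2}\|u\|_{L^2}^2$ by Young's inequality, and the hypothesis $\nu_1^2(b)b^2\leq \frac{C_g+8\nu_0}{16}(1+b^2)$ together with $|\langle u\,,\,M_0(b)u\rangle|\leq \nu_0(1+\frac1{b^2})\|u\|_{L^2}^2$ shows that these are swallowed by the two slack terms above (using $\kappa_b\geq (C_g+16\nu_0)(1+b^2)$ so that $\frac{\kappa_b}{2b^2}\geq$ what is needed). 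This yields the stated lower bound $\Real\langle u\,,\,(\frac{\kappa_b}{b^2}+P_{\pm,b,M})u\rangle\geq \frac{1}{8b^2}[\|u\|_{\tilde{\mathcal{W}}^{1,0}}^2+\kappa_b\|u\|_{L^2}^2]$, and the same bound for the formal adjoint since $M(b)^*$ has the same structure (the adjoint of $M_1(b)$ maps $L^2$ to $\tilde{\mathcal{W}}^{-1,0}$, so one writes the estimate dually — here one uses that $P_{\pm,b,M}^*=P_{\mp,b}+M(b)^*$ enjoys the same hypotheses). Accretivity of both $\frac{\kappa_b}{b^2}+P_{\pm,b,M}$ and its formal adjoint, plus closability, then gives essential maximal accretivity exactly as in Corollary~\ref{corollary_ess_max_acc}: the hypoellipticity argument there only uses that the first-order-in-$q$ and zeroth-order part is harmless, and $M(b)$ is a continuous operator $\tilde{\mathcal{W}}^{1,0}\to L^2$, hence does not spoil the commutator identity with $\chi(\varepsilon|p|_q^2)$. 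The equality of domains $D(\overline{P}_{\pm,b,M})=D(\overline{P}_{\pm,b})$ follows because $M(b)$ is $\overline{P}_{\pm,b}$-bounded with relative bound $0$ (indeed $\|M_1(b)u\|_{L^2}\leq\frac{\nu_1(b)}{b}\|u\|_{\tilde{\mathcal{W}}^{1,0}}\leq C\|\sqrt{\mathcal O}u\|_{L^2}\lesssim$ a small multiple of $\|\frac{\mathcal O}{b^2}u\|_{L^2}+\|u\|_{L^2}$ after interpolation), so one invokes a standard Kato--Rellich type stability of the maximal accretive realization.

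For the two subelliptic inequalities I would start from the refined form of \eqref{eq:principalInequality} for $\overline{P}_{\pm,b}$, which after adding $\kappa_b\|u\|_{L^2}/b^2$ to both sides and using $\kappa_b\asymp (1+b^2)$ reads, schematically,
$$
\Big\|\big(\tfrac{\kappa_b}{b^2}+\overline P_{\pm,b}-\tfrac{i\lambda}{b}\big)u\Big\|_{L^2}\;\geq\;\frac{1}{C_g(1+b)^7}\,\mathrm{RHS}(u)\,,
$$
where $\mathrm{RHS}(u)$ denotes the full right-hand side of \eqref{eq:principalInequality} (one gets this by combining \eqref{eq:principalInequality} with the trivial bound $\frac{\kappa_b}{b^2}\|u\|_{L^2}\leq \|(\frac{\kappa_b}{b^2}+\overline P_{\pm,b}-\frac{i\lambda}{b})u\|_{L^2}+\|(\overline P_{\pm,b}-\frac{i\lambda}{b})u\|_{L^2}$ and re-absorbing). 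Next I write $(\frac{\kappa_b}{b^2}+\overline P_{\pm,b,M}-\frac{i\lambda}{b})u=(\frac{\kappa_b}{b^2}+\overline P_{\pm,b}-\frac{i\lambda}{b})u+M(b)u$ and estimate the error $\|M(b)u\|_{L^2}\leq \frac{\nu_1(b)}{b}\|u\|_{\tilde{\mathcal{W}}^{1,0}}+\nu_0(1+\frac1{b^2})\|u\|_{L^2}$. The point is that $\frac1b\|u\|_{\tilde{\mathcal{W}}^{1,0}}=\frac1b\|\sqrt{\mathcal O}u\|_{L^2}$ is controlled by a small constant times $\|\frac{\mathcal O}{b^2}u\|_{L^2}+\frac{1}{b^2}\|u\|_{L^2}$ (Cauchy--Schwarz: $\frac1b\|\sqrt{\mathcal O}u\|\leq \frac12\|\frac{\mathcal O}{b^2}u\|^{1/2}\cdot$ — more carefully, $\|\sqrt{\mathcal O}u\|^2=\langle u,\mathcal O u\rangle\leq \|u\|\|\mathcal O u\|$, so $\frac1b\|\sqrt{\mathcal O}u\|\leq \frac12(\varepsilon\|\frac{\mathcal O}{b^2}u\|+\frac{1}{\varepsilon}\frac{1}{b^2}\|u\|)\cdot b$ — hmm, one needs the factor of $(1+b^2)$ from $\nu_1(b)$ to clean this, which is exactly why the hypothesis is phrased with $b^2$), hence all error terms are, up to the factor $(1+b^2)$ matched by $\nu_1(b)^2 b^2\lesssim(1+b^2)$, absorbable into $\mathrm{RHS}(u)$ divided by $(C_g+16\nu_0)(1+b)^7$, after which one rearranges to obtain the first displayed inequality with the explicit constant $\frac{(1+b)^{-7}}{8(C_g+16\nu_0)^2}$. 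The second displayed inequality is obtained from the first by the same "$\kappa_b$-in-front'' trick used at the very end of Section~\ref{sec:finalproof}: multiply through by the factor relating $\|(\frac{\kappa_b}{b^2}+\overline P_{\pm,b,M}-\frac{i\lambda}{b})u\|_{L^2}$ to $\|(\overline P_{\pm,b,M}-\frac{i\lambda}{b})u\|_{L^2}+\frac{2\kappa_b}{b^2}\|u\|_{L^2}$ and use $\kappa_b\asymp C_g(1+b)^5$.

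\textbf{Main obstacle.} The routine analytic parts (integration by parts, Young/Cauchy--Schwarz, essential accretivity) are exactly as before. The delicate bookkeeping is making the two constants come out \emph{exactly} as written — in particular propagating $16\nu_0$ through the $\kappa_b$ threshold and checking that the hypothesis $\nu_1^2(b)b^2\leq\frac{C_g+8\nu_0}{16}(1+b^2)$ is precisely what makes the $M_1$-error reabsorbable both in the accretivity estimate (where one loses a factor $\frac18$ instead of $\frac14$) and in the subelliptic estimate (where one loses the factor $8(C_g+16\nu_0)^2$ instead of $C_g$). A secondary subtlety is justifying all the manipulations on $D(\overline{P}_{\pm,b,M})$ rather than on the core: since $D(\overline{P}_{\pm,b,M})=D(\overline{P}_{\pm,b})$ and $\mathcal{C}^\infty_0$ (or $\mathcal S$) is a core for both, every inequality first proved on $\mathcal{C}^\infty_0$ extends by continuity once one knows the right-hand sides are continuous for the graph norm of $\overline{P}_{\pm,b}$, which is the content of the relative-boundedness statement $D(\overline{P}_{\pm,b,M})=D(\overline{P}_{\pm,b})$ established in the first step.
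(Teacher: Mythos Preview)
Your accretivity computation is essentially the paper's: start from \eqref{eq:IPPineqTh}, split off $\frac{\nu_1(b)}{b}\|u\|_{L^2}\|u\|_{\tilde{\mathcal{W}}^{1,0}}$ by Young, and use the hypothesis on $\nu_1^2(b)b^2$ together with $\kappa_b\geq (C_g+16\nu_0)(1+b^2)$ to land on the $\frac{1}{8b^2}$ lower bound. That part is fine.

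The gap is in the maximal accretivity step. You propose to rerun Corollary~\ref{corollary_ess_max_acc}: accretivity of the formal adjoint plus density of the range, the latter via the hypoellipticity argument and the cut-off $\chi(\varepsilon|p|_q^2)$. This does not work for an \emph{abstract} perturbation $M(b)\in\mathcal{L}(\tilde{\mathcal{W}}^{1,0};L^2)$. Hörmander's theorem gives regularity only for differential operators; with $M(b)$ present you cannot conclude that a distributional solution of $(\frac{\kappa_b}{b^2}+P_{\pm,b,M}^*)u=0$ is smooth, and the commutator $[M(b),\chi(\varepsilon|p|_q^2)]$ is not controlled (it need not even make sense). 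The Kato--Rellich idea you mention afterwards is exactly the right tool, but it should replace the hypoellipticity route, not supplement it: the paper shows from \eqref{eq:IPPineqTh} (via $\frac{t}{2}\|(\frac{\kappa_b}{b^2}+\overline P_{\pm,b}-i\lambda)u\|^2\geq \frac{1}{4b^2}\|u\|_{\tilde{\mathcal{W}}^{1,0}}^2+\frac{\kappa_b-2t^{-1}b^2}{4b^2}\|u\|_{L^2}^2$ with $t=\frac{1}{8\nu_1^2(b)}$) that $\|M(b)u\|_{L^2}\leq \frac{3}{4}\|(\frac{\kappa_b}{b^2}+\overline P_{\pm,b})u\|_{L^2}$, and then invokes Theorem~X.50 of \cite{ReSi} to get maximal accretivity \emph{and} $D(\overline P_{\pm,b,M})=D(\overline P_{\pm,b})$ in one stroke.

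For the subelliptic inequalities your absorption strategy (bound $\|M(b)u\|$ by a small piece of $\mathrm{RHS}(u)$) is workable but unnecessarily heavy, and your write-up wobbles at the $b$-bookkeeping. The paper's argument is one line once the $\tfrac{3}{4}$-relative bound is in hand: from $\|(A-C)u\|\leq \tfrac{3}{4}\|(A-i\lambda)u\|$ with $A=\frac{\kappa_b}{b^2}+\overline P_{\pm,b}$ and $C=\frac{\kappa_b}{b^2}+\overline P_{\pm,b,M}$ one gets $\|(C-i\lambda)u\|\geq \tfrac{1}{4}\|(A-i\lambda)u\|$, hence (with $\kappa_b=(C_g+16\nu_0)(1+b^2)$)
\[
2(C_g+16\nu_0)\Big[\|(\overline P_{\pm,b,M}-\tfrac{i\lambda}{b})u\|+\tfrac{1+b^2}{b^2}\|u\|\Big]\;\geq\;\tfrac{1}{4}\Big[\|(\overline P_{\pm,b}-\tfrac{i\lambda}{b})u\|+\tfrac{1}{b^2}\|u\|\Big],
\]
after which \eqref{eq:principalInequality} gives both displayed inequalities immediately with the stated constants. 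No interpolation of $\sqrt{\mathcal O}$ against $\mathcal O$ is needed.
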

\begin{proof}
  Let us first check the accretivity of $P_{\pm,b,M}$ on $\mathcal{C}^{\infty}_{0}(X;\mathcal{E})$\,.
  For $u\in \mathcal{C}^{\infty}_{0}(X;\mathcal{E})$\,, write
  \begin{eqnarray*}
    \mathrm{Re}~ \langle u\,,\, [\frac{\kappa_{b}}{b^{2}}+P_{\pm,b}+M_{1}(b)+M_{0}(b)]u\rangle_{L^{2}}&\geq& \frac{1}{4b^{2}}\left[\|u\|_{\tilde{\mathcal{W}}^{1,0}}^{2}+\kappa_{b}\|u\|_{L^{2}}^{2}\right] -\frac{\nu_{1}(b)}{b}\|u\|_{L^{2}}\|u\|_{\tilde{\mathcal{W}}^{1,0}}\\
    &&\hspace{3cm}-\nu_{0}\frac{1+b^{2}}{b^{2}}\|u_{0}\|_{L^{2}}^{2}
    \\
                                                                                               &\geq & \frac{1}{4b^{2}}\left[\|u\|_{\tilde{\mathcal{W}}^{1,0}}^{2}+\kappa_{b}\|u\|_{L^{2}}^{2}\right] -\frac{1}{8b^{2}}\|u\|^{2}_{\tilde{\mathcal{W}}^{1,0}}\\
                                                                                               &&\hspace{3cm}    -(2\nu_{1}^{2}(b)+\nu_{0}\frac{1+b^{2}}{b^{2}})\|u\|_{L^{2}}^{2}\\
    \\ &\geq & \frac{1}{8b^{2}}\|u\|_{\tilde{\mathcal{W}}^{1,0}}^{2}
               +\frac{\kappa_{b}+(C_{g}+8\nu_{0})(1+b^{2})-16\nu_{1}^{2}(b)b^{2}}{8b^{2}}\|u\|_{L^{2}}^{2}\\                                                                &\geq & \frac{1}{8b^{2}}\left[\|u\|_{\tilde{\mathcal{W}}^{1,0}}^{2}+\kappa_{b}\|u\|_{L^{2}}^{2}\right]\,.
  \end{eqnarray*}
  This proves the accretivity of $P_{\pm,b,M}$ which is therefore closable.\\
  From the inequality \eqref{eq:IPPineqTh} for $\overline{P}_{\pm,b}$ we deduce for any $\lambda\in \mathbb{R}$
$$
\|(\frac{\kappa_{b}}{b^{2}}+\overline{P}_{\pm,b}-i\lambda)u\|_{L^{2}}\|u\|_{L^{2}}\geq \mathrm{Re}~\langle u\,,\, (\frac{\kappa_{b}}{b^{2}}+\overline{P}_{\pm,b})u\rangle_{L^{2}}
\geq \frac{1}{4b^{2}}\left[\|u\|_{\tilde{\mathcal{W}}^{1,0}}^{2}+\kappa_{b}\|u\|_{L^{2}}^{2}\right]
$$
and for all $t>0$
$$
\frac{t}{2}\|(\frac{\kappa_{b}}{b^{2}}+\overline{P}_{\pm,b}-i\lambda)u\|_{L^{2}}^{2}
\geq \frac{1}{4b^{2}}\|u\|_{\tilde{\mathcal{W}}^{1,0}}^{2}+\frac{\kappa_{b}-2t^{-1}b^{2}}{4b^{2}}\|u\|_{L^{2}}^{2}\,.
$$
This inequality with $t=\frac{1}{8\nu_{1}^{2}(b)}$ and
$$\kappa_{b}-2t^{-1}b^{2}=\kappa_{b}-16\nu_{1}^{2}(b)b^{2}\geq (C_{g}+16\nu_{0})(1+b^{2})-16\nu_{1}^{2}(b)b^{2}\geq 0
$$
gives
$$
\frac{1}{4}\|(\frac{\kappa_{b}}{b^{2}}+\overline{P}_{\pm,b}-i\lambda)u\|_{L^{2}}^{2}\geq \frac{\nu_{1}^{2}(b)}{b^{2}}\|u\|_{\tilde{\mathcal{W}}^{1,0}}^{2}\geq \|M_{1}(b)u\|_{L^{2}}^{2}\,.
$$
Actually the inequality \eqref{eq:IPPineqTh} also gives
$$
\frac{1}{4}\|(\frac{\kappa_{b}}{b^{2}}+\overline{P}_{\pm,b}-i\lambda)u\|_{L^{2}}
-\|M_{0}(b)u\|_{L^{2}}\geq \frac{\kappa_{b}}{16b^{2}}\|u\|_{L^{2}}-\frac{\nu_{0}(1+b^{2})}{b^{2}}\|u\|_{L^{2}}\geq \frac{(C_{g}-16\nu_{0})(1+b^{2})}{16b^{2}}\|u\|_{L^{2}}\geq 0\,.
$$
Used with $\lambda=0$\,, we have two closed accretive operators $A=(\frac{\kappa_{\pm,b}}{b^{2}}+\overline{P}_{\pm,b})$ and $C=(\frac{\kappa_{b}}{b^{2}}+\overline{P}_{\pm,b,M})$ such that $D=\mathcal{C}^{\infty}_{0}(X;\mathcal{E})$ is dense in $D(A)=D(\overline{P}_{\pm,b})$ and $D(C)=D(\overline{P}_{\pm,b,M})$ while
$$
\forall u\in D\,,\quad \|(A-C)u\|_{L^{2}}\leq \frac{3}{4}\|Au\|_{L^{2}}\,,
$$
with $\frac{3}{4}<1$ and $A$ maximal accretive. Theorem~X.50 of \cite{ReSi} tells us that $C$ is maximal accretive as well with $D(C)=D(A)$\,.\\
We can take $\kappa_{b}=(C_{g}+16\nu_{0})(1+b^{2})$\,, and $\|(A-C)u\|_{L^{2}}\leq \frac{3}{4}\|(A-i\lambda)u\|_{L^{2}}$ leads to 
\begin{eqnarray*}
  2(C_{g}+16\nu_{0})\big[\|(\overline{P}_{\pm,b,M}-i\lambda)u\|_{L^{2}}+\frac{1+b^{2}}{b^{2}}\|u\|_{L^{2}}\big]&\geq& \|(\frac{\kappa_{b}}{b^{2}}+\overline{P}_{\pm,b,M}-i\lambda)u\|_{L^2}+\frac{\kappa_{b}}{b^{2}}\|u\|_{L^{2}}\\
                                                                                                               &\geq& \frac{1}{4}\|(\frac{\kappa_{b}}{b^{2}}+\overline{P}_{\pm,b}-i\lambda)u\|_{L^{2}}+\frac{\kappa_{b}}{b^{2}}\|u\|_{L^{2}}\\
  &\geq & \frac{1}{4}\big[\|(\overline{P}_{\pm,b}-i\lambda)u\|_{L^{2}}+\frac{1}{b^{2}}\|u\|_{L^{2}}\big]\,,
\end{eqnarray*}
and
\begin{eqnarray*}
    \|(\overline{P}_{\pm,b,M}-i\lambda)u\|_{L^2}+\frac{2\kappa_{b}}{b^{2}}\|u\|_{L^{2}}
    \geq
    \frac{1}{4}\big[\|(\overline{P}_{\pm,b}-i\lambda)u\|_{L^{2}}+\frac{1}{b^{2}}\|u\|_{L^{2}}\big]
\end{eqnarray*}
where, in both inequalities, the right-hand side is bounded from below by \eqref{eq:principalInequality} of Theorem~\ref{th:mainOne}.
\end{proof}
\subsection{$\widetilde{\mathcal{W}}^{s}$-versions}
\label{sec:tWsversion}
The subellitpic estimate of Theorem~\ref{th:mainOne} wich is concerned with the case $L^2(X,dqp; \mathcal{E})=\tilde{\mathcal{W}}^{0}(X;\mathcal{E})$ can be extended to $\tilde{\mathcal{W}}^{s}(X;\mathcal{E})$ subelliptic estimates for any $s\in\R$ as follows.
\begin{proposition}
  \label{pr:tWsestimate}
  Let $P_{\pm,b,M}=\frac{1}{b^{2}}\mathcal{O}\pm \frac{1}{b}\nabla^{\mathcal{E}}_{\mathcal{Y}}+M_{1}(b)+M_{0}(b)$ satisfy
  $$
\forall s\in \R\,,\quad
\|M_{1}(b)\|_{\mathcal{L}(\tilde{\mathcal{W}}^{1,s};\tilde{\mathcal{W}}^{s})}\leq \frac{\nu_{1,s}(b)}{b}\quad,\quad \|M_{0}(b)\|_{\mathcal{L}(\tilde{\mathcal{W}}^{s};\tilde{\mathcal{W}}^{s})}\leq \nu_{0,s}\frac{1+b^{2}}{b^{2}}
$$
whenever
$$
\forall b\in (0,+\infty)\,, \quad \nu^{2}_{1,s}(b)b^{2}\leq \frac{C_{g}+8\nu_{0,s}}{16}(1+b^{2})\,.
$$
For every $s\in\R$, there exists $C_{g,s}\geq 1$ determined by the geometric data $(g,E,g^{E},\nabla^{E})$ and the pair $(\nu_{0,s},\nu_{1,s}(.))$ such that,
for $\kappa_{b}\geq C_{g,s}(1+b^{5})$\,, the operator $\frac{\kappa_{b}}{b^{2}}+P_{\pm,b,M}$ is closable in $\tilde{\mathcal{W}}^{s}(X;\mathcal{E})$ and its closure $\frac{\kappa_{b}}{b^{2}}+\overline{P}_{\pm,b,M}^{\tilde{\mathcal{W}^{s}}}$ is maximal accretive with $D(\overline{P}_{\pm,b,M}^{\tilde{\mathcal{W}^{s}}})=D(\overline{P}_{\pm,b}^{\tilde{\mathcal{W}}^{s}})=W^{-s}_{\theta}D(\overline{P_{\pm,b}}^{L^{2}})$ and
$$
\forall u\in D(\overline{P}_{\pm,b,M}^{\tilde{\mathcal{W}}^{s}})\,,\quad
\mathrm{Re}~ \langle u\,,\, (\frac{\kappa_{b}}{b^{2}}+\overline{P}_{\pm,b,M}^{\tilde{\mathcal{W}}^{s}})u\rangle_{\tilde{\mathcal{W}}^{s}}
\geq \frac{1}{8b^{2}}\left[\|u\|_{\tilde{\mathcal{W}}^{1,s}}^{2}+\kappa_{b}\|u\|_{\mathcal{W}^{s}}^{2}\right]\,.
$$
Moreover, the inequalities
\begin{align*}
  \left\|(\overline{P}_{\pm,b,M}^{\tilde{\mathcal{W}}^{s}}- \frac{i\lambda}{b})u\right\|_{\tilde{\mathcal{W}}^{s}}  
    +\frac{1+b^{2}}{b^2}\norm{u}_{\tilde{\mathcal{W}}^{s}}
  \geq \frac{(1+b)^{-7}}{8C_{g,s}^{2}}
  \Bigg(&
  \| \frac{\mathcal{O}}{b^{2}}u\|_{\tilde{\mathcal{W}^{s}}}+ 
    \|\frac{1}{b}(\pm \nabla^{\mathcal{E}}_{\mathcal{Y}} - i\lambda)u
                                            \|_{\tilde{\mathcal{W}}^{s}}
  \\
      &+ \frac{1}{b^{4/3}}\left[||u||_{\tilde{\cal W}^{s+\frac{2}{3}}} + \norm{\left(\frac{\abs{\lambda}}{\langle p \rangle_q} \right)^{2/3} u}_{\tilde{\mathcal{W}}^{s}}\right]
    \Bigg)
\end{align*}
and
\begin{align*}
  \left\|(\overline{P}_{\pm,b,M}^{\tilde{\mathcal{W}}^{s}}- \frac{i\lambda}{b})u\right\|_{\tilde{\mathcal{W}}^{s}}  
    +\frac{\kappa_b}{b^2}\norm{u}_{\tilde{\mathcal{W}}^{s}}
  \geq \frac{1}{4C_{g}(1+b)^7}
  \Bigg(&
  \| \frac{\mathcal{O}}{b^{2}}u\|_{\tilde{\mathcal{W}^{s}}}+ 
    \|\frac{1}{b}( \nabla^{\mathcal{E}}_{\mathcal{Y}} - i\lambda)u
                                            \|_{\tilde{\mathcal{W}}^{s}}
  \\
      &+ \frac{1}{b^{4/3}}\left[||u||_{\tilde{\cal W}^{s+\frac{2}{3}}} + \norm{\left(\frac{\abs{\lambda}}{\langle p \rangle_q} \right)^{2/3} u}_{\tilde{\mathcal{W}}^{s}}\right]
    \Bigg)
\end{align*}
holds 
for every $u\in D(\overline{P}_{\pm,b,M})$ and every $(\lambda,b)\in \R\times(0,+\infty)$\,.
\end{proposition}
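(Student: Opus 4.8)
The plan is to reduce to the $L^2=\tilde{\mathcal W}^0$ statements already obtained (Theorem~\ref{th:main} and Proposition~\ref{perturbation_result}) by conjugating with a power of the globally defined elliptic operator $W^2_\theta$. Set $\Lambda_s=(W^2_\theta)^{s/2}$. Since $W^2_\theta\geq 1$ is self-adjoint and, by Proposition~\ref{pr:commut}, strongly commutes with $\mathcal O$, the operator $\Lambda_s$ is a topological isomorphism of $\mathcal S(X;\mathcal E)$ which, by Definition~\ref{de:Ws1s2} and Proposition~\ref{pr:identWsE}, realizes for every $t\in\R$ an isometry $\tilde{\mathcal W}^{s+t}(X;\mathcal E)\to\tilde{\mathcal W}^{t}(X;\mathcal E)$, and also an isometry $\tilde{\mathcal W}^{1,s}(X;\mathcal E)\to\tilde{\mathcal W}^{1,0}(X;\mathcal E)$ (moving $\mathcal O^{1/2}$ past $\Lambda_s$). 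First I would compute, on $\mathcal S(X;\mathcal E)$, the conjugated operator $\widetilde P:=\Lambda_s\,P_{\pm,b,M}\,\Lambda_s^{-1}$: the term $\tfrac1{b^2}\mathcal O$ is left unchanged by the strong commutation of Proposition~\ref{pr:commut}; the term $\pm\tfrac1b\nabla^{\mathcal E}_{\mathcal Y}$ becomes $\pm\tfrac1b\nabla^{\mathcal E}_{\mathcal Y}\pm\tfrac1b R_s$ with $R_s=\Lambda_s\nabla^{\mathcal E}_{\mathcal Y}\Lambda_s^{-1}-\nabla^{\mathcal E}_{\mathcal Y}$ and $\|R_s\|_{\mathcal L(\tilde{\mathcal W}^{1,0};L^2)}\leq C_{g,s}$ by the third estimate of Proposition~\ref{pr:nablaY}; and $M_1(b),M_0(b)$ become $\Lambda_sM_1(b)\Lambda_s^{-1},\Lambda_sM_0(b)\Lambda_s^{-1}$, whose norms in $\mathcal L(\tilde{\mathcal W}^{1,0};L^2)$ and $\mathcal L(L^2;L^2)$ are, by the isometry properties, bounded by $\nu_{1,s}(b)/b$ and $\nu_{0,s}(1+b^2)/b^2$. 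Hence $\widetilde P=\tfrac1{b^2}\mathcal O\pm\tfrac1b\nabla^{\mathcal E}_{\mathcal Y}+\widetilde M_1(b)+\widetilde M_0(b)$ with $\|\widetilde M_1(b)\|_{\mathcal L(\tilde{\mathcal W}^{1,0};L^2)}\leq(C_{g,s}+\nu_{1,s}(b))/b$ and $\|\widetilde M_0(b)\|_{\mathcal L(L^2;L^2)}\leq\nu_{0,s}(1+b^2)/b^2$.

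Second, I would run the $L^2$ analysis for $\widetilde P$. Its $M_1$-type perturbation has $\widetilde\nu_1(b)=C_{g,s}+\nu_{1,s}(b)$, which need not satisfy verbatim the smallness hypothesis of Proposition~\ref{perturbation_result}; but that hypothesis only serves to guarantee accretivity and the Reed--Simon comparison at $\kappa_b\sim(1+b^2)$, so one simply reruns that short argument (integration by parts, a Young inequality to absorb $\tfrac1bR_s$, then Theorem~X.50 of \cite{ReSi}) now asking the larger lower bound $\kappa_b\geq C_{g,s}(1+b^2)$ with $C_{g,s}$ chosen, depending on the geometry and on $(\nu_{0,s},\nu_{1,s}(\cdot))$, so as to dominate $C_{g,s}^2b^2$, $\nu_{1,s}^2(b)b^2$ and $\nu_{0,s}(1+b^2)$. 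This gives essential maximal accretivity of $\tfrac{\kappa_b}{b^2}+\widetilde P$ on $\mathcal S(X;\mathcal E)$, the lower bound $\Real\langle u,(\tfrac{\kappa_b}{b^2}+\overline{\widetilde P})u\rangle_{L^2}\geq\tfrac1{8b^2}[\|u\|_{\tilde{\mathcal W}^{1,0}}^2+\kappa_b\|u\|_{L^2}^2]$, and the two $L^2$ subelliptic estimates for $\overline{\widetilde P}$ (with constant $C_{g,s}^{-2}(1+b)^{-7}$ after relabelling). Transporting everything back through $\Lambda_s$, using $\|\Lambda_su\|_{L^2}=\|u\|_{\tilde{\mathcal W}^s}$, $\|\Lambda_su\|_{\tilde{\mathcal W}^{1,0}}=\|u\|_{\tilde{\mathcal W}^{1,s}}$, $\|\tfrac{\mathcal O}{b^2}\Lambda_su\|_{L^2}=\|\tfrac{\mathcal O}{b^2}u\|_{\tilde{\mathcal W}^s}$ and $\|\Lambda_su\|_{\tilde{\mathcal W}^{2/3}}=\|u\|_{\tilde{\mathcal W}^{s+2/3}}$, yields the claimed inequalities; the only care needed is for the term $\|\tfrac1b(\pm\nabla^{\mathcal E}_{\mathcal Y}-i\lambda)u\|_{\tilde{\mathcal W}^s}=\|\tfrac1b(\pm\nabla^{\mathcal E}_{\mathcal Y}\pm R_s-i\lambda)\Lambda_su\|_{L^2}$, which is bounded by $\|\tfrac1b(\pm\nabla^{\mathcal E}_{\mathcal Y}-i\lambda)\Lambda_su\|_{L^2}+\tfrac{C_{g,s}}{b}\|u\|_{\tilde{\mathcal W}^{1,s}}$, the last term being reabsorbed into the left-hand side via the accretivity lower bound. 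Since $\Lambda_s$ is an isomorphism of the whole scale, $D(\overline{P}^{\tilde{\mathcal W}^s}_{\pm,b,M})=\Lambda_s^{-1}D(\overline{\widetilde P}^{L^2})=W^{-s}_\theta D(\overline{P_{\pm,b}}^{L^2})$.

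The only genuinely delicate ingredient is the one already packaged in Proposition~\ref{pr:nablaY}: that the conjugation error $R_s$ gains enough regularity to be continuous $\tilde{\mathcal W}^{1,0}\to L^2$, rather than merely $\tilde{\mathcal W}^{3/2}\to L^2$ as $\nabla^{\mathcal E}_{\mathcal Y}$ itself would be, which uses essentially that $\nabla^{\mathcal E}_{\mathcal Y}$ factors through the half-order $p$-multiplications $\hat p_{j,i}$ and the order-one operators $\hat E_{j,i}$ inside the $\mathrm{OpS}_\Psi$ calculus. Granting that, the rest is bookkeeping: tracking the perturbation norms under conjugation, and tracking the constants $C_{g,s}$ and the threshold $\kappa_b\geq C_{g,s}(1+b^2)$ through the $L^2$ results. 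I expect no further obstruction, the exact commutation $[\mathcal O,W^2_\theta]=0$ of Proposition~\ref{pr:commut} being precisely what keeps the term $\tfrac1{b^2}\mathcal O$, and hence the shape of the subelliptic estimate, untouched by the reduction.
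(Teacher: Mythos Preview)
Your approach is correct and essentially identical to the paper's: conjugate by $(W^2_\theta)^{s/2}$, use the strong commutation with $\mathcal O$ (Proposition~\ref{pr:commut}) and the commutator bound of Proposition~\ref{pr:nablaY} to recognize the conjugated operator as $P_{\pm,b}$ plus an admissible perturbation, then invoke the $L^2$ result. The only cosmetic difference is that instead of rerunning the accretivity/Reed--Simon argument, the paper applies Proposition~\ref{perturbation_result} directly after enlarging $\nu_0$ to $\nu_{0,s}+2\tilde C_{g,s}$ (which relaxes the smallness constraint on $\nu_1$ enough to accommodate $\nu_{1,s}(b)+\tilde C_{g,s}$).
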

\begin{proof}
We use the pseudodifferential operator $W^{2}_{\theta}$ introduced in Propostion~\ref{pr:identWsE} which is self-adjoint with $D(W^{2}_{\theta})=\tilde{\mathcal{W}}^{2}(X;\mathcal{E})$ with an elliptic scalar principal symbol in $S^{2}_{\Psi}(Q;\mathrm{End}\mathcal{E})$ and for which we can write
$$
\|u\|_{\tilde{\mathcal{W}}^{s_{1},s_{2}}}=\|\mathcal{O}^{s_{1}/2}(W^{2}_{\theta})^{s_{2}/2}u\|_{L^{2}}\,,
$$
for any $(s_{1},s_{2})\in \mathbb{R}^{2}$ according to Definition~\ref{de:Ws1s2}.\\
Because $\mathcal{O}$ and $(W^{2}_{\theta})^{s/2}$ commute on $\mathcal{S}(X;\mathcal{E})$\,, according to Proposition~\ref{pr:commut},  when considering the operator $P_{\pm,b,M}:\mathcal{S}(X;\mathcal{E})\to \tilde{\mathcal{W}}^{s}(X;\mathcal{E})$ we may instead work with the operator
$$
(W^2_{\theta})^{-s/2}P_{\pm,b,M}(W^2_{\theta})^{s/2}=\frac{1}{b^{2}}\mathcal{O}\pm \frac{1}{b}\nabla_{\mathcal{Y}}^{\mathcal{E}}\pm\frac{1}{b}(W^2_{\theta})^{-s/2}[\nabla_{\mathcal{Y}}^{\mathcal{E}},(W^2_{\theta})^{s/2}]+ (W^2_{\theta})^{-s/2}(M_{1}(b)+M_{0}(b))(W^2_{\theta})^{s/2}
$$
initially defined from $\mathcal{S}(X;\mathcal{E})\to L^{2}(X;\mathcal{E})$\,.\\
The assumptions ensure 
$$
\|(W^2_{\theta})^{-s/2}(M_{1}(b)(W^2_{\theta})^{s/2}\|_{\mathcal{L}(\tilde{\mathcal{W}}^{1,0};L^{2})}\leq \frac{\nu_{1,s}(b)}{b}\quad\text{and}\quad
\|(W^2_{\theta})^{-s/2}(M_{0}(b)(W^2_{\theta})^{s/2}\|_{\mathcal{L}(L^{2};L^{2})}\leq
\nu_{0,s}(1+\frac{1}{b^{2}})\,.
$$
With $(W^2_{\theta})^{-s/2}[\nabla_{\mathcal{Y}}^{\mathcal{E}},(W^2_{\theta})^{s/2}]=(W^2_{\theta})^{-s/2}\nabla_{\mathcal{Y}}^{\mathcal{E}}(W^2_{\theta})^{s/2}-\nabla_{\mathcal{Y}}^{\mathcal{E}}$\,, Proposition~\ref{pr:nablaY} tells us
\begin{equation}
  \label{eq:commutnabY}
\|(W^2_{\theta})^{-s/2}[\nabla_{\mathcal{Y}}^{\mathcal{E}},(W^2_{\theta})^{s/2}]\|_{\mathcal{L}(\tilde{\mathcal{W}}^{1,0};L^{2})}\leq \tilde{C}_{g,s}
\end{equation}
for some constant $\tilde{C}_{g,s}\geq 1$ determined by $(g,E,g^{E},\nabla^{E})$\,.
It then suffices to  apply Proposition~\ref{perturbation_result} with $M_{1}(b)$ replaced by $M_{1}(b)\pm(W^2_{\theta})^{-s/2}[\nabla_{\mathcal{Y}}^{\mathcal{E}},(W^2_{\theta})^{s/2}]$\,, $\nu_{1}(b)$ replaced by $\nu_{1,s}(b)+\tilde{C}_{g,s}$ and $\nu_{0}$ by $\nu_{0,s}+2\tilde{C}_{g,s}$\,.
\end{proof}

\appendix
\section{Comparison of harmonic oscillator hamiltonians}
For a positive definite symmetric matrix $g=(g_{ij})_{1\leq i,j\leq d}\in \mathcal{M}_{dd}(\R)$ with $g^{-1}=(g^{ij})_{1\leq i,j\leq d}$ let $\mathcal{O}_{g}$ denote the harmonic oscillator hamiltonian
\begin{eqnarray*}
\mathcal{O}_{g}&=&\frac{-g_{ij}\partial_{p_i}\partial_{p_j}+g^{ij}p_ip_j}{2}\\
D(\mathcal{O}_{g})&=&\{u\in L^2(\R^d,dp),  \forall\alpha,\beta\in\N^d, |\alpha|+|\beta|\leq 2, p^{\alpha}\partial_{p}^{\beta}u\in L^2(\R^d,dp)\}=D(\mathcal{O}_{\mathrm{Id}})\,.
\end{eqnarray*}
The following result is a consequence of the ellipticity of $\mathcal{O}_{g}$ in the H{\"o}rmander class $S(\langle p,\eta\rangle^2, 
\frac{dp^2+d\eta^2}{\langle p,\eta\rangle^2})$(see \cite{HormIII}-Chap XVIII) combined with $\|\mathcal{O}_g u\|_{L^2}\geq \frac{d}{2}\|u\|_{L^2}$\,.

\begin{proposition}
\label{pr:composcharm}
For two positive definite symmetric matrices $g_1$ and $g_2$ there exist two constants $C_{g_1,g_2}>0$ and $C_{g_1}>0$ such that
\begin{eqnarray*}
    && \left(\frac{\norm{\mathcal{O}_{g_1}u}_{L^2}}{\norm{\mathcal{O}_{g_2}u}_{L^2}}\right)^{\pm 1}\leq C_{g_1,g_2}\,,\\
    \text{and}&&
    \norm{(\mathcal{O}_{g_2}-\mathcal{O}_{g_1})u}_{L^2}\leq C_{g_1}\norm{g_2-g_1}_{\mathcal{M}_{dd}(\R)}\norm{\mathcal{O}_{g_1}u}_{L^2}
\end{eqnarray*}
hold for all $u\in D(\mathcal{O}_{\mathrm{Id}})$\,.
\end{proposition}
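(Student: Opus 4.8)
The plan is to treat both inequalities as purely $p$-space statements on $\R^d$ and to reduce everything to the global ellipticity of $\mathcal{O}_{g_1}$ in the H{\"o}rmander class $S(\langle p,\eta\rangle^2,\, \frac{dp^2+d\eta^2}{\langle p,\eta\rangle^2})$, exactly as announced before the statement. First I would record that, for any positive definite symmetric $g$, the Weyl symbol of $\mathcal{O}_g$ is $\frac12(g_{ij}\eta_i\eta_j + g^{ij}p_ip_j)$, which lies in $S(\langle(p,\eta)\rangle^2,\cdot)$ and satisfies the elliptic lower bound $\frac12(g_{ij}\eta_i\eta_j + g^{ij}p_ip_j)\ge c_g\langle(p,\eta)\rangle^2 - C_g$ with $c_g,C_g$ depending only on the smallest/largest eigenvalues of $g$. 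Combined with the elementary bound $\|\mathcal{O}_g u\|_{L^2}\ge \frac d2\|u\|_{L^2}$ (the bottom of the spectrum), the sharp Garding/elliptic estimate of the Weyl--H{\"o}rmander calculus gives, for some $C_g\ge 1$ depending only on $g$,
$$
\|\langle p\rangle^2 u\|_{L^2} + \|\langle D_p\rangle^2 u\|_{L^2} + \sum_{|\alpha|+|\beta|\le 2}\|p^\alpha \partial_p^\beta u\|_{L^2} \le C_g\,\|\mathcal{O}_g u\|_{L^2}
$$
for all $u\in D(\mathcal{O}_{\mathrm{Id}})$; the reverse inequality $\|\mathcal{O}_g u\|_{L^2}\le C_g'(\|\langle p\rangle^2 u\|_{L^2}+\|\langle D_p\rangle^2 u\|_{L^2})$ is immediate from the explicit form of $\mathcal{O}_g$. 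This two-sided bound simultaneously proves that $D(\mathcal{O}_g)=D(\mathcal{O}_{\mathrm{Id}})$ and provides the uniform control needed below.

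For the first assertion, I would simply chain the two-sided estimates for $g_1$ and $g_2$:
$$
\|\mathcal{O}_{g_1}u\|_{L^2}\le C_{g_1}'\big(\|\langle p\rangle^2 u\|_{L^2}+\|\langle D_p\rangle^2 u\|_{L^2}\big)\le C_{g_1}'\,C_{g_2}\,\|\mathcal{O}_{g_2}u\|_{L^2},
$$
and symmetrically, so that $C_{g_1,g_2}=\max(C_{g_1}'C_{g_2},\,C_{g_2}'C_{g_1})$ works. For the second assertion, I would write the difference of symbols explicitly,
$$
\mathcal{O}_{g_2}-\mathcal{O}_{g_1} = \tfrac12\big((g_2-g_1)_{ij}(-\partial_{p_i}\partial_{p_j})\big) + \tfrac12\big((g_2^{-1}-g_1^{-1})^{ij}p_ip_j\big),
$$
bound the coefficients of the first block by $\|g_2-g_1\|_{\mathcal{M}_{dd}}$ and those of the second by $C_{g_1}\|g_2-g_1\|_{\mathcal{M}_{dd}}$ (using that $g\mapsto g^{-1}$ is Lipschitz on a neighbourhood of $g_1$, with constant depending only on $g_1$, once $\|g_2-g_1\|$ is small; for $\|g_2-g_1\|$ not small the claimed inequality follows trivially from the first assertion after enlarging $C_{g_1}$), and then estimate $\|\partial_{p_i}\partial_{p_j}u\|_{L^2}$ and $\|p_ip_j u\|_{L^2}$ by $\|\mathcal{O}_{g_1}u\|_{L^2}$ via the elliptic estimate for $g_1$. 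Collecting constants yields $\|(\mathcal{O}_{g_2}-\mathcal{O}_{g_1})u\|_{L^2}\le C_{g_1}\|g_2-g_1\|_{\mathcal{M}_{dd}}\|\mathcal{O}_{g_1}u\|_{L^2}$.

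The only genuine point requiring care — the ``hard part'', such as it is — is the passage from the pointwise ellipticity of the symbol to the operator estimate $\sum_{|\alpha|+|\beta|\le 2}\|p^\alpha\partial_p^\beta u\|_{L^2}\le C_g\|\mathcal{O}_g u\|_{L^2}$ uniformly in $g$ over a compact set of positive definite matrices: one must either invoke the standard global elliptic regularity in the H{\"o}rmander class $S(m,g_\Psi)$ with the metric $g_\Psi=\frac{dp^2+d\eta^2}{\langle(p,\eta)\rangle^2}$ and order function $m=\langle(p,\eta)\rangle^2$, noting that all seminorms of the symbol of $\mathcal{O}_g$ and of an approximate inverse are controlled by the eigenvalue bounds of $g$, or else give an elementary hands-on argument: expand $\|\mathcal{O}_g u\|_{L^2}^2$, producing $\|\langle D_p\rangle^2 u\|^2$-type and $\|\langle p\rangle^2 u\|^2$-type terms with positive coefficients plus cross terms and lower-order commutator terms (the commutators of $p_i$ and $\partial_{p_j}$ generate only terms of the form $\|p^\alpha\partial_p^\beta u\|$ with $|\alpha|+|\beta|\le 2$, which are absorbed using $\varepsilon$-Young and the lower bound $\|\mathcal{O}_g u\|\ge \frac d2\|u\|$). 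Everything else is bookkeeping of constants, and since the eigenvalues of $g_1,g_2$ enter only through finitely many continuous quantities, the dependence of $C_{g_1,g_2}$ and $C_{g_1}$ on the matrices is exactly as stated.
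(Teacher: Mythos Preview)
Your proposal is correct and follows exactly the approach the paper indicates: the paper gives no detailed proof at all, only the one-sentence remark preceding the proposition that the result ``is a consequence of the ellipticity of $\mathcal{O}_{g}$ in the H{\"o}rmander class $S(\langle p,\eta\rangle^2,\frac{dp^2+d\eta^2}{\langle p,\eta\rangle^2})$ combined with $\|\mathcal{O}_g u\|_{L^2}\geq \frac{d}{2}\|u\|_{L^2}$.'' You have simply unpacked that sentence, so there is nothing to compare.

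One small remark: your handling of the case ``$\|g_2-g_1\|$ not small'' via the first assertion produces a constant that depends on $g_2$ through $C_{g_1,g_2}$, so strictly speaking it does not yield a $C_{g_1}$ depending on $g_1$ alone. This is not a flaw in your argument but rather a slight imprecision already present in the paper's statement; in the only place the proposition is invoked (Proposition~\ref{pr:ChangingOToTheEuclideanO}), $g_1=\mathrm{Id}$ and $g_2$ ranges over a fixed compact set of metrics close to $\mathrm{Id}$, so the issue is immaterial there.
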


\section{Complex Airy Operator}

\label{sec:euconedim}

We consider here the case of the one dimensional euclidean case of which the properties are due to the fact that the complex Airy operator has a compact resolvent and an empty spectrum.\\
Set
$$
P_{1}(\xi,\lambda)=i(p_{1}\xi-\lambda)-\frac{1}{2}\Delta_{p_{1}}
$$
with $\xi,\lambda\in \mathbb{R}$\,. It is  maximal accretive with $D(P_{1}(\xi,\lambda))=\left\{u\in L^{2}(\mathbb{R},dp), P_{1}(\xi,\lambda)u\in L^{2}(\mathbb{R},dp)\right\}$ in which $\mathcal{C}^{\infty}_{0}(\mathbb{R})$ is dense (with respect to the graph norm).\\
\begin{proposition}\label{eq:EstimateOn1DComplexAiryOperator}
  There exists $C_{0}\geq 1$\,, such that for all $(\xi,\lambda)\in \mathbb{R}^{2}$ the inequality
   \begin{equation}
    \label{eq:hypD1}
C_{0}\|(1+P_{1}(\xi,\lambda))u\|\geq \|\frac{1}{2}\Delta_{p}u\|+\|(p_{1}\xi-\lambda)u\|+(|\xi|^{2/3}+1)\|u\|+\|
\left(
  \frac{|\lambda|}{1+|p_{1}|}
\right)^{\frac{2}{3}}u\|\,,
\end{equation}
holds for all $u\in D(P_{1}(\xi,\lambda))$\,.
\end{proposition}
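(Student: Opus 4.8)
The statement to prove is the subelliptic estimate \eqref{eq:hypD1} for the one‑dimensional complex Airy operator $P_1(\xi,\lambda)=i(p_1\xi-\lambda)-\tfrac12\Delta_{p_1}$. The plan is to reduce everything to a single parameter by scaling, then to treat the genuinely one‑dimensional complex Airy operator $-\tfrac12\partial_t^2+it$ by its well‑known structural properties: compact resolvent and empty spectrum. First I would perform the affine change of variable $p_1\mapsto t$ that absorbs $\lambda$: writing $t = |\xi|^{1/3}p_1 - |\xi|^{-2/3}\lambda$ (when $\xi\neq 0$) turns $P_1(\xi,\lambda)$ into $|\xi|^{2/3}$ times $-\tfrac12\partial_t^2 + i\,\mathrm{sgn}(\xi)\,t$, i.e. the standard complex Airy operator $\mathcal{A}=-\tfrac12\partial_t^2+it$ (up to complex conjugation, which is irrelevant for norm estimates). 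The term $(|\xi|^{2/3}+1)\|u\|$ in the right‑hand side is produced by this scaling together with handling the region $|\xi|\le 1$ separately by a trivial integration by parts (for $|\xi|\lesssim 1$, $\|(1+P_1)u\|\gtrsim \|u\| + \|\tfrac12\Delta_p u\|$ follows directly from $\Real\langle (1+P_1)u,u\rangle \ge \|u\|^2 + \tfrac12\|\partial_p u\|^2$ and Cauchy–Schwarz, while $\|(p_1\xi-\lambda)u\|$ and the last term are controlled using that $P_1-\Real P_1 = i(p_1\xi-\lambda)$ is the imaginary part).

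The heart of the matter is therefore the estimate for $\mathcal{A}=-\tfrac12\partial_t^2+it$: there is a universal constant $C_0$ so that $C_0\|(1+\mathcal{A})u\|_{L^2(\mathbb{R})}\ge \|\partial_t^2 u\| + \|t u\| + \|u\| + \|(1+|t|)^{-2/3}\,c\,u\|$ for any real constant $c$ — one then sets $c=|\lambda|\,|\xi|^{-2/3}$ after scaling and keeps track of how $(1+|p_1|)$ rescales to $(1+|t|)$. To prove the $\mathcal{A}$‑estimate I would argue as follows. The bounds $\|\partial_t^2 u\|\lesssim \|(1+\mathcal{A})u\|$ and $\|tu\|\lesssim\|(1+\mathcal{A})u\|$ are standard maximal‑type estimates for $\mathcal{A}$: from $\|(1+\mathcal{A})u\|^2 = \|(1-\tfrac12\partial_t^2)u\|^2 + \|tu\|^2 + 2\Real\langle(1-\tfrac12\partial_t^2)u, itu\rangle$ and the commutator $[\,-\tfrac12\partial_t^2, it\,] = -i\partial_t$, the cross term is $2\Real\langle u,\tfrac12\partial_t u\rangle + (\text{boundary})$ which is bounded, giving $\|(1-\tfrac12\partial_t^2)u\|^2 + \|tu\|^2 \lesssim \|(1+\mathcal{A})u\|^2 + \|u\|^2$; and $\|u\|\lesssim \|(1+\mathcal{A})u\|$ because $\Real\langle(1+\mathcal{A})u,u\rangle = \|u\|^2 + \tfrac12\|\partial_t u\|^2 \ge \|u\|^2$. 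This already yields $\|\partial_t^2 u\| + \|tu\| + \|u\| \lesssim \|(1+\mathcal{A})u\|$.

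The one subtle term is $\|(1+|t|)^{-2/3}\,|\lambda|\,|\xi|^{-2/3}\,u\|$, equivalently (after scaling) showing $\|(1+|t|)^{-2/3}u\|\lesssim \|(1+\mathcal{A})u\| + \text{(already controlled quantities)}$; the weight $(1+|t|)^{-2/3}$ is a sub‑elliptic gain concentrated where $\xi p_1\approx\lambda$, i.e. near $t=0$ after scaling. The clean way is to use that $\mathcal{A}$ has empty spectrum and compact resolvent (these are the properties flagged in the section preamble), so $(1+\mathcal{A})^{-1}$ is a well‑defined bounded operator, and in fact $\|(1+\mathcal{A})^{-1}\|_{\mathcal L(L^2)}$ and the smoothing bounds of $(z+\mathcal{A})^{-1}$ can be quantified by a rescaling argument in $z$: for $\Real z\ge 0$ one has $\|(z+\mathcal{A})^{-1}\| \lesssim \langle z\rangle^{-1}$ by the accretivity estimate, while the resolvent maps $L^2$ into the domain with the weighted gain. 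Concretely I would write $u = (1+\mathcal{A})^{-1}f$ with $f=(1+\mathcal{A})u$ and prove the pointwise‑in‑frequency bound for $\widetilde{P}_\xi$ directly using the fundamental solution / Airy‑function representation: the kernel of $(1+P_1(\xi,\cdot))^{-1}$ is explicit in terms of Airy functions, and the decay of the Airy function $\mathrm{Ai}$ gives exactly the $(1+|t|)^{-2/3}$ weight. The main obstacle I anticipate is making this last weighted estimate \emph{uniform} in all parameters $(\xi,\lambda)$ with a single universal constant: one must be careful that the scaling $t=|\xi|^{1/3}p_1-|\xi|^{-2/3}\lambda$ degenerates as $\xi\to0$, so the case $|\xi|\lesssim 1$ (where $|\lambda|(1+|p_1|)^{-2/3}$ need not be small and must instead be absorbed into $\|tu\|=\|(p_1\xi-\lambda)u\|$ plus $\|u\|$ via a crude estimate, distinguishing $|\lambda|\le 1$ from $|\lambda|\ge 1$) must be handled by a separate elementary argument rather than by the scaling, and then the constants from the two regimes combined.
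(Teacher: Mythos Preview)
Your treatment of the first three terms on the right of \eqref{eq:hypD1} --- via accretivity $\Real\langle(1+P_1)u,u\rangle\ge\|u\|^2$, the commutator expansion of $\|P_1 u\|^2$, and the affine scaling to the model $\mathcal{A}=-\tfrac12\partial_t^2+it$ to extract the $|\xi|^{2/3}$ gain --- is essentially the paper's argument.

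For the fourth (weighted) term, however, your primary plan has a gap. Under the substitution $t=|\xi|^{1/3}p_1-|\xi|^{-2/3}\lambda$ one has $p_1=|\xi|^{-1/3}t+|\xi|^{-1}\lambda$, so the weight $1+|p_1|$ does \emph{not} become a multiple of $1+|t|$: there is an additive shift by $|\xi|^{-1}\lambda$ which can be arbitrarily large. Thus ``keeping track of how $(1+|p_1|)$ rescales to $(1+|t|)$'' is not a rescaling at all, and the Airy--kernel decay does not directly produce the required bound; carrying this through would need a genuinely different argument. The paper avoids this issue entirely: it takes a dyadic partition of unity in $p_1$ (so $|p_1|\sim 2^\ell$ and the weight becomes the constant $2^{-\ell}$ on each piece), rescales $p_1\mapsto 2^{-\ell}p_1$ to get $P_1(\xi 2^{3\ell},\lambda 2^{2\ell})$ on functions with $|p_1|\le C$, and then splits into two cases: if $|\lambda|2^{2\ell}\gg|\xi|2^{3\ell}$ then $|p_1\xi 2^{3\ell}-\lambda 2^{2\ell}|\ge\tfrac12|\lambda|2^{2\ell}$ on the support and the imaginary part alone gives $(|\lambda|/2^\ell)^{2/3}\|u_\ell\|$; if $|\lambda|2^{2\ell}\lesssim|\xi|2^{3\ell}$ then $(|\lambda|/2^\ell)^{2/3}\lesssim|\xi|^{2/3}$ and the already--established $|\xi|^{2/3}$ bound suffices.

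It is worth noting that your ``crude estimate'', which you reserve for $|\xi|\lesssim 1$, in fact works uniformly for all $(\xi,\lambda)$ and is simpler than either approach. From the pointwise inequality
\[
\frac{|\lambda|}{1+|p_1|}\;\le\;\frac{|p_1\xi-\lambda|}{1+|p_1|}+\frac{|p_1||\xi|}{1+|p_1|}\;\le\;|p_1\xi-\lambda|+|\xi|
\]
and the subadditivity of $t\mapsto t^{2/3}$, one gets immediately
\[
\Bigl\|\Bigl(\tfrac{|\lambda|}{1+|p_1|}\Bigr)^{2/3}u\Bigr\|\;\le\;\|\,|p_1\xi-\lambda|^{2/3}u\|+|\xi|^{2/3}\|u\|\;\le\;\|(p_1\xi-\lambda)u\|+\|u\|+|\xi|^{2/3}\|u\|,
\]
all three of which are already controlled by $C_0\|(1+P_1(\xi,\lambda))u\|$.
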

\begin{proof}
  The lower bound
  $$
  \forall u\in D(P_{1}(\xi,\lambda))\,,
\|(1+P_{1}(\xi,\lambda))u\|^{2}\geq \|u\|^{2}+\|P_{1}(\xi,\lambda)u\|^{2}
$$
is due to the accretivity of $P_{1}(\xi,\lambda)$\,.\\
With
\begin{eqnarray*}
\|P_{1}(\xi,\lambda)u\|^{2}&\geq& \|\frac{1}{2}\Delta_{p_{1}}u\|^{2}+\|(\lambda-p_{1}\xi)u\|^{2}-|\langle \xi u\,,\, D_{p_{1}}u\rangle|
  \\
                           &\geq& \|\frac{1}{2}\Delta_{p_{1}}u\|^{2}+\|(\lambda-p_{1}\xi)u\|^{2}-[c_4\||\xi|^{2/3}u\|^2+\frac{1}{8}\|\Delta_{p_1}u\|^2]
  \\
  &\geq& \frac{1}{2}\|\frac{1}{2}\Delta_{p_{1}}u\|^{2}+\|(\lambda-p_{1}\xi)u\|^{2}-c'_{4}\||\xi|^{2/3}u\|^{2}\,.
\end{eqnarray*}
Consider now the lower bound of $\|P_{1}(\xi,\lambda)u\|$ by $\||\xi|^{2/3}u\|$\,. It is obviously true for $\xi=0$\,. For $\xi\neq 0$\,, the operator
$$
P_{1}(\xi,\lambda)=i(p-p_{0})\xi-\frac{1}{2}\Delta_{p_{1}}\quad,\quad p_{0}=\frac{\lambda}{\xi}\,,
$$
is unitarily equivalent to $|\xi|^{2/3}P_{1}(1,0)=|\xi|^{2/3}(ip_{1}-\frac{1}{2}\Delta_{p_{1}})$ and there exists $c_{1}>0$ such that
$$
\forall u\in D(P_1(\xi,\lambda))\,, \quad \|P_{1}(\xi,\lambda)u\|\geq c_{1}|\xi|^{2/3}\|u\|\,.
$$
There exists a constant $C_{1}\geq 1$ such that
$$
\forall (\xi,\lambda)\in \mathbb{R}^{2}\,,\forall u\in D(P_{1}(\xi,\lambda))\,,\quad
C_{1}\|P_{1}(\xi,\lambda)u\|^{2}\geq \|u\|^{2}+\|\frac{1}{2}\Delta_{p_{1}}u\|^{2}+\|(p_{1}\xi-\lambda)u\|^{2}+\||\xi|^{2/3}u\|^{2}\,.
$$
The lower bound \eqref{eq:hypD1} is then obviously true for $|\lambda|\leq 1$ and it suffices to consider the case $\lambda\geq 1$\,.\\
Take a dyadic partition of unity $\chi_{0}^{2}(\varepsilon p_{1})+\sum_{\ell=1}^{\infty}\chi^{2}(\varepsilon 2^{-\ell}p_{1})\equiv 1$ with $\mathrm{supp\,} \chi_{0}\cup \mathrm{supp\,}\chi\subset [-4,4]$ and 
  for all $ \ell \in \mathbb{N}$, we define $\chi_{\ell}$ by 
$$\forall t\in \mathbb{R}_+,\quad \chi_{\ell}(t) = \left\lbrace
  \begin{array}{ll}
    \chi(\varepsilon 2^{-\ell}t) & \text{if } \ell \neq 0\\
    \chi_0(\varepsilon t) & \text{if } \ell=0  \,.
  \end{array}\right.
$$
We get
$$
\|P_{1}(\xi,\lambda)u\|^{2}-\sum_{\ell=0}^{\infty}\|P_{1}(\xi,\lambda)\chi_{\ell}u\|^{2}\leq C_{\chi}\varepsilon^{2}\left[\|\partial_{p_{1}}u\|^{2}+\|u\|^{2}\right]
\leq 16C_{1}C_{\chi}\varepsilon^{2}\|P_{1}(\xi,\lambda)u\|^{2}\,
$$
for some constant $C_{\chi}>0$ determined by the pair $(\chi_0,\chi)$\,.
By taking $\varepsilon\leq \frac{1}{8\sqrt{C_{1}C_{\chi}}}$ it suffices to consider
$$
\|P_{1}(\xi,\lambda)(\chi_{\ell}u)\|=\frac{1}{2^{2\ell}}\|P_{1}(\xi 2^{3\ell},\lambda 2^{2\ell})u_{\ell}\|
$$
with $u_{\ell}(p_{1})=2^{\ell/2}(\chi_{\ell}u)(2^{\ell}p_{1})$ and $\mathrm{supp\,} u_{\ell}\subset [-4\varepsilon^{-1},4\varepsilon^{-1}]$\,.\\
There are two cases
\begin{itemize}
\item $|\lambda 2^{2\ell}|\geq 2(4\varepsilon^{-1}2^{3\ell}|\xi|)$ and then
$$
\forall p_{1}\in \mathrm{supp\,} u_{\ell}\,,\quad|\lambda 2^{2\ell}-p_{1}\xi 2^{3\ell}|\geq |\lambda|2^{2\ell}-4\varepsilon^{-1}2^{3\ell}|\xi|
\geq \frac{|\lambda| 2^{2\ell}}{2}
$$
This implies
$$
C_{1}\|\frac{1}{2^{2\ell}}P_1(\xi 2^{3\ell},\lambda 2^{2\ell})u_{\ell}\|^{2}\geq \|\frac{|\lambda|2^{2\ell}}{2\times 2^{2\ell}}u_{\ell}\|^{2}\geq \|\frac{|\lambda|^{2}}{2}u_{\ell}\|^{2}
$$
Finally with $|\lambda|\geq 1$ and $|\lambda|\geq |\lambda|^{2/3}\geq \left(\frac{|\lambda|}{2^{\ell}}\right)^{2/3}$ we obtain
$$
4C_{1}\|\frac{1}{2^{2\ell}}P(\xi 2^{3\ell},\lambda 2^{2\ell})u_{\ell}\|^{2}\geq \|\left(\frac{|\lambda|}{2^{\ell}}\right)^{2/3}u_{\ell}\|^{2}\,,
$$
in this case.
\item $|\lambda 2^{2\ell}|\leq 2 (4\varepsilon^{-1}2^{3\ell}|\xi|)$ and then the lower bound
$$
C_{1}\|\frac{1}{2^{2\ell}}P_{1}(\xi 2^{3\ell},\lambda 2^{2\ell})u_{\ell}\|^{2}\geq\||\xi|^{2/3}u_{\ell}\|^{2}
$$
implies with $|\xi|\geq \frac{4^{-1}\varepsilon |\lambda|}{2^{\ell+1}}$
$$
C_{1}\|\frac{1}{2^{2\ell}}P_{1}(\xi 2^{3\ell},\lambda 2^{\ell})u_{\ell}\|^{2}\geq \frac{4^{-4/3}\varepsilon^{4/3}}{16}\|\left(\frac{|\lambda|}{2^{\ell}}\right)^{2/3}u_{\ell}\|^{2} 
$$
\end{itemize}
We conclude with the uniform equivalence
$$
C_{\varepsilon}^{-1}\langle p\rangle\leq 2^{\ell}\leq C_{\varepsilon}\langle p\rangle
$$
on $\mathrm{supp\,}u_{\ell}$\, where $\varepsilon\leq\frac{1}{8\sqrt{C_1 C_\psi}}$ and all the other constants are actually universal constants once  the pair $(\chi_0,\chi)$ for the dyadic partition of unity is fixed. 
\end{proof}

\section{Result used to localize the operator}

In this appendix $M$ is a manifold endowed with a volume density  $d\mathrm{vol}$ and  $\pi_{E}:E\to M$ is a smooth complex vector bundle endowed with a hermitian metric $g^{E}$\,, so that $L^{2}(M;E)$ is well defined with the norm $\|~\|_{L^{2}(M;E)}$ simply denoted by $\|~\|$\,.\\

For a differential operator $P$ acting on $\mathcal{C}^{\infty}_{0}(M;E)$ and a function $\chi\in \mathcal{C}^{\infty}(M)$\,, the equality $\chi P= P\chi -\left[P,\chi\right]$ and the triangular inequality give
$$\forall u \in \mathcal{C}^{\infty}_{0}(M;E) ,\quad ||P\chi u || - ||[P,\chi]u||   \leq || \chi Pu || \leq ||P \chi u || + ||[P,\chi] u ||\,.$$
It then follows that
$$\forall u \in \mathcal{C}^{\infty}_{0}(M;E),\quad \frac{1}{2}||P\chi u ||^{2} - ||[P,\chi]u||^{2} \leq ||\chi P u||^{2} \leq 2||P \chi u||^{2} + 2||[P,\chi]u||^{2} .$$
After three iterations with the additional assumptions that third order commutators vanish, which is relevant for differential operators of order less or equal to $2$\,, we get the following statement.
\begin{proposition}
  Let $P$ be a differential operator acting on $\mathcal{C}^{\infty}_{0}(M;E)$ and let $\chi_{1},\chi_{2}$ and $\chi_{3}$ be three $\mathcal{C}^{\infty}$ functions such that
  \begin{equation}
    \label{eq:Cond3com}
    [[[P,\chi_{1}],\chi_{2}],\chi_{3}]=0\,.
  \end{equation}
  The following inequalities hold for $u$ in  $\mathcal{C}^{\infty}_{0}(M;E)$
  $$ ||\chi_{1}\chi_{2}\chi_{3} Pu ||^{2}  \leq 2||\chi_{2}\chi_{3}P\chi_{1}u ||^{2} + 4 ||\chi_{3}[P,\chi_{1}]\chi_{2}u ||^{2}+8 ||[[P,\chi_{1} ],\chi_{2}]\chi_{3} u ||^{2}  $$
  and
  $$ ||\chi_{1}\chi_{2}\chi_{3} Pu ||^{2}  \geq \frac{1}{2}||\chi_{2}\chi_{3}P\chi_{1}u ||^{2} - 2 ||\chi_{3}[P,\chi_{1}]\chi_{2}u ||^{2} -4 ||[[P,\chi_{1} ],\chi_{2}]\chi_{3} u ||^{2}   .$$
\end{proposition}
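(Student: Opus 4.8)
The statement to be proved is the three-cut-off commutator inequality in the final appendix proposition, and the plan is to obtain it by iterating the elementary two-term inequality
$$\tfrac12\|P\chi u\|^{2}-\|[P,\chi]u\|^{2}\le\|\chi Pu\|^{2}\le 2\|P\chi u\|^{2}+2\|[P,\chi]u\|^{2},$$
which has already been recorded just above the statement, a total of three times. First I would apply it with $\chi=\chi_{1}$ to the section $\chi_{2}\chi_{3}u$: this gives $\|\chi_{1}\chi_{2}\chi_{3}Pu\|^{2}\le 2\|P\chi_{1}(\chi_{2}\chi_{3}u)\|^{2}+2\|[P,\chi_{1}]\chi_{2}\chi_{3}u\|^{2}$, and since multiplication operators commute, $P\chi_{1}(\chi_{2}\chi_{3}u)=\chi_{2}\chi_{3}(P\chi_{1})u$ up to reorganizing factors, while $[P,\chi_{1}]\chi_{2}\chi_{3}u=\chi_{3}\,[P,\chi_{1}]\chi_{2}u$ because $[P,\chi_{1}]$ is a differential operator of one order lower whose coefficients are supported where the relevant cut-offs live and $\chi_{3}$ is a function. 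So after the first step we have reduced to bounding $\|\chi_{2}\chi_{3}(P\chi_{1})u\|^{2}$ and $\|\chi_{3}[P,\chi_{1}]\chi_{2}u\|^{2}$.

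Next I would apply the same two-term inequality again, now treating $\chi_{2}$ as the cut-off and $P\chi_{1}$, respectively $[P,\chi_{1}]$, as the operator, applied to $\chi_{3}u$. This produces terms $\|(P\chi_{1})\chi_{2}(\chi_{3}u)\|^{2}$, $\|[P\chi_{1},\chi_{2}]\chi_{3}u\|^{2}=\|[P,\chi_{1}]\chi_{2}'\|$-type expressions, and similarly for the commutator branch; the key algebraic identity to invoke is $[P\chi_{1},\chi_{2}]=[P,\chi_{2}]\chi_{1}=\chi_{1}[P,\chi_{2}]$ (again because $\chi_{1},\chi_{2}$ are functions) and $[[P,\chi_{1}],\chi_{2}]$ for the double-commutator branch. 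Collecting, one is left with $\|\chi_{2}\chi_{3}P\chi_{1}u\|$ type terms, first-order commutator terms $\|\chi_{3}[P,\chi_{1}]\chi_{2}u\|$, and double commutator terms $\|[[P,\chi_{1}],\chi_{2}]\chi_{3}u\|$, together with a remaining triple-commutator term $\|[[[P,\chi_{1}],\chi_{2}],\chi_{3}]u\|$, which vanishes by hypothesis \eqref{eq:Cond3com}. A third application of the inequality, or rather just the triangle inequality at the innermost level, disposes of the last layer. Bookkeeping the factors of $2$ through the three iterations ($2$, then $2\cdot2=4$, then $2\cdot4=8$) yields exactly the constants $2,4,8$ in the upper bound, and the matching subtraction of the lower-order terms with constants $-2,-4$ (and the vanishing triple commutator) yields the lower bound; one uses $\frac12$ at the outermost step for the lower bound, consistent with the displayed form.

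The only genuinely non-routine point is the commutator reorganization: one must be careful that because $\chi_1,\chi_2,\chi_3$ are scalar multiplication operators they commute with each other and with any multiplication operator, so that expressions like $[P,\chi_1]\chi_2\chi_3$ can be rewritten as $\chi_3[P,\chi_1]\chi_2$, and that $[[P,\chi_1],\chi_2]$ has order $\le 0$ (is a bounded multiplication operator) when $P$ has order $\le 2$, which is precisely what makes the hypothesis \eqref{eq:Cond3com} automatic in the applications and is already flagged in the text. I expect the main bookkeeping obstacle to be simply tracking which cut-off ends up on the outside of which factor at each stage and making sure the powers of $2$ accumulate as claimed; there is no analytic difficulty, since everything reduces to the triangle inequality and the trivial inequality $(a+b)^2\le 2a^2+2b^2$ applied repeatedly. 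I would therefore present the argument as: state the two basic inequalities, apply them three times with the cut-offs in the order $\chi_1,\chi_2,\chi_3$, use commutator reorganization and \eqref{eq:Cond3com} to drop the triple commutator, and read off the constants.
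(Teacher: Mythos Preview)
Your overall strategy---iterate the elementary two-term inequality three times---is exactly the paper's approach (the paper says only ``After three iterations\ldots''), and the final constants $2,4,8$ and $\tfrac12,-2,-4$ come out as you describe. However, your description of the first step contains a genuine algebraic error that would derail the computation if executed literally.

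You write that applying the basic inequality ``with $\chi=\chi_1$ to the section $\chi_2\chi_3 u$'' gives $\|\chi_1\chi_2\chi_3 Pu\|^2\le 2\|P\chi_1(\chi_2\chi_3 u)\|^2+\cdots$, and then that $P\chi_1(\chi_2\chi_3 u)=\chi_2\chi_3(P\chi_1)u$. Both claims are wrong: substituting $\chi_2\chi_3 u$ for $u$ in the basic inequality gives $\|\chi_1 P(\chi_2\chi_3 u)\|^2$ on the left, not $\|\chi_1\chi_2\chi_3 Pu\|^2$; and $P$ does not commute with $\chi_2\chi_3$, so $P\chi_1\chi_2\chi_3 u\neq \chi_2\chi_3 P\chi_1 u$. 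Similarly, you cannot freely move $\chi_3$ past $[P,\chi_1]$ as you do in the line $[P,\chi_1]\chi_2\chi_3 u=\chi_3[P,\chi_1]\chi_2 u$, since $[[P,\chi_1],\chi_3]$ is not assumed to vanish. The correct iteration keeps the remaining cut-offs on the \emph{left} throughout: from $\chi_1 P=P\chi_1-[P,\chi_1]$ one gets $\chi_2\chi_3\chi_1 Pu=\chi_2\chi_3 P\chi_1 u-\chi_2\chi_3[P,\chi_1]u$, hence $\|\chi_1\chi_2\chi_3 Pu\|^2\le 2\|\chi_2\chi_3 P\chi_1 u\|^2+2\|\chi_2\chi_3[P,\chi_1]u\|^2$; then repeat on the second term with $\chi_2[P,\chi_1]=[P,\chi_1]\chi_2-[[P,\chi_1],\chi_2]$ (still with $\chi_3$ on the left), and finally use $\chi_3[[P,\chi_1],\chi_2]=[[P,\chi_1],\chi_2]\chi_3$ by the hypothesis \eqref{eq:Cond3com}. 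With this ordering the bookkeeping is straightforward and yields the stated bounds.
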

Applying the above proposition with a locally finite  quadratic partition of unity $(\chi_{\ell})_{\ell\in L}$\,, $\chi_{\ell}\in \mathcal{C}^{\infty}_{0}$\,, $\sum_{\ell\in L}\chi_{\ell}^{2}\equiv 1$\,,  and summing over theindices
$\ell_{1},\ell_{2}$ and $\ell_{3}$ leads to 

\begin{equation}
  \label{eq:upperBound}
  ||Pu||^{2} \leq 2 \sum_{\ell_{1}}||P\chi_{\ell_{1}} u ||^{2} + 4 \sum_{\ell_{1},\ell_{2}} ||[P,\chi_{\ell_{2}}]\chi_{\ell_{1}}u||^{2} + 8 \sum_{\ell_{1},\ell_{2},\ell_{3}}||[[P,\chi_{\ell_{2}}],\chi_{\ell_{3}}]\chi_{\ell_{1}}u ||^{2}
\end{equation}
and
\begin{equation}
  \label{eq:lowerBound}
  ||Pu||^{2} \geq \frac{1}{2} \sum_{\ell_{1}}||P\chi_{\ell_{1}} u ||^{2} -2 \sum_{\ell_{1},\ell_{2}} ||[P,\chi_{\ell_{2}}]\chi_{\ell_{1}}u||^{2} -4 \sum_{\ell_{1},\ell_{2},\ell_{3}}||[[P,\chi_{\ell_{2}}],\chi_{\ell_{3}}]\chi_{\ell_{1}}u ||^{2}
\end{equation}
for all $u\in \mathcal{C}^{\infty}_{0}(M;E)$\,.\\
With (\ref{eq:upperBound}) and (\ref{eq:lowerBound}) we have

\begin{corollary}\label{Cor:equivalenceOfQuantities}
  Let $(\chi_{\ell})_{\ell\in L}$ be a family of functions such that
  $$\sum_{\ell\in L} \chi_{\ell}^{2} = 1 $$
  and let $P$ be a second order differential operator such that 
  \begin{equation}
      \label{eq:equivH}
      \forall u \in \mathcal{C}^{\infty}_{0}(M;E), \quad\frac{r}{2}\sum_{\ell_{1}}||P\chi_{\ell_{1}} u ||^{2} \geq 2 \sum_{\ell_{1},\ell_{2}} ||[P,\chi_{\ell_{2}}]\chi_{\ell_{1}}u||^{2} +4 \sum_{\ell_{1},\ell_{2},\ell_{3}}||[[P,\chi_{\ell_{2}}],\chi_{\ell_{3}}]\chi_{\ell_{1}}u ||^{2} 
  \end{equation}
for some $r\in (0,1)$\,. Then
  \begin{equation}
    \label{eq:equiv1}
    \forall u\in \mathcal{C}^{\infty}_{0}(M;E)\,,\quad (2+r) \sum_{\ell\in L}||P\chi_{\ell}u||^{2}\geq ||Pu||^{2}\geq \frac{1-r}{2} \sum_{\ell}||P\chi_{\ell}u||^{2}.
  \end{equation}
\end{corollary}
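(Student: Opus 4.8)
The statement to prove is Corollary~\ref{Cor:equivalenceOfQuantities}: under the partition-of-unity hypothesis $\sum_{\ell} \chi_\ell^2 = 1$ and the smallness assumption \eqref{eq:equivH} on the commutator terms relative to $\sum_{\ell_1} \norm{P\chi_{\ell_1}u}^2$, one gets the two-sided equivalence \eqref{eq:equiv1} between $\norm{Pu}^2$ and $\sum_\ell \norm{P\chi_\ell u}^2$. The plan is to simply combine the two inequalities \eqref{eq:upperBound} and \eqref{eq:lowerBound} — which are proved just above the corollary by iterating the elementary commutator estimate three times and summing over a locally finite quadratic partition of unity — with the hypothesis \eqref{eq:equivH}.

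First I would write down the lower bound. From \eqref{eq:lowerBound},
\[
\norm{Pu}^2 \geq \tfrac12 \sum_{\ell_1} \norm{P\chi_{\ell_1}u}^2 - 2\sum_{\ell_1,\ell_2}\norm{[P,\chi_{\ell_2}]\chi_{\ell_1}u}^2 - 4\sum_{\ell_1,\ell_2,\ell_3}\norm{[[P,\chi_{\ell_2}],\chi_{\ell_3}]\chi_{\ell_1}u}^2,
\]
and by \eqref{eq:equivH} the sum of the last two (negative) terms is bounded below by $-\tfrac{r}{2}\sum_{\ell_1}\norm{P\chi_{\ell_1}u}^2$, so
\[
\norm{Pu}^2 \geq \tfrac{1-r}{2} \sum_\ell \norm{P\chi_\ell u}^2.
\]
For the upper bound, from \eqref{eq:upperBound} one has
\[
\norm{Pu}^2 \leq 2\sum_{\ell_1}\norm{P\chi_{\ell_1}u}^2 + 4\sum_{\ell_1,\ell_2}\norm{[P,\chi_{\ell_2}]\chi_{\ell_1}u}^2 + 8\sum_{\ell_1,\ell_2,\ell_3}\norm{[[P,\chi_{\ell_2}],\chi_{\ell_3}]\chi_{\ell_1}u}^2.
\]
The coefficients $4$ and $8$ on the commutator sums in \eqref{eq:upperBound} are exactly twice those appearing on the left of \eqref{eq:equivH} (which are $2$ and $4$), so \eqref{eq:equivH} gives $4\sum\norm{[P,\chi_{\ell_2}]\chi_{\ell_1}u}^2 + 8\sum\norm{[[P,\chi_{\ell_2}],\chi_{\ell_3}]\chi_{\ell_1}u}^2 \leq r\sum_{\ell_1}\norm{P\chi_{\ell_1}u}^2$, whence $\norm{Pu}^2 \leq (2+r)\sum_\ell\norm{P\chi_\ell u}^2$. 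This is precisely \eqref{eq:equiv1}.

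There is essentially no obstacle here: the corollary is a bookkeeping consequence of the two displayed inequalities preceding it together with the hypothesis, and the only thing to be careful about is matching the numerical coefficients ($2$ and $4$ in the hypothesis versus $4$ and $8$ in \eqref{eq:upperBound}; $2$ and $4$ in \eqref{eq:lowerBound}). The genuinely substantive content lies upstream — namely the proposition on iterated commutators that yields \eqref{eq:upperBound}–\eqref{eq:lowerBound} — and that is already established in the excerpt, so I would cite it directly rather than reprove it. I would present the proof as two short displayed chains, one for each side of \eqref{eq:equiv1}.
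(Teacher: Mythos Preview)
Your proposal is correct and follows exactly the approach the paper intends: the corollary is stated immediately after \eqref{eq:upperBound} and \eqref{eq:lowerBound} with the lead-in ``With \eqref{eq:upperBound} and \eqref{eq:lowerBound} we have'', and no separate proof is given because the result is an immediate bookkeeping combination of those two inequalities with the hypothesis \eqref{eq:equivH}. Your matching of the coefficients (the $4,8$ in \eqref{eq:upperBound} being twice the $2,4$ in \eqref{eq:equivH}, and the $2,4$ in \eqref{eq:lowerBound} matching directly) is precisely what is needed.
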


\section{$N-\mathrm{loc}$ and $N-\mathrm{comp}$ functional spaces}
\label{sec:Qloccomp}

Let $f:M\to N$ be a $\mathcal{C}^{\infty}$ map from the manifold $M$ to the manifold $N$\,, let $E\stackrel{\pi_{E}}{\to} M$  be a vector bundle and $\mathcal{F}(M;E)$ be a locally convex space of sections continuously embedded in $\mathcal{D}'(M;E)$ (abbreviated as a functional space of sections) such that for any $\chi\in \mathcal{C}^{\infty}_{0}(N;\mathbb{R})$ the multiplication by $\chi\circ f$ is a continuous
endomorphism of $\mathcal{F}(M;E)$\,. The notation $\mathcal{F}_{f-\mathrm{loc}}(M;E)$
will denote the  set of sections $s$ of $E$ such that
$$
\forall \chi\in \mathcal{C}^{\infty}_{0}(N;\mathbb{R})\,,\quad [\chi\circ f]s\in \mathcal{F}(M;E)\,.
$$
Once $\mathcal{F}_{f-\mathrm{loc}}(M;E)$ is defined $\mathcal{F}_{f-\mathrm{comp}}(M;E)$ is the set of sections $s\in \mathcal{F}_{f-\mathrm{loc}}(M;E)$ such that there exists $\chi\in \mathcal{C}^{\infty}_{0}(N;\mathbb{R})$ with $s=[\chi\circ f]s$\,. For $s\in \mathcal{F}_{f-\mathrm{loc}}(M;E)$  the $f$-support of $s$ is defined by
$$
f-\mathrm{supp}\,s=\mathop{\bigcap}_{
  \scriptsize\begin{array}[c]{c}
    F\subset N\\
    F~\text{closed}\\
    s\big|_{f^{-1}(N\setminus F)\equiv 0}
  \end{array}
}F=\overline{f(\mathrm{supp}\, s)}\,.
$$
For a compact subset $K$ of $N$\,
and $\mathcal{F}_{f-K}(M;E)=\left\{s\in \mathcal{F}_{f-\mathrm{loc}}(M;E)\,, f-\mathrm{supp}\,s\subset K\right\}$ and
$$
\mathcal{F}_{f- \mathrm{comp}}(M;E)=
\mathop{\bigcup_{K~\text{compact in}~N}\mathcal{F}_{f-K}(M;E)}\,.
$$
When the topology on $\mathcal{F}(M;E)$ is known, the topology on $\mathcal{F}_{f-\mathrm{loc}}(M;E)$ is the initial topology for the collection of maps $(s\mapsto [\chi\circ f]s)_{\chi\in \mathcal{C}^{\infty}_{0}(N;\mathbb{R})}$\,. This induces the topology on $\mathcal{F}_{f-K}(M;\mathcal{E})$ and the topology on $\mathcal{F}_{f-\text{comp}}(M;E)$ is the inductive limit topology.\\

We will use this in a particular case.

\begin{definition}
  \label{de:Qloccomp} Let $M=T^{*}N$ or $M=T^{*}(T^{*}N)$  be endowed with the natural projection $\pi:M\to N$ and let $\mathcal{F}(M;E)\subset \mathcal{D}'(M;E)$ be a functional space of sections of the vector bundle $E\stackrel{\pi_{E}}{\to}M$ which is a $\mathcal{C}^{\infty}_{0}(N;\mathbb{R})$-module. We will use in both cases the notation $\mathcal{F}_{N-\mathrm{loc}}(M;E)=\mathcal{F}_{\pi-\mathrm{loc}}(M;E)$\,, $N-\mathrm{supp}\, s=\pi-\mathrm{supp}\,s\subset N$\,,  $\mathcal{F}_{N-K}(M;\mathcal{E})=\mathcal{F}_{\pi-K}(M;E)$\,, $\mathcal{F}_{N-\mathrm{comp}}(M;E)=\mathcal{F}_{\pi-\mathrm{loc}}(M;E)$\,.
\end{definition}

When $N$ is a locally compact manifold, introducing a locally finite atlas and a subordinate partition of unity $\sum_{i\in \mathcal{I}}\chi_{i}(q)\equiv 1$ reduces the characterization of $s\in \mathcal{F}_{N-\mathrm{loc}}(M;E)$\,, $M=T^{*}N$ or $M=T^{*}(T^{*}N)$ to the meaning of $s\in \mathcal{F}_{\Omega-\mathrm{loc}}(M;E)$\,,  $M=T^{*}\Omega=\Omega\times \mathbb{R}^{d}$ or $M=T^{*}(T^{*}\Omega)=\Omega\times \mathbb{R}^{3d}$ and the invariance of $\mathcal{F}_{\Omega-\mathrm{loc}}(M;E)$ via a diffeomorphism $\phi:\Omega\to \Omega$\,.
With the extension by $0$ and the restriction, the embeddings  $\mathcal{F}_{\Omega-\text{comp}}(\Omega;E)\subset \mathcal{F}_{N-\text{comp}}(N;E)\subset \mathcal{F}_{\Omega'-\text{loc}}(\Omega';E)$ hold for  two different open sets $\Omega$ and $\Omega'$ in $N$\,.

\medskip
\noindent\textbf{Example:}\\
The spaces $\mathcal{S}_{N-\mathrm{comp}}(T^{*}N;\mathbb{C})$\,, $\mathcal{S}'_{N-\mathrm{comp}}(T^{*}N;\mathbb{C})$ and their respective duals $\mathcal{S}'_{N-\mathrm{loc}}(T^{*}N;\mathbb{C})$ and \hfill \break $\mathcal{S}_{N-\mathrm{loc}}(T^{*}N;\mathbb{C})$ are well defined for any locally compact manifold $N$\,.\\
If additionally $N$ is compact $\mathcal{S}_{N-\mathrm{loc}}(T^{*}N;\mathbb{C})=\mathcal{S}_{N-\mathrm{comp}}(T^{*}N;\mathbb{C})$
(resp. $\mathcal{S}'_{N-\mathrm{loc}}=\mathcal{S}'_{N-\mathrm{comp}}$) will be simply denoted $\mathcal{S}(T^{*}N;\mathbb{C})$ (resp. $\mathcal{S}'(T^{*}N;\mathbb{C})$)\,.\\
The Schwartz kernel theorem for continuous maps $\mathcal{C}^{\infty}_{0}(T^{*}N;\mathbb{C})\to \mathcal{D}'(T^{*}N;\mathbb{C})$ implies that any continuous map from $A:\mathcal{S}_{N-\text{comp}}(T^{*}N;\mathbb{C})\to \mathcal{S}'_{N-\text{loc}}(T^{*}N;\mathbb{C})$ admits a kernel in $K_{A}\in \mathcal{S}'_{N\times N-\text{loc}}(T^{*}(N\times N);\mathbb{C})$\,.
Additionally $A$ is continous from $\mathcal{S}'_{N-\text{comp}}(T^{*}N;\mathbb{C})$ to $\mathcal{S}_{N-\text{loc}}(T^{*}N;\mathbb{C})$ if and only if its kernel $K_{A}$ belongs to $\mathcal{S}_{N\times N-\mathrm{loc}}(T^{*}(N\times N);\mathbb{C})$\,.

\medskip
\noindent Other $N-\mathrm{loc}$ and $N-\mathrm{comp}$ spaces are introduced in the text.

\section{Some pseudo-differential calculus on $X=T^{*}Q$}
\label{sec:pseudodiff}
The manifold $Q$ is  either a compact manifold  which can be endowed with any riemannian metric or $\mathbb{R}^{d}$\,.
The total space of the cotangent bundle is denoted by $X=T^{*}Q$ and symbols of pseudo-differential operators are defined as functions of $T^{*}X=T^{*}(T^{*}Q)$\,.\\
The pseudo-differential calculus presented here and, of which the global geometrical meaning is checked, implements the idea that $\partial_{q^{i}}$ is an operator of order $1$ while $p_{i}\times$ and $\partial_{p_{i}}$ are of order $1/2$ as presented in \cite{Leb1}\cite{Leb2}. However our presentation, like our definition of the spaces $\tilde{\mathcal{W}}^{k}(X;\mathcal{E})$ in Definition~\ref{de:tW} slightly differ from Lebeau's approach (see Remark~\ref{re:compLeb}).
\subsection{Definitions and properties}
\label{sec:defpseudo}

We give here the definitions and state the main properties but their global geometrical meaning will be consequences of the subsequent paragraphs.
\begin{definition}
  \label{de:doubcancoord}
  For any coordinate system $(q^{1},\ldots, q^{d})$ on a chart open set $\Omega\subset Q$\,, the associated canonical coordinates on $T^{*}\Omega\subset X$ are $(q^{1},\ldots,q^{d},p_{1},\ldots,p_{d})$ with $p=p_{i}dq^{i}\in T^{*}_{q}Q$\,.  Accordingly doubly canonical coordinates on $T^{*}(T^{*}\Omega)\subset T^{*}X$ associated with the coordinates $(q^{1},\ldots,q^{d})$ on $\Omega$ will be $(q,p,\xi,\eta)$ with $(\xi,\eta)=\xi_{i}dq^{i}+\eta^{i}dp_{i}\in T^{*}_{q,p}X$ for the canonical coordinates $(q,p)$ associated with the coordinates $(q^{1},\ldots,q^{d})$\,. Those coordinates will be abbreviated as $X=(x,\Xi)\in \Omega\times\mathbb{R}^{3d}$ with $x=(q,p)\in \Omega\times \mathbb{R}^{d}$ and $\Xi=(\xi,\eta)\in \mathbb{R}^{2d}$\,.
\end{definition}

\begin{definition}
  \label{de:symbol}
  The set $S^{m}_{\Psi}(Q;\mathbb{C})$ is the class of functions $a\in \mathcal{C}^{\infty}(T^{*}X;\mathbb{C})$ such that for any doubly canonical coordinate system $(q,p,\xi,\eta)$ on $T^{*}(T^{*}\Omega)$\,, where $\Omega$ is a chart open set in $Q$\,, the following inequalities hold:
  \begin{multline}
\label{eq:Sloc}
\forall K\subset\subset \Omega\,, \forall (\alpha,\beta,\gamma,\delta)\in \mathbb{N}^{d}\,,\exists C_{K,\alpha,\beta,\gamma,\delta}>0\,, \forall (q,p,\xi,\eta)\in K\times (\mathbb{R}^{d})^{3}\,,\\
|\partial_{q}^{\alpha}\partial_{p}^{\beta}\partial_{\xi}^{\gamma}\partial_{\eta}^{\delta}a(q,p,\xi,\eta)|\leq C_{K,\alpha,\beta,\gamma,\delta}(1+|\xi|^{2}+|p|^{4}+|\eta|^{4})^{\frac{m-|\gamma|-\frac{|\beta|+|\delta|}{2}}{2}}\,.
\end{multline}
The intersection $\bigcap_{m\in \mathbb{R}}S^{m}_{\Psi}(Q;\mathbb{C})$ is denoted by $S^{-\infty}_{\Psi}(Q;\mathbb{C})$\,. The topology on $S^{m}_{\Psi}(Q;\mathbb{C})$ is given by the seminorms $p_{K,\alpha,\beta,\gamma,\delta}(a)$ which are the best constant $C_{K,\alpha,\beta,\gamma,\delta}$ in the above inequality. For any open set $\Omega\subset Q$\,, the spaces $S^{m}_{\Psi,\Omega-\mathrm{loc}}(\Omega;\mathbb{C})$ and $S^{m}_{\Psi, \Omega-\mathrm{comp}}(\Omega;\mathbb{C})$ are defined according to Appendix~\ref{sec:Qloccomp}\,. Finally the equivalence relation $a_{1}\sim a_{2}$ means $a_{1}-a_{2}\in S^{-\infty}_{\Psi}(Q;\mathbb{C})$\,.
\end{definition}
Actually $S^{-\infty}_{\Psi}(Q;\mathbb{C})=\mathcal{S}_{Q-\mathrm{loc}}(T^{*}(T^{*}Q);\mathbb{C})=\mathcal{S}(T^{*}(T^{*}Q);\mathbb{C})$
with the notations of Appendix~\ref{sec:Qloccomp}.

\begin{definition}
\label{def:quantiz}
The quantization of a symbol $a\in S^{m}_{\Omega-\mathrm{comp}}(\Omega;\mathbb{C})$ is given by
$$
a(q,p,D_{q},D_{p})u=\int_{\Omega\times \mathbb{R}^{3d}}e^{i[\xi(q-q')+\eta.(p-p')]}a(q,p,\xi,\eta)u(q',p')~ dq'dp'd\xi d\eta \in \mathcal{S}'_{\Omega-\mathrm{comp}}(T^{*}\Omega;\mathbb{C})
$$
for any $u\in \mathcal{S}'_{\Omega-\mathrm{comp}}(T^{*}\Omega;\mathbb{C})$\,.\\
The global definition  of $a(q,p,D_{q},D_{p})$ for $a\in S^{m}_{\Psi}(Q;\mathbb{C})$ is given by
\begin{equation}
  \label{eq:quantchi}
a(q,p,D_{q},D_{p})u=\sum_{n=1}^{N}(\chi_{n}(q)a)(q,p,D_{q},D_{p}) (\tilde{\chi}_{n}(q)u)
\end{equation}
for some partition of unity $\sum_{n=1}^{N}\chi_{n}\equiv 1$ on $Q$ subordinate to a finite atlas $Q=\bigcup_{n=1}^{N}\Omega_{n}$ with $\tilde{\chi}_{n}\in \mathcal{C}^{\infty}_{0}(\Omega_{n};[0,1])$\,, $\tilde{\chi}_{n}\equiv 1$ in a neighborhood of $\mathrm{supp}\,\chi_{n}$\,.\\
The set $\mathcal{R}(Q;\mathbb{C})$ of regularizing operators is $ \mathcal{L}(\mathcal{S}'(T^{*}Q;\mathbb{C});\mathcal{S}(T^{*}Q;\mathbb{C}))$\,.\\
The set $\left\{a(q,p,D_{q},D_{p})+R \, , a\in S^{m}_{\Psi}(Q;\mathbb{C})  \, , R\in \mathcal{R}(Q;\mathbb{C})\right\}$ is denoted $\mathrm{OpS}^{m}_{\Psi}(Q;\mathbb{C})$ and
$\mathrm{OpS}^{-\infty}_{\Psi}(Q;\mathbb{C})=\mathcal{R}(Q;\mathbb{C})$ with the equivalence relation $A_{1}\sim A_{2}$ in $\mathrm{OpS}^{m}_{\Psi}(Q;\mathbb{C})$ iff $A_{1}=A_{2}+R$ with $R\in \mathcal{R}(Q;\mathbb{C})$\,. 
\end{definition}

This pseudo-differential calculus has the same properties listed below as the classical pseudo-differential calculus and  our approach relies on the global
pseudo-differential calculus when $Q=\mathbb{R}^{d}$ recalled in the next paragraph.

\noindent\textbf{Properties:}
\begin{description}
\item[a)] For any vector bundle $\mathcal{C}^{\infty}$ isomorphism $\Phi: T^{*}Q\to T^{*}Q'$ given by
  $$(q',p')=\Phi(q,p)=(\phi(q), L(q).p)\quad\text{with}~L(q)\in \mathrm{GL}(T_{q}Q;T_{\phi(q)}Q')\,,$$
 the pull-back $\Phi^{*}$ defined by
  $[\Phi^{*}a](x,\Xi)=a(\Phi(x),{}^{t}d\Phi^{-1}(x).\Xi)$ with $x=(q,p)$ and $\Xi=(\xi,\eta)$ defines a continuous isomorphism from $S^{m}_{\Psi}(Q';\mathbb{C})$ to $S^{m}_{\Psi}(Q;\mathbb{C})$\,.\\
  Any $a\in S^{m}_{\Psi}(Q;\mathbb{C})$ equals $\sum_{n=1}^{N}\chi_{n}(q)a$ for any finite partition of unity $\sum_{n=1}^{N}\chi_{n}(q)\equiv 1$\,.  The particular case where $L(q)={}^{t}d\phi(q)^{-1}$ says that $\chi_{n}a\in S^{m}_{\Psi, \Omega_{n}-\mathrm{comp}}(\Omega_{n},\mathbb{C})$ is independent of the choice of the coordinate system $(q^{1},\ldots,q^{d})$\,.
\item[b)] When $a\in S^{m}_{\Psi , \Omega-\mathrm{comp}}(\Omega;\mathbb{C})$ and $\tilde{\chi}_{1}$ and $\tilde{\chi}_{2}$ are two elements of $\mathcal{C}^{\infty}_{0}(\Omega;[0,1])$ such that $\tilde{\chi}_{1}-\tilde{\chi}_{2}\equiv 0$ in a neighborhood of $\Omega-\mathrm{supp}\,a$\,, the operator  $a\circ[\tilde{\chi}_{1}-\tilde{\chi}_{2}]$ belongs to $\mathrm{OpS}^{-\infty}_{\Psi}(Q;\mathbb{C})$\,. This property ensures that the quantization \eqref{eq:quantchi} is actually independent of the choice of the $\tilde{\chi}_{n}$ cut-off functions  as a map from $S^{m}_{\Psi}(Q;\mathbb{C})/S^{-\infty}_{\Psi}(Q;\mathbb{C})$ to $\mathrm{OpS}^{m}_{\Psi}(Q;\mathbb{C})/\mathrm{OpS}^{-\infty}_{\Psi}(Q;\mathbb{C})$\,.
\item[c)] For any $a\in S^{m}_{\Psi}(Q;\mathbb{C})$ the operator $a(q,p,D_{q},D_{p})$ defined by \eqref{eq:quantchi} is continuous from $\mathcal{S}(X;\mathbb{C})$ to $\mathcal{S}(X;\mathbb{C})$ and from $\mathcal{S}'(X;\mathbb{C})$ to $\mathcal{S}'(X;\mathbb{C})$\,.
\item[d)] The set $\mathop{\bigcup}_{m\in \mathbb{R}}\mathrm{OpS}^{m}_{\Psi}(Q;\mathbb{C})$ is an algebra for the composition product with
  \begin{eqnarray*}
    &&
a_{1}(q,p,D_{q},D_{p})\circ a_{2}(q,p,D_{q},D_{p})=[a_{1}a_{2}](q,p,D_{q},D_{p})\quad \mod \mathrm{OpS}^{m_{1}+m_{2}-1}_{\Psi}(Q;\mathbb{C})\,,\\
    &&
       \left[a_{1}(q,p,D_{q},D_{p})\,,\, a_{2}(q,p,D_{q},D_{p})\right]=[\frac{1}{i}\{a_{1}a_{2}\}](q,p,D_{q},D_{p})\quad \mod \mathrm{OpS}^{m_{1}+m_{2}-2}_{\Psi}(Q;\mathbb{C})\,,
  \end{eqnarray*}
  for $a_{k}\in S^{m_{k}}_{\Psi}(Q;\mathbb{C})$ and $\left\{a_{1},a_{2}\right\}=\partial_{\xi}.a_{1}\partial_{q}a_{2}+\partial_{\eta}a_{1}.\partial_{p}a_{2}-\partial_{q}a_{1}.\partial_{\xi}a_{2}-\partial_{p}a_{1}.\partial_{\eta}a_{2}$ in doubly canonical  coordinates.
\item[e)] For any family  $(a_{j})_{j\in \mathbb{N}}$ with $a_{j}\in S^{m-j}_{\Psi}(Q;\mathbb{C})$ there exists $a\in S^{m}_{\Psi}(Q;\mathbb{C})$ such that $a-\sum_{j=0}^{J}a_{j}\in S^{m-J-1}_{\Psi}(Q;\mathbb{C})$\,, which is simply written $a\sim \sum_{j\in \mathbb{N}}a_{j}$\,.
  
  In particular for any $a\in \mathrm{OpS}^{m}_{\Psi}(Q;\mathbb{C})$ which is elliptic ($|a(q,p,\xi,\eta)|\geq C^{-1}(1+|\xi|^{2}+|p|^{4}+|\eta|^{4})^{m/2}$) there exists $b\sim \sum_{n=0}^{\infty}b_{j}$ with $b_{0}=\frac{1}{a}$ such that $b(q,p,D_{q},D_{p})\circ a(q,p,D_{q},D_{p})\sim a(q,p,D_{q},D_{p})\circ b(q,p,D_{q},D_{p})\sim\mathrm{Id}$\,.
\item[f)] For any $a\in S^{0}_{\Psi}(Q;\mathbb{C})$\,,
  $a(q,p,D_{q},D_{p})\in \mathcal{L}(L^{2}(X,dqdp;\mathbb{C}))$ with
  $$\|a(q,p,D_{q},D_{p})\|_{\mathcal{L}(L^{2})}\leq C\sup_{|\alpha|+|\beta|+|\gamma|+|\delta|\leq N_{d}}p_{\alpha,\beta,\gamma,\delta}(a)
  $$
  for some $N_{d}\in \mathbb{N}$ determined by $d=\mathrm{dim}~Q$\,.
\item[g)] When $\Phi:T^{*}Q\to T^{*}Q'$ is a $\mathcal{C}^{\infty}$-isomorphism like in $\textbf{a)}$ and $U_{\Phi}:L^{2}(X',dq'dp';\mathbb{C})\to L^{2}(X,dqdp;\mathbb{C})$ is the unitary map defined by $[U_{\Phi}u](q,p)=\sqrt {|\det(d\phi(q))\det(L(q))}|u(\phi(q),L(q).p)$ then for any $a\in S^{m}_{\Psi}(Q';\mathbb{C})$\,,
  $U_{\Phi}a(q',p',D_{q'},D_{p'})U_{\Phi}^{-1}=b(q,p,D_{q},D_{p})\in \mathrm{OpS}^{m}_{\Psi}(Q;\mathbb{C})$ with $b\sim\sum_{j=0}^{\infty}b_{j}$ and $b_{0}=(\Phi^{*}a)$\,.\\
  When $L(q)={}^{t}d\phi(q)^{-1}$ and $q\in \Omega$ a chart open set in $Q$\,, this result contains the fact that the space of pseudo-differential operator $\mathrm{OpS}^{m}_{\Psi , \Omega-\mathrm{comp}}(\Omega;\mathbb{C})$ does not depend on the choice of coordinates on $Q$ with a functorial transformation of the principal symbol. With the local definition of the quantization \eqref{eq:quantchi} and \textbf{b)}, the space $\mathrm{OpS}^{m}_{\Psi}(Q;\mathbb{C})$ has a global geometric meaning.
  
\item[h)] The vector bundle version of pseudo-differential operators $a(q,p,D_{q},D_{p})\in \mathrm{OpS}^{m}_{\Psi}(Q;\pi_{2}^{*}(\mathrm{End}(E)))$ acting on sections of $\pi_{X}^{*}(E)$ where $X=T^{*}Q\stackrel{\pi_{X}}{\to}Q$ and $T^{*}X\stackrel{\pi_{2}}{\to}Q$ are the natural projection and $E\stackrel{\pi_{E}}{\to}Q$ is a vector  bundle over $Q$\,, is reduced to the case of matricial pseudo-differential operators acting on $\mathbb{C}^{N}$-valued sections via the localized definition \eqref{eq:quantchi}. It has the same properties as the scalar pseudo-differential operators except for the principal symbol of a commutator.  We will use the abbreviation $\mathrm{OpS}^{m}_{\Psi}(Q;\mathrm{End}\,\mathcal{E})$ for $\mathrm{OpS}^{m}_{\Psi}(Q;\pi_{2}^{*}(\mathrm{End}(E)))$\,.
\item[i)] The seminorm topology on $\mathrm{OpS}^{m}_{\Psi}(Q;\mathbb{C})$ and the continuity properties of $(A_{1},A_{2})\to A_{1}\circ A_{2}$ and of $A\to U_{\Phi}\circ A \circ U_{\Phi}^{-1}$ are discussed in Subsection~\ref{sec:global}.
\end{description}

\subsection{Global calculus when $Q=\mathbb{R}^d$}
\label{sec:globpseudodiff}

Let $T^{*}(T^{*}\mathbb{R}^{d})=\mathbb{R}^{4d}_{q,p,\xi,\eta}$ and consider the function
$$
\Psi(q,p,\xi,\eta)=\sqrt{1+|\xi|^{2}+|\eta|^{4}+|p|^{4}}\,.
$$
\\
The symbol class $S(m,g_{\Psi})$ is the set of $a\in \mathcal{C}^{\infty}(\mathbb{R}^{4d};\mathbb{C})$ such that:
$$
\forall \alpha,\beta,\gamma,\delta\in \mathbb{N}^{d}\,,\, \exists C_{\alpha,\beta,\gamma,\delta}>0\,,\, \forall X=(q,p,\xi,\eta)\in \mathbb{R}^{4d}\,,\quad
|\partial_{q}^{\alpha}\partial_{p}^{\beta}\partial_{\xi}^{\gamma}\partial_{\eta}^{\delta}a(X)|\leq C_{\alpha,\beta,\gamma,\delta}m(X)\Psi(X)^{-|\gamma|-\frac{|\beta|+|\delta|}{2}}\,
$$
and the topology on $S(m,g_{\Psi};\mathbb{C})$ is given by the family of the seminorms
$$
p_{m,k}(a)=\sum_{\substack{|\alpha|+|\beta|+|\gamma|+|\delta|\leq k\\X\in \mathbb{R}^{4d}}}\left|\frac{\partial_{q}^{\alpha}\partial_{p}^{\beta}\partial_{\xi}^{\gamma}\partial_{\eta}^{\delta}a(X)}{m(X) \Psi^{-|\gamma|-\frac{|\beta|+|\delta|}{2}}(X)} \right|\,.
$$
We follow the terminology of \cite{Bon}.
\begin{proposition}
  \label{pr:Hormmetric} The metric $g_{\Psi}=dq^{2}+\frac{d\xi^{2}}{\Psi^{2}}+\frac{dp^{2}}{\Psi}+\frac{d\eta^{2}}{\Psi}$ on $T^{*}\mathbb{R}^{2d}_{q,p}=\mathbb{R}^{4d}_{q,p,\xi,\eta}$ is a splitted H{\"o}rmander metric with the gain function $\Psi(q,p,\xi,\eta)$\,.\\
  Additionally it is geodesically temperate with $g^{\sigma}_{\Psi}=\Psi^{2}g_{\Psi}$\,.\\
  For any $s\in \mathbb{R}$\,,\, the function $\Psi^{s}$ is a $g_{\Psi}$-weight for any $s\in \mathbb{R}$ and an elliptic symbol in $S(\Psi^{s},g_{\Psi})$\,.
\end{proposition}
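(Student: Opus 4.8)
The final statement to prove is Proposition~\ref{pr:Hormmetric}, which asserts that the metric $g_{\Psi}=dq^{2}+\Psi^{-2}d\xi^{2}+\Psi^{-1}dp^{2}+\Psi^{-1}d\eta^{2}$ on $\mathbb{R}^{4d}_{q,p,\xi,\eta}$ is a splitted, admissible (H\"ormander), and geodesically temperate metric with gain function $\Psi$ and $g_{\Psi}^{\sigma}=\Psi^{2}g_{\Psi}$, and that $\Psi^{s}$ is a $g_{\Psi}$-weight and an elliptic symbol in $S(\Psi^{s},g_{\Psi})$ for every $s\in\mathbb{R}$. The plan is to verify the classical Beals–Fefferman/H\"ormander axioms directly, since $\Psi$ is an explicit, concrete function, and the computations are all of the same elementary flavour as for the standard metric $dq^{2}+\langle\xi\rangle^{-2}d\xi^{2}$.

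\textbf{Step 1: slowness and the gain function.} First I would record the key elementary estimates on $\Psi(q,p,\xi,\eta)=(1+|\xi|^{2}+|p|^{4}+|\eta|^{4})^{1/2}$: namely that $\partial_{\xi}\Psi=O(1)$, $\partial_{p}\Psi=O(\Psi^{1/2})$, $\partial_{\eta}\Psi=O(\Psi^{1/2})$, $\partial_{q}\Psi=0$, with analogous bounds on higher derivatives (each extra $\xi$-derivative gains $\Psi^{-1}$, each extra $p$- or $\eta$-derivative gains $\Psi^{-1/2}$). Equivalently, $\Psi\in S(\Psi,g_{\Psi})$, which is precisely the statement that $\Psi$ is a $g_{\Psi}$-weight; more generally $\Psi^{s}\in S(\Psi^{s},g_{\Psi})$ follows by the chain rule and Leibniz. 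From these bounds one gets slowness: if $g_{\Psi,X}(Y-X)\le c$ for $c$ small, then $|\xi_{Y}-\xi_{X}|\le c\Psi(X)$, $|p_{Y}-p_{X}|\le c\Psi(X)^{1/2}$, $|\eta_{Y}-\eta_{X}|\le c\Psi(X)^{1/2}$, and one checks $|p_{Y}|^{4}-|p_{X}|^{4}=O(c\Psi(X)^{2})$ etc., so $\Psi(Y)\asymp\Psi(X)$ uniformly, hence $g_{\Psi,X}\asymp g_{\Psi,Y}$. The splitting is obvious from the diagonal block form: the position variables $(q,p)$ carry $dq^{2}+\Psi^{-1}dp^{2}$ and the dual variables $(\xi,\eta)$ carry $\Psi^{-2}d\xi^{2}+\Psi^{-1}d\eta^{2}$, and $\Psi$ depends on $(p,\xi,\eta)$ only.

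\textbf{Step 2: uncertainty and temperance.} Compute the symplectic dual metric. With the symplectic form $d\xi\wedge dq+d\eta\wedge dp$, one finds $g_{\Psi}^{\sigma}=\Psi^{2}dq^{2}+\Psi\,dp^{2}+d\xi^{2}+\Psi\,d\eta^{2}$; indeed in each conjugate pair $(q,\xi)$ the dual of $a\,dq^{2}+b\,d\xi^{2}$ is $b^{-1}dq^{2}+a^{-1}d\xi^{2}$, giving $\Psi^{2}dq^{2}+d\xi^{2}$, and similarly for $(p,\eta)$ we get $\Psi\,dp^{2}+\Psi\,d\eta^{2}$. Thus $g_{\Psi}^{\sigma}=\Psi^{2}g_{\Psi}$ exactly, so the gain function is $\lambda(X)=(g_{X}^{\sigma}/g_{X})^{1/2}=\Psi(X)\ge 1$, and the uncertainty principle $g_{\Psi}\le g_{\Psi}^{\sigma}$ holds since $\Psi\ge1$. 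For temperance one uses the standard Cotlar-type argument: since $\Psi$ is slowly varying and $g_{\Psi}^{\sigma}/g_{\Psi}=\Psi^{2}$ one checks that $g_{\Psi,X}(Y-X)\lesssim g_{\Psi,Y}(Y-X)\bigl(1+g_{\Psi,X}^{\sigma}(Y-X)\bigr)^{N}$ for suitable $N$; concretely, if $g_{\Psi,X}^{\sigma}(Y-X)$ is comparable to $M\ge1$ then $\Psi(Y)\le C\,M^{C}\Psi(X)$ because the differences $|\xi_{Y}-\xi_{X}|,|p_{Y}-p_{X}|^{2},|\eta_{Y}-\eta_{X}|^{2}$ are all $\lesssim M\Psi(X)$, which yields $\Psi(Y)^{2}\lesssim M^{2}\Psi(X)^{2}$, and this polynomial control of the ratio $\Psi(Y)/\Psi(X)$ is exactly temperance. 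Geodesic temperance (needed for the Beals-type characterisation and the Helffer--Sj\"ostrand argument invoked elsewhere) is the stronger statement with $g_{\Psi,X}^{\sigma}$ replaced by the geodesic distance for $g_{\Psi}^{\sigma}$; since $g_{\Psi}^{\sigma}=\Psi^{2}g_{\Psi}$ is itself conformal to a slowly varying metric, one estimates the $g_{\Psi}^{\sigma}$-geodesic distance from below by (a power of) the Euclidean-type differences above, as in \cite{Bon}, and concludes. The ellipticity of $\Psi^{s}$ is immediate: $\Psi^{s}\ge 1$ if $s\ge0$ and $|\Psi^{s}|\ge\Psi^{s}$ for $s<0$, so $c^{-1}\Psi^{s}\le|\Psi^{s}|\le c\Psi^{s}$ trivially, and together with $\Psi^{s}\in S(\Psi^{s},g_{\Psi})$ from Step~1 this gives that $\Psi^{s}$ is an elliptic symbol.

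The main obstacle, such as it is, is purely bookkeeping: one must be systematic about tracking the two different homogeneities (order $1$ in $\xi$, order $1/2$ in $p$ and $\eta$) through all the Leibniz and chain-rule estimates, and one must verify geodesic temperance rather than just temperance, which requires a lower bound on the $g_{\Psi}^{\sigma}$-geodesic distance in terms of coordinate differences — this is the only place where a genuine (if short) argument beyond direct differentiation is needed, and it follows the template in \cite{Bon} for conformally slowly varying metrics. Everything else reduces to the explicit inequalities on $\Psi$ collected in Step~1.
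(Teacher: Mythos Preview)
Your proposal is correct and follows the same overall architecture as the paper's proof: verify $g_{\Psi}^{\sigma}=\Psi^{2}g_{\Psi}$ and the splitting directly, then check slowness and temperance of the weight $\Psi$ (which by \eqref{eq:rappgXgXprime}-type inequalities immediately yields the same for the metric), and observe that $\Psi^{s}\in S(\Psi^{s},g_{\Psi})$ is an elementary derivative computation.

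Two differences in detail are worth noting. For \emph{slowness}, your argument (bounding $|p_{Y}|^{4}-|p_{X}|^{4}$, $|\xi_{Y}|^{2}-|\xi_{X}|^{2}$, etc.\ directly from $g_{\Psi,X}(Y-X)\le c$) is actually more streamlined than the paper's, which proceeds in two steps via the normalisations $\tilde{\xi}=(1+|p|^{4}+|\eta|^{4})^{-1/2}\xi$ and then $\tilde{p}=\langle\xi'\rangle^{-1/2}p$, $\tilde{\eta}=\langle\xi'\rangle^{-1/2}\eta$; your direct approach is cleaner here. For \emph{geodesic temperance}, however, the paper's argument is sharper than your appeal to the general machinery in \cite{Bon}: since $\Psi\ge 1$ one has $g_{\Psi}^{\sigma}=\Psi^{2}dq^{2}+d\xi^{2}+\Psi\,dp^{2}+\Psi\,d\eta^{2}\ge dq^{2}+d\xi^{2}+dp^{2}+d\eta^{2}$, so the $g_{\Psi}^{\sigma}$-geodesic distance dominates the Euclidean distance $|X-X'|$ outright, and then the explicit inequality $\Psi(X')^{2}\le 64\,\Psi(X)^{2}(1+|X-X'|^{2})^{3}$ (obtained by expanding $|\xi'|^{2}\le 2|\xi|^{2}+2|\xi'-\xi|^{2}$ and $(|p'|^{2}+|\eta'|^{2})^{2}\le \bigl(2|p|^{2}+2|p'-p|^{2}+2|\eta|^{2}+2|\eta'-\eta|^{2}\bigr)^{2}$) finishes the job in one line. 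This replaces your ``template in \cite{Bon} for conformally slowly varying metrics'' by a completely elementary observation, and is the one place where your sketch would benefit from being more explicit.
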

Remember that $\sigma=\sum_{j=1}^{2d}d\Xi_{j}\wedge dx_{j}$ denotes the canonical symplectic form on $\mathbb{R}^{4d}_{x,\Xi}$\,, with here $x=(q,p)$ and $\Xi=(\xi,\eta)$.\\
We follow the usual abusive convention for the presentation of Weyl-Hörmander calculus and write shortly $g_{\Psi,X}(T)$ for the quadratic form applied to the tangent vector $T=(t_{x},t_{\Xi})$ instead of $g_{\Psi,X}(T,T)$\,.
\begin{proof}
  The properties $g^{\sigma}_{\Psi}=\Psi^{2}g_{\Psi}$ and $g_{\Psi,X}(t_{x},-t_{\Xi})=g_{\Psi,X}(t_{x},t_{\Xi})$ ($g_{\Psi}$ is splitted)\,, $\Psi\geq 1$ (H{\"o}rmander uncertainty condition) and $\Psi^{s}\in S(\Psi^{s},g_{\Psi};\mathbb{C})$ are obvious.\\
  The inequality
  \begin{equation}
  \label{eq:rappgXgXprime}
\left(\frac{g_{\Psi,X}}{g_{\Psi,X'}}\right)^{-\pm 1}\leq \max \left\{1,\left(\frac{\Psi(X)}{\Psi(X')}\right)^{\pm 2}, \left(\frac{\Psi(X)}{\Psi(X')}\right)^{\pm 1}\right\}\,,
\end{equation}
says that the slowness and (geodesic) temperance are proved when $\Psi$ is a slow and (geodesically) tempered weight for $g_{\Psi}$\,.
\\
\noindent\textbf{Slowness:} Set $X=(q,p,\xi,\eta)$ and $X'=(q',p',\xi',\eta')$\,.For $g_{\Psi,X}(X'-X)\leq \frac{1}{R^{2}}$ let us prove $\left(\frac{\Psi(X)}{\Psi(X')}\right)^{-1}\leq R^{2}$ for $R>1$ large enough.\\
The assumption implies
$$
|\tilde{\xi}-\tilde{\xi}'|\leq \frac{\sqrt{1+|\tilde{\xi}|^{2}}}{R}=\frac{\langle \tilde{\xi}\rangle}{R}\quad\text{with}~\left\{
    \begin{array}[c]{l}
      \tilde{\xi}=\frac{1}{(1+|p|^{4}+|\eta|^{4})^{1/2}}\xi
      \\
      \tilde{\xi}'=\frac{1}{(1+|p|^{4}+|\eta|^{4})^{1/2}}\xi'\,.
    \end{array}
  \right.
$$
We deduce
$$
|\tilde{\xi}'|^{2}\leq 2|\tilde{\xi}|^{2}+2|\tilde{\xi}'-\tilde{\xi}|^{2}\leq
(1+\frac{2}{R^{2}})\langle \tilde{\xi}\rangle^{2}\,,
$$
and
$$
|\tilde{\xi}|^{2}\leq 2\langle \tilde{\xi}'\rangle^{2}+\frac{2}{R^{2}}\langle \tilde{\xi}\rangle^{2}\,.
$$
This gives for $R\geq 2$\,,
$$
(1-\frac{2}{R^{2}})\langle \tilde{\xi}\rangle^{2}\leq 2\langle \tilde{\xi}'\rangle^{2}\leq 2(1+\frac{2}{R^{2}})\langle \tilde{\xi}\rangle^{2}
$$
and
$$
\left(\frac{\Psi(q,p,\xi,\eta)}{\Psi(q',p,\xi',\eta)}\right)^{\pm 1}\leq 2(1+\frac{2}{R^{2}})\,.
$$
Let us consider now the quantity
$$
\left(\frac{\Psi(q',p,\xi',\eta)}{\Psi(q',p',\xi',\eta')}\right)^{\pm 1}
$$
while noticing that the first result and the assumption $g_{\Psi,X}(X-X')\leq \frac{1}{R^{2}}$ implies
\begin{eqnarray*}
  &&|p-p'|\leq \frac{1}{R}\Psi(q,p,\xi,\eta)^{1/2}\leq \frac{\sqrt{2(1+2/R^{2})}}{R}\Psi(q',p,\xi',\eta)^{1/2}\leq
     \frac{2}{R}(1+|\xi'|^{2}+|p|^{4}+|\eta|^{4})^{1/4}\\
   \text{and}&&
                |\eta-\eta'|\leq \frac{\sqrt{2(1+2/R^{2})}}{R}\Psi(q',p,\xi',\eta)^{1/2}\leq  
                \frac{2}{R}(1+|\xi'|^{2}+|p|^{4}+|\eta|^{4})^{1/4}
                \\
  \text{with}&&
    \frac{2}{R}(1+|\xi'|^{2}+|p|^{4}+|\eta|^{4})^{1/4}\leq \frac{2\langle \xi'\rangle^{1/2}(1+|\tilde{p}|^{2}+|\tilde{\eta}|^{2})^{1/2}}{R}
\end{eqnarray*}
by setting $\tilde{p}=\langle \xi'\rangle^{-1/2}p$ and $\tilde{\eta}=\langle \xi'\rangle^{-1/2}\eta$\,.
By using the same normalization for $(\tilde{p}',\tilde{\eta}')$ we deduce that the vectors $Y=(\tilde{p},\tilde{\eta})$ and $Y'=(\tilde{p}',\tilde{\eta}')$ satisfy

$$
|Y-Y'|\leq \frac{2}{R}\langle Y\rangle
$$
and again
$$
\left(\frac{\Psi(q',p,\xi',\eta)}{\Psi(q',p',\xi',\eta')}\right)^{\pm 1/2}
=\left(\frac{\langle Y\rangle}{\langle Y'\rangle}\right)^{\pm 1}\leq 2(1+\frac{8}{R^{2}})
$$
when $R/2>2$\,.\\
We deduce the uniform inequality
$$
\left(\frac{\Psi(X)}{\Psi(X')}\right)^{\pm 1}
=
\left(\frac{\Psi(q,p,\xi,\eta)}{\Psi(q',p,\xi',\eta)}\right)^{\pm 1}\times
\left(\frac{\Psi(q',p,\xi',\eta)}{\Psi(q',p',\xi',\eta')}\right)^{\pm 1}
\leq 2(1+\frac{2}{R^{2}})4(1+\frac{8}{R^{2}})^{2}\leq 2^{12}\leq R
$$
as soon as $g_{\Psi,X}(X'-X)\leq \frac{1}{R^{2}}$
if $R\geq 2^{12}$\,.\\

\medskip
\noindent\textbf{Geodesic Temperance:} With $g^{\sigma}_{\Psi}\geq dq^{2}+d\xi^{2}+dp^{2}+d\eta^{2}$\,, we get $g^{\sigma}_{\Psi,X}(X-X')\geq |X-X'|^{2}$ and the same inequality holds for the geodesic distance for $g_{\Psi}^{\sigma}$\,,  $d^{\sigma}_{\Psi}(X,X')\geq \left|X-X'\right|$\,.\\
From
$$
\Psi(q',p',\xi',\eta')^{2}\leq 1+|\xi'|^{2}+(|p'|^{2}+|\eta'|^{2})^{2}
\leq 1+2|\xi|^{2}+2|\xi'-\xi|^{2}+\left(2|p|^{2}+2|p'-p|^{2}+2|\eta|^{2}+2|\eta'-\eta|\right)^{2}
$$
we deduce
$$
\Psi(q',p',\xi',\eta')^{2}\leq 64 \Psi(q,p,\xi,\eta)^{2}(1+|\xi'-\xi|^{2})(1+|p'-p|^{2}+|\eta'-\eta|^{2})^{2} 
$$
and the symmetric version results from the exchange $X\leftrightarrow X'$\,.
We obtain
$$
\left(\frac{\Psi(X)}{\Psi(X')}\right)^{\pm 2}\leq 64 (1+|q-q'|^{2}+|\xi-\xi'|^{2}+|p'-p|^{2}+|\eta'-\eta|^{2})^{3}\leq 64(1+|X-X'|^{2})^{3}\,.
$$
With  $|X-X'|^{2}\leq \min (g_{\Psi,X}^{\sigma}(X-X'); d_{\Psi}^{\sigma}(X,X')^{2})$\,, this proves that the weight $\Psi$\,, and  the metric $g_{\Psi}$ owing to \eqref{eq:rappgXgXprime}, are geodesically tempered.
\end{proof}
All the result of \cite{HormIII}-Chap~XVIII can be applied for the Weyl quantization $a^{W}(q,p,D_{q},D_{p})$ when $a\in S(m,g_{\Psi})$ and $m$ is a $g_{\Psi}$-weight\,. Because $g_{\Psi}$ is splitted the Weyl and standard quantizations are equivalent and we recall
$a(x,D_{x})=b^{W}(x,D_{x})$ with
\begin{eqnarray*}
  && a=e^{iD_{x}.D_{\Xi}/2}b=\sum_{n=0}^{N-1}\frac{(iD_{x}.D_{\Xi}/2)^{n}}{n!}b+R_{N,+}(b)\\
  && b=e^{-iD_{x}.D_{\Xi}/2}a=\sum_{n=0}^{N-1}\frac{(-iD_{x}.D_{\Xi}/2)^{n}}{n!}a+R_{N,-}(a)
\end{eqnarray*}

where every $n$-th term is continuous from $S(m,g_{\Psi})$ to $S(m\Psi^{-n},g_{\Psi})$ while the remainders are continous from $S(m,g_{\Psi})$ to $S(m\Psi^{-N},g_{\Psi})$\,. Accordingly if $a_{1}\sharp^{W}a_{2}$ (resp. $a_{1}\sharp a_{2}$) denote the symbols of $a_{1}^{W}(x,D_{x})\circ a_{2}^{W}(x,D_{x})$ (resp. $a_{1}(x,D_{x})\circ a_{2}(x,D_{x})$) we know
\begin{align*}
  a_{1}\sharp^{W} a_{2}(X)&=e^{i\sigma(D_{X_{1}},D_{X_{2}})/2}a_{1}(X_{1})a_{2}(X_{2})\big|_{X_{1}=X_{2}=X}\\
                          &=\sum_{n=0}^{N-1}\frac{(i\sigma(D_{X_{1}},D_{X_{2}})/2)^{n}}{n!}a_{1}(X_{1})a_{2}(X_{2})\big|_{X_{1}=X_{2}=X}+R_{N}^{W}(a_{1},a_{2})\,,
\end{align*}
and respectively
\begin{align*}
a_{1}\sharp a_{2}(X)=e^{iD_{\Xi_{1}}D_{x_{2}}}a_{1}(X_{1})a_{2}(X_{2})\big|_{X_{1}=X_{2}=X}&=\sum_{n=0}^{N-1}\frac{(iD_{\Xi_{1}}D_{x_{2}})^{n}}{n!}a_{1}(X_{1})a_{2}(X_{2})\big|_{X_{1}=X_{2}=X}+R_{N}(a_{1},a_{2})
  \\
                    &=\sum_{n=0}^{N-1}\sum_{|\alpha|\leq n}\frac{1}{i^{|\alpha|}\alpha!}\partial_{\Xi}^{\alpha}a_{1}\partial_{x}^{\alpha}a_{2}+R_{N}(a_{1},a_{2})\,,
\end{align*}
where every $n$-th term is is bilinear continuous from
$S(m_{1},g_{\Psi})\times S(m_{2},g_{\Psi})$ to $S(m_{1}m_{2}\Psi^{-n},g_{\Psi})$ while the remainder is bilinear  continous from $S(m_{1},g_{\Psi})\times S(m_{2},g_{\Psi})$ to $S(m_{1}m_{2}\Psi^{-N},g_{\Psi})$\,. Two differences: $a^{W}(x,D_{x})^{*}=(\bar{a})^{W}(x,D_{x})$ remains true only modulo $S(m\Psi^{-1},g_{\Psi})$ for the classical quantization while $f(x)a(x,D_{x})=(fa)(x,D_{x})$ remains true only modulo $S(m\Psi^{-1},g_{\Psi})$ for the Weyl quantization.\\
In \cite{BoCh} were introduced the general Sobolev spaces $H(m,g)$ for any  H{\"o}rmander metric $g$ and $g$-weight $m$ as Hilbert spaces with the norms $\|u\|_{H(m,g)}=\|M^{W}(x,D_{x})u\|_{L^{2}}$\,, where $M\in S(m,g)$ is any fixed elliptic invertible operator.\\
We are concerned here with a simple case.
\begin{definition}
  \label{de:tWsRd} For $s\in \mathbb{R}$ the space $\tilde{\mathcal{W}}^{s}(\mathbb{R}^{2d};\mathbb{C})$ is nothing but $H(\Psi^{s},g_{\Psi})$ with the norm
  $$
  \|u\|_{\tilde{\mathcal{W}}^{s}}=\|(M_{s})^{W}(x,D_{x})u\|_{L^{2}}\,.
  $$
  with $M_{s}=(C_{s}+\Psi^{|s|})^{\mathrm{sign}\,s}$ for some $C_{s}\geq 1$\,.
\end{definition}
The invertibility of $M_{s}(x,D_{x}):\tilde{\mathcal{W}}^{s}(\mathbb{R}^{2d};\mathbb{C})\to L^{2}(\mathbb{R}^{2d},dqdp;\mathbb{C})$ comes from $(C_{s}+\Psi^{|s|})^{-1}\sharp^{W}(C_{s}+\Psi^{|s|})=1+\mathcal{O}(1/C_{s})$ in $S(1,g_{\Psi})$\,.\\
Because $g_{\Psi}$ is geodesically tempered with $g_{\Psi}^{\sigma}=\Psi^{2}g_{\Psi}$\,, J.M.~Bony provides us a simple version of Beals criterion in \cite{Bon}. In our particular case the symbol class $S^{+}(1,g_{\Psi})$ is nothing but the set of $a\in \mathcal{C}^{\infty}(\mathbb{R}^{2d};\mathbb{C})$ such that for any $i\in \left\{1,\ldots,d\right\}$\, $\partial_{q^{i}}a\in S(\Psi,g_{\Psi})$\,,\, $\partial_{\xi_{i}}a\in S(1,g_{\Psi})$\,, while $\partial_{p_{i}}a$ and $\partial_{\eta^{i}}a$ belong to $S(\Psi^{1/2};g_{\Psi})$\,. An operator $A$ equals $a^{W}(x,D_{x})$ with $a\in S(1,g_{\Psi})$ if and only if all the commutators $\mathrm{ad}_{b_{1}^{W}(x,D_{x})} \dots \mathrm{ad}_{b_{N}^{W}(x,D_{x})}A$ for $b_{n}\in S^{+}(1,g_{\Psi})$ are bounded in $\mathcal{L}(L^{2})$\,.\\
Alternatively, the simpler and original version of  Beals criterion in \cite{Bea} works here according \cite{BoCh} (see \cite{NaNi} for a detailed version of Remark~5.6 in \cite{BoCh}) owing to the three properties
\begin{itemize}
\item The metric is diagonal in the canonical basis $\mathcal{B}$ of 
$\mathbb{R}^{4d}=T^*(\mathbb{R}^{2d})$\,, written as 
$$\mathcal{B}=\left\{\partial_{q^{i}},\partial_{p_{i}}, \partial_{\xi_{i}},\partial_{\eta^{i}}\,, 1\leq i\leq d\right\}\,,$$
while the convex hull  $C_{X,\mathcal{B}}=\left\{\sum_{e\in \mathcal{B}}t_{e}g_{\Psi,X}(e)^{-1/2}e, (t_{e})_{e\in \mathcal{B}}\in[-1,1]^{\sharp \mathcal{B}}\right\}$ satisfies
$$
\exists r\in ]0,1]\,, \forall X\in \mathbb{R}^{4d}=T^{*}(\mathbb{R}^{2d})\,,\quad B_{g_{\Psi,X}}(0,r)\subset C_{X,\mathcal{B}}\subset B_{g_{\Psi},X}(0,2)\,.
$$
\item If $L(e)=(\sigma(e,X))^{W}$ for $e\in \mathcal{B}$ with $\sigma = d\eta\wedge dp + d\xi\wedge dq$ and $X$ the radial vector field\,, we get $L(\partial_{q^{i}})=-D_{q^{i}}$\,, $L(\partial_{p_{i}})=-D_{p_{i}}$\,, $L(\partial_{\xi_{i}})=q^{i}$ and $L(\partial_{\eta^{i}})=p^{i}$\,.
  \item For a finite familly $E=(e_{n})_{1\leq n\leq N}$ of elements of $\mathcal{B}$ the weight $m_{E}(X)$ equals
$$
m_{E}(X)=\prod_{n=1}^{N_{E}}g_{\Psi,X}(e_{k})^{1/2}=\Psi(X)^{-N_{1}-N_{2}/2} \quad\text{with}~
\left\{
  \begin{array}[c]{l}
    N_{1}=\sharp\left\{k, e_{k}\in \left\{\partial_{\xi_{i}}\right\}\right\}\\
    N_{2}=\sharp\left\{k, e_{k}\in \left\{\partial_{p_{i}},\partial_{\eta_{i}}\right\}\right\}
  \end{array}
\right.
$$
\end{itemize}
The Beals criterion thus says that $A=a^{W}(q,p,D_{q},D_{p})$ with $a\in S(\Psi^{s},g_{\Psi})$ for some $g_{\Psi}$-weight $m$\,, if and only if all the commutators
$$
\mathrm{ad}_{q}^{\alpha}\mathrm{ad}_{p}^{\beta}\mathrm{ad}_{D_{q}}^{\gamma}\mathrm{ad}_{D_{p}}^{\delta}A
$$
initially defined as continuous operators on $\mathcal{S}(\mathbb{R}^{2d};\mathbb{C})$ (or on $\mathcal{S}'(\mathbb{R}^{2d};\mathbb{C})$)
actually belongs to
$$
\mathcal{L}(\tilde{\mathcal{W}}^{s_{0}}(\mathbb{R}^{2d};\mathbb{C});\tilde{\mathcal{W}}^{s_{0}-s+|\alpha|+\frac{|\beta|+|\delta|}{2}}(\mathbb{R}^{2d};\mathbb{C}))
$$
for some $s_{0}\in \mathbb{R}$ (and equivalently for all $s_{0}\in \mathbb{R}$).
Additionally the topology on $\mathcal{S}(\Psi^{s};g_{\Psi})$ is equivalently defined by the family of seminorms $(q_{\Psi^{s},k})_{k\in \mathbb{N}}$ or $(\tilde{q}_{\Psi^{s}, k})_{k\in\mathbb{N}}$\,,
\begin{eqnarray*}
  && q_{\Psi^{s},k}(A)=\max_{|\alpha|+|\beta|+|\gamma|+|\delta|\leq k}\|\mathrm{ad}_{q}^{\alpha}\mathrm{ad}_{p}^{\beta}\mathrm{ad}_{D_{q}}^{\gamma}\mathrm{ad}_{D_{p}}^{\delta}A\|_{\mathcal{L}(L^{2};\tilde{\mathcal{W}}^{-s+|\alpha|+\frac{|\beta|+|\delta|}{2}})}\,.
\end{eqnarray*}
Beals criterion is especially convenient for the link between a global pseudo-differential calculus and functional analysis.
\begin{proposition}
  \label{pr:Bealsfunction}
  Let $A=a^{W}(x,D_{x})$ be a self-adjoint operator in $L^{2}(\mathbb{R}^{2d},dqdp;\mathbb{C})$ with an elliptic (and real) symbol $a\in S(\Psi^{\mu},g_{\Psi})$ (ellipticity means here $a\geq \frac{1}{C}\Psi^{\mu}$ uniformly on $\mathbb{R}^{2d}$) such that $D(A)=\tilde{\mathcal{W}}^{\mu}(\mathbb{R}^{2d};\mathbb{C})$\,. Then for any $f\in S(\langle t\rangle^{s}, \frac{dt^{2}}{\langle t\rangle^{2}}; \mathbb{C})$\,, the operators $f(A)$ and $f(A)-f(a)^{W}(q,p,D_{q},D_{p})$ are pseudo-differential operators with symbols respectively in $S(\Psi^{\mu s},g_{\Psi})$ and $S(\Psi^{\mu s-1},g_{\Psi})$\,.\\
  If additionally $A\geq C\mathrm{Id}_{L^{2}}$ with $C>0$\,, then the same result holds for $A^{s}$ and $A^{s}-(a^{s})^{W}(q,p,D_{q},D_{p})$\,.
\end{proposition}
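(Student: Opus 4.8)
The strategy is to reduce the statement about functions of $A$ to a verification of Beals' commutator criterion, which was recalled just before the statement. First I would establish that $f(A)$ is bounded on $L^2(\mathbb{R}^{2d},dqdp;\mathbb{C})$: since $A$ is self-adjoint and $f\in S(\langle t\rangle^{s},dt^2/\langle t\rangle^2;\mathbb{C})$ is, in particular, a symbol of order $s$, and since $\tilde{\mathcal{W}}^{\mu}(\mathbb{R}^{2d};\mathbb{C})=D(A)$ with the graph norm equivalent to $\|\cdot\|_{\tilde{\mathcal{W}}^{\mu}}$ (because $a$ is elliptic in $S(\Psi^{\mu},g_{\Psi})$ and hence $M^W(x,D_x)$ with $M=(C+\Psi^{|\mu|})^{\mathrm{sign}\,\mu}$ differs from $A$ by a lower order operator handled by the parametrix in \textbf{e)} of the calculus), the spectral theorem shows $f(A):\tilde{\mathcal{W}}^{\mu s_0'}\to\tilde{\mathcal{W}}^{\mu s_0''}$ for suitable indices, and in any case $f(A)\in\mathcal{L}(L^2)$ when $s\le 0$ and more generally maps $D(A^{\lceil s\rceil})\to L^2$. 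The clean way to run this uniformly is via the Helffer--Sj\"ostrand formula
\[
f(A)=\frac{1}{\pi}\int_{\mathbb{C}}\bar{\partial}\tilde{f}(z)\,(z-A)^{-1}\,dL(z),
\]
where $\tilde{f}$ is an almost analytic extension of $f$ with $|\bar{\partial}\tilde{f}(z)|\le C_N\langle\mathrm{Re}\,z\rangle^{s-1-N}|\mathrm{Im}\,z|^{N}$, and $dL$ is Lebesgue measure on $\mathbb{C}$.

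The heart of the argument is the following: I claim that for each $z\notin\mathbb{R}$, the resolvent $(z-A)^{-1}$ is a pseudo-differential operator with symbol in $S(\langle\mathrm{Re}\,z\rangle\,|\mathrm{Im}\,z|^{-2}\,\Psi^{-\mu},g_{\Psi})$, with seminorms controlled by powers of $\langle z\rangle/|\mathrm{Im}\,z|$. To see this I would apply Beals' criterion to $R(z)=(z-A)^{-1}$: the iterated commutators
\[
\mathrm{ad}_{q}^{\alpha}\mathrm{ad}_{p}^{\beta}\mathrm{ad}_{D_q}^{\gamma}\mathrm{ad}_{D_p}^{\delta}R(z)
\]
are computed inductively by the identity $\mathrm{ad}_{B}R(z)=R(z)(\mathrm{ad}_{B}A)R(z)$ and its higher-order analogues (Leibniz), each application of $\mathrm{ad}_{q}$ costing a factor in $S(\Psi,g_{\Psi})$, each $\mathrm{ad}_{D_q}$ a factor in $S(1,g_{\Psi})$, and each $\mathrm{ad}_{p},\mathrm{ad}_{D_p}$ a factor in $S(\Psi^{1/2},g_{\Psi})$ — because $a\in S(\Psi^{\mu},g_{\Psi})$ and differentiation in the metric $g_{\Psi}$ gains exactly those powers, as recalled in the description of $S^{+}(1,g_{\Psi})$ and the weight $m_E$. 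Every intermediate factor $\mathrm{ad}_{\bullet}A$ is a pseudo-differential operator of the appropriate order by the calculus (property \textbf{d)}), hence bounded between the corresponding $\tilde{\mathcal{W}}^{\bullet}$ spaces, and each occurrence of $R(z)$ between such spaces is bounded with norm $\le C\langle z\rangle/|\mathrm{Im}\,z|$ (using ellipticity of $z-A$ away from the spectrum and the equivalence $D(A)=\tilde{\mathcal{W}}^{\mu}$). Collecting the powers of $\Psi$ gives exactly the claimed symbol class for $R(z)$, with polynomial-in-$(\langle z\rangle/|\mathrm{Im}\,z|)$ control of the $S(\cdot,g_{\Psi})$-seminorms.

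Having this, I would insert the resolvent estimate into the Helffer--Sj\"ostrand integral: the growth $|\bar{\partial}\tilde{f}(z)|\le C_N\langle\mathrm{Re}\,z\rangle^{s-1-N}|\mathrm{Im}\,z|^{N}$ beats the polynomial loss $(\langle z\rangle/|\mathrm{Im}\,z|)^{k}$ coming from the $k$-th Beals seminorm of $R(z)$ once $N$ is chosen large enough, and the $\langle\mathrm{Re}\,z\rangle^{s-1}$ weight combines with $\langle\mathrm{Re}\,z\rangle^{\text{(from resolvent)}}$ to leave exactly $\Psi^{\mu s}$ after integration; thus $f(A)=b^W(x,D_x)$ with $b\in S(\Psi^{\mu s},g_{\Psi})$. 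For the second, sharper statement, I would write $f(A)-f(a)^W(x,D_x)$ and use the same contour integral together with the parametrix expansion of $(z-A)^{-1}-(z-a)^{-1,W}(x,D_x)$: by property \textbf{e)} the difference $(z-A)^{-1}-((z-a)^{-1})^{W}(x,D_x)$ has symbol one order lower in $\Psi$ (gain $\Psi^{-1}$), with an extra power of $\langle z\rangle/|\mathrm{Im}\,z|$, still absorbed by $\bar\partial\tilde f$; integrating gives $f(A)-f(a)^W\in\mathrm{Op}\,S(\Psi^{\mu s-1},g_{\Psi})$, because $\bigl(\frac1\pi\int\bar\partial\tilde f(z)(z-a)^{-1}dL(z)\bigr)^{W}=f(a)^W$ by the scalar (commutative) Helffer--Sj\"ostrand identity. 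Finally, if $A\ge C\,\mathrm{Id}$ with $C>0$, then $t\mapsto t^{s}$ agrees on the spectrum $[C,\infty)$ with a function in $S(\langle t\rangle^{s},dt^2/\langle t\rangle^2;\mathbb{C})$ (cut off smoothly near $0$), so the preceding applies verbatim to $A^{s}$ and to $A^{s}-(a^{s})^W(x,D_x)$.

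The main obstacle I anticipate is the bookkeeping in the Beals-criterion induction for the resolvent: one must check that every iterated commutator of $R(z)$, after inserting Leibniz terms, is a finite sum of products $R(z)\,B_1\,R(z)\,B_2\cdots R(z)$ with each $B_j$ a \emph{fixed} pseudo-differential operator of a definite $g_{\Psi}$-order (an iterated commutator of $A$), and then count both the $\Psi$-powers — which must come out to precisely $-\mu+|\alpha|+\frac{|\beta|+|\delta|}{2}$, the order allowed for $\mathrm{ad}_{q}^{\alpha}\mathrm{ad}_{p}^{\beta}\mathrm{ad}_{D_q}^{\gamma}\mathrm{ad}_{D_p}^{\delta}$ of an order-$\mu s$ operator shifted by $-1$ — and the $\langle z\rangle/|\mathrm{Im}\,z|$-powers, which must stay polynomial so that the $\bar\partial\tilde f$ decay can dominate. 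The temperateness and slowness of $g_{\Psi}$ proved in Proposition~\ref{pr:Hormmetric}, together with $g_{\Psi}^{\sigma}=\Psi^2 g_{\Psi}$, are exactly what make the Weyl--H\"ormander calculus of \cite{HormIII}-Chap.~XVIII and the Beals criterion of \cite{Bon}\cite{BoCh}\cite{Bea} available on $T^*(\mathbb{R}^{2d})$; once those are granted the remaining work is the uniform-in-$z$ estimates just described.
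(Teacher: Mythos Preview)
Your approach is essentially the same as the paper's: Helffer--Sj\"ostrand formula combined with Beals' criterion to show $(z-A)^{-1}$ is pseudo-differential with seminorms polynomially controlled in $\langle z\rangle/|\mathrm{Im}\,z|$, then the comparison $(z-A)^{-1}-((z-a)^{-1})^{W}$ via the parametrix to gain one order of $\Psi$ for the principal symbol statement. The paper in fact cites \cite{Bon}--Theorem~3.8 directly for the first claim and only spells out the principal symbol argument, whereas you propose to redo both; this is fine.

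One point to tighten: for $s\ge 0$ the Helffer--Sj\"ostrand integral does not converge as written, since $|\bar\partial\tilde f(z)|\lesssim\langle\mathrm{Re}\,z\rangle^{s-1-N}|\mathrm{Im}\,z|^{N}$ only gives integrability in $\mathrm{Re}\,z$ when $s<0$. The paper handles this by the standard reduction $f(A)=(i+A)^{N}f_{N}(A)$ with $f_{N}(t)=(i+t)^{-N}f(t)\in S(\langle t\rangle^{s-N},dt^{2}/\langle t\rangle^{2})$ and $N$ large enough; you should insert this step rather than leaving the integration for general $s$ implicit.
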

\begin{proof}
  The proof of Bony in \cite{Bon}-Theorem~3.8 relies on Helffer-Sj{\"o}strand functional calculus formula very convenient with Beals criterion (seminorms on $S(\Psi^{s},g_{\Psi})$ are expressed in terms of norms of commutators). We refer the reader to \cite{Bon}\cite{DiSj}\cite{HeSj} or to the end of Subsection~\ref{sec:global} for a more detailed use of Helffer-Sj{\"o}strand formula. The only thing which was not verified in \cite{Bon}, because it is about a more general framework, is  the principal symbol statement. Actually we can focus on $\mu\geq 0$ and when one knows that $(z-A)^{-1}=b_{z}^{W}(x,D_{x})$ with  seminorms of $b_{z}$ estimated by $p_{\Psi^{-\mu},k}(b)\leq C_{k}\frac{\langle z\rangle^{N_{k}}}{|\mathrm{Im}\, z|^{N_{k}}}$ it suffices to write $\left(\frac{1}{z-a}\right)^{W}\circ(z-A)=1+r^{W}_{z}(x,D_{X})$ with seminorms of $r_{z}\in S(\Psi^{-1},g_{\Psi})$ estimated by $p_{\Psi^{-1}, k}(r_{z})\leq C'_{k}\frac{\langle z\rangle^{N'_{k}}}{|\mathrm{Im}z|^{N'_{k}+1}}$\,. We deduce $\left(\frac{1}{z-a}\right)^{W}-(z-A)^{-1}=c_{z}^{W}(x,D_{x})$ with $p_{\Psi^{-\mu-1},k}(c_{z})$ estimated by $\frac{\langle z\rangle^{N''_{k}}}{|\mathrm{Im}\,z|^{N''_{k}+1}}$\,. Inserting this into Helffer-Sj{\"o}strand formula proves the result for $s<0$\,, by simple integration. For $s\geq  0$\,, write $f(A)=(i+A)^{N}f_{N}(A)$ with $f_{N}(t)=(i+t)^{-N}f(t)\in S(\langle t\rangle^{s-N},\frac{dt^{2}}{\langle t\rangle^{2}})$ and $N$ large enough.
\end{proof}
The former result provides us an easy way for comparing various simple definitions of the spaces $\tilde{\mathcal{W}}^{s}(\mathbb{R}^{2d};\mathbb{C})$ and the equivalence of the norms.
\begin{proposition}
  \label{pr:eqnormes} Consider the self-adjoint operator $A=1-\Delta_{q}^{2}+\left(\frac{-\Delta_{p}+|p|^{2}}{2}\right)^{2}$ in $L^{2}(\mathbb{R}^{2d},dqdp;\mathbb{C})$ with domain $D(A)=\tilde{\mathcal{W}}^{2}(\mathbb{R}^{2d};\mathbb{C})$\,. Let $\sum_{\ell=-1}^{\infty}\theta_{\ell}^{2}(t)\equiv 1$  on $[0,+\infty)$ be a quadratic dyadic partition of unity like in \eqref{eq:dyadic}. 
  Then the following squared norms on $\tilde{\mathcal{W}}^{s}(\mathbb{R}^{2d};\mathbb{C})$ equivalent with $\|~\|^{2}_{\tilde{\mathcal{W}}^{s}}$:
  \begin{description}
  \item[i)] $\|A^{s/2}u\|^{2}_{L^{2}(\mathbb{R}^{2d},dqdp;\mathbb{C})}$ for any  $s\in \mathbb{R}$\,;
  \item[ii)] $\|(\frac{-\Delta_p+|p|^2}{2})^s u\|^2_{L^{2}(\mathbb{R}^{2d},dqdp;\mathbb{C})}+\||D_s|^s u\|^2_{L^{2}(\mathbb{R}^{2d},dqdp;\mathbb{C})}$ for $s\geq 0$\,;
  \item[iii)] $\sum_{\ell=-1}^{\infty}\|\theta_{\ell}(|p|^{2})u\|^{2}_{\tilde{\mathcal{W}}^{s}}$\,;
  \item[iv)] for $s=k\in \mathbb{N}$\,, $\sum_{|\alpha|+\frac{|\beta|+|\gamma|}{2}\leq k}\|\partial_{q}^{\alpha}p^{\beta}\partial_{p}^{\gamma}u\|^{2}_{L^{2}(\mathbb{R}^{2d},dqdp;\mathbb{C})}$\,;
    \item[v)] for $s=k\in \mathbb{N}$\,, $\sum_{|\alpha|+\frac{N_{3}+|\gamma|}{2}\leq k}\|\langle p\rangle^{N_{3}}\partial_{q}^{\alpha}\partial_{p}^{\gamma}u\|^{2}_{L^{2}(\mathbb{R}^{2d},dqdp:\mathbb{C})}$\,;
  \end{description}
\end{proposition}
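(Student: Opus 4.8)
The statement is Proposition~\ref{pr:eqnormes}, which asserts that five concretely given squared quantities all define the same topology on $\tilde{\mathcal{W}}^{s}(\mathbb{R}^{2d};\mathbb{C})=H(\Psi^{s},g_{\Psi})$. The unifying tool is Beals' criterion in the metric $g_{\Psi}$ together with Proposition~\ref{pr:Bealsfunction} on functional calculus. The plan is to produce, for each of \textbf{i)}--\textbf{v)}, an invertible ``norm operator'' that is pseudo-differential of the correct order, i.e. whose symbol is an elliptic element of $S(\Psi^{s},g_{\Psi})$, so that by \cite{BoCh}\cite{Bea} the associated $L^{2}$-graph norm is equivalent to $\|M_{s}^{W}(x,D_{x})u\|_{L^{2}}$, which is the defining norm of Definition~\ref{de:tWsRd}. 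The whole proof is therefore an assembly of symbolic ellipticity checks, and the key observation made once and for all is that
\begin{align*}
\Psi(q,p,\xi,\eta)^{2}=1+|\xi|^{2}+|p|^{4}+|\eta|^{4}
\end{align*}
is, up to constants, comparable to $\langle\xi\rangle^{2}+\big(\tfrac{|p|^{2}+|\eta|^{2}}{2}\big)^{2}$, which is exactly the principal symbol of the operator $A$ appearing in \textbf{i)}.

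First I would do \textbf{i)}. The operator $A=1-\Delta_{q}^{2}+\mathcal{O}_{\mathrm{eucl}}^{2}$ with $\mathcal{O}_{\mathrm{eucl}}=\tfrac{-\Delta_{p}+|p|^{2}}{2}$ is self-adjoint on $D(A)=\tilde{\mathcal{W}}^{2}$, its Weyl symbol is $a=1+|\xi|^{4}+\big(\tfrac{|p|^{2}+|\eta|^{2}}{2}\big)^{2}$ modulo lower order terms in $S(\Psi^{2},g_{\Psi})$ (the Weyl symbol of $\mathcal{O}_{\mathrm{eucl}}^{2}$ differs from $(\tfrac{|p|^2+|\eta|^2}{2})^2$ by an $S(\Psi,g_{\Psi})$ remainder, since $g_{\Psi}$ is split), and this $a$ is elliptic in $S(\Psi^{2},g_{\Psi})$ and bounded below by a positive constant. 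Hence Proposition~\ref{pr:Bealsfunction} applies with $f(t)=t^{s/2}\in S(\langle t\rangle^{s/2},\tfrac{dt^{2}}{\langle t\rangle^{2}})$: $A^{s/2}$ is pseudo-differential with principal symbol $a^{s/2}\in S(\Psi^{s},g_{\Psi})$, elliptic, and differs from $M_{s}^{W}(x,D_{x})$ by an operator bounded $\tilde{\mathcal{W}}^{s_{0}}\to\tilde{\mathcal{W}}^{s_{0}-s+1}$; composing with the parametrix of $M_{s}^{W}$ gives the two-sided bound. For \textbf{ii)}, the operator $B=(\mathcal{O}_{\mathrm{eucl}})^{s}+|D_{q}|^{s}$ (or its square) has symbol comparable to $\big(\tfrac{|p|^2+|\eta|^2}{2}\big)^{s}+|\xi|^{s}\asymp \Psi^{s}$ for $s\ge 0$, again elliptic in $S(\Psi^{s},g_{\Psi})$ after the same functional-calculus argument applied to the two commuting nonnegative operators $\mathcal{O}_{\mathrm{eucl}}$ and $|D_q|$ (Proposition~\ref{pr:Bealsfunction} with $f(t)=t^{s/2}$, modulo a care for the homogeneous singularity of $|D_q|^s$ at $\xi=0$, handled by replacing it with $(1+|D_q|^2)^{s/2}$ and absorbing the difference into a lower-order, hence bounded, correction). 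Then \textbf{iv)} and \textbf{v)} for $s=k\in\mathbb{N}$ are the most hands-on: each monomial operator $\partial_q^\alpha p^\beta\partial_p^\gamma$ (resp.\ $\langle p\rangle^{N_3}\partial_q^\alpha\partial_p^\gamma$) has symbol $(i\xi)^\alpha p^\beta(i\eta)^\gamma$ which lies in $S(\Psi^{|\alpha|+(|\beta|+|\gamma|)/2},g_{\Psi})\subset S(\Psi^{k},g_{\Psi})$ under the stated constraint, so the sum of squares is dominated by $\|u\|_{\tilde{\mathcal{W}}^{k}}^{2}$; conversely one checks that the finite family of these symbols is jointly elliptic of order $k$ (their squares sum to something $\gtrsim\Psi^{2k}$ — for \textbf{iv)} this uses that $\{1,|\xi|^2,|p|^2,|\eta|^2\}$ already generate $\Psi^2$, raised to the power $k$), which via Gårding/parametrix gives the reverse inequality; \textbf{v)} is then equivalent to \textbf{iv)} because $\langle p\rangle^{N_3}$ and the monomials $p^\beta$, $|\beta|\le N_3$, span the same module over $S^0$ up to lower order, exactly as in the $\varepsilon$-absorption computation already carried out in the proof of Proposition~\ref{pr:characWs}.

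For \textbf{iii)} the dyadic reassembly $\sum_{\ell\ge-1}\|\theta_\ell(|p|^2)u\|_{\tilde{\mathcal{W}}^s}^2$, I would invoke Corollary~\ref{Cor:equivalenceOfQuantities} / the localization machinery of Appendix~C exactly as it is used in Proposition~\ref{pr:equivNormAfterDyadicPartition}: the commutators $[M_s^W(x,D_x),\theta_\ell(|p|^2)]$ and the double commutators are of order $s-\tfrac12$ and $s-1$ respectively with the dyadic-uniform bounds $\sup|\partial_p^\alpha\theta_\ell|\le C_\alpha 2^{-|\alpha|\ell}$ from \eqref{p_derivative_estimates_thetas}, so after multiplying $M_s$ by a large constant (or using $M_s$ of large lower-order constant $C_s$) the hypothesis \eqref{eq:equivH} holds with $r<1$ and the equivalence follows from \eqref{eq:equiv1}. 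The main obstacle, and the only genuinely delicate point, is keeping the bookkeeping of \emph{which} order each commutator and each symbol has in the nonstandard scale $g_{\Psi}$ consistent — in particular remembering that $p\times$ and $\partial_{p_i}$ cost only $\Psi^{1/2}$ while $\partial_{q^i}$ costs $\Psi$, and that the Weyl-vs-standard quantization discrepancies and the symbol remainders are always one full power of $\Psi$ lower (hence harmless after composing with a parametrix). Once that ordering discipline is fixed, every step reduces to an ellipticity estimate of the shape $\sum_j |a_j|^2\gtrsim \Psi^{2s}$ on $\mathbb{R}^{4d}$, which is elementary. I would therefore structure the write-up as: (1) record the comparison $A$'s symbol $\asymp\Psi^2$ and deduce \textbf{i)}; (2) deduce \textbf{ii)} from \textbf{i)} by functional calculus of the two commuting operators; (3) prove \textbf{iv)}$\Leftrightarrow$\textbf{i)} by the monomial-symbol ellipticity count; (4) prove \textbf{v)}$\Leftrightarrow$\textbf{iv)} by the $\varepsilon$-absorption argument; (5) prove \textbf{iii)}$\Leftrightarrow$\textbf{i)} by the dyadic commutator estimates and Corollary~\ref{Cor:equivalenceOfQuantities}.
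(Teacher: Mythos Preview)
Your treatment of \textbf{i)}, \textbf{ii)}, \textbf{iv)}, \textbf{v)} is essentially the paper's: reduce each candidate norm to an elliptic operator in $S(\Psi^{s}, g_\Psi)$ (or to a quadratic form $\langle u,B u\rangle$ with $B$ elliptic in $S(\Psi^{2k}, g_\Psi)$ for \textbf{iv)}, \textbf{v)}), then invoke Proposition~\ref{pr:Bealsfunction}. One minor slip: the Weyl symbol of $A$ is $1+|\xi|^{2}+\tfrac14(|p|^{2}+|\eta|^{2})^{2}$ modulo $S(\Psi^{1},g_\Psi)$, not $1+|\xi|^{4}+\cdots$; the $-\Delta_q^{2}$ in the statement is a typo for $-\Delta_q$, as confirmed both by the paper's proof and by the declared domain $D(A)=\tilde{\mathcal W}^{2}$.

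For \textbf{iii)}, however, your argument has a genuine gap. Corollary~\ref{Cor:equivalenceOfQuantities} and the identities \eqref{eq:upperBound}--\eqref{eq:lowerBound} on which it rests require the triple commutator $[[[P,\chi_1],\chi_2],\chi_3]$ to vanish, i.e.\ $P$ must be a differential operator of order at most two in the variables on which the $\chi$'s depend. Your choice $P=M_s^W(x,D_x)$ is a genuine pseudo-differential operator, so triple and higher commutators with $\theta_\ell(|p|^2)$ do \emph{not} vanish, and the Appendix~C machinery does not apply as written. The paper avoids this by a different route: for $s=k\in\N$, apply the partial Fourier transform $F_{q\to\xi}$ so that the $\tilde{\mathcal W}^k$-norm becomes
\[
N_{C,k}(u)^{2}=C\|u\|_{L^2}^{2}+\sum_{|\alpha|+\frac{|\beta|+|\gamma|}{2}\le k}\|\xi^{\alpha}p^{\beta}\partial_p^{\gamma}u\|_{L^2}^{2},
\]
then express $N_{C,k}$ recursively via $\sum_{2|\alpha|+|\beta|+|\gamma|\le 2}N_{C,k-1}(\xi^{\alpha}p^{\beta}\partial_p^{\gamma}u)^{2}$. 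Each building block $\xi^{\alpha}p^{\beta}\partial_p^{\gamma}$ with $2|\alpha|+|\beta|+|\gamma|\le 2$ is at most second order in $\partial_p$ (with $\xi$ a mere multiplication parameter after Fourier transform), so now Corollary~\ref{Cor:equivalenceOfQuantities} legitimately applies to the partition $\theta_\ell(|p|^{2})$; induction on $k$ gives \textbf{iii)} for integer $s$, and interpolation/duality then extends it to all $s\in\R$. Your direct approach could in principle be salvaged by iterating the localization identities to arbitrary commutator depth (the multi-commutators do decay uniformly in $\ell$ in $S(\Psi^{s-n},g_\Psi)$), but that is substantially more than what Appendix~C provides.
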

\begin{proof}
  The equivalence with  \textbf{i)} is a direct application of Proposition~\ref{pr:Bealsfunction} because $C+A=a^{W}(q,p,D_{q},D_{p})$ with $a=C+1+|\xi|^{2}+\frac{1}{4}(|p|^{2}+|\eta|^{2})^{2}~\mod S(\Psi^{1},g_{\Psi})$ which and $a$ is elliptic for $C>0$ large enough. For $s\in \mathbb{R}$ the functional calculus says that $\|A^{s/2}u\|_{L^{2}}$ is equivalent to $\|(C+A)^{s/2}u\|_{L^{2}}$\,. But for $C\geq C_{s}>0$ large enough $(C+A)^{s/2}=a_{s}^{W}(q,p,D_{q},D_{p})$ with $a_{s}$ elliptic in $S(\Psi^{s},g_{\psi})$ and $\|(C+A)^{s/2}u\|_{L^{2}}$ is equivalent to $\|u\|_{\tilde{\mathcal{W}}^{s}}$\,.\\
  The statement \textbf{ii)} is actually a consequence of the functional calculus with $(1+t^2+{t'}^2)^{s}\asymp t^{2s}+{t'}^{2s}$ for all $t\geq 0, t'\geq d/2$ when  $s\geq 0$ is fixed\,.\\
  For $s=k$ the squared norms of \textbf{iv)} and \textbf{v)} are equivalent to $\langle u\,,\, B_{iv}u\rangle_{L^{2}}$ and $\langle u\,,\, B_{v}u\rangle$ with
$$
B_{iv}=C+\sum_{|\alpha|+\frac{|\beta|+|\gamma|}{2}\leq k}D_{p}^{\gamma}D_{q}^{2\alpha}p^{2\beta}D_{p}^{\gamma}\quad,\quad
B_{v}=C+\sum_{|\alpha|+\frac{N_{3}+|\gamma|}{2}\leq k}D_{p}^{\gamma}D_{q}^{2\alpha}\langle p\rangle^{2N_{3}}D_{p}^{\gamma}
$$
which both have a symbol elliptic in  $S(\Psi^{2k},g_{\Psi})$ for $C>0$ large enough. By Proposition~\ref{pr:Bealsfunction}, the operators $(C+B_{iv})^{1/2}$ and $(C+B_{v})^{1/2}$ have an elliptic symbol in $S(\Psi^{k},g_{\Psi})$ for $C>0$ large enough and the two norms of \textbf{iv)} and \textbf{v)} are equivalent to $\|u\|_{\tilde{\mathcal{W}}^{k}}$ for $k\in \mathbb{N}$\,.\\
Finally for \textbf{iii)}, the partial Fourier $F_{q\to\xi}$ transform with respect to $q$ sends $L^{2}(\mathbb{R}^{2d},dqdp;\mathbb{C})$ onto the direct integral $\int_{\mathbb{R}^{d}}L^{2}(\mathbb{R}^{d},dp;\mathbb{C})~\frac{d\xi}{(2\pi)^{d}}$ and for $s=k\in\mathbb{N}$ the squared norm in $\tilde{\mathcal{W}}^{k}(\mathbb{R}^{2};\mathbb{R})$   is equivalent to 
$$
N_{C,k}(u)^{2}=C\|u\|_{L^{2}}^{2}+\sum_{|\alpha|+\frac{|\beta|+|\gamma|}{2}\leq k}\|\xi^{\alpha}p^{\beta}\partial_{p}^{\gamma}u\|_{L^{2}}^{2}$$
and to $\sum_{2|\alpha|+|\beta|+|\gamma|\leq 2}N_{C,k-1}(\xi^{\alpha}p^{\beta}\partial_{p}^{\gamma}u)^{2}$\,. 
We are considering operators of order $2$ in $\partial_{p}$ and Corollary~\ref{Cor:equivalenceOfQuantities} can be applied with a recurrence with respect to $k\in \mathbb{N}$ and gives
$$
\frac{1}{C_{k}}\sum_{\ell=-1}^{\infty}N_{C,k}(\theta_{\ell}(|p|^{2})u)\leq N_{C,k}(u)^{2}\leq C_{k}\sum_{\ell=-1}^{\infty}N_{C,k}(\theta_{\ell}(|p|^{2})u)^{2}\,,
$$
for all $u\in \mathcal{C}^{\infty}_{0}(\mathbb{R}^{2d};\mathbb{C})$ and by density for all $u\in \tilde{\mathcal{W}}^{k}(\mathbb{R}^{2d};\mathbb{C})$\,. The result for $s\in \mathbb{R}$ follows  by interpolation and duality.
\end{proof}
\begin{remark}
\label{re:2ndmicro}
  The equivalence with \textbf{ii)} could be done with a semi-classical calculus $a^{W}(\sqrt{h}p,\sqrt{h}D_{p})$ with the semiclassical parameter $h=\frac{1}{\sqrt{C+|\xi|^{2}}}$ with the symbol classes $S(\langle p,\eta\rangle^{\mu_{1}}\langle p\rangle^{\mu_{2}}\langle \eta\rangle^{\mu_{3}},\frac{dp^{2}}{\langle p\rangle^{2}}+\frac{d\eta^{2}}{\langle \eta\rangle^{2}})$ with $1+\frac{h^{2}}{4}(-\Delta_{p}+|p|^{2})^{2}=a_{1}(\sqrt{h}p,\sqrt{h}D_{p})$\,, $a_{1}$ elliptic in $S(\langle p,\eta\rangle^{4},\frac{dp^{2}}{\langle p\rangle^{2}}+\frac{d\eta^{2}}{\langle \eta\rangle^{2}})$ and $\chi_{\ell}(|p|^{2})\in S(1,\frac{dp^{2}}{\langle p\rangle^{2}}+\frac{d\eta^{2}}{\langle \eta\rangle^{2}})$ (see e.g. \cite{Rob} or \cite{NaNi}). The necessity of two H{\"o}rmander metrics suggests the link with the second microlocalization of \cite{BoLe}. The chosen elementary method suffices here.
\end{remark}

\subsection{Localization and geometric invariance} 
\label{sec:geominv}
For the localization in $\Omega\times \mathbb{R}^{3d}$\,, $q\in \Omega$\,, $\Omega$ open set of $\mathbb{R}^{d}$\,, it is more convenient to work with the classical  quantization:
$$
[a(q,p,D_{q},D_{p})](q,p,q',p')=\int_{\mathbb{R}^{2d}}e^{i[\xi.(q-q')+\eta.(p-p')}a(q,p,\xi,\eta)~\frac{d\xi d\eta}{(2\pi)^{2d}}\,,
$$
for which $\varrho(q)a(x,D_{x})=[\varrho(q)a](x,D_{x})$\,.\\
Every $a\in \mathcal{S}'_{\Omega-\mathrm{loc}}(\Omega\times\R^{3d};\mathbb{C})$ gives rise to a kernel in $\mathcal{S}'_{\Omega\times \Omega-\mathrm{loc}}(\Omega\times \Omega\times\mathbb{R}^{2d};\mathbb{C})$ and therefore a continuous  operator $A_{a}$ from $\mathcal{S}_{\Omega-\mathrm{comp}}(\Omega\times \mathbb{R}^{d};\mathbb{C})$ to $\mathcal{S}'_{\Omega-\mathrm{loc}}(\Omega\times\mathbb{R}^{d};\mathbb{C})$ and $a\mapsto A_{a}$ is a bijection. Consider the symbol classe $S^{m}_{\Psi,\Omega-\mathrm{loc}}(\Omega;\mathbb{C})$ characterized by \eqref{eq:Sloc} with the associated space $S^{m}_{\Psi,\Omega-\mathrm{comp}}(\Omega;\mathbb{C})$ and the spaces $\tilde{\mathcal{W}^{s}}_{\Omega-{\mathrm{loc}}}(\Omega;\mathbb{C})$\,, $\tilde{\mathcal{W}}^{s}_{\Omega-\mathrm{comp}}(\Omega;\mathbb{C})$\,. For two open sets $\Omega$ and $\Omega'$ of $\mathbb{R}^{d}$ and $\varrho\in \mathcal{C}^{\infty}_{0}(\Omega;\mathbb{C})$ we have the following continuous embeddings when the letter $E$ in $E(\Omega)$ stands for $S^{m}_{\Psi}$\,, $\tilde{\mathcal{W}}^{s}$\,,\, $\mathcal{S}$ or $\mathcal{S}'$: 
\begin{eqnarray*} 
  && E_{\Omega-\mathrm{comp}}(\Omega)\subset E_{\mathbb{R}^{d}-\mathrm{comp}}(\mathbb{R}^{d})\subset E_{\Omega'-\mathrm{loc}}(\Omega')\\
  && \varrho(q)E_{\Omega'-\mathrm{comp}}(\Omega')\subset \varrho(q)E_{\mathbb{R}^{d}-\mathrm{comp}}(\mathbb{R}^{d})\subset E_{\Omega-\mathrm{comp}}(\Omega)\,.
\end{eqnarray*}
Notice also for $\bullet=\mathrm{loc}$ or $\mathrm{comp}$:
$$
\mathop{\bigcap}_{s\in \mathbb{R}}\tilde{\mathcal{W}^{s}}_{\Omega-\bullet}(\Omega\times\mathbb{R}^{d};\mathbb{C})=\mathcal{S}_{\Omega-\bullet}(\Omega\times\mathbb{R}^{d};\mathbb{C})\quad\text{and}\quad
\mathop{\bigcup}_{s\in \mathbb{R}}\tilde{\mathcal{W}^{s}}_{\Omega-\bullet}(\Omega\times\mathbb{R}^{d};\mathbb{C})=\mathcal{S}_{\Omega-\bullet}'(\Omega\times\mathbb{R}^{d};\mathbb{C})
$$
In particular symbols $a\in S^{m}_{\Psi,\Omega-\mathrm{comp}}(\Omega;\mathbb{C})$ can be viewed as symbols $a\in S(\Psi^{m},g_{\Psi})$\,. Therefore $a(x,D_{x})$ defines a continuous operator from $\tilde{\mathcal{W}}^{s}_{\Omega-\mathrm{comp}}(\Omega\times\mathbb{R}^{d};\mathbb{C})$ to $\tilde{\mathcal{W}}^{s}_{\Omega-\mathrm{comp}}(\Omega\times\mathbb{R}^{d};\mathbb{C})$\,. For $\chi\in \mathcal{C}^{\infty}_{0}(\Omega;[0,1])$ such that $\chi\equiv 1$ on a neighborhood $\Omega_{\chi}$  of $\Omega-\mathrm{supp}\, a$ we have
\begin{eqnarray*}
&&a(x,D_{x})\chi(q):\tilde{\mathcal{W}}^{s}_{\Omega-\mathrm{loc}}(\Omega\times\mathbb{R}^{d};\mathbb{C})\to
\tilde{\mathcal{W}}^{s}_{\Omega-\mathrm{comp}}(\Omega\times\mathbb{R}^{d};\mathbb{C})\\
\text{and}&&
             a(x,D_{x})\chi(q)|_{\tilde{\mathcal{W}}^{s}_{\Omega_{\chi}-\mathrm{comp}}}=a(x,D_{x})\big|_{\tilde{\mathcal{W}}^{s}_{\Omega_{\chi}-\mathrm{comp}}}\,.
\end{eqnarray*}
For two different choices of $\chi$ and $\chi'$ which are equal to $1$ in a neighborhood of $\Omega-\mathrm{supp}\,a$\,, $a\in S^{m}_{\Psi,\Omega-\mathrm{comp}}(\Omega;\mathbb{C})$\,, the differences $a(x,D_{x})\chi(q)-a(x,D_{x})\chi'(q)=a(x,D_{x})(\chi(q)-\chi'(q))=b(x,D_{x})$ with $b\in S(\Psi^{-\infty},g_{\Psi})$ and therefore is continuous from $\mathcal{S}'_{\Omega'-\mathrm{loc}}(\Omega'\times\mathbb{R}^{d};\mathbb{C})$ to $\mathcal{S}_{\Omega-\mathrm{comp}}(\Omega\times\mathbb{R}^{d};\mathbb{C})$\,, where $\Omega'=\Omega$ or any other open subset of $\mathbb{R}^{d}$\,.

\begin{definition}
  \label{def:ROmega} For an open set $\Omega$ the set of regularizing operators is
  $$
  \mathcal{R}(\Omega;\mathbb{C})=\mathcal{L}(\mathcal{S}'_{\Omega-\mathrm{loc}}(\Omega\times\mathbb{R}^{d};\mathbb{C});\mathcal{S}_{\Omega-\mathrm{comp}}(\Omega\times\mathbb{R}^{d};\mathbb{C}))\,.
  $$
  For two operators $A,B$ continuous from $\mathcal{S}_{\Omega-\mathrm{comp}}(\Omega\times\mathbb{R}^{d};\mathbb{C})$ to $\mathcal{S}'_{\Omega-\mathrm{comp}}(\Omega\times\mathbb{R}^{d};\mathbb{C})$ the equivalence $A\sim B$ is defined by $A-B\in \mathcal{R}(\Omega;\mathbb{C})$\,.\\
  The set $\mathrm{Op S}^{m}_{\Psi}(\Omega;\mathbb{C})$ is the set of sums $a(x,D_{x})\chi(q)+R$ with $a\in S^{m}_{\Psi,\Omega-\mathrm{comp}}(\Omega;\mathbb{C})\subset S(\Psi^{m},g_{\Psi})$\,, $\chi\in \mathcal{C}^{\infty}_{0}(\Omega;[0,1])$ with $\chi\equiv 1$ on a neighborhood of $\Omega-\mathrm{supp}\, a$\,, and $R\in \mathcal{R}(\Omega;\mathbb{C})$\,. The set $\mathrm{OpS}^{-\infty}_{\Psi}(\Omega;\mathbb{C})$ is $\mathcal{R}(\Omega;\mathbb{C})$\,.
\end{definition}
With the previous remarks $\mathop{\bigcup}_{m\in \mathbb{R}}\mathrm{OpS}^{m}_{\Psi}(\Omega;\mathbb{C})$ is an algebra and clearly $\mathop{\bigcap}_{m\in \mathbb{R}}\mathrm{OpS}^{m}_{\Psi}(\Omega;\mathbb{C})=\mathcal{R}(\Omega;\mathbb{C})=\mathrm{OpS}^{-\infty}_{\Psi}(\Omega;\mathbb{C})$\,. Moreover if $A_{j}=a_{j}(x,D_{x})\chi_{j}(q)+R_{j}\in \mathrm{OpS}^{m}_{\Psi}(\Omega_{j};\mathrm{C})$ for $j=1,2$ then $A_{1}\circ A_{2}\in \mathrm{OpS}^{m}_{\Psi}(\Omega_{1}\cup \Omega_{2};\mathbb{C})$ with $A_{1}\circ A_{2}\sim a_{1}(x,D_{x})\circ a_{2}(x,D_{x})\chi(q)=(a_{1}\sharp a_{2})(x,D_{x})\chi(q)$ for any $\chi\in \mathcal{C}^{\infty}_{0}(\Omega_{1}\cap \Omega_{2};\mathbb{C})$ such that $\chi\equiv 1$ on a neighborhood of $\Omega-\mathrm{supp}\, a_{1}\cap \Omega-\mathrm{supp}\,a_{2}$\,.\\
\begin{definition}
  \label{def:asympsum}
  For $A\in \mathcal{L}(\mathcal{S}_{\Omega-\mathrm{comp}}(\Omega\times\mathbb{R}^{d};\mathbb{C});\mathcal{S}'_{\Omega-\mathrm{loc}}(\Omega\times\mathbb{R}^{d};\mathbb{C}))$\,,
the notation $A\sim \sum_{j=0}^{\infty}a_{j}(x,D_{x})$ is thought of as a localized asymptotic sum, for a sequence $a_{j}\in S^{m_{j}}_{\Psi,\Omega-\mathrm{loc}}(\Omega;\mathbb{C})$ with $\lim_{j\to\infty}m_{j}=-\infty$ and $(m_{j})_{j\in\mathbb{N}}$ decreasing. It means  that for any pair $\varrho,\chi\in \mathcal{C}^{\infty}_{0}(\Omega;[0,1])$\,, with $\chi\equiv 1$ on a neighborhood of $\Omega-\mathrm{supp}\,\varrho$\,, there exists $a_{\varrho}\in S^{m_{0}}_{\Psi}(\Omega;\mathbb{C})$ such that $\varrho(q)A\sim a_{\varrho}(x,D_{x})\chi(q)$ and for any $J\in \mathbb{N}$\,, $a_{\varrho}-\sum_{j=0}^{J}\varrho(q)a_{j}\in S^{m_{J+1}}_{\Psi}(\Omega;\mathbb{C})$\,.
\end{definition}
Notice that if $A=\tilde{\varrho}(q)A\tilde{\chi}(q)$ for some $\tilde{\varrho},\tilde{\chi}\in \mathcal{C}^{\infty}_{0}(\Omega;\mathbb{C})$ and $a_{j}=\tilde{\varrho}(q)a_{j}$ for all $j\in\mathbb{N}$\,, the above condition is reduced to $a-\sum_{j=0}^{J}a_{j}\in S^{m_{J+1}}(\Omega;\mathbb{C})$ with $a$ independent of $\varrho$\,. Moreover the localized definition means that we can always consider this simpler case.\\
The previous definition is justified by the following standard result.
\begin{proposition}
  \label{pr:asympsum}
  For any sequence $a_{j}\in S(\Psi^{m_{j}},g_{\Psi})$ with $m_{j}$ decreasing and $\lim_{j\to\infty}m_{j}=-\infty$\,, there exists $a\in S(\Psi^{m_{0}},g_{\Psi})$ such that $a-\sum_{j=0}^{J}a_{}\in S(\Psi^{m_{J+1}},g_{\Psi})$\,.\\
  For $A\in \mathcal{L}(\mathcal{S}_{\Omega-\mathrm{comp}}(\Omega\times\mathbb{R}^{d};\mathbb{C});\mathcal{S}'_{\Omega-\mathrm{loc}}(\Omega\times\mathbb{R}^{d};\mathbb{C}))$\,, $A\sim \sum_{j=0}^{\infty}a_{j}(x,D_{x})$ is equivalent to the apparently weaker condition $A-\sum_{j=0}^{J}a_{j}(x,D_{x})\in \mathcal{L}(\tilde{\mathcal{W}}^{-\mu_{J}}_{\Omega-\mathrm{comp}}(\Omega\times\mathbb{R}^{d};\mathbb{C});\tilde{\mathcal{W}}^{\mu_{J}}_{\Omega-\mathrm{loc}}(\Omega\times\mathbb{R}^{d};\mathbb{C}))$ with $\lim_{J\to\infty}\mu_{J}=-\infty$\,.
\end{proposition}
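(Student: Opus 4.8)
\textbf{Proof plan for Proposition~\ref{pr:asympsum}.}
The statement has two halves, and both follow the pattern of the classical Borel-type construction for \Hormander calculus; the only subtlety is the local-in-$q$ bookkeeping inherent in the spaces $S^{m}_{\Psi,\Omega-\mathrm{loc}}$ and the continuity statements of Subsection~\ref{sec:global}. For the first half, I would run the standard asymptotic summation in the \Hormander class $S(\Psi^{m_{0}},g_{\Psi})$: since $g_{\Psi}$ is a \Hormander metric with gain function $\Psi$ by Proposition~\ref{pr:Hormmetric}, the general theorem of \cite{HormIII}-Chap.~XVIII (or the splitted metric version recalled before Definition~\ref{de:tWsRd}) applies verbatim. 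Concretely, choose a cut-off $\varkappa\in \mathcal{C}^{\infty}(\R^{4d};[0,1])$ supported where $\Psi\geq 2$ and equal to $1$ where $\Psi\geq 3$, pick a sequence $t_{j}\to +\infty$ growing fast enough, and set $a=\sum_{j}\varkappa(X/t_{j})\,a_{j}(X)$. The usual estimates on $g_{\Psi}$-balls (slowness and temperance, both checked in Proposition~\ref{pr:Hormmetric}) guarantee that the sum is locally finite, that $a\in S(\Psi^{m_{0}},g_{\Psi})$, and that $a-\sum_{j=0}^{J}a_{j}\in S(\Psi^{m_{J+1}},g_{\Psi})$ for every $J$. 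I would then note that if all the $a_{j}$ carry a fixed $q$-cut-off $\tilde\varrho(q)$ (the only case that matters, by the localized definition), then $a$ can be taken of the form $\tilde\varrho(q)a$ as well, so that $a\in S^{m_{0}}_{\Psi,\Omega-\mathrm{comp}}(\Omega;\mathbb{C})$.

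For the second half, the nontrivial direction is: the weak condition
$A-\sum_{j=0}^{J}a_{j}(x,D_{x})\in \mathcal{L}(\tilde{\mathcal{W}}^{-\mu_{J}}_{\Omega-\mathrm{comp}};\tilde{\mathcal{W}}^{\mu_{J}}_{\Omega-\mathrm{loc}})$ with $\mu_{J}\to+\infty$ implies $A\sim\sum_{j}a_{j}(x,D_{x})$ in the sense of Definition~\ref{def:asympsum}. The plan is: fix $\varrho,\chi$ with $\chi\equiv 1$ near $\Omega-\mathrm{supp}\,\varrho$. By the first part there is $a_{\varrho}\in S^{m_{0}}_{\Psi}(\Omega;\mathbb{C})$, of the form $\varrho(q)a_{\varrho}$, with $a_{\varrho}-\sum_{j=0}^{J}\varrho(q)a_{j}\in S^{m_{J+1}}_{\Psi}(\Omega;\mathbb{C})$ for all $J$. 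It then suffices to show $\varrho(q)A\sim a_{\varrho}(x,D_{x})\chi(q)$, i.e. $R:=\varrho(q)A-a_{\varrho}(x,D_{x})\chi(q)\in\mathcal{R}(\Omega;\mathbb{C})$. Writing, for each $J$,
$$
R=\varrho(q)\Bigl(A-\sum_{j=0}^{J}a_{j}(x,D_{x})\Bigr)
+\sum_{j=0}^{J}\varrho(q)a_{j}(x,D_{x})(1-\chi(q))
+\bigl(\textstyle\sum_{j=0}^{J}\varrho(q)a_{j}-a_{\varrho}\bigr)(x,D_{x})\chi(q),
$$
the first term maps $\tilde{\mathcal{W}}^{-\mu_{J}}_{\Omega-\mathrm{comp}}\to\tilde{\mathcal{W}}^{\mu_{J}}_{\Omega-\mathrm{loc}}$ by hypothesis (using that $\varrho$ is $\Omega$-compactly supported), the middle term is regularizing because $\varrho a_{j}$ and $1-\chi$ have disjoint $q$-supports (the $S(\Psi^{-\infty},g_{\Psi})$ argument recalled in Subsection~\ref{sec:geominv}), and the last term lies in $\mathrm{OpS}^{m_{J+1}}_{\Psi}(\Omega;\mathbb{C})$, hence maps $\tilde{\mathcal{W}}^{-\mu_{J}}_{\Omega-\mathrm{comp}}\to\tilde{\mathcal{W}}^{-\mu_{J}-m_{J+1}}_{\Omega-\mathrm{loc}}$. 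Choosing $\mu_{J}$ increasing with $\mu_{J}+m_{J+1}\to+\infty$ as well, we get $R\in\mathcal{L}(\tilde{\mathcal{W}}^{-\mu'_{J}}_{\Omega-\mathrm{comp}};\tilde{\mathcal{W}}^{\mu''_{J}}_{\Omega-\mathrm{loc}})$ for sequences $\mu'_{J},\mu''_{J}\to+\infty$; intersecting over $J$ and using $\bigcap_{s}\tilde{\mathcal{W}}^{s}_{\Omega-\bullet}=\mathcal{S}_{\Omega-\bullet}$ from Subsection~\ref{sec:geominv}, this forces $R\in\mathcal{R}(\Omega;\mathbb{C})$, so $A\sim\sum_{j}a_{j}(x,D_{x})$. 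The reverse implication is immediate from the mapping properties of $\mathrm{OpS}^{m_{J+1}}_{\Psi}$.

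The main obstacle I anticipate is not any single estimate but making the local/global interface watertight: one must be careful that the cut-offs $\varrho,\chi$ can be inserted and removed at the cost only of regularizing operators, and that ``maps $\tilde{\mathcal{W}}^{-\mu}_{\Omega-\mathrm{comp}}\to\tilde{\mathcal{W}}^{\mu}_{\Omega-\mathrm{loc}}$ for all $\mu$'' genuinely means ``kernel in $\mathcal{S}_{\Omega\times\Omega-\mathrm{loc}}$'', which is where the characterization of $\mathcal{R}(\Omega;\mathbb{C})$ via the Schwartz kernel theorem (Appendix~\ref{sec:Qloccomp}, the Example) is used. Once that dictionary is fixed, everything reduces to the two routine ingredients already available: the Borel summation in $S(\Psi^{\bullet},g_{\Psi})$ provided by Proposition~\ref{pr:Hormmetric}, and the disjoint-support $\Rightarrow$ smoothing principle for this calculus recorded in Subsection~\ref{sec:geominv}.
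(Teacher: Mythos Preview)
Your proposal is correct and follows essentially the same approach as the paper: Borel summation in $S(\Psi^{\bullet},g_{\Psi})$ for the first half, and for the second half, fixing $\varrho,\chi$, constructing $a_{\varrho}$ via the first half, and showing the difference $\varrho(q)A-a_{\varrho}(x,D_{x})\chi(q)$ has arbitrarily good Sobolev mapping properties, hence lies in $\mathcal{R}(\Omega;\mathbb{C})$. The only cosmetic difference is that the paper inserts $\chi(q)$ on the right of $A$ from the start (writing $D=\varrho(q)A\chi(q)-a_{\varrho}(x,D_{x})\chi(q)$), which absorbs your middle ``disjoint-support'' term into the setup and yields a two-term rather than three-term decomposition; your explicit handling of $\sum_{j}\varrho(q)a_{j}(x,D_{x})(1-\chi(q))$ via the smoothing principle is equivalent.
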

\begin{proof}
  The first statement can be reduced to the case where $m_{j}=m_{0}-j$ by putting together $b_{n}=\sum_{m_{0}-n-1<m_{j}\leq m_{0}-n}a_{j}$\,. Then use the standard Borel summation in $S(\Psi^{m_{0}},g_{\Psi})$ by taking $a=\sum_{n=0}^{\infty}(1-\chi)\left(\frac{\Psi}{N_{n}(1+p_{\Psi^{m_{0}-n},n}(b_{n}))}\right) b_{n}$ for $\chi\in \mathcal{C}^{\infty}_{0}(\mathbb{R};[0,1])$ equal to $1$ in a neighborhood of $0$ and the sequence $(N_{n})_{n\in \mathbb{N}}$ being increasing fast enough such that for every $k\in \mathbb{N}$\,,
  $$\sum_{n=k}^{\infty}p_{\Psi^{m_{0}-k},k}\left[(1-\chi)\left(\frac{\Psi}{N_{n}(1+p_{\Psi^{m_{0}-n},n}(b_{n})}\right)b_{n}\right]<+\infty\,.
  $$
  For the second statement, fix $\varrho,\chi\in \mathcal{C}^{\infty}(\Omega;[0,1])$ with $\chi\equiv 1$ in a neighborhood of $\varrho$\,. Then take $a_{\varrho}\in S(\Psi^{m_{0}},g_{\Psi})$ such that $a_{\varrho}-\sum_{j=0}^{j}\varrho(q)a_{j}\in S(\Psi^{m_{J+1}},g_{\Psi})$\,. For any $J\in \mathbb{N}$\,, the difference $D=\varrho(q)A\chi(q)-a_{\varrho}(x,D_{x})\chi(q)$ equals
  $$
D=\varrho(q)A \chi(q)-a_{\varrho}(x,D_{x})\chi(q)=\varrho(q)(A-\sum_{j=0}^{J}a_{j}(x,D_{x}))\chi(q)+(a_{\varrho}(x,D_{x})-\sum_{j=0}^{m_{j}}(\varrho(q)a_{j})(x,D_{x}))\chi(q)
$$
and belongs to $\mathcal{L}(\tilde{\mathcal{W}}^{-\mu_{J}}(\mathbb{R}^{2d};\mathbb{C});\tilde{\mathcal{W}}^{\mu_{j}}(\mathbb{R}^{2d};\mathbb{C}))$ for all $J\in \mathbb{N}$ with $D=\tilde{\varrho}(q)D\tilde{\chi}(q)$ for some pair $\tilde{\varrho},\tilde{\chi}\in \mathcal{C}^{\infty}_{0}(\Omega;[0,1])\subset \mathcal{C}^{\infty}(\mathbb{R}^{d};[0,1])$\,. This implies that $D=\varrho(q)A\chi(q)-a_{\varrho}(x,D_{x})\chi(q)$ is continuous from $\mathcal{S}'(\mathbb{R}^{2d};\mathbb{C})$ to $\mathcal{S}(\mathbb{R}^{2d};\mathbb{C})$
and with $D=\tilde{\varrho}(q)D\tilde{\chi}(q)$ it means $D\in \mathcal{R}(\Omega;\mathbb{C})$\,.
\end{proof}
In particular when $A_{k}\in \mathrm{OpS}^{m_{k}}_{\Psi}(\Omega;\mathbb{C})$  with $A_{k}=a_{k}(x,D_{x})\chi_{k}(q)+R_{k}$\,, $a_{k}\in S^{m_{k}}_{\Psi,\Omega-\mathrm{comp}}(\Omega;\mathbb{C})$ we can write as usual
$$
A_{1}\circ A_{2}\sim \sum_{\alpha \in \mathbb{N}^{2d}}\frac{1}{i^{|\alpha|}\alpha!}\partial_{\xi}^{\alpha}a_{1}\partial_{x}^{\alpha}a_{2}\,.
$$

With the previous localization method, the global differential calculus of Subsection~\ref{sec:globpseudodiff} is well defined with all its properties, if the following two conditions are satisfied:
\begin{itemize}
\item the class of symbols $S^{m}_{\Psi,\Omega-\mathrm{comp}}(\Omega;\mathbb{C})\subset S^{m}_{\Psi,\mathbb{R}^{d}-\mathrm{comp}}(\mathbb{R}^{d};\mathbb{C})$ is sent onto $S^{m}_{\Psi,\phi(\Omega)-\mathrm{comp}}(\phi(\Omega;\mathbb{C}) \subset  \linebreak    S^m_{\Psi,\mathbb{R}^d-\mathrm{comp}}(\mathbb{R}^d;\mathbb{C})$ by the canonical transformation $\Phi_{*}: \mathbb{R}^{4d}_{q,p,\xi,\eta}\to R^{4d}_{q,p,\xi,\eta}$ induced by a diffeomorphism $\phi:\mathbb{R}^{d}\to \mathbb{R}^{d}$ with $\Phi(q,p)=(\phi(q),{}^{t}d\phi(q)^{-1}.p)$ and $\Phi_{*}:T^{*}\mathbb{R}^{2d}\to T^{*}\mathbb{R}^{2d}$\,;
\item when $U_{\Phi}$ is the unitary transform in $L^{2}(\mathbb{R}^{2d},dqdp;\mathbb{C})$ given by
  $$
  (U_{\Phi}u)(q,p)=u\circ{\Phi(q,p)}=u(\phi(q),{}^{t}d\phi(q)^{-1}.p)
  $$
  satisfies $U_{\Phi}a(x,D_{x})U_{\Phi}^{-1}\sim \sum_{n=0}^{\infty}b_{n}(x,D_{x})$ in $\mathrm{OpS}^{m}_{\Psi}(\Omega,\mathbb{C})$ with $b_{n}\in S^{m-n}_{\Psi, \Omega-\mathrm{comp}}(\Omega;\mathbb{C})$\,,  $b_{0}=a\circ \Phi_{*}=\Phi^{*}a$ in
  $S^{m}_{\Psi,\Omega-\mathrm{comp}}(\Omega;\mathbb{C})$\,, for any $a\in S^{m}_{\Psi,\phi(\Omega)-\mathrm{comp}}(\phi(\Omega);\mathbb{C})$\,.
\end{itemize}
In particular all the sets $\Omega\times\mathbb{R}^{d}_{p}$ and $\Omega\times \mathbb{R}^{3d}_{p,\xi,\eta}$ can be replaced by $T^{*}\Omega$ and $T^{*}(T^{*}\Omega)$\,, with a natural geometrical meaning.\\
Actually we will consider more general changes of variables on $\mathbb{R}^{2d}$ which can be viewed as vector bundle isomorphisms of $\mathbb{R}^{2d}=T^{*}\mathbb{R}^{d}$\,.
We consider the following change of variables
\begin{equation}
  \label{eq:defPhi}
(\tilde{q},\tilde{p})=\Phi(q,p)=(\phi(q), L(q).p)
\end{equation}
which is a $\mathcal{C}^{\infty}$-diffeomorphism, a bijection such that $d\phi(q)$ and $L(q)$ belong to $GL_{d}(\mathbb{R})$ for all $q$
with the following estimates
\begin{equation}
\label{eq:estimPhi}
\forall \alpha\in \mathbb{N}^{d}\,,\exists C_{\alpha}>0\,,\quad
     \|\partial_{q}^{\alpha}d\phi\|_{L^{\infty}}+\|\partial_{q}^{\alpha}(d\phi)^{-1}\|_{L^{\infty}}+\|\partial_{q}^{\alpha}L\|_{L^{\infty}}+\|\partial_{q}^{\alpha}L^{-1}\|_{L^{\infty}}\leq C_{\alpha}\,.
   \end{equation}
   Note that $\Phi^{-1}$ takes the same form with 
   $$\Phi^{-1}(\tilde{q},\tilde{p})=(q,p)=(\phi^{-1}(\tilde{q}),[L(\phi^{-1}(\tilde{q}))]^{-1}\tilde{p})\,.$$
It is given by a change of variable $\phi:\mathbb{R}^{d}_{q}\to \mathbb{R}^{d}_{q}$ when $L(q)={}^{t}d\phi(q)^{-1}$\,.\\
   Its differential is given by
   $$
d\Phi(q,p).
\begin{pmatrix}
  t_{q}\\t_{p}
\end{pmatrix}
=
\begin{pmatrix}
  d\phi(q).t_{q}\\
  (dL(q).p).t_{q}+L(q).t_{p}
\end{pmatrix}
=
\begin{pmatrix}
  d\phi(q)&0\\
  dL(q).p&L(q)
\end{pmatrix}
\begin{pmatrix}
  t_{q}\\
  t_{p}
\end{pmatrix}\,.
$$
and
$$
{}^{t}d\Phi(q,p)^{-1}=
\begin{pmatrix}
  {}^{t}d\phi(q)^{-1}& -{}^{t}d\phi(q)^{-1}{}^{t}[L(q).p]{}^{t}L(q)^{-1}\\
  0& {}^{t}L(q)^{-1}
\end{pmatrix}
=
\begin{pmatrix}
  L_{1}(q)& (L_{2}(q).p)\\
  0&L_{3}(q)
\end{pmatrix}\,,
$$
where all linear maps $L_{1},L_{2}$ and the bilinear map $L_{2}$ have uniformly bounded derivatives of any order with respect to $q\in \mathbb{R}^{d}$\,.\\
The canonical transformation $\Phi_{*}:T^{*}\mathbb{R}^{2d}\to T^{*}\mathbb{R}^{2d}$ is thus given by
$$
\begin{pmatrix}
  \tilde{q}\\
  \tilde{p}\\
  \tilde{\xi}\\
  \tilde{\eta}
\end{pmatrix}
=\Phi_{*}
\begin{pmatrix}
  q\\p\\\xi\\\eta
\end{pmatrix}
=
\begin{pmatrix}
  \phi(q)\\
  L(q).p\\
  L_{1}(q).\xi+L_{2}(q).p.\eta\\
 L_{3}(q).\eta
\end{pmatrix}
$$
   With the transformation $\Phi$ given by  \eqref{eq:defPhi} we associate the unitary transform  $U_{\Phi}$ in $L^{2}(\mathbb{R}^{2d},dqdp;\mathbb{C})$:
   \begin{equation}
     \label{eq:defUPsi}
(U_{\Phi}u)(q,p)=|\det(d\phi(q))\det(L(q))|^{1/2}u(\phi(q),L(q)p)=J^{1/2}(q)u(\phi(q),L(q)p)\,.
\end{equation}
Let us consider the simplest versions of spaces of functions.
\begin{proposition}
  \label{pr:invfunc} For the tranformation $\Phi$ given by \eqref{eq:defPhi} the following properties hold for any $s\in \mathbb{R}$:
  \begin{description}
  \item[i)] The operator $U_{\Phi}$ ( resp. $U_{\Phi}^{-1}$) is continuous from $\mathcal{S}(\mathbb{R}^{2d};\mathbb{C})$ and from $\mathcal{S}'(\mathbb{R}^{2d};\mathbb{C})$ onto itself. For any open subset $\Omega\in \mathbb{R}^{d}$ it is continuous from $\mathcal{S}^{\dagger}_{\phi(\Omega)-\bullet}(\phi(\Omega)\times \mathbb{R}^{d};\mathbb{C})$ (resp. $\mathcal{S}^{\dagger}_{\Omega-\bullet}(\Omega\times \mathbb{R}^{d};\mathbb{C})$) onto  $\mathcal{S}^{\dagger}_{\Omega-\bullet}(\Omega\times \mathbb{R}^{d};\mathbb{C})$ (resp. $\mathcal{S}^{\dagger}_{\phi(\Omega)-\bullet}(\phi(\Omega)\times \mathbb{R}^{d};\mathbb{C})$) where, with respective correspondence, $\mathcal{S}^{\dagger}$ stands for $\mathcal{S}$ or $\mathcal{S}'$ and $\mathrm{\bullet}$ means $\mathrm{loc}$ or $\mathrm{comp}$\,.
  \item[ii)] The map $a\mapsto a_{\Phi}=a\circ \Phi_{*}$ is continuous from $S(\Psi^{s},g_{\Psi})$ onto itself. For any open subset $\Omega$ in $\mathbb{R}^{d}$ and any $m\in \mathbb{R}$\,, it is continuous from $S^{m}_{\Psi,\phi(\Omega)-\mathrm{comp}}(\Phi(\Omega);\mathbb{C})$ onto $S^{m}_{\Psi,\Omega-\mathrm{comp}}(\Omega;\mathbb{R})$\,.
  \end{description}
\end{proposition}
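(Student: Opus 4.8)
\textbf{Proof plan for Proposition~\ref{pr:invfunc}.}

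The plan is to establish both statements by direct verification, exploiting the fact that the change of variables $\Phi$ in \eqref{eq:defPhi} is block-triangular with uniformly bounded coefficients, together with the structure of the associated canonical transformation $\Phi_*$ computed just above. For \textbf{i)}, I would first note that $U_\Phi$ is manifestly a composition of a multiplication by $J^{1/2}(q)$ (a function all of whose $q$-derivatives are bounded, by \eqref{eq:estimPhi}) with the pullback by the diffeomorphism $\Phi$. The continuity on $\mathcal{S}(\mathbb{R}^{2d};\mathbb{C})$ reduces to checking that each monomial $(q,p)^\alpha D_{q,p}^\beta$ applied to $u\circ\Phi$ is controlled by a finite family of seminorms of $u$; this is the chain rule combined with \eqref{eq:estimPhi} and the fact that $|L(q).p|\asymp |p|$ and $|\phi(q)|\to\infty$ as $|q|\to\infty$ with $|\phi(q)|$ comparable to $|q|$ (which again follows from the uniform bounds on $d\phi$ and $(d\phi)^{-1}$). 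Continuity on $\mathcal{S}'$ follows by duality since $U_\Phi$ is unitary on $L^2$; the localized statements $\mathcal{S}^{\dagger}_{\phi(\Omega)-\bullet}\to \mathcal{S}^{\dagger}_{\Omega-\bullet}$ follow from $\Phi(T^*\Omega)=T^*\phi(\Omega)$ together with $\phi$-support behaving functorially, exactly as spelled out in Appendix~\ref{sec:Qloccomp}.

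For \textbf{ii)}, the task is to show $a\mapsto a\circ\Phi_*$ preserves the class $S(\Psi^s,g_\Psi)$. The key computation is that $\Psi\circ\Phi_*$ is equivalent to $\Psi$ up to a multiplicative constant: writing $\tilde{\xi}=L_1(q)\xi+L_2(q).p.\eta$, $\tilde{\eta}=L_3(q)\eta$, $\tilde{p}=L(q)p$ and using \eqref{eq:estimPhi}, one gets $|\tilde{\xi}|\lesssim |\xi|+|p|\,|\eta|\lesssim |\xi|+|p|^2+|\eta|^2$ and $|\tilde{\eta}|\lesssim|\eta|$, $|\tilde p|\lesssim |p|$, hence $\Psi(\Phi_*(X))\lesssim\Psi(X)$, and symmetrically with $\Phi_*^{-1}$, so that
\begin{equation}
\label{eq:PsiPhiequiv}
C^{-1}\Psi(X)\le \Psi(\Phi_*(X))\le C\,\Psi(X)\,,\qquad X\in\mathbb{R}^{4d}\,.
\end{equation}
Then one differentiates $a\circ\Phi_*$ using Faà di Bruno, noting that each $\xi$-derivative of $a\circ\Phi_*$ produces $\partial_\Xi a$ evaluated at $\Phi_*(X)$ contracted against $q$-independent-norm-bounded blocks ($L_1$ gives a $\xi$-type gain, $L_3$ an $\eta$-type gain, $L_2.p$ an $\eta$-type gain weighted by $|p|\lesssim\Psi^{1/2}$), while $q$-derivatives differentiate the bounded coefficient matrices $L_1,L_2,L_3,\phi$ and cost nothing in the weight. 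Matching the powers of $\Psi$ against \eqref{eq:PsiPhiequiv} gives the required estimate \eqref{eq:Sloc}-type bound with the same exponent $s$, with seminorms of $a\circ\Phi_*$ bounded by finitely many seminorms of $a$; continuity is then automatic. The localized statement $S^m_{\Psi,\phi(\Omega)-\mathrm{comp}}(\phi(\Omega);\mathbb{C})\to S^m_{\Psi,\Omega-\mathrm{comp}}(\Omega;\mathbb{C})$ follows because $\Phi_*$ covers $\Phi$, which maps $T^*\Omega$ to $T^*\phi(\Omega)$, so compact $q$-supports are sent to compact $q$-supports.

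The main obstacle I anticipate is purely bookkeeping: keeping track, in the Faà di Bruno expansion for $\partial_q^\alpha\partial_p^\beta\partial_\xi^\gamma\partial_\eta^\delta(a\circ\Phi_*)$, of exactly how many $\Psi^{1/2}$ factors are generated by the off-diagonal block $L_2(q).p.\eta$ and by the $L(q)p$ component of $\tilde p$, and verifying that these never exceed the budget $\Psi^{-|\gamma|-(|\beta|+|\delta|)/2}$ allowed by the target class. The anisotropy between the ``order $1$'' variable $\xi$ and the ``order $1/2$'' variables $p,\eta$ is what makes this delicate, but since the block structure of ${}^t d\Phi^{-1}$ is lower-triangular in the right sense — $\tilde\xi$ depends on $(\xi,p,\eta)$ but $\tilde\eta$ only on $\eta$, and $\tilde p$ only on $p$ — every term lands with at least the required gain, and no cancellation is needed. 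Once \eqref{eq:PsiPhiequiv} is in hand the rest is a routine induction on the total order of differentiation.
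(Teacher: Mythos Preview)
Your proposal is correct and follows essentially the same route as the paper. Both proofs reduce \textbf{i)} to the chain rule plus the uniform bounds \eqref{eq:estimPhi} (with $\mathcal{S}'$ handled by duality), and for \textbf{ii)} first establish the equivalence $\Psi\circ\Phi_*\asymp\Psi$ via the block-triangular form of ${}^t d\Phi^{-1}$, then verify the derivative bounds; the only cosmetic difference is that the paper organizes the latter by listing, for each of $\partial_{q^i},\partial_{p_i},\partial_{\xi_i},\partial_{\eta^i}$ separately, the $\mathcal{C}^\infty_b(\mathbb{R}^d_q)$-linear combination of elementary operators (such as $\partial_{p_j}$, $\eta^j\partial_{\xi_k}$, $p_j\partial_{\xi_k}$, etc.) that results, checking each maps $S(\Psi^s,g_\Psi)\to S(\Psi^{s-r},g_\Psi)$ with the correct $r\in\{0,1/2,1\}$, and then iterating---this makes the anisotropic bookkeeping you flag as the ``main obstacle'' completely explicit and avoids invoking Fa\`a di Bruno in full generality.
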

\begin{proof}
  \noindent\textbf{i)}
  It suffices to consider $J^{-1/2}(q)U_{\Phi}$ because $|\partial_{q}^{\alpha}J^{\pm 1/2}|\leq C_{\alpha}$ for all $\alpha\in \mathbb{N}^{d}$\,.
  We write
  \begin{eqnarray*}
    &&
  \partial_{q^{i}}(J^{-1/2}U_{\Phi}u)(q,p)=\partial_{q^{i}}\phi^{j}(q)(\partial_{q^{i}}u)(\phi(q),L(q).p)
  +(\partial_{q^{i}}L(q).p)_{j}\partial_{p_{j}}u(\phi(q),L(q).p)
    \\
      &&
                    \partial_{p_{j}}(J^{-1/2}U_{\Phi}u)(q,p)=L(q)^{j}_{k}(\partial_{p_{k}}u)(\phi(q),L(q).p)\\
    &&\frac{1}{C_{\Phi}}(1+|q|^{2}+|p|^{2})\leq 1+|\phi(q)|^{2}+|L(q).p|^{2}\leq C_{\Phi}(1+|q|^{2}+|p|^{2})\,,
  \end{eqnarray*}
  where the last inequality is a consequence of $1+|\phi(q)|\leq C_{\phi}(1+|q|^{2})$ owing to $|d\phi|\leq C_{0}$ and its reverse inequality for $\phi^{-1}$\,.\\
  \noindent\textbf{ii)} The formula for $a\circ \Phi_{*}$ is
  $$
a\circ\Phi_{*}=a(\phi(q),L(q).p, L_{1}(q).\xi+L_{2}(q).p.\eta,L_{3}(q).\eta)\,.
$$
Let us first compare $\Psi$ and $\Psi\circ \Phi_{*}$\,:
\begin{align*}
\Psi^{2}\circ \Phi_{*}(q,p,\xi,\eta)&=1+|L(q).p|^{4}+|L_{1}(q).\xi+L_{2}(q).p.\eta|^{2}
                                      +\left|L_{3}(q).\eta\right|^{4}
  \\
                                    &\leq C_{\Phi}(1+|p|^{4}+|\xi|^{2}+|p|^{2}|\eta|^{2}+|\eta|^{4})\\
                                    &\leq 2C_{\Phi}\Psi^{2}(q,p,\xi,\eta)\,.
\end{align*}
Applied to $\Psi^{2}\circ(\Phi^{-1})_{*}$\,, this provides the equivalence
$$
\frac{1}{C'_{\Phi}}\Psi\leq \Psi\circ (\Phi^{\pm 1})_{*}\leq C'_{\Phi}\Psi\,.
$$
The operators $\partial_{q^{i}}$\,,\, $\partial_{p_{i}}$\,, $\partial_{\xi_{i}}$\,,\, $\partial_{\eta^{i}}$ applied to $a\circ \Phi_{*}$ are equivalent to the following $\mathcal{C}^{\infty}_{b}(\mathbb{R}_{q}^{d};\mathbb{R})$ linear combinations (abbreviated as L.C) of elementary operators acting on $a$:
\begin{description}
\item[$\partial_{q^{i}}$:]  L.C of $\partial_{q^{j}}$\,, $p_{k}\partial_{p_{j}}$\,, $\xi_i\partial_{\xi_{j}}$\,, $p_{i}\eta^{j}\partial_{\xi_{\ell}}$\,, $\partial_{\eta_{j}}$\,, which all are continuous from $S(\Psi^{s},g_{\Psi})$ to $S(\Psi^{s},g_{\Psi})$\,.
\item[$\partial_{p_{i}}$:] L.C. of $\partial_{p_{j}}$\,,\, $\eta^{j}\partial_{\xi_{k}}$\,, which are continuous from $S(\Psi^{s},g_{\Psi})$ to $S(\Psi^{s-1/2},g_{\Psi})$\,.
\item[$\partial_{\xi_{i}}$:] L.C. of $\partial_{\xi_{j}}$ which are all continuous from $S(\Psi^{s},g_{\Psi})$ to $S(\Psi^{s-1},g_{\Psi})$\,.
\item[$\partial_{\eta^{i}}$:] L.C. of $p_{j}\partial_{\xi_{k}}$ and of $\partial_{\eta^{i}}$ which are all continuous from $S(\Psi^{s},g_{\Psi})$ to $S(\Psi^{s-1/2},g_{\Psi})$\,.
\end{description}
This proves the continuity of $a\mapsto a\circ \Phi_{*}$ from $S(\Psi^{s},g_{\Psi})$ to $S(\Psi^{s},g_{\Psi})$\,.
\end{proof}

Let us consider the functoriality of the transformation of  the quantization rule $a\mapsto a(x,D_{x})\chi(q)+R$ with $a\in S^{m}_{\Psi,\phi(\Omega)-\mathrm{comp}}(\Omega;\mathbb{C})$\,, $\chi\equiv 1$ in a neighborhood of $\phi(\Omega)-\mathrm{supp}\,a$ and $R\in \mathcal{R}(\phi(\Omega);\mathbb{C})$\,.
\begin{proposition}
  \label{pr:quantchang}
  For any $A=a(x,D_{x})\chi(q)+R\in \mathrm{OpS}^{m}_{\Psi}(\phi(\Omega);\mathbb{C})$\,, the operator $U_{\Phi}AU_{\Phi}^{*}$ is equal to $b_{\Phi}(x,D_{x})\chi(\Phi(q))+R_{\Phi}$ with $R_{\Phi}\in \mathcal{R}(\phi(\Omega);\mathbb{C})$ and $b_{\Phi}\in S^{m}_{\Psi,\Omega-\mathrm{comp}}(\Omega;\mathbb{C})$ and satisfies
$$
U_{\Phi}AU_{\Phi}\sim \sum_{n=0}^{\infty} b_{n}(x,D_{x})
$$
according to Definition~\ref{def:asympsum} with $b_{0}=a\circ{\Phi}_{*}$\,.
More precisely when $\Omega$ is a bounded open subset\,, with $S^{m}_{\Psi,\Omega-\mathrm{comp}}(\Omega;\mathbb{C})\subset S(\Psi^{m},g_{\psi})$  and $S^{m}_{\Psi,\phi(\Omega)-\mathrm{comp}}(\phi(\Omega);\mathbb{C})\subset S(\Psi^{m},g_{\psi})$\,,
$$
b_{\Phi}=\sum_{n=0}^{N-1}b_{n}+r_{\Phi,N}(a)
$$
with a continuous map $r_{\Phi,N}:S(\Psi^{m},g_{\psi})\to S(\Psi^{m-N},g_{\psi})$ for every $N\in \mathbb{N}$\,.
\end{proposition}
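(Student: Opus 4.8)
The statement to prove is Proposition~\ref{pr:quantchang}: for $A = a(x,D_x)\chi(q) + R \in \mathrm{OpS}^m_\Psi(\phi(\Omega);\mathbb{C})$, the conjugate $U_\Phi A U_\Phi^*$ is again a pseudo-differential operator in $\mathrm{OpS}^m_\Psi(\Omega;\mathbb{C})$ with an asymptotic expansion whose leading term is $a\circ\Phi_*$. The plan is to reduce everything to the global framework of Subsection~\ref{sec:globpseudodiff} (the H\"ormander metric $g_\Psi$) and then invoke the Beals criterion stated there. First I would dispose of the regularizing part: since $U_\Phi$ and $U_\Phi^{*}$ preserve $\mathcal{S}$ and $\mathcal{S}'$ with the appropriate $\Omega$-localization behavior by Proposition~\ref{pr:invfunc}-\textbf{i)}, the term $U_\Phi R U_\Phi^*$ lies in $\mathcal{R}(\phi(\Omega);\mathbb{C})$, so it suffices to treat $U_\Phi\, a(x,D_x)\chi(q)\, U_\Phi^*$. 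Using a further cutoff $\tilde\chi\in\mathcal{C}^\infty_0(\phi(\Omega))$ equal to $1$ near $\phi(\Omega)-\mathrm{supp}\,a$ and the properties \textbf{b)} of Subsection~\ref{sec:defpseudo}, I can replace $a(x,D_x)\chi(q)$ by $\tilde\varrho(q)\,a(x,D_x)\,\tilde\chi(q)$ modulo regularizing, reducing to the case where $a = \tilde\varrho(q)a$ has compact $\phi(\Omega)$-support, hence $a\in S(\Psi^m,g_\Psi)$ globally.

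The heart of the argument is then to show $B := U_\Phi\, a(x,D_x)\, U_\Phi^*$ satisfies $B = b_\Phi^{W}(x,D_x)$ (or in standard quantization, which is equivalent since $g_\Psi$ is splitted) for some $b_\Phi\in S(\Psi^m,g_\Psi)$, with $b_\Phi \equiv a\circ\Phi_*$ modulo $S(\Psi^{m-1},g_\Psi)$. For this I would apply the Beals criterion exactly as formulated after Proposition~\ref{pr:Hormmetric}: $B$ equals the quantization of a symbol in $S(\Psi^m,g_\Psi)$ if and only if all iterated commutators $\mathrm{ad}_q^\alpha\mathrm{ad}_p^\beta\mathrm{ad}_{D_q}^\gamma\mathrm{ad}_{D_p}^\delta B$ map $\tilde{\mathcal{W}}^{s_0}$ into $\tilde{\mathcal{W}}^{s_0-m+|\alpha|+(|\beta|+|\delta|)/2}$. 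Conjugating by $U_\Phi$, each such commutator becomes $U_\Phi \big[ \mathrm{ad}_{U_\Phi^* q\, U_\Phi}^\alpha \cdots\, a(x,D_x)\big] U_\Phi^*$, and one computes (using the explicit block form of $d\Phi$ and ${}^t d\Phi^{-1}$ recalled just before Proposition~\ref{pr:invfunc}, together with \eqref{eq:estimPhi}) that $U_\Phi^* q^i U_\Phi$, $U_\Phi^* p_i U_\Phi$, $U_\Phi^* D_{q^i} U_\Phi$, $U_\Phi^* D_{p_i} U_\Phi$ are, respectively, functions of $(q,p)$ that are $\mathcal{C}^\infty_b$-linear combinations of $1$, $q$, $p$ and of the operators $D_q$, $D_p$, $p\cdot D_q$, $p\cdot D_p$, etc. --- precisely the generators appearing in the "positive" symbol class $S^+(1,g_\Psi)$ and the list of elementary operators in the proof of Proposition~\ref{pr:invfunc}-\textbf{ii)}. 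Since $a(x,D_x)\in\mathrm{Op}\,S(\Psi^m,g_\Psi)$ already satisfies the Beals estimates, and since conjugation by $U_\Phi$ together with composition with these bounded-coefficient first-order operators preserves the relevant mapping properties between $\tilde{\mathcal{W}}^s$ spaces (by Proposition~\ref{pr:invfunc}-\textbf{i)} and the boundedness statement \textbf{f)}), all commutators of $B$ have the required mapping property. Hence $B$ is a pseudo-differential operator with symbol $b_\Phi\in S(\Psi^m,g_\Psi)$.

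To identify the principal symbol and produce the asymptotic expansion, I would use the standard oscillatory-integral / stationary-phase computation for a change of variables in a pseudo-differential operator, carried out within the calculus $S(\cdot,g_\Psi)$: writing the kernel of $B$ via the kernel of $a(x,D_x)$ and the diffeomorphism $\Phi$, a Taylor expansion of the phase around the diagonal (using the Kuranishi trick) yields $b_\Phi \sim \sum_{n\ge 0} b_n$ with $b_n\in S(\Psi^{m-n},g_\Psi)$ and $b_0 = a\circ\Phi_*$; the continuity of the remainder map $r_{\Phi,N}: S(\Psi^m,g_\Psi)\to S(\Psi^{m-N},g_\Psi)$ follows from tracking the seminorm estimates through the stationary-phase remainder, exactly as in the classical H\"ormander calculus but with the weights $\Psi^{-|\gamma|-(|\beta|+|\delta|)/2}$ replacing $\langle\xi\rangle^{-1}$. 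Finally, by Proposition~\ref{pr:asympsum}, the localized asymptotic-sum condition of Definition~\ref{def:asympsum} is equivalent to the (a priori weaker) $\tilde{\mathcal{W}}^s$-mapping condition, which is automatic here; this converts the global statement into the localized one in $\mathrm{OpS}^m_\Psi(\Omega;\mathbb{C})$ with cutoff $\chi\circ\Phi$. The main obstacle I anticipate is not any single deep point but the bookkeeping in the second step: verifying cleanly that conjugation by $U_\Phi$ sends the "test" operators $q,p,D_q,D_p$ into the closure of $S^+(1,g_\Psi)$-type symbols and their quantizations --- i.e. that the non-symplectic, fiber-linear nature of $\Phi$ (the term $L_2(q)\cdot p\cdot\eta$ in $\Phi_*$) does not break the homogeneity count $|\alpha| + (|\beta|+|\delta|)/2$. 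This is exactly the computation already done symbol-side in Proposition~\ref{pr:invfunc}-\textbf{ii)}, so the work is to transcribe it faithfully on the operator side.
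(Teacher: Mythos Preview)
Your approach differs substantially from the paper's. The paper works directly with the Schwartz kernel of $U_\Phi\,a(x,D_x)\tilde\chi(q)\,U_\Phi^{-1}$: it introduces cut-offs $\Theta_1(q,q')$ near the $q$-diagonal and $\Theta_2$ near the $p$-diagonal at scale $\Psi^{1/2}$, shows the two off-diagonal pieces are regularizing by non-stationary phase in $\xi$ and $\eta$ respectively, straightens the phase via the Kuranishi trick, and finally identifies the diagonal piece as a pseudo-differential operator with the claimed expansion by the Gauss transform (H\"ormander~III, Thm~18.4.11) applied to an auxiliary metric $G_{\tilde\Psi}$ on $\mathbb{R}^{6d}$. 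No Beals criterion is invoked.

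Your Beals-criterion route has a genuine gap. To conclude that the iterated commutators $\mathrm{ad}_q^\alpha\mathrm{ad}_p^\beta\mathrm{ad}_{D_q}^\gamma\mathrm{ad}_{D_p}^\delta B$ have the required mapping property $\tilde{\mathcal W}^{s_0}\to\tilde{\mathcal W}^{s_0-m+|\alpha|+(|\beta|+|\delta|)/2}$, you write them as $U_\Phi\,C\,U_\Phi^*$ with $C\in\mathrm{Op}\,S(\Psi^{m-\text{gain}},g_\Psi)$ and then need $U_\Phi,U_\Phi^*$ to be bounded on the relevant $\tilde{\mathcal W}^s$ spaces. But Proposition~\ref{pr:invfunc}\textbf{-i)} only gives continuity on $\mathcal S$ and $\mathcal S'$, not on $\tilde{\mathcal W}^s$; property~\textbf{f)} is merely $L^2$-boundedness. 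In the paper's logical order, the $\tilde{\mathcal W}^s$-boundedness of $U_\Phi$ is Proposition~\ref{pr:diffeoWsloc}, which is proved \emph{after} and \emph{using} Proposition~\ref{pr:quantchang}---so your argument as written is circular. You could break the circularity by proving $U_\Phi:\tilde{\mathcal W}^k\to\tilde{\mathcal W}^k$ for $k\in\mathbb N$ directly from the equivalent norm in Proposition~\ref{pr:eqnormes}\textbf{-iv)} (the chain-rule computation is the operator-side analogue of Proposition~\ref{pr:invfunc}\textbf{-ii)}, and the key point is that each $\partial_{q^i}$ produces terms of type $\partial_{q'}$ and $p\,\partial_{p'}$, both of weight~$1$), then interpolating and dualizing. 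That works, but it is exactly the missing content---and once you have it, your step~3 (Kuranishi + stationary phase on the kernel) already yields both the pseudo-differential nature and the full expansion, making the Beals step redundant. In short, either close the $\tilde{\mathcal W}^s$-boundedness gap first and drop Beals, or follow the paper's direct kernel decomposition, which needs only the $\mathcal S,\mathcal S'$ continuity you already cited.
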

\begin{remark}
  Except for the principal symbol this result does not say that the transformation $a\mapsto b_{\Phi}$ corresponds $b_{\Phi}=a\circ \Phi_{*}$\,. It works exactly only for functions $a(q,p)$ and in particular for the cut-off functions with respect to $q$\,. But when $\Omega$ is a bounded open subset of $\mathbb{R}^{d}$\,, $U_{\Phi}a(x,D_{x})\chi(q)U_{\Phi}^{-1}=b_{\phi}(x,D_{x})\chi(\phi(q))$ defines a continuous operator from  $S^{m}_{\Psi, \phi(\Omega)-\mathrm{comp}}(\phi(\Omega);\mathbb{C})$ to $S^{m}_{\Psi, \Omega-\mathrm{comp}}(\Omega;\mathbb{C})$\,.
\end{remark}
\begin{proof}
  With the localization we can assume $\Omega=\mathbb{R}^{d}$\,, $a\in S(\Psi^{m};g_{\Psi})$\,, $\mathbb{R}^{d}-\mathrm{supp}\,a$ compact, and $R=0$\,. We introduce another cut-off function $\tilde{\chi}\in \mathcal{C}^{\infty}_{0}(\mathbb{R}^{d};[0,1])$ equal to $1$ on $\Phi(\Omega)$ when $\Omega$ is bounded and, for a more general choice of $\Omega$\,, equal to $1$ in a neighborhood of $\mathrm{supp}\,\chi$\,.\\
  Because the function  $J^{\pm 1/2}(q)\in S(1,g_{\Psi})$\,, the problem is reduced to the study of the operator $$(J^{-1/2}(q)U_{\Phi})\tilde{\chi}(q)a(x,D_{x})\tilde{\chi}(q)(J^{-1/2}(q)U_{\Phi})^{-1}$$ of which the Schwarz kernel is given by the oscillating integral
$$
K(x,y)=\int_{\mathbb{R}^{2d}}e^{i[\xi.(\phi(q)-\phi(q'))+\eta.(L(q).p-L(q').p')]}
\tilde{\chi}(\phi(q))[a(\phi(q),L(q).p,\xi,\eta)]\tilde{\chi}(\phi(q'))~\frac{d\xi d\eta}{(2\pi)^{2d}}\,.
$$

\noindent\textbf{Metrics and cut-off:} On $\mathbb{R}_{x,x',\Xi}^{6d}$ with $x=(q,p)$\,, $x'=(q',p')$ and $\Xi=(\xi,\eta)$ we consider the metrics
\begin{eqnarray*}
  &&G_{f}=dq^{2}+dq'^{2}+\frac{dp^{2}}{f}+\frac{dp'^{2}}{f}+\frac{d\xi^{2}}{f^{2}}+\frac{d\eta^{2}}{f}\\
  \text{for}&& f=\Psi(q,p,\xi,\eta)=(1+|\xi|^{2}+|p|^{4}+|\eta|^{4})^{1/2}\quad\text{and}
\quad f=\widetilde{\Psi}=(1+|\xi|^{2}+|p|^{4}+|p'|^{4}+|\eta|^{4})^{1/2}\,.
\end{eqnarray*}
The metrics $G_{\Psi}$ and $G_{\widetilde{\Psi}}$ are slow (same proof as for $g_{\Psi}$)\,. But the slowness of $g_{\psi}$ implies $$\left(\frac{\Psi(q,p,\xi,\eta)}{\Psi(q',p',\xi,\eta)}\right)^{\pm 1}\leq C_{\Psi}\quad\text{when}\quad |p-p'|\leq C_{\Psi}^{-1}\Psi(q,p,\xi,\eta)^{1/2}$$ and therefore with the above notations
$$
|p-p'|\leq \frac{1}{C'_{\Psi}}\Psi^{1/2}\Rightarrow \left(\frac{\Psi}{\tilde{\Psi}}\right)^{\pm 1}\leq C'_{\Psi}
$$
for some large enough constant $C'_{\Psi}\geq 1$\,, and
$$
c\in S(\Psi^{m},G_{\Psi})\Leftrightarrow c\in S(\widetilde{\Psi}^{m},G_{\tilde{\Psi}})\quad\text{when}~\mathrm{supp}\,c\subset
\left\{(x,y,\Xi)\in \mathbb{R}^{6d}\,,\, |p-p'|< \frac{1}{C'_{\Psi}}\Psi^{1/2}\right\}\,.$$
For $\theta\in \mathcal{C}^{\infty}_{0}(\mathbb{R};[0,1])$ and $\varepsilon>0$\,, $\varepsilon\leq \frac{1}{C'_{\Psi}}$ fixed later consider
the two cut-off functions
$$
\Theta_{1}(x,x',\Xi)=\Theta_1(q,q')=\theta\Big(\frac{|q-q'|^{2}}{\varepsilon^{2}}\Big)\quad\text{and}\quad \Theta_{2}(x,x',\Xi)=\theta\Big(\frac{|p-p'|^{2}}{\varepsilon^{2}\Psi}\Big)\,.
$$
By looking at the region $2^{n+10}\leq \widetilde{\Psi}\leq 2^{n+12}$ contained in a fixed shell for the rescaled variable $(\tilde{p},\tilde{p}',\tilde{\xi},\tilde{\eta})=(2^{-n/2}p,2^{-n/2}p',2^{-n}\xi,2^{-n/2}\eta)$\,, a homogeneity argument gives $\Theta_{2}\in S(1,G_{\tilde{\Psi}})\cap S(1,G_{\Psi})$\,. The following properties become obvious when $a\in S(\Psi^{m},g_{\Psi})$
\begin{align*}
  & b_{\tilde\chi,\phi}=\tilde\chi(\phi(q))[a(\phi(q),L(q).p,\xi,\eta)]\tilde\chi(\phi(q'))\in S(\Psi^{m},G_{\Psi})\,,\\
  & \Theta_{1}\,,\,\Theta_{2}\,, 1-\Theta_{2}\in S(1,G_{\widetilde{\Psi}})\cap S(1,G_{\Psi})\,.
\end{align*}
We now write the kernel $K(x,x')$ or the operator $K:\mathcal{S}_{\mathbb{R}^{d}-\mathrm{loc}}(\mathbb{R}^{2d};\mathbb{C})\to \mathcal{S}'_{\mathbb{R}^{d}-\mathrm{comp}}(\mathbb{R}^{2d};\mathbb{C})$ as 
\begin{eqnarray*}
  &&
K=K_{\mathrm{diag}}+K_{1}+K_{2}
  \\
  \text{with}&&
                K_{1}(x,x')=
                \int_{\mathbb{R}^{2d}}e^{i[\xi.(\phi(q)-\phi(q'))+\eta.(L(q).p-L(q').p')]}(1-\Theta_{1}(q,q'))b_{\tilde\chi,\phi}(x,x',\Xi)~\frac{d\xi d\eta}{(2\pi)^{2d}}\\
&&  K_{2}(x,x')=
  \int_{\mathbb{R}^{2d}}e^{i[\xi.(\phi(q)-\phi(q'))+\eta.(L(q).p-L(q').p')]}\Theta_{1}(q,q')(1-\Theta_{2}(x,x',\Xi)) b_{\tilde\chi,\phi}(x,x',\Xi)~\frac{d\xi d\eta}{(2\pi)^{2d}}\\
&&  K_{\mathrm{diag}(x,x')}=
  \int_{\mathbb{R}^{2d}}e^{i[\xi.(\phi(q)-\phi(q'))+\eta.(L(q).p-L(q').p')]}\Theta_{1}(q,q')\Theta_{2}(x,x',\Xi)b_{\tilde\chi,\phi}(x,x',\Xi)~\frac{d\xi d\eta}{(2\pi)^{2d}}\,,
\end{eqnarray*}
and use the same symbol $K_{\mathrm{diag}}$\,,\, $K_{1,2}$ for the associated operators $\mathcal{S}_{\mathbb{R}^{d}-\mathrm{loc}}(\mathbb{R}^{2d};\mathbb{C})\to \mathcal{S}'_{\mathbb{R}^{d}-\mathrm{comp}}(\mathbb{R}^{2d};\mathbb{C})$ with uniformly controlled supports.\\
\noindent\textbf{Non stationary phase in $q$:} For a given $k\in \mathbb{N}$\,,  $N\geq N_{k,d}$ integrations by parts with
$$
\Big[\frac{1}{|\phi(q)-\phi(q')|^{2}}(\phi(q)-\phi(q')).D_{\xi}\Big]^{N}e^{i\xi.(\phi(q)-\phi(q'))}=e^{i\xi.(\phi(q)-\phi(q'))}\,,
$$
and the lower bound
\begin{equation}
  \label{eq:lowboundnondiag}
\forall (q,q')\in \mathrm{supp}(1-\Theta_{1})\cap V_{\tilde\chi}^{2}\,,\quad |\phi(q)-\phi(q')|\geq \frac{1}{C_{\tilde\chi,\phi}}\,,
\end{equation}
for $\varepsilon\leq \frac{1}{C_{\tilde\chi,\phi}}$\,, small enough, and $V_{\tilde\chi}$ a compact neighborhood of $\mathrm{supp}\,\tilde\chi$\,,
implies that for all $(\alpha_{j},\beta_{j},\gamma_{j})\in \mathbb{N}^{3d}$\,, $2|\alpha_{j}|+|\beta_{j}|+|\gamma_{j}|\leq k$\,, for $j=1,2$\,, the kernel
$$
(-1)^{\alpha_{2}+\gamma_{2}}\partial_{q}^{\alpha_{1}}p^{\beta_{1}}\partial_{p}^{\gamma_{1}}\partial_{q'}^{\alpha_{2}}(p')^{\beta_{2}}\partial_{p'}^{\gamma_{2}}K_{1}(x,x')
$$
of $(\partial_{q}^{\alpha_{1}}p^{\beta_{1}}\partial_{p}^{\gamma_{1}})\circ K_{1}\circ(\partial_{q}^{\alpha_{2}}p^{\beta_{2}}\partial_{p}^{\gamma_{2}})$ belongs to $L^{2}(\mathbb{R}^{4d},dq dp dq' dp';\mathbb{C})$\,. Actually the estimates with powers of $p'$ is deduced from our estimates with powers of $p$ via integration by parts with 
$$ 
D_{\eta}e^{i\eta.(L(q).p - L(q').p')}= (L(q).p - L(q').p')e^{i\eta.(L(q).p - L(q').p')}.
$$
We deduce that $K_{1}$ is Hilbert-Schmidt and therefore bounded operator from $\tilde{\mathcal{W}}^{-k}(\mathbb{R}^{2d};\mathbb{C})$ to $\tilde{\mathcal{W}}^{k}(\mathbb{R}^{2d};\mathbb{C})$ for any $k\in \mathbb{C}$\,. With the fixed compact $\Omega$-support, $K_{1}\in \mathcal{L}(\mathcal{S}'(\mathbb{R}^{2d};\mathbb{C});\mathcal{S}(\mathbb{R}^{2d};\mathbb{C}))$\,. It has a symbol in $\mathcal{S}(\mathbb{R}^{4d};\mathbb{C})\subset S(\Psi^{-\infty},g_{\psi})$ with a compact support in $\Omega$\,.\\

\noindent\textbf{Kuranishi's trick:} Write
$$
\xi.(\phi(q)-\phi(q'))+\eta.(L(q)p-L(q')p')=(\xi,\eta).(\Phi(x)-\Phi(x'))=(\xi,\eta).\left[\int_{0}^{1}d\Phi((1-t)x+tx')~dt\right]
\begin{pmatrix}
  q-q'\\
  p-p'
\end{pmatrix}
$$
and remember with $x_{t}=(1-t)x+tx'$\,, $q_{t}=(1-t)q+tq'$\,, $p_{t}=(1-t)p+tp'$
$$
\int_{0}^{1}d\Phi(x_{t})~dt=
\int_{0}^{1}\begin{pmatrix}
  d\phi(q_{t})&0\\
  dL(q_{t}).p_{t}&L(q_{t})
            \end{pmatrix}~dt
=
\begin{pmatrix}
  \int_{0}^{1}d\phi(q_{t})~dt& 0\\
  [\int_{0}^{1}(1-t)dL(q_{t})~dt].p+[\int_{0}^{1}tdL(q_{t})~dt]p'&\int_{0}^{1}L(q_{t})~dt
\end{pmatrix}
$$
Because $(\mathbb{R}^{2d}_{q,q'}-\mathrm{supp}\,K)\subset \mathrm{supp}\,\tilde\chi\times\mathrm{supp}\,\tilde\chi$\,, we can fix $\varepsilon\leq \frac{1}{C_{\tilde\chi,\phi}}$ small enough  so  that  the inequalities
\begin{equation}
  \label{eq:convineq}
\forall (q,q')\in \mathrm{supp}\,\Theta_{1}\cap V_{\tilde\chi}^{2}\,,\, \forall A\in \mathrm{Conv}(d\phi([q,q'])) \cup \mathrm{Conv}(L([q,q']))\,,\quad
|\det(A)|\geq \frac{1}{C_{\tilde\chi,\Phi}}\,,
\end{equation}
where $\mathrm{Conv}(M([q,q']))$ stands for the convex hull in $\mathcal{M}_{d}(\mathbb{C})$ of the set
$M([q,q'])=\left\{M(q_t, t\in [0,1])\right\}\subset M_{d}(\mathbb{C})$\,, while \eqref{eq:lowboundnondiag} remains valid.
We obtain for $(q,q')$
\begin{eqnarray*}
  &&
 [\xi.(\phi(q)-\phi(q'))+\eta.(L(q).p-L(q').p')]=\left[
     \begin{pmatrix}
       A(q,q')& B(q,q').p+C(q,q').p'\\
       0&D(q,q')
     \end{pmatrix}
          \begin{pmatrix}
            \xi\\\eta
          \end{pmatrix}
          \right] 
\cdot\begin{pmatrix}
  q-q'\\p-p'
\end{pmatrix}
  \\
  \text{with}&&
 E(x,x')=               \begin{pmatrix}
       A(q,q')& B(q,q').p+C(q,q').p'\\
       0&D(q,q')
                \end{pmatrix}^{-1}
          =
          \begin{pmatrix}
       A(q,q')^{-1}& B'(q,q').p+C'(q,q').p'\\
       0&D(q,q')^{-1}
          \end{pmatrix}\\
  \text{and}&&
 A,B,C,D,A^{-1},B^{-1},C',D'\in S(1,dq^{2}+dq'^{2};\mathcal{M}_{d}(\mathbb{R}))\,.
\end{eqnarray*}
We obtain
\begin{align*}
  &K_{2}(x,x')=\int_{\mathbb{R}^{2d}}e^{i[\xi.(q-q')+\eta.(p-p')]}
[\Theta_{1}(1-\Theta_{2})b_{\tilde\chi,\phi}](x,x',E(x,x').
\Xi)~|\det(E^{-1})(q,q')|~\frac{d\xi d\eta}{(2\pi)^{2d}}\,,\\
  &K_{\mathrm{diag}}(x,x')=\int_{\mathbb{R}^{2d}}e^{i[\xi.(q-q')+\eta.(p-p')]}
[\Theta_{1}\Theta_{2}b_{\tilde\chi,\phi}](x,x',E(x,x').
\Xi)~|\det(E^{-1})(q,q')|~\frac{d\xi d\eta}{(2\pi)^{2d}}\,,
\end{align*}
where choosing $\varepsilon\leq \frac{1}{C_{\psi,\phi,\tilde\chi}}$\,, small enough, ensures
\begin{align*}
  &\Theta_{1}(x,x',E(x,x').\Xi)=\theta\Big(\frac{|q-q'|^{2}}{\varepsilon^{2}}\Big)=\Theta_{1}(q,q')\in S(1,G_{\widetilde{\Psi}})\,,\\
  &\Theta_{2}(x,x',E(x,x').\Xi)=\theta\left(\frac{|p-p'|^{2}}{\varepsilon^{2}\Psi(0,p,A(q,q').\xi+B'(q,q').p.\eta+C'(q,q').p'.\eta,D(q,q')^{-1}.\eta)}\right) \in S(1,G_{\widetilde{\Psi}})\,,\\
  & b_{\tilde\chi,\phi}(x,x',E(x,x').\Xi)=\tilde\chi(q)\tilde\chi(q')a(\phi(q),L(q).p,A(q,q').\xi+B'(q,q').p.\eta+C'(q,q').p'.\eta,D(q,q')^{-1}.\eta) 
  \\
&[(1-\Theta_{2})b_{\tilde\chi,\phi}](x,x',E(x,x').\Xi)\in S(\widetilde{\Psi}^{|m|},dx^{2}+dx'^{2}+d\Xi^{2})\quad,\quad [\Theta_{2}b_{\tilde\chi,\phi}](x,x',E(x,x').\Xi)\in S(\widetilde{\Psi}^{m},G_{\widetilde{\Psi}})\,.          
\end{align*}
Actually it suffices to check
$$
\Psi^{2}(0,p,E(x,x').\Xi)\leq C(1+(|\xi|+|p||\eta|+|p'||\eta|)^{2}+|p|^{4}+|p'|^{4}+|\eta|^{4})\leq C'\widetilde{\Psi}^{2}
$$
with the symmetric version by applying the same result to $\Psi^{2}(0,p,E(x,x')^{-1}.\Xi)$ and then to use the equivalence $\left(\frac{\widetilde{\Psi}}{\Psi}\right)^{\pm 1}\leq C_{\varepsilon}$ when $|p-p'|\leq \varepsilon \widetilde{\Psi}^{1/2}$ owing to  the slowness of $G_{\widetilde{\Psi}}$\,.\\

\noindent\textbf{Non stationary phase in $p$:}
Despite the bad a priori estimate of $[(1-\Theta_{2})b_{\tilde\chi,\phi}](x,x',E(x,x').\Xi)$\,, $N\geq N_{k,d}$ integrations by parts for a given $k\in N$ with
\begin{eqnarray*}
  && \left(\frac{1}{|p-p'|^{2}}(p-p').D_{\eta}\right)^{N}e^{i[\xi.(q-q')+\eta.(p-p')]}=
     e^{i[\xi.(q-q')+\eta.(p-p')]}\\
  \text{and}&&
               \forall (x,x')\in \mathrm{supp}\,(1-\Theta_{2})(.,.,E(.,.).)\,,\quad
               |p-p'|\geq \frac{1}{C_{\Psi,\phi,\tilde\chi}}\widetilde{\Psi}^{1/2}\,,
\end{eqnarray*}
leads to the property that the kernel of $(\partial_{q}^{\alpha_{1}}p^{\beta_{1}}\partial_{p}^{\gamma_{1}})\circ K_{2}\circ(\partial_{q}^{\alpha_{2}}p^{\beta_{2}}\partial_{p}^{\gamma_{2}})$ is Hilbert-Schmidt and therefore bounded in $L^{2}(\mathbb{R}^{2d},dqdp;\mathbb{C})$ for $|\alpha_{j}|+\frac{|\beta_{j}|+|\gamma_{j}|}{2}\leq k$\,. We conclude as we did for $K_{1}$ that $K_{2}$ belongs to $\mathcal{L}(\mathcal{S}'(\mathbb{R}^{2d};\mathbb{C});\mathcal{S}(\mathbb{R}^{2d};\mathbb{C}))$\,. It has a symbol in $\mathcal{S}(\mathbb{R}^{4d};\mathbb{C})\subset S(\Psi^{-\infty},g_{\Psi})$ with a compact support in $\Omega$\,.\\

\noindent\textbf{Gauss transform~:} The kernel of $K_{\mathrm{diag}}$ can be written
$$
K_{\mathrm{diag}}(x,x')=b(x,D_{x})\quad\text{with}\quad b(x,\Xi)=e^{iD_{\Xi}.D_{x'}}
\left[[\Theta_{1}\Theta_{2}b_{\tilde\chi,\phi}](x,x',E(x,x')\Xi)\right]\big|_{x=x'}\,,
$$
where the metric $G_{\widetilde{\Psi}}$ is slow on $\mathbb{R}^{6d}$\,, the $B$-dual metric of $G_{\widetilde{\Psi}}$ for $B=
\begin{pmatrix}
  0&0&0\\
  0&0&\frac{1}{2}\mathrm{Id}_{\mathbb{R}^{2d}}\\
  0&\frac{1}{2}\mathrm{Id}_{\mathbb{R}^{2d}}&0
\end{pmatrix}
$ is  the degenerate metric $G^{B}=\widetilde{\Psi}^{2}dq'^{2}+\widetilde{\Psi}dp'^{2}+d\xi^{2}+\widetilde{\Psi}d\eta^{2}$\,. Fortunately $G_{\widetilde{\Psi}}$ is $G^{B}$-temperate along the vector space $V_{0}=\left\{(x,x',\Xi)\in \mathbb{R}^{6d}\,,\, x=x'\right\}$ and $\widetilde{\Psi}\big|_{V_{0}}$ can be replaced with $\Psi$\,.
Theorem~18.4.11 of \cite{HormIII} tells us  that
$b\in S(\Psi^{m},g_{\Psi})$ with the asymptotic expansion
$$
b(x,D_{x})\sim \sum_{n=0}^{\infty}\underbrace{b_{n}(x,D_{x})}_{b_{n}\in S(\Psi^{m-n},g_{\psi})}
$$
and the first term $b_{0}(x,\Xi)=b_{\phi,\tilde\chi}(x,x,\Xi)=a\circ\Phi_{*}(x,\Xi)$\,.
\end{proof}
\begin{remark}
  We could have used the general theory of global Fourier integral operators of J.M.~Bony in \cite{Bon}. At least when $\phi-\mathrm{Id}_{\mathbb{R}^{d}}$ and $L-\mathrm{Id}_{\mathbb{R}^{d}}$ have a compact support, this describes $U_{\Phi}$ as a Fourier integral operator of which the global symbol is a section of the fiber bundle of affine metaplectic operators $\mathcal{M}\to \mathcal{G}_{\phi}$ above the graph of $\phi_{*}$\,, $\mathcal{G}_{\Phi}=\left\{(X,\Phi_{*}(X)),\quad X\in \mathbb{R}^{4d}\right\}$ with a value above $(X_{0},\Phi(X_{0}))$\,, $x_{0}=(q_{0},p_{0},\xi_{0},\eta_{0})$ which is the composition $\tau_{\phi(X_{0})}U_{X_{0}}\tau_{-x_{0}}$ where $\tau_{(x_{1},\Xi_{1})}$ is the phase translation $e^{i(\Xi_{1}.D_{x}-x_{1}.D_{\Xi})}$ and $U_{X_{0}}$ is a metaplectic representation of the linear symplectic map $d\Phi_{*}(X_{0})$\,.\\
  The proposed methods mimics the classical techniques of pseudo-differential calculus for the metric $dq^{2}+\frac{d\xi^{2}}{\langle \xi\rangle^{2}}$ modulo the localization process only in the $q$-variable and for specific linear transformations in the $p$-variable. It is informative from this point of view and provides a more explicit formulation for the functoriality of the principal symbol.
\end{remark}
From Proposition~\ref{pr:invfunc} and the definition of $\tilde{\mathcal{W}}^{s}_{\Omega-\bullet}(\Omega;\mathbb{C})$, $\bullet=\mathrm{loc}$ or $\mathrm{comp}$\,, deduced from Definition~\ref{de:tWsRd}, we obtain the following result.

\begin{proposition}
  \label{pr:diffeoWsloc}
  Consider the unitary map $U_{\Phi}:L^{2}(\mathbb{R}^{2d},dqdp;\mathbb{C})\to L^{2}(\mathbb{R}^{2d},dqdp;\mathbb{C})$ given by \eqref{eq:defPhi}\eqref{eq:estimPhi}\eqref{eq:defUPsi} with the additional assumption that $\phi-\mathrm{Id}_{\mathbb{R}^{d}}$ and $L-\mathrm{Id}_{\mathbb{R}^{d}}$  have a compact support. Then for any $s\in \mathbb{R}^{d}$\,, $U_{\Phi}$ and $U_{\Phi}^{-1}$ are isomorphisms from $\tilde{\mathcal{W}}^{s}(\mathbb{R}^{2d};\mathbb{C})$ into itself, with $\mathbb{R}^{d}_{q}-\mathrm{supp}\, U_{\Phi}^{\pm 1}u=\phi^{\mp 1}(\mathbb{R}^{d}_{q}-\mathrm{supp}\,u)$ for every $u\in \tilde{\mathcal{W}}^{s}(\mathbb{R}^{2d};\mathbb{C})$\,.  
\end{proposition}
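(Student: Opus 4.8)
The statement to prove is Proposition~\ref{pr:diffeoWsloc}: the unitary map $U_{\Phi}$ associated with a compactly supported perturbation $\Phi$ of the identity (of the product form $(q,p)\mapsto(\phi(q),L(q).p)$) and its inverse are isomorphisms of $\tilde{\mathcal{W}}^{s}(\mathbb{R}^{2d};\mathbb{C})$ for every $s\in\mathbb{R}$, with the obvious transformation law for the $q$-support. The plan is to combine the abstract pseudo-differential conjugation result (Proposition~\ref{pr:quantchang}) with the synthetic definition of $\tilde{\mathcal{W}}^{s}(\mathbb{R}^{2d};\mathbb{C})$ as $H(\Psi^{s},g_{\Psi})$ (Definition~\ref{de:tWsRd}), realized by the elliptic operator $M_{s}=(C_{s}+\Psi^{|s|})^{\mathrm{sign}\,s}$, whose quantization $(M_{s})^{W}(x,D_{x})$ is invertible from $\tilde{\mathcal{W}}^{s}$ onto $L^{2}(\mathbb{R}^{2d},dqdp;\mathbb{C})$.

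First I would observe that it suffices to prove boundedness of $U_{\Phi}:\tilde{\mathcal{W}}^{s}\to\tilde{\mathcal{W}}^{s}$ for all $s\in\mathbb{R}$; applying this to $\Phi^{-1}$ (which has the same structure, by the remarks preceding \eqref{eq:defPhi}) gives boundedness of $U_{\Phi}^{-1}=U_{\Phi^{-1}}$, and since $U_{\Phi}U_{\Phi^{-1}}=\mathrm{Id}$ on $L^{2}$ and both spaces are dense subspaces, the two are mutual inverses. To prove boundedness, write $\|U_{\Phi}u\|_{\tilde{\mathcal{W}}^{s}}=\|(M_{s})^{W}U_{\Phi}u\|_{L^{2}}=\|U_{\Phi}\,\big(U_{\Phi}^{-1}(M_{s})^{W}U_{\Phi}\big)u\|_{L^{2}}=\|\big(U_{\Phi}^{-1}(M_{s})^{W}U_{\Phi}\big)u\|_{L^{2}}$, using unitarity of $U_{\Phi}$ on $L^{2}$. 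Now $M_{s}\in S(\Psi^{s},g_{\Psi})$ is an elliptic symbol, and although it is not compactly supported in $q$, the operator $U_{\Phi}^{-1}(M_{s})^{W}U_{\Phi}$ differs from $(M_{s})^{W}$ only over a compact set in $q$ (because $\phi=\mathrm{Id}$ and $L=\mathrm{Id}$ outside a compact set). Concretely I would pick $\chi\in\mathcal{C}^{\infty}_{0}(\mathbb{R}^{d};[0,1])$ equal to $1$ on a neighborhood of the set where $\Phi\neq\mathrm{Id}$ and write $M_{s}=\chi(q)M_{s}+(1-\chi(q))M_{s}$, with $(1-\chi(q))M_{s}$ commuting past $U_{\Phi}$ unchanged, and $\chi(q)M_{s}\in S^{s}_{\Psi,\mathbb{R}^{d}-\mathrm{comp}}(\mathbb{R}^{d};\mathbb{C})$ now compactly supported in $q$; Proposition~\ref{pr:quantchang} then applies to conjugate $(\chi(q)M_{s})(x,D_{x})$ by $U_{\Phi}$, yielding $b_{\Phi}(x,D_{x})+R_{\Phi}$ with $b_{\Phi}\in S^{s}_{\Psi,\mathbb{R}^{d}-\mathrm{comp}}(\mathbb{R}^{d};\mathbb{C})\subset S(\Psi^{s},g_{\Psi})$ and $R_{\Phi}\in\mathcal{R}$. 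Hence $U_{\Phi}^{-1}(M_{s})^{W}U_{\Phi}=c^{W}(x,D_{x})$ with $c\in S(\Psi^{s},g_{\Psi})$ (absorbing the Weyl-vs-standard conversion, which only changes $c$ by terms in $S(\Psi^{s-1},g_{\Psi})$, via the formulas of Subsection~\ref{sec:globpseudodiff}), plus a regularizing operator. Since $c^{W}(x,D_{x}):\tilde{\mathcal{W}}^{s}\to L^{2}$ is bounded (by the Calderón–Vaillancourt type bound, property \textbf{f)}, after conjugating $c^{W}$ by $(M_{s})^{W}$ to reduce to order $0$), and $\mathcal{R}$ maps $\mathcal{S}'$ into $\mathcal{S}\subset L^{2}$ continuously on the relevant compactly $q$-supported spaces, we get $\|U_{\Phi}u\|_{\tilde{\mathcal{W}}^{s}}\lesssim\|u\|_{\tilde{\mathcal{W}}^{s}}$. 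Finally the support statement $\mathbb{R}^{d}_{q}-\mathrm{supp}\,U_{\Phi}^{\pm1}u=\phi^{\mp1}(\mathbb{R}^{d}_{q}-\mathrm{supp}\,u)$ is immediate from the pointwise definition \eqref{eq:defUPsi}: $(U_{\Phi}u)(q,p)=J^{1/2}(q)u(\phi(q),L(q)p)$ vanishes for $q$ outside $\phi^{-1}(\mathbb{R}^{d}_{q}-\mathrm{supp}\,u)$, and the jacobian factor $J^{1/2}$ is everywhere nonzero.

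The main obstacle is purely technical: keeping track of the fact that $M_{s}$ is not compactly supported in $q$ while Proposition~\ref{pr:quantchang} is stated (for the sharp symbol-class conclusion) in the compactly-$q$-supported setting. The cut-off splitting $M_{s}=\chi(q)M_{s}+(1-\chi(q))M_{s}$ handles this cleanly because $\Phi$ is itself a compactly supported perturbation of the identity, so $(1-\chi(q))M_{s}$ literally commutes with $U_{\Phi}$; but one must be careful that the cut-off does not destroy ellipticity where it matters — this is not needed here, since we only need \emph{boundedness}, not invertibility, of the conjugated operator, so ellipticity of $b_{\Phi}$ plays no role. A secondary point requiring a line of care is the reduction of the $L^{2}$-boundedness of $c^{W}(x,D_{x}):\tilde{\mathcal{W}}^{s}\to L^{2}$ to property \textbf{f)}: one conjugates by the invertible $(M_{s})^{W}$ to land in $S(1,g_{\Psi})$, then invokes the Calderón–Vaillancourt bound and the fact that all symbol seminorms stay finite along this construction, which is exactly the content of Proposition~\ref{pr:invfunc}\,\textbf{ii)} and the continuity statements in Subsection~\ref{sec:geominv}.
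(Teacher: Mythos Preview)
Your approach is essentially the one the paper takes: both proofs exploit the compact support of $\Phi-\mathrm{Id}$ to reduce, via a $q$-cutoff, to a compactly $q$-supported piece to which Proposition~\ref{pr:quantchang} applies, and a remaining piece on which $U_\Phi$ acts trivially. The paper cuts off at the level of the function (writing $U_\Phi u = U_\Phi\bigl[\chi_1(\phi(q))u\bigr] + \chi_2(q)u$ and then estimating $\|U_\Phi\chi_1(\phi(q))u\|_{\tilde{\mathcal{W}}^s}^2$ through the quadratic form $\langle v, M_{2s}(x,D_x)v\rangle$), whereas you cut off the symbol $M_s$ and work directly with $\|(M_s)^W U_\Phi u\|_{L^2}$ via unitarity. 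These are interchangeable implementations of the same idea.

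One point deserves a more careful phrasing: the operator $\bigl[(1-\chi(q))M_s\bigr]^W$ does \emph{not} literally commute with $U_\Phi$, because pseudodifferential operators are not $q$-local. What is true is that the failure of commutation is smoothing. Concretely, choose $\varrho\in\mathcal{C}^\infty_0(\mathbb{R}^d;[0,1])$ with $\varrho\equiv 1$ on $\mathrm{supp}(\Phi-\mathrm{Id})$ and $\mathrm{supp}\,\varrho$ disjoint from $\mathrm{supp}(1-\chi)$; since $(U_\Phi-\mathrm{Id})=\varrho(q)(U_\Phi-\mathrm{Id})\varrho(q)$ one is left with compositions containing $\varrho(q)\bigl[(1-\chi)M_s\bigr]^W$ or $\bigl[(1-\chi)M_s\bigr]^W\varrho(q)$, whose full $\sharp^W$-expansions vanish by disjoint supports, hence lie in $\mathcal{R}(\mathbb{R}^d;\mathbb{C})$. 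This is routine and is absorbed in the regularizing remainder you already allow for, so your argument goes through once ``commuting past $U_\Phi$ unchanged'' is read as ``modulo $\mathcal{R}$''.
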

\begin{proof}
  The support property is obvious from the definition \eqref{eq:defUPsi} of $U_{\Phi}$\,. With the additional support assumption on $\Phi-\mathrm{Id}_{\mathbb{R}^{2d}}$\,, we can write for any $u\in \tilde{\mathcal{W}}^{s}(\mathbb{R}^{2d};\mathbb{C})$\,, $U_{\Phi}u=U_{\Phi}\chi_{1}(\phi(q))u + \chi_{2}(q)u$ for $\chi_{1},\chi_{2}\in \mathcal{C}^{\infty}(\mathbb{R}^{d};[0,1])$\,, $\chi_{1}+\chi_{2}\equiv 1$ and $\mathrm{supp}\,\chi_{1}$ compact. From the Definition~\ref{de:tWsRd} and the global pseudo-differential calculus in $S(\Psi^{2s},g_{\psi})$ remember
  $$
C_{s}^{-1}\mathrm{Re}~\langle u\,,\, M_{2s}(x,D_{x})u\rangle_{L^{2}}\leq \|u\|_{\tilde{\mathcal{W}}^{s}}^{2}=\|M_{s}^{W}(x,D_{x})u\|^{2}_{L^{2}}\leq C_{s}\mathrm{Re}~\langle u\,,\, M_{2s}(x,D_{x})u\rangle_{L^{2}}\,,
$$
for $M_{s}=(C_{s}+\Psi^{|s|})^{\mathrm{sign}\,s}$ with $C_{s}\geq 1$ large enough.\\ 
We deduce
$$
\|U_{\Phi}\chi_{1}(\phi(q))u\|_{\tilde{\mathcal{W}}^{s}}^{2}\leq C_{s}\mathrm{Re}~ \langle u\,,\, U_{\phi}^{*}(\chi_{1}(q)M_{2s})(x,D_{x})U_{\Phi}\chi_{1}(\phi(q))u\rangle\,.
$$
By Proposition~\ref{pr:quantchang} we know
$$
U_{\phi}^{*}(\chi_{1}(q)M_{2s})(x,D_{x})U_{\Phi}\chi_{1}(\phi(q))=b_{2s}(x,D_{x})\circ \chi_{1}(\phi(q))=c^{W}_{2s}(x,D_{x})\,\quad b_{2s},c_{2s}\in S(\Psi^{2s},g_{\Psi})\,,
$$
and the pseudo-differential calculus in $S(\Psi^{2s},g_{\Psi})$ gives
$$
\|U_{\Phi}\chi_{1}(\phi(q))u\|_{\tilde{\mathcal{W}}^{s}}^{2}\leq C'_{s}\|u\|^{2}_{\tilde{\mathcal{W}}^{s}}\,.
$$
By the triangular inequality $U_{\Phi}:\tilde{\mathcal{W}}^{s}(\mathbb{R}^{2d};\mathbb{C})\to \tilde{\mathcal{W}}^{s}(\mathbb{R}^{2d};\mathbb{C})$ is continuous and we conclude with $U_{\Phi}^{-1}=U_{\Phi^{-1}}$\,.
\end{proof}
All this section gives a meaning to  $\mathcal{S}^{\dagger}_{\Omega-\bullet}(T^{*}\Omega;\mathbb{C})$\,, $S^{m}_{\Psi,\Omega-\mathrm{comp}}(\Omega;\mathbb{C})$\,, $\mathcal{R}(\Omega;\mathbb{C})$\,, $\mathrm{OpS}^{m}_{\Psi}(\Omega;\mathbb{C})$ and  $\tilde{\mathcal{W}}^{s}_{\Omega-\bullet}(T^{*}\Omega;\mathbb{C})$ ( with $\mathcal{S}^{\dagger}=\mathcal{S}$ or $\mathcal{S}'$ and $\bullet$ meaning $\mathrm{loc}$ or $\mathrm{comp}$) when $\Omega$ is a chart open set in the compact manifold $Q$\,.\\
The topology, the continuity properties and the global ellipticity that we need will be better discussed in the global setting which avoids considerations of inductive limit topologies.
\subsection{Globalization on $Q$ and applications}
\label{sec:global}
Let us fix, an atlas covering of $Q$\,, $Q=\mathop{\bigcup}_{j=1}^{J}\Omega_{j}$ such that $\tilde{\Omega}_{j}=\mathop{\bigcup}_{\Omega_{j'}\cap \Omega_{j}\neq \emptyset}\Omega_{j'}$ is still a chart open set.
We take a finite partition of unity $\sum_{j=1}^{J}\varrho_{j}(q)\equiv 1$ subordinate with the atlas covering $Q=\mathop{\bigcup}_{j=1}^{J}\Omega_{j}$ and cut-off functions $\chi_{j}\in \mathcal{C}^{\infty}_{0}(\Omega_{j};[0,1])$ such that $\chi_{1}\equiv 1$ in a neighborhood of $\mathrm{supp}\,\varrho_{j}$\,. Notice that $\chi_{j}\chi_{j'}\neq 0$ implies $\Omega_{i}\cap \Omega_{j}\neq 0$ and in this case $\varrho_{j},\varrho_{j'},\chi_{j},\chi_{j'}$ are cut-off functions in the coordinate charts $\tilde{\Omega}_{j}$ and $\tilde{\Omega}_{j'}$\,.\\
The spaces $S^{m}_{\Psi}(Q;\mathbb{C})$ (resp. $\widetilde{\mathcal{W}}^{s}(T^{*}Q;\mathbb{C})$) are the sets $a=\sum_{j=1}^{J}\varrho_{j}(q)a$ with $\varrho_{j}(q)a\in S^{m}_{\Psi,\Omega_{j}-\mathrm{comp}}(\Omega_{j};\mathbb{C})\subset S(\Psi^{m},g_{\Psi})$ (resp. $\varrho_{j}a \in \widetilde{\mathcal{W}}^{s}_{\Omega_{j}-\mathrm{comp}}(T^{*}\Omega_{j};\mathbb{C})$) with the topologies given by
\begin{eqnarray*}
 && p_{m,k}(a)=\sum_{j=1}^{J}p_{\tilde{\Omega}_{j},\Psi^{m},k}(\varrho_{j}(q)a)\,,\quad k\in \mathbb{N}\\
  \text{resp.}&& \|a\|_{\widetilde{\mathcal{W}}^{s}(Q;\mathbb{C})}^{2}=\sum_{j=1}^{J}\|\varrho_{j}(q)a\|^{2}_{\widetilde{\mathcal{W}}^{s}(T^{*}\Omega;\mathbb{C})}\,.
\end{eqnarray*}
The subscript $_{\tilde{\Omega}_{j}}$ in $p_{\tilde\Omega_{j},\Psi^{m},k}$ refers to the choice of some local coordinates in $\tilde\Omega_{j}$\,. But Proposition~\ref{pr:invfunc} and Proposition~\ref{pr:quantchang} for the conjugation $a\mapsto U_{\Phi}a(x,D_{x})\chi(q)U_{\Phi}^{-1}$ with $\Phi(q,p)=(\phi(q),{}^{t}d\phi(q)^{-1}.p)$\,, says that the seminorms $p_{\tilde{\Omega}_{j},\Psi^{m},k}(\varrho_{j}(q)a)$ can be replaced by $p_{\tilde{\Omega}_{j'},\Psi^{m},k}(\varrho_{j}(q)a)$ for any $j'\in \left\{1,\ldots,J\right\}$ such that $\Omega_{j}\subset \tilde{\Omega}_{j'}$\,.\\
A vector bundle isomorphism on $T^{*}Q$\,, written locally as $\Phi:(q,p)\mapsto (\phi(q),L(q).p)$ with the associated unitary operator $U_{\Phi}$\,,
gives rise to an isomorphism of the space $\tilde{\mathcal{W}}^{s}(Q;\mathbb{C})$ according to the local result of  Proposition~\ref{pr:diffeoWsloc}. This gives a first application, which is not exactly due to the pseudo-differential calculus
\begin{proposition}
  \label{pr:dyadic}~\\
  For any riemannian metric $g=g_{ij}(q)dq^{i}dq^{j}$ on $TQ$ with the dual metric $g^{ij}(q)dp_{i}dp_{j}$ on $T^{*}Q$\,, if $\sum_{\ell=-1}^{\infty}\theta_{\ell}^{2}(t)\equiv 1$ is a quadratic dyadic partition of unity like \eqref{eq:dyadic} and $|p|_{q}^{2}=g^{ij}(q)p_{i}p_{j}$\,, then for every $s\in \mathbb{R}$ the squared norm $\|u\|^{2}_{\tilde{\mathcal{W}}^{s}(Q;\mathbb{C})}$ is equivalent to $\sum_{\ell=-1}^{\infty}\|\theta_{\ell}(|p|_{q}^{2})u\|^{2}_{\tilde{\mathcal{W}}^{s}(Q;\mathbb{C})}$\,.
\end{proposition}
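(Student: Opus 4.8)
\textbf{Proof plan for Proposition~\ref{pr:dyadic}.}
The plan is to reduce the global statement to the already-established euclidean model statement, Proposition~\ref{pr:eqnormes}-\textbf{iii)}, via localization in the $q$-variable and a gauge change in the $p$-variable. First I would use a finite chart covering $Q=\bigcup_{j=1}^{J}\Omega_{j}$ with $\tilde{\Omega}_{j}$ still a chart open set, a subordinate partition of unity $\sum_{j}\varrho_{j}(q)\equiv 1$, and the definition of $\|u\|_{\tilde{\mathcal{W}}^{s}(Q;\mathbb{C})}^{2}=\sum_{j}\|\varrho_{j}u\|_{\tilde{\mathcal{W}}^{s}(T^{*}\Omega_{j};\mathbb{C})}^{2}$. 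Because $\theta_{\ell}(|p|_{q}^{2})$ is, in local coordinates on $\tilde{\Omega}_{j}$, a function of $q$ and $p$ which together with its $\partial_{q},\partial_{p}$-derivatives lies in $S^{0}_{\Psi,\tilde{\Omega}_{j}-\mathrm{loc}}(\tilde{\Omega}_{j};\mathbb{C})$ (the key point being $|\partial_{p}^{\alpha}\theta_{\ell}(|p|_{q}^{2})|\le C_{\alpha}2^{-|\alpha|\ell}\langle p\rangle_{q}^{0}$, cf. the estimate \eqref{p_derivative_estimates_thetas} near the dyadic partition), multiplication by $\theta_{\ell}(|p|_{q}^{2})$ and by $\varrho_{j}(q)$ commute up to lower-order terms, and both act boundedly on $\tilde{\mathcal{W}}^{s}_{\Omega_{j}-\mathrm{comp}}$ uniformly in $\ell$. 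This lets me pass from $\sum_{j}\|\varrho_{j}u\|^{2}$ to $\sum_{j}\sum_{\ell}\|\theta_{\ell}(|p|_{q}^{2})\varrho_{j}u\|^{2}$ and back, with constants independent of $\ell$.

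Second, on a fixed chart $T^{*}\Omega_{j}\subset\mathbb{R}^{2d}$ I would apply the pointwise gauge transformation $(q,p)\mapsto(q,g(q)^{1/2}p)$, i.e. the unitary map $\mathcal{U}_{\Phi}$ of the type $\Phi(q,p)=(q,L(q)p)$ with $L(q)=g(q)^{1/2}$ studied in Proposition~\ref{pr:quantchang} and Proposition~\ref{pr:diffeoWsloc}. Under this map $|p|_{q}^{2}=g^{ij}(q)p_ip_j$ becomes the euclidean $|p|^{2}$, while $\tilde{\mathcal{W}}^{s}$ is preserved with equivalent norm (Proposition~\ref{pr:diffeoWsloc}), uniformly; after a further cut-off and a flattening diffeomorphism $\phi$ of $\tilde{\Omega}_{j}$ we are reduced to functions in $\tilde{\mathcal{W}}^{s}_{\Omega_{j}-\mathrm{comp}}(\mathbb{R}^{2d};\mathbb{C})$ and the purely euclidean dyadic equivalence
$$
\frac{1}{C_{s}}\sum_{\ell=-1}^{\infty}\|\theta_{\ell}(|p|^{2})v\|_{\tilde{\mathcal{W}}^{s}}^{2}\le \|v\|_{\tilde{\mathcal{W}}^{s}}^{2}\le C_{s}\sum_{\ell=-1}^{\infty}\|\theta_{\ell}(|p|^{2})v\|_{\tilde{\mathcal{W}}^{s}}^{2},
$$
which is exactly Proposition~\ref{pr:eqnormes}-\textbf{iii)} (the interpolation/duality extension from $s=k\in\mathbb{N}$ being already carried out there). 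Transporting this back through $\mathcal{U}_{\Phi}^{-1}$ and $\phi$, and using that the gauge map only rescales $p$ fiberwise so that $\theta_{\ell}(|p|^{2})\circ\Phi=\theta_{\ell}(|p|_{q}^{2})$ exactly, yields the chartwise estimate, and summing over $j$ and invoking the commutator/boundedness bookkeeping of the first step gives the claim.

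The main obstacle is the uniformity in $\ell$ of the commutator estimates $[\varrho_{j}(q),\theta_{\ell}(|p|_{q}^{2})]$ and, more subtly, checking that conjugating the (uniformly in $\ell$) family $\theta_{\ell}(|p|_{q}^{2})$ by the fixed gauge map $\mathcal{U}_{\Phi}$ and the flattening diffeomorphism preserves the $S^{0}_{\Psi}$-seminorm bounds with $\ell$-independent constants; this is where the scaling structure $\partial_{p}^{\alpha}\theta_{\ell}=O(2^{-|\alpha|\ell})$ interacts with the $\Psi$-homogeneity $1+|\xi|^{2}+|p|^{4}+|\eta|^{4}$. The harmless but slightly tedious part is that $\mathcal{U}_{\Phi}$ is not symplectic, so I must use the version of the calculus in Proposition~\ref{pr:invfunc}--\ref{pr:quantchang} rather than the metaplectic one; since those propositions are already proved and stated with the needed continuity of $a\mapsto U_{\Phi}a(x,D_x)\chi(q)U_{\Phi}^{-1}$ on the symbol classes, the argument closes. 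I expect no difficulty with the $\ell=-1$ term since $\theta_{-1}(|p|_{q}^{2})$ is supported in a fixed compact $p$-region and is handled by ordinary (non-$p$-global) pseudodifferential bounds.
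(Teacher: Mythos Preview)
Your approach is essentially the paper's: use the fiberwise gauge $\Phi$ so that $U_{\Phi}$ intertwines multiplication by $\theta_{\ell}(|p|_{q}^{2})$ with multiplication by $\theta_{\ell}(|p|^{2})$, and then invoke Proposition~\ref{pr:eqnormes}-\textbf{iii)}. The obstacles you flag dissolve on inspection: $\varrho_{j}(q)$ and $\theta_{\ell}(|p|_{q}^{2})$ commute \emph{exactly} (both are multiplication operators), and since $U_{\Phi}^{-1}\,\theta_{\ell}(|p|_{q}^{2})\,U_{\Phi}=\theta_{\ell}(|p|^{2})$ is an exact identity of multiplication operators, no $\ell$-uniform $S^{0}_{\Psi}$-seminorm bounds are needed---only that the fixed isomorphism $U_{\Phi}^{\pm 1}$ has $\ell$-independent operator norm on $\tilde{\mathcal{W}}^{s}$, which is automatic.
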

\begin{proof}
  It suffices to use the local gauge transform given by $\Phi:(q,p)\mapsto (q,g^{-1/2}(q).p)$ with $|p|_{q}^{2}={}^{t}p g^{-1}(q)p=|g^{-1/2}(q).p|^{2}$ and to write
$$
\sum_{\ell=-1}^{\infty}\|\theta_{\ell}(|p|_{q}^{2})u\|^{2}_{\tilde{\mathcal{W}}^{s}}\asymp\sum_{\ell=-1}^{\infty}\|U_{\Phi}^{-1}\theta_{\ell}(|p|_{q}^{2})u\|^{2}_{\tilde{\mathcal{W}}^{s}}
\asymp\sum_{\ell=-1}^{\infty}\|\theta_{\ell}(|p|^{2})U_{\Phi}^{-1}u\|^{2}_{\tilde{\mathcal{W}}^{s}}
$$
and Proposition~\ref{pr:eqnormes} gives
$$
\sum_{\ell=-1}^{\infty}\|\theta_{\ell}(|p|_{q}^{2})u\|^{2}_{\tilde{\mathcal{W}}^{s}}\asymp \|U_{\Phi}^{-1}u\|^{2}_{\tilde{\mathcal{W}}^{s}}\asymp \|u\|^{2}_{\tilde{\mathcal{W}}^{s}}\,.
$$
\end{proof}
The intersection $\mathop{\bigcap}_{s\in \mathbb{R}}\widetilde{\mathcal{W}}^{s}(T^{*}Q;\mathbb{C})$ is nothing but $\mathcal{S}(T^{*}Q;\mathbb{C})$\,.\\
On $\mathcal{R}(Q;\mathbb{C})=\mathcal{L}(\mathcal{S}'(T^{*}Q;\mathbb{C});\mathcal{S}(T^{*}Q;\mathbb{C}))\sim \mathcal{S}(T^{*}Q\times T^{*}Q;\mathbb{C})$\,, the Fr{\'e}chet space topology is equivalently defined by the family of (semi)norms
$$
q_{k}(R)=\|R\|_{\mathcal{L}(\widetilde{\mathcal{W}}^{-k}(T^{*}Q;\mathbb{C});\widetilde{\mathcal{W}}^{k}(T^{*}Q;\mathbb{C}))}\,,\quad k\in\mathbb{N}\,.
$$
Before giving an explicit family of seminorms
on
$$
\mathrm{OpS}^{m}_{\Psi}(Q;\mathbb{C})=\left\{\sum_{j=1}^{J}(\varrho_{j}(q)a)(x,D_{x})\circ\chi_{j}(q)+ R\,, a\in S^{m}_{\Psi}(\Omega;\mathbb{C})\,, R\in \mathcal{R}(\Omega;\mathbb{C})\right\}\,,
$$
let us check that any $A\in \mathrm{OpS}^{m}_{\Psi}(Q;\mathbb{C})$ admits a canonical decomposition after fixing some cut-off function on $Q\times Q$\,.\\
Attention must be payed to the following point: although $\varrho_{j}(q)a_{j'}=\varrho_{j'}(q)a_{j}$ for all pairs $(j,j')$ allows to define $a(x,\Xi)=\sum_{j'=1}^{J}\varrho_{j'}(q)a_{j'}(x,\Xi)$\,, the equality $\sum_{j=1}^{J}(\varrho_{j}(q))a_{j}(x,D_{x})\circ\chi_{j}(q)-\sum_{j=1}^{J}(\varrho_{j}(q)a)(x,D_{x})\circ\chi_{j}(q)=R\in \mathcal{R}(Q;\mathbb{C})$ is true with $R=0$ only under the equality of the operators
$$
(\varrho_{j'}(q)a_{j})(x,D_{x})\circ\chi_{j'}(q)=(\varrho_{j}(q)a_{j'})(x,D_{x})\circ\chi_{j}(q)
$$
for all pairs $(j,j')$\,.\\
With the subset $\tilde{\Omega}_{j}=\mathop{\bigcup}_{\Omega_{j'}\cap \Omega_{j}\neq\emptyset}\Omega_{j'}$ take a cut-off function $\tilde{\chi}_{j}\in \mathcal{C}^{\infty}_{0}(\tilde{\Omega}_{j};[0,1])$ such that $\tilde{\chi}_{j}\equiv 1$ on a neighborhood of
$$
\mathop{\bigcup}_{\Omega_{j'}\cap \Omega_{j}\neq \emptyset}\mathrm{supp}\,\chi_{j}
\supset
\mathop{\bigcup}_{\Omega_{j'}\cap \Omega_{j}\neq \emptyset}\mathrm{supp}\,\varrho_{j}
$$
Because for all $j\in \left\{1,\ldots,J\right\}$\,, $\varrho_{j}(q)\chi_{j}(q')$ and $\tilde{\chi}_{j}(q)1_{Q\setminus\tilde{\Omega}_{j}}(q')$ vanish in a neighborhood of the diagonal  $\Delta_{Q}=\left\{(q,q)\,,\, q\in Q\right\}$\,, there exists $\Theta_{1}\in \mathcal{C}^{\infty}(Q\times Q;[0,1])$ such that $\Theta_{1}\equiv 1$ in a neighborhood of $\Delta_{Q}$ and
\begin{eqnarray}
\nonumber
  \varrho_{j}(q)\Theta_{1}(q,q')&=&\varrho_{j}(q)\Theta_{1}(q,q')\chi_{j}(q')=\varrho_{j}(q)\tilde{\chi}_{j}(q)\Theta_{1}(q,q')\chi_{j}(q)\\
\nonumber                                &=&\varrho_{j}(q)\left[\sum_{j'=1}^{J}\varrho_{j'}(q)\tilde{\chi}_{j}(q)\right]\Theta_{1}(q,q')\chi_{j}(q')\\
\label{eq:rhojchij}
  &=&\varrho_{j}(q)\left[\sum_{j'=1}^{J}\varrho_{j'}(q)\tilde{\chi}_{j'}(q)\Theta_{1}(q,q')\right]\chi_{j}(q')\,,
\end{eqnarray}
 where the equalities hold as multiplication operators on $\mathcal{S}'(T^{*}\Omega_{j}\times T^{*}\Omega_{j};\mathbb{C})$\,. Additionally the function $\Theta_{1}$ can be chosen symmetric: $\Theta_{1}(q,q')=\Theta_{1}(q',q)$\,, and we set 
 $$
 \Theta_2(q,q') = 1-\Theta_1(q,q')\,.
 $$ \\
For any $K\in \mathcal{\mathcal{L}}(\mathcal{S}(T^{*}Q;\mathbb{C});\mathcal{S}'(T^{*}Q;\mathbb{C}))$\,, identified with its Schwartz kernel $K(x,x')\in \mathcal{S}'(T^{*}Q\times T^{*}Q;\mathbb{C})$\,, we set
 \begin{eqnarray}
   \label{eq:defdiag}
   K_{\mathrm{diag}}(x,x')=\Theta_{1}(q,q')K(x,x')\quad,\quad
   K_{\mathrm{off}}(x,x')=\Theta_{2}(q,q')K(x,x')\quad,\quad K=K_{\mathrm{diag}}+K_{\mathrm{off}}\,.
 \end{eqnarray}
 Notice that $K\mapsto (K_{\mathrm{diag}},K_{\mathrm{off}})$ is an isomorphism between
 $\mathcal{S}'(T^{*}Q\times T^{*}Q;\mathbb{C})$ and the closed set of $\mathcal{S}'(T^{*}Q\times T^{*}Q;\mathbb{C})\times \mathcal{S}'(T^{*}Q\times T^{*}Q;\mathbb{C})$
$$
\left\{(K_{1},K_{2})\in \mathcal{S}'(T^{*}Q\times T^{*}Q;\mathbb{C})\times \mathcal{S}'(T^{*}Q\times T^{*}Q;\mathbb{C})\,,\, \Theta_{1}(q,q')K_{2}(x,x')-\Theta_{2}(q,q')K_{1}(x,x')=0\right\}\,.
$$
With \eqref{eq:rhojchij}, we have the additional properties
\begin{align*}
K_{\mathrm{diag}}=\sum_{j=1}^{J}\varrho_{j}(q)\circ K_{\mathrm{diag}}
   &=\sum_{j=1}^{J}\varrho_{j}(q)\circ\left[\sum_{j'=1}^{J}(\varrho_{j'}\tilde{\chi}_{j'})(q)\circ K\right]_{\mathrm{diag}}\circ \chi_{j}(q)
  \\
  \text{and}
             \sum_{j=1}^{J}\left[(\varrho_{j}(q)a)(x,D_{x}) \circ \chi_{j}(q)\right]_{\mathrm{diag}}
     &=
      \sum_{j=1}^{J}\varrho_{j}(q)\circ
       \left[\sum_{j'=1}^{J}[(\varrho_{j'}\tilde{\chi}_{j'})(q)a](x,D_{x})
       \right]_{\mathrm{diag}}\circ \chi_{j}(q)
\end{align*}
for some $\tilde{\chi}_{j}\in \mathcal{C}^{\infty}_{0}(\Omega_{j};[0,1])$ such that $\tilde{\chi}_{j}\equiv 1$ in a neighborhood of $\mathrm{supp}\,\chi_{j}$\,.
 \begin{proposition}
   \label{pr:candecomp} Every $A=\sum_{j=1}^{J}(\varrho_{j}(q)a)(x,D_{x})\chi_{q}+R\in \mathrm{OpS}^{m}_{\Psi}(Q;\mathbb{C})$ admits a unique decomposition
$$
A=\underbrace{\sum_{j=1}^{J}(\varrho_{j}(q)a_{A}(x,D_{x}))\chi_{j}(q)}_{=A_{\mathrm{diag}}}+\underbrace{R_{A}}_{=A_{\mathrm{off}}}\,,
$$
with $a_{A}\in S^{m}_{\Psi}(Q;\mathbb{C})$ and $R_{A}\in \mathcal{R}(Q;\mathbb{C})$\,.\\
Additionally this provides a topological direct sum on $\mathrm{OpS}^{m}(Q;\mathbb{C})$\,, because the map
\begin{eqnarray*}
  S^{m}_{\Psi}(Q;\mathbb{C})\times \mathcal{R}(Q;\mathbb{C})&\to& S^{m}_{\Psi}(Q;\mathbb{C})\times \mathcal{R}(Q;\mathbb{C})\\
  (a,R)&\mapsto & (a_{A},R_{A})\quad,\quad A=\sum_{j=1}^{J}(\varrho_{j}(q)a)(x,D_{x})\chi_{j}(q)+R
\end{eqnarray*}
is continuous when $S^{m}_{\Psi}(Q;\mathbb{C})\times \mathcal{R}(Q;\mathbb{C})$ is endowed with the seminorms $(p_{m,k}(a)+q_{k}(R))_{k\in \mathbb{N}}$\,.
\end{proposition}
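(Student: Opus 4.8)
\textbf{Proof strategy for Proposition~\ref{pr:candecomp}.} The plan is to read off the decomposition directly from the diagonal/off-diagonal splitting of Schwartz kernels introduced in \eqref{eq:defdiag} and to verify that the diagonal part of a quantized symbol stays in the pseudodifferential class while the off-diagonal part is regularizing. First I would fix the data $(\varrho_j,\chi_j,\tilde\chi_j,\tilde\Omega_j,\Theta_1,\Theta_2)$ as in the paragraph preceding the statement, so that \eqref{eq:rhojchij} and the two displayed identities for $K_{\mathrm{diag}}$ hold. Given $A=\sum_{j=1}^{J}(\varrho_j(q)a)(x,D_x)\chi_j(q)+R$ with $a\in S^m_\Psi(Q;\mathbb{C})$ and $R\in\mathcal R(Q;\mathbb{C})$, I would set $A_{\mathrm{diag}}$ and $A_{\mathrm{off}}=R_A$ by applying the kernel splitting $K\mapsto(K_{\mathrm{diag}},K_{\mathrm{off}})$ to the full kernel of $A$. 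The point is then to identify $A_{\mathrm{diag}}$ with $\sum_j(\varrho_j(q)a_A)(x,D_x)\chi_j(q)$ for some $a_A\in S^m_\Psi(Q;\mathbb{C})$ and to show $R_A\in\mathcal R(Q;\mathbb{C})$.

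For existence: the kernel of $R$, being in $\mathcal S(T^*Q\times T^*Q;\mathbb{C})$, splits into two Schwartz kernels, so $R_{\mathrm{diag}},R_{\mathrm{off}}\in\mathcal R(Q;\mathbb{C})$ and can be absorbed; hence it suffices to treat $A_0:=\sum_j(\varrho_j(q)a)(x,D_x)\chi_j(q)$. Using the second displayed identity before the statement, $(A_0)_{\mathrm{diag}}=\sum_j\varrho_j(q)\circ\big[\sum_{j'}[(\varrho_{j'}\tilde\chi_{j'})(q)a](x,D_x)\big]_{\mathrm{diag}}\circ\chi_j(q)$. For each $j'$, the operator $[(\varrho_{j'}\tilde\chi_{j'})(q)a](x,D_x)$ is localized in the chart $\tilde\Omega_{j'}$ with symbol $(\varrho_{j'}\tilde\chi_{j'})a\in S^m_{\Psi,\tilde\Omega_{j'}-\mathrm{comp}}(\tilde\Omega_{j'};\mathbb{C})\subset S(\Psi^m,g_\Psi)$; multiplying its kernel by $\Theta_1$ (supported near the diagonal) produces, by the oscillatory-integral/non-stationary-phase argument already used in the proof of Proposition~\ref{pr:quantchang} (the ``Gauss transform'' step, or more simply property~\textbf{b)} of Subsection~\ref{sec:defpseudo} since $\Theta_2=1-\Theta_1$ vanishes near the diagonal), a symbol in $S(\Psi^m,g_\Psi)$ modulo $S(\Psi^{-\infty},g_\Psi)=\mathcal S$. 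Collecting these over $j'$ and using $\sum_{j'}\varrho_{j'}\tilde\chi_{j'}\equiv 1$ near $\mathrm{supp}\,\chi_j$ gives a symbol $b_j$ with $(A_0)_{\mathrm{diag}}=\sum_j(\varrho_j(q)b_j)(x,D_x)\chi_j(q)+R'$, $R'\in\mathcal R(Q;\mathbb{C})$; since $\varrho_j b_j=\varrho_{j'}b_{j'}$ on overlaps (both equal the diagonal localization of $a$ up to $\mathcal S$, using property~\textbf{b)} to change the cut-offs), one can glue $a_A=\sum_{j'}\varrho_{j'}b_{j'}\in S^m_\Psi(Q;\mathbb{C})$. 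Finally $A_{\mathrm{off}}=A-A_{\mathrm{diag}}$ has kernel $\Theta_2$ times the kernel of $A_0$ plus a Schwartz remainder; since $\varrho_j(q)\chi_j(q')$ is supported in $\Theta_1\equiv 1$ for each $j$, every summand $(\varrho_j(q)a)(x,D_x)\chi_j(q)$ has a kernel that, away from the diagonal, is smoothing by the same non-stationary-phase estimate (integrating by parts in $\xi,\eta$ against the nonvanishing phase gradient, exactly as in the ``non stationary phase'' steps of the proof of Proposition~\ref{pr:quantchang}), so $A_{\mathrm{off}}\in\mathcal R(Q;\mathbb{C})$.

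For uniqueness and the topological statement: if $\sum_j(\varrho_j(q)a_A)(x,D_x)\chi_j(q)=R_1\in\mathcal R(Q;\mathbb{C})$, apply the kernel splitting; the left side is supported (up to $\mathcal R$) where $\Theta_1\equiv1$ near the diagonal and the right side is Schwartz, forcing each $\varrho_j a_A\in\mathcal S$, hence $a_A\in S^{-\infty}_\Psi(Q;\mathbb{C})=\mathcal S$ and then $R_A$ is determined. This shows $A\mapsto(a_A,R_A)$ is well defined. Continuity follows because each step above — multiplication by a fixed cut-off, the conjugation/quantization maps of Proposition~\ref{pr:invfunc} and Proposition~\ref{pr:quantchang}, the kernel splitting by $\Theta_1$, and the Borel-type summation of Proposition~\ref{pr:asympsum} — is continuous with respect to the seminorm families $p_{m,k}$ and $q_k$; one estimates $p_{m,k}(a_A)\le C_k\,p_{m,k+N_{k,d}}(a)$ and $q_k(R_A)\le C_k(p_{m,k+N_{k,d}}(a)+q_{k+N_{k,d}}(R))$ by tracking the finitely many localizations and the explicit non-stationary-phase integrations by parts. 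I expect the main obstacle to be bookkeeping rather than a genuine difficulty: namely, verifying carefully that the diagonal localization of $(\varrho_{j'}\tilde\chi_{j'})(q)a$ produces the \emph{same} symbol $a_A$ (modulo $\mathcal S$) independently of $j'$ on chart overlaps, so that the glued $a_A$ is globally well defined in $S^m_\Psi(Q;\mathbb{C})$; this is where property~\textbf{b)} of Subsection~\ref{sec:defpseudo} and the functoriality of the principal symbol from Proposition~\ref{pr:quantchang} must be invoked with some care, and where one must check that all error terms are uniformly Schwartz with continuous dependence on the seminorms of $a$.
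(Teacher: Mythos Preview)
Your approach follows the same route as the paper: define $A_{\mathrm{diag}}$ and $A_{\mathrm{off}}$ via the $\Theta_1/\Theta_2$ kernel splitting \eqref{eq:defdiag}, use non-stationary phase in $\xi$ for the off-diagonal part, and an amplitude-to-symbol Gauss transform for the diagonal part. The overall strategy is correct and matches the paper.

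There is, however, one genuine gap. You write $(A_0)_{\mathrm{diag}}=\sum_j(\varrho_j b_j)(x,D_x)\chi_j+R'$ with $R'\in\mathcal R$, and you dismiss $R_{\mathrm{diag}}$ as something that ``can be absorbed''. But the proposition asserts that $A_{\mathrm{diag}}$ is \emph{exactly} of the form $\sum_j(\varrho_j a_A)(x,D_x)\chi_j$, with no floating regularizing remainder. You cannot absorb $R'+R_{\mathrm{diag}}$ into $R_A=A_{\mathrm{off}}$, since their kernels are supported where $\Theta_1\neq 0$ and would violate the kernel splitting. The paper handles this by observing that $R_{\mathrm{diag}}$, having Schwartz kernel $\varrho_{j'}(q)\Theta_1(q,q')R(x,x')\chi_{j'}(q')$ localized near the diagonal in the chart $\tilde\Omega_{j'}$, is itself the quantization of a symbol $a_{j',\mathrm{reg}}\in\mathcal S(\mathbb R^{4d};\mathbb C)\subset S^{-\infty}_\Psi$ (just take the inverse Fourier transform in $x-x'$). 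Likewise the Gauss transform $b_{j'}=e^{iD_\xi D_{q'}}[\varrho_{j'}(q)a(x,\Xi)\Theta_1(q,q')\chi_{j'}(q')]\big|_{q'=q}$ is an \emph{exact} identity, not ``modulo $\mathcal S$'', so no $R'$ appears. This is what makes the compatibility $\varrho_{j'}(b_j+b_{j,\mathrm{reg}})=\varrho_j(b_{j'}+b_{j',\mathrm{reg}})$ hold on the nose and allows the global gluing of $a_A$.

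Your uniqueness paragraph is also misdirected. Uniqueness of $(A_{\mathrm{diag}},A_{\mathrm{off}})$ is immediate from the fact, stated just before the proposition, that $K\mapsto(K_{\mathrm{diag}},K_{\mathrm{off}})$ is a bijection onto its image; once $A_{\mathrm{diag}}$ is shown to equal $\sum_j(\varrho_j a_A)(x,D_x)\chi_j$, the symbol $a_A$ is determined by inverting the quantization locally. Your argument about a hypothetical equality $\sum_j(\varrho_j a_A)(x,D_x)\chi_j=R_1\in\mathcal R$ forcing $a_A\in\mathcal S$ is addressing a different question and is not needed.
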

\begin{remark}
\label{re:local}
When $A$ is a differential operator or more generally a local operator with respect to $q$-variable, then we can write $A=A_{\mathrm{diag}}$ with a vanishing remainder $R_A=0$\,.
\end{remark}
\begin{proof}
  The decomposition of $A=\sum_{j=1}^{J}(\varrho_{j}(q)a)(x,D_{x})\circ \chi_{j}(q)+R=A_{a}+R$
  \begin{eqnarray*}
    &&A=(A_{a}+R)_{\mathrm{diag}}+(A_{a}+R)_{\mathrm{off}}=A_{a,\mathrm{diag}}+R_{\mathrm{diag}}+A_{a,\mathrm{off}}+R_{\mathrm{off}}\,,\\
    \text{with}&&R_{\mathrm{off}}(x,x')=\Theta_{2}(q,q')R(x,x')\in \mathcal{S}(T^{*}Q\times T^{*}Q;\mathbb{C})\,,\\
    && A_{a,\mathrm{off}}(x,x')=\sum_{j=1}^{J}A_{j,\mathrm{off}}(x,x')\quad,\quad
       A_{j,\mathrm{off}}(x,x')=\Theta_{2}(q,q')[(\varrho_{j}(q)a)(x,D_{x})\circ \chi_{j}(q)](x,x')\\
    && R_{\mathrm{diag}}=\sum_{j=1}^{J}\varrho_{j}(q) \circ \left[\sum_{j'=1}^{J}(\varrho_{j'}\tilde{\chi}_{j'})(q)\circ R\right]_{\mathrm{diag}}\circ \chi_{j}(q)\,,\\
    \text{and}&&
                 A_{a,\mathrm{diag}}=
                 \sum_{j=1}^{J}\varrho_{j}(q)\circ \left[\sum_{j'=1}^{J}[(\varrho_{j'}\tilde{\chi}_{j'})(q)a](x,D_{x})\right]_{\mathrm{diag}}\circ \chi_{j}(q)\,.
  \end{eqnarray*}
  The kernel of $A_{j,\mathrm{off}}$ with coordinates in $\Omega_{j}$ is
  $$
\int_{\mathbb{R}^{2d}}e^{i[\xi.(q-q')+\eta.(p-p')}\Theta_{2}(q,q')\varrho_{j}(q)a(q,p,\xi,\eta)\chi_{j}(q')~\frac{d\xi d\eta}{(2\pi)^{d}}\,.
$$  
A non stationary phase argument with $\frac{(q-q')}{|q-q'|^{2}}D_{\xi}e^{i[\xi.(q-q')+\eta.(p-p')}=e^{i[\xi.(q-q')+\eta.(p-p')]}$ with the factors $\varphi_{j}(q)\theta_{j}(q')$ implies that the map
  $a\mapsto A_{j,\mathrm{off}}(x,x')$ is continuous from
  $S(\Psi^{m},g_{\Psi})$ to $\mathcal{S}(\mathbb{R}^{4d};\mathbb{C})$\,. Again with the controlled support, the map $a\mapsto A_{j,\mathrm{off}}$ is continuous from $S^{m}_{\Psi}(Q;\mathbb{C})$ to $\mathcal{R}(Q;\mathbb{C})$\,.\\
  This proves that the map
  $$
(a,A)\mapsto A_{\mathrm{off}}=R_{\mathrm{off}}+\sum_{j=1}^{J}A_{j,\mathrm{off}}
$$
is continuous from $S^{m}_{\Psi}(Q;\mathbb{C})\times \mathcal{R}(Q;\mathbb{C})$ to $\mathcal{R}(Q;\mathbb{C})$\,.\\
For the diagonal part, let us first notice at the operator level that the sum with respect to $j'$ is introduced for
$$
\varrho_{j'}(q)\left[(\varrho_{j}\tilde{\chi}_{j})(q)M\right]_{\mathrm{diag}}
  =\varrho_{j'}(q)\varrho_{j}(q)\tilde{\chi}_{j}(q)\tilde{\chi}_{j'}(q)[M]_{\mathrm{diag}}\chi_{j}(q)\chi_{j'}(q)
  $$
  with $M=R$ or $M=(\tilde{\chi}_{j}\tilde{\chi}_{j'}(q)a)(x,D_{x})$\,.\\
  It remains to identify these operators as the quantization of  symbols.
Every term indexed by $(j,j')$ for a fixed $j'\in \left\{1,\ldots,J\right\}$, has a kernel with a $Q\times Q$ support that is compact in $\tilde{\Omega}_{j'}\times\tilde{\Omega}_{j'}$\,. We choose coordinates in $\tilde{\Omega}_{j'}$ in order to compare the terms of the double sum for different values of $j\in \left\{1,\ldots,J\right\}$\,. For $[(\varrho_{j'}\tilde{\chi}_{j'})(q)R]_{\mathrm{diag}}$ the kernel
$$
(\varrho_{j'}\tilde{\chi}_{j'})(q)R(x,x')\Theta_{1}(q,q')=\varrho_{j'}(q)R(x,x')\Theta_{1}(q,q')\chi_{j'}(q')
$$
belongs to $\mathcal{S}(\Omega_{j'}\times \Omega_{j}\times \mathbb{R}^{2d};\mathbb{C})$\,. With the factors $\varrho_{j'}(q)\chi_{j'}(q')$\,, it can be written $a_{j',\mathrm{reg}}(x,D_{x})$ with $a_{j',\mathrm{reg}}\in \mathcal{S}(\mathbb{R}^{4d};\mathbb{C})\subset S(\Psi^{-\infty},g_{\Psi})$\,.\\
For $A_{a,\mathrm{diag}}$\,, the kernel is localized in the same way and equals
$$
\int_{\mathbb{R}^{2d}}e^{i[\xi.(q-q')+\eta.(p-p')]}\varrho_{j'}(q)a(q,p,\xi,\eta)\Theta_{1}(q,q')\chi_{j'}(q')\frac{d\xi d\eta}{(2\pi)^{2d}}\,,
$$
We obtain
$$
\left[[\varrho_{j'}\tilde{\chi}_{j'}(q)a](x,D_{x})\right]_{\mathrm{diag}}=b_{j'}(x,D_{x})
$$
where $b_{j'}$ is given by the Gauss transform
$$
b_{j'}(x,\xi)=e^{iD_{\Xi}.D_{x'}}\varrho_{j'}(q)a(q,p,\xi,\eta)\Theta_{1}(q,q')\chi_{j'}(q')\big|_{X=X'}=e^{iD_{\xi}.D_{q'}}\varrho_{j'}(q)a(q,p,\xi,\eta)\Theta_{1}(q,q')\chi_{j'}(q')\big|_{q'=q}\,,
$$
where the variables $(p,\eta)$ are now simple parameters.
We deduce that the map $a\mapsto b_{j'}$ is continuous from $S(\Psi^{m},g_{\Psi})$ to $S(\Psi^{m},g_{\Psi})$\,. With $b_{j'}(x,D_{x})=\tilde{\chi}_{j'}(q)b_{j'}(x,D_{x})$ we deduce that $b_{j'}\in S^{m}_{\Psi,\tilde{\Omega}_{j}-\mathrm{comp}}(\tilde{\Omega}_{j};\mathbb{C})\subset S^{m}_{\Psi}(Q;\mathbb{C})$\,.\\
So we have written
\begin{eqnarray*}
  && R_{\mathrm{diag}}=\sum_{j=1}^{J} (\varrho_{j}(q)b_{j,\mathrm{reg}})(x,D_{x})\circ \chi_{j}(q)\quad,\quad A_{\mathrm{diag}}=\sum_{j=1}^{J}(\varrho_{j}(q)b_{j})(x,D_{x})\circ \chi_{j}(q)\\
  \text{with}&&
                b_{j,\mathrm{reg}}\in S^{-\infty}_{\Psi}(Q;\mathbb{C})\quad,\quad b_{j}\in S^{m}_{\Psi}(Q;\mathbb{C})\\
  \text{and}&&
               \forall j,j'\in \left\{1,\ldots,J\right\}\,,\quad
               \varrho_{j'}(q)(b_{j}+b_{j,\mathrm{reg}})=\varrho_{j}(q)(b_{j'}+b_{j',\mathrm{reg}})\,,
  \end{eqnarray*}
The last identity follows the same strategy as at the operator level except that we consider only left multiplications by functions of $q$\,, which commute with the Gauss transform.
\end{proof}
\begin{definition}
  \label{de:globaltop}
  According to Proposition~\ref{pr:candecomp} the  topology on $\mathrm{OpS}^{m}_{\Psi}(Q;\mathbb{C})$ is equivalently defined by the family of (semi)norms $(q_{m,k})_{k\in\mathbb{N}}$ and $(\tilde{q}_{m,k})_{k\in\mathbb{N}}$ with
  $$
  q_{m,k}(A)= p_{m,k}(a_{A})+q_{k}(R_{A})\quad \text{with}~A=\underbrace{\sum_{j=1}^{J}\varrho_{j}(q)a_{A}(x,D_{x})\circ \chi_{j}(q)}_{=A_{\mathrm{diag}}} +\underbrace{R_{A}}_{=A_{\mathrm{off}}}
  $$
  and
  $$
\tilde{q}_{m,k}(A)=\mathrm{inf}\left\{p_{m,k}(a)+q_{k}(R)\,, A=\sum_{j=1}^{J}(\varrho_{j}(q)a)(x,D_{x})\circ \chi_{j}(q)+R\,, a\in S^{m}_{\Psi}(Q;\mathbb{C}, R\in \mathcal{R}(Q;\mathbb{C}))\right\}\,.
$$
\end{definition}
\begin{definition}
  \label{de:ellipticity}
  We now write $a_{\varrho,\chi}(x,D_{x})=\sum_{j=1}^{J}(\varrho_{j}(q)a)(x,D_{x})\circ \chi_{j}(q)$ for $a\in S^{m}_{\Psi}(Q;\mathbb{C})$\,.
  A symbol $a\in S^{m}_{\Psi}(Q;\mathbb{C})$ is said to be elliptic if there exists $\kappa\geq 1$ such that $|a|\geq \frac{1}{\kappa}\Psi^{m}$ for $\Psi\geq \kappa$\,. An operator $A=a_{\varrho,\chi}(x,D_{x})+R\in \mathrm{OpS}^{m}_{\Psi}(Q;\mathbb{C})$ is said to be elliptic if it admits a symbol $a$ which is elliptic.
\end{definition}
The product of two operators $A=a_{\varrho,\chi}(x,D_{x})+R, A'=a'_{\varrho,\chi}(x,D_{x})+R'\in \mathrm{OpS}^{m}_{\Psi}(Q;\mathbb{C})$ equals
$$
A\circ A'=a_{\varrho,\chi}(x,D_{x})\circ a'_{\varrho,\chi}(x,D_{x}) + \underbrace{a_{\varrho,\chi}(x,D_{x})\circ R'+R\circ a_{\varrho,\chi}(x,D_{x})+R\circ R'}_{\in \mathcal{R}(Q;\mathbb{C})}
$$
and the treatment of every non vanishing term $\varrho_{j}(q)a(x,D_{x})\chi_{j}(q)\varrho_{j'}(q)a(x,D_{x})\chi_{j'}(q)$ of the product $a_{\varrho,\chi}(x,D_{x})\circ a'_{\varrho,\chi}(x,D_{x})$ can be studied in the chart open set $\tilde{\Omega}_{j}$ with the expansion of the Gauss transform $e^{iD_{\Xi_{1}}.D_{x_{2}}}a_{1}(X_{1})a_{2}(X_{2})\big|_{X_{1}=X_{2}=X}$ in $S(\Psi^{mm'},g_{\psi})$\,.
We deduce that
$$
A\circ A'=\sum_{n=1}^{N}b_{n}(a,a')_{\varrho,\chi}(x,D_{x})+R_{N+1}(A,A')
$$
with $p_{m+m'-n,k}(b_{n}(a,a'))\leq C_{m,m',n,k}p_{m,\ell_{n,k}}(a)p_{m',\ell_{n,k}}(A')$ and the remainder estimated by
$$
q_{m+m'-N-1,k}(R_{N+1}(A,A'))\leq
C'_{m,m',N+1,k}q_{m,\ell_{N+1,k}}(A)q_{m,\ell_{N+1,k}}(A')\,,
$$
when $p_{m,\ell}(a)\leq C_{m,\ell} q_{m,\ell}(A)$ and $p_{m,\ell}(a')\leq C_{m,\ell} q_{m,\ell}(A')$\,.
According to Remark~\ref{re:local}, differential operators provide a wide family of examples where the latter condition holds true. And this can be extended for operators of which the Schwartz kernel is explicitly localized in a small neighborhood of the $Q$-diagonal.\\
The rough version of this continuity property says
$$
A\circ A'=\sum_{n=1}^{N}B_{n}(A,A')+R_{N+1}(A,A')
$$
with $q_{m+m'-n,k}(B_{n}(A,A'))\leq C_{m,m',n,k}q_{m,\ell_{n,k}}(a)q_{m',\ell_{n,k}}(A')$\,, and $$
q_{m+m'-N-1,k}(R_{N+1}(A,A'))\leq
C'_{m,m',N+1,k}q_{m,\ell_{N+1,k}}(A)q_{m,\ell_{N+1,k}}(A')\,.
$$
Similarly for a vector bundle isomorphism $\Phi:T^{*}Q\to T^{*}Q$\,, the conjugation by the associated unitary transform $A=a_{\varrho,\chi}(x,D_{x})+R\mapsto U_{\Phi}AU_{\Phi}^{-1}$ can be written
$$
U_{\Phi}AU_{\Phi}^{-1}=\sum_{n=1}^{N}[b_{n,\Phi}(a)]_{\varrho,\chi}(x,D_{x})+R_{\Phi,N+1}(A)
$$
with continuity estimates gathered from the local model treated in Proposition~\ref{pr:quantchang}. It can be written more roughly as
\begin{eqnarray*}
  && U_{\Phi}AU_{\Phi}^{-1}=\sum_{n=1}^{N}B_{n,\Phi}(A)+R_{\Phi,N+1}(A)\\
  \text{with}&& q_{m-n,k}(B_{n,\Phi}(A))\leq C_{\Phi,m,k}q_{m,\ell_{n,k}}(A)\\
  \text{and}&& q_{m-N-1,k}(R_{\Phi,N+1}(A))\leq C'_{\Phi,m,N+1}q_{m,\ell_{N+1},k}(A)\,.
\end{eqnarray*}
Because the function $(q,q')\mapsto\tilde{\chi}_{j}(q)\tilde{\chi}_{j}(q')\left[\sum_{\Omega_{j'}\cap \Omega_{j}\neq \emptyset}\varrho_{j'}(q)\Theta_{1}(q,q')\chi_{j'}(q')-\chi_{j'}(q)\Theta_{1}(q,q')\varrho_{j'}(q')\right]$, which is symmetric if $\Theta_{1}(q,q')=\Theta_{1}(q',q)$\,, has a compact support away from the diagonal $\Delta$\,, the decomposition of the formal adjoint can be reduced to the
local model with the formula $b(x,D_{x})^{*}=[e^{iD_{\Xi}.D_{x}}\overline{b}](x,D_{x})$\,.  The formal adjoint $A^{*}$ of the operator $A=a_{\varrho,\chi}(x,D_{x})+R\in \mathrm{OpS}^{m}_{\Psi}(Q;\mathbb{C})$ can be written
$$
A^{*}=\sum_{n=0}^{N}[b_{n}]_{\varrho,\chi}(x,D_{x})+R_{N+1}(A)=\sum_{n=0}^{N}B_{N}(A)+R_{N+1}(A)
$$
with  $b_{0}=\overline{a}$ and the estimates $p_{m-n,k}(b_{n})\leq C_{m,n,k}p_{m,\ell_{n,k}}(a)$ and  $q_{m-n,k}(B_{n})\leq C_{m,n,k}q_{m,\ell_{n,k}}(A)$\, and $q_{m-N-1,k}(R_{N+1})\leq C_{m,N,k}q_{m,\ell_{N+1},k}(A)$\,.\\
All the classical estimates can be decomposed in this way by going back to the $Q=\mathbb{R}^{d}$-model. The condition $p_{m,\ell}(a)\leq C_{m,\ell}q_{m,\ell}(A)$ holds true if one starts with an operator $A=a_{\varrho,\chi}(x,D_{x})+0$\,, exactly given by the local quantization rule with no regularizing global remainder, and $a\not\in S^{-\infty}_{\Psi}(Q;\mathbb{C})$\,. For example this is the case if $A$ is a differential operator. The estimates are then propagated via the operations\,.\\
The local Calderon-Vaillancourt theorem and our choice of the norm $q_{k}$ on
$\mathcal{R}(Q;\mathbb{C})$ gives at once the existence of a $k_{d}\in \mathbb{N}$ determined by the dimension of $Q$\,, such that $\|A\|_{\mathcal{L}(L^{2})}\leq C q_{0,k_{d}}(A)$\,.\\
Similarly the Garding inequality says that if $A\in S^{m}_{\Psi}(Q;\mathbb{C})$ has an elliptic non negative symbol $a\geq \frac{1}{\kappa}\Psi^{m}$ for $\Psi\geq \kappa$\,,   there exists $C_{\kappa}\geq 1$ and $k_{1}\in \mathbb{N}$ such that
$$
\forall u\in \mathcal{S}(T^{*}Q;\mathbb{C})\,,\quad \mathrm{Re}~\langle u\,,\, A u\rangle_{L^{2}}\geq \frac{1}{C_{\kappa}}\|u\|^{2}_{\tilde{\mathcal{W}}^{m/2}}-C_{\kappa}q_{m,k_{1}}(A)\|u\|_{\tilde{\mathcal{W}}^{(m-1)/2}}^{2}\,.
$$
All these properties extend to $\mathrm{OpS}^{m}_{\Psi}(Q;\mathrm{End}\,\mathcal{E})$ with the following constraint for the symbol of the adjoint: The reduction to $a\in S^{m}_{\Psi, \Omega-\mathrm{comp}}(\Omega;\mathcal{M}_{d}(\mathbb{C}))$ is done by chosing $\tilde{\Omega}_{j}$ such that the vector bundle $E\big|_{\tilde{\Omega}_{j}}$ admits an orthonormal frame $(f_{j}^{1},\ldots,f_{j}^{N})$ for the metric $g_{E}$\,. Then the adjoint of $A=a_{m,\varrho,\chi}(x,D_{x})+A_{m-1}\in \mathrm{OpS}^{m}_{\Psi}(Q;\mathrm{End}\,\mathcal{E})$ can be written $$A^{*}=(a_{m}^{*})_{\varrho,\chi}(x,D_{x})+A'_{m-1}\quad\text{with}\quad A'_{m-1}\in \mathrm{OpS}^{m-1}_{\Psi}(Q;\mathrm{End}\,\mathcal{E})$$ and this property is invariant by a change of orthonormal frame.\\
Actually if $U(q)\in \mathcal{U}_{N}(\mathbb{C})$ is the unitary matrix which represent another orthonormal frame $(\tilde{f}_{j}^{N},\ldots,\tilde{f}_{j}^{N})$ in the frame $(f_{j}^{N},\ldots,f_{j}^{N})$ for $E\big|_{\tilde{\Omega}_{j}}$\,, the symbol of $a_{m,\varrho,\chi}(x,D_{x})+A_{m-1}$ equals
$$
b_{m}(x,\Xi)=U(q)a_{m}(x,\Xi)U^{-1}(q)=U(q)a_{m}(x,\Xi)U^{*}(q)\quad\text{with}\quad b_{m}^{*}(x,\Xi)=U(q)a_{m}^{*}(x,\Xi)U^{*}(q)\,.
$$
The norms $(q_{m,k})_{k\in \mathbb{N}}$ are very convenient for handling the ellipticity as it is done in the case of the global pseudo-differential calculus on $\mathbb{R}^{d}$\,.
We focus here on the case of non negative elliptic operators.
\begin{proposition}
  \label{pr:HeSj}~\\
  Let $A\in \mathrm{OpS}^{m}_{\Psi}(Q;\mathrm{End}\,\mathcal{E})$\,, $m>0$\,, be an elliptic operator with
  $A=(a_{m}\otimes \mathrm{Id}_{\mathcal{E}})_{\varrho,\chi}(x,D_{x})+A_{m-1}$\,,  $a_{m}\geq \frac{1}{\kappa}\Psi^{m}$ for $\Psi\geq \kappa$\,, $A_{m-1}\in \mathrm{OpS}^{m-1}_{\Psi}(Q;\mathrm{End}\,\mathcal{E})$\,. If additionally $A$ is symmetric on $\mathcal{S}(T^{*}Q;\mathcal{E})$ then it is self-adjoint with $D(A)=\tilde{\mathcal{W}}^{m}(T^{*}Q;\mathcal{E})$\,, bounded from below, and its resolvent is compact.\\
In the case when $m=2$\,, if $A=a_{\varrho,\chi}(x,D_{x})+R\in \mathrm{OpS}^{2}_{\Psi}(Q;\mathrm{End}\,\mathcal{E})$ fulfills the above conditions,  then for every  $f\in S(\langle t\rangle^{s},\frac{dt^{2}}{\langle t\rangle^{2}})$\,, $s\in \mathbb{R}$\,, the operator $f(A)-f(a_{2})_{\varrho,\chi}(x,D_{x})$ belongs to $\mathrm{OpS}^{2s-1}_{\Psi}(Q;\mathrm{End}\,\mathcal{E})$ while $f(a_{2})\in S^{2s}_{\Psi}(Q;\mathrm{End}\,\mathcal{E})$\,.
\end{proposition}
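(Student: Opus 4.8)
The plan is to establish Proposition~\ref{pr:HeSj} in two stages, first the self-adjointness and spectral statement, then the Helffer-Sj\"ostrand functional calculus claim in the case $m=2$. For the first stage I would argue as follows. Ellipticity of $a_m$ (with $a_m\otimes\mathrm{Id}_{\mathcal E}$ as principal symbol) together with property \textbf{e)} of the pseudodifferential calculus gives a parametrix $B\in \mathrm{OpS}^{-m}_{\Psi}(Q;\mathrm{End}\,\mathcal E)$ with $BA\sim AB\sim \mathrm{Id}$ modulo $\mathcal R(Q;\mathcal E)$; combined with the Sobolev mapping properties coming from the fact that $W^{s}$ is elliptic in $\mathrm{OpS}^{s}_{\Psi}$ (Proposition~\ref{pr:characWs}, Proposition~\ref{pr:identWsE}), this shows that $A:\tilde{\mathcal W}^{m}(T^*Q;\mathcal E)\to L^2(X,dqdp;\mathcal E)$ is bounded with closed range and finite-dimensional cokernel, and that $Au\in L^2$ forces $u\in\tilde{\mathcal W}^{m}$. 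Hence the maximal domain of $A$ is exactly $\tilde{\mathcal W}^{m}(T^*Q;\mathcal E)$. Since $A$ is symmetric on the core $\mathcal S(T^*Q;\mathcal E)$ (which is dense in $\tilde{\mathcal W}^m$ for the graph norm, by the $N$-comp density statements and the definition of the $\tilde{\mathcal W}^s$ spaces), and since $A\pm i\lambda$ is elliptic in $\mathrm{OpS}^{m}_{\Psi}$ for $\lambda$ large hence invertible modulo $\mathcal R(Q;\mathcal E)$, a standard argument (the deficiency indices vanish because $(A\pm i)^{*}u=0$ with $u\in L^2$ forces $u\in\mathcal S$ by elliptic regularity and then $u=0$ by accretivity of $A\pm i$) gives essential self-adjointness on $\mathcal S(T^*Q;\mathcal E)$, hence self-adjointness with $D(A)=\tilde{\mathcal W}^m$. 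The lower bound follows from the Garding inequality quoted just above the proposition, applied to $A+C$ for a large constant $C$. Compactness of the resolvent follows from the compact embedding $\tilde{\mathcal W}^m(T^*Q;\mathcal E)\hookrightarrow L^2$, which itself is a consequence of the Rellich-type argument: the weight $\Psi^{-m}$ tends to $0$ at infinity in $T^*X$, so $(A-z)^{-1}=B_z^W$ with $B_z\in S(\Psi^{-m},g_\Psi)$ is compact by the standard criterion for operators with symbol decaying at infinity in a H\"ormander class.

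For the second stage, with $m=2$, the plan is to follow the scheme of Bony's Theorem 3.8 reported in Proposition~\ref{pr:Bealsfunction}, but now in the global-on-$Q$ class $\mathrm{OpS}^{2s}_{\Psi}(Q;\mathrm{End}\,\mathcal E)$ rather than the translation-invariant class $S(\Psi^s,g_\Psi)$ on $\R^{2d}$. The key input is the resolvent estimate: I would show that for $z$ off the spectrum $(z-A)^{-1}\in\mathrm{OpS}^{-2}_{\Psi}(Q;\mathrm{End}\,\mathcal E)$ with seminorm bounds $q_{-2,k}((z-A)^{-1})\le C_k\langle z\rangle^{N_k}/|\mathrm{Im}\,z|^{N_k}$, using the parametrix construction: $(z-A)\bigl(\tfrac{1}{z-a_2}\bigr)_{\varrho,\chi}(x,D_x)=\mathrm{Id}+R_z$ with $R_z\in\mathrm{OpS}^{-1}_{\Psi}(Q;\mathrm{End}\,\mathcal E)$ and the same type of seminorm bounds on $R_z$ (a degree gained per $|\mathrm{Im}\,z|^{-1}$), then a Neumann series, exactly as in the final paragraph of the proof of Proposition~\ref{pr:Bealsfunction} but with the composition and seminorm continuity estimates now taken from the globalized calculus discussed in Subsection~\ref{sec:global}. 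Then one inserts this into the Helffer-Sj\"ostrand formula $f(A)=-\tfrac1\pi\int_{\C}\bar\partial \tilde f(z)\,(z-A)^{-1}\,dL(z)$ for an almost-analytic extension $\tilde f$ of $f\in S(\langle t\rangle^s,\tfrac{dt^2}{\langle t\rangle^2})$; the polynomial-in-$z$ seminorm control together with $|\bar\partial\tilde f(z)|=\mathcal O(|\mathrm{Im}\,z|^{M}\langle z\rangle^{s-1-M})$ for any $M$ makes the integral converge in each seminorm of $\mathrm{OpS}^{2s}_{\Psi}(Q;\mathrm{End}\,\mathcal E)$, giving $f(A)\in\mathrm{OpS}^{2s}_{\Psi}$. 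Writing $f(A)-f(a_2)_{\varrho,\chi}(x,D_x)=-\tfrac1\pi\int\bar\partial\tilde f(z)\,[(z-A)^{-1}-(\tfrac1{z-a_2})_{\varrho,\chi}(x,D_x)]\,dL(z)$ and noting the bracket lies in $\mathrm{OpS}^{-3}_{\Psi}$ with the right $z$-dependence (one order gained from the parametrix error, as in Bony), the integral converges in $\mathrm{OpS}^{2s-1}_{\Psi}(Q;\mathrm{End}\,\mathcal E)$, which is the claimed refinement. For $s\ge 0$ one reduces to $s<0$ by the trick $f(A)=(i+A)^{N}f_N(A)$ with $f_N(t)=(i+t)^{-N}f(t)\in S(\langle t\rangle^{s-N},\tfrac{dt^2}{\langle t\rangle^2})$ and $N$ large, using that $(i+A)^N\in\mathrm{OpS}^{2N}_{\Psi}$ with principal symbol $(i+a_2)^N$ and composition rules.

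The main obstacle I expect is the bookkeeping of the $z$-dependent seminorm estimates in the globalized calculus: on $\R^{2d}$ one has a clean single H\"ormander metric $g_\Psi$ and the Beals criterion, whereas on $Q$ the norms $q_{m,k}$ are defined through the canonical decomposition of Proposition~\ref{pr:candecomp}, involving the partition of unity, the cut-offs $\chi_j$, and off-diagonal regularizing remainders. One must check that the composition estimates $q_{m_1+m_2-n,k}(B_n(A,A'))\le C q_{m_1,\ell}(A)q_{m_2,\ell}(A')$ and remainder estimates recalled at the end of Subsection~\ref{sec:global} propagate through the Neumann series with at most polynomial loss in $\langle z\rangle/|\mathrm{Im}\,z|$, and in particular that the global remainder $R_A\in\mathcal R(Q;\mathcal E)$ part of the parametrix construction does not destroy the $z$-estimates --- here one uses that $\mathcal R(Q;\mathcal E)$ elements map $\tilde{\mathcal W}^{-k}$ to $\tilde{\mathcal W}^{k}$ with norms controlled uniformly, and that they are themselves absorbed into lower-order pieces of $\mathrm{OpS}^{-\infty}_{\Psi}$. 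A secondary, more routine, point is that for the $\mathrm{End}\,\mathcal E$-valued calculus the adjoint symbol is only functorial in orthonormal frames (as noted in the text), so one must be careful that $f(a_2)$, being a scalar function of a scalar principal symbol times $\mathrm{Id}_{\mathcal E}$, causes no trouble; the ellipticity hypothesis $a_m=a_m\otimes\mathrm{Id}_{\mathcal E}$ with scalar $a_m$ is exactly what makes this harmless. None of these are conceptually new --- the substance is entirely Bony's argument --- so the proof will mostly consist of citing the calculus properties already set up in the appendix and invoking Proposition~\ref{pr:Bealsfunction} locally, then patching.
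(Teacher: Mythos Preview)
Your proposal is correct and follows essentially the same route as the paper: the first part is dismissed as standard, and the second part is the Helffer--Sj\"ostrand argument with a parametrix for $(z-A)^{-1}$, finite iteration, and seminorm control polynomial in $\langle z\rangle/|\mathrm{Im}\,z|$. The only notable technical difference is that the paper takes a two-term parametrix $B_z=\bigl[\tfrac{1}{z-a_2}+\tfrac{a_1}{(z-a_2)^2}\bigr]_{\varrho,\chi}(x,D_x)$ so that the error $r_L(z)$ lands directly in $\mathrm{OpS}^{-2}_\Psi$ (rather than your one-term parametrix with $R_z\in\mathrm{OpS}^{-1}_\Psi$), and then sandwiches the a~priori resolvent between $r_L(z)^{M+1}$ and $r_D(z)^{M+1}$ to push the non-pseudodifferential remainder into $\mathcal L(\tilde{\mathcal W}^{-M+c_d};\tilde{\mathcal W}^{M-c_d})$; this lets them settle the case $s\in[-3/2,0[$ cleanly before bootstrapping to general $s$ via $\langle t\rangle^s=\langle t\rangle^{s_1 n_1}$ and $\langle t\rangle^{2N}\langle t\rangle^{s'}$, which is the same reduction you invoke with $(i+A)^N f_N(A)$.
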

\begin{proof}
  The first results are the standard ones.\\
We just show how Helffer-Sjöstrand formula can be used in this framework and we focus on the case $m=2$\,. We write $a$ in the form $a=a_{2}\otimes\mathrm{Id}_{\mathcal{E}}+a_{1}$ with $a_{1}\in S^{1}_{\Psi}(Q;\mathrm{End}\,\mathcal{E})$ and $a_{2}\otimes \mathrm{Id}_{\mathcal{E}}$ is simply written $a_{2}$\,.\\
  By the Leibniz formula applied to $1=(z-a_{2})^{-1}\times(z-a_{2})$ for $z\in \mathbb{C}\setminus \mathbb{R}$\,, the seminorms  $p_{-2,k}(\frac{1}{z-a_{2}})$ are estimated by
$$
p_{-2,k}\left(\frac{1}{z-a_{2}}\right)\leq C_{k}p_{2,k}(a) \frac{\langle z\rangle^{k}}{|\mathrm{Im}\, z|^{k+1}}\,.
$$
By taking $B_{z}=\left[\frac{1}{z-a_{2}}+\frac{a_{1}}{(z-a_{2})^{2}}\right]_{\varrho,\chi}(x,D_{x})+0$\,, the second term of the expansion  of  $B_{z}\circ (z-A)$\,, to be studied in $S^{1}_{\Psi}(Q;\mathrm{End}\,\mathcal{E})$\,, can be reduced to
$$
-\frac{1}{(z-a_{2})^{2}}a_{1}\circ a_{1}+
\sum_{|\alpha_{1}|+|\alpha_{2}|= 1}\frac{1}{\alpha_{1}!\alpha_{2}!}\partial_{\xi}^{\alpha_{1}}\partial_{\eta}^{\alpha_{2}}\frac{1}{(z-a_{2})}.\partial_{q}^{\alpha_{1}}\partial_{p}^{\alpha_{2}}[a_{2}+a_{1}]
$$
and it is of order $-2$ with seminorms estimated like the seminorms $p_{-2,k}(\frac{1}{z-a_{2}})$\,. We deduce
$$
B_{z}\circ (z-A)=zB_{z} -B_{z}\circ A= (1-r_{L}(z))
$$
with  $q_{-2,k}(r_{L}(z))\leq C_{k}\frac{\langle z\rangle^{N_{k}}}{|\mathrm{Im}\,z|^{N_{k}+1}}$\,.
By left multiplying with $\sum_{n=0}^{M}r_{L}(z)^{n}$ we obtain
$$
\sum_{n=0}^{M}r_{L}(z)^{n}\circ B_{z}\circ (z-A)=1-r_{L}(z)^{M+1}\,,
$$
with $q_{(-M-1)2,k}[r_{L}(z)^{M+1}]\leq C_{M,k}\frac{\langle z\rangle^{N_{M,k}}}{|\mathrm{Im}\,z|^{N_{M,k}+1}}$ and $\|r_{L}(z)^{M+1}\|_{\mathcal{L}(L^{2};\widetilde{\mathcal{W}}^{(Mm-c_{d})})}\leq C'_{M}\frac{\langle z\rangle^{N_{M}}}{|\mathrm{Im}\,z|^{N_{M}+1}}$\,.
In the right-hand side of 
$$
(z-A)^{-1}=\sum_{n=0}^{M}r_{L}(z)^{n}\circ B(z)+(z-A)^{-1}\circ r_{L}(z)^{M+1}\,,
$$
all terms except the remainder term $(z-A)^{-1}\circ r_{L}(s)^{M+1}$ are known to be pseudo-differential operators $r_{L}(z)^{n}\circ B(z)\in \mathrm{OpS}^{-2(n+1)}_{\Psi}(Q;\mathrm{End}\,\mathcal{E})$ and $q_{-2(n+1),k}(r_{L}(z)^{n}\circ B(z))\leq C_{k}\frac{\langle z\rangle^{N_{n,k}}}{|\mathrm{Im}\,z|^{N_{n,k}+1}}$\,.
But we can do the same for the right-multiplication and obtain similarly:
$$
(z-A)^{-1}=\sum_{n=0}^{M}B_{z}\circ r_{D}(z)^{n}+r_{D}(z)^{M+1}(z-A)^{-1}\,,
$$
with the same upper bounds.\\
We deduce
\begin{eqnarray}
\label{eq:resoexp}  &&
\hspace{-2.3cm}(z-A)^{-1}=\sum_{j=1}^{J}\left[\varrho_{j}(q)(\frac{1}{z-a_{2}}+\frac{a_{1}}{(z-a_{2})^{2}})\right](x,D_{x})\chi_{j}(q)+\sum_{n=1}^{2M}b_{n}(z)+\underbrace{r_{D}(z)^{M+1}(z-A)^{-1}r_{L}(z)^{M+1}}_{=r_{M}(z)}
  \\
\nonumber
  \text{with}&&
                q_{-2(n+1),k}(b_{n})\leq C_{n,k}
                 \frac{\langle z\rangle^{N_{n,k}}}{|\mathrm{Im}\,z|^{N_{n,k}+1}}\quad,\quad
                \|r_{M}\|_{\mathcal{L}(\widetilde{\mathcal{W}}^{-M+c_{d}},\widetilde{\mathcal{W}}^{M-c_{d}})}
                 \leq C_{M}
                 \frac{\langle z\rangle^{N_{M}}}{|\mathrm{Im}\,z|^{N_{M}+2}}\,.
\end{eqnarray}
Inserting \eqref{eq:resoexp} into the Helffer-Sjöstrand formula (see \cite{HeSj}\cite{DiSj}) gives
\begin{eqnarray*}
  &&f(A)=\frac{1}{2i\pi}\int_{\mathbb{C}}\partial_{\bar{z}}\tilde{f}(z)(z-A)^{-1}~dz\wedge d\bar{z}\\
  \text{while}&&
                 f(a_{2})=\frac{1}{2i\pi}\int_{\mathbb{C}}\partial_{\bar{z}}\tilde{f}(z)(z-a_{2})^{-1}~dz\wedge d\bar{z}\quad,\quad
                 f'(a_{2})=\frac{1}{2i\pi}\int_{\mathbb{C}}\partial_{\bar{z}}\tilde{f}(z)(z-a_{2})^{-2}~dz\wedge d\bar{z}\,,\\
  \text{with}&&
                \tilde{f}\in \mathcal{C}^{\infty}(\mathbb{C};\mathbb{C})\quad,\quad \mathrm{supp}\,\tilde{f}\subset\left\{z\in \mathbb{C}\,,\;|\mathrm{Im}\,z|\leq C_{f}\langle z\rangle \right\}\quad,\quad \tilde{f}\big|_{\mathbb{R}}=f\,,\\
  \text{and}&& \forall N\in\mathbb{N}\,,\,\exists C_{N}>0\,,\,
  |\partial_{\bar z}\tilde{f}(z)|\leq C_{N}\frac{|\mathrm{Im}\, z|^{N}}{\langle z\rangle^{N}}\langle z\rangle^{s-1}
\end{eqnarray*}
when $f\in S(\langle t\rangle^{s}, \frac{dt^{2}}{\langle t\rangle})$\,, $s<0$\,, we obtain by integration of the respective terms by choosing $N\geq \max\left\{N_{n,k},N_{M}\right\}$
$$
f(A)=[f(a_{2})]_{\varrho,\chi}(x,D_{x})+ [f'(a_{2})a_{1}]_{\varrho,\chi}(x,D_{x})+
\sum_{n=1}^{2M}[\beta_{n}]_{\varrho,\chi}(x,D_{x})+R_{M}
$$
with $\beta_{n}\in S^{-2(n+1)}_{\Psi}(Q;\mathrm{End}\,\mathcal{E})$ for $n\geq 1$ while $f(a)\in S^{-2s}_{\Psi}(Q;\mathrm{End}\,\mathcal{E})$\,.\\
Let us first conclude for $s\in[-3/2,0[$\,. Take $\beta\in S^{-4}(Q;\mathrm{End}\,\mathcal{E})$ such that $\beta\sim \sum_{n=1}^{\infty}\beta_{n}$\,. For every $M\in \mathbb{N}$\,,
$$
f(A)=[f(a_{2})]_{\varrho,\chi}(x,D_{x})+ [f'(a_{2})a_{1}]_{\varrho,\chi}(x,D_{x})+\beta_{\varrho,\chi}(x,D_{x})
+R_{\beta,2M+1}+R_{M}
$$
with $R_{\beta,2M+1}\in \mathrm{OpS}^{-2(2M+1)}_{\Psi}(Q;\mathrm{End}\,\mathcal{E})$ and $R_{M}\in \mathcal{L}(\widetilde{\mathcal{W}}^{-M+c_{d}};\widetilde{\mathcal{W}}^{M-c_{d}})$\,.
By taking $M$ arbritrarily large, this says that $f(A)-[f(a_{2})]_{\varrho,\chi}(x,D_{x})-[f'(a_{2})a_{1}]_{\varrho,\chi}(x,D_{x})-\beta_{\varrho,\chi}(x,D_{x})$ belongs to $\mathcal{R}(Q;\mathrm{End}\,\mathcal{E})$\,.\\
Because $s\in [-3/2,-0[$\,, we know $f(a_{2})\in S^{2s}_{\Psi}(Q;\mathrm{End}\,\mathcal{E})$ with  $2s\geq -3$ and $f'(a_{2})\in S^{2(s-1)}(Q;\mathrm{End}\,\mathcal{E})$\,, $f'(a_{2})a_{1}\in S^{2s-1}_{\Psi}(Q;\mathrm{End}\,\mathcal{E})$\,,  while $\beta_{\varrho,\chi}\in S^{-4}_{\Psi}(Q;\mathrm{End}\,\mathcal{E})$\,, $-4\leq -3-1\leq  2s-1$\,.\\
Now for a general $s<0$\,, simply write $\langle t\rangle^{s}=\langle t\rangle^{s_{1}n_{1}}$ with $s_{1}\in [-3/2,0[$ and $n_{1}\in \mathbb{N}$\,. The composition of pseudo-differential operators says that the principal symbol of $\langle A\rangle^{s}=\langle A\rangle^{s_{1}}\circ\ldots\circ \langle A\rangle^{s_{1}}$ is $\langle a_{2}\rangle^{s}$\,. Any power $\langle t\rangle^{s}$\,, $s\in \mathbb{R}$\,, can be written $\langle t\rangle^{2N}\langle t\rangle^{s'}$ with $s'<0$ and $N\in \mathbb{N}$\,. With $\langle t\rangle^{2N}=(1+t^{2})^{N}\in \mathbb{R}[t]$\,, $\langle A\rangle^{s}$ is a pseudo-differential operator with principal symbol $\langle a_{2}\rangle^{s}$ for any $s\in \mathbb{R}$\,. Finally a general function $f\in S(\langle t\rangle^{s};\frac{dt^{2}}{\langle t\rangle^{2}})$ is written $\langle t\rangle^{s+3/2}f_{s}(t)$ with $f_{s}\in S(\langle t\rangle^{-3/2},\frac{dt^{2}}{\langle t\rangle^{2}})$\,.\\
For the $\mathrm{End}\,\mathcal{E}$ version it suffices to notice that all the explicit computations above, are done essentially with scalar symbols\,.
\end{proof}

\noindent\textbf{Acknowledgements:} The two first authors benefited during this work from the support of the French ANR-Project QUAMPROCS "Quantitative Analysis of Metastable Processes".
Francis White is grateful to have received funding from the European Union's Horizon 2020 research and innovation programme under the Marie Sklodowska-Curie grant agreement No 101034255.\euflag

\end{document}